\documentclass{memo-l}
\usepackage{graphicx, amsfonts, amssymb}
\usepackage{mathrsfs, amsmath, amsthm}
\usepackage{hyperref}

\usepackage{graphicx}
\usepackage{subfigure}
\usepackage{lpic}

\newcommand{\R}{\mathcal{R}}
\newcommand{\D}{\mathbb{D}}
\newcommand{\T}{\mathbb{T}}
\newcommand{\N}{\mathbb{N}}
\newcommand{\BN}{\textit{BN}}
\renewcommand{\H}{\mathcal{H}}
\newcommand{\B}{\mathcal{B}}
\newcommand{\CC}{\mathcal{C}}
\newcommand{\BMOA}{\mathord{\rm BMOA}}
\newcommand{\VMOA}{\mathord{\rm VMOA}}
\newcommand{\I}{\mathcal{I}}
\newcommand{\SSS}{\mathcal{S}}
\newcommand{\op}{\mathrm{o}}

\newcommand{\e}{\varepsilon}
\newcommand{\vp}{\varphi}
\newcommand{\z}{\zeta}
\newcommand{\De}{\Delta}
\newcommand{\om}{\omega}
\newcommand{\p}{\psi}

\def\t{\theta}
\def\a{\alpha}
\def\b{\beta}

\newtheorem{theorem}{Theorem}[chapter]
\newtheorem{lemma}[theorem]{Lemma}
\newtheorem{proposition}[theorem]{Proposition}
\newtheorem{corollary}[theorem]{Corollary}

\newtheorem{lettertheorem}{Theorem}

\newtheorem{letterlemma}[lettertheorem]{Lemma}
\newtheorem{letterproposition}[lettertheorem]{Proposition}

\theoremstyle{definition}

\numberwithin{section}{chapter}

\numberwithin{equation}{chapter}

\makeindex

\begin{document}

\frontmatter

\title[Weighted Bergman Spaces induced by rapidly increasing weights]{Weighted Bergman Spaces induced by rapidly increasing weights}

\author{Jos\'e \'Angel Pel\'aez}
\address{Departamento de An´alisis Matem´atico, Universidad de M´alaga, Campus de
Teatinos, 29071 M´alaga, Spain} \email{japelaez@uma.es}

\thanks{This research was supported in part by the Ram\'on y Cajal program
of MICINN (Spain), Ministerio de Edu\-ca\-ci\'on y Ciencia, Spain,
(MTM2007-60854), from La Junta de Andaluc{\'i}a, (FQM210) and
(P09-FQM-4468), Academy of Finland 121281, MICINN- Spain ref.
MTM2008-05891, European Networking Programme HCAA of the European
Science Foundation.}

\author{Jouni R\"atty\"a}
\address{University of Eastern Finland, P.O.Box 111, 80101 Joensuu, Finland}
\email{jouni.rattya@uef.fi}

\date{\today}

\subjclass[2010]{Primary 30H20; Secondary 47G10}

\keywords{Bergman space, Hardy space, regular weight, rapidly
increasing weight, normal weight, Bekoll\'e-Bonami weight,
Carleson measure, maximal function, integral operator, Schatten
class, factorization, zero distribution, linear differential
equation, growth of solutions, oscillation of solutions.}

\begin{abstract}
This monograph is devoted to the study of the weighted Bergman
space $A^p_\om$ of the unit disc $\D$ that is induced by a radial
continuous weight $\om$ satisfying
    \begin{equation}\label{absteq}
    \lim_{r\to
    1^-}\frac{\int_r^1\om(s)\,ds}{\om(r)(1-r)}=\infty.\tag{\dag}
    \end{equation}
Every such $A^p_\om$ lies between the Hardy space $H^p$ and every
classical weighted Bergman space~$A^p_\a$. Even if it is well
known that $H^p$ is the limit of~$A^p_\a$, as $\a\to-1$, in many
respects, it is shown that $A^p_\om$ lies ``closer'' to $H^p$ than
any $A^p_\a$, and that several finer function-theoretic properties
of $A^p_\a$ do not carry over to $A^p_\om$.

As to concrete objects to be studied, positive Borel measures
$\mu$ on $\D$ such that $A^p_\om\subset L^q(\mu)$, $0<p\le
q<\infty $, are characterized in terms of a neat geometric
condition involving Carleson squares. In this characterization
Carleson squares can not be replaced by (pseudo)hyperbolic discs.
These measures are shown to coincide with those for which a
H\"ormander-type maximal function from $L^p_\om$ to $L^q(\mu)$ is
bounded. It is also proved that each $f\in A^p_\om$ can be
represented in the form $f=f_1\cdot f_2$, where $f_1\in
A^{p_1}_\om$, $f_2\in A^{p_2}_\om$ and $\frac{1}{p_1}+
\frac{1}{p_2}=\frac{1}{p}$. Because of the tricky nature of
$A^p_\om$ several new concepts are introduced. In particular, the
use of a certain equivalent norm involving a square area function
and a non-tangential maximal function related to lens type regions
with vertexes at points in $\D$, gives raise to a some what new
approach to the study of the integral operator
    $$
    T_g(f)(z)=\int_{0}^{z}f(\zeta)\,g'(\zeta)\,d\zeta.
    $$
This study reveals the fact that $T_g:A^p_\om\to A^p_\om$ is
bounded if and only if $g$ belongs to a certain space of analytic
functions that is not conformally invariant. The lack of this
invariance is one of the things that cause difficulties in the
proof leading the above-mentioned new concepts, and thus further
illustrates the significant difference between $A^p_\om$ and the
standard weighted Bergman space $A^p_\a$. The symbols $g$ for
which $T_g$ belongs to the Schatten $p$-class $\SSS_p(A^2_\om)$
are also described. Furthermore, techniques developed are applied
to the study of the growth and the oscillation of analytic
solutions of (linear) differential equations.
\end{abstract}

\maketitle

\setcounter{page}{4}

\tableofcontents

\mainmatter

\chapter*{Preface}

This work concerns the weighted Bergman space $A^p_\om$ of the
unit disc $\D$ that is induced by a radial continuous weight
$\om:[0,1)\to(0,\infty)$ such that
    \begin{equation}\label{100}
    \lim_{r\to 1^-}\frac{\int_r^1\om(s)\,ds}{\om(r)(1-r)}=\infty.\tag{\ddag}
    \end{equation}
A radial continuous weight $\om$ with the property \eqref{100} is
called \emph{rapidly increasing} and the class of all such weights
is denoted by $\I$. Each $A^p_\om$ induced by $\om\in\I$ lies
between the Hardy space $H^p$ and every classical weighted Bergman
space $A^p_\a$ of~$\D$.

In many respects the Hardy space $H^p$ is the limit of $A^p_\a$,
as $\a\to-1$, but it is well known to specialists that this is a
very rough estimate since none of the finer function-theoretic
properties of the classical weighted Bergman space $A^p_\a$ is
carried over to the Hardy space $H^p$ whose (harmonic) analysis is
much more delicate. One of the main motivations for us to write
this monograph is to study the spaces $A^p_\om$, induced by
rapidly increasing weights, that indeed lie ``closer'' to $H^p$
than any $A^p_\a$ in the above sense and explore the change of
these finer properties related to harmonic analysis. We will see
that the ``transition'' phenomena from $H^p$ to~$A^p_\a$ does
appear in the context of~$A^p_\om$ with rapidly increasing weights
and raises a number of new interesting problems, some of which are
addressed in this monograph.

Chapter~\ref{S2} is devoted to proving several basic properties of
the rapidly increasing weights that will be used frequently in the
monograph. Also examples are provided to show that, despite of
their name, rapidly increasing weights are by no means necessarily
increasing and may admit a strong oscillatory behavior. Most of
the presented results remain true or have analogues for those
weights $\om$ for which the quotient in the left hand side
of~\eqref{100} is bounded and bounded away from zero. These
weights are called \emph{regular} and the class of all such
weights is denoted by~$\R$. Each standard weight
$\om(r)=(1-r^2)^\alpha$ is regular for all $-1<\a<\infty$.
Chapter~\ref{S2} is instrumental for the rest of the monograph.

Many conventional tools used in the theory of the classical
Bergman spaces fail to work in $A^p_\om$ that is induced by a
rapidly increasing weight $\om$. For example, one can not find a
weight $\om'=\om'(p)$ such that
$\|f\|_{A^p_\om}\asymp\|f'\|_{A^p_{\om'}}$ for all analytic
functions $f$ in $\D$ with $f(0)=0$, because such a
Littlewood-Paley type formula\index{Littlewood-Paley} does not
exist unless $p=2$. Moreover, neither $q$-Carleson measures for
$A^p_\om$ can be characterized by a simple condition on
pseudohyperbolic discs, nor the Riesz projection is necessarily
bounded on $A^p_\om$ if $0<p\le1$. However, it is shown in
Chapter~\ref{Sec:Carleson} that the embedding $A^p_\om\subset
L^q(\mu)$ can be characterized by a geometric condition on
Carleson squares $S(I)$ when $\om$ is rapidly increasing and
$0<p\le q<\infty$. In these considerations we will see that the
weighted maximal function
    $$
    M_{\om}(\vp)(z)=\sup_{I:\,z\in S(I)}\frac{1}{\om\left(S(I)
    \right)}\int_{S(I)}|\vp(\xi)|\om(\xi)\,dA(\xi),\quad
    z\in\D,
    $$
introduced by H\"ormander~\cite{HormanderL67}, plays a role on
$A^p_\om$ similar to that of the Hardy-Littlewood maximal function
on the Hardy space~$H^p$. Analogously, the conventional norm in
$A^p_\om$ is equivalent to a norm expressed in terms of certain
square area functions. These results illustrate in a very concrete
manner the significant difference between the function-theoretic
properties of the classical weighted Bergman space~$A^p_\a$ and
those of $A^p_\om$ induced by a rapidly increasing weight $\om$.

We will put an important part of our attention to the integral
operator
    \begin{displaymath}
    T_g(f)(z)=\int_{0}^{z}f(\zeta)\,g'(\zeta)\,d\zeta,\quad
    z\in\D,
    \end{displaymath}
induced by an analytic function $g$ on $\D$. This operator will
allow us to further underscore the transition phenomena from
$A^p_\a$ to $H^p$ through $A^p_\om$ with $\om\in\I$. The choice
$g(z)=z$ gives the usual Volterra operator and the Ces\`{a}ro
operator is obtained when $g(z)=-\log(1-z)$. The study of integral
operators on spaces of analytic functions merges successfully with
other areas of mathematics, such as the theory of univalent
functions, factorization theorems, harmonic analysis and
differential equations. Pommerenke was probably one of the first
authors to consider the operator $T_g$~\cite{Pom}. However, an
extensive study of this operator was initiated by the seminal
works by Aleman, Cima and Siskakis~\cite{AC,AS0,AS}. In all these
works classical function spaces such as $\BMOA$ and the Bloch
space $\B$ arise in a natural way. It is known that embedding-type
theorems and equivalent norms in terms of the first derivative
have been key tools in the study of the integral operator.
Therefore the study of $T_g$ has also lead developments that
evidently have many applications in other branches of the operator
theory on spaces of analytic functions. Recently, the spectrum
of~$T_g$ on the Hardy space $H^p$~\cite{AlPe} and the classical
weighted Bergman space $A^p_\alpha$~\cite{AlCo} has been studied.
The approach used in these works reveals, in particular, a strong
connection between a description of the resolvent set of $T_g$ on
$H^p$ and the classical theory of the Muckenhoupt weights. In the
case of the classical weighted Bergman space $A^p_\a$, the
Bekoll\'e-Bonami weights take the role of Muckenhoupt weights.

The approach we take to the study of the boundedness of the
integral operator~$T_g$ requires, among other things, a
factorization of $A^p_\om$-functions. In
Chapter~\ref{sec:factorizacion} we establish the required
factorization by using a probabilistic method introduced by
Horowitz~\cite{HorFacto}. We prove that if $\om$ is a weight (not
necessarily radial) such that
    \begin{equation}\label{222}
    \om(z)\asymp\om(\zeta),\quad z\in\Delta(\zeta,r),\quad \z\in\D,\tag{\pounds}
    \end{equation}
where $\Delta(\zeta,r)$ denotes a pseudohyperbolic disc, and
polynomials are dense in $A^p_\om$, then each $f\in A^p_\omega$
can be represented in the form $f=f_1\cdot f_2$, where $f_1\in
A^{p_1}_\omega$, $f_2\in A^{p_2}_\omega$ and $\frac{1}{p_1}+
\frac{1}{p_2}=\frac{1}{p}$, and the following norm estimates hold
    \begin{equation}\label{P1}
    \|f_1\|_{A^{p_1}_\omega}^p\cdot\|f_2\|_{A^{p_2}_\omega}^p\le\frac{p}{p_1}\|f_1\|_{A^{p_1}_\omega}^{p_1}+\frac{p}{p_2}\|f_2\|_{A^{p_2}_\omega}^{p_2}\le
    C(p_1,p_2,\omega)\|f\|_{A^p_\omega}^p.\tag{\S}
    \end{equation}
These estimates achieve particular importance when we recognize
that under certain additional hypothesis on the parameters
$p,\,p_1,$ and $p_2$, the constant in \eqref{P1} only depends on
$p_1$. This allows us to describe those analytic symbols $g$ such
that $T_g:A^p_\om\to A^q_\om$ is bounded, provided $0<q<p<\infty$
and $\om\in\I$ satisfies \eqref{222}. By doing this we avoid the
use of interpolation theorems and arguments based on Kinchine's
inequality, that are often employed when solving this kind of
problems on classical spaces of analytic functions on $\D$. The
techniques used to establish the above-mentioned factorization
result permit us to show that each subset of an $A^p_\om$-zero set
is also an $A^p_\om$-zero set. Moreover, we will show that the
$A^p_\om$-zero sets depend on $p$ whenever $\om\in\I\cup\R$. This
will be done by estimating the growth of the maximum modulus of
certain infinite products whose zero distribution depends on both
$p$ and $\om$. We will also briefly discuss the zero distribution
of functions in the Bergman-Nevanlinna class $\BN_\om$ that
consists of those analytic functions in $\D$ for which
    $$
    \int_\D\log^+|f(z)|\om(z)\,dA(z)<\infty.
    $$
Results related to this discussion will be used in
Chapter~\ref{difequ} when the oscillation of solutions of linear
differential equations in the unit disc is studied.

In Chapter~\ref{SecVolterra} we first equip $A^p_\om$ with several
equivalent norms inherited from different $H^p$-norms through
integration. Those ones that are obtained via the classical
Fefferman-Stein estimate or in terms of a non-tangential maximal
function related to lens type regions with vertexes at points in
$\D$, appear to be the most useful for our purposes. Here, we also
prove that it is not possible to establish a
Littlewood-Paley\index{Littlewood-Paley} type formula if
$\om\in\I$ unless $p=2$. In Section~\ref{sec:Volterra} we
characterize those analytic symbols $g$ on $\D$ such that
$T_g:A^p_\om\to A^q_\om$, $0<p, q<\infty$, is bounded or compact.
The case $q>p$ does not give big surprises because essentially
standard techniques work yielding a condition on the maximum
modulus of~$g'$. This is no longer true if $q=p$. Indeed, we will
see that $T_g:A^p_\om\to A^p_\om$ is bounded exactly when $g$
belongs to the space $\CC^{1}(\om^\star)$ that consists of those
analytic functions on $\D$ for which
   \begin{equation*}
    \|g\|^2_{\CC^{1}(\om^\star)}=|g(0)|^2+\sup_{I\subset\T}\frac{\int_{S(I)}|g'(z)|^2\om^\star(z)\,dA(z)}
    {\om\left(S(I)\right)}<\infty,
    \end{equation*}
where $$
    \omega^\star(z)=\int_{|z|}^1\omega(s)\log\frac{s}{|z|}s\,ds,\quad z\in\D\setminus\{0\}.
    $$
Therefore, as in the case of $H^p$ and $A^p_\a$, the boundedness
(and the compactness) is independent of $p$. It is also worth
noticing that the above $\CC^{1}(\om^\star)$-norm has all the
flavor of the known Carleson measure characterizations of $\BMOA$
and $\B$. Both of these spaces admit the important and very
powerful property of conformal invariance. In fact, this
invariance plays a fundamental role in the proofs of the
descriptions of when $T_g$ is bounded on either $H^p$ or~$A^p_\a$.
In contrast to $\BMOA$ and~$\B$, the space $\CC^1(\om^\star)$ is
not necessarily conformally invariant if $\om$ is rapidly
increasing, and therefore we will employ different techniques. In
Section~\ref{Hardy} we will show in passing that the methods used
are adaptable to the Hardy spaces, and since they also work for
$A^p_\om$ when $\om$ is regular, we consequently will obtain as a
by-product a unified proof for the classical results on the
boundedness and compactness of $T_g$ on $H^p$ and $A^p_\a$.
Chapter~\ref{Sec:SpaceCCpp} is devoted to the study of the space
$\CC^1(\om^\star)$ and its \lq\lq little oh\rq\rq counter\-part
$\CC_0^1(\om^\star)$. In particular, here we will prove that if
$\om\in\I$ admits certain regularity, then $\CC^1(\om^\star)$ is
not conformally invariant, and further, the strict inclusions
    $$
    \BMOA\subsetneq\CC^1(\om^\star)\subsetneq\B
    $$
are valid. Moreover, we will show, among other things, that
$C_0^1(\om^\star)$ is the closure of polynomials in
$\CC^1(\om^\star)$.

Chapter~\ref{Sec:Schatten} offers a complete description of those
analytic symbols $g$ in $\D$ for which the integral operator $T_g$
belongs to the Schatten $p$-class $\SSS_p(A^2_\om)$, where
$\omega\in\I\cup\R$. If $p>1$, then $T_g\in\SSS_p(A^2_\om)$ if and
only if $g$ belongs to the analytic Besov space $B_p$, and if
$0<p\le 1$, then $T_g\in \SSS_p(A^2_\om)$ if and only if $g$ is
constant. It is appropriate to mention that these results are by
no means unexpected. This is due to the fact that the operators
$T_g$ in both $\SSS_p(H^2)$ and $\SSS_p(A^2_\a)$ are also
characterized by the condition $g\in B_p$, provided $p>1$. What
makes this chapter interesting is the proofs which are carried
over in a much more general setting. Namely, we will study the
Toeplitz operator, induced by a complex Borel measure and a
reproducing kernel, in certain Dirichlet type spaces that are
induced by $\om^\star$, in the spirit of Luecking~\cite{Lu87}. Our
principal findings on this operator are gathered in a single
theorem at the end of Chapter~\ref{Sec:Schatten}.

In Chapter~\ref{difequ} we will study linear differential
equations with solutions in either the weighted Bergman space
$A^p_\om$ or the Bergman-Nevalinna class $\BN_\omega$. Our primary
interest is to relate the growth of coefficients to the growth and
the zero distribution of solutions. In
Section~\ref{Sec:SolutionsBergman} we will show how results and
techniques developed in the presiding chapters can be used to find
a set of sufficient conditions for the analytic coefficients of a
linear differential equation of order $k$ forcing all solutions to
the weighted Bergman space $A^p_\omega$. Since the zero
distribution of functions in $A^p_\om$ is studied in
Chapter~\ref{sec:factorizacion}, we will also obtain new
information on the oscillation of solutions. In
Section~\ref{Sec:SolutionsBergmanNevanlinna} we will see that it
is natural to measure the growth of the coefficients by the
containment in the weighted Bergman spaces depending on $\om$,
when all solutions belong to the Bergman-Nevalinna class
$\BN_\omega$. In particular, we will establish a one-to-one
correspondence between the growth of coefficients, the growth of
solutions and the zero distribution of solutions whenever $\om$ is
regular. In this discussion results from the Nevanlinna value
distribution theory are explicitly or implicitly present in many
instances. Apart from tools commonly used in the theory of complex
differential equations in the unit disc, Chapter~\ref{difequ} also
relies strongly on results and techniques from
Chapters~\ref{S2}--\ref{Sec:SpaceCCpp}, and is therefore
unfortunately a bit hard to read independently.

Chapter~\ref{sec:furtherdiscussion} is devoted to further
discussion on topics that this monograph does not cover. We will
briefly discuss $q$-Carleson measures for $A^p_\om$ when $q<p$,
generalized area operators as well as questions related to
differential equations and the zero distribution of functions in
$A^p_\om$. We include few open problems that are particularly
related to the special features of the weighted Bergman spaces
$A^p_\om$ induced by rapidly increasing weights.

\vspace{2em}

\begin{equation*}
\text{\em{Jos\'e \'Angel Pel\'aez}\quad \rm{(M\'alaga)}}
\end{equation*}

\begin{equation*}
\text{\em{Jouni R\"atty\"a}\quad \rm{(Joensuu)}}
\end{equation*}

\chapter{Basic Notation and Introduction to Weights}\label{S2}

In this chapter we first define the weighted Bergman spaces and
the classical Hardy spaces of the unit disc and fix the basic
notation. Then we introduce the classes of radial and non-radial
weights that are considered in the monograph, show relations
between them, and prove several lemmas on weights that are
instrumental for the rest of the monograph.

\section{Basic notation}

Let $\H(\D)$\index{$\H(\D)$} denote the algebra of all analytic
functions in the unit disc $\D=\{z:|z|<1\}$ of the complex plane
$\mathbb{C}$.\index{$\D$}\index{$\mathbb{C}$} Let $\T$\index{$\T$}
be the boundary of $\D$, and let $D(a,r)=\left\{z:
|z-a|<r\right\}$\index{$D(a,r)$} denote the Euclidean disc of
center $a\in\mathbb{C}$ and radius $r\in(0,\infty)$. A function
$\omega:\D\to (0,\infty)$, integrable over $\D$, is called a
\emph{weight function}\index{weight function} or simply a
\emph{weight}\index{weight}. It is \emph{radial}\index{radial
weight} if $\omega(z)=\omega(|z|)$ for all $z\in\D$. For
$0<p<\infty$ and a weight $\omega$, the \emph{weighted Bergman
space} \index{weighted Bergman space}
$A^p_\omega$\index{$A^p_\om$} consists of those $f\in\H(\D)$ for
which
    $$
    \|f\|_{A^p_\omega}^p=\int_\D|f(z)|^p\omega(z)\,dA(z)<\infty,\index{$\Vert\cdot\Vert_{A^p_\omega}$}
    $$
where $dA(z)=\frac{dx\,dy}{\pi}$ \index{$dA(z)$}is the normalized
Lebesgue area measure on $\D$. As usual, we write
$A^p_\alpha$\index{$A^p_\alpha$} for the \emph{classical weighted
Bergman space}\index{classical weighted Bergman space} induced by
the standard radial weight $\omega(z)=(1-|z|^2)^\alpha$,
$-1<\alpha<\infty$.\index{standard weight} For $0<p\le\infty$, the
\emph{Hardy space} \index{Hardy space} $H^p$\index{$H^p$} consists
of those $f\in\H(\D)$ for which
    \begin{equation*}\label{normi}
    \|f\|_{H^p}=\lim_{r\to1^-}M_p(r,f)<\infty,\index{$\Vert\cdot\Vert_{H^p}$}
    \end{equation*}
where
    $$
    M_p(r,f)=\left (\frac{1}{2\pi }\int_0^{2\pi}
    |f(re^{i\theta})|^p\,d\theta\right )^{\frac{1}{p}},\quad 0<p<\infty,
    $$
 and \index{$M_p(r,f)$}
    $$
    M_\infty(r,f)=\max_{0\le\theta\le2\pi}|f(re^{i\theta})|.
    $$
\index{$M_\infty(r,f)$}For the theory of the Hardy and the
classical weighted Bergman spaces,
see~\cite{Duren1970,DurSchus,Garnett1981,HKZ,Zhu}.

Throughout the monograph, the letter $C=C(\cdot)$
\index{$C(\cdot)$} will denote an absolute constant whose value
depends on the parameters indicated in the parenthesis, and may
change from one occurrence to another. We will use the notation
$a\lesssim b$ \index{$\lesssim$} if there exists a constant
$C=C(\cdot)>0$ such that $a\le Cb$, and $a\gtrsim b$
\index{$\gtrsim$} is understood in an analogous manner. In
particular, if $a\lesssim b$ and $a\gtrsim b$, then we will write
$a\asymp b$\index{$\asymp$}.

\section{Regular and rapidly increasing
weights}\label{Sec:DefIUR}

The \emph{distortion function}\index{distortion function} of a
radial weight $\om:[0,1)\to(0,\infty)$ is defined by
    $$
    \psi_{\om}(r)=\frac{1}{\om(r)}\int_{r}^1\om(s)\,ds,\quad
    0\le r<1,\index{$\psi_{\om}(r)$}
    $$
and was introduced by Siskakis in~\cite{Si}. A radial weight $\om$
is called {\em{regular}}\index{regular weight}, if $\om$ is
continuous and its distortion function satisfies
    \begin{equation}\label{eq:r0}
    \psi_\om(r)\asymp(1-r),\quad 0\le r<1.
    \end{equation}
The class of all regular weights is denoted by $\R$.\index{$\R$}
We will show in Section~\ref{Sec:LemmasOnWeights} that if
$\om\in\R$, then for each $s\in[0,1)$ there exists a constant
$C=C(s,\omega)>1$ such that
    \begin{equation}\label{eq:r2}
    C^{-1}\om(t)\le \om(r)\le C\om(t),\quad 0\le r\le t\le
    r+s(1-r)<1.
    \end{equation}
It is easy to see that \eqref{eq:r2} implies
    \begin{equation}\label{eq:r3}
    \psi_{\om}(r)\ge C(1-r),\quad 0\le r<1,
    \end{equation}
for some constant $C=C(\omega)>0$. However, \eqref{eq:r2} does not
imply the existence of $C=C(\om)>0$ such that
    \begin{equation}\label{eq:r1}
    \psi_{\om}(r)\le C(1-r),\quad0\le r<1,
    \end{equation}
as is seen by considering the weights
    \begin{equation}\label{eq:def-of-v_alpha}
    v_\a(r)=\left((1-r)\left(\log\frac{e}{1-r}\right)^\a\right)^{-1},\quad 1<\a<\infty.\index{$v_\a(r)$}
    \end{equation}
Putting the above observations together, we deduce that the
regularity of a radial continuous weight $\om$ is equivalently
characterized by the conditions \eqref{eq:r2} and \eqref{eq:r1}.
As to concrete examples, we mention that every $\om(r)=(1-r)^\a$,
$-1<\a<\infty$, as well as all the weights in
\cite[(4.4)--(4.6)]{AS} are regular.

The good behavior of regular weights can be broken in different
ways. On one hand, the condition~\eqref{eq:r2} implies, in
particular, that $\om$ can not decrease very fast. For example,
the exponential type weights \index{exponential weight}
    \begin{equation}\label{Eq:ExponentialWeights}
    \om_{\gamma,\alpha}(r)=(1-r)^{\gamma}\exp
    \left(\frac{-c}{(1-r)^\alpha}\right), \quad\gamma\ge0,\quad
    \alpha>0,\quad c>0,
    \end{equation}
satisfy neither~\eqref{eq:r2} nor \eqref{eq:r3}. Meanwhile the
weights $\om_{\gamma,\alpha}$ are monotone near $1$, the condition
\eqref{eq:r2} clearly also requires local smoothness and therefore
the regular weights can not oscillate too much. We will soon come
back to such oscillatory weights. On the other hand, we will say
that a radial weight $\om$ is \emph{rapidly
increasing}\index{rapidly increasing weight}, denoted by
$\om\in\I$,\index{$\I$} if it is continuous and
    \begin{equation}\label{eq:I}
    \lim_{r\to 1^-}\frac{\psi_{\om}(r)}{1-r}=\infty.
    \end{equation}
It is easy to see that if $\om$ is a rapidly increasing weight,
then $A^p_\om\subset A^p_\beta$ for any $\beta>-1$, see Section
\ref{Sec:LemmasOnWeights}. Typical examples of rapidly increasing
weights are $v_\a$\index{$v_\a(r)$}, defined in
\eqref{eq:def-of-v_alpha}, and
    \begin{equation}\label{Eq:PesosEnV-alpha}
    \om(r)=\left((1-r)\prod_{n=1}^{N}\log_n\frac{\exp_{n}0}{1-r}\left(\log_{N+1}\frac{\exp_{N+1}0}{1-r}\right)^\a\right)^{-1}
    \end{equation}
for all $1<\a<\infty$ and
$N\in\N=\{1,2,\ldots\}$.\index{$\mathbb{N}$} Here, as usual,
$\log_nx=\log(\log_{n-1}x)$\index{$\log_n$}, $\log_1x=\log x$,
$\exp_n x=\exp(\exp_{n-1}x)$\index{$\exp_n$} and $\exp_1x=e^x$. It
is worth noticing that if $\om\in\I$, then $\sup_{r\le
t}\psi_\om(r)/(1-r)$ can grow arbitrarily fast as $t\to1^-$. See
the weight defined in \eqref{pesomalo2} and
Lemma~\ref{Lemma:Distortion} in Section~\ref{Sec:LemmasOnWeights}.

In this study we are particularly interested in the weighted
Bergman space $A^p_\om$ induced by a rapidly increasing weight
$\om$, although most of the results are obtained under the
hypotheses \lq\lq $\om\in\I\cup\R$\rq\rq. However, there are
proofs in which more regularity is required for $\om\in\I$. This
is due to the fact that rapidly increasing weights may admit a
strong oscillatory behavior. Indeed, consider the weight
    \begin{equation}\label{pesomalo1}\index{$v_\a(r)$}\index{oscillatory weight}
    \omega(r)=\left|\sin\left(\log\frac{1}{1-r}\right)\right|v_\alpha(r)+1,\quad
    1<\alpha<\infty.
    \end{equation}
It is clear that $\omega$ is continuous and $1\le\omega(r)\le
v_\alpha(r)+1$\index{$v_\a(r)$} for all $r\in[0,1)$. Moreover, if
    $$
    1-e^{-(\frac{\pi}{4}+n\pi)}\le r \le
    1-e^{-(\frac{3\pi}{4}+n\pi)},\quad n\in\N\cup\{0\},
    $$
then $-\log(1-r)\in[\frac{\pi}{4}+n\pi,\frac{3\pi}{4}+n\pi]$, and
thus $\omega(r)\asymp v_\alpha(r)$\index{$v_\a(r)$} in there. Let
now $r\in(0,1)$, and fix $N=N(r)$ such that
$1-e^{-(N-1)\pi}<r\le1-e^{-N\pi}$. Then
    \begin{equation*}
    \begin{split}
    \int_r^1\omega(s)\,ds
    &\gtrsim\sum_{n=N}^\infty\int_{1-e^{-(\frac{\pi}{4}+n\pi)}}^{1-e^{-(\frac{3\pi}{4}+n\pi)}}v_\alpha(s)\,ds
    =\sum_{n=N}^\infty\frac{1}{\left(\frac{\pi}{4}+n\pi\right)^{\alpha-1}}-\frac{1}{(\frac{3\pi}{4}+n\pi)^{\alpha-1}}\\
    &\asymp\sum_{n=N}^\infty\frac{1}{n^{\alpha}}\asymp\int_N^\infty\frac{dx}{x^\alpha}\asymp\frac{1}{N^{\alpha-1}}
    \asymp\frac{1}{\left(\log\frac{1}{1-r}\right)^{\alpha-1}}\asymp\int_r^1v_\alpha(s)\,ds,
    \end{split}
    \end{equation*}
and it follows that $\omega\in\I$. However,
    $$
    \frac{\omega(1-e^{-n\pi-\frac{\pi}{2}})}{\omega(1-e^{-n\pi})}=v_\alpha(1-e^{-n\pi-\frac{\pi}{2}})+1\to\infty,\quad
    n\to\infty,
    $$
yet
    $$
    \frac{1-(1-e^{-n\pi-\frac{\pi}{2}})}{1-(1-e^{-n\pi})}=e^{-\frac{\pi}{2}}\in(0,1)
    $$
for all $n\in\N$. Therefore $\omega$ does not satisfy
\eqref{eq:r2}. Another bad-looking example in the sense of
oscillation is
    \begin{equation}\label{pesomalo2}\index{$v_\a(r)$}\index{oscillatory weight}
    \omega(r)=\left|\sin\left(\log\frac{1}{1-r}\right)\right|v_\alpha(r)+\frac{1}{e^{e^{\frac1{1-r}}}},\quad
    1<\alpha<\infty,
    \end{equation}
which  belongs to $\I$, but does not satisfy \eqref{eq:r2} by the
reasoning above. Moreover, by passing through the zeros of the
$\sin$ function, we see that
    $$
    \liminf_{r\to1^-}\omega(r)e^{e^\frac{1}{1-r}}=1.
    $$
Our last example on oscillatory weights is
    \begin{equation}\label{65}
    \om(r)=\left|\sin\left(\log\frac{1}{1-r}\right)\right|(1-r)^\a+(1-r)^\b,\index{oscillatory weight}
    \end{equation}
where $-1<\a<\b<\infty$. Obviously,
$(1-r)^\b\lesssim\om(r)\lesssim(1-r)^\a$, so $A^p_\a\subset
A^p_\om\subset A^p_\b$. However, $\om\not\in\I$ because the limit
in \eqref{eq:I} does not exist, and $\om\not\in\R$ because $\om$
neither satisfies \eqref{eq:r2} nor \eqref{eq:r1}, yet $\om$ obeys
\eqref{eq:r3}.

In the case when more local regularity is required for $\om\in\I$
in the proof, we will consider the class $\widetilde{\mathcal I}$
of those $\om\in\I$ that
satisfy~\eqref{eq:r2}.\index{$\widetilde{\mathcal I}$}

\section{Bekoll\'e-Bonami and invariant
weights}\label{sec:BBINV}

The \emph{Carleson square}\index{Carleson square}
$S(I)$\index{$S(I)$} associated with an interval $I\subset\T$ is
the set $S(I)=\{re^{it}\in\D:\,e^{it}\in I,\, 1-|I|\le r<1\}$,
where $|E|$\index{$\vert E\vert$} denotes the Lebesgue measure of
the measurable set $E\subset\T$. For our purposes it is also
convenient to define for each $a\in\D\setminus\{0\}$ the interval
$I_a=\{e^{i\t}:|\arg(a
e^{-i\t})|\le\frac{1-|a|}{2}\}$,\index{$I_a$}\index{$S(I_a)$}\index{$S(a)$}
and denote $S(a)=S(I_a)$.

Let $1<p_0,p_0'<\infty$ such that
$\frac{1}{p_0}+\frac{1}{p'_0}=1$, and let $\eta>-1$. A weight
$\om:\D\to(0,\infty)$ satisfies the \emph{Bekoll\'e-Bonami
$B_{p_0}(\eta)$-condition}\index{Bekoll\'e-Bonami
weight},\index{$B_{p_0}(\eta)$} denoted by $\om\in B_{p_0}(\eta)$,
if there exists a constant $C=C(p_0,\eta,\omega)>0$ such that
    \begin{equation}\label{eq:BB}
    \begin{split}
    &\left(\int_{S(I)}\om(z)(1-|z|)^{\eta}\,dA(z)\right)
    \left(\int_{S(I)}\om(z)^{\frac{-p'_0}{p_0}}(1-|z|)^{\eta}\,dA(z)\right)^{\frac{p_0}{p'_0}}\\
    &\le C|I|^{(2+\eta)p_0}
    \end{split}
    \end{equation}
for every interval $I\subset \T$. Bekoll\'e and Bonami introduced
these weights in~\cite{Bek,BB}, and showed that $\om\in
B_{p_0}(\eta)$\index{Bekoll\'e-Bonami
weight}\index{$B_{p_0}(\eta)$} if and only if the Bergman
projection\index{Bergman projection}\index{$P_\eta(f)$}
    $$
    P_\eta(f)(z)=(\eta+1)\int_\D\frac{f(\xi)}{(1-\overline{\xi}z)^{2+\eta}}(1-|\xi|^2)^\eta\,dA(\xi)
    $$
is bounded from $L^{p_0}_\om$ to $A^{p_0}_\om$~\cite{BB}. This
equivalence allows us to identify the dual space of $A^{p_0}_\om$
with $A^{p_0'}_\om$. In the next section we will see that if
$\om\in\R$, then for each $p_0>1$ there exists
$\eta=\eta(p_0,\omega)>-1$ such that $\frac{\om(z)}{(1-|z|)^\eta}$
belongs to $B_{p_0}(\eta)$.\index{Bekoll\'e-Bonami
weight}\index{$B_{p_0}(\eta)$} However, this is no longer true if
$\om\in\I$.

There is one more class of weights that we will consider. To give
the definition, we need to recall several standard concepts. For
$a\in\D$, define $\vp_a(z)=(a-z)/(1-\overline{a}z)$
\index{$\varphi_a$}. The automorphism $\vp_a$ of $\D$ is its own
inverse and interchanges the origin and the point $a\in\D$. The
\emph{pseudohyperbolic} and \emph{hyperbolic
distances}\index{pseudohyperbolic distance}\index{hyperbolic
distance} from $z$ to $w$ are defined as
$\varrho(z,w)=|\vp_z(w)|$\index{$\varrho(z,w)$} and
    $$
    \varrho_h(z,w)=\frac12\log\frac{1+\varrho(z,w)}{1-\varrho(z,w)},\quad
    z,w\in\D,
    $$
respectively.\index{$\varrho_h(z,w)$} The \emph{pseudohyperbolic
disc}\index{hyperbolic disc}\index{pseudohyperbolic disc} of
center $a\in\D$ and radius $r\in(0,1)$ is denoted by
$\Delta(a,r)=\{z:\varrho(a,z)<r\}$\index{$\Delta(a,r)$}. It is
clear that $\Delta(a,r)$ coincides with the \emph{hyperbolic disc}
$\Delta_h(a,R)=\{z:\varrho_h(a,z)<R\}$, where
$R=\frac12\log\frac{1+r}{1-r}\in(0,\infty)$.\index{$\Delta_h(a,r)$}

The class ${\mathcal Inv}$\index{${\mathcal Inv}$}\index{invariant
weight} of \emph{invariant weights}\index{invariant weight}
consists of those weights $\om$ (that are not necessarily radial
neither continuous) such that for some (equivalently for all)
$r\in(0,1)$ there exists a constant $C=C(r)\ge1$ such that
    $$
    C^{-1}\om(a)\le\om(z)\le C\om(a)
    $$
for all $z\in\De(a,r)$. In other words, $\om\in{\mathcal Inv}$ if
$\om(z)\asymp\om(a)$ in $\De(a,r)$. It is immediate that radial
invariant weights are neatly characterized by the condition
\eqref{eq:r2}, and thus $\I\cap{\mathcal Inv}=\widetilde{\I}$ and
$\R\cap{\mathcal Inv}=\R$. To see an example of a radial weight
that just fails to satisfy \eqref{eq:r2}, consider
    $$
    \om(r)=(1-r)^{\log_n\left(\frac{\exp_n0}{1-r}\right)}=\exp\left(-\log\left(\frac{1}{1-r}\right)\cdot\log_n\left(\frac{\exp_n0}{1-r}\right)\right),\quad
    n\in\N.
    $$
It is easy to see that
    $$
    \frac{\psi_\om(r)}{1-r}\asymp\frac{1}{\log_n\left(\frac{1}{1-r}\right)},\quad
    r\to1^-,
    $$
and hence $\om\not\in\I\cup\R$.\index{${\mathcal
Inv}$}\index{invariant weight} For an example of a weight which is
not rapidly decreasing neither belongs to $\I\cup\R$,
see~\eqref{112}. In Chapter~\ref{sec:factorizacion} we will see
that the class of those invariant weights $\om$\index{${\mathcal
Inv}$}\index{invariant weight} for which polynomials are dense in
$A^p_\om$ form a natural setting for the study of factorization of
functions in~$A^p_\om$.

\section{Lemmas on weights}\label{Sec:LemmasOnWeights}

In this section we will prove several lemmas on different weights.
These results do not only explain the relationships between
different classes of weights, but are also instrumental for the
rest of the monograph. The first three lemmas deal with the
integrability of regular and rapidly increasing weights.

\begin{lemma}\label{le:condinte}
\begin{itemize}
\item[\rm(i)] Let $\om\in\R$. Then there exist constants
$\a=\a(\om)>0$ and $\b=\b(\omega)\ge\a$ such that
    \begin{equation}\label{eq:Integral1}
    \left(\frac{1-r}{1-t}\right)^\a\int_t^1\om(s)\,ds\le\int_r^1\om(s)\,ds\le
    \left(\frac{1-r}{1-t}\right)^\b\int_t^1\om(s)\,ds
    \end{equation}
for all $0\le r\le t<1$. \item[\rm(ii)] Let $\om\in\I$. Then for
each $\b>0$ there exists a constant $C=C(\b,\omega)>0$ such that
    \begin{equation}\label{eq:Integral2}
    \int_r^1\om(s)\,ds\le C
    \left(\frac{1-r}{1-t}\right)^\b\int_t^1\om(s)\,ds,\quad 0\le r\le
    t<1.
    \end{equation}
\end{itemize}
\end{lemma}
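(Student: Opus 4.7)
The plan is to reduce both parts to integrating the logarithmic derivative of the tail $\Omega(r)=\int_r^1\om(s)\,ds$. Since $\om$ is continuous, $\Omega\in C^1$ with $-\Omega'(r)=\om(r)$, so
$$-\frac{d}{dr}\log\Omega(r)=\frac{\om(r)}{\Omega(r)}=\frac{1}{\psi_\om(r)}.$$
In other words, pointwise bounds on $1/\psi_\om$ translate directly into multiplicative bounds on ratios of tails via one logarithmic integration.

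For (i), the defining condition $\psi_\om(r)\asymp 1-r$ of $\R$ supplies constants $0<c_1\le c_2<\infty$ with $c_1/(1-r)\le 1/\psi_\om(r)\le c_2/(1-r)$ on $[0,1)$. Integrating the identity above from $r$ to $t$, with $0\le r\le t<1$, gives
$$c_1\log\frac{1-r}{1-t}\le\log\frac{\Omega(r)}{\Omega(t)}\le c_2\log\frac{1-r}{1-t},$$
which, after exponentiating, is exactly \eqref{eq:Integral1} with $\a=c_1$ and $\b=c_2$.

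For (ii), the definition \eqref{eq:I} of $\I$ says $1/\psi_\om(r)=o(1/(1-r))$ as $r\to 1^-$. Given $\b>0$, I choose $r_0=r_0(\b,\om)\in[0,1)$ such that $1/\psi_\om(s)\le\b/(1-s)$ for all $s\in[r_0,1)$; integrating the above identity on $[r,t]\subset[r_0,1)$ then yields $\Omega(r)\le((1-r)/(1-t))^\b\,\Omega(t)$, which is \eqref{eq:Integral2} with $C=1$ in that regime. The only step that takes any care, and thus the main (very modest) obstacle, is the boundary regime in which $r$ lies below $r_0$: here I bound $\Omega(r)/\Omega(t)\le\Omega(0)/\Omega(r_0)$ (with $\Omega(r_0)>0$ because $\om$ is a weight) and use $((1-r)/(1-t))^\b\ge 1$ for $r\le t$ to absorb everything into a constant $C=C(\b,\om)=\max\{1,\Omega(0)/\Omega(r_0)\}$, which delivers \eqref{eq:Integral2} on all of $\{0\le r\le t<1\}$.
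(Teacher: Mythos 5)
Your part (i) is correct and is in substance the paper's own argument: the paper proves monotonicity of the auxiliary function $h_c(r)=\bigl(\int_r^1\om(s)\,ds\bigr)(1-r)^{-1/c}$, which is precisely your pointwise comparison of $-(\log\Omega)'(r)=1/\psi_\om(r)$ with $c/(1-r)$ followed by one integration, and the constants agree. The same is true of your main estimate in part (ii) on $[r_0,1)$, which matches the paper's choice of $r_0$ via \eqref{eq:I}.

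However, your explicit treatment of the regime $r<r_0$ in part (ii) contains a false step. You claim $\Omega(r)/\Omega(t)\le\Omega(0)/\Omega(r_0)$ whenever $r<r_0$, but in the sub-case $r<r_0\le t$ this fails: $\Omega$ is decreasing with $\Omega(t)\to0$ as $t\to1^-$, so for fixed $r<r_0$ the ratio $\Omega(r)/\Omega(t)$ is unbounded, while $\Omega(0)/\Omega(r_0)$ is a fixed constant; the trivial observation $((1-r)/(1-t))^\b\ge1$ cannot absorb an unbounded quantity into a constant, because in this regime that factor must do real work, namely control the decay of $\Omega(t)$. The case $r\le t\le r_0$ is fine as written, and the mixed case is repaired by chaining through $r_0$ with the estimate you have already proved on $[r_0,1)$:
\begin{equation*}
\Omega(r)\le\Omega(0)=\frac{\Omega(0)}{\Omega(r_0)}\,\Omega(r_0)
\le\frac{\Omega(0)}{\Omega(r_0)}\left(\frac{1-r_0}{1-t}\right)^{\b}\Omega(t)
\le\frac{\Omega(0)}{\Omega(r_0)}\left(\frac{1-r}{1-t}\right)^{\b}\Omega(t),
\end{equation*}
using $1-r_0\le1-r$. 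With this one-line correction your constant $C=\Omega(0)/\Omega(r_0)\ (\ge1)$ gives \eqref{eq:Integral2} on all of $\{0\le r\le t<1\}$; the paper leaves this routine extension implicit, but since you spelled it out, the inequality you wrote needs the fix above.
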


\begin{proof} (i) Let $\om\in\R$, and let $C_1=C_1(\om)>0$ and $C_2=C_2(\om)\ge C_1$ such that
    \begin{equation}\label{eq:r4}
    C_1(1-r)\le\psi_\om(r)\le C_2(1-r),\quad 0\le r<1.
    \end{equation}
Then a direct calculation based on the first inequality in
\eqref{eq:r4} shows that the differentiable function
$h_{C_1}(r)=\frac{\int_{r}^1\om(s)\,ds}{(1-r)^{1/{C_1}}}$ is
increasing on $[0,1)$. The second inequality in
\eqref{eq:Integral1} with $\b=1/C_1$ follows from this fact. The
first inequality in \eqref{eq:Integral1} with $\a=1/C_2$ can be
proved in an analogous manner by showing that $h_{C_2}(r)$ is
decreasing on $[0,1)$.

(ii) Let now $\om\in\I$ and $\beta>0$. By \eqref{eq:I}, there
exists $r_0\in (0,1)$ such that $\psi_{\om}(r)/(1-r)\ge\b^{-1}$
for all $r\in [r_0,1)$. As above we deduce that $h_{1/\beta}(r)$
is increasing on $[r_0,1)$, and \eqref{eq:Integral2} follows.
\end{proof}

We make several observations on Lemma~\ref{le:condinte} and its
proof.
\begin{itemize}
\item[\rm(i)] If $\om\in\R$ and $0\le r\le t\le
    r+s(1-r)<1$, then Part (i) implies
    \begin{equation*}
    \frac{\om(r)}{\om(t)}\asymp\frac{1-t}{1-r}\cdot\frac{\int_r^1\om(v)\,dv}{\int_t^1\om(v)\,dv}
    \asymp\frac{\int_r^1\om(v)\,dv}{\int_t^1\om(v)\,dv}\asymp1,
    \end{equation*}
where the constants of comparison depend only on $s\in[0,1)$ and
$\b=\b(\om)>0$. This proves the important local
smoothness~\eqref{eq:r2} of regular weights.

\item[\rm(ii)] If $\om\in\R$, then $h_{C_1}(r)$ is increasing and
$h_{C_2}(r)$ is decreasing by the proof of Part~(i). Therefore
    \begin{equation}\label{64}
    (1-r)^{1/C_1}\lesssim\int_r^1\om(s)\,ds=\om(r)\psi_\om(r)\lesssim(1-r)^{1/C_2},
    \end{equation}
and since $\psi_\om(r)\asymp(1-r)$, we deduce $A^p_\a\subset
A^p_\om\subset A^p_\b$ for $\a=C_2^{-1}-1$ and $\b=C_1^{-1}-1$.
This means that each weighted Bergman space~$A^p_\om$, induced by
a regular weight $\om$, lies between two classical weighted
Bergman spaces. Of course, if $\om$ is a radial continuous weight
such that the chain of inclusions above is satisfied for some
$-1<\a<\b<\infty$, then $\om$ does not need to be regular as is
seen by considering the oscillatory weight given in~\eqref{65}. It
is also worth noticing that $\om(r)(1-r)^{-\gamma}$ is a weight
for each $\gamma<1/C_2$ by \eqref{64}. Moreover, an integration by
parts gives
    \begin{equation}\label{77}
    \begin{split}
    \int_r^1\om(s)(1-s)^{-\gamma}\,ds&=(1-r)^{-\gamma}\int_r^1\om(s)\,ds\\
    &\quad+\gamma\int_r^1\left(\int_s^1\om(t)\,dt\right)(1-s)^{-\gamma-1}\,ds\\
    &\asymp\om(r)(1-r)^{1-\gamma}+\gamma\int_r^1\om(s)(1-s)^{-\gamma}\,ds,
    \end{split}
    \end{equation}
so, by choosing $\gamma>0$ sufficiently small and reorganizing
terms, we obtain \eqref{eq:r0} for the weight
$\om(r)(1-r)^{-\gamma}$, that is, $\om(r)(1-r)^{-\gamma}\in\R$.

\item[\rm(iii)] Let now $\omega\in\I$, and let $\b>-1$ be fixed.
The proof of Part~(ii) shows that there exists
$r_0=r_0(\a)\in(\frac12,1)$ such that $h_{2/(1+\b)}$ is increasing
on $[r_0,1)$. Therefore $\int_r^1\om(s)\,ds\gtrsim
(1-r)^{\frac{1+\b}{2}}$ on $[r_0,1)$. If $f\in A^p_{\om}$, then
    \begin{equation*}
    \|f\|_{A^p_\omega}^p\ge\int_{\D\setminus
    D(0,r)}|f(z)|^p\omega(z)\,dA(z)\gtrsim
    M_p^p(r,f)\int_r^1\omega(s)\,ds,\quad r\ge\frac12,
    \end{equation*}
and hence
    \begin{equation*}
    \begin{split}
    \|f\|_{A^p_\b}^p&\lesssim\|f\|_{A^p_\omega}^p\int_0^1\frac{(1-r)^\b}{\int_r^1\omega(s)\,ds}\,dr\\
    &\lesssim\int_0^{r_0}\frac{(1-r)^\b}{\int_r^1\omega(s)\,ds}\,dr
    +\int_{r_0}^1\frac{dr}{(1-r)^{\frac{1-\b}{2}}}<\infty.
    \end{split}
    \end{equation*}
Therefore $f\in A^p_\b$, and we obtain the inclusion
$A^p_\om\subset A^p_\b$ for all $\b>-1$. Moreover, by combining
the equality in \eqref{77} and the estimate
$\int_r^1\om(s)\,ds\gtrsim (1-r)^{\frac{1+\b}{2}}$, established
above, for $\b=2\gamma-1$, we see that $\om(r)(1-r)^{-\gamma}$ is
not a weight for any $\gamma>0$.

\item[\rm(iv)] If $\om\in\I$ is differentiable and
    \begin{equation}\label{PP}
    \lim_{r\to 1^-}\frac{\om'(r)}{\om^2(r)}\int_r^1\om(s)\,ds
    \end{equation}
exists, then this limit is equal to $\infty$ by
Bernouilli-l'H\^{o}pital theorem. This in turn implies that $\om$
is \emph{essentially increasing} \index{essentially increasing} on
$[0,1)$, that is, there exists a constant $C\ge1$ such that
$\om(r)\le C\om (s)$ for all $0\le r\le s<1$. It is also known
that if the limit (superior) in \eqref{PP} is finite, then a
Littlewood-Paley type formula exists for all
$0<p<\infty$~\cite{PavP,Si}\index{Littlewood-Paley}, that is,
$\|f\|_{A^p_\om}\asymp\|f'\psi_\om\|_{L^p_{\om}}$ for all
$f\in\H(\D)$ with $f(0)=0$.

\item[\rm(v)] It is
easy to see that a radial continuous weight $\om$ is regular if and only if there
exist $-1<a<b<\infty$ and $r_0\in(0,1)$ such that
    \begin{equation}\label{75}
    \frac{\om(r)}{(1-r)^b}\nearrow\infty,\quad r\ge r_0,
    \quad\textrm{and}\quad
    \frac{\om(r)}{(1-r)^a}\searrow0,\quad r\ge r_0.
    \end{equation}
These weights without the continuity assumption were first studied
by Shields and Williams~\cite{ShieldsWilliams} in the range
$0<a<b<\infty$, and they are known as \emph{normal
weights}.\index{normal weight} Each weight
$v_\a\in\I$\index{$v_\a(r)$} satisfies \eqref{75} if we allow the
number $a$ to attain the value $-1$, which is usually excluded in
the definition.
\end{itemize}

\begin{lemma}\label{le:nec1}
\begin{itemize}
\item[\rm(i)] Let $\om\in\R$. Then there exist constants
$\gamma=\gamma(\om)>0$, $C_1=C_1(\gamma,\omega)>0$ and
$C_2=C_2(\gamma,\omega)>0$ such that
    \begin{equation}\label{Eq-Condition-Extra}
    C_1\int_t^1\om(s)\,ds\le\int_0^t\left(\frac{1-t}{1-s}\right)^{\gamma}\om(s)\,ds\le
    C_2\int_t^1\om(s)\,ds,\quad0<t<1.
    \end{equation}
More precisely, the first inequality is valid for all $\gamma>0$,
and the second one for all $\gamma>\b$, where $\b$ is from
\eqref{eq:Integral1}.

\item[\rm(ii)] Let $\om\in\I$. Then for each $\gamma>0$ there
exists a constant $C=C(\gamma,\omega)>0$ such that
    \begin{equation*}
    \int_0^t\left(\frac{1-t}{1-s}\right)^{\gamma}\om(s)\,ds\le C
    \int_t^1\om(s)\,ds,\quad0<t<1.
    \end{equation*}
\end{itemize}
\end{lemma}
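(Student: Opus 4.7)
The proof rests on rewriting the integral via the tail function $W(r):=\int_r^1\om(s)\,ds$. Since $\om=-W'$, integrating by parts in $\int_0^t(1-s)^{-\gamma}\om(s)\,ds$ produces the key identity
\begin{equation*}
\int_0^t\left(\frac{1-t}{1-s}\right)^\gamma\om(s)\,ds
=(1-t)^\gamma W(0)-W(t)+\gamma(1-t)^\gamma\int_0^t\frac{W(s)}{(1-s)^{\gamma+1}}\,ds.
\end{equation*}
From here everything reduces to bounding the pointwise ratios $W(s)/W(t)$ for $0\le s\le t<1$, which is exactly the content of Lemma~\ref{le:condinte}.

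For the upper bound in (i), I would insert the estimates $W(s)\le((1-s)/(1-t))^\b W(t)$ and $W(0)\le(1-t)^{-\b}W(t)$ supplied by \eqref{eq:Integral1}. Provided $\gamma>\b$, the surviving integral is $\int_0^t(1-s)^{\b-\gamma-1}\,ds\le(1-t)^{\b-\gamma}/(\gamma-\b)$, and after collecting terms the three contributions combine to yield $C_2=\gamma/(\gamma-\b)$. The upper bound in (ii) then follows by exactly the same calculation with $\b$ replaced by an arbitrary $\b_1\in(0,\gamma)$ via \eqref{eq:Integral2}; this is why in (ii) every $\gamma>0$ is admissible.

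For the lower bound in (i) a direct localization is cleaner than the identity. I would keep only the contribution from $s\in[\max(0,2t-1),t]$, on which the smoothness \eqref{eq:r2} gives $\om(s)\asymp\om(t)$, the factor $(1-t)/(1-s)\ge 1/2$, and the interval has length $\asymp 1-t$. Combined with the regular-weight relation $\om(t)(1-t)\asymp W(t)$ (coming from $\psi_\om(t)\asymp 1-t$), this yields $\int_0^t(\cdots)\gtrsim\om(t)(1-t)\asymp W(t)$ for every $\gamma>0$.

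The main obstacle is the sharp exponent threshold in the upper bound: for $\om\in\R$ one cannot sidestep $\gamma>\b$ because $\int_0^t(1-s)^{\b-\gamma-1}\,ds$ diverges at $s=t$ otherwise, and \eqref{eq:Integral1} offers no better decay rate than $\b$. The relaxation to arbitrary $\gamma>0$ in (ii) is precisely the quantitative reflection of the stronger tail estimate \eqref{eq:Integral2} available for rapidly increasing weights.
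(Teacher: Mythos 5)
Your argument is correct and takes a genuinely different route from the paper's. The paper decomposes $[0,t]$ into the dyadic blocks $[1-2^{n+1}(1-t),\,1-2^n(1-t)]$, freezes the factor $((1-t)/(1-s))^\gamma$ at $2^{-n\gamma}$ on each block, and sums a geometric series whose ratio is controlled by Lemma~\ref{le:condinte}; the threshold $\gamma>\b$ appears there as the convergence condition $\sum_n 2^{n(\b-\gamma)}<\infty$, which exactly mirrors your divergence of $\int_0^t(1-s)^{\b-\gamma-1}\,ds$ when $\gamma\le\b$. Your integration-by-parts identity replaces the block decomposition by a single exact formula and feeds the pointwise tail comparisons of Lemma~\ref{le:condinte} in directly; it is shorter and yields the clean explicit constant $C_2=\gamma/(\gamma-\b)$ (after the $-W(t)$ term absorbs $(1-t)^{\gamma-\b}W(t)\le W(t)$), whereas the paper's constant is the less transparent $(2^\b-1)\sum_n 2^{n(\b-\gamma)}$. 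For the lower bound the paper in effect also keeps only the outermost block $[2t-1,t]$ (its final step retains just the $n=0$ term of the sum), but estimates it via the first inequality of \eqref{eq:Integral1} with the exponent $\a$ rather than via the local smoothness \eqref{eq:r2} together with $\psi_\om(t)\asymp 1-t$ as you do; these are equivalent consequences of regularity. One shared caveat: for $t$ near $0$ the left-hand inequality in \eqref{Eq-Condition-Extra} cannot hold with a uniform constant, since the middle integral tends to $0$ while $\int_t^1\om(s)\,ds$ does not; your phrase about the interval having length $\asymp 1-t$ fails for $[0,t]$ with $t$ small, but the paper's own proof degenerates in exactly the same place (its last dyadic block), and the lemma is only ever used for $t$ bounded away from $0$, so this is not a defect of your argument relative to the paper's.
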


\begin{proof} Let $t\in (0,1)$, and let $N$ be the smallest natural number such that
$(1-t)2^N>1$. Set $t_N=0$ and $t_n=1-2^n(1-t)$ for $n=0,\dots,
N-1$. Also, set $\om(s)=0$ for $s<0$.

(i) Let $\om\in\R$ and $\gamma>0$. Then, by
Lemma~\ref{le:condinte}(i), there exists $\a=\a(\om)>0$ such that
    \begin{equation*}
    \begin{split}
    \int_0^t\left(\frac{1-t}{1-s}\right)^{\gamma}\om(s)\,ds
    &=\sum_{n=0}^{N-1}\int_{t_{n+1}}^{t_n}\left(\frac{1-t}{1-s}\right)^{\gamma}\om(s)\,ds\\
    &\ge\sum_{n=0}^{N-1}2^{-(n+1)\gamma}\left(\int_{t_{n+1}}^{1}\om(s)\,ds-\int_{t_n}^1\om(s)\,ds\right)\\
    &\ge\sum_{n=0}^{N-1}2^{-(n+1)\gamma}(2^{\a}-1)\int_{t_{n}}^{1}\om(s)\,ds\\
    &\ge\frac{2^{\a}-1}{2^\gamma}\int_t^1\om(s)\,ds,
    \end{split}
    \end{equation*}
and thus the first inequality in \eqref{Eq-Condition-Extra} with
$C_1=\frac{2^{\a}-1}{2^\gamma}$ is proved. To see the second one,
we may argue as above to obtain
    \begin{equation}\label{eq:l11}
    \begin{split}
    \int_0^t\left(\frac{1-t}{1-s}\right)^{\gamma}\om(s)\,ds
    &\le(2^{\b}-1)\sum_{n=0}^{N-1}2^{-n\gamma}\int_{t_{n}}^{1}\om(s)\,ds\\
    &\le(2^{\b}-1)\sum_{n=0}^{\infty}2^{n(\b-\gamma)}\int_{t}^{1}\om(s)\,ds.
    \end{split}
    \end{equation}
This gives the second inequality in~\eqref{Eq-Condition-Extra} for
all $\gamma>\b$.

(ii) This can be proved by arguing as above and by using
Lemma~\ref{le:condinte}(ii).
\end{proof}

\begin{lemma}\label{le:momentos}
If $\om\in\I\cup\R$, then
    \begin{equation*}
    \int_0^1
    s^{x}\om(s)\,ds\asymp\int_{1-\frac{1}{x}}^1\om(s)\,ds,\quad
    x\in[1,\infty).
    \end{equation*}
\end{lemma}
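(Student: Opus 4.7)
Write $r = 1 - 1/x \in [0,1)$ for $x \in [1,\infty)$; then the claim is that
    $$
    \int_0^1 s^x \om(s)\,ds \asymp \int_r^1 \om(s)\,ds.
    $$
The plan is to split $\int_0^1 = \int_0^r + \int_r^1$. The second piece is trivially bounded above by $\int_r^1 \om(s)\,ds$, and the lower bound on $\int_0^1 s^x\om(s)\,ds$ will come from this same piece alone. The only real work is controlling $\int_0^r s^x \om(s)\,ds$ from above by $\int_r^1 \om(s)\,ds$, which will be done by a dyadic decomposition of $[0,r]$ of the same flavor as in the proof of Lemma~\ref{le:nec1}, combined with Lemma~\ref{le:condinte}.

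For the lower bound, I would argue separately for $x \in [1,2]$ and $x \ge 2$. If $x \ge 2$ then $r \ge 1/2$ and $s^x \ge r^x = (1-1/x)^x \ge 1/4$ on $[r,1]$, so
    $$
    \int_0^1 s^x \om(s)\,ds \ge \int_r^1 s^x \om(s)\,ds \ge \tfrac14 \int_r^1 \om(s)\,ds.
    $$
If $x \in [1,2]$ then $r \in [0,1/2]$ and, using that $\int_0^{1/2}\om(s)\,ds \lesssim \int_{1/2}^1\om(s)\,ds$ (which follows from Lemma~\ref{le:condinte}(i) for $\om\in\R$ and from Lemma~\ref{le:condinte}(ii) for $\om\in\I$, applied with $r=0$, $t=1/2$), one obtains $\int_r^1\om(s)\,ds \asymp \int_{1/2}^1 \om(s)\,ds$, and
    $$
    \int_0^1 s^x \om(s)\,ds \ge \int_{1/2}^1 s^x \om(s)\,ds \ge \tfrac14 \int_{1/2}^1 \om(s)\,ds.
    $$

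For the upper bound on $\int_0^r s^x \om(s)\,ds$, set $t_n = 1 - 2^n(1-r) = 1 - 2^n/x$ for $n=0,1,\ldots,N-1$ and $t_N=0$, where $N$ is the smallest integer with $2^N(1-r)>1$. On $[t_{n+1},t_n]$ the elementary inequality $\log s \le s-1$ yields
    $$
    s^x \le e^{-x(1-s)} \le e^{-x(1-t_n)} = e^{-2^n}.
    $$
Therefore
    $$
    \int_0^r s^x\om(s)\,ds \le \sum_{n=0}^{N-1} e^{-2^n} \int_{t_{n+1}}^{t_n}\om(s)\,ds
    \le \sum_{n=0}^{N-1} e^{-2^n}\int_{t_{n+1}}^1 \om(s)\,ds.
    $$
Since $t_{n+1} \le r$ with $1-t_{n+1} = 2^{n+1}(1-r)$, Lemma~\ref{le:condinte}(i) (for $\om\in\R$, with the exponent $\b=\b(\om)$ from \eqref{eq:Integral1}) gives
    $$
    \int_{t_{n+1}}^1 \om(s)\,ds \le 2^{(n+1)\b}\int_r^1 \om(s)\,ds,
    $$
so the sum is majorized by $\bigl(\sum_{n\ge 0} e^{-2^n}2^{(n+1)\b}\bigr)\int_r^1\om(s)\,ds$, and this series converges. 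For $\om\in\I$ the identical dyadic estimate is used, but now Lemma~\ref{le:condinte}(ii) is invoked with an arbitrary fixed $\b>0$ (say $\b=1$), which is precisely the flexibility provided by the rapidly-increasing hypothesis; the series $\sum_{n\ge 0} e^{-2^n}2^{n+1}$ still converges. The main (mild) obstacle is simply choosing the dyadic scale correctly so that the geometric-type decay $e^{-2^n}$ beats the polynomial growth $2^{(n+1)\b}$ coming from the weight comparison; the choice $t_n = 1 - 2^n(1-r)$ makes this work automatically.
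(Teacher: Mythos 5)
Your proof is correct, but it takes a different route from the paper's. The paper disposes of $x=1$ by a crude direct comparison and then, for $x>1$, bounds $\int_0^{1-1/x}s^x\om(s)\,ds$ by observing the elementary pointwise inequality $s^{x-1}(1-s)^\gamma\le\bigl(\tfrac{\gamma}{x-1+\gamma}\bigr)^\gamma$, which converts the integrand into the kernel $\bigl(\tfrac{1-t}{1-s}\bigr)^\gamma\om(s)$ with $t=1-1/x$, so that Lemma~\ref{le:nec1} finishes the job in one line; the lower bound is left essentially implicit. You instead inline the dyadic mechanism that underlies the proof of Lemma~\ref{le:nec1}: decomposing $[0,1-1/x]$ at the scales $t_n=1-2^n(1-r)$, using the exponential majorant $s^x\le e^{-x(1-s)}\le e^{-2^n}$ on each block, and then invoking Lemma~\ref{le:condinte} directly, so the superexponential decay $e^{-2^n}$ beats the polynomial factor $2^{(n+1)\b}$ regardless of whether $\om\in\R$ or $\om\in\I$. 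Your version is slightly longer but self-contained modulo Lemma~\ref{le:condinte}, treats $\R$ and $\I$ in a unified way without tuning $\gamma>\b$, and has the merit of spelling out the lower bound (the cases $x\ge2$ and $x\in[1,2]$), which the paper glosses over; the paper's version is shorter precisely because it reuses Lemma~\ref{le:nec1}, whose proof already contains the dyadic argument you reproduce. The only point worth a remark is the endpoint block $n+1=N$, where $1-t_N=1$ rather than $2^N(1-r)$; since $2^N(1-r)>1$ the comparison $\int_{t_N}^1\om\le(1-r)^{-\b}\int_r^1\om\le 2^{N\b}\int_r^1\om$ still holds, so this is cosmetic, not a gap.
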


\begin{proof}
If $x=1$, then the assertion follows by the inequalities
    $$
    \int_0^1s\om(s)\,ds\le\int_0^1\om(s)\,ds\le\left(2+\frac{\int_0^\frac12\om(s)\,ds}{\int_0^\frac12s\om(s)\,ds}\right)
    \int_0^1s\om(s)\,ds.
    $$
For $x>1$ it suffices to prove
    \begin{equation*}
    \int_0^{1-\frac{1}{x}} s^{x}\om(s)\lesssim
    \int_{1-\frac{1}{x}}^1\om(s)\,ds.
    \end{equation*}
To see this, let $\gamma=\gamma(\om)>0$ be the constant in
Lemma~\ref{le:nec1}. A simple calculation shows that
    $$
    s^{x-1}(1-s)^\gamma\le\left(\frac{x-1}{x-1+\gamma}\right)^{x-1}\left(\frac{\gamma}{x-1+\gamma}\right)^\gamma
    \le\left(\frac{\gamma}{x-1+\gamma}\right)^\gamma
    $$
for all $s\in[0,1]$. Therefore Lemma~\ref{le:nec1}, with
$t=1-\frac{1}{x}$, yields
    $$
    \int_0^{1-\frac{1}{x}} s^{x}\om(s)\,ds
    \le\left(\frac{\gamma x}{x-1+\gamma}\right)^\gamma\int_0^{1-\frac{1}{x}}\frac{\om(s)}{x^\gamma(1-s)^\gamma}s\,ds\lesssim
    \int_{1-\frac{1}{x}}^1\om(s)\,ds.
    $$
This finishes the proof.
\end{proof}

The next lemma shows that a continuous radial weight $\om$ that
satisfies \eqref{eq:r2} is regular if and only if it is a
Bekoll\'e-Bonami weight. Moreover, Part (iii) quantifies in a
certain sense the self-improving integrability of radial weights.

\begin{lemma}\label{le:RAp}
\begin{itemize}
\item[{\rm(i)}] If $\om\in\R$, then for each $p_0>1$ there exists
$\eta=\eta(p_0,\omega)>-1$ such that $\frac{\om(z)}{(1-|z|)^\eta}$
belongs to $B_{p_0}(\eta)$.\index{Bekoll\'e-Bonami
weight}\index{$B_{p_0}(\eta)$}

\item[{\rm(ii)}] If $\om$ is a continuous radial weight such that
\eqref{eq:r2} is satisfied and $\frac{\om(z)}{(1-|z|)^\eta}$
belongs to $B_{p_0}(\eta)$ for some $p_0>0$ and $\eta>-1$, then
$\om\in\R$.\index{Bekoll\'e-Bonami weight}\index{$B_{p_0}(\eta)$}

\item[{\rm(iii)}] For each radial weight $\om$ and $0<\a<1$,
define\index{$\widetilde{\om}(r)$}
    $$
    \widetilde{\om}(r)=\left(\int_r^1\om(s)\,ds\right)^{-\a}\om(r),\quad
    0\le r<1.
    $$
Then $\widetilde{\om}$ is also a weight and
$\psi_{\widetilde{\om}}(r)=\frac1{1-\a}\psi_{{\om}}(r)$ for all
$0\le r<1$.
\end{itemize}
\end{lemma}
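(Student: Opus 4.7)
The three parts are essentially independent. Part (iii) is a short direct calculation. Setting $W(r)=\int_r^1\om(s)\,ds$, one has $W'=-\om$, so $\widetilde{\om}(r)=W(r)^{-\a}\om(r)=-\frac{d}{dr}[W(r)^{1-\a}/(1-\a)]$. Integrating from $r$ to $1$, and using $W(1^-)=0$ (valid since $\om$ is integrable), gives $\int_r^1\widetilde{\om}(s)\,ds=W(r)^{1-\a}/(1-\a)$, which is finite at $r=0$, confirming that $\widetilde{\om}$ is a weight; dividing by $\widetilde{\om}(r)$ then yields $\psi_{\widetilde{\om}}(r)=W(r)/((1-\a)\om(r))=\psi_\om(r)/(1-\a)$.

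Parts (i) and (ii) share one and the same computation. Writing $\nu(z)=\om(z)(1-|z|)^{-\eta}$ and using the identity $1+p_0'/p_0=p_0'$, the $B_{p_0}(\eta)$ condition for $\nu$ reduces to
\[
\left(\int_{S(I)}\om\,dA\right)\left(\int_{S(I)}\om^{-p_0'/p_0}(1-|z|)^{\eta p_0'}\,dA\right)^{p_0/p_0'}\lesssim |I|^{(2+\eta)p_0}.
\]
Under local smoothness \eqref{eq:r2}, which is available in both parts, $\om(r)\asymp\om(1-|I|)$ on $[1-|I|,1]$. Pulling this essentially constant value out of each integral, and assuming $\eta p_0'>-1$ so that $\int_{1-|I|}^1(1-r)^{\eta p_0'}\,dr\asymp|I|^{\eta p_0'+1}$, the first factor is $\asymp|I|\,\om(1-|I|)\,\psi_\om(1-|I|)$ and the second is $\asymp|I|^{\eta p_0'+2}\om(1-|I|)^{-p_0'/p_0}$. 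Using $p_0/p_0'=p_0-1$, the whole left-hand side then simplifies to $\psi_\om(1-|I|)\,|I|^{(2+\eta)p_0-1}$.

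For (i), regularity provides $\psi_\om(1-|I|)\asymp|I|$, so the simplified LHS matches $|I|^{(2+\eta)p_0}$ on the nose; one picks $\eta$ in the nonempty interval $(-1/p_0',\,1/C_2)$, where $C_2$ comes from \eqref{64}, to guarantee both the integrability of $(1-r)^{\eta p_0'}$ and the fact that $\nu$ is itself a weight. For (ii), the Bekoll\'e--Bonami hypothesis applied to the simplified LHS gives $\psi_\om(1-|I|)\lesssim|I|$, i.e.\ $\psi_\om(r)\lesssim 1-r$ near $r=1$ (and trivially elsewhere); combined with the reverse inequality $\psi_\om(r)\gtrsim 1-r$ that \eqref{eq:r2} delivers automatically via \eqref{eq:r3}, this gives $\om\in\R$.

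The only real obstacle is the careful tracking of the exponents of $|I|$ and of $\om(1-|I|)$: the numerical identity $2+2p_0/p_0'=2p_0$ is what makes the matching in (i) work exactly, and the single residual power of $\psi_\om(1-|I|)$ that survives in (ii) is precisely what is needed to recover the distortion estimate characterizing $\R$.
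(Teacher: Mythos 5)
Part (iii) of your argument is correct and is the same integration-by-parts computation as the paper's, and your part (ii) is essentially salvageable: for the lower bound you only need $\om(t)\asymp\om(1-|I|)$ on the inner half $[1-|I|,1-|I|/2]$ of the Carleson box, which \eqref{eq:r2} does give, and then the Bekoll\'e--Bonami bound forces $\psi_\om(1-|I|)\lesssim|I|$ exactly as you say. The problem is part (i). Your key step is to pull $\om(1-|I|)^{-p'_0/p_0}$ out of $\int_{1-|I|}^1\om(t)^{-p'_0/p_0}(1-t)^{\eta p'_0}\,dt$ on the grounds that \eqref{eq:r2} makes $\om$ ``essentially constant'' on $[1-|I|,1]$. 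That is false: \eqref{eq:r2} is a local (hyperbolic) smoothness condition, and $[1-|I|,1)$ has infinite hyperbolic diameter. Already for $\om(r)=(1-r)^{10}\in\R$ one has $\om(t)\to0$ as $t\to1$, so $\om(t)^{-p'_0/p_0}$ blows up near $1$; with $p_0=2$ and your choice $\eta=0\in(-1/p'_0,\,1/C_2)$ the integral $\int_{1-|I|}^1(1-t)^{-10}\,dt$ is infinite, so the $B_{p_0}(\eta)$ condition fails. Hence both your estimate of the second factor and your claimed admissible range $\eta\in(-1/p'_0,1/C_2)$ are wrong: the integrability constraint is not $\eta p'_0>-1$ but $\eta$ large enough relative to the decay rate of $\om$.

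What the paper does instead is decompose $[1-|I|,1)$ into the hyperbolically-equal pieces $[s_n,s_{n+1}]$ with $s_{n+1}=s_n+s(1-s_n)$, apply \eqref{eq:r2} on each piece to get $\om(t)\asymp\om(s_n)$ there, and then sum the resulting series $\sum_n(1-s)^{n(p'_0\eta+1)}C^{(n+1)p'_0/p_0}$; this converges precisely when $\eta>\frac{\log C}{p_0\log\frac1{1-s}}$, where $C=C(s,\om)$ is the constant in \eqref{eq:r2}. That threshold (equivalently, a lower bound on $\eta$ in terms of the exponent $\b$ from \eqref{eq:Integral1}, roughly $\eta>\frac{\b-1}{p_0}-\frac1{p'_0}$) is the genuinely nontrivial content of part (i), and it is what your ``pull out the constant'' step silently erases. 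Your bookkeeping of the exponents of $|I|$ and the identity $1+p'_0/p_0=p'_0$ are fine; it is only this one estimate, and the resulting choice of $\eta$, that needs to be redone along the lines above.
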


\begin{proof}
(i) Since each regular weight is radial, it suffices to show that
there exists a constant $C=C(p,\eta,\omega)>0$ such that
    \begin{equation}\label{eq:rb1}
    \left (\int_{1-|I|}^1\om(t)\,dt\right )\left
    (\int_{1-|I|}^1\om(t)^{\frac{-p'_0}{p_0}}(1-t)^{p'_0\eta}\,dt\right
    )^{\frac{p_0}{p'_0}}\le C|I|^{(1+\eta)p_0}
    \end{equation}
for every interval $I\subset \T$. To prove~\eqref{eq:rb1}, set
$s_0=1-|I|$ and $s_{n+1}=s_n+s(1-s_n)$, where $s\in (0,1)$ is
fixed. Take $p_0$ and $\eta$ such that $\eta>\frac{\log
C}{p_0\log\frac{1}{1-s}}>0$, where the constant $C=C(s,\omega)>1$
is from~\eqref{eq:r2}. Then \eqref{eq:r2} yields
    \begin{equation*}
    \begin{split}
    \int_{1-|I|}^1\om(t)^{\frac{-p'_0}{p_0}}(1-t)^{p'_0\eta}\,dt
    &\le\sum_{n=0}^\infty(1-s_n)^{p'_0\eta}\int_{s_n}^{s_{n+1}}\om(t)^{\frac{-p'_0}{p_0}}\,dt\\
    &\le C^{\frac{p'_0}{p_0}}\sum_{n=0}^\infty (1-s_n)^{p'_0\eta+1}\om(s_{n})^{\frac{-p'_0}{p_0}}\\
    &\le|I|^{p'_0\eta+1}\om(1-|I|)^{\frac{-p'_0}{p_0}}\\
    &\quad\cdot\sum_{n=0}^\infty (1-s)^{n(p'_0\eta+1)}C^{(n+1)\frac{p'_0}{p_0}}\\
    &=C(p_0,\eta,s,\omega)|I|^{p'_0\eta+1}\om(1-|I|)^{\frac{-p'_0}{p_0}},
    \end{split}
    \end{equation*}
which together with \eqref{eq:r1} gives \eqref{eq:rb1}.

(ii) The asymptotic inequality $\p_\om(r)\lesssim(1-r)$ follows by
\eqref{eq:rb1} and further appropriately modifying the argument in
the proof of (i). Since the assumption \eqref{eq:r2} gives
$\p_\om(r)\gtrsim(1-r)$, we deduce $\om\in\R$.

(iii) If $0\le r<t<1$, then an integration by parts yields
    \begin{equation*}
    \begin{split}
    \int_r^t\frac{\om(s)}{\left(\int_{s}^1\om(v)\,dv\right)^{\a}}\,ds
    &=\left(\int_{r}^1\om(v)\,dv\right)^{1-\a}-\left(\int_{t}^1\om(v)\,dv\right)^{1-\a}\\
    &\quad+\a\int_r^t
    \frac{\om(s)}{\left(\int_{s}^1\om(v)\,dv\right)^{\a}}\,ds,
    \end{split}
    \end{equation*}
from which the assertion follows by letting $t\to1^-$.
\end{proof}

We note that a routine calculation based on \eqref{eq:rb1} shows
that $\frac{v_\a(z)}{(1-|z|)^\eta}$,\index{$v_\a(r)$}
$1<\a<\infty$, is not a $B_{p_0}(\eta)$ weight for any $p_0>1$ and
$\eta>-1$. Later on we will see that this is actually true for
each $\om\in\I$.\index{Bekoll\'e-Bonami
weight}\index{$B_{p_0}(\eta)$}

The next lemma shows that we can find a radial weight $\om$ such
that the growth of the quotient $\psi_\om(r)/(1-r)$ is of any
given admissible rate. Here one should observe that the
assumptions on the auxiliary function $h$ are not restrictions
because they are necessary conditions for \eqref{57} to hold, as
$\om_\lambda$ is a weight and thus satisfies~\eqref{63}.

\begin{lemma}\label{Lemma:Distortion}
Let $\om$ be a radial weight. Then
    \begin{equation}\label{63}
    \int_0^1\frac{dr}{\psi_\om(r)}=\infty.
    \end{equation}
Moreover, for each function $\lambda:[0,1)\to(0,\infty)$ such that
    $$
    h(r)=\int_0^r\frac{ds}{\lambda(s)(1-s)}
    $$
exists for all $0<r<1$ and $\lim_{r\to1^-}h(r)=\infty$, there
exists a radial weight $\om_\lambda$ such that
    \begin{equation}\label{57}
    \frac{\psi_{\om_\lambda}(r)}{(1-r)}=\lambda(r),\quad 0<r<1.
    \end{equation}
\end{lemma}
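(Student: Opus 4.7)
The plan is to treat both assertions through the single substitution $W(r)=\int_r^1\omega(s)\,ds$, which converts the first-order ODE for $\psi_\omega$ into a linear equation solvable in closed form.

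For \eqref{63}, I would set $W(r)=\int_r^1\omega(s)\,ds$ and observe that $W$ is absolutely continuous with $W'(r)=-\omega(r)$ almost everywhere, while $\psi_\omega(r)=W(r)/\omega(r)=-W(r)/W'(r)$. Therefore
\begin{equation*}
\int_0^t\frac{dr}{\psi_\omega(r)}=-\int_0^t\frac{W'(r)}{W(r)}\,dr=\log\frac{W(0)}{W(t)},
\end{equation*}
and since $\omega$ is a weight (hence $W(t)\to 0$ as $t\to 1^-$), the right-hand side tends to $\infty$. This establishes \eqref{63}; it also shows immediately why the hypothesis $h(r)\to\infty$ in the second part is necessary.

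For the construction of $\omega_\lambda$, I would reverse-engineer the ODE $\psi_{\omega_\lambda}(r)/(1-r)=\lambda(r)$. Writing $W_\lambda(r)=\int_r^1\omega_\lambda(s)\,ds$, the desired equation becomes $W_\lambda'(r)/W_\lambda(r)=-1/(\lambda(r)(1-r))=-h'(r)$, suggesting the natural choice $W_\lambda(r)=e^{-h(r)}$ (which satisfies $W_\lambda(0)=1$ and $W_\lambda(1^-)=0$ by the hypothesis on $h$). Differentiating, I take
\begin{equation*}
\omega_\lambda(r)=h'(r)\,e^{-h(r)}=\frac{e^{-h(r)}}{\lambda(r)(1-r)},\quad 0\le r<1.
\end{equation*}

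The verification is then a direct check. First, $\omega_\lambda$ is a weight: the integral $\int_0^1\omega_\lambda(r)\,dr=\int_0^1 h'(r)e^{-h(r)}\,dr=1-\lim_{r\to 1^-}e^{-h(r)}=1<\infty$, and a factor of $r$ only improves integrability near the boundary. Second, $\int_r^1\omega_\lambda(s)\,ds=e^{-h(r)}$ by the fundamental theorem of calculus, so
\begin{equation*}
\psi_{\omega_\lambda}(r)=\frac{e^{-h(r)}}{\omega_\lambda(r)}=\frac{1}{h'(r)}=\lambda(r)(1-r),
\end{equation*}
which is exactly \eqref{57}. There is essentially no obstacle here beyond spotting the change of variables; the only mild point to keep track of is the regularity of $h$, which is guaranteed by the hypothesis that $h(r)$ exists for every $r\in(0,1)$ so that $h'(r)=1/(\lambda(r)(1-r))$ makes sense pointwise and the above computations are justified.
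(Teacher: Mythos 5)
Your proof is correct and follows essentially the same route as the paper: the first assertion is the same logarithmic-derivative computation $\int_0^t dr/\psi_\om(r)=\log\bigl(W(0)/W(t)\bigr)$ with $W(r)=\int_r^1\om(s)\,ds$, and your weight $\om_\lambda(r)=e^{-h(r)}/(\lambda(r)(1-r))$ is exactly the paper's formula with the free constant $C$ set to $1$. Your explicit verification that $\int_r^1\om_\lambda(s)\,ds=e^{-h(r)}$ (using $h(r)\to\infty$) is a welcome addition the paper leaves implicit.
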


\begin{proof}
Clearly,
    \begin{equation*}
    \begin{split}
    \int_0^1\frac{dr}{\psi_\om(r)}&=\int_0^1\frac{\om(r)}{\int_{r}^1\om(s)\,ds}\,dr\\
    &=\lim_{r\to1^-}\log\frac{1}{\int_r^1\om(s)\,ds}
    -\log\frac{1}{\int_0^1\om(s)\,ds}=\infty
    \end{split}
    \end{equation*}
for all radial weights $\om$. Moreover, if $\lambda$ satisfies the
hypothesis, then each radial weight
    $$
    \omega_\lambda(r)=C\frac{\exp\left(-\int_0^r\frac{ds}{\lambda(s)(1-s)}\right)}{\lambda(r)(1-r)},\quad
    C>0,
    $$
solves the integral equation~\eqref{57}.
\end{proof}

Another way to interpret the last assertion in
Lemma~\ref{Lemma:Distortion} is to notice that
    $$
    \om(r)=\left(\int_0^1\om(s)\,ds\right)\cdot\frac{\exp\left(-\int_0^r\frac{ds}{\psi_\om(s)}\right)}{\psi_\om(r)},
    $$
provided $\om$ is a radial weight.

For each radial weight $\om$, we define its \emph{associated
weight}\index{associated weight}
$\omega^\star$\index{$\omega^\star(z)$} by
    $$
    \omega^\star(z)=\int_{|z|}^1\omega(s)\log\frac{s}{|z|}s\,ds,\quad z\in\D\setminus\{0\}.
    $$
Even though $\om\in\I$ might have a bad oscillatory behavior, as
we saw in Section~\ref{Sec:DefIUR}, its associated weight
$\om^\star$ is regular by Lemma~\ref{le:sc1} below. We will also
consider the non-tangential regions\index{non-tangential region}
    \begin{equation}\label{eq:gammadeu}\index{$\Gamma(u)$}
    \Gamma(u)=\left\{z\in \D:\,|\t-\arg
    z|<\frac12\left(1-\frac{|z|}{r}\right)\right\},\quad
    u=re^{i\theta}\in\overline{\D}\setminus\{0\},
    \end{equation}
and the tents\index{tent}
    \begin{equation}\label{eq:Tdez}
    \begin{split}
    T(z)&=\left\{u\in\D:\,z\in\Gamma(u)\right\},\quad
    z\in\D,\index{$T(z)$}
    \end{split}
    \end{equation}
which are closely interrelated. Figure~\ref{fig:2} illustrates how
the sets $\Gamma(z)$ and $T(z)$ change when $z$ varies.

\begin{figure}[htp]
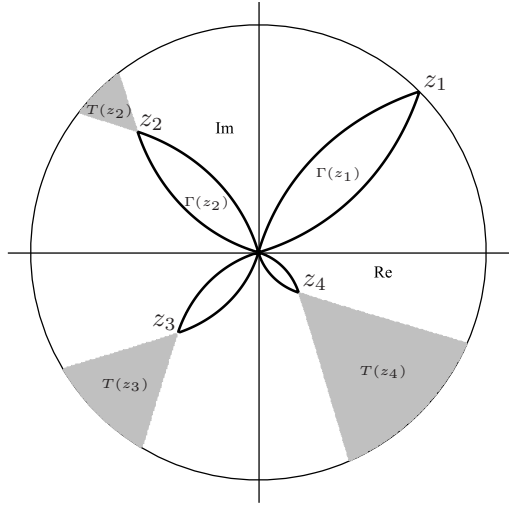

\begin{center}
  \begin{lpic}[]{j3(0.35,0.35)}
    \lbl[t]{165,165;$z_1$}
    \lbl[t]{57,150;$z_2$}
    \lbl[t]{62,74;$z_3$}
    \lbl[t]{119,89;$z_4$}
   \lbl[t]{128,130;${\scriptscriptstyle \Gamma(z_1)}$}
    \lbl[t]{78,119;${\scriptscriptstyle \Gamma(z_2)}$}
    \lbl[t]{41,154;${\scriptscriptstyle T(z_2)}$}
    \lbl[t]{47,50;${\scriptscriptstyle T(z_3)}$}
    \lbl[t]{145,55;${\scriptscriptstyle T(z_4)}$}
  \end{lpic}
  \caption{Let $z_j=(1-\frac{j-1}{4})e^{i(\frac\pi4+(j-1)\frac\pi2)}$,
$j=1,2,3,4$. Then the point $z_j$ as well as the sets
$\Gamma(z_j)$ and $T(z_j)$ belong to the $j$:th quadrant of $\D$.}
  \label{fig:2}
\end{center}
\end{figure}

Lemma~\ref{le:cuadrado-tienda} shows that
$\omega(S(z))\asymp\omega(T(z))\asymp\omega^\star(z)$, as
$|z|\to1^-$, provided $\om\in\I\cup\R$. Here, as usual,
$\om(E)=\int_E\om(z)\,dA(z)$\index{$\om(E)$} for each measurable
set $E\subset\D$.

\begin{lemma}\label{le:cuadrado-tienda}\index{$\omega^\star(z)$}
\begin{itemize}
\item[{\rm(i)}] If $\om$ is a radial weight, then
    \begin{equation}\label{2}
    \om(T(z))\asymp\omega^\star(z),\quad |z|\ge\frac12.
    \end{equation}
\item[{\rm(ii)}] If $\om\in\I\cup\R$, then
    \begin{equation}\label{3}
    \omega\left(T(z)\right)\asymp\omega\left(S(z)\right),\quad
    z\in\D.
    \end{equation}
\end{itemize}
\end{lemma}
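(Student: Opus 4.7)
The plan is to carry out the integration in polar coordinates for both $T(z)$ and $S(z)$, reducing everything to one-dimensional integrals of the radial profile of $\om$, and then compare these with $\om^\star(z)$ and with each other using elementary inequalities for $\log$ and Lemma~\ref{le:condinte} respectively.

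For part (i), writing $z=se^{i\phi}$, the definition of $\Gamma(u)$ gives that $T(z)$ consists of those $u=re^{i\theta}$ with $s\le r<1$ and $|\theta-\phi|<\frac12(1-s/r)$. Using Fubini and the radial symmetry of $\om$, I would compute
$$
\om(T(z))=\frac{1}{\pi}\int_s^1 r\,\om(r)\!\left(1-\frac{s}{r}\right)dr=\frac{1}{\pi}\int_s^1\om(r)(r-s)\,dr.
$$
Next I would invoke the elementary bounds $r-s\le r\log(r/s)\le (r-s)/s$ valid for $0<s\le r\le 1$ (the first coming from $\log(1+x)\ge x/(1+x)$, the second from $\log(1+x)\le x$). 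For $|z|=s\ge 1/2$ this yields $r-s\le r\log(r/s)\le 2(r-s)$, which integrated against $\om(r)$ gives \eqref{2} at once.

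For part (ii), a parallel polar calculation gives $\om(S(z))=\frac{1-s}{\pi}\int_s^1 r\,\om(r)\,dr$. For $|z|\le 1/2$ both $\om(T(z))$ and $\om(S(z))$ are pinched between two positive constants depending only on $\om$, hence are comparable. For $|z|=s\ge 1/2$ one reduces to showing
$$
\int_s^1\om(r)(r-s)\,dr\asymp (1-s)\int_s^1\om(r)\,dr.
$$
The upper estimate is immediate from $r-s\le 1-s$. For the lower estimate I would split the integral at $t=(1+s)/2$ and observe
$$
\int_s^1\om(r)(r-s)\,dr\;\ge\;\int_t^1\om(r)(r-s)\,dr\;\ge\;\frac{1-s}{2}\int_t^1\om(r)\,dr.
$$
Since $(1-s)/(1-t)=2$, Lemma~\ref{le:condinte}(i) (applied with the fixed $\a=\a(\om)$) for $\om\in\R$, and Lemma~\ref{le:condinte}(ii) (applied with any fixed $\b>0$) for $\om\in\I$, both give $\int_t^1\om\gtrsim \int_s^1\om$, which finishes part (ii).

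The proof is essentially mechanical once the polar reductions are made; the only substantive step is the lower estimate in part (ii), and the \emph{only} place where the hypothesis $\om\in\I\cup\R$ enters is in comparing $\int_t^1\om$ with $\int_s^1\om$ when $1-t=(1-s)/2$. This is precisely the uniform doubling-type behaviour packaged in Lemma~\ref{le:condinte}, so there is no real obstacle beyond arranging the arithmetic cleanly and keeping track of the small-$|z|$ case separately.
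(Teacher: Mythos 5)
Your proof is correct, and your part (i) is exactly what the paper has in mind: the polar reduction $\pi\,\om(T(z))=\int_s^1\om(r)(r-s)\,dr$ together with the pinching $r-s\le r\log(r/s)\le (r-s)/s\le 2(r-s)$ for $s\ge\frac12$ is precisely why the paper declares \eqref{2} ``clear by the definitions''.

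For part (ii) you take a genuinely different route. The paper deduces \eqref{3} from a Bernoulli--l'H\^opital limit comparison: both $\int_s^1\om(r)(r-s)\,dr$ and $(1-s)\int_s^1\om(r)r\,dr$ tend to $0$ as $s=|z|\to1^-$, and the quotient of their $s$-derivatives, namely $\int_s^1\om(r)\,dr\big/\bigl(\int_s^1\om(r)r\,dr+(1-s)s\om(s)\bigr)$, is pinched between positive constants for $\om\in\R$ by $\psi_\om(r)\asymp(1-r)$ and tends to $1$ for $\om\in\I$ by \eqref{eq:I}; the range $|z|\le r_0$ is then handled by continuity and positivity. You instead prove the two-sided estimate directly and uniformly: the upper bound from $r-s\le1-s$, and the lower bound by discarding $[s,\tfrac{1+s}{2}]$ and invoking the doubling behaviour of $r\mapsto\int_r^1\om(t)\,dt$ packaged in Lemma~\ref{le:condinte}. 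Both arguments ultimately rest on the same feature of $\I\cup\R$ --- the tail integral decays no faster than a power of $1-r$, i.e.\ $\psi_\om(r)\gtrsim1-r$ --- but yours produces explicit constants on all of $\tfrac12\le|z|<1$ in one stroke, with Lemma~\ref{le:condinte} as the only non-arithmetic input, whereas the paper's version is a limit statement that must be supplemented on compact subsets. Either argument is perfectly adequate.
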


\begin{proof}
The first assertion \eqref{2} is clear by the definitions of the
region~$T(z)$ and the associated weight $\om^\star$. Moreover, a
routine application of Bernouilli-l'H\^{o}pital theorem,
exploiting the relation $\psi_{\om}(r)\asymp (1-r)$ for $\om\in\R$
and the definition \eqref{eq:I} for $\om\in\I$, shows \eqref{3}.
\end{proof}

For $\alpha\in\mathbb{R}$ and a radial weight $\omega$, set
$\omega_\alpha(z)=(1-|z|)^\alpha\om(z)$\index{$\omega_\alpha(z)$}.
We will see next that $\om^\star_\a(z)=(1-|z|)^\a\om^\star(z)$ is
regular whenever $\a>-2$.

\begin{lemma}\label{le:sc1}\index{$\omega^\star(z)$}
If $0<\alpha<\infty$ and $\om\in\I\cup\R$, then
$\omega^\star_{\alpha-2}\in\R$ and
$\left(\omega^\star_{\alpha-2}\right)^\star(z)\asymp\omega^\star_{\alpha}(z)$
for all $|z|\ge\frac12$.
\end{lemma}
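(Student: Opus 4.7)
The plan is to reduce everything to the elementary asymptotic
\[
\omega^\star(z) \asymp (1-|z|)\int_{|z|}^1 \omega(s)\,ds, \qquad |z|\ge \tfrac12,
\]
which holds for every $\omega\in\I\cup\R$. The upper bound is immediate from the inequality $\log(s/|z|)\le (s-|z|)/|z|\le 2(1-|z|)$ on the integration range. For the lower bound I would restrict the integral to $s\in[(1+|z|)/2,1]$, where $\log(s/|z|)\gtrsim 1-|z|$, and then observe that by Lemma~\ref{le:condinte} (part (i) if $\omega\in\R$, part (ii) with $\beta=1$ if $\omega\in\I$) one has
\[
\int_{(1+|z|)/2}^{1}\omega(s)\,ds \asymp \int_{|z|}^{1}\omega(s)\,ds.
\]

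The next step is to compute the integral of $\omega^\star_{\alpha-2}$. Combining the preceding asymptotic with Fubini's theorem (or, equivalently, an integration by parts that hits the factor $(1-s)^{\alpha-1}$), I expect
\[
\int_r^1 \omega^\star_{\alpha-2}(s)\,ds \;\asymp\; \int_r^1 (1-s)^{\alpha-1}\!\!\int_s^1\omega(u)\,du\,ds
\;=\; \frac{(1-r)^\alpha}{\alpha}\int_r^1\!\omega\;-\;\frac{1}{\alpha}\int_r^1\omega(u)(1-u)^\alpha\,du.
\]
The upper bound $\lesssim (1-r)^\alpha\int_r^1\omega$ is trivial. For the matching lower bound I would avoid manipulating the difference directly and instead bound the double integral from below by restricting $u\in[r,(1+r)/2]$: on this interval $\int_u^1\omega\ge \int_{(1+r)/2}^1\omega\asymp\int_r^1\omega$ (again by Lemma~\ref{le:condinte}), while $\int_r^{(1+r)/2}(1-u)^{\alpha-1}\,du = \alpha^{-1}(1-r)^\alpha(1-2^{-\alpha})$. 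This yields
\[
\int_r^1 \omega^\star_{\alpha-2}(s)\,ds \;\asymp\; (1-r)^\alpha\int_r^1\omega(s)\,ds.
\]

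The two displays together give $\psi_{\omega^\star_{\alpha-2}}(r)\asymp 1-r$ for $r\ge 1/2$; continuity and positivity handle $r\in[0,1/2]$ trivially, establishing $\omega^\star_{\alpha-2}\in\R$. For the second assertion I would simply iterate: since $\omega^\star_{\alpha-2}\in\R\subset\I\cup\R$, the preliminary asymptotic applies to it, giving
\[
(\omega^\star_{\alpha-2})^\star(z) \asymp (1-|z|)\!\int_{|z|}^1 \omega^\star_{\alpha-2}(s)\,ds \asymp (1-|z|)^{\alpha+1}\!\int_{|z|}^1\!\omega(s)\,ds \asymp \omega^\star_\alpha(z),
\]
for $|z|\ge 1/2$, where the last step uses the preliminary asymptotic applied to $\omega$.

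The only delicate point, and the one I expect to be the main obstacle, is the matching lower bound in the second paragraph: writing the integral of $\omega^\star_{\alpha-2}$ directly as a difference loses information because the two terms have the same order, so one must argue via the double integral and exploit the uniform control $\int_{(1+r)/2}^1\omega\asymp\int_r^1\omega$ that Lemma~\ref{le:condinte} makes available for both classes $\I$ and $\R$ simultaneously. Once that uniform substitute for monotonicity is in hand, the rest of the proof is a clean two-step iteration of the same preliminary estimate.
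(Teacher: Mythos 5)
Your proof is correct, and it rests on the same two pillars as the paper's argument: the comparison $\omega^\star(z)\asymp(1-|z|)\int_{|z|}^1\omega(s)\,ds$ for $|z|\ge\frac12$ (this is Lemma~\ref{le:cuadrado-tienda} combined with the formula for $\om(S(z))$, which you rederive from scratch) and the quasi-monotonicity of $\widehat{\om}(r)=\int_r^1\om(s)\,ds$ supplied by Lemma~\ref{le:condinte}. The organizational difference lies in how the regularity of $\omega^\star_{\alpha-2}$ is certified. The paper computes only the upper distortion bound \eqref{eq:r1} explicitly, by bounding the inner integral $\int_s^1\om(t)t\,dt$ by $\int_r^1\om(t)t\,dt$, and obtains the matching lower bound indirectly: it verifies the local smoothness \eqref{eq:r2} ``with standard arguments'' and then invokes the characterization of $\R$ by the pair \eqref{eq:r2} and \eqref{eq:r1}. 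You instead establish the two-sided estimate \eqref{eq:r0} directly, producing the lower bound by restricting the outer integration to $[r,(1+r)/2]$ and using $\widehat{\om}((1+r)/2)\asymp\widehat{\om}(r)$; this restriction trick is precisely the substitute for monotonicity that the paper's appeal to ``standard arguments'' amounts to, so your version is, if anything, more self-contained. Your observation that the Fubini identity cannot be lower-bounded termwise (the two terms of the difference are of the same order) is accurate, and the detour through the double integral is the right fix. For the second assertion the paper uses the pointwise relation $\nu^\star(r)\asymp(1-r)^2\nu(r)$, valid for regular $\nu$ and recorded in \eqref{22}, applied to $\nu=\omega^\star_{\alpha-2}$, whereas you iterate the integrated form $\nu^\star(r)\asymp(1-r)\int_r^1\nu(s)\,ds$ together with your computed value of $\int_r^1\omega^\star_{\alpha-2}(s)\,ds$; once $\omega^\star_{\alpha-2}\in\R$ is known these routes are equivalent. (Your dismissal of $r\in[0,\frac12]$ as trivial glosses over the blow-up of $\om^\star$ at the origin, but the paper's proof is silent on the same point and the lemma is only ever used near the boundary, so this is not a gap relative to the paper's own standard.)
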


\begin{proof}
It suffices to show that $\omega_{\a-2}^\star\in\R$, since for
each $\omega\in\R$ we have
    \begin{equation}\label{22}
    \omega^\star(r)\asymp\omega(S(r))=\frac{(1-r)}{\pi}\int_r^1\omega(s)s\,ds\asymp(1-r)^2\omega(r),\quad
    r\ge\frac12,
    \end{equation}
by Lemma~\ref{le:cuadrado-tienda} and $\psi_\omega(r)\asymp(1-r)$.

To prove $\omega_{\a-2}^\star\in\R$, note first that the relation
$\omega^\star(r)\asymp(1-r)\int_r^1\omega(s)s\,ds$ and
Lemma~\ref{le:condinte} with standard arguments give \eqref{eq:r2}
for $\omega_{\alpha-2}^\star$. Further, since $\alpha>0$, we
deduce
    \begin{equation*}\begin{split}
    \frac{\int_r^1\omega^\star_{\alpha-2}(s)\,ds}{\omega^\star_{\alpha-2}(r)}&
    \asymp \frac{\int_r^1(1-s)^{\alpha-1}\left(\int_s^1\om(t)t\,dt\right)ds}{(1-r)^{\alpha-1}\int_r^1\om(t)t\,dt}\le \frac{\int_r^1
    (1-s)^{\alpha-1}\,ds}{(1-r)^{\alpha-1}}=\frac{(1-r)}{\alpha},
    \end{split}\end{equation*}
and so $\omega^\star_{\alpha-2}$ satisfies also \eqref{eq:r1}.
Thus $\omega_{\a-2}^\star\in\R$.
\end{proof}

The last lemma is related to invariant weights.\index{${\mathcal
Inv}$}\index{invariant weight}

\begin{lemma}\label{Lemma:InvariantWeights}
If $\om\in{\mathcal Inv}$, then there exists a function
$C:\D\to[1,\infty)$ such that
    \begin{equation}\label{Eq:InvariantWeightsOldDefinition}
    \om(u)\le C(z)\om(\vp_u(z)),\quad u,z\in\D,
    \end{equation}
and
    \begin{equation}\label{Eq:InvariantWeightsOldDefinitionIntegral}
    \int_\D\log C(z)\,dA(z)<\infty.
    \end{equation}

Conversely, if $\om$ is a weight satisfying
\eqref{Eq:InvariantWeightsOldDefinition} and the function $C$ is
uniformly bounded in compact subsets of $\D$, then
$\om\in{\mathcal Inv}$.
\end{lemma}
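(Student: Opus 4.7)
For the forward direction, suppose $\om\in{\mathcal Inv}$, and fix $r_0\in(0,1)$ together with $C_0\ge 1$ such that $\om(a)\le C_0\om(b)$ whenever $\varrho(a,b)<r_0$, equivalently whenever $\varrho_h(a,b)<R_0:=\frac12\log\frac{1+r_0}{1-r_0}$. For $u,z\in\D$, the involutivity $\vp_u\circ\vp_u=\mathrm{id}$ yields $\varrho(u,\vp_u(z))=|z|$, so the hyperbolic distance from $u$ to $\vp_u(z)$ is $L(z):=\frac12\log\frac{1+|z|}{1-|z|}$. I would connect $u$ to $\vp_u(z)$ by $N\le\lceil L(z)/R_0\rceil+1$ consecutive points on the hyperbolic geodesic joining them, each consecutive pair within hyperbolic distance $R_0$. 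Iterating the invariance inequality along this chain then yields
\begin{equation*}
\om(u)\le C_0^{N}\,\om(\vp_u(z)),
\end{equation*}
so we may set $C(z):=C_0^{\lceil L(z)/R_0\rceil+1}\in[1,\infty)$, which satisfies \eqref{Eq:InvariantWeightsOldDefinition}. Since $L(z)\lesssim 1+\log\frac{1}{1-|z|}$, we get $\log C(z)\lesssim 1+\log\frac{1}{1-|z|}$, and hence
\begin{equation*}
\int_\D\log C(z)\,dA(z)\lesssim \int_0^1 r\left(1+\log\tfrac{1}{1-r}\right)dr<\infty,
\end{equation*}
which gives \eqref{Eq:InvariantWeightsOldDefinitionIntegral}.

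For the converse, assume \eqref{Eq:InvariantWeightsOldDefinition} holds with $C$ bounded on compact subsets of $\D$, and fix $r\in(0,1)$. Given $u,w\in\D$ with $\varrho(u,w)<r$, set $z:=\vp_u(w)$; then $w=\vp_u(z)$ and $|z|=\varrho(u,w)<r$, so the hypothesis gives $\om(u)\le C(z)\om(w)\le M_r\om(w)$, where $M_r:=\sup_{|\z|\le r}C(\z)<\infty$ by the compactness hypothesis. Swapping the roles of $u$ and $w$ and using $|\vp_w(u)|=\varrho(w,u)<r$ yields the reverse inequality $\om(w)\le M_r\om(u)$. Hence $\om(u)\asymp\om(w)$ on $\Delta(u,r)$, so $\om\in{\mathcal Inv}$.

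The main obstacle is obtaining the integrability \eqref{Eq:InvariantWeightsOldDefinitionIntegral}, which forces the chain in the forward direction to be counted in the hyperbolic metric rather than the pseudohyperbolic one. A pseudohyperbolic chaining would need of the order of $1/(1-|z|)$ steps as $|z|\to 1^-$, producing $\log C(z)\gtrsim 1/(1-|z|)$, which is not in $L^1(\D)$. The hyperbolic metric is precisely tuned so that the distance from $u$ to $\vp_u(z)$ grows only logarithmically in $1/(1-|z|)$, which is exactly what makes $\log C(z)$ integrable against area measure on $\D$.
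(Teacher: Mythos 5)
Your proof is correct and follows essentially the same route as the paper: chaining the invariance inequality along the hyperbolic geodesic from $u$ to $\vp_u(z)$, whose length $\frac12\log\frac{1+|z|}{1-|z|}$ grows only logarithmically in $\frac{1}{1-|z|}$, so that $\log C(z)$ is area-integrable; the converse via $z=\vp_u(w)$ and boundedness of $C$ on compacts is also the paper's argument. Your closing remark on why the hyperbolic (rather than pseudohyperbolic) count is essential matches the mechanism the paper exploits.
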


\begin{proof}
Let first $\om\in{\mathcal Inv}$. Then there exists a constant
$C\ge1$ such that
    \begin{equation}\label{111}
    C^{-1}\om(a)\le\om(z)\le C\om(a)
    \end{equation}
for all $z$ in the hyperbolic disc $\Delta_h(a,1)$. For each
$z,u\in\D$, the hyperbolic distance between $u$ and $\vp_u(z)$ is
    $$
    \varrho_h(u,\vp_u(z))=\frac12\log\frac{1+|z|}{1-|z|}.
    $$
By the additivity of the hyperbolic distance on the geodesic
joining $u$ and $\vp_u(z)$, and \eqref{111} we deduce
    $$
    \om(u)\le C^{E(\varrho_h(u,\vp_u(z)))+1}\om(\vp_u(z))\le C\left(\frac{1+|z|}{1-|z|}\right)^{\frac{\log
    C}{2}}\om(\vp_u(z)),
    $$
where $E(x)$ is the integer such that $E(x)\le x<E(x)+1$. It
follows that \eqref{Eq:InvariantWeightsOldDefinition} and
\eqref{Eq:InvariantWeightsOldDefinitionIntegral} are satisfied.

Conversely, let $\om$ be a weight satisfying
\eqref{Eq:InvariantWeightsOldDefinition} such that the function
$C$ is uniformly bounded in compact subsets of $\D$. Then, for
each $r\in(0,1)$, there exists a constant $C=C(r)>0$ such that
$\om(u)\le C(r)\om(z)$ whenever $|\vp_u(z)|<r$. Thus
$\om\in{\mathcal Inv}$.
\end{proof}

Recently, Aleman and Constantin~\cite{AlCo} studied the weighted
Bergman spaces $A^p_\om$ induced by the differentiable weights
$\om$ that satisfy
    \begin{equation}\label{Pesos:AlemanConstantin}
    |\nabla\om(z)|(1-|z|^2)\le C\om(z),\quad z\in\D,
    \end{equation}
for some $C=C(\om)>0$. By using \cite[Proposition~3.1]{AlCo} and
Lemma~\ref{Lemma:InvariantWeights}, we see that every such weight
is invariant. Moreover, $v_\a$\index{$v_\a(r)$} gives an example
of an invariant rapidly increasing weight that does not satisfy
\eqref{Pesos:AlemanConstantin}.

\section{Density of polynomials in $A^p_\om$}\label{Sec:DensityOfPolynomials}

It is known that the polynomials are not necessarily dense in the
weighted Bergman space $A^p_\om$ induced by a weight
$\om:\D\to(0,\infty)$, see \cite[p.~120]{Hedberg},
\cite[p.~134]{Mergelyan} or \cite[p.~138]{DurSchus}). Questions
related to polynomial approximation have attracted (and still do)
a considerable amount of attention during the last decades, and as
far as we know, the problem of characterizing the weights $\om$
for which the polynomials are dense in $A^p_\om$ remains unsolved.
Our interest in this problem comes from the study of factorization
of functions in~$A^p_\om$.\index{factorization} The main result of
this monograph in that direction is
Theorem~\ref{Thm:FactorizationBergman} in which the density of the
polynomials in $A^p_\om$ is taken as an assumption. This because
the proof relies on the existence of a dense set of~$A^p_\om$
whose elements have finitely many zeros only. In this section we
discuss known results on approximation by the polynomials in
$A^p_\om$ and their relations to the classes of weights considered
in this monograph.

We begin with recalling that the polynomials are dense in
$A^p_\om$, whenever $\om$ is a radial weight,
see~\cite[p.~134]{Mergelyan} or \cite[p.~138]{DurSchus}. This is
due to the fact that in this case the dilatation $f_r(z)=f(rz)$ of
$f$ satisfies\index{dilatation}\index{$f_r$}
    \begin{equation}\label{densdilat}
    \lim_{r\to 1^-}\|f_r-f\|_{A^p_\om}=0,\quad f\in A^p_\om.
    \end{equation}
Another neat condition which ensures the approximation in
$A^p_\om$ by dilatations, and therefore by polynomials, is
    \begin{equation}\label{conditionrz}
    \om(z)\le C(\om)\om(rz),\quad r_0\le r<1,\quad r_0\in (0,1),
    \end{equation}
see~\cite[p.~134]{Mergelyan}.

Recall that $M\subset A^p$ is called
\emph{invariant}\index{invariant subspace} if $zM\subset M$, and
that $f\in A^p$ is a \emph{cyclic element}\index{cyclic element}
if the closure of the set of polynomial multiples of $f$ is the
whole space~$A^p$. If now $Q$ is a cyclic element of~$A^p$ and
$\om$ is defined by $\om(z)=|Q(z)|^p$, then the polynomials are
dense in~$A^p_\om$ by the cyclicity. A natural way to construct
cyclic elements of $A^p$ is to choose a function $\phi\in
L^q(\T)$, where $q\ge\max\{p/2,1\}$, such that $\log\phi\in
L^1(\T)$, and consider the associated $H^q$-outer function
   \begin{equation}\label{outer}
   Q(z)=\exp\left(\frac1{2\pi}\int_{\T}\,\frac{\z+z}{\z-z}\log\phi(\z)\,d\z\right).
   \end{equation}
Then Beurling's theorem~\cite{BeurlingActa} ensures that $Q$ is
cyclic in $H^q$, and hence in $A^p$ as well. However, despite of
the Hardy space case there are singular inner functions (in the
classical sense) that are cyclic in $A^p$. For a detailed
discussion involving Beurling's theorem and cyclic elements in
$A^p$ the reader is invited to see \cite[p.~245--260]{DurSchus}
and the references therein.

The results of Hedberg~\cite{Hedberg} can also be used to
construct explicit examples of non-radial weights $\om$ such that
the polynomials are dense in $A^p_\om$. We show two direct
applications that are stated as lemmas. As usual, an analytic
function $f$ is called \emph{univalent}\index{univalent function}
if it is injective.

\begin{lemma}\label{le:univalentweights}
Let $f$ be a non-vanishing univalent function in $\D$,
$0<\gamma<1$ and $\om=|f|^\gamma$. Then the polynomials are dense
in $A^p_\om$ for all $p\ge1$.
\end{lemma}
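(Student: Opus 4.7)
\medskip
\noindent\textbf{Proof proposal.}
The plan is to reduce the statement to a cyclicity assertion in the unweighted Bergman space $A^p$ by ``dividing out'' the weight and then invoking Hedberg's approximation theorem for non-vanishing univalent functions.

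First, since $\D$ is simply connected and $f$ does not vanish, one can fix a single-valued branch of $\log f$ and define the analytic function
    $$
    F(z)=\exp\!\left(\tfrac{\gamma}{p}\log f(z)\right),\qquad z\in\D,
    $$
so that $|F|^p=|f|^\gamma=\om$. The composition $F$ of $f$ with the principal branch of $w\mapsto w^{\gamma/p}$ on the simply connected domain $f(\D)\subset\mathbb{C}\setminus\{0\}$ is itself a non-vanishing univalent function in~$\D$.

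Next, I would exploit this factorization of $\om$ to pass to the unweighted Bergman space. For each $g\in A^p_\om$, the function $h=gF$ is analytic and
    $$
    \|h\|_{A^p}^p=\int_\D|g(z)|^p|F(z)|^p\,dA(z)=\|g\|_{A^p_\om}^p<\infty,
    $$
so $h\in A^p$. Moreover, for every polynomial $P$,
    $$
    \|g-P\|_{A^p_\om}^p=\int_\D|g(z)-P(z)|^p|F(z)|^p\,dA(z)=\|h-PF\|_{A^p}^p.
    $$
Consequently, the density of polynomials in $A^p_\om$ is equivalent to the statement that the set $\{PF:P\text{ polynomial}\}$ is dense in $A^p$; that is, $F$ is a cyclic vector in $A^p$. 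Since polynomials are already dense in $A^p$, it actually suffices to verify that the constant function $1$ lies in the $A^p$-closure of $\{PF:P\text{ polynomial}\}$.

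The final step is to invoke Hedberg's results from \cite{Hedberg}, where it is proved (as part of his study of weighted mean square approximation and cyclic elements in Bergman spaces) that every non-vanishing univalent function in $\D$ is a cyclic vector of $A^p$ for $p\geq 1$. Applying this to $F$ finishes the argument. The main obstacle is precisely this cyclicity input: the constructive approximation of $1$ by polynomial multiples of a non-vanishing univalent function requires careful exhaustion of $F(\D)$ by simply connected subdomains on which $1/F$ can be uniformly approximated by polynomials in $F$, combined with the sharp area-distortion estimates that are available for univalent maps. All other steps are routine, and the $p\geq 1$ restriction enters solely through Hedberg's cyclicity theorem.
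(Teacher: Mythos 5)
Your reduction of the statement to the cyclicity of $F=f^{\gamma/p}$ in $A^p$ is correct: since $F$ is zero-free, $g\mapsto gF$ is a surjective isometry of $A^p_\om$ onto $A^p$ carrying polynomials to $\{PF\}$, so polynomial density in $A^p_\om$ is indeed equivalent to $F$ being cyclic in $A^p$ (this is exactly the observation made in the paragraph of the monograph preceding the lemma). The gap is that the cyclicity of $F$, which you yourself identify as the entire content of the argument, is never established, and the two claims you offer in its support are both wrong. First, $F=f^{\gamma/p}$ need \emph{not} be univalent: $w\mapsto w^{\gamma/p}$ fails to be injective on a simply connected domain $\Omega\subset\mathbb{C}\setminus\{0\}$ as soon as the (single-valued) argument on $\Omega$ has oscillation exceeding $2\pi p/\gamma$, which happens for spiral-shaped $f(\D)$; so even a correct theorem about univalent cyclic vectors would not apply to $F$. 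Second, the statement you attribute to \cite{Hedberg} --- that every non-vanishing univalent function is cyclic in $A^p$ for $p\ge1$ --- is not his Theorem~2 and is false as stated: the non-vanishing univalent function $\left(\frac{1+z}{1-z}\right)^2$ does not even belong to $A^p$ for $p\ge1$, so it cannot be cyclic there.

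What Hedberg's Theorem~2 actually provides is a criterion on the \emph{weight}: if $\om$ and $1/\om$ both lie in $L^{1+\delta}$ for some $\delta>0$, then the polynomials are dense in $A^p_\om$. The paper's proof verifies this directly: $f$ and $1/f$ are both zero-free and univalent, hence belong to $A^q$ for every $q<1$, so choosing $\delta$ with $\gamma(1+\delta)<1$ gives $\om^{1+\delta}=|f|^{\gamma(1+\delta)}\in L^1$ and $\om^{-(1+\delta)}=|1/f|^{\gamma(1+\delta)}\in L^1$, and Hedberg's theorem applies. If you wish to retain your cyclicity framing, your isometry shows that this integrability check is exactly what proves $F$ cyclic; but the univalence of $F$ plays no role and cannot be used.
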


\begin{proof}
Since $f$ is univalent and zero-free, so is $1/f$, and hence both
$f$ and $1/f$ belong to $A^p$ for all $0<p<1$. By choosing
$\delta>0$ such that $\gamma(1+\delta)<1$ we deduce that both
$\om$ and $\frac{1}{\om}$ belong to $L^{1+\delta}$. Therefore the
polynomials are dense in~$A^p_\om$ by \cite[Theorem~2]{Hedberg}.
\end{proof}

\begin{lemma}\label{le:univalentweights2}
Let $f$ be a univalent function in $\D$ such that $f(0)=0$, and
$0<\gamma<\infty$. Let the weight $\om$ be defined by
$\om(z)=\left|\frac{z}{f(z)}\right|^\gamma$ for all $z\in\D$. Then
the polynomials are dense in $A^p_\om$ for all $p\ge1$.
\end{lemma}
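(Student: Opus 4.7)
The plan is to apply the cyclicity mechanism already recalled in the excerpt: if $Q$ is a cyclic element of $A^p$ and $\om=|Q|^p$, then polynomials are dense in $A^p_\om$. Accordingly, I set
$$
Q(z)=\Bigl(\frac{z}{f(z)}\Bigr)^{\gamma/p},
$$
defined via a single-valued analytic branch on the simply connected domain $\D$ (possible because $z/f(z)$ is non-vanishing there).

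To justify this choice, put $\varphi(z)=z/f(z)$. Since $f$ is univalent with $f(0)=0$, one has $f'(0)\ne 0$, so the apparent singularity at the origin is removable with value $1/f'(0)\ne 0$, and injectivity forces $f(z)\ne 0$ for $z\ne 0$. Hence $\varphi$ is analytic and non-vanishing on $\D$. Applying the Koebe growth theorem to the normalized univalent map $f/f'(0)$ gives $|\varphi(z)|\le C(f)$ on $\D$, so $\varphi\in H^\infty$. Consequently $Q$ is a well-defined, bounded, non-vanishing analytic function on $\D$ with $|Q|^p=\om$, and $Q\in H^q$ for every $q\ge 1$.

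The heart of the matter is to show that $Q$ is outer. Since $\varphi$ is non-vanishing, the power $Q=\varphi^{\gamma/p}$ is outer whenever $\varphi$ is; and since $\varphi=1/(f(z)/z)\in H^\infty$, $\varphi$ is outer if and only if $f(z)/z$ is outer. Here I invoke the classical fact from the theory of univalent functions that every univalent function in $\D$ lies in the Smirnov class $N^+$, that is, has no singular inner factor in its inner-outer factorization. Since the only zero of $f$ is the simple zero at the origin, the Blaschke factor reduces to $z$, yielding $f(z)=f'(0)\,z\,F(z)$ with $F$ outer. Therefore $f(z)/z=f'(0)\,F(z)$ is outer, hence $\varphi$ and $Q$ are outer as well. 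Beurling's theorem, exactly as stated in the excerpt, then ensures that $Q$ is cyclic in $H^q$ for every $q\ge\max\{p/2,1\}$, and consequently cyclic in $A^p$.

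With $Q$ cyclic in $A^p$ and $\om=|Q|^p$, the cyclicity mechanism gives density of polynomials in $A^p_\om$: for any $h\in A^p_\om$ the product $hQ$ belongs to $A^p$ since $\int_\D|hQ|^p\,dA=\int_\D|h|^p\om\,dA<\infty$, so polynomial multiples $p_nQ$ approximate $hQ$ in $A^p$-norm, which translates through the isometry $h\mapsto hQ$ into $p_n\to h$ in $A^p_\om$. The main obstacle is the outerness of $f(z)/z$, which rests on the non-trivial classical fact that univalent functions admit no singular inner factor; once this is granted, the rest is routine manipulation with outer functions and the cyclicity machinery already discussed.
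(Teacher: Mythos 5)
Your route is genuinely different from the paper's. The paper checks the hypotheses of Hedberg's integrability criterion directly: the integral means $\int_0^{2\pi}|f(re^{it})|^{-p}\,dt$ of a univalent function are decreasing in $r$, which gives $\om\in L^\alpha$ for every $\alpha>0$, while $1/\om=|f(z)/z|^\gamma$ lies in $L^\alpha$ for $\alpha<1/\gamma$ because of the area integrability of univalent functions; Hedberg's Theorem~2 then yields the density. You instead exhibit $\om$ as $|Q|^p$ for a cyclic vector $Q=(z/f)^{\gamma/p}$ of $A^p$ and run the Beurling mechanism that the section sets up but does not actually use for this lemma. Both are legitimate; your version has the mild advantage of being insensitive to the size of $\gamma$ (no integrability of $1/\om$ is ever required), at the price of needing an outerness statement about univalent functions.

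That outerness step is where your argument has a gap as written. You justify it by asserting that membership in the Smirnov class $N^+$ means ``has no singular inner factor''; this is false --- every singular inner function belongs to $H^\infty\subset N^+$ --- so $f\in N^+$ only gives $f=c\,z\,S\,F$ with $S$ a possibly nontrivial singular inner factor, and the outerness of $f/z$ does not follow from $N^+$ membership alone. The fact you need is nevertheless true, and you already hold the ingredient that proves it: $z/f\in H^\infty\subset N^+$ by the Koebe bound, and $f/z\in N^+$ as well (e.g.\ since $f\in H^{1/3}$). Writing $f/z=SF$ and $z/f=S_1F_1$ with $F,F_1$ outer and multiplying gives $SS_1FF_1\equiv1$; the outer function $FF_1$ then has unimodular boundary values, hence is a unimodular constant, forcing $S$ and $S_1$ to be constant. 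In short, a zero-free $N^+$ function whose reciprocal is also in $N^+$ is outer. With that repair (the remaining remarks --- boundedness of $z/f$, stability of outerness under positive powers of zero-free functions, and the isometry $h\mapsto hQ$ from $A^p_\om$ into $A^p$ --- are all fine), your proof is complete.
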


\begin{proof}
The function $\int_0^{2\pi}|f(re^{it})|^{-p}\,dt$ is a decreasing
function of $r$ on $(0,1)$ for all $0<p<\infty$ by
\cite[Theorem~4.3.1]{Pabook}. It follows that $\om\in L^{\alpha}$
for all $0<\alpha<\infty$. Moreover, $f(z)/z$ is univalent, and so
$\frac{1}{\om}\in L^{\alpha}$ for all $\alpha<1/\gamma$. Therefore
the polynomials are dense in~$A^p_\om$ by
\cite[Theorem~2]{Hedberg}.
\end{proof}

Motivated by factorization of functions in~$A^p_\om$, we are
mainly interested in setting up invariant weights $\om$ such that
the polynomial approximation is possible in~$A^p_\om$. Obviously,
the radial weights $\om$ in this class are precisely those that
satisfy \eqref{eq:r2}. Further, it is easy to see that, for each
$0<\a<2$ and $\xi\in\T$, the non-radial invariant weight
$\om_\xi(z)=\left|\frac{\xi-z}{\xi+z}\right|^\a$ has also this
property by Lemma~\ref{le:univalentweights}. More generally, let
$G$ be non-vanishing such that $\log G$ is a Bloch function. It is
well-known that then there exists a constant $C>0$ such that
$|\log G(z)-\log G(w)|\le C\varrho_h(z,w)$ for all $z,w\in\D$. It
follows that the weight $\om$ defined by $\om(z)=|G(z)|^\gamma$ is
invariant for all $\gamma\in\mathbb{R}$. In particular, if $G$ is
a non-vanishing univalent function and $0<\gamma<1$, then $\log G$
is a Bloch function by the Koebe $\frac14$-theorem~\cite{Hayman},
and hence $\om(z)=|G(z)|^\gamma$ is invariant and further, the
polynomials are dense in $A^p_\om$ by
Lemma~\ref{le:univalentweights}.

Finally, let us consider the class of weights that appears in a
paper by Abkar~\cite{Abkar1}. We will see that also these weights
are invariant. To do this, we need to introduce the following
concepts. A function $u$ defined on $\D$ is said to be
\emph{superbiharmonic}\index{superbiharmonic} if $\triangle^2u\ge
0$, where $\triangle$ stands for the Laplace
operator\index{Laplace operator}\index{$\triangle_z$}
    $$
    \triangle=\triangle_z=\frac{\partial^2}{\partial z\partial \overline{z}}=\frac{1}{4} \left(\frac{\partial^2}{\partial^2 x}
         +\frac{\partial^2}{\partial^2 y}
         \right)
    $$
in the complex plane $\mathbb{C}$. The superbihamonic weights play
an essential role in the study of invariant subspaces of the
Bergman space $A^p$. In particular,~\cite[Theorem~2.6]{Abkar1},
that is sated as Theorem~\ref{th:Abkar} below, is a key ingredient
in the proof of the fact that any invariant subspace $M$ of $A^2$,
that is induced by the zero-set of a function in $A^2$, is
generated by its extremal function $\varphi_M$, see
\cite[Theorem~3.1]{Abkar1}~and~\cite{AleRichSundActa}.

\begin{lettertheorem}\label{th:Abkar}
Let $\om$ be a superbiharmonic weight such that
    \begin{equation}\label{Abkarii}
    \lim_{r\to 1^-}\int_{\T}\om(r\zeta)\,dm(\zeta)=0.
    \end{equation}
Then \eqref{densdilat} is satisfied for all $f\in A^p_\om$. In
particular, the polynomials are dense in~$A^p_\om$.
\end{lettertheorem}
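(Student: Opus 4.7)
The plan is to reduce the theorem to verifying \eqref{conditionrz}, namely $\om(z)\le C(\om)\,\om(rz)$ for all $z\in\D$ and $r_0\le r<1$, with $r_0\in(0,1)$ depending only on $\om$. Once this estimate is available, the quoted consequence of Mergelyan's work already gives \eqref{densdilat}; density of polynomials then follows since each dilate $f_r$ extends analytically to a neighborhood of $\overline{\D}$ and is therefore uniformly, hence in $A^p_\om$, approximable by its Taylor polynomials.

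To establish \eqref{conditionrz} under $\triangle^2\om\ge 0$ together with \eqref{Abkarii}, I would first invoke the Riesz-type representation for superbiharmonic weights on $\D$ with vanishing boundary mean values, which yields
$$\om(z)=\int_\D \Gamma(z,w)\,d\mu(w),\qquad z\in\D,$$
where $d\mu=\triangle^2\om\,dA$ is a nonnegative Borel measure and $\Gamma$ is the biharmonic Green function of $\D$ with homogeneous Dirichlet data. A classical fact, specific to the disc, is that $\Gamma(z,w)\ge 0$ throughout $\D\times\D$, which places $\om$ in a convenient potential-theoretic framework. The hypothesis \eqref{Abkarii} is exactly what kills the harmonic ambiguity in the representation and guarantees no boundary contribution.

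Next I would derive a pointwise comparison of the form
$$\Gamma(z,w)\le C(r_0)\,\Gamma(rz,w),\qquad w\in\D,\ r_0\le r<1,$$
by a direct analysis of the explicit expression for $\Gamma$ and the behaviour of the factor $(1-|rz|)/(1-|z|)$ on the relevant regions of integration. Integrating this inequality against $d\mu$ produces precisely \eqref{conditionrz}. With \eqref{conditionrz} at hand, the change of variables $w=rz$ gives the uniform bound $\|f_r\|_{A^p_\om}^p\le C(\om)\,\|f\|_{A^p_\om}^p$ for $r$ sufficiently close to $1$, so $|f_r-f|^p\om$ admits the $L^1(\D)$ dominant $2^p(|f_r|^p+|f|^p)\om$; the pointwise convergence $f_r\to f$ on $\D$ combined with dominated convergence then yields \eqref{densdilat}.

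The main obstacle is the Green function comparison in the second step: the explicit kernel for the biharmonic Green function of $\D$ is more intricate than the harmonic one, and extracting a uniform constant $C(r_0)$ requires careful handling of the diagonal region where $w$ is close to $z$. An alternative, possibly cleaner, route would be to bypass the explicit kernel entirely and argue directly from the subharmonicity of $\triangle\om$: apply the sub-mean-value property to $\triangle\om$ on hyperbolic discs, combine it with \eqref{Abkarii} to control the harmonic part of $\om$, and iterate to obtain \eqref{conditionrz} without any integral representation.
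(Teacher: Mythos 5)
Your overall strategy --- reduce Theorem~\ref{th:Abkar} to the dilation estimate \eqref{conditionrz} and then invoke the Mergelyan-type criterion --- is exactly the route the monograph attributes to Abkar's original proof; the paper itself states this theorem as a quoted result and gives no proof beyond the remark that ``the proof relies on showing that these type of weights satisfy \eqref{conditionrz}.'' So your plan is the intended one. There is, however, one concrete error in your execution: the representation formula you write down, $\om(z)=\int_\D\Gamma(z,w)\,d\mu(w)$ with $d\mu=\triangle^2\om\,dA$, is false for a general superbiharmonic weight satisfying \eqref{Abkarii}. The correct statement (from \cite[Corollary~3.7]{AbkHed}, reproduced in the paper in the lemma immediately following Theorem~\ref{th:Abkar}) is
\begin{equation*}
\om(\zeta)=\int_{\D}\Gamma(u,\zeta)\,\triangle^2\om(u)\,dA(u)+\int_\T H(u,\zeta)\,d\mu(u),
\end{equation*}
with $H$ the harmonic compensator \eqref{eq:harcom} and $\mu$ a positive measure on $\T$. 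The hypothesis \eqref{Abkarii} rules out an additional genuinely harmonic summand, but it does \emph{not} kill the compensator term: for instance $\om(\zeta)=(1-|\zeta|^2)^2/|1-\zeta|^2$ is biharmonic (so its Green potential is zero), satisfies \eqref{Abkarii}, and consists entirely of the boundary term. The gap is repairable, since $H(u,\zeta)$ obeys the same dilation comparison $H(u,\zeta)\lesssim H(u,r\zeta)$ as the Green kernel --- indeed, using the two-sided bound $\Gamma(u,\zeta)\asymp(1-|\zeta|^2)^2(1-|u|^2)^2|1-\overline{\zeta}u|^{-2}$ quoted in the paper, both comparisons reduce to the elementary estimate $|1-\overline{r\zeta}u|\le|1-\overline{\zeta}u|+(1-r|\zeta|)\lesssim\frac{1-|r\zeta|^2}{1-|\zeta|^2}\,|1-\overline{\zeta}u|$ --- but as written your second step starts from a wrong identity.

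Two smaller points. First, your closing appeal to dominated convergence is not literally valid because the proposed dominant $2^p(|f_r|^p+|f|^p)\om$ depends on $r$; the standard repair is to split $\D$ into a compact core (where $f_r\to f$ uniformly) and a thin annulus whose contribution is controlled uniformly in $r$ by the bound $\|f_r\|_{A^p_\om}\lesssim\|f\|_{A^p_\om}$, or simply to cite \cite[p.~134]{Mergelyan} as the paper does. Second, your proposed ``alternative route'' via sub-mean-value properties of $\triangle\om$ is too vague to assess: $\triangle\om$ need not be subharmonic of one sign, and controlling the harmonic part of $\om$ from \eqref{Abkarii} alone without the representation formula is precisely the nontrivial content of the Abkar--Hedenmalm result, so I would not present that sketch as a genuine alternative.
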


The proof of Theorem~\ref{th:Abkar} relies on showing that these
type of weights $\om$ satisfy \eqref{conditionrz}. For a similar
result, see \cite[Theorem~3.5]{Abkar2}. Next,
let\index{$\Gamma(u,\zeta)$}
    \begin{equation}\label{eq:biGreen}
    \Gamma(u,\zeta)=|u-\zeta|^2\log\left|\frac{u-\zeta}{1-u\overline{\zeta}}\right|+(1-|u|^2)(1-|z|^2),\quad
    (u,\zeta)\in \D\times\D,
    \end{equation}
be the \emph{biharmonic Green function}\index{biharmonic Green
function} for the operator $\Delta^2$ in $\D$, and
let\index{$H(u,\zeta)$}
    \begin{equation}\label{eq:harcom}
    H(u,\zeta)=\frac{(1-|\zeta|^2)^2}{|1-\overline{\zeta}u|^2},\quad
    (u,\zeta)\in \T\times\D,
    \end{equation}
be the \emph{harmonic compensator}.\index{harmonic compensator}

\begin{lemma}
Every superbihamonic weight that satisfies \eqref{Abkarii} is
invariant.
\end{lemma}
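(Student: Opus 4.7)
The plan is to exploit the Riesz-type representation of nonnegative superbiharmonic functions on $\D$ that vanish on the boundary in the mean. Under \eqref{Abkarii} one expects a decomposition of the form
$$
\om(z)=\int_{\D}\Gamma(z,\zeta)\,d\mu(\zeta)+\int_{\T}H(\eta,z)\,d\nu(\eta),\qquad z\in\D,
$$
with positive Borel measures $\mu$ on $\D$ and $\nu$ on $\T$; this is precisely why the biharmonic Green function $\Gamma$ and the harmonic compensator $H$ were introduced in \eqref{eq:biGreen}--\eqref{eq:harcom} just before the statement. Granting the representation, the lemma reduces to showing that for each fixed $r\in(0,1)$ the two kernels are locally invariant: $\Gamma(z,\zeta)\asymp\Gamma(a,\zeta)$ uniformly in $\zeta\in\D$, and $H(\eta,z)\asymp H(\eta,a)$ uniformly in $\eta\in\T$, whenever $z\in\Delta(a,r)$. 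Integrating both estimates against $\mu$ and $\nu$ then yields $\om(z)\asymp\om(a)$ on $\Delta(a,r)$, i.e., $\om\in{\mathcal Inv}$.

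The boundary kernel is the easy half. Since $H(\eta,z)=(1-|z|^2)^2/|1-\overline{z}\eta|^2$ and the pseudohyperbolic inclusion $z\in\Delta(a,r)$ forces $1-|z|^2\asymp 1-|a|^2$ as well as $|1-\overline{z}\eta|\asymp|1-\overline{a}\eta|$ uniformly in $\eta\in\T$, the comparison $H(\eta,z)\asymp H(\eta,a)$ follows immediately with constants depending only on $r$.

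The main technical step is the comparison for $\Gamma$. First I would use the identities $|u-\zeta|^{2}=|1-u\overline{\zeta}|^{2}\,\varrho(u,\zeta)^{2}$ and $(1-|u|^{2})(1-|\zeta|^{2})=|1-u\overline{\zeta}|^{2}(1-\varrho(u,\zeta)^{2})$ to rewrite
$$
\Gamma(u,\zeta)=|1-u\overline{\zeta}|^{2}\,\Phi(\varrho(u,\zeta)),\qquad \Phi(\rho)=\rho^{2}\log\rho+(1-\rho^{2}),\quad\rho\in[0,1].
$$
A short analysis of $\Phi$ (endpoint values $\Phi(0)=1$, $\Phi(1)=0$, strict monotonicity via $\Phi'(\rho)=\rho(2\log\rho-1)<0$, and the one-sided limit $\Phi(\rho)/(1-\rho^{2})\to 1/2$ as $\rho\to 1^{-}$) gives $\Phi(\rho)\asymp 1-\rho^{2}$ on $[0,1]$ with absolute constants. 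Substituting back produces the clean global comparison
$$
\Gamma(u,\zeta)\asymp(1-|u|^{2})(1-|\zeta|^{2}),\qquad u,\zeta\in\D.
$$
Applying this with $u=z$ and $u=a$, and using $1-|z|^{2}\asymp 1-|a|^{2}$ on $\Delta(a,r)$, yields $\Gamma(z,\zeta)\asymp\Gamma(a,\zeta)$ uniformly in $\zeta\in\D$, as required.

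The one genuinely nontrivial point, and the main obstacle of the proof, is the justification of the representation in the first paragraph. This is the only place where superbiharmonicity, combined with the $L^{1}(\T)$-vanishing condition \eqref{Abkarii}, is really used. Either one invokes Abkar~\cite{Abkar1} directly, where essentially this decomposition is available and the kernels $\Gamma$ and $H$ enter naturally (the fact that Theorem~\ref{th:Abkar} relies on \eqref{conditionrz} is a manifestation of the same structure), or one reproves it from scratch by applying a Riesz representation to the superbiharmonic $\om$ on $\D$ and using \eqref{Abkarii} to identify the two boundary pieces. Once the representation is in hand, the rest is pure kernel analysis along the lines indicated above.
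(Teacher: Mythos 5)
Your proof follows essentially the same route as the paper: invoke the Riesz-type representation of superbiharmonic weights satisfying \eqref{Abkarii} in terms of the kernels $\Gamma$ and $H$ (the paper cites \cite[Corollary~3.7]{AbkHed} for exactly this), verify the local invariance $H(\eta,z)\asymp H(\eta,a)$ and $\Gamma(z,\zeta)\asymp\Gamma(a,\zeta)$ for $z\in\Delta(a,r)$, and integrate. The only difference is that you derive the two-sided bound for $\Gamma$ by a direct computation in the pseudohyperbolic variable, whereas the paper simply quotes $\tfrac12\frac{(1-|\zeta|^2)^2(1-|u|^2)^2}{|1-\overline{\zeta}u|^2}\le\Gamma(u,\zeta)\le\frac{(1-|\zeta|^2)^2(1-|u|^2)^2}{|1-\overline{\zeta}u|^2}$ from \cite{Abkar1} and \cite{AleRichSundActa}.

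One caveat on that computation: your global comparison $\Gamma(u,\zeta)\asymp(1-|u|^2)(1-|\zeta|^2)$ is correct for the formula \eqref{eq:biGreen} exactly as printed, but it is \emph{not} the same as the estimate the paper cites, and the discrepancy traces to a typo in \eqref{eq:biGreen}: the standard biharmonic Green function carries $\log\left|\frac{u-\zeta}{1-u\overline{\zeta}}\right|^{2}$, not the unsquared modulus. With the square, your profile function becomes $\Psi(t)=t\log t+1-t$ in $t=\varrho^2$, which satisfies $\Psi(1)=\Psi'(1)=0$ and $\Psi''(1)=1$, so $\Psi(t)\asymp(1-t)^2$ rather than $1-t$; this yields $\Gamma(u,\zeta)\asymp\frac{(1-|u|^2)^2(1-|\zeta|^2)^2}{|1-\overline{\zeta}u|^2}$, matching the cited bound. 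Fortunately this does not affect your conclusion: under either normalization, $1-|z|^2\asymp1-|a|^2$ and $|1-\overline{\zeta}z|\asymp|1-\overline{\zeta}a|$ on $\Delta(a,r)$ give $\Gamma(z,\zeta)\asymp\Gamma(a,\zeta)$ uniformly in $\zeta$, so the lemma follows as you argue.
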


\begin{proof}
The proof relies on the representation formula obtained in
\cite[Corollary~3.7]{AbkHed}. It states that every superbihamonic
weight $\om$ that satisfies \eqref{Abkarii} can be represented in
the form
    \begin{equation*}
    \om(\zeta)=\int_{\D}\Gamma(u,\zeta)\triangle^2(u)\,dA(u)+\int_\T
    H(u,\zeta)\,d\mu(u),\quad \zeta\in\D,
    \end{equation*}
where $\mu$ is a uniquely determined positive Borel measure on
$\T$. Therefore it suffices to prove the relations
    \begin{equation}
    \begin{split}\label{eq:equivG}
    \Gamma(u,\zeta)\asymp \Gamma(u,z), \quad
    z\in\Delta(\zeta,r),\quad u\in\D,
    \end{split}
    \end{equation}
and
    \begin{equation}\begin{split}\label{eq:equivH}
    H(u,\zeta)\asymp H(u,z),\quad z\in\Delta(\zeta,r),\quad u\in\T,
    \end{split}\end{equation}
where the constants of comparison may depend only on $r$. The
definition \eqref{eq:harcom} and standard estimates yield
\eqref{eq:equivH}. Moreover, by \cite[Lemma~2.2(d)]{Abkar1} or
\cite[Lemma~2.3(a)]{AleRichSundActa}, we have
    $$
    \frac{1}{2}\frac{(1-|\zeta|^2)^2(1-|u|^2)^2}{|1-\overline{\zeta}u|^2}\le
    \Gamma(u,\zeta)\le\frac{(1-|\zeta|^2)^2(1-|u|^2)^2}{|1-\overline{\zeta}u|^2},\quad
    (u,\z)\in\D\times\D,
    $$
which easily gives \eqref{eq:equivG}, and finishes the proof.
\end{proof}

\chapter{Description of $q$-Carleson Measures for
$A^p_\om$}\label{Sec:Carleson}\index{Carleson measure}

In this chapter we give a complete description of $q$-Carleson
measures for $A^p_\om$ when $0<p\le q<\infty$ and
$\om\in\I\cup\R$. For a given Banach space (or a complete metric
space) $X$ of analytic functions on $\D$, a positive Borel measure
$\mu$ on $\D$ is called a \emph{$q$-Carleson measure for $X$}
\index{Carleson measure} if the identity operator\index{identity
operator} $I_d:\, X\to L^q(\mu)$\index{$I_d$} is bounded. It is
known that a characterization of $q$-Carleson measures for a space
$X\subset\H(\D)$ can be an effective tool, for example, in the
study of several questions related to different operators acting
on $X$. In this study we are particularly interested in the
integral operator\index{integral operator}
    \begin{displaymath}
    T_g(f)(z)=\int_{0}^{z}f(\zeta)\,g'(\zeta)\,d\zeta,\quad
    g\in\H(\D). \index{$T_g(f)$}
    \end{displaymath}

We will explain how the decay or growth of a radial weight $\om$
affects to the study of $q$-Carleson measures for the weighted
Bergman space $A^p_{\om}$. The influence of $\omega$ appears
naturally in the characterizations as well as in the techniques
used to obtain them. We will draw special attention to rapidly
increasing radial weights, because, to the best of our knowledge,
this situation is not well understood yet.

Hastings, Luecking, Oleinik and Pavlov~\cite{Ha,Luf,Lu93,OP},
among others, have characterized $q$-Carleson
measures\index{Carleson measure} for $A^p_\alpha$. In a recent
paper Constantin~\cite{OC} gives an extension of these classical
results to the case when $\frac{\omega(z)}{(1-|z|)^\eta}$ belongs
to the class $B_{p_0}(\eta)$ of Bekoll\'e-Bonami
weights.\index{Bekoll\'e-Bonami weight}\index{$B_{p_0}(\eta)$}
Recall that these weights are not necessarily radial. A
characterization of $q$-Carleson measures for $A^p_{\om}$, $\om\in
\R$, follows by \cite[Theorems~3.1 and~3.2]{OC} because every
regular weight $\om$ satisfies $\frac{\om(z)}{(1-|z|)^\eta}\in
B_{p_0}(\eta)$\index{Bekoll\'e-Bonami
weight}\index{$B_{p_0}(\eta)$} for some $p_0>1$ and $\eta>-1$ by
Lemma~\ref{le:RAp}(i). If $\om$ is a rapidly decreasing radial
weight\index{rapidly decreasing weight}, such as those in
\eqref{Eq:ExponentialWeights}, then different techniques from
those that work for $\om\in\R$ must be used to describe
$q$-Carleson measures for $A^p_{\om}$~\cite{PP}.

If $\om\in\I$, then the weight $\frac{\om(z)}{(1-|z|)^\eta}$ is
not a Bekoll\'e-Bonami weight \index{Bekoll\'e-Bonami weight} for
any $p_0>1$ and $\eta>-1$. This is a consequence of
Theorem~\ref{Thm-integration-operator-1},
Proposition~\ref{pr:blochcpp}(D) and \cite[Theorem~4.1]{AlCo}.
Therefore the above-mentioned results by Constantin~\cite{OC} do
not yield a characterization of $q$-Carleson measures for
$A^p_\om$ when $\omega\in\I$. As far as we know these measures
have not been characterized yet.

From now on and throughout we will write
$\|T\|_{(X,Y)}$\index{$\Vert\cdot\Vert_{(X,Y)}$} for the norm of
an operator $T:X\to Y$, and if no confusion arises with regards to
$X$ and $Y$, we will simply write $\|T\|$.\index{$\|T\|$}

The following theorem is the main result of this chapter.

\begin{theorem}\label{th:cm}\index{Carleson measure}
Let $0<p\le q<\infty$ and $\om\in\I\cup\R$, and let $\mu$ be a
positive Borel measure on $\D$. Then the following assertions
hold:
\begin{itemize}
\item[\rm(i)] $\mu$ is a $q$-Carleson measure for $A^p_\omega$ if
and only if
    \begin{equation}\label{eq:s1}
    \sup_{I\subset\T}\frac{\mu\left(S(I) \right)}{\left(\om\left(S(I)
    \right)\right)^\frac{q}p}<\infty.
    \end{equation}
Moreover, if $\mu$ is a $q$-Carleson measure for $A^p_\omega$,
then the identity operator $I_d:A^p_{\om}\to L^q(\mu)$ satisfies
    $$
    \|I_d\|^q_{\left(A^p_\om, L^q(\mu)\right)}\asymp\sup_{I\subset\T}\frac{\mu\left(S(I) \right)}{\left(\om\left(S(I)
    \right)\right)^\frac{q}p}.
    $$

\item[\rm(ii)] The identity operator $I_d:A^p_{\om}\to L^q(\mu)$
is compact if and only if
\begin{equation}\label{eq:s1compact}
    \lim_{|I|\to 0}\frac{\mu\left(S(I) \right)}{\left(\om\left(S(I)
    \right)\right)^\frac{q}p}=0.
    \end{equation}
    \end{itemize}
\end{theorem}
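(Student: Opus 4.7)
I would prove necessity in both (i) and (ii) via a single family of test functions, then establish sufficiency of (i) by a H\"ormander-type weighted maximal function argument, handling the diagonal case $q=p$ first and bootstrapping to $q>p$ through a pointwise bound on $|f|$; sufficiency of (ii) then follows from a standard splitting of $\mu$ into interior and boundary parts.

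\textbf{Necessity.} For each $a\in\D\setminus\{0\}$ I would use the test function
$$
f_a(z)=\frac{1}{\om(S(a))^{1/p}}\left(\frac{1-|a|^2}{1-\overline{a}z}\right)^{2\gamma/p},
$$
choosing $\gamma=\gamma(\om,p,q)$ large enough that a preliminary estimate (provable via Lemma~\ref{le:condinte} and the standard boundary integral $\int_\T|1-\overline{a}\z|^{-c}\,|d\z|\asymp(1-|a|)^{1-c}$ for $c>1$) gives $\int_\D(1-|a|^2)^{2\gamma}|1-\overline{a}z|^{-2\gamma}\om(z)\,dA(z)\asymp\om(S(a))$. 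This yields $\|f_a\|_{A^p_\om}\asymp 1$ while $|f_a(z)|^p\asymp 1/\om(S(a))$ uniformly for $z\in S(a)$. Choosing $a=a_I=(1-|I|)e^{i\t_I}$ and testing the embedding gives
$$
\frac{\mu(S(I))}{\om(S(I))^{q/p}}\lesssim\int_{S(a_I)}|f_{a_I}|^q\,d\mu\le\|I_d\|^q\|f_{a_I}\|_{A^p_\om}^q\asymp\|I_d\|^q,
$$
which is \eqref{eq:s1} together with one half of the norm equivalence. For (ii), the same functions satisfy $f_{a_n}\to 0$ uniformly on compacta as $|a_n|\to 1^-$, hence weakly in $A^p_\om$; compactness of $I_d$ forces $\|f_{a_n}\|_{L^q(\mu)}\to 0$, which yields \eqref{eq:s1compact}.

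\textbf{Sufficiency of (i).} For $q=p$, the core ingredient is the pointwise domination
$$
|f(z)|^p\lesssim M_\om(|f|^p)(z),\qquad f\in A^p_\om,\ z\in\D,
$$
obtained by subharmonicity of $|f|^p$ combined with Lemma~\ref{le:cuadrado-tienda} and the regularity of $\om^\star$ (Lemma~\ref{le:sc1}). Given this, the embedding reduces to the weak-type bound
$$
\mu(\{z:M_\om(\vp)(z)>\lambda\})\lesssim\lambda^{-1}\|\vp\|_{L^1_\om},
$$
which is proved by a Vitali-type covering argument on the maximal Carleson squares realising $\{M_\om\vp>\lambda\}$, together with the assumption $\mu(S(I))\lesssim\om(S(I))$; Marcinkiewicz interpolation with the trivial $L^\infty$ bound delivers $\|M_\om\vp\|_{L^p(\mu)}\lesssim\|\vp\|_{L^p_\om}$, whence $\|f\|_{L^p(\mu)}\lesssim\|f\|_{A^p_\om}$. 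For $q>p$ I would bootstrap: the pointwise estimate $|f(z)|^p\lesssim\|f\|_{A^p_\om}^p/\om(S(z))$ (same subharmonicity argument) lets me write
$$
\int_\D|f|^q\,d\mu\lesssim\|f\|_{A^p_\om}^{q-p}\int_\D|f(z)|^p\,d\widetilde{\mu}(z),\quad d\widetilde{\mu}(z)=\frac{d\mu(z)}{\om(S(z))^{(q-p)/p}};
$$
using $\om(S(z))\asymp\om(S(I))$ for $z\in S(I)$ (again via $\om^\star\in\R$) together with \eqref{eq:s1} gives $\widetilde{\mu}(S(I))\lesssim\om(S(I))$, so the already-proved $q=p$ case applied to $\widetilde{\mu}$ completes the argument. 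For the compactness direction of (ii), given $\e>0$ pick $r\in(0,1)$ so that $\mu(S(I))/\om(S(I))^{q/p}<\e$ whenever $S(I)\subset\D\setminus\overline{D(0,r)}$, and split $\mu=\mu|_{D(0,r)}+\mu|_{\D\setminus D(0,r)}$; the first piece gives a compact operator because $A^p_\om$ embeds compactly into $C(\overline{D(0,r)})$ by normal families, while the second has operator norm $\lesssim\e^{1/p}$ by the sufficiency argument applied to its (small) Carleson constant, exhibiting $I_d$ as a norm limit of compact operators.

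\textbf{Main obstacle.} The delicate step is the pointwise domination $|f(z)|^p\lesssim M_\om(|f|^p)(z)$ for $\om\in\I$, since the usual subharmonicity argument requires $\om$ to be essentially constant on Euclidean discs $D(z,(1-|z|)/2)$, which already fails for the oscillatory weights \eqref{pesomalo1}--\eqref{pesomalo2}. I expect to circumvent this by performing the averaging against $\om^\star$ instead of $\om$ (exploiting $\om^\star\in\R$) and then passing back via Lemma~\ref{le:cuadrado-tienda}. The same phenomenon explains why the characterisation is genuinely in terms of Carleson squares rather than pseudohyperbolic discs: the latter are too small to average out the local oscillation of $\om\in\I$, so no analogue of \eqref{eq:s1} with $S(I)$ replaced by $\Delta(a,r)$ can hold.
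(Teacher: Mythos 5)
The most serious gap is in your bootstrap from $q=p$ to $q>p$. The claimed equivalence $\om(S(z))\asymp\om(S(I))$ for $z\in S(I)$ is false: for $z$ deep inside $S(I)$ one has $\om(S(z))\asymp(1-|z|)\int_{|z|}^1\om(s)\,ds\to0$ as $|z|\to1^-$ while $\om(S(I))$ stays fixed. Worse, the conclusion you need --- that $d\widetilde\mu(z)=\om(S(z))^{-(q-p)/p}\,d\mu(z)$ inherits the $p$-Carleson property from \eqref{eq:s1} --- genuinely fails for $\om\in\I$. Decomposing $S(I)$ into dyadic layers $\{1-2^{-k}|I|\le 1-|z|<2^{-k+1}|I|\}$ and applying \eqref{eq:s1} on each layer yields only $\widetilde\mu(S(I))\lesssim|I|\sum_{k\ge0}\int_{1-2^{-k}|I|}^1\om(s)\,ds$; for $\om=v_\a$ with $1<\a\le2$ this sum diverges, and for $\a>2$ it exceeds $\om(S(I))$ by a factor $\log\frac{e}{|I|}$, and one can realize the failure with point masses spread angularly over dyadic circles. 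This is precisely why the paper proves the weak-type inequality with the exponent $q/p$ built in, $\mu(\{M_\om(\vp)>s\})\lesssim s^{-q/p}\|\vp\|_{L^1_\om}^{q/p}$: the covering-lemma argument together with $\sum_n\om(S(z_n))^{q/p}\le\bigl(\sum_n\om(S(z_n))\bigr)^{q/p}$ handles all $q\ge p$ at once, with no transfer of the Carleson condition to an auxiliary measure. Relatedly, even on the diagonal your chain does not close as written: Marcinkiewicz gives $\|M_\om\vp\|_{L^t(\mu)}\lesssim\|\vp\|_{L^t_\om}$ only for $t>1$, so for $p\le1$ (and even for $p>1$, to pass from the maximal bound back to $\|f\|_{L^p(\mu)}$) you must invoke the pointwise estimate with a small exponent, $|f|\lesssim\bigl(M_\om(|f|^{1/\a})\bigr)^{\a}$ with $p\a>1$, rather than with exponent $p$; this is exactly the role of the parameter $\a$ in Lemma~\ref{le:suf1} and Corollary~\ref{co:maxbou}.

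The second gap is the pointwise estimate itself for $\om\in\I$, which you rightly single out as the delicate step but whose proposed repair does not work. Averaging $|f|^\a$ against $\om^\star$ over $D(z,\frac12(1-|z|))$ gives $|f(z)|^\a\lesssim\frac{1}{(1-|z|)^2\om^\star(z)}\int_{S(z^\star)}|f|^\a\om^\star\,dA$; to convert this into $\frac{1}{\om(S(z^\star))}\int_{S(z^\star)}|f|^\a\om\,dA$ you would need $\om^\star(w)\lesssim(1-|w|)^2\om(w)$, and by \eqref{22} that relation characterizes regular weights --- for $\om\in\I$ the ratio $\om^\star(w)/\bigl((1-|w|)^2\om(w)\bigr)\asymp\psi_\om(|w|)/(1-|w|)$ tends to infinity. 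The paper's argument for $\om\in\I$ is genuinely different: it represents $|f(re^{i\t})|^\a$ via the Poisson kernel on every larger circle $|z|=\rho$, splits the kernel over dyadic arcs, and only then integrates in $\rho$ against $\om(\rho)\rho\,d\rho$, so that the Carleson-square averages emerge after the radial integration rather than from a single local mean-value inequality. Your necessity arguments and the interior/boundary splitting for compactness do match the paper.
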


Recall that $I_d:A^p_{\om}\to L^q(\mu)$ is compact if it
transforms bounded sets of $A^p_\om$ to relatively compact sets of
$L^q(\mu)$. Equivalently, $I_d$ is compact if and only if for
every bounded sequence $\{f_n\}$ in $A^p_\om$ some subsequence of
$\{f_n\}$ converges in $L^q(\mu)$.

One important feature of the proof of Theorem~\ref{th:cm} is that
it relies completely on Carleson squares and do not make any use
of techniques related to (pseudo)hyperbolic discs as it is
habitual when classical weighted Bergman spaces are considered.
Therefore the reasoning gives an alternative proof of the known
characterization of $q$-Carleson measures for the classical
weighted Bergman space~$A^p_\a$ \cite{DurSchus,Zhu}. Indeed, it is
known that, for any $\om\in\R$ and $\beta\in(0,1)$, there exists a
constant $C=C(\beta,\om)>0$ such that
    $$
    C^{-1}\om\left(D(a,\beta(1-|a|))\right)\le\om\left(S(a)\right)\le C\om\left(D(a,\beta(1-|a|))\right)
    $$
for all $a\in \D$, and so $q$-Carleson measures for $\om\in\R$ can
be characterized either in terms of Carleson squares or
pseudohyperbolic discs. However, this is no longer true when
$\om\in\I$. The proof of Theorem~\ref{th:cm} relies heavily on the
properties of the maximal function
    $$\index{weighted maximal function}\index{$M_\om(\vp)$}
    M_{\om}(\vp)(z)=\sup_{I:\,z\in S(I)}\frac{1}{\om\left(S(I)
    \right)}\int_{S(I)}|\vp(\xi)|\om(\xi)\,dA(\xi),\quad
    z\in\D,
    $$
introduced by H\"ormander~\cite{HormanderL67}. Here we must
require $\vp\in L^1_\om$ and that $\vp(re^{i\t})$ is
$2\pi$-periodic with respect to $\t$ for all $r\in(0,1)$. It
appears that $M_{\om}(\vp)$\index{weighted maximal
function}\index{$M_\om(\vp)$} plays a similar role on $A^p_\om$
than the Hardy-Littlewood maximal function\index{Hardy-Littlewood
maximal function} on the Hardy space $H^p$. As a by-product of the
proof of Theorem~\ref{th:cm}, we obtain the following result which
is of independent interest.

\begin{corollary}\label{co:maxbou}\index{weighted maximal function}\index{$M_\om(\vp)$}
Let $0<p\le q<\infty$ and $0<\alpha<\infty$ such that $p\alpha>1$.
Let $\om\in\I\cup\R$, and let $\mu$ be a positive Borel measure on
$\D$. Then
$[M_{\om}((\cdot)^{\frac{1}{\alpha}})]^{\alpha}:L^p_\omega\to
L^q(\mu)$ is bounded if and only if $\mu$ satisfies \eqref{eq:s1}.
Moreover,
    $$
    \|[M_{\om}((\cdot)^{\frac{1}{\alpha}})]^{\alpha}\|^q_{\left(L^p_\om,L^q(\mu)\right)}\asymp\sup_{I\subset\T}\frac{\mu\left(S(I) \right)}{\left(\om\left(S(I)
    \right)\right)^\frac{q}p}.
    $$
\end{corollary}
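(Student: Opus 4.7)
The plan is to reduce the claim to a two-weight inequality for the H\"ormander maximal function $M_\omega$. Writing $\psi=\varphi^{1/\alpha}$ identifies the operator $[M_\omega((\cdot)^{1/\alpha})]^\alpha:L^p_\omega\to L^q(\mu)$ with $M_\omega:L^{p\alpha}_\omega\to L^{q\alpha}(\mu)$, and it is convenient to set $s=p\alpha>1$ and $t=q\alpha\ge s$. The corollary thus becomes: for such exponents, $M_\omega:L^s_\omega\to L^t(\mu)$ is bounded if and only if $\mu(S(I))\lesssim\omega(S(I))^{t/s}$ uniformly in $I\subset\T$, with comparable norms.

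Necessity is immediate by testing on $\psi=\chi_{S(I)}$: since $M_\omega\psi(z)\ge 1$ for $z\in S(I)$, one has $\mu(S(I))\le\|M_\omega\psi\|_{L^t(\mu)}^t\le\|M_\omega\|^t\omega(S(I))^{t/s}$. For sufficiency, I would first prove the weak-type estimate $M_\omega:L^1_\omega\to L^{t/s,\infty}(\mu)$ and then interpolate with the trivial $L^\infty_\omega$-to-$L^\infty(\mu)$ bound. Given $\psi\ge0$ and $\lambda>0$, each point $z$ of the level set $E_\lambda=\{M_\omega\psi>\lambda\}$ is contained in a Carleson square $S(I_z)$ with $\omega(S(I_z))<\lambda^{-1}\int_{S(I_z)}\psi\omega\,dA$. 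A Vitali argument on the underlying arcs $I_z\subset\T$ extracts a disjoint subfamily $\{S(I_k)\}$ with $E_\lambda\subset\bigcup_k S(5I_k)$. Using the Carleson hypothesis together with the doubling relation $\omega(S(5I))\asymp\omega(S(I))$, justified in the next paragraph, gives $\mu(S(5I_k))\lesssim\omega(S(I_k))^{t/s}$. Combining the disjointness of the $S(I_k)$ with the bound $\omega(S(I_k))<\lambda^{-1}\int_{S(I_k)}\psi\omega\,dA$ and the elementary inequality $\sum_k a_k^{t/s}\le(\sum_k a_k)^{t/s}$ (valid for $a_k\ge 0$ and $t/s\ge 1$), I obtain
$$
\mu(E_\lambda)\le\sum_k\mu(S(5I_k))\lesssim\sum_k\omega(S(I_k))^{t/s}\le\bigl(\lambda^{-1}\|\psi\|_{L^1_\omega}\bigr)^{t/s},
$$
which is the desired weak bound. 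Two-measure Marcinkiewicz interpolation applied at $\theta=1-1/s$ to the endpoints $L^1_\omega\to L^{t/s,\infty}(\mu)$ and $L^\infty_\omega\to L^\infty(\mu)$ then yields $M_\omega:L^s_\omega\to L^t(\mu)$ with the expected quantitative norm control.

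The main technical point is the doubling of $\omega$ on Carleson squares uniformly for $\omega\in\I\cup\R$, since such $\omega$ may be highly oscillatory (cf.\ the examples in Section~\ref{Sec:DefIUR}). However, the asymptotic identity $\omega(S(I))\asymp|I|\int_{1-|I|}^1\omega(r)\,dr$ combined with Lemma~\ref{le:condinte}(i)--(ii) immediately gives $\int_{1-5|I|}^1\omega(r)\,dr\lesssim\int_{1-|I|}^1\omega(r)\,dr$ and therefore $\omega(S(5I))\lesssim\omega(S(I))$ in both regimes. The remaining ingredients, namely the Vitali covering for Carleson squares and two-measure Marcinkiewicz interpolation, are standard, and the quantitative norm equivalences stated in the corollary are recovered by reversing the substitution $\psi=\varphi^{1/\alpha}$.
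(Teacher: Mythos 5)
Your argument is correct and follows essentially the same route as the paper: the paper also proves the weak $(1,q/p)$ estimate \eqref{eq:s4} via a Vitali-type covering of the arcs underlying the Carleson squares together with the doubling $\om(S(5I))\lesssim\om(S(I))$ supplied by Lemma~\ref{le:condinte}, and the necessity by testing on (essentially) characteristic functions of Carleson squares. The only cosmetic difference is that you invoke off-diagonal Marcinkiewicz interpolation between $L^1_\om\to L^{q/p,\infty}(\mu)$ and $L^\infty_\om\to L^\infty(\mu)$, whereas the paper carries out that interpolation by hand via the truncation $\psi_{\frac{1}{\alpha},s}$ and a layer-cake/Minkowski computation.
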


The proof of Theorem~\ref{th:cm} is presented in the next two
sections. In Section~\ref{SubSection:counterexample} we give a
counterexample that shows that Theorem~\ref{th:cm}(i) does not
remain valid for $\om\in\widetilde{\I}$ if Carleson squares are
replaced by pseudohyperbolic discs.

\section{Weighted maximal function $M_\om$}\index{weighted maximal
function}\index{$M_\om(\vp)$}

We begin with constructing a family of test functions that allows
us to show the necessity of conditions \eqref{eq:s1} and
\eqref{eq:s1compact}. To do so, we will need the following lemma.

\begin{lemma}\label{Lemma:Zhu-type}
\begin{itemize}
\item[\rm(i)] If $\om\in\R$, then there exists
$\gamma_0=\gamma_0(\om)$ such that
    \begin{equation}\label{84}
    \int_\D\frac{\om(z)}{|1-\overline{a}z|^{\gamma+1}}\,dA(z)\asymp\frac{\int_{|a|}^1\om(r)\,dr}{(1-|a|)^{\gamma}}\asymp\frac{\om(a)}{(1-|a|)^{\gamma-1}},\quad
    a\in\D,
    \end{equation}
for all $\gamma>\gamma_0$. \item[\rm(ii)] If $\om\in\I$, then
    \begin{equation}
    \int_\D\frac{\om(z)}{|1-\overline{a}z|^{\gamma+1}}\,dA(z)\asymp\frac{\int_{|a|}^1\om(r)\,dr}{(1-|a|)^{\gamma}},\quad
    a\in\D,
    \end{equation}
for all $\gamma>0$.
\end{itemize}
\end{lemma}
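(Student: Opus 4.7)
The plan is to pass to polar coordinates, use the classical angular integral estimate, and then reduce to the one-variable bounds that Lemma~\ref{le:nec1} already provides.

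First, I would write $z = \rho e^{i\theta}$ and use Fubini to get
\[
\int_\D\frac{\om(z)}{|1-\overline{a}z|^{\gamma+1}}\,dA(z)
= \int_0^1 2\rho\,\om(\rho)\cdot\frac{1}{2\pi}\int_0^{2\pi}\frac{d\theta}{|1-|a|\rho\,e^{i\theta}|^{\gamma+1}}\,d\rho.
\]
For $\gamma>0$ the classical Forelli--Rudin type estimate
\[
\int_0^{2\pi}\frac{d\theta}{|1-te^{i\theta}|^{\gamma+1}}\asymp\frac{1}{(1-t)^{\gamma}},\quad 0\le t<1,
\]
reduces both parts of the lemma to establishing
\begin{equation}\label{eq:plan1}
\int_0^1\frac{\rho\,\om(\rho)}{(1-|a|\rho)^{\gamma}}\,d\rho \asymp \frac{\int_{|a|}^1\om(s)\,ds}{(1-|a|)^{\gamma}}.
\end{equation}
We may assume $|a|\ge 1/2$, since for $|a|$ bounded away from $1$ both sides are comparable to $\int_0^1\om(s)\,ds$.

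Next I would use the standard comparison $1-|a|\rho\asymp (1-|a|)+(1-\rho)$ valid for $|a|\ge 1/2$, and split the integral in \eqref{eq:plan1} at $\rho=|a|$. On the interval $[|a|,1)$ we have $1-|a|\rho\asymp 1-|a|$, so that piece contributes exactly the right-hand side of \eqref{eq:plan1}, which gives the lower bound in both (i) and (ii). On the interval $[0,|a|]$ we use $1-|a|\rho\ge 1-\rho$ and see that the remaining task is to control
\[
(1-|a|)^{\gamma}\int_0^{|a|}\frac{\om(\rho)}{(1-\rho)^{\gamma}}\,d\rho \;=\; \int_0^{|a|}\left(\frac{1-|a|}{1-\rho}\right)^{\gamma}\om(\rho)\,d\rho
\]
by $\int_{|a|}^1\om(s)\,ds$. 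This is precisely the content of Lemma~\ref{le:nec1}: for $\om\in\I$ it holds for every $\gamma>0$, whereas for $\om\in\R$ it holds for $\gamma>\b$, where $\b=\b(\om)$ is the exponent in Lemma~\ref{le:condinte}(i). Thus in case (i) we take $\gamma_0=\b(\om)$, and both assertions concerning the first $\asymp$ follow.

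The second $\asymp$ in (i) is a direct consequence of regularity: since $\psi_\om(r)\asymp (1-r)$ for $\om\in\R$, we have
\[
\int_{|a|}^1\om(s)\,ds=\om(|a|)\psi_\om(|a|)\asymp\om(a)(1-|a|),
\]
and dividing by $(1-|a|)^{\gamma}$ yields $\om(a)/(1-|a|)^{\gamma-1}$. The only genuine difficulty lies in the tail estimate on $[0,|a|]$, but that is exactly where Lemma~\ref{le:nec1} was prepared to be applied; the difference between (i) and (ii) is nothing but the difference between its two parts, which is why $\gamma$ must exceed a threshold in the regular case but may be arbitrarily small in the rapidly increasing case.
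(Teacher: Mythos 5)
Your proof is correct and follows essentially the same route as the paper's: reduce to the radial integral $\int_0^1 \om(r)r\,dr/(1-|a|r)^{\gamma}$, split at $\rho=|a|$, get the lower bound from the outer piece where $1-|a|\rho\asymp 1-|a|$, and control the inner piece with Lemma~\ref{le:nec1} (whence the threshold $\gamma_0=\b(\om)$ in the regular case and none in the rapidly increasing case), with the second comparison in (i) coming from $\psi_\om(r)\asymp 1-r$. No gaps.
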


\begin{proof} (i) Let $\om\in\R$ and let $\gamma_0=\b(\omega)>0$ be the constant in
Lemma~\ref{le:condinte}(i). Then
    \begin{equation*}
    \begin{split}\label{eq:tf3}
    \int_\D\frac{\om(z)}{|1-\overline{a}z|^{\gamma+1}}\,dA(z)&\asymp\int_0^1\frac{\om(r)r\,dr}{\left(1-|a|r\right)^{\gamma}}
    \gtrsim\frac{\om(a)\psi_\om(|a|)}{\left(1-|a|\right)^{\gamma}}\asymp\frac{\om(a)}{(1-|a|)^{\gamma-1}}
    \end{split}
    \end{equation*}
and
    \begin{equation*}
    \begin{split}
    \int_\D\frac{\om(z)}{|1-\overline{a}z|^{\gamma+1}}\,dA(z)
    &\asymp
    \left(\int_0^{|a|}+\int_{|a|}^1\right)\frac{\om(r)r\,dr}{\left(1-|a|r\right)^{\gamma}}\lesssim\frac{\om(a)}{(1-|a|)^{\gamma-1}}
    \end{split}
    \end{equation*}
by Lemma~\ref{le:nec1}. These inequalities give~\eqref{84}.
Part~(ii) can be proved in a similar manner.
\end{proof}

Let now $\gamma=\gamma(\omega)>0$ be the constant in
Lemma~\ref{Lemma:Zhu-type}, and define
$F_{a,p}(z)=\left(\frac{1-|a|^2}{1-\overline{a}z}\right)^{\frac{\gamma+1}{p}}$.\index{$F_{a,p}$}
Lemma~\ref{Lemma:Zhu-type} immediately yields the following
result.

\begin{lemma}\label{testfunctions1}
If $\omega\in\I\cup\R$, then for each $a\in \D$ and $0<p<\infty$
there exists a function $F_{a,p}\in\H(\D)$ such that
    \begin{equation}\label{eq:tf1}
    |F_{a,p}(z)|\asymp 1,\quad z\in S(a),\index{$F_{a,p}$}
    \end{equation}
and
    \begin{equation}\label{eq:tf2}
    \|F_{a,p}\|_{A^p_\om}^p\asymp\om\left(S(a)\right).
    \end{equation}
\end{lemma}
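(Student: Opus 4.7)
The plan is to use the explicit formula
\[
F_{a,p}(z)=\left(\frac{1-|a|^2}{1-\overline{a}z}\right)^{(\gamma+1)/p},
\]
where $\gamma=\gamma(\om)>0$ is chosen large enough for Lemma~\ref{Lemma:Zhu-type} to apply (in the case $\om\in\R$ we take $\gamma>\gamma_0(\om)$, while any $\gamma>0$ works for $\om\in\I$). Both assertions then reduce to routine calculations already implicit in the excerpt.

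For \eqref{eq:tf1}, I would show the elementary geometric estimate $|1-\overline{a}z|\asymp 1-|a|$ uniformly for $z\in S(a)$. Writing $a=|a|e^{i\t_a}$ and $z=re^{i\t}$ with $e^{i\t}\in I_a$ and $1-|I_a|\le r<1$, the inequality $|1-\overline{a}z|\ge 1-|a|r\ge 1-|a|$ gives the lower bound, while
\[
|1-\overline{a}z|\le (1-|a|r)+|a|\,|1-e^{i(\t-\t_a)}|\lesssim(1-|a|)+|a|\,|\t-\t_a|\lesssim 1-|a|,
\]
since $|\t-\t_a|\le(1-|a|)/2$ by the definition of $I_a$. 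Raising to the power $(\gamma+1)/p$ and taking absolute values gives $|F_{a,p}(z)|\asymp 1$ on $S(a)$.

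For \eqref{eq:tf2}, I would expand
\[
\|F_{a,p}\|_{A^p_\om}^p=(1-|a|^2)^{\gamma+1}\int_\D\frac{\om(z)}{|1-\overline{a}z|^{\gamma+1}}\,dA(z),
\]
and apply Lemma~\ref{Lemma:Zhu-type} directly to obtain
\[
\|F_{a,p}\|_{A^p_\om}^p\asymp(1-|a|^2)^{\gamma+1}\cdot\frac{\int_{|a|}^1\om(r)\,dr}{(1-|a|)^{\gamma}}\asymp(1-|a|)\int_{|a|}^1\om(r)\,dr.
\]
On the other hand, using that $\om$ is radial and $|I_a|=1-|a|$,
\[
\om(S(a))=\frac{1-|a|}{\pi}\int_{|a|}^1 r\,\om(r)\,dr,
\]
which for $|a|\ge\tfrac12$ is $\asymp(1-|a|)\int_{|a|}^1\om(r)\,dr$, matching the previous display. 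The range $|a|<\tfrac12$ is trivial since all three quantities $|F_{a,p}|$, $\|F_{a,p}\|_{A^p_\om}^p$, and $\om(S(a))$ are then bounded above and below by positive constants depending only on $p$ and $\om$.

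There is no real obstacle here: the only nontrivial input is Lemma~\ref{Lemma:Zhu-type}, which was established precisely with this kind of test-function construction in mind. The care required is only in matching the exponent $\gamma$ to the hypothesis on $\om$ (the class $\R$ imposes the lower bound $\gamma>\gamma_0$, whereas $\om\in\I$ admits any $\gamma>0$), but in either case an admissible choice exists.
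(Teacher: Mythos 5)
Your proposal is correct and follows exactly the paper's route: the paper defines the same test function $F_{a,p}(z)=\bigl(\frac{1-|a|^2}{1-\overline{a}z}\bigr)^{(\gamma+1)/p}$ with $\gamma$ from Lemma~\ref{Lemma:Zhu-type} and notes that the lemma immediately yields both assertions. You have merely written out the routine details (the estimate $|1-\overline{a}z|\asymp 1-|a|$ on $S(a)$ and the identity $\om(S(a))=\frac{1-|a|}{\pi}\int_{|a|}^1\om(r)r\,dr$) that the paper leaves implicit.
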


To prove the sufficiency of conditions \eqref{eq:s1} and
\eqref{eq:s1compact} in Theorem~\ref{th:cm} ideas
from~\cite{CarlesonL58,Duren1970,Garnett1981,HormanderL67} are
used. The following lemma plays an important role in the proof,
but it is also of independent interest. It will be also used in
Chapter~\ref{Sec:Schatten} when the norms of reproducing kernels
in the Hilbert space $A^2_\omega$ are estimated, see
Lemma~\ref{kernels}.

\begin{lemma}\label{le:suf1}
Let $0<\alpha<\infty$ and $\om\in\I\cup\R$. Then there exists a
constant $C=C(\a,\omega)>0$ such that
    \begin{equation}\label{eq:s3}\index{weighted maximal function}\index{$M_\om(\vp)$}
    |f(z)|^\alpha\le CM_{\om}(|f|^\alpha)(z),\quad z\in\D,
    \end{equation}
for all $f\in\H(\D)$.
\end{lemma}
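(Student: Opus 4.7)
The plan is to find, for each $z\in\D$, a specific Carleson square $S(I_z)$ with $z\in S(I_z)$ such that
\begin{equation*}
|f(z)|^\alpha\cdot\omega(S(I_z))\le C\int_{S(I_z)}|f(\xi)|^\alpha\omega(\xi)\,dA(\xi),
\end{equation*}
whence the claimed pointwise inequality will follow at once from the definition of $M_\omega$. The case $|z|\le 1/2$ is handled by elementary estimates on a compact subset of $\D$ together with a choice of $|I_z|$ bounded below by a constant, so I would concentrate on $|z|\ge 1/2$.

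For such $z$, I would take $I_z\subset\T$ to be the arc centered at $z/|z|$ with $|I_z|=K(1-|z|)$ for a constant $K=K(\omega)\ge 2$ to be fixed. This ensures $z$ lies in the interior of $S(I_z)$, and a Euclidean disc $D(z,c(1-|z|))$ fits inside $S(I_z)$ for some $c=c(K)>0$. Since $|f|^\alpha$ is subharmonic, the sub-mean value inequality gives
\begin{equation*}
|f(z)|^\alpha \le \frac{C_1}{(1-|z|)^2}\int_{D(z,c(1-|z|))}|f(\xi)|^\alpha\,dA(\xi).
\end{equation*}
A radial computation in polar coordinates shows that for $\om\in\I\cup\R$ one has $\omega(S(I_z))\asymp (1-|z|)\int_{|z|}^1\omega(s)\,ds$, a quantity which by Lemma~\ref{le:condinte} varies smoothly with $|z|$ even when $\omega$ itself is strongly oscillatory.

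The remaining step is to compare the unweighted integral on the Euclidean disc with the $\omega$-weighted integral on $S(I_z)$. For $\om\in\R$ this is straightforward: the local smoothness \eqref{eq:r2} gives $\omega(\xi)\asymp\omega(z)$ throughout $S(I_z)$, and combined with $\omega(S(I_z))\asymp(1-|z|)^2\omega(z)$ one recovers the claim. The main obstacle is $\om\in\I$ with strong local oscillation, where no pointwise comparison $\omega(\xi)\asymp\omega(z)$ on $S(I_z)$ is available. I would overcome this by employing subharmonicity in an averaged form: starting from $|f(z)|^\alpha\le (2\pi)^{-1}\int_0^{2\pi}|f(z+se^{it})|^\alpha\,dt$ for each $s\in(0,1-|z|)$, multiplying by a radial weight built from $\omega$, and integrating in $s$ using Fubini together with Lemma~\ref{le:condinte}(ii), the comparison reduces to integrated quantities of the form $\int\omega(s)\,ds$, which remain controllable under oscillation. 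This yields the bound with a constant depending only on $\alpha$ and $\omega$.
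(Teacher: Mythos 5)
Your treatment of the case $\om\in\R$ is sound and is essentially the paper's argument: subharmonicity on a Euclidean disc of radius comparable to $1-|z|$, then the local comparability \eqref{eq:r2} together with \eqref{eq:r1} to pass from the unweighted average to the $\om$-weighted average over a Carleson square. The gap is in the case $\om\in\I$. You propose to average $|f|^\alpha$ over circles $\{z+se^{it}\}$ \emph{centered at $z$}, multiply by a radial weight $w(s)$ and integrate in $s$. Whatever $w$ you choose, this produces on the right-hand side an integral of $|f(\xi)|^\alpha$ against a measure that depends only on $|\xi-z|$, and since $|f|^\alpha$ can concentrate near any single point, dominating that integral by $C\int_{S(I)}|f|^\alpha\om\,dA$ forces the pointwise bound $w(|\xi-z|)/|\xi-z|\le C\,\om(|\xi|)$ on the disc. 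But $\om$ is radial about the origin, not about $z$: every circle $|\xi-z|=s$ (with $s<2|z|$) passes through points with $|\xi|=|z|$, so necessarily $w(s)/s\le C\,\om(|z|)$ for all $s$. For an oscillatory weight such as \eqref{pesomalo2}, $\om(|z|)$ can be as small as $e^{-e^{1/(1-|z|)}}$ while $\om(S(I_z))/(1-|z|)^2\asymp(1-|z|)^{-1}\int_{|z|}^1\om(s)\,ds$ decays only logarithmically, so the normalizing factor $\int_0^{1-|z|}w(s)\,ds$ is doubly exponentially too small and no uniform constant $C(\a,\om)$ exists. Fubini and Lemma~\ref{le:condinte}(ii) cannot repair this, because they only reorganize the same integral; the obstruction is geometric.

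The paper circumvents this by averaging over circles \emph{centered at the origin}: for $r<\rho<1$ one has $|f(re^{i\t})|^\alpha\le\int_{-\pi}^{\pi}P(r/\rho,t)|f(\rho e^{i(t+\t)})|^\alpha\,dt$ via the Poisson kernel, and multiplying by $(\rho-r)\om(\rho)\rho\,d\rho$ and integrating in $\rho$ now reproduces exactly the measure $\om(\xi)\,dA(\xi)$, with no pointwise comparison of $\om$ at different radii ever needed. The price is that the Poisson kernel is not uniformly bounded; this is handled by splitting the circle into dyadic arcs $G_n$ of angular width $\asymp 2^n(1-r)$, on which $P(r/\rho,\cdot)\lesssim 4^{-n}(1-r/\rho)^{-1}$, so that the resulting contributions are averages over the nested Carleson squares $S(J_n(\t))$ weighted by $2^{-n(1-\b)}$, summable by Lemma~\ref{le:condinte}. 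To complete your proof you would need to replace the circles about $z$ by this (or an equivalent) origin-centered averaging.
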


\begin{proof} Let $0<\alpha<\infty$ and $f\in\H(\D)$. Let
first $\om\in\R$. Let $a\in\D$, and assume without loss of
generality that $|a|>1/2$. Set $a^\star=\frac{3|a|-1}{2}e^{i\arg
a}$ so that $D(a,\frac12(1-|a|))\subset S(a^\star)$. This
inclusion together with the subharmonicity property of $|f|^\a$,
\eqref{eq:r1} and \eqref{eq:r2} give
    \begin{equation}\label{le:suf11}\index{weighted maximal
function}\index{$M_\om(\vp)$}
    \begin{split}
    |f(a)|^\alpha &\lesssim\frac{1}{\om(a)(1-|a|)^2}\int_{D(a,\frac12(1-|a|))}|f(z)|^\alpha\om(z)\,dA(z)\nonumber\\
    &\lesssim\frac{1}{\om(a^\star)(1-|a^\star|)^2}\int_{S(a^\star)}|f(z)|^\alpha\om(z)\,dA(z)\nonumber\\
    &\asymp\frac{1}{\om(S(a^\star))}\int_{S(a^\star)}|f(z)|^\alpha\om(z)\,dA(z)\le
    M_\omega(|f|^\alpha)(a),
    \end{split}
    \end{equation} which is the desired inequality for
$\omega\in\R$.

Let now $\om\in\I$. It again suffices to prove the assertion for
the points $re^{i\t}\in\D$ with $r>\frac12$. If $r<\rho<1$, then
    \begin{equation*}
    \begin{split}
    |f(re^{i\t})|^\alpha
    &\le\frac{1}{2\pi}\int_{-\pi}^{\pi}\frac{1-(\frac{r}{\rho})^2}{|1-\frac{r}{\rho}e^{it}|^2}|f(\rho
    e^{i(t+\t)})|^\alpha\,dt\\
    &=\int_{-\pi}^{\pi}P\left(\frac{r}{\rho},t\right)|f(\rho e^{i(t+\t)})|^\alpha\,dt,
    \end{split}
    \end{equation*}
where
    $$
    P(r,t)=\frac{1}{2\pi}\frac{1-r^2}{|1-re^{it}|^2},
    \quad0<r<1,\index{$P(r,t)$}
    $$
is the Poisson kernel.\index{Poisson kernel} Set
$t_n=2^{n-1}(1-r)$ and $J_n=[-t_n,t_n]$ for $n=0,1,\ldots,N+1$,
where $N$ is the largest natural number such that $t_N<\frac12$.
Further, set $G_0=J_0$, $G_n=J_n\setminus J_{n-1}$ for
$n=1,\ldots,N$, and $G_{N+1}=[-\pi,\pi]\setminus J_N$. Then
    \begin{equation*}
    \begin{split}
    |f(re^{i\t})|^\alpha
    &\le\sum_{n=0}^{N+1}\int_{G_n}P\left(\frac{r}{\rho},t\right)|f(\rho
    e^{i(t+\t)})|^\alpha dt\\
    &\le\sum_{n=0}^{N+1}P\left(\frac{r}{\rho},t_{n-1}\right)\int_{G_n}|f(\rho
    e^{i(t+\t)})|^\alpha dt\\
    &\lesssim\frac{1}{1-\frac{r}{\rho}}\sum_{n=0}^{N+1}4^{-n}\int_{G_n}|f(\rho
    e^{i(t+\t)})|^\alpha dt,
    \end{split}
    \end{equation*}
and therefore
    \begin{eqnarray*}
    &&|f(re^{i\t})|^\alpha
    (1-r)\int_{(1+r)/2}^1\omega(\rho)\rho\,d\rho
    \le2\int_r^1|f(re^{i\t})|^\alpha(\rho-r)\omega(\rho)\rho\,d\rho\\
    &&\lesssim\sum_{n=0}^{N+1}4^{-n}\int_{r}^1\int_{G_n}\left|f\left(\rho
    e^{i(t+\t)}\right)\right|^\alpha dt\,\omega(\rho)\rho^2\,d\rho.
    \end{eqnarray*}
It follows that
    \begin{equation*}
    \begin{split}
    |f(re^{i\t})|^\alpha
    &\lesssim\sum_{n=0}^{N}2^{-n}
    \frac{\int_{r}^1\int_{-t_n}^{t_n}\left|f\left(\rho
    e^{i(t+\t)}\right)\right|^\alpha dt\,\omega(\rho)\rho\,d\rho}{\int_{-t_n}^{t_n}\int_{(1+r)/2}^1\omega(\rho)\rho\,
    d\rho\,dt}\\
    &\quad+2^{-N}\frac{\int_{r}^1\int_{-\pi}^{\pi}\left|f\left(\rho
    e^{it}\right)\right|^\alpha
    dt\omega(\rho)\rho\,d\rho}{\int_{-\pi}^{\pi}\int_{(1+r)/2}^1\omega(\rho)\rho
    d\rho\,dt}\\
    &\lesssim\sum_{n=0}^{N}2^{-n}
    \frac{\int_{1-t_{n+1}}^1\int_{-t_n}^{t_n}\left|f\left(\rho
    e^{i(t+\t)}\right)\right|^\alpha dt\,\omega(\rho)\rho\,d\rho}{\int_{-t_n}^{t_n}\int_{(1+r)/2}^1\omega(\rho)\rho\,
    d\rho\,dt}\\
    &\quad+2^{-N}\frac{\int_{0}^1\int_{-\pi}^{\pi}\left|f\left(\rho
    e^{it}\right)\right|^\alpha
    dt\omega(\rho)\rho\,d\rho}{\int_{-\pi}^{\pi}\int_{(1+r)/2}^1\omega(\rho)\rho
    d\rho\,dt},
    \end{split}
    \end{equation*}
where the last step is a consequence of the inequalities
$0<1-t_{n+1}\le r$. Denoting the interval centered at $e^{i\t}$
and of the same length as $J_n$ by $J_n(\t)$, and applying
Lemma~\ref{le:condinte}, with $\b\in(0,1)$ fixed, to the
denominators, we obtain
    \begin{equation*}
    \begin{split}
    |f(re^{i\t})|^\alpha&\lesssim\sum_{n=0}^{N}2^{-n(1-\b)}
    \frac{\int_{S(J_n(\t))}\left|f(z)\right|^\alpha\omega(z)\,dA(z)}{\omega(S(J_n(\t)))}\\
    &\quad+2^{-N(1-\b)}\frac{\int_{\D}|f(z)|^\alpha\omega(z)\,dA(z)}{\omega(\D)}\\
    &\lesssim\left(\sum_{n=0}^\infty2^{-n(1-\beta)}\right)M_{\om}(|f|^\alpha)(re^{i\t})
    \lesssim M_{\om}(|f|^\alpha)(re^{i\t}),
    \end{split}
    \end{equation*}
which finishes the proof for $\om\in\I$.
\end{proof}

\section{Proof of the main result}

\subsection{Boundedness}

Let $0<p\le q<\infty$ and $\om\in\I\cup\R$, and assume first that
$\mu$ is a $q$-Carleson measure for $A^p_\om$. Consider the test
functions $F_{a,p}$\index{$F_{a,p}$} defined just before
Lemma~\ref{testfunctions1}. Then the assumption together with
relations \eqref{eq:tf1} and \eqref{eq:tf2} yield
    \begin{equation*}\index{$F_{a,p}$}
    \begin{split}
    \mu(S(a))&\lesssim\int_{S(a)}|F_{a,p}(z)|^q\,d\mu(z)\le\int_{\D}|F_{a,p}(z)|^q\,d\mu(z)\lesssim\|F_{a,p}\|_{A^p_\om}^q\lesssim\om\left(S(a)\right)^\frac{q}{p}
    \end{split}
    \end{equation*}
for all $a\in\D$, and thus $\mu$ satisfies \eqref{eq:s1}.

Conversely, let $\mu$ be a positive Borel measure on $\D$ such
that \eqref{eq:s1} is satisfied. We begin with proving that there
exists a constant $K=K(p,q,\omega)>0$ such that the $L^1_\om$-weak
type inequality\index{$L^1_\om$-weak type inequality}
    \begin{equation}\label{eq:s4}\index{weighted maximal function}\index{$M_\om(\vp)$}
    \mu\left(E_{s}\right)\le
    Ks^{-\frac{q}{p}}\|\vp\|_{L^1_\om}^\frac{q}{p},\quad E_s=\left\{z\in\D: M_{\om}(\vp)(z)>s
    \right\},
    \end{equation}
is valid for all $\varphi\in L^1_\om$ and $0<s<\infty$.

If $E_s=\emptyset$, then \eqref{eq:s4} is clearly satisfied. If
$E_s\not=\emptyset$, then recall that
$I_z=\{e^{i\t}:|\arg(ze^{-i\t})|<(1-|z|)/2\}$ and $S(z)=S(I_z)$,
and define for each $\e>0$ the sets
    $$
    A_s^{\e}=\left\{z\in\D:\, \int_{S(I_z)}|\vp(\xi)|\om(\xi)\,dA(\xi)>s
    \left(\e+\om(S(z)) \right) \right\}
    $$
and
    $$
    B_s^{\e}=\left\{z\in\D:\, I_z\subset I_u\,\text{for some $u\in A_s^{\e}$}\right\}.
    $$
The sets $B_s^{\e}$ expand as $\e\to 0^+$, and
    $$\index{weighted maximal function}\index{$M_\om(\vp)$}
    E_s=\left\{z\in\D: M_{\om}(\vp)(z)>s \right\}=\bigcup_{\e>0}B_s^{\e},
    $$
so
    \begin{equation}\label{eq:s5}
    \mu(E_s)=\lim_{\e\to0^+}\mu(B_s^{\e}).
    \end{equation}
We notice that for each $\e>0$ and $s>0$ there are finitely many
points $z_n\in A_s^{\e}$ such that the arcs $I_{z_n}$ are
disjoint. Namely, if there were infinitely many points $z_n\in
A_s^{\e}$ with this property, then the definition of $A_s^{\e}$
would yield
    \begin{equation}\label{eq:s6}
    s\sum_n[\e+\om(S(z))]\le\sum_n\int_{S(I_{z_n})}|\vp(\xi)|\om(\xi)\,dA(\xi)
    \le\|\vp\|_{L^1_\om},
    \end{equation}
and therefore
    $$
    \infty=s\sum_n\e\le\|\vp\|_{L^1_\om},
    $$
which is impossible because $\vp\in L^1_\om$.

We now use Covering lemma~\cite[p.~161]{Duren1970} to find
$z_1,\ldots,z_m\in A_s^{\e}$ such that the arcs $I_{z_n}$ are
disjoint and
    $$
    A_s^{\e}\subset\bigcup_{n=1}^m\left\{z:I_z\subset J_{z_n}\right\},
    $$
where $J_z$ is the arc centered at the same point as $I_z$ and of
length $5|I_z|$. It follows easily that
    \begin{equation}\label{eq:s7}
    B_s^{\e}\subset\bigcup_{n=1}^m\left\{z:I_z\subset J_{z_n}\right\}.
    \end{equation}
But now the assumption \eqref{eq:s1} and Lemma~\ref{le:condinte}
give
    \begin{equation*}\begin{split}
    \mu\left(\left\{z:I_z\subset J_{z_n}\right\}\right)
    &=\mu\left(\left\{z:S(z)\subset S(J_{z_n}) \right\}\right)\le\mu\left( S(J_{z_n})\right)\\
    &\lesssim\left(\om\left( S(J_{z_n})\right)\right)^\frac{q}p\lesssim\left(\om\left(S(z_n)\right)\right)^\frac{q}{p},\quad
n=1,\ldots,m.
    \end{split}
    \end{equation*}
This combined with \eqref{eq:s7} and \eqref{eq:s6} yield
    $$
    \mu(B_s^{\e})\lesssim\sum_{n=1}^m\left(\om\left(S(z_n)\right)\right)^\frac{q}{p}
    \le\left(\sum_{n=1}^m\om\left(S(z_n)\right)\right)^\frac{q}{p}\le s^{-\frac{q}{p}}
    \|\vp\|_{L^1_\om}^\frac{q}{p},
    $$
which together with \eqref{eq:s5} gives \eqref{eq:s4} for some
$K=K(p,q,\omega)$.

We will now use Lemma~\ref{le:suf1} and \eqref{eq:s4} to show that
$\mu$ is a $q$-Carleson measure for $A^p_\om$. To do this, fix
$\alpha>\frac{1}{p}$ and let $f\in A^p_\om$. For $s>0$, let
$|f|^{\frac{1}{\alpha}}=\psi_{\frac{1}{\alpha},s}+\chi_{\frac{1}{\alpha},s}$,
where
    \begin{equation*}
    \psi_{\frac{1}{\alpha},s}(z)=\left\{
        \begin{array}{rl}
        |f(z)|^{\frac{1}{\alpha}},&\quad\text{if}\;|f(z)|^{\frac{1}{\alpha}}>s/(2K)\\
        0,&\quad\textrm{otherwise}
        \end{array}\right.
    \end{equation*}
and $K$ is the constant in \eqref{eq:s4}, chosen such that
$K\ge1$. Since $p>\frac{1}{\alpha}$, the function
$\psi_{\frac{1}{\alpha},s}$ belongs to $L^1_\om$ for all $s>0$.
Moreover,
    $$
    M_{\om}(|f|^{\frac{1}{\alpha}})\le M_{\om}(\psi_{\frac{1}{\alpha},s})+M_{\om}(\chi_{\frac{1}{\alpha},s})\le
    M_{\om}(\psi_{\frac{1}{\alpha},s})+\frac{s}{2K},
    $$
and therefore
    \begin{equation}\label{eq:inclusion}
    \left\{z\in\D: M_{\om}(|f|^{\frac{1}{\alpha}})(z)>s\right\}
   \subset
   \left\{z\in\D: M_{\om}(\psi_{\frac{1}{\alpha},s})(z)>s/2\right\}.
    \end{equation}
Using Lemma~\ref{le:suf1}, the inclusion \eqref{eq:inclusion},
\eqref{eq:s4} and Minkowski's inequality in continuous form
(Fubini in the case $q=p$), we finally deduce
    \begin{equation*}
    \begin{split}
    \int_{\D}|f(z)|^q\,d\mu(z)&\lesssim
    \int_{\D}\left(M_{\om}(|f|^{\frac{1}{\alpha}})(z)\right)^{q\alpha}\,d\mu(z)\\
    &=q\alpha\int_0^\infty s^{q\alpha-1}
    \mu\left(\left\{z\in\D: M_{\om}(|f|^{\frac{1}{\alpha}})(z)>s \right\} \right)\,ds\\
    &\le q\alpha\int_0^\infty s^{q\alpha-1}
    \mu\left(\left\{z\in\D: M_{\om}(\psi_{\frac{1}{\alpha},s})(z)>s/2\right\}\right)\,ds\\
    &\lesssim
    \int_0^\infty s^{q\alpha-1-\frac{q}{p}}
    \|\psi_{\frac{1}{\alpha},s}\|_{L^1_\om}^\frac{q}p\,ds\\
    &=\int_0^\infty s^{q\alpha-1-\frac{q}{p}}
    \left(\int_{\left\{z:\,|f(z)|^{\frac{1}{\alpha}}>\frac{s}{2K}\right\}}
    |f(z)|^{\frac{1}{\alpha}}\om(z)\,dA(z)\right)^\frac{q}p\,ds\\
    &\le\left(\int_{\D}|f(z)|^{\frac{1}{\alpha}}\om(z)\left(\int_0^{2K|f(z)|^{\frac{1}{\alpha}}}s^{q\alpha-1-\frac{q}{p}}
    \,ds\right)^\frac{p}{q}\,dA(z)\right)^\frac{q}{p}\\
    &\lesssim\left(\int_{\D}|f(z)|^p\om(z)\,dA(z)\right)^\frac{q}{p}.
    \end{split}
    \end{equation*}
Therefore $\mu$ is a $q$-Carleson measure for $A^p_\om$, and the
proof of Theorem~\ref{th:cm}(i) is complete.\hfill$\Box$

We note that the proof of Theorem~\ref{th:cm}(i) also yields
Corollary~\ref{co:maxbou}.

\subsection{Compactness}

The proof is based on a well-known method that has been used in
several times in the existing literature, see, for example,
\cite{PP}. Let $0<p\le q<\infty$ and $\om\in\I\cup\R$, and assume
first that $I_d:\, A^p_\om\to L^q(\mu)$ is compact. For each
$a\in\D$, consider the function
    \begin{equation}\label{testfunctions}\index{$f_{a,p}$}
    f_{a,p}(z)=\frac{(1-|a|)^{\frac{\gamma+1}{p}}}{(1-\overline{a}z)^{\frac{\gamma+1}{p}}\om\left(S(a)\right)^{\frac1p}},\index{$f_{a,p}$}
    \end{equation}
where $\gamma>0$ is chosen large enough such that
$\sup_{a\in\D}\|f_{a,p}\|_{A^p_\om}<\infty$\index{$f_{a,p}$} by
the proof of Lemma~\ref{testfunctions1}. Therefore the closure of
set $\{f_{a,p}:a\in\D\}$ is compact in $L^q(\mu)$ by the
assumption, and hence standard arguments yield
    \begin{equation}\label{101}
    \lim_{r\to1^-}\int_{\D\setminus
    D(0,r)}|f_{a,p}(z)|^q\,d\mu(z)=0
    \end{equation}
uniformly in $a$. Moreover, the proof of Lemma~\ref{le:condinte}
gives
    $$
    \lim_{|a|\to1^-}\frac{(1-|a|)^{\gamma}}{\int_{|a|}^1\omega(s)s\,ds}=0,
    $$
if $\gamma>0$ is again large enough. So, if $\gamma$ is fixed
appropriately, then
    $$
    \lim_{|a|\to1^-}f_{a,p}(z)=\lim_{|a|\to1^-}\left(\frac{(1-|a|)^\gamma \pi}{(1-\overline{a}z)^{\gamma+1}\int_{|a|}^1\omega(s)s\,ds}\right)^\frac1p=0
    $$
uniformly on compact subsets of $\D$. By combining this with
\eqref{101} we obtain
$\lim_{|a|\to1^-}\|f_{a,p}\|^q_{L^q(\mu)}=0$, and consequently,
    $$
    0=\lim_{|a|\to1^-}\|f_{a,p}\|^q_{L^q(\mu)}\ge\lim_{|a|\to1^-}\int_{S(a)}|f_{a,p}(z)|^q\,d\mu(z)
    \gtrsim\lim_{|a|\to1^-}\frac{\mu\left(S(a) \right)}{\left(\om\left(S(a)
    \right)\right)^\frac{q}p},
    $$
which proves \eqref{eq:s1compact}.

Conversely, assume that $\mu$ satisfies \eqref{eq:s1compact}, and
set
    $$
    d\mu_r(z)=\chi_{\left\{r\le |z|<1\right\}}(z)\,d\mu(z).
    $$
We claim that Theorem~\ref{th:cm}(i) implies
    $$
    \|h\|_{L^q(\mu_r)}\le K_{\mu_r}\|h\|_{A^p_\om},\quad h\in A^p_\om,
    $$
where
    \begin{equation}\label{kmur}
    \lim_{r\to 1^-}K_{\mu_r}
    =\lim_{r\to 1^-}\left(\sup_{I\subset\T}\frac{\mu_r(S(I))}{\left(\om\left(S(I)\right)\right)^{\frac{q}{p}}}\right)=0,
    \end{equation}
the proof of which is postponed for a moment. Let $\{f_n\}$ be a
bounded sequence in~$A^p_\om$. Then $\{f_n\}$ is uniformly bounded
on compact sets of $\D$ by Lemma~\ref{le:suf1}, and hence
$\{f_n\}$ constitutes a normal family by Montel's theorem.
Therefore we may extract a subsequence $\{f_{n_k}\}$ that
converges uniformly on compact sets of $\D$ to some $f\in\H(\D)$.
Fatou's lemma shows that $f\in A^p_\om$. Let now $\varepsilon>0$.
By \eqref{kmur} there exists $r_0\in(0,1)$ such that
$K_{\mu_r}\le\e$ for all $r\ge r_0$. Moreover, by the uniform
convergence on compact sets, we may choose $n_0\in\N$ such that
$|f_{n_k}(z)-f(z)|^q<\e$ for all $n_k\ge n_0$ and
$z\in\overline{D(0,r_0)}$. It follows that
    \begin{equation*}
    \begin{split}
    \|f_{n_k}-f\|^q_{L^q(\mu)}
    &\le\e\mu\left(\overline{D(0,r_0)}\right)+\|f_{n_k}-f\|^q_{L^q(\mu_{r_0})}\\
    &\le\e\mu\left(\D\right)+K_{\mu_{r_0}}\|f_{n_k}-f\|^q_{A^p_\om}\\
    &\le\e(\mu\left(\D\right)+C),\quad n_k\ge n_0,
    \end{split}
    \end{equation*}
for some constant $C>0$, and so $I_d:\, A^p_\om\to L^q(\mu)$ is
compact.

Finally, we will prove \eqref{kmur}. By the assumption
\eqref{eq:s1compact}, for a given $\e>0$, there exists
$r_0\in(0,1)$ such that
    \begin{equation}\label{1.}
    \frac{\mu(S(I))}{\left(\om\left(S(I)\right)\right)^{\frac{q}{p}}}<\e
    \end{equation}
for all intervals $I\subset\T$ with $|I|\le 1-r_0$. Therefore, if
$r\ge r_0$, we have
    \begin{equation}\label{2.}
    \sup_{|I|\le1-r_0}\frac{\mu_r(S(I))}{\left(\om\left(S(I)\right)\right)^{\frac{q}{p}}}
    \le\sup_{|I|\le1-r_0}\frac{\mu(S(I))}{\left(\om\left(S(I)\right)\right)^{\frac{q}{p}}}\le\e.
    \end{equation}
If $|I|>1-r_0$, we choose $n=n(I)\in\N$ such that
$(n-1)(1-r_0)<|I|\le n(1-r_0)$, and let $I_k\subset\T$ be
intervals satisfying $|I_k|=1-r_0$ for all $k=1,\ldots,n$, and
$I\subset\cup_{k=1}^n I_k$. If now $r\ge r_0$, then \eqref{1.}
yields
    \begin{equation*}
    \begin{split}
    \mu_r(S(I))&\le\mu_{r_0}(S(I))\le\sum_{k=1}^n\mu_{r_0}(S(I_k))
    \le\e\sum_{k=1}^n\left(\om\left(S(I_k)\right)\right)^{\frac{q}{p}}\\
    &=\e n\left( (1-r_0)\int_{r_0}^1s\om(s)\,ds\right)^{\frac{q}{p}}
    \le\e\left(n
    (1-r_0)\int_{r_0}^1s\om(s)\,ds\right)^{\frac{q}{p}}\\
    &\le2^{\frac{q}{p}}\e\left(|I|\int_{r_0}^1s\om(s)\,ds\right)^{\frac{q}{p}}
    \le2^{\frac{q}{p}}\e\left(\om\left(S(I)\right)\right)^{\frac{q}{p}},\quad
    |I|>1-r_0.
    \end{split}
    \end{equation*}
This together with \eqref{2.} gives \eqref{kmur}. \hfill$\Box$

\subsection{Counterexample}\label{SubSection:counterexample}

Theorem~3.1 in \cite{OC}, Theorem~\ref{th:cm}(i) and
Lemma~\ref{le:RAp}(i) imply that condition \eqref{eq:s1} is
equivalent to
    \begin{equation}\label{eq:s1bolas}
    \sup_{a\in\D}\frac{\mu\left(D(a,\b(1-|a|)) \right)}{\left(\om\left(D(a,\b(1-|a|))
    \right)\right)^\frac{q}p}<\infty,\quad \b\in(0,1),
    \end{equation}
for all $q\ge p$ and $\om\in\R$. However, this is no longer true
if $\om\in\widetilde{\I}$. Indeed, one can show that
\eqref{eq:s1bolas} is a sufficient condition for $\mu$ to be a
$q$-Carleson measure for $A^p_{\om}$, but it turns out that it is
not necessary.

\begin{proposition}
Let $0<p\le q<\infty$ and $\om\in\widetilde{\I}$. Then there
exists a $q$-Carleson measure for $A^p_\om$ which does not
satisfy~\eqref{eq:s1bolas}.
\end{proposition}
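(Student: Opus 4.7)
The plan is to build a discrete measure $\mu = \sum_{n\ge 1} c_n \delta_{a_n}$ supported on the radial sequence $a_n := 1 - 2^{-n}$ on the positive real axis, with point masses
$$
c_n := \om(S(I_{a_n}))^{q/p} \asymp \Bigl(2^{-n}\int_{a_n}^1\om(s)\,ds\Bigr)^{q/p}
$$
calibrated to saturate the Carleson square condition \eqref{eq:s1}. The driving observation is that for every $\om\in\widetilde{\I}$, combining \eqref{eq:r2} with \eqref{eq:I} gives
$$
\frac{\om(S(a))}{\om(D(a,\b(1-|a|)))} \asymp \frac{\psi_\om(|a|)}{1-|a|} \longrightarrow \infty, \quad |a|\to 1^-,
$$
so any measure that saturates the Carleson condition on the squares $S(I_{a_n})$ is automatically forced to overwhelm the corresponding pseudohyperbolic condition by exactly this factor, which blows up along $(a_n)$.

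The first task is to verify, via Theorem~\ref{th:cm}(i), that $\mu$ is a $q$-Carleson measure for $A^p_\om$. Since $\arg a_n = 0$ for every $n$, an interval $I\subset\T$ of length $\ell$ has $a_n \in S(I)$ only when $1\in I$ and $n \ge n_0 := \lceil \log_2(1/\ell)\rceil$; otherwise $\mu(S(I)) = 0$ trivially. When $1 \in I$, plain monotonicity of the tail integrals yields $c_m \le 2^{-(m-n_0)q/p}\, c_{n_0}$ for every $m\ge n_0$, and since $q/p \ge 1$ the geometric series converges, giving $\mu(S(I)) = \sum_{m\ge n_0} c_m \lesssim c_{n_0}$. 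Finally, Lemma~\ref{le:condinte}(ii) ensures that $\int_{1-\ell}^1\om \asymp \int_{a_{n_0}}^1 \om$ (their lower endpoints differ by at most a factor of $2$), so $c_{n_0} \asymp \om(S(I))^{q/p}$, completing the verification of \eqref{eq:s1}.

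Finally, to see that \eqref{eq:s1bolas} fails, I would observe that $a_n \in D(a_n,\b(1-a_n))$ forces $\mu(D(a_n,\b(1-a_n))) \ge c_n$, whereas $\om \in \widetilde{\I}$ satisfies \eqref{eq:r2}, which yields $\om(D(a_n,\b(1-a_n))) \asymp (1-a_n)^2 \om(a_n) = 4^{-n}\om(a_n)$; this is the one spot where the extra regularity of $\widetilde{\I}$ (beyond $\I$) is used. Combining these estimates,
$$
\frac{\mu(D(a_n,\b(1-a_n)))}{\om(D(a_n,\b(1-a_n)))^{q/p}} \gtrsim \frac{\bigl(2^{-n}\int_{a_n}^1\om\bigr)^{q/p}}{(4^{-n}\om(a_n))^{q/p}} = \Bigl(\frac{\psi_\om(a_n)}{1-a_n}\Bigr)^{q/p} \to \infty
$$
as $n\to\infty$ by \eqref{eq:I}, contradicting \eqref{eq:s1bolas}. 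The only nontrivial step is the scale-invariance used in the Carleson verification, i.e.\ ensuring that $\om(S(I))$ depends on $I$ essentially only through $|I|$, and this is precisely what Lemma~\ref{le:condinte}(ii) supplies. Everything else is bookkeeping along the radial sequence $(a_n)$.
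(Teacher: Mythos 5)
Your construction is correct, and it follows the same basic strategy as the paper's proof: a positive measure concentrated on the radius $[0,1)$, calibrated so that $\mu(S(I))\asymp\om(S(I))^{q/p}$, which is then forced to violate \eqref{eq:s1bolas} by exactly the factor $\bigl(\psi_\om(a)/(1-|a|)\bigr)^{q/p}\to\infty$. The difference is in the implementation. The paper uses the continuous density $d\mu(z)=(1-|z|)^{q/p-1}\bigl(\int_{|z|}^1\om\bigr)^{q/p}\chi_{[0,1)}(z)\,d|z|$, whereas you discretize into atoms $c_n\delta_{a_n}$ at the dyadic points $a_n=1-2^{-n}$. Your version buys two simplifications: the Carleson upper bound becomes a geometric series controlled by Lemma~\ref{le:condinte}(ii) alone (the paper additionally establishes the matching lower bound $\mu(S(|a|))\gtrsim\om(S(a))^{q/p}$ via the essential monotonicity of $\int_r^1\om/(1-r)^\b$, which is not actually needed for the proposition); and, more significantly, the lower bound $\mu(D(a_n,\b(1-a_n)))\ge c_n$ is trivial because the atom sits at the centre of the disc, so you bypass the paper's use of Lemma~\ref{le:cuadrado-tienda}, Lemma~\ref{le:sc1} and the local regularity \eqref{eq:r2} of $\om^\star$ to estimate the $\mu$-mass of the disc. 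The only place either argument genuinely needs $\om\in\widetilde{\I}$ rather than $\om\in\I$ is the estimate $\om(D(a,\b(1-|a|)))\asymp(1-|a|)^2\om(a)$, and you identify this correctly. All steps check out; the argument is complete.
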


\begin{proof}
Let us consider the measure
    $$
    d\mu(z)=(1-|z|)^{\frac{q}{p}-1}\left(\int_{|z|}^1\om(s)\,ds\right)^{\frac{q}{p}}\chi_{[0,1)}(z)d|z|,\quad
    \a\in(1,\infty),
    $$
which is supported on $[0,1)\subset\D$. It is clear that
    $$
    \mu(S(|a|))\le \left(\int_{|a|}^1\om(s)\,ds\right)^{\frac{q}{p}}\int_{|a|}^1(1-s)^{\frac{q}{p}-1}\,ds\asymp
    \left(\om(S(a))\right)^\frac{q}{p},\quad |a|\in[0,1).
    $$
Moreover, bearing in mind the proof of Lemma~\ref{le:condinte},
for each $\beta>0$, there exists $r_0=r_0(\b)$ such that the
function $h_{1/\beta}(r)=\frac{\int_{r}^1\om(s)\,ds}{(1-r)^\beta}$
is increasing on $[r_0,1)$. Therefore $h_{1/\beta}(r)$ is
essentially increasing on $[0,1)$, and hence
    $$
    \mu(S(|a|))\gtrsim\left(\frac{\int_{|a|}^1\om(s)\,ds}{(1-|a|)^\beta}\right)^{q/p}
    \int_{|a|}^1(1-s)^{\frac{q}{p}(1+\beta)-1}\,ds\asymp
    \left(\om(S(a))\right)^\frac{q}{p}.
    $$
Theorem~\ref{th:cm} now shows that $\mu$ is a $q$-Carleson measure
for $A^p_{\om}$.

It remains to show that \eqref{eq:s1bolas} fails. If $|a|$ is
sufficiently close to $1$, then, by
Lemma~\ref{le:cuadrado-tienda}, Lemma~\ref{le:sc1} and
\eqref{eq:r2} for $\om^\star$, we obtain
    \begin{equation*}
    \begin{split}
    \mu\left(D(|a|,\b(1-|a|))\right)
    &\asymp\int_{|a|-\b(1-|a|)}^{|a|+\b(1-|a|)}\frac{\left(\om^\star(s)\right)^{q/p}}{1-s}\,ds\\
    &\asymp\left(\om^\star(a)\right)^{q/p}\int_{|a|-\b(1-|a|)}^{|a|+\b(1-|a|)}\frac{ds}{1-s}
    \asymp\left(\om(S(a))\right)^\frac{q}{p}.
    \end{split}\end{equation*}
Moreover, since $\om\in\widetilde{\I}$ by the assumption,
\eqref{eq:r2} gives
    $$
    \om(D(|a|,\b(1-|a|)))\asymp(1-|a|)^2\om(a).
    $$
Therefore
    $$
    \lim_{|a|\to 1^{-}}\frac{\mu\left(D(|a|,\b(1-|a|))\right)}{ \om(D(|a|,\b(1-|a|)))^{q/p}}
    \asymp\lim_{|a|\to1^{-}}\left(\frac{\psi_\om(a)}{1-|a|}\right)^{q/p}=\infty,
    $$
because $\om\in\widetilde{\I}$, and consequently
\eqref{eq:s1bolas} does not hold.
\end{proof}

\chapter{Factorization and Zeros of Functions in
$A^p_\om$}\label{sec:factorizacion}\index{factorization}\index{${\mathcal
Inv}$}\index{invariant weight}

The main purpose of this chapter is to establish the following
factorization theorem for functions in the weighted Bergman space
$A^p_\om$, under the assumption that $\om$ is invariant and the
polynomials are dense in $A^p_\om$. Recall that a weight
$\om:\D\to(0,\infty)$ is invariant if $\om(z)\asymp\om(a)$ in each
pseudohyperbolic disc $\De(a,r)$ of a fixed radius $r\in(0,1)$.
The invariant weights and their relation to those $\om$ for which
the polynomials are dense in~$A^p_\om$ were discussed in
Section~\ref{Sec:DensityOfPolynomials}.

\begin{theorem}\label{Thm:FactorizationBergman}\index{factorization}
Let $0<p<\infty$ and $\omega\in{\mathcal Inv}$ such that the
polynomials are dense in $A^p_\om$. Let $f\in A^p_\omega$, and let
$0<p_1,p_2<\infty$ such that $p^{-1}=p_1^{-1}+p_2^{-1}$. Then
there exist $f_1\in A^{p_1}_\omega$ and $f_2\in A^{p_2}_\omega$
such that $f=f_1\cdot f_2$ and
    \begin{equation}\label{Eq:NormEstimateForFactorization}
   \|f_1\|_{A^{p_1}_\omega}^p\cdot\|f_2\|_{A^{p_2}_\omega}^p\le\frac{p}{p_1}\|f_1\|_{A^{p_1}_\omega}^{p_1}+\frac{p}{p_2}\|f_2\|_{A^{p_2}_\omega}^{p_2}\le C\|f\|_{A^p_\omega}^p
    \end{equation}
for some constant $C=C(p_1,p_2,\omega)>0$.\index{${\mathcal
Inv}$}\index{invariant weight}
\end{theorem}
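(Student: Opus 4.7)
The left-hand inequality in \eqref{Eq:NormEstimateForFactorization} is automatic: setting $t_i = p/p_i$, so that $t_1+t_2=1$, the weighted AM--GM inequality $a^{t_1}b^{t_2}\le t_1 a+t_2 b$, applied to $a=\|f_1\|_{A^{p_1}_\omega}^{p_1}$ and $b=\|f_2\|_{A^{p_2}_\omega}^{p_2}$, yields
$$\|f_1\|_{A^{p_1}_\omega}^{p}\|f_2\|_{A^{p_2}_\omega}^{p}=a^{t_1}b^{t_2}\le \frac{p}{p_1}\|f_1\|_{A^{p_1}_\omega}^{p_1}+\frac{p}{p_2}\|f_2\|_{A^{p_2}_\omega}^{p_2}.$$
Hence the whole content of the theorem is to produce a factorization satisfying the rightmost inequality. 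Since polynomials are dense in $A^p_\omega$ by hypothesis, the strategy is to first prove the estimate for polynomials (where the set of zeros is finite and explicit), and then extend to general $f$ by a limiting argument.

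For the polynomial step I would implement Horowitz's probabilistic method. Given $P(z)=cz^{m_0}\prod_{k=1}^n(a_k-z)$ with $a_k\in\D\setminus\{0\}$, write each factor as
$$a_k-z=\psi_{a_k}(z)\,(1-\overline{a_k}z),\qquad \psi_{a_k}(z)=\frac{a_k-z}{1-\overline{a_k}z},$$
so that $|\psi_{a_k}|\le 1$ on $\D$, and introduce independent Bernoulli random variables $X_1,\dots,X_n$ with a success probability tailored to the pair $(p_1,p_2)$. Use the $X_k$'s to randomly distribute the two building blocks $\psi_{a_k}$ and $1-\overline{a_k}z$ (with deterministic fractional exponents summing to $1$ on each) between two analytic functions $P_1(z;X),P_2(z;X)$ in such a way that $P_1P_2\equiv P$ pointwise. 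By independence, $\mathbb{E}|P_j(z)|^{p_j}$ is then a product of per-zero expectations, and, with the parameters chosen correctly, I expect a pointwise estimate of the shape
$$\frac{p}{p_1}\mathbb{E}\,|P_1(z)|^{p_1}+\frac{p}{p_2}\mathbb{E}\,|P_2(z)|^{p_2}\lesssim |P(z)|^{p}\prod_{k=1}^n\bigl(1+\Phi_k(z)\bigr),$$
where each $\Phi_k$ is a bounded function on $\D$ concentrated on a pseudohyperbolic neighborhood of $a_k$.

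At this point the hypothesis $\omega\in{\mathcal Inv}$ is decisive: since $\omega(z)\asymp\omega(\zeta)$ on each pseudohyperbolic disc of fixed radius, integrating against $\omega$ lets one swap $\omega(z)$ for $\omega(a_k)$ on the support of $\Phi_k$, which absorbs the product over $k$ into a multiplicative constant depending only on $p_1,p_2,\omega$. This gives
$$\mathbb{E}\left[\frac{p}{p_1}\|P_1\|_{A^{p_1}_\omega}^{p_1}+\frac{p}{p_2}\|P_2\|_{A^{p_2}_\omega}^{p_2}\right]\le C(p_1,p_2,\omega)\,\|P\|_{A^p_\omega}^{p},$$
and the first-moment method then guarantees a deterministic realization $P=P_1\cdot P_2$ obeying the same bound. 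Finally, for general $f\in A^p_\omega$, choose polynomials $Q_n\to f$ in $A^p_\omega$, factor each $Q_n=Q_{n,1}\cdot Q_{n,2}$ by the polynomial case, and note that the uniform $A^{p_j}_\omega$-bounds on $\{Q_{n,j}\}$ give local uniform bounds via the standard subharmonic pointwise estimate valid for invariant $\omega$; Montel's theorem extracts subsequences $Q_{n,j}\to f_j\in\H(\D)$, one clearly has $f=f_1f_2$, and Fatou's lemma transports the norm bound to the limit.

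The main obstacle is the polynomial step: one has to engineer the random decomposition so that the per-zero expectation identity leaves behind an error that invariance of $\omega$ can swallow, with a constant independent of both the polynomial and its zero set. This is the point at which the invariance hypothesis genuinely earns its keep, because each Blaschke factor $\psi_{a_k}$ is only small in a pseudohyperbolic neighborhood of $a_k$, and without the comparison $\omega(z)\asymp\omega(a_k)$ on such neighborhoods the product structure would blow up as $n\to\infty$.
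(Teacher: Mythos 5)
Your overall architecture coincides with the paper's: AM--GM for the left inequality, Horowitz's probabilistic distribution of the Blaschke factors for functions with finitely many zeros, the first-moment method to extract one deterministic factorization from the expectation bound, and density of polynomials plus Montel and Fatou for the general case. The gap is in the one step that carries all the weight. You claim a pointwise estimate of the form
$$\frac{p}{p_1}\mathbb{E}\,|P_1(z)|^{p_1}+\frac{p}{p_2}\mathbb{E}\,|P_2(z)|^{p_2}\lesssim |P(z)|^{p}\prod_{k}\bigl(1+\Phi_k(z)\bigr)$$
with each $\Phi_k$ \emph{bounded} and \emph{concentrated on a pseudohyperbolic neighborhood of $a_k$}, and you then assert that invariance of $\omega$ absorbs the product into a constant by swapping $\omega(z)$ for $\omega(a_k)$ on the support of $\Phi_k$. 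Neither half of this is correct. The exact expectation identity is
$$\mathbb{E}\,|f_j(z)|^{p_j}=|f(z)|^{p}\prod_{k}\frac{\bigl(1-\tfrac{p}{p_j}\bigr)+\tfrac{p}{p_j}|\vp_{z_k}(z)|^{p_j}}{|\vp_{z_k}(z)|^{p}},$$
and each factor blows up like $|\vp_{z_k}(z)|^{-p}$ at the zero $z_k$ (only the vanishing of $|f|^p$ there saves the product), while away from $z_k$ it equals $1+O\bigl((1-|\vp_{z_k}(z)|)^2\bigr)$: it is $\ge1$ everywhere and is not supported near $a_k$. More importantly, even granting localization, a product of $n$ factors each exceeding $1$ on overlapping pseudohyperbolic neighborhoods cannot be bounded independently of $n$ by a factor-by-factor comparison of the weight; the accumulated product at a point $z$ close to many zeros is genuinely large, and no local property of $\omega$ controls it.

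The mechanism that actually works is global. Setting $g(\zeta)=|f(\zeta)|^p\prod_k(\cdots)$, Jensen's formula and two integrations by parts yield the identity $\log g(\zeta)=\int_\D\log|f(\vp_\zeta(z))|^p\,d\sigma(z)$ for an explicit probability measure $d\sigma$ on $\D$; one compares $d\sigma$ with normalized area measure using the monotonicity of integral means, applies Jensen's inequality, and invokes invariance in its \emph{integrated} form $\int_\D\log C(z)\,dA(z)<\infty$ from Lemma~\ref{Lemma:InvariantWeights}, after which Fubini and the change of variable with the kernel $|\vp_\zeta'(u)|^2(1-|\vp_\zeta(u)|^2)$ give $\|g\|_{L^1_\omega}\le C\|f\|_{A^p_\omega}^p$. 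This is precisely Lemma~\ref{Lemma:factorization}, and it is the step your proposal would have to supply; without it the polynomial case is not established and there is nothing to pass to the limit. (The limiting argument you sketch, and the extraction of a deterministic factorization from the expectation bound, are fine as stated.)
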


If $\om$ is radial, then polynomials are dense in $A^p_\om$, and
hence in this case the assumption on $\om$ in
Theorem~\ref{Thm:FactorizationBergman} reduces to the requirement
\eqref{eq:r2}.

Theorem~\ref{Thm:FactorizationBergman} is closely related to the
zero distribution of functions in~$A^p_\om$. Therefore, and also
for completeness, we put some attention to this matter. Our
results follow the line of those due to
Horowitz~\cite{Horzeros,Horzeros1,Horzeros2}. Roughly speaking we
will study basic properties of unions, subsets and the dependence
on $p$ of the zero sets of functions in $A^p_\om$. In contrast to
the factorization result, here the density of polynomials need not
to be assumed. A good number of the results established are
obtained as by-products of some key lemmas used in the proof of
Theorem~\ref{Thm:FactorizationBergman} and sharp conditions on
some products defined on the moduli of the zeros. However, we do
not venture into generalizing the theory, developed among others
by Korenblum~\cite{Kor}, Hedenmalm~\cite{HedStPeter} and
Seip~\cite{S1,S2}, and based on the use of densities defined in
terms of partial Blaschke sums, Stolz star domains and
Beurling-Carleson characteristic of the corresponding boundary
set, which might be needed in order to obtain more complete
results. Since the angular distribution of zeros plays a role in a
description of the zero sets of functions in the classical
weighted Bergman space~$A^p_\alpha$, it is natural to expect that
the same happens also in $A^p_\om$, when $\om\in\I\cup\R$. In this
chapter we will also briefly discuss the zero distribution of
functions in the Bergman-Nevanlinna classes.

\section{Factorization of functions in $A^p_\om$}\label{SubSec:factorization}

Most of the results and proofs in this section are inspired by
those of Horowitz~\cite{HorFacto}, and Horowitz and
Schnaps~\cite{HorSchna}. It is worth noticing that
in~\cite{HorSchna} the factorization of functions in $A^p_\om$ is
considered when $\om$ belongs to a certain subclass of decreasing
radial weights. Consequently, it does not cover the class
${\mathcal Inv}$,\index{${\mathcal Inv}$}\index{invariant weight}
and in particular it does not contain the set
$\widetilde{\I}\cup\R$. One part of our contribution consists of
detailed analysis on the constant $C=C(p_1,p_2,\omega)>0$
appearing in \eqref{Eq:NormEstimateForFactorization}, see
Corollary~\ref{cor:FactorizationBergman} below. Later on, in
Chapter~\ref{SecVolterra}, this corollary shows its importance
when the bounded integral operators $T_g:A^p_\om\to A^q_\om$ are
characterized.

For $-1<\a<\infty$, the \emph{Bergman-Nevanlinna
class}\index{Bergman-Nevanlinna class} $\BN_\a$\index{$\BN_\a$}
consists of those $f\in\H(\D)$ such that
    $$
    \int_\D\log^+|f(z)|(1-|z|^2)^\alpha\,dA(z)<\infty.
    $$
If $\{z_k\}$ are the zeros of $f\in\BN_\a$, then
    \begin{equation}\label{Eq:Bergman-Nevanlinna-Zeros}\index{$N(r,f)$}\index{$N(r)$}
    \begin{split}
    &\sum_{k}(1-|z_k|)^{2+\a}<\infty,\\
    n(r,f)&=\op\left(\frac{1}{(1-r)^{2+\a}}\right),\quad
    r\to1^-,\\\index{$n(r,f)$}\index{$n(r)$}
    N(r,f)&=\op\left(\frac{1}{(1-r)^{1+\a}}\right),\quad r\to1^-,
    \end{split}
    \end{equation}
where $n(r,f)$ denotes the number of zeros of $f$ in $D(0,r)$,
counted according to multiplicity, and
    $$
    N(r,f)=\int_0^r\frac{n(s)-n(0,f)}{s}\,ds+n(0,f)\log r
    $$ is the
integrated counting function. If no confusion arises with regards
to $f$, we will simply write $n(r)$ and $N(r)$ instead of $n(r,f)$
and $N(r,f)$. The well-known facts
\eqref{Eq:Bergman-Nevanlinna-Zeros} are consequences of Jensen's
formula,\index{Jensen's formula} which states that
    \begin{equation}\label{Eq:Jensen-Formula}
    \log|f(0)|+\sum_{k=1}^n\log\frac{r}{|z_k|}=\frac{1}{2\pi}\int_0^{2\pi}\log|f(re^{i\t})|d\t,\quad0<r<1,
    \end{equation}
where $\{z_k\}$ are the zeros of $f$ in the disc $D(0,r)$,
repeated according to multiplicity and ordered by increasing
moduli. See Lemma~\ref{Lemma:NBzeros} for a more general result
from which \eqref{Eq:Bergman-Nevanlinna-Zeros} follows by choosing
$\om(r)=(1-r^2)^\a$.

We begin with showing that the class of invariant
weights\index{${\mathcal Inv}$}\index{invariant weight} is in a
sense a natural setting for the study of factorization of
functions in $A^p_\om$.

\begin{lemma}\label{InvcontenidoenB_0}\index{${\mathcal Inv}$}\index{invariant weight}
If $0<p<\infty$ and $\om\in{\mathcal Inv}$, then
$A^p_\omega\subset\BN_0$.
\end{lemma}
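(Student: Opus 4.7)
The plan is to obtain a pointwise upper bound for $|f|$ from its membership in $A^p_\omega$ and then check that taking $\log^+$ yields something integrable with respect to Lebesgue area measure. First, I would exploit the subharmonicity of $|f|^p$ together with the invariance of $\omega$: for fixed $a\in\D$, applying the submean value property on the Euclidean disc $D(a,(1-|a|)/2)$, which is contained in a pseudohyperbolic disc $\Delta(a,s)$ of fixed radius $s\in(0,1)$, and using $\omega(z)\asymp\omega(a)$ throughout $\Delta(a,s)$, I would derive
\begin{equation*}
|f(a)|^p\lesssim\frac{1}{(1-|a|)^2\,\omega(a)}\int_{D(a,(1-|a|)/2)}|f(z)|^p\omega(z)\,dA(z)\le\frac{\|f\|_{A^p_\omega}^p}{(1-|a|)^2\,\omega(a)}.
\end{equation*}

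Next, I would take $\log^+$ of both sides and use $\log^+(xy)\le\log^+x+\log^+y$ to split the right-hand side into three terms: a constant depending on $\|f\|_{A^p_\omega}$, the term $\log^+\bigl(1/(1-|a|)^2\bigr)$, and the term $\log^+(1/\omega(a))$. The first is harmless, and the second is obviously in $L^1(\D,dA)$ since $\int_0^1\log(1/(1-r))\,dr<\infty$. Integrating would therefore reduce the whole statement to the integrability of $\log^+(1/\omega)$ over $\D$.

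This is where the invariance hypothesis pulls its weight, and I expect it to be the main obstacle: a priori an invariant weight can be very small in parts of $\D$. I would appeal to Lemma~\ref{Lemma:InvariantWeights}, which supplies a function $C:\D\to[1,\infty)$ with $\int_\D\log C(z)\,dA(z)<\infty$ and $\omega(u)\le C(z)\omega(\varphi_u(z))$ for all $u,z\in\D$. Setting $u=0$ and noting that $\varphi_0(z)=-z$, this gives $\omega(0)\le C(z)\omega(-z)$, i.e.\ $1/\omega(-z)\le C(z)/\omega(0)$. After the change of variable $w=-z$ (which preserves $dA$), I obtain
\begin{equation*}
\int_\D\log^+\frac{1}{\omega(w)}\,dA(w)\le\int_\D\log C(-w)\,dA(w)+\log^+\frac{1}{\omega(0)}<\infty.
\end{equation*}

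Combining these estimates yields $\int_\D\log^+|f(a)|\,dA(a)<\infty$, which is exactly $f\in\BN_0$. The only delicate point is verifying the invariance-based integrability $\int_\D\log^+(1/\omega)\,dA<\infty$; the pointwise bound is a routine subharmonic argument, and the splitting of $\log^+$ is elementary.
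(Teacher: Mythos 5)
Your proof is correct. The decisive ingredient is the same as in the paper's proof, namely that Lemma~\ref{Lemma:InvariantWeights} forces $\int_\D\log^+\frac{1}{\omega(z)}\,dA(z)<\infty$; your specialization to $u=0$ (so that $\vp_0(z)=-z$ and no Jacobian appears) is in fact a touch cleaner than the paper's version, which keeps a general $u$ and absorbs the change of variable into the factor $\bigl(\frac{1+|u|}{1-|u|}\bigr)^2$. Where you diverge is in handling the $f$-dependent part: you first derive the pointwise bound $|f(a)|^p\lesssim\|f\|_{A^p_\omega}^p\,(1-|a|)^{-2}\omega(a)^{-1}$ by subharmonicity on $D(a,(1-|a|)/2)\subset\Delta(a,\tfrac12)$, which costs you an extra (integrable) term $\log^+\frac{1}{(1-|a|)^2}$. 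The paper dispenses with this entirely by writing $\log^+|f(z)|\le\frac1p\log^+\bigl(|f(z)|^p\omega(z)\bigr)+\frac1p\log^+\frac{1}{\omega(z)}$ and estimating the first integral by $\|f\|_{A^p_\omega}^p$ via the elementary inequality $\log^+x\le x$. So your subharmonicity detour is valid but superfluous; the trade-off is that your argument also delivers a pointwise growth estimate for $A^p_\omega$-functions as a by-product, which the paper's one-line trick does not.
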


\begin{proof}
Let $f\in A^p_\om$, where $0<p<\infty$ and $\om\in{\mathcal Inv}$.
Let $u\in\D$ be fixed. Then Lemma~\ref{Lemma:InvariantWeights}
yields
    \begin{equation*}
    \begin{split}
    \int_\D\log^+|f(z)|\,dA(z)&\le\frac1p\int_\D\log^+(|f(z)|^p\omega(z))\,dA(z)+\frac1p\int_\D\log^+\frac1{\omega(z)}\,dA(z)\\
    &\le\frac1p\|f\|^p_{A^p_\omega}+\frac1p\int_\D\log^+C(\vp_u(z))\,dA(z)+\frac{1}{p}\log^+\frac1{\omega(u)}\\
    &\le\frac1p\|f\|^p_{A^p_\omega}+\frac1p\left(\frac{1+|u|}{1-|u|}\right)^2\int_\D\log^+C(w)\,dA(w)\\
    &\quad+\frac{1}{p}\log^+\frac1{\omega(u)}<\infty,
    \end{split}
    \end{equation*}
and the inclusion $A^p_\omega\subset\BN_0$
follows.\index{${\mathcal Inv}$}\index{invariant weight}
\end{proof}

The next result plays an important role in the proof of
Theorem~\ref{Thm:FactorizationBergman}.

\begin{lemma}\label{Lemma:factorization}\index{${\mathcal Inv}$}\index{invariant weight}
Let $0<p<q<\infty$ and $\omega\in{\mathcal Inv}$. Let $\{z_k\}$ be
the zero set of $f\in A^p_\omega$, and let
    $$
    g(z)=|f(z)|^p\prod_{k}\frac{1-\frac{p}{q}+\frac{p}{q}|\vp_{z_k}(z)|^q}{|\vp_{z_k}(z)|^p}.
    $$
Then there exists a constant $C=C(p,q,\omega)>0$ such that
\begin{equation}\label{constante}
\|g\|_{L^1_\omega}\le C\|f\|_{A^p_\omega}^p.
\end{equation}
Moreover, the constant $C$ has the following properties:
    \begin{enumerate}
    \item[\rm(i)] If $0<p<q\le 2$, then $C=C(\omega)$, that is, $C$ is independent of $p$ and $q$.
    \item[\rm(ii)] If $2<q<\infty$ and $\frac{q}{p}\ge1+\epsilon>1$, then $C=C_1qe^{C_1q}$, where $C_1=C_1(\epsilon,\omega)$.
    \end{enumerate}
\end{lemma}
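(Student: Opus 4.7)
The plan is to follow the probabilistic averaging method of Horowitz~\cite{HorFacto}, adapted to the invariant weight setting. First, Young's inequality $(1-t)a + tb\ge a^{1-t}b^t$ with $t = p/q$, $a = 1$, $b = |\varphi_{z_k}(z)|^q$ yields $\frac{1-p/q + (p/q)|\varphi_{z_k}(z)|^q}{|\varphi_{z_k}(z)|^p}\ge 1$, so the truncated products
$$g_n(z) = |f(z)|^p\prod_{k=1}^n\frac{1-\frac{p}{q}+\frac{p}{q}|\varphi_{z_k}(z)|^q}{|\varphi_{z_k}(z)|^p}$$
increase monotonically to $g(z)$. By monotone convergence it suffices to bound $\|g_n\|_{L^1_\om}$ by $C\|f\|_{A^p_\om}^p$ uniformly in $n$.

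Fix $n$ and let $h = f\prod_{k=1}^n\varphi_{z_k}^{-1}$ be the zero-free factor of $f$. Since $\D$ is simply connected, $h^{p/q}:=\exp\bigl(\tfrac{p}{q}\log h\bigr)$ is a well-defined analytic function. Introduce independent Bernoulli random variables $X_1,\dots,X_n$ with $P(X_k=1) = p/q$ and define
$$F_X(z) = h(z)^{p/q}\prod_{k\,:\,X_k=1}\varphi_{z_k}(z).$$
Using $\mathbb{E}\bigl[|\varphi_{z_k}(z)|^{qX_k}\bigr] = (p/q)|\varphi_{z_k}(z)|^q + (1-p/q)$ together with independence, one checks that $\mathbb{E}[|F_X(z)|^q] = g_n(z)$ pointwise, hence by Fubini
$$\int_\D g_n(z)\,\om(z)\,dA(z) = \mathbb{E}\bigl[\|F_X\|_{A^q_\om}^q\bigr].$$
Since $q\ge p$ and $\bigl|\prod_{k\,:\,X_k=1}\varphi_{z_k}(z)\bigr|\le 1$, the pointwise estimate $|F_X(z)|^q\le |h(z)B_X(z)|^p = |f(z)/B_{X^c}(z)|^p$ holds, where $B_{X^c}(z) = \prod_{k\,:\,X_k=0}\varphi_{z_k}(z)$. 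Thus the problem reduces to showing $\mathbb{E}\bigl[\|f/B_{X^c}\|_{A^p_\om}^p\bigr]\le C\|f\|_{A^p_\om}^p$.

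This reduction is where I expect the main obstacle. For each realization, $f$ vanishes at each zero being divided out, so Jensen's inequality applied to $\log|f|$ on a pseudohyperbolic disc $\Delta(z_k,r)$ (with $X_k = 0$) bounds $|f/\varphi_{z_k}|^p$ on a smaller disc by the mean of $|f|^p$ on the boundary of the larger one. The invariance of $\om$ from Lemma~\ref{Lemma:InvariantWeights} then transfers this into a weighted integral estimate with constants depending only on $\om$. Summing the local contributions over $k$ and averaging over the Bernoulli variables yields the desired inequality. For the dependence of $C$ on parameters, I would distinguish two regimes: when $q\le 2$, the inequality $|B_X|^q\le|B_X|^p$ used above is tight enough that the accumulated constants collapse to $C = C(\om)$, independent of $p$ and $q$; when $q > 2$, the gap in that step introduces an amplification proportional to the expected number of zeros removed per realization, and tracking its contribution to the $L^1_\om$-bound by a Taylor-type expansion under the hypothesis $q/p\ge 1+\e$ yields the quoted form $C = C_1 q e^{C_1 q}$. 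The delicate point throughout is that a single removal can inflate $|f/\varphi_{z_k}|$ sharply near $z_k$; the averaging over $X$ (only a proportion $1-p/q$ of zeros is removed in each realization) together with the invariance of $\om$ and the inclusion $A^p_\om\subset\BN_0$ (Lemma~\ref{InvcontenidoenB_0}) is what makes the integrated bound possible.
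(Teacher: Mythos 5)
Your setup is sound up to and including the identity $\mathbb{E}[|F_X(z)|^q]=g_n(z)$ --- that identity is exactly why the lemma has the form it does --- and the monotone convergence reduction to finite products is fine. The proof breaks at the very next step. Passing from $|F_X(z)|^q$ to $|f(z)/B_{X^c}(z)|^p$ via $|\vp_{z_k}(z)|^q\le|\vp_{z_k}(z)|^p$ replaces the factor $\frac{1-\frac pq+\frac pq|\vp_{z_k}(z)|^q}{|\vp_{z_k}(z)|^p}$ by $\frac pq+(1-\frac pq)|\vp_{z_k}(z)|^{-p}$, and this is not a harmless loss: by \eqref{Eq:Limit(1-r)^2} the original factor equals $1-O\bigl((1-|\vp_{z_k}(z)|)^2\bigr)$, so its logarithms are summable because $\sum_k(1-|z_k|)^2<\infty$ (Lemma~\ref{InvcontenidoenB_0}), whereas your replacement equals $1+(1-\frac pq)p(1-|\vp_{z_k}(z)|)+O\bigl((1-|\vp_{z_k}(z)|)^2\bigr)$, whose logarithms sum to $+\infty$ at every $z$ with $f(z)\ne0$ whenever $\{z_k\}$ fails the Blaschke condition --- which $A^p_\om$-zero sets in general do. Hence $\mathbb{E}\bigl[\|f/B_{X^c}\|_{A^p_\om}^p\bigr]\to\infty$ as $n\to\infty$, and the statement you reduce to is false; no local Jensen argument on pseudohyperbolic discs can rescue it. The entire content of the lemma is precisely the second-order cancellation that your domination discards.

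The paper's proof takes a non-probabilistic route at this stage: two integrations by parts together with Jensen's formula \eqref{Eq:Jensen-Formula} give the exact identity $\log g(\z)=\int_\D\log|f(\vp_\z(z))|^p\,d\sigma(z)$ for an explicit radial probability measure $d\sigma(z)=-u'(|z|)\,dA(z)/(2|z|)$ with $u(r)=\frac{(q/p-1)(1-r^q)}{q/p-1+r^q}$. One then compares $d\sigma$ with normalized area measure (for $q\le2$ using a single sign change of $-u'(r)-2r$ together with the monotonicity of $r\mapsto\int_0^{2\pi}\log|f(\vp_\z(re^{i\t}))|^p\,d\t$; for $q>2$ using $-u'(r)\le\frac{q^2}{q-p}r$, and under $q/p\ge1+\e$ the sharper bound $-u'(r)\le\frac{1+\e}{\e}qr$, which is where the constant $C_1qe^{C_1q}$ in part (ii) comes from), and finally applies Jensen's inequality together with $\int_\D\log C(z)\,dA(z)<\infty$ from Lemma~\ref{Lemma:InvariantWeights}, a change of variable and Fubini. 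If you want to keep the probabilistic framing, you must estimate $\mathbb{E}\bigl[\|F_X\|_{A^q_\om}^q\bigr]$ itself without any pointwise domination; that is exactly the original problem, and the logarithmic identity above is the mechanism that solves it.
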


\begin{proof}
Let us start with considering $g(0)$. To do this, assume
$f(0)\ne0$. Lemma~\ref{InvcontenidoenB_0} yields $f\in\BN_0$, and
hence $\sum_k(1-|z_k|)^2<\infty$. Moreover,
Bernouilli-l'H\^{o}pital theorem shows that
    \begin{equation}\label{Eq:Limit(1-r)^2}
    1-\frac{1-\frac1n+\frac1nr^{pn}}{r^p}\asymp(1-r)^2,\quad r\to1^-,
    \end{equation}
for all $n>1$, and hence the product
    $$
    \prod_{k}\frac{1-\frac{p}{q}+\frac{p}{q}|z_k|^q}{|z_k|^p}
    $$
converges. Now, an integration by parts gives
    \begin{equation}\label{45}
    \begin{split}\sum_{k}\log\left(\frac{1-\frac{p}{q}+\frac{p}{q}|z_k|^q}{|z_k|^p}\right)
    &=\int_0^1\log\left(\frac{1-\frac{p}{q}+\frac{p}{q}r^q}{r^p}\right)\,dn(r)\\
    &=\int_0^1\frac{(\frac{q}{p}-1)(1-r^q)}{\frac
    qp-1+r^q}\frac{pn(r)}{r}\,dr,
    \end{split}
    \end{equation}
where the last equality follows by \eqref{Eq:Limit(1-r)^2} and
\eqref{Eq:Bergman-Nevanlinna-Zeros} with $\alpha=0$. Another
integration by parts and Jensen's formula
\eqref{Eq:Jensen-Formula}\index{Jensen's formula} show that the
last integral in \eqref{45} equals to
    \begin{equation}\label{46}
    \begin{split}
    -\int_0^1pN(r)\,du(r)
    =\int_\D\log|f(z)|^p\,d\sigma(z)-\log|f(0)|^p,
    \end{split}
    \end{equation}
where
    $$
    d\sigma(z)=-u'(|z|)\frac{dA(z)}{2|z|},\quad u(r)=\frac{(\frac{q}{p}-1)(1-r^q)}{\frac{q}{p}-1+r^q},
    $$
is a positive measure of unit mass on $\D$. By combining
\eqref{45} and \eqref{46}, we deduce
    \begin{equation*}
    \begin{split}
    \log(g(0))&=\log\left(|f(0)|^p\prod_{k}\left(\frac{1-\frac{p}{q}+\frac{p}{q}|z_k|^q}{|z_k|^p}\right)\right)\\
    &=\log|f(0)|^p+\sum_{k}\log\left(\frac{1-\frac{p}{q}+\frac{p}{q}|z_k|^q}{|z_k|^p}\right)
    =\int_\D\log|f(z)|^p\,d\sigma(z).
    \end{split}
    \end{equation*}
Replacing now $f$ by $f\circ\vp_\zeta$, we obtain
    \begin{equation}\label{49}
    \log(g(\zeta))=\int_\D\log|f(\vp_\z(z))|^p\,d\sigma(z)
    \end{equation}
for $\zeta$ outside of zeros of $f$. Once this identity has been
established we will prove~(i).

Assume that $0<q\le2$. We claim that there exists a unique point
$x=x(p,q)\in(0,1)$ such that $-u'(x)=2x$, $2r\le-u'(r)$ on
$(0,x]$, and $2r\ge-u'(r)$ on $[x,1)$. We first observe that
    \begin{equation*}
    \begin{split}
    -u'(r)=\frac{q}{p}\left(\frac{q}{p}-1\right)\frac{qr^{q-1}}{\left(\frac{q}{p}-1+r^q\right)^2}&=2r\\
    \Leftrightarrow2r^{q+2}+4r^2\left(\frac{q}{p}-1\right)+2r^{2-q}\left(\frac{q}{p}-1\right)^2&=\frac{q^2}{p}\left(\frac{q}{p}-1\right).
    \end{split}
    \end{equation*}
Now the left hand side is increasing, it vanishes at the origin,
and at one it attains the value $2\left(\frac{q}{p}\right)^2$,
which is strictly larger than the constant on the right hand side
because $q\le2$. The existence of the point $x\in(0,1)$ with the
desired properties follows.

By dividing the radial integral in \eqref{49} into $(0,x)$ and
$(x,1)$, using the fact that
$\int_0^{2\pi}\log|f(\vp_\z(re^{i\t}))|^p\,d\t$ is increasing on
$(0,1)$, and bearing in mind that $-u'(r)dr$ and $2rdr$ are
probability measures on $(0,1)$, we deduce
    \begin{equation}\label{50}
    \int_\D\log|f(\vp_\z(z))|^p\,d\sigma(z)\le\int_\D\log|f(\vp_\z(z))|^p\,dA(z).
    \end{equation}
By combining \eqref{49} and \eqref{50}, we obtain
    \begin{equation}\label{47}
    \begin{split}
    \log(g(\z)\omega(\z))&\le\int_\D\log\left(|f(\vp_\z(z))|^p\omega(\z)\right)dA(z)\\
    &\le\int_\D\log\left(|f(\vp_\z(z))|^p\omega(\z)\frac{(1-|z|^2)}{C(z)}\right)dA(z)+C_1,
    \end{split}
    \end{equation}
where $C$ is the function from Lemma~\ref{Lemma:InvariantWeights},
and hence
    \begin{equation}\label{68}
    C_1=C_1(\omega)=\int_\D\log\left(\frac{C(z)}{1-|z|^2}\right)dA(z)<\infty
    \end{equation}
by \eqref{Eq:InvariantWeightsOldDefinitionIntegral}. Now Jensen's
inequality together with \eqref{Eq:InvariantWeightsOldDefinition}
yields
    \begin{equation}\label{48}
    \begin{split}
    g(\z)\omega(\z)&\le
    e^{C_1}\int_\D|f(\vp_\z(z))|^p\omega(\z)\frac{(1-|z|^2)}{C(z)}\,dA(z)\\
    &\le e^{C_1}\int_\D|f(\vp_\z(z))|^p\omega(\vp_\z(z))(1-|z|^2)\,dA(z)
    \end{split}
    \end{equation}
for $\zeta$ outside of zeros of $f$. By integrating this
inequality, changing a variable, and using Fubini's theorem and
\cite[Lemma~3.10]{Zhu} we deduce
    \begin{equation}\label{66}
    \begin{split}
    \|g\|_{L^1_\om}
    &\le e^{C_1}\int_\D|f(u)|^p\omega(u)\left(\int_\D|\vp_\z'(u)|^2(1-|\vp_\z(u)|^2)\,dA(\z)\right)dA(u)\\
    &\le C_2\|f\|_{A^p_\om}^p,
    \end{split}
    \end{equation}
where $C_2=C_2(\om)>0$ is a constant. This finishes the proof
of~(i).

Assume now that $q\ge 2$. In this case
    \begin{equation}\label{j1}
    -u'(r)=\frac{q}{p}\left(\frac{q}{p}-1\right)\frac{qr^{q-1}}{\left(\frac{q}{p}-1+r^q\right)^2}\le \frac{q^2}{q-p}r^{q-1}\le \frac{q^2}{q-p}r,
    \quad 0\le r\le1.
    \end{equation}
The identity \eqref{49} and \eqref{j1} give
    \begin{equation*}
    \begin{split}
    \log(g(\z)\omega(\z))&\le\int_\D\log\left(|f(\vp_\z(z))|^p\omega(\z)\frac{(1-|z|^2)}{C(z)}\right)d\sigma(z)\\
    &\quad+\int_\D\log\left(\frac{C(z)}{(1-|z|^2)}\right)d\sigma(z)\\
    &\le\int_\D\log\left(|f(\vp_\z(z))|^p\omega(\z)\frac{(1-|z|^2)}{C(z)}\right)d\sigma(z)+C_1(q,p,\om),
    \end{split}
    \end{equation*}
where $C$ is the function from Lemma~\ref{Lemma:InvariantWeights},
and hence
    $$
    C_1(q,p,\om)=\frac{q^2}{2(q-p)}\int_\D\log\left(\frac{C(z)}{1-|z|^2}\right)dA(z)<\infty
    $$
by \eqref{Eq:InvariantWeightsOldDefinitionIntegral}. By arguing
now similarly as in \eqref{48} and \eqref{66}, and using
\eqref{j1}, we obtain $\|g\|_{L^1_\om}\le C_2\|f\|_{A^p_\om}^p$,
where $C_2=C_2(p,q,\om)=\frac{q^2}{2(q-p)}e^{C_1(p,q,\om)}>0$ is a
constant. Consequently, joining this with (i), we deduce
\eqref{constante} for all $0<p<q<\infty$.

Finally we will prove (ii). Assume that $2<q<\infty$ and
$\frac{q}{p}\ge1+\epsilon>1$. A direct calculation shows that
    \begin{equation}\label{j3}
    -u'(r)=\frac{q}{p}\left(\frac{q}{p}-1\right)\frac{qr^{q-1}}{\left(\frac{q}{p}-1+r^q\right)^2}
    \le\frac{1+\epsilon}{\epsilon}qr,\quad
    0\le r\le1.
    \end{equation}
The identity \eqref{49} and \eqref{j3} give
    \begin{equation}\label{47b}
    \begin{split}
    \log(g(\z)\omega(\z))\le\int_\D\log\left(|f(\vp_\z(z))|^p\omega(\z)\frac{(1-|z|^2)}{C(z)}\right)d\sigma(z)+C_1,
    \end{split}
    \end{equation}
where
    $$
    C_1=C_1(p,q,\omega)=q\frac{1+\epsilon}{\epsilon}\int_\D\log\left(\frac{C(z)}{1-|z|^2}\right)dA(z).
    $$
Therefore $C_1=C_2C_3q$, where
$C_2=C_2(\epsilon)=\frac{1+\epsilon}{\epsilon}$ and
$C_3=C_3(\omega)<\infty$ by
\eqref{Eq:InvariantWeightsOldDefinitionIntegral}. Jensen's
inequality together with \eqref{47b} and
\eqref{Eq:InvariantWeightsOldDefinition} yield
    \begin{equation*}
    \begin{split}
    g(\z)\omega(\z)&\le
    e^{C_2C_3q}C_2q\int_\D|f(\vp_\z(z))|^p\omega(\z)\frac{(1-|z|^2)}{C(z)}\,dA(z)\\
    &\le e^{C_2C_3q}C_2q\int_\D|f(\vp_\z(z))|^p\omega(\vp_\z(z))(1-|z|^2)\,dA(z).
    \end{split}
    \end{equation*}
By arguing similarly as in \eqref{66} we finally obtain
$\|g\|_{L^1_\om}\le C_4\|f\|_{A^p_\om}^p$, where
$C_4=e^{C_2C_3q}C_2C_5q$ and $C_5>0$ is a constant. This finishes
the proof.
\end{proof}

Now we are in position to prove the announced factorization for
functions in~$A^p_\om$.

\subsection*{Proof of Theorem~\ref{Thm:FactorizationBergman}.}
Let $0<p<\infty$ and $\omega\in{\mathcal Inv}$ such that the
polynomials are dense in $A^p_\om$, and let $f\in A^p_\omega$.
Assume first that $f$ has finitely many zeros only. Such functions
are of the form $f=gB$, where $g\in A^p_\omega$ has no zeros and
$B$ is a finite Blaschke product. Let $z_1,\ldots,z_m$ be the
zeros of $f$ so that $B=\prod_{k=1}^mB_k$, where
$B_k=\frac{z_k}{|z_k|}\vp_{z_k}$. Write $B= B^{(1)}\cdot B^{(2)}$,
where the factors $B^{(1)}$ and $B^{(2)}$ are random subproducts
of $B_0,B_1,\ldots,B_m$, where $B_0\equiv1$. Setting
$f_j=\left(\frac{f}{B}\right)^\frac{p}{p_j}B^{(j)}$, we have
$f=f_1\cdot f_2$. We now choose $B^{(j)}$ probabilistically. For a
given $j\in\{1,2\}$, the factor $B^{(j)}$ will contain each $B_k$
with the probability $p/p_j$. The obtained $m$ random variables
are independent, so the expected value of $|f_j(z)|^{p_j}$ is
    \begin{equation}\begin{split}\label{eq:esp2}
    E(|f_j(z)|^{p_j})&=\left|\frac{f(z)}{B(z)}\right|^p\prod_{k=1}^m
    \left(1-\frac{p}{p_j}+\frac{p}{p_j}|\vp_{z_k}(z)|^{p_j}\right)\\
    &=\left|f(z)\right|^p\prod_{k=1}^m
    \frac{\left(1-\frac{p}{p_j}\right)+\frac{p}{p_j}|\vp_{z_k}(z)|^{p_j}}{|\vp_{z_k}(z)|^p}
    \end{split}\end{equation}
for all $z\in\D$ and $j\in\{1,2\}$. Now, bearing in mind
\eqref{eq:esp2} and Lemma~\ref{Lemma:factorization}, we find a
constant $C_1=C_1(p, p_1,\omega)>0$ such that
   \begin{equation*}\begin{split}
   \left\|E\left(f_1^{p_1}\right)\right\|_{L^{1}_\omega}
    & =
    \int_\D\left[
   \left|f(z)\right|^p\prod_{k=1}^m
    \frac{\left(1-\frac{p}{p_1}\right)+\frac{p}{p_1}|\vp_{z_k}(z)|^{p_1}}{|\vp_{z_k}(z)|^p}\right]\,\om(z)dA(z)
    \\ & =
    \int_\D\left[
   \left|f(z)\right|^p\prod_{k=1}^m
    \frac{\frac{p}{p_2}+\left(1-\frac{p}{p_2}\right)|\vp_{z_k}(z)|^{p_1}}{|\vp_{z_k}(z)|^p}\right]\,\om(z)dA(z)\le  C_{1}\|f\|_{A^p_\omega}^p.
   \end{split}\end{equation*}
Analogously, by \eqref{eq:esp2} and
Lemma~\ref{Lemma:factorization} there exists a constant
$C_2=C_{2}(p,p_2,\om)>0$ such that
    \begin{equation*}\begin{split}
   \left\|E\left(f_2^{p_2}\right)\right\|_{L^{1}_\omega}
    & =
    \int_\D\left[
   \left|f(z)\right|^p\prod_{k=1}^m
    \frac{\left(1-\frac{p}{p_2}\right)+\frac{p}{p_2}|\vp_{z_k}(z)|^{p_2}}{|\vp_{z_k}(z)|^p}\right]\,\om(z)dA(z)\le
    C_2\|f\|_{A^p_\omega}^p.
      \end{split}\end{equation*}
By combining the two previous inequalities, we obtain
 \begin{equation}\begin{split}\label{eq:lfn3}
    \left\|E\left(\frac{p}{p_1}f_1^{p_1}\right)\right\|_{L^{1}_\omega}
    +\left\|E\left(\frac{p}{p_2}f_2^{p_2}\right)\right\|_{L^{1}_\omega}
    \le\left(\frac{p}{p_1}C_{1}+\frac{p}{p_2}C_{2}\right)\|f\|_{A^p_\omega}^p.
      \end{split}\end{equation}
On the other hand,
      \begin{equation}\begin{split}\label{eq:lfn4}
   &  \left\|E\left(\frac{p}{p_1}f_1^{p_1}\right)\right\|_{L^{1}_\omega}
   +\left\|E\left(\frac{p}{p_2}f_2^{p_2}\right)\right\|_{L^{1}_\omega}
   \\ & =\frac{p}{p_1}\int_\D \left|\frac{f(z)}{B(z)}\right|^p
   \prod_{k=1}^m
    \left(\frac{p}{p_2}+\left(1-\frac{p}{p_2}\right)|\vp_{z_k}(z)|^{p_1}\right)\,\om(z)dA(z)
    \\ &   \quad+\frac{p}{p_2}\int_\D \left|\frac{f(z)}{B(z)}\right|^p
   \prod_{k=1}^m
    \left(\left(1-\frac{p}{p_2}\right)+\frac{p}{p_2}|\vp_{z_k}(z)|^{p_2}\right)
   \,\om(z)dA(z)\\
   &=\int_\D I(z)\omega(z)\,dA(z),
      \end{split}\end{equation}
where
    \begin{equation*}\begin{split}
    I(z)=\left|\frac{f(z)}{B(z)}\right|^p\Bigg[\frac{p}{p_1}\cdot&\prod_{k=1}^m
    \left(\frac{p}{p_2}+\left(1-\frac{p}{p_2}\right)|\vp_{z_k}(z)|^{p_1}\right)\\
    +\frac{p}{p_2}\cdot&\prod_{k=1}^m
    \left(\left(1-\frac{p}{p_2}\right)+\frac{p}{p_2}|\vp_{z_k}(z)|^{p_2}\right)\Bigg].
    \end{split}\end{equation*}
It is clear that the $m$ zeros of $f$ must be distributed to the
factors $f_1$ and $f_2$, so if $f_1$ has $n$ zeros, then $f_2$ has
the remaining $(m-n)$ zeros. Therefore
 \begin{equation}\label{idez}
    I(z)=\sum_{f_{l_1}\cdot f_{l_2}=f} \left(\left(1-\frac{p}{p_2}\right)^n\left(\frac{p}{p_2}\right)^{m-n}\left[\frac{p}{p_1}
|f_{l_1}(z)|^{p_1}+\frac{p}{p_2} |f_{l_2}(z)|^{p_2}
\right]\right).
 \end{equation}
This sum consists of $2^m$ addends, $f_{l_1}$ contains
$\left(\frac{f}{B}\right)^\frac{p}{p_1}$ and $n$ zeros of $f$, and
$f_{l_2}$ contains $\left(\frac{f}{B}\right)^\frac{p}{p_2}$ and
the remaining $(m-n)$ zeros of $f$, and thus $f=f_{l_1}\cdot
f_{l_2}$. Further, for a fixed $n=0,1,\ldots,m$, there are
$({m\atop n})$ ways to choose $f_{l_1}$ (once $f_{l_1}$ is chosen,
$f_{l_2}$ is determined). Consequently,
    \begin{equation}\begin{split}\label{eq:lfn5}
    \sum_{f_{l_1}\cdot f_{l_2}=f} \left(1-\frac{p}{p_2}\right)^n\left(\frac{p}{p_2}\right)^{m-n}
    =\sum_{n=0}^m\left({m\atop n}\right)\left(1-\frac{p}{p_2}\right)^n\left(\frac{p}{p_2}\right)^{m-n}=1.
    \end{split}\end{equation}
Now, by joining \eqref{eq:lfn3}, \eqref{eq:lfn4} and \eqref{idez},
we deduce
    \begin{equation*}\begin{split}
    &\sum_{f_{l_1}\cdot f_{l_2}=f}
    \left(1-\frac{p}{p_2}\right)^n\left(\frac{p}{p_2}\right)^{m-n}
    \left[\frac{p}{p_1}\|f_{l_1}\|_{A^{p_1}_\om}^{p_1}+\frac{p}{p_2}
    \|f_{l_2}\|_{A^{p_2}_\om}^{p_2} \right]\\
    &\quad\le\left(\frac{p}{p_1}C_{1}+\frac{p}{p_2}C_{2}\right)\|f\|_{A^p_\omega}^p.
    \end{split}\end{equation*}
This together with \eqref{eq:lfn5} shows that there must exist a
concrete factorization $f=f_1\cdot f_2$ such that
    \begin{equation}
    \begin{split}
    \label{eq:lfn6}
    \frac{p}{p_1}\|f_{1}\|_{A^{p_1}_\om}^{p_1}+\frac{p}{p_2}
    \|f_{2}\|_{A^{p_2}_\om}^{p_2} \le
    C(p_1,p_2,\om)\|f\|_{A^p_\omega}^p.
    \end{split}
    \end{equation}
By combining this with the inequality
     $$
     x^\alpha\cdot y^\beta\le \alpha x+\beta y,\quad x,y\ge 0,\quad \alpha+\beta=1,
     $$
we finally obtain \eqref{Eq:NormEstimateForFactorization} under
the hypotheses that $f$ has finitely many zeros only.

To deal with the general case, we first prove that every
norm-bounded family in $A^p_\omega$ is a normal family of analytic
functions. If $f\in A^p_\omega$, then
    \begin{equation}\label{Eq:RadialGrowthNEW}
    \begin{split}
    \|f\|_{A^p_\omega}^p&\ge\int_{D(0,\frac{1+\rho}{2})\setminus
    D(0,\rho)}|f(z)|^p\omega(z)\,dA(z)\\
    &\gtrsim
    M_p^p(\rho,f)\left(\min_{|z|\le\frac{1+\rho}{2}}\om(z)\right),\quad 0\le\rho<1,
    \end{split}
    \end{equation}
from which the well-known relation $M_\infty(r,f)\lesssim
M_p(\frac{1+r}{2},f)(1-r)^{-1/p}$ yields
    \begin{equation}\label{20NEW}
    M^p_\infty(r,f)
    \lesssim\frac{\|f\|_{A^p_\omega}^p}{(1-r)\left(\min_{|z|\le\frac{3+r}{4}}\om(z)\right)},\quad 0\le r<1.
    \end{equation}
Therefore every norm-bounded family in $A^p_\omega$ is a normal
family of analytic functions by Montel's theorem.

Finally, assume that $f\in A^p_\omega$ has infinitely many zeros.
Since polynomials are dense in $A^p_\omega$ by the assumption, we
can choose a sequence $f_l$ of functions with finitely many zeros
that converges to $f$ in norm, and then, by the previous argument,
we can factorize each $f_l=f_{l,1}\cdot f_{l,2}$ as earlier. Now,
since every norm-bounded family in~$A^p_\omega$ is a normal family
of analytic functions, by passing to subsequences of $\{f_{l,j}\}$
with respect to $l$ if necessary, we have $f_{l,j}\to f_j$, where
the functions $f_j$ form the desired bounded factorization $f=
f_1\cdot f_2$ satisfying \eqref{Eq:NormEstimateForFactorization}.
This finishes the proof of Theorem~\ref{Thm:FactorizationBergman}.
\hfill$\Box$

\medskip

It is worth noticing that the hypothesis on the density of
polynomials in Theorem~\ref{Thm:FactorizationBergman} can be
replaced by any assumption that ensures the existence of a dense
family of functions with finitely many zeros only.

At first glance the next result might seem a bit artificial.
However, it turns out to be the key ingredient in the proof of the
uniform boundedness of a certain family of linear operators, see
Proposition~\ref{PropSmallIndeces}. Usually this kind of
properties are established by using interpolation theorems similar
to~\cite[Theorem~2.34]{Zhucomplexball}
or~\cite[Theorem~4.29]{Zhu}. Indeed, it would be interesting to
know whether or not an extension of~\cite[Theorem~4.29]{Zhu}
remains valid for $A^p_\om$ if either $\om\in\I\cup\R$ or
$\om\in\widetilde{\I}\cup\R$.

\begin{corollary}\label{cor:FactorizationBergman}\index{factorization}
Let $0<p<2$ and $\omega\in{\mathcal Inv}$ such that the
polynomials are dense in $A^p_\om$.\index{${\mathcal
Inv}$}\index{invariant weight} Let $0<p_1\le2<p_2<\infty$ such
that $\frac1p=\frac1{p_1}+\frac1{p_2}$ and $p_2\ge2p$. If $f\in
A^p_\omega$, then there exist $f_1\in A^{p_1}_\omega$ and $f_2\in
A^{p_2}_\omega$ such that $f=f_1\cdot f_2$ and
    \begin{equation}\label{Eqco:NormEstimateForFactorization}
    \|f_1\|_{A^{p_1}_\omega}\cdot \|f_2\|_{A^{p_2}_\omega}\le C\|f\|_{A^p_\om}
    \end{equation}
for some constant $C=C(p_1,\om)>0$.
\end{corollary}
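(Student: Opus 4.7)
The plan is to follow the probabilistic factorization scheme from the proof of Theorem~\ref{Thm:FactorizationBergman}, but to replace its averaging step by a Markov-inequality-plus-union-bound argument so as to control $\|f_1\|_{A^{p_1}_\omega}$ and $\|f_2\|_{A^{p_2}_\omega}$ \emph{individually}. The reason this is necessary is that Lemma~\ref{Lemma:factorization}(ii), which governs $f_2$, carries a factor $p_2\,e^{A_2p_2}$ that would destroy the desired uniformity in $p_2$ if one only controlled the weighted sum \eqref{Eq:NormEstimateForFactorization}; separating the two norms and then taking the $p_2$-th root is what converts $e^{A_2p_2}$ into the harmless $e^{A_2}$.

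As in the proof of Theorem~\ref{Thm:FactorizationBergman} I would first reduce to the case where $f$ has only finitely many zeros by density of polynomials in $A^p_\omega$ and the normal-family bound~\eqref{20NEW}. Writing $f=gB$ with $B=\prod_{k=1}^m B_k$ a finite Blaschke product and performing the random factorization $f=f_1\cdot f_2$ with $f_j=(f/B)^{p/p_j}B^{(j)}$, where $B^{(j)}$ independently retains each $B_k$ with probability $p/p_j$, Fubini combined with \eqref{eq:esp2} gives
\begin{equation*}
E\bigl(\|f_j\|_{A^{p_j}_\omega}^{p_j}\bigr)=\int_\D E\bigl(|f_j(z)|^{p_j}\bigr)\,\omega(z)\,dA(z),
\end{equation*}
and the integrand on the right is precisely the function $g$ of Lemma~\ref{Lemma:factorization} with $q=p_j$. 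Since $p<p_1\le 2$, part~(i) of that lemma yields $E(\|f_1\|^{p_1}_{A^{p_1}_\omega})\le A_1\|f\|_{A^p_\omega}^p$ with $A_1=A_1(\omega)$. Since $p_2/p\ge 2$ by hypothesis, part~(ii) of the lemma (with $\varepsilon=1$) yields $E(\|f_2\|^{p_2}_{A^{p_2}_\omega})\le A_2\,p_2\,e^{A_2 p_2}\|f\|_{A^p_\omega}^p$ with $A_2=A_2(\omega)$.

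Applying Markov's inequality to each non-negative random variable $\|f_j\|^{p_j}_{A^{p_j}_\omega}$ on the finite probability space of $2^m$ factorizations, and combining the two resulting estimates by a union bound (with probability at most $1/3+1/3=2/3$), one obtains at least one deterministic factorization satisfying simultaneously
\begin{equation*}
\|f_1\|_{A^{p_1}_\omega}^{p_1}\le 3A_1\|f\|_{A^p_\omega}^p
\qquad\text{and}\qquad
\|f_2\|_{A^{p_2}_\omega}^{p_2}\le 3A_2\,p_2\, e^{A_2 p_2}\|f\|_{A^p_\omega}^p.
\end{equation*}
Raising these to the powers $1/p_1$ and $1/p_2$, multiplying, and using $p/p_1+p/p_2=1$ leads to
\begin{equation*}
\|f_1\|_{A^{p_1}_\omega}\|f_2\|_{A^{p_2}_\omega}\le (3A_1)^{1/p_1}\,(3A_2 p_2)^{1/p_2}\,e^{A_2}\,\|f\|_{A^p_\omega}.
\end{equation*}

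The last task is to verify that this constant has the form $C(p_1,\omega)$. The factor $(3A_1(\omega))^{1/p_1}$ is manifestly of that form, and $e^{A_2(\omega)}$ depends only on $\omega$. For the factor $(3A_2 p_2)^{1/p_2}$, the hypothesis $p_2>2$ combined with the boundedness of both $x\mapsto x^{1/x}$ and $x\mapsto (3A_2)^{1/x}$ on $[2,\infty)$ yields a bound depending only on $\omega$. The main obstacle is precisely this $p_2$-dependence in Lemma~\ref{Lemma:factorization}(ii): a direct application of AM--GM to the averaged bound \eqref{Eq:NormEstimateForFactorization} of Theorem~\ref{Thm:FactorizationBergman} would produce $C^{1/p}\|f\|_{A^p_\omega}$ with $C$ containing $e^{A_2p_2}$, and the $1/p$-th root cannot control this; only by decoupling the two norms via Markov and then raising the $f_2$-estimate to the $1/p_2$-th power does $e^{A_2p_2}$ simplify to $e^{A_2}$.
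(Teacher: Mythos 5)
Your proposal is correct and follows essentially the same route as the paper: the probabilistic factorization of Theorem~\ref{Thm:FactorizationBergman} combined with the two cases of Lemma~\ref{Lemma:factorization}, with the whole point being that the bound on $\|f_2\|_{A^{p_2}_\omega}^{p_2}$ may carry the factor $p_2e^{Cp_2}$ because the subsequent $p_2$-th root tames it. The only (inessential) difference is the mechanism for extracting one factorization satisfying both norm bounds simultaneously: you use Markov's inequality twice and a union bound, whereas the paper averages the two expectations with the weights $1-\tfrac{1}{M_2}$ and $\tfrac{1}{M_2}$, $M_2=C_2p_2e^{C_2p_2}$, and reads both individual bounds off the single resulting inequality — two equivalent incarnations of the first-moment method.
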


\begin{proof}
We will mimic the proof of Theorem~\ref{Thm:FactorizationBergman},
but we will have to pay special attention to the constants coming
from Lemma~\ref{Lemma:factorization}. Therefore we will keep the
notation used in the proof of
Theorem~\ref{Thm:FactorizationBergman}, and will only explain the
steps where the proof significantly differs from that of
Theorem~\ref{Thm:FactorizationBergman}.

It suffices to establish the desired factorization for those $f\in
A^p_\omega$ having finitely many zeros only. Following the proof
of Theorem~\ref{Thm:FactorizationBergman}, we write $f=gB$, where
$B= B^{(1)}\cdot B^{(2)}$, and
$f_j=\left(\frac{f}{B}\right)^\frac{p}{p_j}B^{(j)}$, so that
$f=f_1\cdot f_2$. Now, bearing in mind~\eqref{eq:esp2}, and the
cases (i) and (ii) of Lemma~\ref{Lemma:factorization}, we find
constants $C_1=C_1(\omega)>0$ and $C_2=C_{2}(\om)>0$ such that
    \begin{equation*}
    \begin{split}
    \left\|E\left(f_1^{p_1}\right)\right\|_{L^{1}_\omega}
    &=\int_\D\left(\left|f(z)\right|^p\prod_{k=1}^m
    \frac{\frac{p}{p_2}+\left(1-\frac{p}{p_2}\right)|\vp_{z_k}(z)|^{p_1}}{|\vp_{z_k}(z)|^p}\right)\,\om(z)dA(z)\le C_{1}\|f\|_{A^p_\omega}^p
    \end{split}
    \end{equation*}
and
    \begin{equation*}
    \begin{split}
    \left\|E\left(f_2^{p_2}\right)\right\|_{L^{1}_\omega}
    &=\int_\D\left(
    \left|f(z)\right|^p\prod_{k=1}^m
    \frac{\left(1-\frac{p}{p_2}\right)+\frac{p}{p_2}|\vp_{z_k}(z)|^{p_2}}{|\vp_{z_k}(z)|^p}\right)\,\om(z)dA(z)\\
    &\le C_2p_2e^{C_2p_2}\|f\|_{A^p_\omega}^p=M_2\|f\|_{A^p_\omega}^p,
    \end{split}
    \end{equation*}
where we may choose $C_2$ sufficiently large so that
$M_2=C_2p_2e^{C_2p_2}\ge 2$. These inequalities yield
    \begin{equation*}
    \begin{split}
    &\left\|E\left(\left(1-\frac{1}{M_2}\right)f_1^{p_1}\right)\right\|_{L^{1}_\omega}
    +\left\|E\left(\frac{1}{M_2}f_2^{p_2}\right)\right\|_{L^{1}_\omega}\le\left(C_{1}\left(1-\frac{1}{M_2}\right)+1\right)\|f\|_{A^p_\omega}^p,
    \end{split}
    \end{equation*}
where
   \begin{equation*}\begin{split}
   &\left\|E\left(\left(1-\frac{1}{M_2}\right)f_1^{p_1}\right)\right\|_{L^{1}_\omega}
   +\left\|E\left(\frac{1}{M_2}f_2^{p_2}\right)\right\|_{L^{1}_\omega}\\
   &=\left(1-\frac{1}{M_2}\right)\int_\D \left|\frac{f(z)}{B(z)}\right|^p
   \prod_{k=1}^m
   \left(\frac{p}{p_2}+\left(1-\frac{p}{p_2}\right)|\vp_{z_k}(z)|^{p_1}\right)\,\om(z)dA(z)\\
   &\quad+\frac{1}{M_2}\int_\D \left|\frac{f(z)}{B(z)}\right|^p
   \prod_{k=1}^m
   \left(\left(1-\frac{p}{p_2}\right)+\frac{p}{p_2}|\vp_{z_k}(z)|^{p_2}\right)\,\om(z)dA(z).
   \end{split}
   \end{equation*}
Therefore
    \begin{equation*}
    \begin{split}
    &\sum_{f_{l_1}\cdot f_{l_2}=f} \left(1-\frac{p}{p_2}\right)^n\left(\frac{p}{p_2}\right)^{m-n}
    \left[\left(1-\frac{1}{M_2}\right)
    \|f_{l_1}\|_{A^{p_1}_\om}^{p_1}+\frac{1}{M_2}\|f_{l_2}\|_{A^{p_2}_\om}^{p_2}\right]\\
    &\le\left(C_{1}\left(1-\frac{1}{M_2}\right)+1\right)\|f\|_{A^p_\omega}^p,
    \end{split}
    \end{equation*}
which together with~\eqref{eq:lfn5} implies that there exists a
concrete factorization $f=f_1\cdot f_2$ such that
    \begin{equation*}
    \begin{split}
    \left(1-\frac{1}{M_2}\right)\|f_{1}\|_{A^{p_1}_\om}^{p_1}+\frac{1}{M_2}\|f_{2}\|_{A^{p_2}_\om}^{p_2}
    \le\left(C_{1}\left(1-\frac{1}{M_2}\right)+1\right)\|f\|_{A^p_\omega}^p.
    \end{split}
    \end{equation*}
It follows that
      \begin{equation*}
      \begin{split}
      \|f_{1}\|_{A^{p_1}_\om}
      &\le\left(2\left(C_{1}\left(1-\frac{1}{M_2}\right)+1\right)\right)^\frac{1}{p_1}\|f\|_{A^p_\omega}^\frac{p}{p_1}
      \le K_1\|f\|_{A^p_\omega}^\frac{p}{p_1},
      \end{split}
      \end{equation*}
where $K_1=K_1(p_1,\omega)=\left(2(C_{1}+1)\right)^\frac{1}{p_1}$,
and
    \begin{equation*}
    \begin{split}\label{eq:lf7}
    \|f_{2}\|_{A^{p_2}_\om}
    \le\left(M_2+C_{1}\left(M_2-1\right)\right)^{\frac{1}{p_2}}\|f\|_{A^p_\omega}^{\frac{p}{p_2}}\le
    K_2\|f\|_{A^p_\omega}^{\frac{p}{p_2}}<\infty,
    \end{split}
    \end{equation*}
where
      $$
      K_2=K_2(\om)=\sup_{p_2>2}\left(M_2+ C_{1}\left(M_2-1\right)\right)^{\frac{1}{p_2}}<\infty.
      $$
By multiplying these inequalities we obtain
      $$
      \|f_{1}\|_{A^{p_1}_\om}\|f_{2}\|_{A^{p_2}_\om}
      \le K_1\cdot K_2\|f\|_{A^p_\omega}.
      $$
This establishes the assertion with
$C(p_1,\om)=K_1(p_1,\om)K_2(\om)$.
\end{proof}

\section{Zeros of functions in $A^p_\om$}\label{Sec:Zeros}

For a given space $X$ of analytic functions in $\D$, a sequence
$\{z_k\}$ is called an $X$-\emph{zero set}, if there exists a
function $f$ in $X$ such that $f$ vanishes precisely on the points
$z_k$ and nowhere else. As far as we know, it is still an open
problem to find a complete description of zero sets of functions
in the Bergman spaces $A^p=A^p_0$, but the gap between the known
necessary and sufficient conditions is very small. We refer
to~\cite[Chapter~4]{DurSchus} and \cite[Chapter~4]{HKZ} as well as
\cite{Kor,Lzeros96,S1,S2} for more information on this topic.

We begin with proving that a subset of an $A^p_\om$-zero set is
also an $A^p_\om$-zero set if~$\om$ is invariant.\index{${\mathcal
Inv}$}\index{invariant weight}

\begin{theorem}\label{Theorem:f/H}
Let $0<p<\infty$ and $\omega\in{\mathcal Inv}$.\index{${\mathcal
Inv}$}\index{invariant weight} Let $\{z_k\}$ be an arbitrary
subset of the zero set of $f\in A^p_\omega$, and let
    $$
    H(z)=\prod_{k}B_{k}(z)(2-B_{k}(z)),\quad
    B_k=\frac{z_k}{|z_k|}\vp_{z_k},
    $$
with the convention $z_k/|z_k|=1$ if $z_k=0$. Then there exists a
constant $C=C(\omega)>0$ such that $\|f/H\|_{A^p_\omega}^p\le
C\|f\|_{A^p_\omega}^p$. In particular, each subset of an
$A^p_\omega$-zero set is an $A^p_\omega$-zero set.
\end{theorem}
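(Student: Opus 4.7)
The proof adapts the strategy of Lemma~\ref{Lemma:factorization} to the specific product $H$, replacing the M\"obius-invariant quantities $|\varphi_{z_k}|$ appearing there by the non-invariant factors $|B_k(2-B_k)|$. I would first reduce to the case of finitely many zeros $z_1,\dots,z_N$ (with $f(0)\neq 0$ and $0\notin\{z_k\}$) by considering the partial products $H_N=\prod_{k=1}^N B_k(2-B_k)$; the general bound then follows from a uniform bound on $\|f/H_N\|_{A^p_\omega}$ via Fatou's lemma and normal families (the partial products converge locally uniformly in $\D$ thanks to the Bergman--Nevanlinna condition $\sum(1-|z_k|)^2<\infty$ from Lemma~\ref{InvcontenidoenB_0}). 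The ``in particular'' conclusion then follows by applying the norm estimate with $\{z_k\}$ replaced by the complementary subset of the zero set.

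The key is to establish the pointwise inequality
\begin{equation*}
\log|f(\zeta)/H(\zeta)|^p\le\int_\D \log|f(\varphi_\zeta(z))|^p\,d\sigma(z),\qquad \zeta\in\D,
\end{equation*}
where $d\sigma(z)=\frac{dA(z)}{(2-|z|)^2|z|}$ turns out to be a probability measure on $\D$ (via the direct computation $\int_0^1 2(2-r)^{-2}dr=1$). At $\zeta=0$ this becomes an identity: since $B_k(0)=|z_k|$, we have $|H(0)|=\prod_k |z_k|(2-|z_k|)=\prod_k(1-(1-|z_k|)^2)$, and writing $-\log|H(0)|=\int_0^1\log\frac{1}{r(2-r)}\,dn(r)$, performing two integrations by parts (using $N'(r)=n(r)/r$ and then $(2(1-r)/(2-r))'=-2/(2-r)^2$) and invoking Jensen's formula \eqref{Eq:Jensen-Formula} yields $\log|f/H|^p(0)=\int_\D\log|f|^p\,d\sigma$. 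For general $\zeta$, I would apply this identity to $\tilde f=f\circ\varphi_\zeta$, whose zeros are $\{\varphi_\zeta(z_k)\}$, and combine it with the crucial lower bound
\[
|B_k(\zeta)(2-B_k(\zeta))|\ge|B_k(\zeta)|(2-|B_k(\zeta)|)=1-(1-|\varphi_\zeta(z_k)|)^2,
\]
which comes from the reverse triangle inequality together with $|B_k(\zeta)|=|\varphi_\zeta(z_k)|$. This says $|H(\zeta)|\ge|\widetilde H(0)|$, which upgrades the identity for $\tilde f$ at the origin into the desired inequality at $\zeta$.

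The remaining steps proceed as in Lemma~\ref{Lemma:factorization}. Using $\omega\in\mathcal{Inv}$, Lemma~\ref{Lemma:InvariantWeights} supplies $C\colon\D\to[1,\infty)$ with $\omega(\zeta)\le C(z)\omega(\varphi_\zeta(z))$ and $\int_\D\log C\,dA<\infty$. Adding $\log\omega(\zeta)=\int_\D\log\omega(\zeta)\,d\sigma$ to the log-inequality, incorporating $\omega(\zeta)$ via $C$, and applying Jensen's inequality to the convex function $\exp$ yields
\[
|f/H|^p(\zeta)\,\omega(\zeta)\le e^{C_\sigma}\int_\D|f(\varphi_\zeta(z))|^p\,\omega(\varphi_\zeta(z))(1-|z|^2)\,d\sigma(z),
\]
with the finite constant $C_\sigma=\int_\D\log(C(z)/(1-|z|^2))\,d\sigma(z)$ depending only on $\omega$ (finiteness requires noting that the $1/|z|$ singularity of $d\sigma$ is $dA$-integrable in the plane, and that $C(z)$ is bounded near the origin). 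Integrating in $\zeta$, swapping the order by Fubini, and changing variables $u=\varphi_\zeta(z)$ in the inner integral reduces the problem to the uniform boundedness in $u\in\D$ of
\[
\int_\D\frac{(1-|\varphi_\zeta(u)|^2)\,|\varphi_\zeta'(u)|^2}{(2-|\varphi_\zeta(u)|)^2\,|\varphi_\zeta(u)|}\,dA(\zeta),
\]
which, after the M\"obius substitution $\eta=\varphi_u(\zeta)$, becomes $\int_\D(1-|\eta|^2)^3/(|1-\overline u\eta|^4|\eta|)\,dA(\eta)$; splitting at $|\eta|=1/2$ handles the $1/|\eta|$ singularity by direct estimate on the inner ball, while the outer region is bounded uniformly by \cite[Lemma~3.10]{Zhu}. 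The main technical obstacle is precisely this chain of manipulations around the non-standard measure $d\sigma$: verifying its probability-measure nature via the twin integrations by parts, and carrying the $1/|z|$ singularity through the change-of-variable and Fubini steps without losing integrability.
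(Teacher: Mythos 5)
Your proposal is correct and follows essentially the same route as the paper: the paper reduces the claim to the pointwise bound $|f/H|\le\widehat f$ with $\widehat f=|f|/\prod_k|\vp_{z_k}|(2-|\vp_{z_k}|)$ and proves exactly your sub-mean-value inequality via Jensen's formula for the probability measure $d\sigma=dA(z)/\bigl((2-|z|)^2|z|\bigr)$, a M\"obius shift, Jensen's inequality with the invariant-weight function $C(z)$, and Fubini plus a change of variables. The only divergence is that the paper first dominates $\int_\D\log|f\circ\vp_\zeta|\,d\sigma$ by $\int_\D\log|f\circ\vp_\zeta|\,dA$ (using the monotonicity of the circular means of $\log|f|$, as in its factorization lemma), so its final estimate is the standard uniformly bounded integral $\int_\D|\vp_\zeta'(u)|^2(1-|\vp_\zeta(u)|^2)\,dA(\zeta)$ rather than your singular variant carrying the $1/|\eta|$ factor --- both versions close the argument.
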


Since
    $$
    \frac{|f(z)|}{\prod_{k}|\vp_{z_k}(z)|(2-|\vp_{z_k}(z)|)}
    \ge\left|\frac{f(z)}{H(z)}\right|,
    $$
the assertion in Theorem~\ref{Theorem:f/H} follows by the
following lemma.

\begin{lemma}\label{fstar}
Let $0<p<\infty$ and $\omega\in{\mathcal Inv}$.\index{${\mathcal
Inv}$}\index{invariant weight} Let $\{z_k\}$ be an arbitrary
subset of the zero set of $f\in A^p_\omega$, and set
    $$
    \widehat{f}(z)=\frac{|f(z)|}{\prod_{k}|\vp_{z_k}(z)|(2-|\vp_{z_k}(z)|)}.
    $$
Then there exists a constant $C=C(\omega)>0$ such that
    $$
    \|\widehat{f}\|_{L^p_\omega}^p\le C\|f\|_{A^p_\omega}^p.
    $$
\end{lemma}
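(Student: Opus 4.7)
The plan is to translate the proof of case (i) of Lemma \ref{Lemma:factorization} to the present setting, with the key observation that replacing the one-parameter family of test factors $1-\frac{p}{q}+\frac{p}{q}|\varphi_{z_k}|^q$ used there by the fixed denominator $|\varphi_{z_k}|^p(2-|\varphi_{z_k}|)^p$ still yields, after two integrations by parts, a bona fide radial probability measure on $\D$. Assume first that $f(0)\neq 0$, and set $g(z)=\widehat{f}(z)^{p}$, so that proving the lemma reduces to showing $\|g\|_{L^1_\omega}\le C\|f\|_{A^p_\omega}^{p}$. Writing $\psi(r)=-\log r-\log(2-r)$ and Taylor-expanding at $r=1$ gives $\psi(r)\asymp(1-r)^2$ as $r\to 1^-$; combined with Lemma \ref{InvcontenidoenB_0} (so $f\in\BN_0$) and the estimates \eqref{Eq:Bergman-Nevanlinna-Zeros} with $\alpha=0$, this guarantees the convergence of $\sum_k\psi(|z_k|)$ and hence the finiteness of $g(0)$.

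Two successive integrations by parts (using $n(r)=o((1-r)^{-2})$ and $N(r)=o((1-r)^{-1})$ to kill boundary terms) rewrite $p\sum_k\psi(|z_k|)=p\int_0^1\frac{2\,N(r,f)}{(2-r)^2}\,dr$. Since $\int_0^1\frac{2\,dr}{(2-r)^2}=1$, the measure $d\sigma(z)=\frac{dA(z)}{|z|(2-|z|)^2}$ is a probability measure on $\D$, and Jensen's formula \eqref{Eq:Jensen-Formula} converts the representation into
$$
\log g(0)=p\int_\D\log|f(z)|\,d\sigma(z).
$$
Replacing $f$ by $f\circ\varphi_\zeta$ and using the Möbius-invariance $|\varphi_a(b)|=|\varphi_b(a)|$ (so that the zero set of $f\circ\varphi_\zeta$ relevant for the analogous product is $\{\varphi_\zeta(z_k)\}$, and the value at $0$ of the corresponding $G$ equals $g(\zeta)$) yields, for all $\zeta$ outside the zeros of $f$,
$$
\log g(\zeta)=p\int_\D\log|f(\varphi_\zeta(z))|\,d\sigma(z).
$$

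The decisive comparison step is the analogue of \eqref{50}: the radial densities $\frac{2}{(2-r)^2}$ and $2r$ of $d\sigma$ and $dA$ meet at the unique interior point $r_0=(3-\sqrt{5})/2\in(0,1)$, with $d\sigma$ dominating $dA$ on $[0,r_0)$ and the opposite inequality on $(r_0,1)$; together with the fact that $r\mapsto\int_0^{2\pi}\log|f(\varphi_\zeta(re^{i\theta}))|\,d\theta$ is increasing (by subharmonicity of $\log|f\circ\varphi_\zeta|$), this delivers the pointwise upper bound
$$
\log g(\zeta)\le p\int_\D\log|f(\varphi_\zeta(z))|\,dA(z).
$$
From this point the argument of Lemma \ref{Lemma:factorization}(i) carries over verbatim: invoking Lemma \ref{Lemma:InvariantWeights} to write $\omega(\zeta)\le C(z)\omega(\varphi_\zeta(z))$ with $C_1=\int_\D\log\frac{C(z)}{1-|z|^2}\,dA(z)<\infty$, inserting $\log\omega(\zeta)=\int_\D\log\omega(\zeta)\,dA(z)$, and applying Jensen's inequality produces
$$
g(\zeta)\omega(\zeta)\le e^{C_1}\int_\D|f(\varphi_\zeta(z))|^{p}\,\omega(\varphi_\zeta(z))\,(1-|z|^2)\,dA(z).
$$
Integrating in $\zeta$, swapping the order of integration, substituting $u=\varphi_\zeta(z)$ for fixed $z$, and using the standard bound $\int_\D|\varphi_\zeta'(u)|^2(1-|\varphi_\zeta(u)|^2)\,dA(\zeta)\lesssim 1$ from \cite[Lemma~3.10]{Zhu} concludes $\|g\|_{L^1_\omega}\le C(\omega)\|f\|_{A^p_\omega}^{p}$.

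The main obstacle I anticipate is the bookkeeping in the double integration by parts: one must confirm that the correct boundary terms vanish, that the resulting kernel $\frac{2}{(2-r)^2}$ is indeed a positive density of total mass one, and that it dominates $2r$ precisely near the origin so that the monotonicity-based comparison against $dA$ yields the needed inequality in the right direction. The case $f(0)=0$ is handled by factoring $f(z)=z^{m}h(z)$ with $h(0)\neq 0$ and absorbing the extra Blaschke-type factors, since the monomial $z^m$ contributes a bounded perturbation to the associated product.
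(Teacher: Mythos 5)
Your proposal is correct and follows essentially the same route as the paper's proof, which likewise reduces the claim to the computations of Lemma~\ref{Lemma:factorization} with the kernel $\log\frac{1}{r(2-r)}$, the probability measure $d\sigma(z)=\frac{dA(z)}{|z|(2-|z|)^2}$, the single-crossing comparison against $dA$ as in \eqref{50}, and the invariant-weight/Jensen/change-of-variables argument of \eqref{47}--\eqref{48}. The only cosmetic remark is that since $\{z_k\}$ is merely a subset of the zero set, the identity $\sum_k\psi(|z_k|)=\int_0^1\frac{2N(r,f)}{(2-r)^2}\,dr$ should be an inequality $\le$ (harmless, since the integrand obtained after the first integration by parts is nonnegative), a point the paper's own write-up glosses over in the same way.
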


\begin{proof} Assume $f(0)\ne0$. By arguing as in~\eqref{45} and~\eqref{46}, and using Lemma~\ref{InvcontenidoenB_0} and \eqref{Eq:Bergman-Nevanlinna-Zeros}, we
obtain
    \begin{equation*}
    \begin{split}
    \sum_k\log\frac{1}{|z_k|(2-|z_k|)}&=\int_0^1\log\frac{1}{r(2-r)}\,dn(r)=\int_0^1\frac{2(1-r)}{2-r}\,\frac{n(r)}{r}\,dr\\
    &=\int_\D\log|f(z)|\frac{dA(z)}{(2-|z|)^2|z|}-\log|f(0)|\\
    &\le\int_\D\log|f(z)|\,dA(z)-\log|f(0)|,
    \end{split}
    \end{equation*}
where the last inequality can be established as \eqref{50}.
Replacing $f$ by $f\circ\vp_\zeta$, we obtain
    \begin{equation*}
    \log \widehat{f}(\z)=\log\frac{|f(\z)|}{\prod_{k}|\vp_{z_k}(\z)|(2-|\vp_{z_k}(\z)|)}
    \le\int_\D\log|f(\vp_\z(z))|\,dA(z)
    \end{equation*}
for all $\z$ outside of zeros of $f$. By proceeding as in
\eqref{47} and \eqref{48}, we deduce
    $$
    \|\widehat{f}\|_{L^p_\omega}^p\le
    e^{C_1}\int_\D|f(u)|^p\omega(u)\left(\int_\D|\vp_\z'(u)|^2(1-|\vp_\z(u)|^2)\,dA(\z)\right)dA(u),
    $$
where $C_1$ is given by \eqref{68} and is thus bounded by
Lemma~\ref{Lemma:InvariantWeights}. The assertion follows.
\end{proof}

Theorem~\ref{Theorem:f/H} gives a partial answer to a question
posed by Aleman and Sundberg in \cite[p.~10]{AleSun09}. They
asked:

\begin{enumerate}
\item[]\emph{If $\om$ is a radial continuous and decreasing weight
such that $\om(r)<1$ for all $0\le r<1$, does the condition
    \begin{equation}\label{eq:alesun}
    \limsup_{r\to 1^-}\frac{\log\om(r)}{\log(1-r)}<\infty
    \end{equation}
guarantee that any subset of an $A^p_\om$-zero set is necessarily
an $A^p_\om$-zero set?}
\end{enumerate}
The assumption \eqref{eq:alesun} on $\om$ is equivalent to saying
that $\om$ is continuous and decreasing, and there exists
$C=C(\om)>0$ such that $(1-r)^C\lesssim\om(r)<1$ for all $0\le
r<1$. By Theorem~\ref{Theorem:f/H} and the definition of invariant
weights the answer is affirmative if~$\om$ is a radial continuous
weight satisfying~\eqref{eq:r2}. In this case
$\psi_\om(r)\gtrsim(1-r)$ and $\om(r)\gtrsim(1-r)^\a$ for some
$\a>-1$, so \eqref{eq:alesun} is satisfied. However, there are
radial continuous weights $\om$ that satisfy \eqref{eq:alesun},
and for which~\eqref{eq:r2} fails. For example, consider $\om$
whose graph lies between those of $\om_1(r)=(1-r)/2$ and
$\om_2\equiv1/2$, and that is constructed in the following manner.
Let $r_k=1-e^{-e^k}$ and
$s_k=\frac{r_k+\frac{1}{2}}{1+\frac{r_k}{2}}$ for all $k\in\N$.
Then $0<r_k<s_k<r_{k+1}<1$ and
    \begin{equation}\label{rksk}
    \frac{1-r_k}{3}<1-s_k<1-r_k,\quad k\in\N.
    \end{equation}
Define the decreasing continuous function $\om$ by
    \begin{equation}\label{112}
    \om(r)=\left\{
        \begin{array}{cl}
        \frac{1-r}{2}, & \quad r\in[0,s_1],\\
        \frac{1-s_k}{2}, & \quad r\in[s_k,r_{k+1}],\quad k\in\N,\\
        \frac{1-s_k}{2}+\frac{s_k-s_{k+1}}{2(s_{k+1}-r_{k+1})}(r-r_{k+1}),&\quad
        r\in[r_{k+1}, s_{k+1}],\quad k\in\N.
        \end{array}\right.
    \end{equation}
Then \eqref{rksk} yields $1-r_k\asymp1-s_k$ and
    $$
    \frac{\om(r_k)}{\om(s_k)}=\frac{1-s_{k-1}}{1-s_k}
    \ge\frac{1-r_{k-1}}{3(1-r_k)}\to\infty,\quad k\to\infty,
    $$
and hence $\om$ does not satisfy \eqref{eq:r2}. Since clearly
$\om\not\in\I$, we also deduce $\om\not\in\I\cup\R\cup{\mathcal
Inv}$.

The next two results on zero sets of functions in~$A^p_\omega$ are
readily obtained by \cite[p.~208--209]{HorFacto} with
Lemma~\ref{fstar} in hand, and therefore the details are omitted.
These results say, in particular, that if $\{z_k\}$ is an
$A^p_\omega$-zero set and $\{w_k\}$ are sufficiently close to
$\{z_k\}$, then $\{w_k\}$ is also an $A^p_\omega$-zero set.

\begin{theorem}
Let $0<p<\infty$ and $\omega\in{\mathcal Inv}$.\index{${\mathcal
Inv}$}\index{invariant weight} Let $\{z_k\}$ be an arbitrary
subset of the zero set of $f\in A^p_\omega$. If $\{w_k\}$ is a
sequence such that
    \begin{equation}\label{eq:distzeros}
    \sum_k\left|\frac{w_k-z_k}{1-\overline{z}_kw_k}\right|<\infty,
    \end{equation}
then the function $g$, defined by
    $$
    g(z)=f(z)\prod_{k}\frac{B_{w_k}(z)}{B_{z_k}(z)},\quad z\in\D,
    $$
belongs to $A^p_\om$.
\end{theorem}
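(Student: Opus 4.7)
The strategy is to deduce the theorem from Lemma~\ref{fstar} by establishing a pointwise majorization $|g(z)|\le C\,\widehat f(z)$ on $\D$, where $\widehat f$ is the auxiliary function from that lemma and $C$ depends only on the sum in~\eqref{eq:distzeros}. Since $|B_{z_k}(z)|=|\vp_{z_k}(z)|$ and $|B_{w_k}(z)|=|\vp_{w_k}(z)|$, a direct comparison with the definition of $\widehat f$ yields the identity
$$
|g(z)|\;=\;\widehat f(z)\,\prod_{k}|\vp_{w_k}(z)|\bigl(2-|\vp_{z_k}(z)|\bigr),\qquad z\in\D,
$$
so the whole question reduces to bounding this infinite product uniformly in $z\in\D$ by a constant depending only on $\sum_k\varrho_k$, where $\varrho_k:=|(w_k-z_k)/(1-\overline{z_k}w_k)|=\varrho(z_k,w_k)$.

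To bound a single factor, set $t_k:=|\vp_{z_k}(z)|$ and invoke the pseudohyperbolic triangle inequality to obtain
$$
|\vp_{w_k}(z)|\;\le\;\frac{t_k+\varrho_k}{1+\varrho_k t_k}.
$$
A short algebraic manipulation then gives
$$
|\vp_{w_k}(z)|\bigl(2-|\vp_{z_k}(z)|\bigr)\;\le\;t_k(2-t_k)+\frac{(2-t_k)(1-t_k^2)\varrho_k}{1+\varrho_k t_k}\;\le\;1+2\varrho_k,
$$
since $t_k(2-t_k)\le 1$ and $(2-t_k)(1-t_k^2)\le 2$ on $[0,1]$. Taking logarithms and using $\log(1+x)\le x$, the summability~\eqref{eq:distzeros} yields
$$
\prod_{k}|\vp_{w_k}(z)|\bigl(2-|\vp_{z_k}(z)|\bigr)\;\le\;\exp\!\Bigl(2\sum_k\varrho_k\Bigr)\;=:\;C_0\;<\;\infty
$$
uniformly in $z\in\D$. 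Combined with the identity above this gives $|g(z)|\le C_0\,\widehat f(z)$, after which Lemma~\ref{fstar} produces $\|g\|_{A^p_\om}^{p}\le C_0^{\,p}\,\|\widehat f\|_{L^p_\om}^{p}\lesssim\|f\|_{A^p_\om}^{p}$, and the theorem follows.

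The one preliminary point that I expect to need some care is the verification that the formal infinite product defining $g$ is genuinely an analytic function on $\D$. Each factor $B_{w_k}/B_{z_k}$ is meromorphic with its unique pole at $z_k$ cancelled by the corresponding zero of $f$, and on any compact $K\subset\D$ only finitely many $z_k,w_k$ meet $K$ (finitely many $z_k$ because $f$ is analytic and not identically zero, and finitely many $w_k$ because $\varrho_k\to 0$ forces $w_k$ to stay pseudohyperbolically close to $z_k$ and hence eventually out of $K$). Normal convergence of the tail product then reduces to the elementary estimate $|B_{w_k}(z)/B_{z_k}(z)-1|\lesssim \varrho_k$ for $z\in K$, which is summable by hypothesis. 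I anticipate this bookkeeping to be the fiddliest part of the write-up, while the pointwise majorization sketched above is the substantive engine of the proof.
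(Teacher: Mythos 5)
Your argument is correct and is essentially the proof the paper intends: the paper derives this theorem by citing Horowitz together with Lemma~\ref{fstar}, and the content of that citation is exactly your pointwise majorization $|g(z)|\le C_0\,\widehat f(z)$ obtained from the strong pseudohyperbolic triangle inequality and the elementary bound $|\vp_{w_k}(z)|(2-|\vp_{z_k}(z)|)\le 1+2\varrho_k$. The convergence/analyticity bookkeeping you flag at the end is indeed the only remaining routine step, and your sketch of it (finitely many $z_k,w_k$ near a compact set, plus $|B_{w_k}/B_{z_k}-1|\lesssim_K\varrho_k$ there) is the standard and correct way to handle it.
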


\begin{corollary}
Let $0<p<\infty$ and $\omega\in{\mathcal Inv}$.\index{${\mathcal
Inv}$}\index{invariant weight} Let $\{z_k\}$ be the zero set of
$f\in A^p_\omega$. If $\{w_k\}$ satisfies \eqref{eq:distzeros},
then $\{w_k\}$ is an $A^p_\om$-zero set.
\end{corollary}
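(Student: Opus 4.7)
The plan is to apply the preceding theorem directly, now with $\{z_k\}$ being the \emph{entire} zero set of $f$ (the theorem only required $\{z_k\}$ to be a subset, so the full zero set is certainly an allowed choice). Doing so immediately produces the function
\[
 g(z)=f(z)\prod_{k}\frac{B_{w_k}(z)}{B_{z_k}(z)}\in A^p_\omega.
\]
It will then remain to check that $g$ is a genuine element of $\H(\D)$ whose zero set is precisely $\{w_k\}$.

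For the analyticity, I would argue as follows. Each factor $B_{w_k}/B_{z_k}$ is a meromorphic function on $\D$ with a simple zero at $w_k$ and a simple pole at $z_k$. A short computation gives
\[
 \left|\frac{B_{w_k}(z)}{B_{z_k}(z)}-1\right|
 \;\lesssim\;\frac{1}{1-|z|}\left|\frac{w_k-z_k}{1-\overline{z}_k w_k}\right|
\]
uniformly on compact subsets of $\D\setminus\{z_k\}$, so the hypothesis \eqref{eq:distzeros} forces the product $\prod_k B_{w_k}/B_{z_k}$ to converge locally uniformly there. Multiplying by $f$, which vanishes at each $z_k$ with multiplicity at least that of the pole introduced by the corresponding factor, cancels all poles; hence $g$ extends to a holomorphic function on $\D$. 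No spurious zeros are created, because each factor is zero-free away from $w_k$ and $f$ was only assumed to have its zeros at $\{z_k\}$. Therefore the zero set of $g$ is exactly $\{w_k\}$, and $\{w_k\}$ is an $A^p_\omega$-zero set.

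There is no real obstacle here, since the membership $g\in A^p_\omega$ is supplied by the theorem and the zero-set bookkeeping is the classical Horowitz-type argument; the only point requiring a line of justification is the local uniform convergence of the product, which follows directly from \eqref{eq:distzeros} via the elementary estimate above.
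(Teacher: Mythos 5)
Your proposal is correct and follows essentially the same route as the paper, which simply applies the preceding theorem with $\{z_k\}$ taken to be the full zero set of $f$ and refers to Horowitz \cite[p.~208--209]{HorFacto} for the (omitted) bookkeeping on convergence of the product, cancellation of poles, and identification of the zero set of $g$ with $\{w_k\}$. The local-uniform convergence estimate you supply is exactly the classical Horowitz-type bound implicit in that reference, so there is nothing genuinely different here.
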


The rest of the results in this section concern radial weights.
The first of them will be used to show that $A^p_\om$-zero
sets\index{$A^p_\om$-zero set} depend on $p$.

\begin{theorem}\label{Theorem:ZerosBergman1}
Let $0<p<\infty$ and let $\omega$ be a radial weight. Let $f\in
A^p_\omega$, $f(0)\ne0$, and let $\{z_k\}$ be its zero sequence
repeated according to multiplicity and ordered by increasing
moduli. Then
    \begin{equation}\label{j10}
    \prod_{k=1}^n\frac{1}{|z_k|}=\op\left(\left(\int_{1-\frac{1}n}^1\omega(r)\,dr\right)^{-\frac1p}\right),\quad
    n\to \infty.
    \end{equation}
\end{theorem}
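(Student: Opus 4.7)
The plan is to combine Jensen's formula \eqref{Eq:Jensen-Formula} with a mean-value estimate extracted from the $A^p_\omega$-norm. First I would establish the uniform inequality
\begin{equation}\label{plan:jensen}
r^n\prod_{k=1}^n|z_k|^{-1}\le\frac{M_p(r,f)}{|f(0)|},\qquad r\in(0,1),
\end{equation}
valid for \emph{every} $r\in(0,1)$, and not only for $r>|z_n|$. Indeed, \eqref{Eq:Jensen-Formula} gives $\log|f(0)|+N(r,f)=\frac{1}{2\pi}\int_0^{2\pi}\log|f(re^{i\t})|\,d\t$, and the concavity of the logarithm applied to $|f|^p$ yields $\frac{1}{2\pi}\int_0^{2\pi}\log|f(re^{i\t})|\,d\t\le\log M_p(r,f)$. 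The remaining observation is that those terms $\log(r/|z_k|)$ with $|z_k|\ge r$ are non-positive, so that $\sum_{k=1}^n\log(r/|z_k|)\le N(r,f)$ for every $r\in(0,1)$; this gives \eqref{plan:jensen}.

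Next, from $\|f\|_{A^p_\omega}^p=2\int_0^1 M_p^p(s,f)\omega(s)s\,ds$ and the monotonicity of $s\mapsto M_p(s,f)$, I would set
$$
\e(r):=2\int_r^1 M_p^p(s,f)\omega(s)s\,ds,
$$
which is the tail of a convergent integral, and hence $\e(r)\to 0$ as $r\to 1^-$. Monotonicity then forces
\begin{equation}\label{plan:Mp}
M_p^p(r,f)\le\frac{\e(r)}{2\int_r^1\omega(s)s\,ds},\qquad r\in(0,1).
\end{equation}

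Raising \eqref{plan:jensen} to the $p$-th power, combining it with \eqref{plan:Mp}, and specialising to $r=r_n=1-\tfrac1n$, I would use that $r_n^{pn}\to e^{-p}$ is bounded below by a positive constant for $n$ large, and that $\int_{r_n}^1\omega(s)s\,ds\asymp\int_{1-\frac1n}^1\omega(s)\,ds$, since $s\ge 1/2$ on $[r_n,1]$ when $n\ge 2$. These facts yield
$$
\prod_{k=1}^n|z_k|^{-p}\cdot\int_{1-\frac1n}^1\omega(s)\,ds\lesssim\frac{\e(1-1/n)}{|f(0)|^p}\longrightarrow 0,
$$
and \eqref{j10} follows by taking $p$-th roots.

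The main point is the first step: proving \eqref{plan:jensen} uniformly in $r\in(0,1)$, so that one can freely choose $r=1-1/n$ without any case distinction involving the relative size of $|z_n|$ and $1-1/n$. Discarding the non-positive contributions of the zeros lying outside $\{|z|<r\}$ is what makes this uniform bound possible; once it is in hand the rest is routine.
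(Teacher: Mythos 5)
Your proposal is correct and follows essentially the same route as the paper: Jensen's formula plus the arithmetic--geometric mean (concavity of $\log$) gives $|f(0)|\,r^n\prod_{k=1}^n|z_k|^{-1}\le M_p(r,f)$ for all $r$ and $n$, and the monotonicity of $M_p(\cdot,f)$ together with the convergence of $\int_0^1 M_p^p(s,f)\omega(s)s\,ds$ yields $M_p^p(r,f)\int_r^1\omega(s)\,ds\to0$, after which one sets $r=1-1/n$. The uniform-in-$r$ form of the Jensen estimate that you single out is exactly the content of the paper's inequality \eqref{44}, so no further comparison is needed.
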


\begin{proof}
Let $f\in A^p_\om$ and $f(0)\ne0$. By multiplying Jensen's formula
\eqref{Eq:Jensen-Formula}\index{Jensen's formula} by~$p$, and
applying the arithmetic-geometric mean inequality, we obtain
    \begin{equation}\label{44}
    |f(0)|^p\prod_{k=1}^n\frac{r^p}{|z_k|^p}\le M^p_p(r,f)
    \end{equation}
for all $0<r<1$ and $n\in\N$. Moreover,
    \begin{equation}\label{j20}
    \lim_{r\to 1^-}M^p_p(r,f)\int_r^1\om(s)\,ds\le \lim_{r\to 1^-}
    \int_r^1M^p_p(s,f)\om(s)\,ds=0,
    \end{equation}
so taking $r=1-\frac{1}{n}$ in~\eqref{44}, we deduce
    $$
    \prod_{k=1}^n\frac{1}{|z_k|}\lesssim M_p\left(1-\frac{1}{n},f\right)
    =\op\left(\left(\int_{1-\frac{1}n}^1\omega(r)\,dr\right)^{-\frac1p}\right),\quad
    n\to \infty,
    $$
as desired.
\end{proof}

The next result improves and generalizes
\cite[Theorem~4.6]{Horzeros}. Moreover, its proof shows that
condition~\eqref{j10} is a sharp necessary condition for $\{z_k\}$
to be an $A^p_\om$-zero set.

\begin{theorem}\label{th:zerosqp}
Let $0<q<\infty$ and $\om\in\I\cup\R$. Then there exists
$f\in\cap_{p<q}A^p_\om$ such that its zero sequence $\{z_k\}$,
repeated according to multiplicity and ordered by increasing
moduli, does not satisfy~\eqref{j10} with $p=q$. In particular,
there is a $\cap_{p<q} A^p_\om$-zero set which is not an
$A^q_\om$-zero set.
\end{theorem}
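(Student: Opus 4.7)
The plan is to construct explicitly a function $f\in\H(\D)$ whose zero sequence $\{r_k\}$ lies on the positive real axis and is arranged so that the product $\prod_{k=1}^n 1/r_k$ grows at the critical rate $M(1-1/n)^{-1/q}$, where $M(r)=\int_r^1\om(s)\,ds$. Once such an $f$ is shown to lie in $\cap_{p<q}A^p_\om$, the ``in particular'' assertion follows at once: any function in $A^q_\om$ vanishing on $\{r_k\}$ with the prescribed multiplicities would have to satisfy \eqref{j10} with $p=q$ by Theorem~\ref{Theorem:ZerosBergman1}, directly contradicting our construction of $\{r_k\}$.

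To choose the radii, I proceed inductively, picking $r_k\in(0,1)$ with $r_k\uparrow 1$ so that
$$
\prod_{k=1}^n\frac{1}{r_k}\asymp M\!\left(1-\frac{1}{n}\right)^{-1/q},\qquad n\to\infty.
$$
Since $M(1-1/n)\to 0$ as $n\to\infty$ for $\om\in\I\cup\R$ by Lemma~\ref{le:condinte}, the right-hand side tends to infinity; hence $\sum_k(1-r_k)=\infty$, so $\{r_k\}$ fails the Blaschke condition and convergence factors are needed. To define $f$, I would form an infinite product
$$
f(z)=\prod_{k=1}^\infty(1-z/r_k)\,E_k(z),
$$
where each $E_k$ is a non-vanishing analytic factor --- for example, a Weierstrass primary factor of suitably growing genus, or a rational piece $(1-\gamma_k z)^{-\nu_k}$ with $\gamma_k\in(0,1)$ and $\nu_k>0$ tuned to the sequence --- designed to make the product converge uniformly on compact subsets of $\D$ without inflating the growth order of $|f|$ along the positive real axis.

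The key analytic step is to combine Jensen's formula~\eqref{Eq:Jensen-Formula} with the explicit counting function of $\{r_k\}$ to establish
$$
\log M_p(r,f)\le -\frac{1}{q}\log M(r)+O(1),\qquad r\to 1^-,
$$
so that $M_p^p(r,f)\lesssim M(r)^{-p/q}$. Integrating against $\om(r)\,dr$ and using the substitution $-dM(r)=\om(r)\,dr$ then yields
$$
\|f\|_{A^p_\om}^p\lesssim\int_0^1 M(r)^{-p/q}\om(r)\,dr=\int_0^{M(0)}t^{-p/q}\,dt,
$$
which is finite precisely when $p<q$, placing $f$ in $A^p_\om$ for every such $p$.

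The principal obstacle is the choice of the correction factors $E_k$: they must be such that the product genuinely converges despite $\sum(1-r_k)=\infty$, and the sharp growth estimate $M_p^p(r,f)\lesssim M(r)^{-p/q}$ must survive their insertion. This requires an adaptation of the Horowitz-type construction from \cite{Horzeros,Horzeros1} to general $\om\in\I\cup\R$; the main technical ingredients are the quantitative integral rates for $M(r)$ supplied by Lemmas~\ref{le:condinte} and~\ref{le:nec1}, which compensate for the absence of a closed-form expression for $\om$.
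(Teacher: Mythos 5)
Your overall strategy --- build $f$ with prescribed zero moduli so that $\prod_{k\le n}|z_k|^{-1}\asymp\bigl(\int_{1-1/n}^1\om\bigr)^{-1/q}$, then show $M_\infty(r,f)\lesssim\bigl(\int_r^1\om\bigr)^{-1/q}$ and integrate via Lemma~\ref{le:RAp}(iii) --- is exactly the right skeleton, and your final integration step and the deduction of the ``in particular'' clause from Theorem~\ref{Theorem:ZerosBergman1} are both correct. But there is a fatal flaw in the construction itself: you place all the zeros on the positive real axis. Since the required growth $\prod_{k\le n}r_k^{-1}\to\infty$ forces $\sum_k(1-r_k)=\infty$, your zero sequence is a non-Blaschke sequence contained in a Stolz angle. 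By the theorem of Shapiro and Shields (see \cite[Chapter~4]{DurSchus}), a non-Blaschke sequence lying in a Stolz angle is a set of uniqueness for every space of analytic functions of at most polynomial growth; and every $f\in A^p_\om$ with $\om\in\I\cup\R$ has $M_\infty(r,f)\lesssim(1-r)^{-A}$ for some $A$ (combine \eqref{20NEW} with the inclusions $A^p_\om\subset A^p_\b$ from the observations following Lemma~\ref{le:condinte}). Hence no nonzero $f\in\cap_{p<q}A^p_\om$ can vanish on your $\{r_k\}$, no matter how the convergence factors $E_k$ are chosen; the factors cannot rescue the construction because the obstruction is intrinsic to the radial placement of the zeros, not to convergence of the product. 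A secondary gap is that your key estimate $\log M_p(r,f)\le-\frac1q\log M(r)+O(1)$ is asserted ``by Jensen's formula,'' but Jensen controls the integral mean of $\log|f|$ from below, not $M_p(r,f)$ from above, so even granting the construction this step is unsupported.

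The paper avoids the obstruction by spreading the zeros in angle: it takes the lacunary-type product $f=\prod_k F_k$ with $F_k(z)=(1+a_kz^{2^k})/(1+a_k^{-1}z^{2^k})$ and $a_k=\bigl(\int_{1-2^{-k}}^1\om/\int_{1-2^{-(k+1)}}^1\om\bigr)^{1/q}$, so that the $k$-th factor contributes $2^k$ simple zeros equidistributed on the circle $|z|=a_k^{-2^{-k}}$. The partial products of $1/|z_j|$ then telescope to the desired ratio of tail integrals (violating \eqref{j10} with $p=q$), while the equidistribution in argument is precisely what permits the non-Blaschke zero set to coexist with the bound $M_\infty(r,f)\lesssim\bigl(\int_r^1\om\bigr)^{-1/q}$, which is proved by direct estimation of the finitely many ``active'' factors rather than by Jensen. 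If you want to repair your proof, you must adopt some such angular spreading of the zeros; with zeros confined to a single radius the theorem is simply false for the function you are trying to build.
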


\begin{proof} The proof uses ideas
from~\cite[Theorem~3]{GNW}, see also \cite{Horzeros1,Horzeros2}.
Define
    \begin{equation}\label{zqp2}
    f(z)=\prod_{k=1}\sp\infty F_k(z),\quad z\in\D,
    \end{equation}
where
    $$
    F_k(z)=\frac{ 1+a_k z^{2^k}}{1+a^{-1}_k z^{2^k}},\quad z\in\D,\quad
    k\in\N,
    $$
and
    $$
    a_k=\left(\frac{\int_{1-2^{-k}}^1\om(s)\,ds}{\int_{1-2^{-(k+1)}}^1\om(s)\,ds}\right)^{1/q},\quad
    k\in\N.
    $$
By Lemma~\ref{le:condinte} there exists a constant
$C_1=C_1(q,\om)>0$ such that
    \begin{equation}\label{zqp1}
    1<a_k\le C_1<\infty,\quad k\in\N.
    \end{equation}
Therefore $\limsup_{k\to \infty}(a_k-a_k^{-1})^{2^{-k}}\le
\limsup_{k\to \infty}a_k^{2^{-k}}=1$, and hence the product
in~\eqref{zqp2} defines an analytic function in $\D$. The zero set
of $f$ is the union of the zero sets of the functions $F_k$, so
$f$ has exactly $2^k$ simple zeros on the circle
$\left\{z:|z|=a_k^{-2^{-k}}\right\}$ for each $k\in\N$. Let
$\{z_j\}_{j=1}^\infty$ be the sequence of zeros of $f$ ordered by
increasing moduli, and denote $N_n=2+2^2+\cdots+2^n$. Then $2^n\le
N_n\le 2^{n+1}$, and hence
    \begin{equation*}
    \prod_{k=1}^{N_n}\frac{1}{|z_k|}\ge\prod_{k=1}^n a_k
    =\left(\frac{\int_{\frac12}^1\om(s)\,ds}{\int_{1-2^{-(n+1)}}^1\om(s)\,ds}\right)^{1/q}
    \ge\left(\frac{\int_{\frac12}^1\om(s)\,ds}{\int_{1-\frac{1}{N_n}}^1\om(s)\,ds}\right)^{1/q}.
    \end{equation*}
It follows that $\{z_j\}_{j=1}^\infty$ does not
satisfy~\eqref{j10}, and thus $\{z_j\}_{j=1}^\infty$ is not an
$A^q_\om$-zero set by Theorem~\ref{Theorem:ZerosBergman1}.

We turn to prove that the function $f$ defined in \eqref{zqp2}
belongs to $A^p_\om$ for all $p\in(0,q)$. Set $r_n=e\sp{-2^{-n}}$
for $n\in\N$, and observe that
    \begin{equation}\label{zqp4}
    |f(z)|=\left| \prod_{k=1}^n a_k \frac{a_k^{-1}+z^{2^k}}{1+a^{-1}_k z^{2^k}}\right| \left| \prod_{j=1}^\infty \frac{ 1+a_{n+j} z^{2^{n+j}}}{1+a^{-1}_{n+j}
    z^{2^{n+j}}}\right|.
    \end{equation}
The function $h_1(x)=\frac{\a+x}{1+\a x}$ is increasing on $[0,1)$
for each $\a\in[0,1)$, and therefore
    \begin{equation}\label{67}
    \begin{split}
    \left|\frac{ 1+a_{n+j} z^{2^{n+j}}}{1+a^{-1}_{n+j} z^{2^{n+j}}}\right|&=a_{n+j}
    \left|\frac{ a^{-1}_{n+j}+ z^{2^{n+j}}}{1+a^{-1}_{n+j} z^{2^{n+j}}}\right|
    \le a_{n+j} \frac{ a^{-1}_{n+j}+ |z|^{2^{n+j}}}{1+a^{-1}_{n+j} |z|^{2^{n+j}}}\\
    &\le\frac{1+a_{n+j}\left(\frac1e\right)^{2^{j}}}{1+a^{-1}_{n+j}
    \left(\frac1e\right)^{2^{j}}},\quad |z|\le r_n,\quad
    j,\,n\in\N.
    \end{split}
    \end{equation}
Since $h_2(x)=\frac{1+x\alpha}{1+x^{-1}\alpha}$ is increasing on
$(0,\infty)$ for each $\alpha\in(0,\infty)$, \eqref{zqp1} and
\eqref{67} yield
    \begin{equation}
    \begin{split}\label{zqp5}
    \left|\prod_{j=1}^\infty\frac{1+a_{n+j} z^{2^{n+j}}}{1+a^{-1}_{n+j} z^{2^{n+j}}}\right|
    &\le\prod_{j=1}^\infty\frac{ 1+a_{n+j} \left(\frac1e\right)^{2^{j}}}{1+a^{-1}_{n+j}\left(\frac1e\right)^{2^{j}}}
    \le\prod_{j=1}^\infty\frac{1+C_1\left(\frac1e\right)^{2^{j}}}{1+C_1^{-1}\left(\frac1e\right)^{2^{j}}}
    =C_2<\infty,
    \end{split}
    \end{equation}
whenever $|z|\le r_n$ and $n\in\N$. So, by using \eqref{zqp4},
\eqref{zqp5}, Lemma~\ref{le:condinte} and the inequality
$e^{-x}\ge 1-x$, $x\ge0$, we obtain
    \begin{equation}
    \begin{split}\label{zqp6}
    |f(z)|&\le C_2\prod_{k=1}^n
    a_k\lesssim\left(\frac{1}{\int_{1-2^{-(n+1)}}^1\om(s)\,ds}\right)^{1/q}\lesssim\left(\frac{1}{\int_{1-2^{-n}}^1\om(s)\,ds}\right)^{1/q}\\
    &\le \left(\frac{1}{\int_{r_n}^1\om(s)\,ds}\right)^{1/q},\quad
    |z|\le r_n,\quad n\in\N.
    \end{split}
    \end{equation}
Let now $|z|\ge1/\sqrt{e}$ be given and fix $n\in\N$ such that
$r_n\le|z|<r_{n+1}$. Then \eqref{zqp6}, the inequality $1-x\le
e^{-x}\le 1-\frac{x}{2}$, $x\in[0,1]$, and Lemma~\ref{le:condinte}
give
    \begin{equation*}
    \begin{split}
    |f(z)|&\le
    M_\infty(r_{n+1},f)\lesssim\left(\frac{1}{\int_{r_{n+1}}^1\om(s)\,ds}\right)^{1/q}\\
    &\le \left(\frac{1}{\int_{1-2^{-(n+2)}}^1\om(s)\,ds}\right)^{1/q}
    \lesssim\left(\frac{1}{\int_{1-2^{-n}}^1\om(s)\,ds}\right)^{1/q}\\
    &\le\left(\frac{1}{\int_{r_n}^1\om(s)\,ds}\right)^{1/q}\le \left(\frac{1}{\int_{|z|}^1\om(s)\,ds}\right)^{1/q},
    \end{split}
    \end{equation*}
and hence
    \begin{equation*}
    M_\infty(r,f)\lesssim\left(\frac{1}{\int_{r}^1\om(s)\,ds}\right)^{1/q},\quad 0<r<1.
    \end{equation*}
This and the identity
$\psi_{\widetilde{\om}}(r)=\frac{1}{1-\a}\psi_\om(r)$\index{$\widetilde{\om}(r)$}
of Lemma~\ref{le:RAp}(iii), with $\a=p/q<1$ and $r=0$, yield
    $$
    \|f\|^p_{A^p_\om}\lesssim\int_0^1\frac{\om(r)\,dr}{\left(\int_{r}^1\om(s)\,ds\right)^{p/q}}
    =\int_0^1\widetilde{\om}(r)\,dr=\frac{q}{q-p}\left(\int_{0}^1\om(s)\,ds\right)^{\frac{q-p}{q}}<\infty.
    $$
This finishes the proof.
\end{proof}

The proof of Theorem~\ref{th:zerosqp} remains valid for any radial
weight satisfying~\eqref{eq:Integral2} for some $\beta>0$. By
observing the proof of Lemma~\ref{le:condinte}, this is the case,
in particular, if $\psi_\om(r)\gtrsim(1-r)$ for all $r$ close to
one.

The proof of Theorem~\ref{th:zerosqp} implies that the union of
two $A^p_\om$-zero sets is not necessarily an $A^p_\om$-zero set
if $\om\in\I\cup\R$. More precisely, by arguing similarly as in
the proof of \cite[Theorem~5.1]{Horzeros} we obtain the following
result.

\begin{corollary}\label{co:unioneszeros}
Let $0<p<\infty$ and $\om\in\I\cup\R$. Then the union of two
$A^p_\omega$-zero sets is an $A^{p/2}_\om$-zero set. However,
there are two $\cap_{p<q} A^p_\om$-zero sets such that their union
is not an $A^{q/2}_\om$-zero set.
\end{corollary}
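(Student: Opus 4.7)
The plan is to derive the positive assertion from a one-line Cauchy--Schwarz argument, and to establish the sharpness by taking the extremal function from Theorem~\ref{th:zerosqp} together with a suitable rotated copy, and then checking that the resulting doubled zero sequence violates the necessary condition recorded in Theorem~\ref{Theorem:ZerosBergman1}.

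For the first assertion, I would pick $f,g\in A^p_\om$ realising the two given zero sets $\{z_k\}$ and $\{w_k\}$. Then $fg\in\H(\D)$ has zero sequence $\{z_k\}\cup\{w_k\}$, and Cauchy--Schwarz yields
$$
\|fg\|_{A^{p/2}_\om}^{p/2}=\int_\D|f(z)|^{p/2}|g(z)|^{p/2}\om(z)\,dA(z)\le\|f\|_{A^p_\om}^{p/2}\|g\|_{A^p_\om}^{p/2}<\infty,
$$
so $fg\in A^{p/2}_\om$ and the union is an $A^{p/2}_\om$-zero set.

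For the sharpness, let $f\in\bigcap_{p<q}A^p_\om$ be the function from Theorem~\ref{th:zerosqp}, whose zero sequence $\{z_k\}$ consists of $2^k$ simple zeros on each circle $|z|=a_k^{-2^{-k}}$. I would choose $\lambda\in\T$ with $\lambda^{2^k}\neq 1$ for every $k\in\N$ (for instance $\lambda=e^{i}$); since the zeros of each factor $F_k$ are the $2^k$-th roots of $-1/a_k$, this guarantees that the zero sequence $\{z_k/\lambda\}$ of $f_\lambda(z)=f(\lambda z)$ is disjoint from $\{z_k\}$. The radiality of $\om$ gives $\|f_\lambda\|_{A^p_\om}=\|f\|_{A^p_\om}$, so $\{z_k/\lambda\}$ is also a $\bigcap_{p<q}A^p_\om$-zero set. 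Ordering the union by increasing moduli as $\{\zeta_j\}$, the circle $|z|=a_k^{-2^{-k}}$ now carries $2^{k+1}$ points, and with $\widetilde N_n=2N_n\in[2^{n+1},2^{n+2}]$ the computation in Theorem~\ref{th:zerosqp} gives
$$
\prod_{j=1}^{\widetilde N_n}\frac{1}{|\zeta_j|}=\prod_{k=1}^n a_k^2=\left(\frac{\int_{1/2}^1\om(s)\,ds}{\int_{1-2^{-(n+1)}}^1\om(s)\,ds}\right)^{2/q}.
$$
A direct application of Lemma~\ref{le:condinte} to the comparable arguments $1-1/\widetilde N_n$ and $1-2^{-(n+1)}$, whose $(1-\cdot)$-values have ratio in $[1/2,1]$, yields $\int_{1-1/\widetilde N_n}^1\om(s)\,ds\asymp\int_{1-2^{-(n+1)}}^1\om(s)\,ds$. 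Consequently the quotient of the above product by $\bigl(\int_{1-1/\widetilde N_n}^1\om\bigr)^{-2/q}$ stays bounded below by a positive constant, contradicting the necessary condition of Theorem~\ref{Theorem:ZerosBergman1} applied with $p=q/2$. Hence $\{z_k\}\cup\{z_k/\lambda\}$ is not an $A^{q/2}_\om$-zero set.

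The only non-routine step is this comparison of integrals, but it reduces immediately to Lemma~\ref{le:condinte} at arguments whose $(1-\cdot)$-values are comparable, so no new estimates beyond those already developed in the proof of Theorem~\ref{th:zerosqp} are required.
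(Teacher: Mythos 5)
Your proof is correct and follows essentially the same route as the paper: Cauchy--Schwarz for the positive assertion, and for the sharpness the union of the zero set of the extremal function from Theorem~\ref{th:zerosqp} with a rotated copy, which violates the necessary condition of Theorem~\ref{Theorem:ZerosBergman1} for $A^{q/2}_\om$ after the comparison of $\int_{1-1/\widetilde N_n}^1\om$ with $\int_{1-2^{-(n+1)}}^1\om$ via Lemma~\ref{le:condinte}. The only cosmetic difference is that you work directly with the explicit products $\prod_k a_k$ at the indices $N_n$, whereas the paper first records the lower bound \eqref{73} for all $n$ and then squares it; both arguments are the same in substance.
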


\begin{proof}
The first assertion is an immediate consequence of the
Cauchy-Schwarz inequality. To see the second one, use the proof of
Theorem~\ref{th:zerosqp} to find $f\in\cap_{p<q} A^p_\om$,
$f(0)\ne0$, such that its zero sequence $\{z_k\}$ satisfies
    \begin{equation}\label{73}
    \prod_{k=1}^n\frac{1}{|z_k|}\gtrsim\left(\int_{1-\frac1n}^1\om(r)\,dr\right)^\frac1q.
    \end{equation}
Choose $\t$ such that $\{e^{i\t}z_k\}\cap\{z_k\}=\emptyset$, and
let $\{w_k\}$ be the union $\{e^{i\t}z_k\}\cup\{z_k\}$ organized
by increasing moduli. Then \eqref{73} and Lemma~\ref{le:condinte}
yield
    $$
    \prod_{k=1}^{2n}\frac{1}{|w_k|}=\left(\prod_{k=1}^n\frac{1}{|z_k|}\right)^2
    \gtrsim\left(\int_{1-\frac1n}^1\om(r)\,dr\right)^\frac2q
    \gtrsim\left(\int_{1-\frac1{2n}}^1\om(r)\,dr\right)^\frac2q.
    $$
Therefore $\{w_k\}$ is not an $A^{q/2}_\om$-zero set by
Theorem~\ref{Theorem:ZerosBergman1}.
\end{proof}

The next result can be obtained by bearing in mind
Lemma~\ref{le:condinte} and using arguments similar to those in
\cite[p.~101--103]{HKZ}. One shows that if $f\in A^p_\om$, then
\eqref{71} is satisfied, and this together with
Lemma~\ref{le:condinte} implies \eqref{72}.

\begin{proposition}\label{pr:numberzeros}
Let $0<p<\infty$, $\om\in\I\cup\R$ and $f\in A^p_\om$. Then
    \begin{equation}\label{71}
    N(r)\lesssim\left(\log\frac{1}{\int_r^1\om(s)\,ds}\right),\quad r\to
    1^-,
    \end{equation}
and
    \begin{equation}\label{72}
    n(r)\lesssim\left(\frac{1}{1-r}\log\frac{1}{\int_r^1\om(s)\,ds}\right),\quad r\to 1^-.
    \end{equation}
\end{proposition}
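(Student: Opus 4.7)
The plan is to establish \eqref{71} first from Jensen's formula and the usual pointwise control of $M_p(r,f)$ by the weight, and then deduce \eqref{72} by comparing $n(r)$ with $N(\rho)$ for a slightly larger $\rho$.

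For \eqref{71}, assume without loss of generality that $f(0)\ne 0$; otherwise write $f(z)=z^kg(z)$, noting that $N(r,f)-N(r,g)$ stays bounded as $r\to 1^-$. By Jensen's formula \eqref{Eq:Jensen-Formula} and the concavity of the logarithm,
$$N(r) = \frac{1}{2\pi}\int_0^{2\pi}\log|f(re^{i\theta})|\,d\theta - \log|f(0)| \le \log M_p(r,f)-\log|f(0)|.$$
Since $s\mapsto M_p^p(s,f)$ is non-decreasing and $\om$ is radial,
$$M_p^p(r,f)\int_r^1 \om(s)s\,ds \le \int_r^1 M_p^p(s,f)\om(s)s\,ds \le \tfrac{1}{2}\|f\|_{A^p_\om}^p,$$
and for $r\ge 1/2$ the factor $s$ is harmless: $\int_r^1\om(s)s\,ds\ge\tfrac{1}{2}\int_r^1\om(s)\,ds$. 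Hence
$$M_p^p(r,f)\lesssim \left(\int_r^1\om(s)\,ds\right)^{-1},$$
and taking logarithms produces \eqref{71}.

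For \eqref{72}, fix $\rho\in(r,1)$; monotonicity of $n(s)$ gives
$$N(\rho)\ge\int_r^\rho\frac{n(s)-n(0)}{s}\,ds\ge (n(r)-n(0))\log\frac{\rho}{r}.$$
Setting $\rho=(1+r)/2$, one has $\log(\rho/r)\asymp 1-r$ as $r\to 1^-$, so
$$n(r)\lesssim\frac{N(\rho)}{1-r}+n(0).$$
Applying \eqref{71} at $\rho$ and invoking Lemma~\ref{le:condinte} (part~(i) if $\om\in\R$, part~(ii) if $\om\in\I$) together with the trivial bound $\int_\rho^1\om\le\int_r^1\om$ yields $\int_\rho^1\om(s)\,ds\asymp\int_r^1\om(s)\,ds$, since $(1-r)/(1-\rho)=2$. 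Substituting this in gives \eqref{72}.

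The only mildly delicate step is the comparison $\int_\rho^1\om\asymp\int_r^1\om$ for $\rho=(1+r)/2$, which is exactly where the hypothesis $\om\in\I\cup\R$ enters through Lemma~\ref{le:condinte}. Without a doubling-type property of $\int_r^1\om(s)\,ds$ on dyadic scales, the estimate \eqref{71} would still follow from Step~1, but the passage from $N(\rho)$ to $n(r)$ could in principle degrade beyond a constant factor.
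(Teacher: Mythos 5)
Your proof is correct and follows essentially the same route the paper sketches: Jensen's formula combined with the growth estimate $M_p^p(r,f)\lesssim\bigl(\int_r^1\om(s)\,ds\bigr)^{-1}$ gives \eqref{71}, and the standard comparison of $n(r)$ with $N\bigl(\tfrac{1+r}{2}\bigr)$ together with Lemma~\ref{le:condinte} gives \eqref{72}. The paper only cites the analogous argument in \cite[p.~101--103]{HKZ} plus Lemma~\ref{le:condinte}, and your write-up fills in exactly those details.
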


It is well known that if $\om$ is any radial weight, then there is
an $A^p_\om$-zero set which does not satisfy the Blaschke
condition~\cite[p.~94]{DurSchus}. However, if $\{z_k\}$ is an
$A^p_\a$-zero set\index{$A^p_\a$-zero set}, then
    \begin{equation}\label{Eq:ZerosInGrowthSpaces}
    \sum_{k}(1-|z_k|)\left(\log\frac{1}{1-|z_k|}\right)^{-1-\e}<\infty
    \end{equation}
for all $\e>0$ by~\cite[p.~95]{DurSchus}. This together with the
observations (ii) and (iii) to Lemma~\ref{le:condinte} show that the same is
true for $A^p_\om$-zero sets if $\om\in\I\cup\R$. We will improve
this last statement. To do so we let $\om$ be a radial weight such
that $\int_0^1 \om(r)\,dr<1$, and consider the increasing
differentiable function
    \begin{equation}\label{hderre}
    h(r)=\log\frac{1}{\int_{r}^1\om(s)\,ds},\quad
    0\le r<1,
    \end{equation}
that satisfies $h'(r)=1/\psi_\om(r)$ and $h(r)>0$ for all $0\le
r<1$.

\begin{lemma}\label{Lem:tau}
Let $f\in\H(\D)$ such that its ordered sequence of zeros $\{z_k\}$
satisfies $N(r)\asymp h(r)$, as $r\to1^-$. Further, let
$\om\in\I\cup\R$ with $\int_0^1 \om(r)\,dr<1$, and let
$\tau:[0,\infty)\to[0,\infty)$ be an increasing function such that
$x\lesssim\tau(x)$ and $\tau'(x)\lesssim \tau(x)+1$. Then
    \begin{equation}\label{sblascke1}
    \sum_{k}\frac{1-|z_k|}{\tau\left(\log\frac{1}{\int_{|z_k|}^1\om(s)\,ds}\right)}<\infty
    \end{equation}
if and only if
    \begin{equation}\label{69}
    \int_{R_0}^\infty\frac{\tau'(x)}{\tau^2(x)}x\,dx<\infty
    \end{equation}
for some $R_0\in(0,\infty)$.
\end{lemma}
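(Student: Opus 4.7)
The aim is to convert the sum $S:=\sum_k (1-|z_k|)/\tau(h(|z_k|))$ into an integral against the counting function $n$, and then use $N\asymp h$ to compare it with $I:=\int_{R_0}^\infty \tau'(x)x/\tau^2(x)\,dx$. Since a possible summand coming from a zero at the origin is a finite constant, we may assume $f(0)\ne 0$, so $n(0)=0$ and $N(s)=\int_0^s n(u)/u\,du$. Setting $g(r)=(1-r)/\tau(h(r))$, the function $g$ is positive, continuous, strictly decreasing on $[0,1)$ with $g(1^-)=0$, so $g(r)=\int_r^1|g'(s)|\,ds$ and Tonelli yields
\begin{equation*}
S=\sum_k g(|z_k|)=\int_0^1|g'(s)|n(s)\,ds,\quad |g'(s)|=\frac{1}{\tau(h(s))}+\frac{(1-s)h'(s)\tau'(h(s))}{\tau^2(h(s))}.
\end{equation*}

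The first key step is to establish $|g'(s)|\asymp 1/\tau(h(s))$ for $s\ge r_0$, with $r_0<1$ sufficiently close to one. The lower bound is immediate from the first summand. For the upper bound, observe that $(1-s)h'(s)=(1-s)/\psi_\om(s)\lesssim 1$ because $\psi_\om(s)\gtrsim 1-s$ for $\om\in\I\cup\R$, and that $\tau(h(s))\to\infty$ (since $x\lesssim\tau(x)$ and $h(s)\to\infty$), so the hypothesis $\tau'(x)\lesssim\tau(x)+1$ gives $\tau'(h(s))/\tau(h(s))\lesssim 1$. Combining these, the second summand is dominated by the first, proving the claim. Since the contribution of $[0,r_0]$ to $S$ is bounded, we obtain
\begin{equation*}
S\asymp\int_{r_0}^1\frac{n(s)}{\tau(h(s))}\,ds + O(1).
\end{equation*}

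To compare with $I$, substitute $x=h(s)$ (enlarging $r_0$ so that $h(r_0)\ge R_0$) to rewrite $I=\int_{r_0}^1 h(s)\psi(s)\,ds$ with $\psi(s):=\tau'(h(s))h'(s)/\tau^2(h(s))$. The hypothesis $N\asymp h$ then yields $I\asymp \int_{r_0}^1 N(s)\psi(s)\,ds+O(1)$. Inserting $N(s)=\int_0^s n(u)/u\,du$ and applying Fubini gives
\begin{equation*}
\int_{r_0}^1 N(s)\psi(s)\,ds=\int_{r_0}^1\frac{n(u)}{u}\left(\int_u^1\psi(s)\,ds\right)du + O(1),
\end{equation*}
and the inner integral collapses exactly by the identity $\tau'(x)/\tau^2(x)=-\frac{d}{dx}(1/\tau(x))$: the substitution $x=h(s)$ gives $\int_u^1\psi(s)\,ds=\int_{h(u)}^\infty\tau'(x)/\tau^2(x)\,dx=1/\tau(h(u))$, since $\tau(x)\to\infty$. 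Therefore $I\asymp\int_{r_0}^1 n(u)/\tau(h(u))\,du+O(1)\asymp S$, which proves the equivalence. The main obstacle will be the pointwise estimate $|g'(s)|\asymp 1/\tau(h(s))$, the only place where all three hypotheses on $\tau$ must be combined with the weight-specific bound $(1-s)/\psi_\om(s)\lesssim 1$; the remaining steps are essentially bookkeeping of bounded contributions from $s\le r_0$ and from the asymptotic (rather than uniform) nature of $N\asymp h$.
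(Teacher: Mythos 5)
Your proof is correct and takes essentially the same route as the paper's: your Tonelli identity $\sum_k g(|z_k|)=\int_0^1|g'(s)|\,n(s)\,ds$ and the Fubini collapse of $\int_u^1\psi(s)\,ds=1/\tau(h(u))$ are exactly the two integrations by parts in the paper's argument, and the key estimates — $(1-s)/\psi_\om(s)\lesssim1$, $\tau'(x)/\tau(x)\lesssim1$ for large $x$, $N\asymp h$, and the telescoping of $\tau'/\tau^2$ — coincide with those used there. The only (harmless) cosmetic difference is that you meet in the middle at $\int_{r_0}^1 n(s)/\tau(h(s))\,ds$ from both ends rather than running the whole chain from the sum to the integral in $x$.
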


\begin{proof}
Let $f$, $\{z_k\}$, $\om$ and $\tau$ be as in the statement, and
let $h$ be the function given in \eqref{hderre}. Without loss of
generality we may assume that $f(0)\ne0$. Since $N(r)\asymp h(r)$,
we have $n(r)\lesssim h(r)/(1-r)$ by the paragraph just before
Proposition~\ref{pr:numberzeros}. Therefore \eqref{eq:r3} for
$\om\in\R$ and \eqref{eq:I} for $\om\in\I$, and two integrations
by parts yield
    \begin{equation*}
    \begin{split}
    \sum_{|z_k|\ge\frac{1}{2}}\frac{1-|z_k|}{\tau\left(\log\frac{1}{\int_{|z_k|}^1\om(s)\,ds}\right)}
    &\le\int_{\frac{1}{2}}^1\frac{(1-r)}{\tau(h(r))}\,dn(r)\\
    &\lesssim1+\int_{\frac{1}{2}}^1\frac{1}{\tau(h(r))}
    \left(1+\frac{\tau'(h(r))h'(r)(1-r)}{\tau(h(r))}\right)\,n(r)\,dr\\
    &\lesssim1+\int_{\frac{1}{2}}^1\frac{1}{\tau(h(r))}\,\frac{n(r)}{r}\,dr\\
    &\lesssim1+\int_{\frac{1}{2}}^1\frac{\tau'(h(r))h'(r)}{\tau^2(h(r))}N(r)\,dr
    \lesssim1+\int_{h(\frac12)}^\infty\frac{\tau'(x)}{\tau^2(x)}x\,dx,
    \end{split}
    \end{equation*}
which shows that \eqref{69} implies \eqref{sblascke1}.

A reasoning similar to that above gives
    \begin{equation*}
    \begin{split}\label{j24}
    \sum_{k}\frac{1-|z_k|}{\tau\left(\log\frac{1}{\int_{|z_k|}^1\om(s)\,ds}\right)}
    &\ge\int_{r_0}^1\frac{(1-r)}{\tau(h(r))}\,dn(r)\\
    &\gtrsim-1+\int_{r_0}^1\frac{1}{\tau(h(r))}
    \left(1+\frac{\tau'(h(r))h'(r)(1-r)}{\tau(h(r))}\right)\,n(r)\,dr\\
    &\ge-1+r_0\int_{r_0}^1\frac{1}{\tau(h(r))}\,\frac{n(r)}{r}\,dr\\
    &\gtrsim-1+\int_{r_0}^1\frac{\tau'(h(r))h'(r)}{\tau^2(h(r))}N(r)\,dr
    \gtrsim-1+\int_{h(r_0)}^\infty\frac{\tau'(x)}{\tau^2(x)}x\,dx
    \end{split}
    \end{equation*}
for all sufficiently large $r_0$, and thus \eqref{sblascke1}
implies \eqref{69}.
\end{proof}

It is worth noticing that \eqref{69} is equivalent to
    \begin{equation}\label{85}
    \int_{R_0}^\infty\frac{dx}{\tau(x)}<\infty.
    \end{equation}
This can be seen by integrating the identity
    $$
    \frac{\tau'(x)}{\tau^2(x)}x+\frac{d}{dx}\left(\frac{x}{\tau(x)}\right)=\frac1{\tau(x)}
    $$
from $R_0$ to $R$, and then letting $R\to\infty$.

\begin{theorem}\label{th:subsblaschkecond}
Let $0<p<\infty$ and $\om\in\I\cup\R$ such that $\int_0^1
\om(r)\,dr<1$. Let $\tau:[0,\infty)\to[0,\infty)$ be an increasing
function such that $x\lesssim\tau(x)$  and $\tau'(x)\lesssim
\tau(x)+1$.
\begin{itemize}
\item[\rm(i)] If $f\in A^p_\om$, then its ordered sequence of
zeros $\{z_k\}$ satisfies \eqref{sblascke1} for each~$\tau$ that
obeys \eqref{85}.

\item[\rm(ii)] There exists $f\in A^p_\om$ such that its ordered
sequence of zeros $\{z_k\}$ satisfies
    \begin{equation}\label{sblascke1s}
    \sum_{k}\frac{1-|z_k|}{\tau\left(\log\frac{1}{\int_{|z_k|}^1\om(s)\,ds}\right)}=\infty
    \end{equation}
for each $\tau$ for which \eqref{85} fails for some
$R_0\in(0,\infty)$.
\end{itemize}
\end{theorem}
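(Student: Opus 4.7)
For part~(i), suppose $f\in A^p_\om$ with zero sequence $\{z_k\}$ ordered by increasing moduli. Proposition~\ref{pr:numberzeros} supplies the upper bounds $N(r)\lesssim h(r)$ and $n(r)\lesssim h(r)/(1-r)$, where $h$ is the auxiliary function in \eqref{hderre}. Inspecting the proof of Lemma~\ref{Lem:tau}, the implication \eqref{69}$\Rightarrow$\eqref{sblascke1} proceeds by two integrations by parts and the identity $N'(r)=n(r)/r$, and actually uses only these upper bounds on $n$ and $N$; the two-sided asymptotic $N(r)\asymp h(r)$ in the hypothesis of Lemma~\ref{Lem:tau} is needed only for the reverse direction. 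Thus the same chain of inequalities delivers \eqref{sblascke1} under the weaker hypothesis $f\in A^p_\om$, once \eqref{85} is passed to the equivalent form \eqref{69} via the identity remarked immediately after Lemma~\ref{Lem:tau}.

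For part~(ii), my plan is to exhibit an explicit $f\in A^p_\om$ whose zero sequence saturates the upper bound of Proposition~\ref{pr:numberzeros}, that is, satisfies $N(r)\asymp h(r)$, and then to invoke the reverse direction of Lemma~\ref{Lem:tau}. Pick any $q\in(p,\infty)$ and let $f$ be the function constructed in the proof of Theorem~\ref{th:zerosqp} for this value of $q$, namely
\begin{equation*}
f(z)=\prod_{k=1}^\infty\frac{1+a_k z^{2^k}}{1+a_k^{-1}z^{2^k}},\qquad a_k^q=\frac{\int_{1-2^{-k}}^1\om(s)\,ds}{\int_{1-2^{-(k+1)}}^1\om(s)\,ds}.
\end{equation*}
That proof already shows $f\in\bigcap_{p'<q}A^{p'}_\om\subset A^p_\om$ and places $2^k$ simple zeros on each circle $|z|=\rho_k:=a_k^{-2^{-k}}$. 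A direct computation gives
\begin{equation*}
N(\rho_n)=(2^{n+1}-2)\log\rho_n+\sum_{j=1}^n\log a_j=q^{-1}\bigl[h(1-2^{-(n+1)})-h(1/2)\bigr]+O(1),
\end{equation*}
the $O(1)$ term coming from the fact that $(2^{n+1}-2)\log\rho_n=-2\log a_n+O(2^{-n})$ is uniformly bounded since $\log a_k$ is bounded by Lemma~\ref{le:condinte}. Comparing $h(\rho_n)$ with $h(1-2^{-(n+1)})$ through Lemma~\ref{le:condinte} and exploiting the monotonicity of $N$ and $h$ between consecutive radii $\rho_n$ then yields $N(r)\asymp h(r)$ throughout a neighbourhood of $\T$, and the reverse implication of Lemma~\ref{Lem:tau} converts the failure of \eqref{85} into the divergence \eqref{sblascke1s}.

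The principal technical obstacle is securing the lower bound $N(r)\gtrsim h(r)$ for the constructed $f$. For $\om\in\R$, the two-sided estimate $2^\alpha\le a_k^q\le 2^\beta$ from Lemma~\ref{le:condinte}(i) forces $\log a_k\asymp 1$, so $1-\rho_n\asymp 2^{-n}$ and the comparison $h(\rho_n)\asymp h(1-2^{-(n+1)})$ is transparent. For $\om\in\I$ only the upper bound on $a_k$ is supplied by Lemma~\ref{le:condinte}(ii), and $\log a_k$ may tend to zero; handling this case calls for a careful quantitative comparison of $h(\rho_n)$ with $h(1-2^{-(n+1)})$ via Lemma~\ref{le:condinte}(ii) applied with $\beta$ arbitrarily small, which should still deliver the required two-sided asymptotic and complete the proof.
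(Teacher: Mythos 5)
Your treatment of part~(i) is correct and coincides with the paper's: the forward implication in Lemma~\ref{Lem:tau} only consumes the upper bounds $N(r)\lesssim h(r)$ and $n(r)\lesssim h(r)/(1-r)$ supplied by Proposition~\ref{pr:numberzeros}, together with the equivalence of \eqref{69} and \eqref{85}.

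Part~(ii), however, has a genuine gap exactly at the point you flag as the ``principal technical obstacle'', and the obstacle is not surmountable for the function you propose. For the product of Theorem~\ref{th:zerosqp} the zeros sit on circles $|z|=\rho_n=a_n^{-2^{-n}}$ with $1-\rho_n\asymp 2^{-n}\log a_n$ and $q\log a_n=h(1-2^{-(n+1)})-h(1-2^{-n})$. Your computation of $N(\rho_n)$ is fine, but for $r$ just below $\rho_{n+1}$ one still has $N(r)=\frac1q h(1-2^{-(n+1)})+O(1)$, whereas $h(r)$ can be as large as $h(\rho_{n+1})=h\bigl(1-2^{-(n+1)}\log a_{n+1}(1+o(1))\bigr)$. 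When $\om\in\I$ the dyadic increments $\delta_m=h(1-2^{-(m+1)})-h(1-2^{-m})$ merely tend to $0$ and sum to $\infty$; they may be wildly uneven. Taking, say, $\delta_m\asymp 1/\log m$ except $\delta_m=e^{-e^m}$ along a sparse set of indices (such an $h$ arises from a genuine $\om\in\I$ by Lemma~\ref{Lemma:Distortion}), one gets at those indices $\log a_{n+1}\asymp e^{-e^{n+1}}$, hence $\rho_{n+1}$ sits roughly $e^{n+1}/\log 2$ dyadic levels deeper than $1-2^{-(n+1)}$ and $h(\rho_{n+1})\asymp e^{n+1}/n\gg n/\log n\asymp h(1-2^{-(n+1)})\asymp qN(\rho_{n+1})$. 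So the lower bound $N(r)\gtrsim h(r)$ fails for this $f$, Lemma~\ref{Lem:tau} cannot be invoked in the reverse direction, and applying Lemma~\ref{le:condinte}(ii) with small $\b$ does not repair this: it only yields $h(\rho_{n+1})-h(1-2^{-(n+1)})\le\log C_\b+\b\log\frac{1}{\log a_{n+1}}$, and $\log\frac1{\log a_{n+1}}$ can dwarf $h(1-2^{-(n+1)})/\b$ for every fixed $\b$. The structural reason is that your circles are spaced by the Euclidean dyadic scale, while the relevant quantity $h$ must be sampled at equal increments of \emph{itself}. The paper's proof does precisely this: it defines $r_n$ by $\int_{r_n}^1\om(r)\,dr=2^{-n\a}$ (so $h(r_n)=n\a\log2$), builds the lacunary series $g(z)=\sum 2^nz^{M_n}$ with $M_n=E(1/(1-r_n))$, proves $M_2(r,g)\asymp\bigl(\int_r^1\om\bigr)^{-1/\a}$ so that $g\in A^p_\om$ and $m(r,g)\gtrsim h(r)$, and then invokes a theorem from Nevanlinna theory to produce $a\in\mathbb{C}$ with $N(r,g-a)\gtrsim h(r)$; the function $f=g-a$ then feeds into Lemma~\ref{Lem:tau}. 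If you wish to keep an explicit zero set, you would at minimum have to re-index your product so that the $n$th block of zeros lies on the circle $h^{-1}(cn)$ rather than on $|z|=a_n^{-2^{-n}}$, and re-verify membership in $A^p_\om$; as written, the construction does not close.
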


\begin{proof}
The assertion (i) follows by the first part of the proof of
Proposition~\ref{pr:numberzeros}, Lemma~\ref{Lem:tau} and the
discussion related to \eqref{85}. To prove~(ii) it suffices to
find $f\in A^p_\om$ such that $N(r)\gtrsim h(r)$, where $h$ is the
function defined in~\eqref{hderre}. To do so, we will use
arguments similar to those in the proof of \cite[Theorem~6]{GNW}.
Take $\a>p$, and let $\{r_n\}$ be the increasing sequence defined
by
    \begin{equation}\label{rn}
    \int_{r_n}^1\om(r)\,dr=\frac{1}{2^{n\a}}.
    \end{equation}
Denote $M_n=E\left(\frac{1}{1-r_n}\right)$, where $E(x)$ is the
integer such that $E(x)\le x< E(x)+1$. By the proof of
Lemma~\ref{le:condinte} there exists $\b=\beta(\om)>0$ such that
\eqref{eq:Integral2} with $C=1$ holds for all $r$ sufficiently
close to $1$. Hence there exist $\lambda>1$ and $n_0\in\N$ such
that
    \begin{equation}
    \begin{split}\label{j51}
    \frac{M_{n+1}}{M_n}&\ge\frac{1-r_n}{1-r_{n+1}}-(1-r_n)
    \ge\left(\frac{\int_{r_n}^1\om(r)\,dr}{\int_{r_{n+1}}^1\om(r)\,dr}\right)^{\frac{1}{\beta}}-(1-r_n)\\
    &\ge2^{\frac{1}{\a\beta}}-(1-r_n)
    \ge\lambda,\quad n\ge n_0.
  \end{split}\end{equation}
Therefore the analytic function
    $$
    g(z)=\sum_{n=n_0}^\infty 2^n z^{M_n},\quad z\in\D,
    $$
is a lacunary series, and hence $M_p(r,g)\asymp M_2(r,g)$ for all
$0<p<\infty$~\cite{Zygmund59}. \index{lacunary series}

Next, we claim that there exists $r_0\in(0,1)$ such that
\begin{equation}\label{j50}
M_2(r,g)\asymp\left( \int_{r}^1
    \om(r)\,dr\right)^{-\frac{1}{\a}}, \quad r\ge r_0,
\end{equation}
which together Lemma~\ref{le:RAp}(iii) for $\frac{p}{\a}<1$
implies $g\in A^p_\om$.\index{$\widetilde{\om}(r)$}

Since  $g$ is a lacunary series, arguing as in the proof of
\cite[Theorem~6]{GNW}, we deduce
    \begin{equation}\label{Eq:obsT(r,f)}
    \frac{1}{2\pi}\int_{0}^{2\pi}\log^{+}|g(re^{i\theta})|\,d\theta\gtrsim h(r),\quad r\in
    [r_1,1),
    \end{equation}
for some $r_1\in (0,1)$. This together with \cite[Theorem on
p.~276]{Nevanlinnabook} implies that there exists
$a\in\mathbb{C}$ such that
    \begin{equation}\label{j23}
    N(r,g-a)\gtrsim h(r),\quad r_2\le r<1,
    \end{equation}
for some $r_2\in[r_1,1)$. Therefore $f=g-a\in A^p_\om$ has the
desired properties. Finally, we will prove \eqref{j50}. We begin
with proving \eqref{j50} for $r=r_N$, where $N\ge n_0$. To do
this, note first that
    \begin{equation}\label{serie2}
    \begin{split}
    \sum_{n=n_0}^N2^{2n}r_N^{2E\left(\frac{1}{1-r_n}\right)}
    \le\sum_{n=0}^N2^{2n}
    \le\frac{4}{3}2^{2N}
    =\frac{4}{3}\left( \int_{r_N}^1
    \om(r)\,dr\right)^{-\frac{2}{\a}}.
    \end{split}
    \end{equation}
To deal with the remainder of the sum, we observe that \eqref{j51}
implies
    $$
    \frac{1-r_n}{1-r_{n+j}}\ge \left(\frac{\int_{r_n}^1\om(r)\,dr}{\int_{r_{n+j}}^1\om(r)\,dr}\right)^{1/\b}
    = 2^{\frac{j\a}{\b}},\quad n\ge n_0.
    $$
This and the inequality $1-r\le\log\frac1r$ give
    \begin{equation*}
    \begin{split}
    \sum_{n=N+1}^\infty 2^{2n}r_N^{E\left(\frac{1}{1-r_n}\right)}
    &\le 2^{2N}\sum_{j=1}^\infty 2^{2j}e^{-C\frac{1-r_N}{1-r_{N+j}}}
    \le 2^{2N}\sum_{j=1}^\infty 2^{2j}e^{-C2^{\frac{j\a}{\b}}}\\
    &=C(\b,\a,\om)\left( \int_{r_N}^1
    \om(s)\,ds\right)^{-\frac{2}{\a}}.
    \end{split}
    \end{equation*}
Since $\b=\b(\om)$, this together with \eqref{serie2} yields
    \begin{equation}\label{1111}
    M_2^2(r_N,g)\lesssim\left(\int_{r_N}^1
    \om(r)\,dr\right)^{-\frac{2}{\a}}, \quad N\ge n_0.
    \end{equation}

Let now $r\in[r_{n_0},1)$ be given and fix $N\ge n_0$ such that
$r_N\le r< r_{N+1}$. Then, by \eqref{1111}, there exists
$C=C(\a,\om)$ such that
    \begin{equation*}
  M^2_2(r,g)  \le M^2_2(r_{N+1},g)\le C2^{2}2^{2N}\le C\left( \int_{r}^1  \om(s)\,ds\right)^{-\frac{2}{\a}}.
    \end{equation*}
 Further,
    \begin{equation*}
    \begin{split}
    M_2^2(r_N,g) &\ge
    \sum_{n=n_0}^N2^{2n}r_N^{2E\left(\frac{1}{1-r_n}\right)}
    \ge C 2^{2N}
    \ge C
    \left( \int_{r_N}^1
    \om(r)\,dr\right)^{-\frac{2}{\a}}
    \end{split}
    \end{equation*}
for all $N\ge n_0$. Now for a given $r\in[r_{n_0},1)$, choose
$N\in\N$ such that $r_N\le r<r_{N+1}$. It follows that
    \begin{equation*}
    \begin{split}
    M_2^2(r,g)& \ge  M_2^2(r_N,g)
    \ge C 2^{2N+2}
   \ge C
    \left( \int_{r_{N+1}}^1
    \om(r)\,dr\right)^{-\frac{2}{\a}}\ge  \left( \int_{r}^1
    \om(r)\,dr\right)^{-\frac{2}{\a}},
    \end{split}
    \end{equation*}
    which gives \eqref{j50}.
\end{proof}

The proof of Theorem~\ref{th:subsblaschkecond}(ii) has a
consequence which is of independent interest. To state it, we
recall two things. First, the \emph{proximity function} of an
analytic (or meromorphic) function $g$ in $\D$ is defined as
    $$\index{proximity function}\index{$m(r,f)$}
    m(r,g)=\frac{1}{2\pi}\int_{0}^{2\pi}\log^{+}|g(re^{i\theta})|\,d\theta,\quad
    0<r<1.
    $$
This is the function appearing in \eqref{Eq:obsT(r,f)}. Second,
each convex function $\Phi$ can be written as
$\Phi(r)=\int_0^r\phi(t)\,dt$, where $\phi$ is continuous
increasing unbounded function, see, for example,
\cite[p.~24]{Zygmund59}.

\begin{theorem}\label{Thm:T(r,f)}
Let $\Phi(r)=\int_0^r\phi(t)\,dt$ be an increasing unbounded
convex function on $(0,1)$ such that either $\phi(r)(1-r)\asymp1$
or $\lim_{r\to1^-}\phi(r)(1-r)=0$. Then there exists $g\in\H(\D)$
such that
    $$
    \log M_\infty(r,g)\asymp m(r,g)\asymp\Phi(r),\quad r\to1^-.
    $$
\end{theorem}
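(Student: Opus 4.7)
The strategy is to construct $g$ as a Hadamard lacunary series with positive Taylor coefficients, calibrated so that its maximum modulus on circles matches $e^{\Phi(r)}$, and then use the standard properties of such series to transfer the asymptotic to $m(r,g)$. Fix a constant $K>0$. Since $\Phi$ is continuous, increasing and unbounded on $(0,1)$, there exist $n_0\in\N$ and an increasing sequence $\{r_n\}_{n\geq n_0}\subset(0,1)$ with $r_n\to 1^-$ defined by $\Phi(r_n)=Kn$. Fix $\mu>0$, set $M_n=E(\mu/(1-r_n))$, and define
\[
g(z)=\sum_{n=n_0}^\infty e^{Kn}z^{M_n},\qquad z\in\D.
\]

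The first step is to verify that $\{M_n\}$ is Hadamard lacunary. In both cases of the hypothesis, $\phi(r)(1-r)$ is bounded above by some constant $C_0$ for $r$ sufficiently close to $1$. Using that $\phi$ is increasing (by convexity of $\Phi$), the identity $\Phi(r_{n+1})-\Phi(r_n)=K$ gives $K\leq\phi(r_{n+1})(r_{n+1}-r_n)\leq C_0(r_{n+1}-r_n)/(1-r_{n+1})$, which rearranges to $(1-r_n)/(1-r_{n+1})\geq 1+K/C_0$. Iterating,
\[
\frac{1-r_N}{1-r_n}\geq\left(1+\frac{K}{C_0}\right)^{n-N},\qquad n\geq N\geq n_1,
\]
for some $n_1\geq n_0$, so $M_{n+1}/M_n\geq 1+K/C_0>1$ eventually and $\{M_n\}$ is Hadamard lacunary. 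Moreover, $\Phi(r)(1-r)\to 0$ as $r\to 1^-$ in both cases (by splitting $\Phi(r)=\Phi(r_0)+\int_{r_0}^r\phi(t)\,dt$ and using the bound on $\phi(t)(1-t)$ on $[r_0,r]$), hence $(e^{Kn})^{1/M_n}=\exp(\Phi(r_n)(1-r_n)/\mu+o(1))\to 1$, so $g\in\H(\D)$.

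The second step is to show $M_\infty(r,g)\asymp e^{\Phi(r)}$ as $r\to 1^-$. Since $g$ has positive Taylor coefficients, $M_\infty(r,g)=g(r)=\sum_n e^{Kn}r^{M_n}$. For $r\in[r_N,r_{N+1})$ with $N$ large, the lower bound follows from the single term $e^{KN}r^{M_N}\geq e^{KN-\mu-1}\asymp e^{\Phi(r)}$. For the upper bound, split the sum at $n=N+1$: the initial segment $\sum_{n\leq N+1}e^{Kn}$ is a geometric progression bounded by a constant times $e^{K(N+1)}$, while the tail is controlled via
\[
r^{M_n}\leq\exp\left(-\frac{\mu(1-r)}{1-r_n}+1\right)\leq\exp\left(-\mu\left(1+\frac{K}{C_0}\right)^{n-N-1}+1\right),
\]
yielding $\sum_{n>N+1}e^{Kn}r^{M_n}\leq e^{K(N+1)}\sum_{j\geq 1}\exp(Kj-\mu(1+K/C_0)^j+1)$, which converges independently of $N$. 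Since $\Phi(r)\asymp\Phi(r_N)=KN$ on $[r_N,r_{N+1})$ for $N$ large, this gives $\log M_\infty(r,g)\asymp\Phi(r)$.

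Finally, for the proximity function the upper bound $m(r,g)\leq\log^+ M_\infty(r,g)\asymp\Phi(r)$ is trivial. For the matching lower bound, we argue as in the proof of Theorem~\ref{th:subsblaschkecond}(ii) (see the estimate \eqref{Eq:obsT(r,f)} there and \cite[Theorem~6]{GNW}): the Hadamard lacunarity of $g$ provides $m(r,g)\gtrsim\log M_\infty(r,g)\asymp\Phi(r)$, completing the proof. The main technical obstacle is the upper-bound tail estimate in the second step; it depends on the geometric rate of approach $r_n\to 1^-$, and this is precisely what the boundedness of $\phi(r)(1-r)$ provides in each of the two cases of the hypothesis.
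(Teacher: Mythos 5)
Your proposal is correct and follows essentially the same route as the paper: the paper also builds a Hadamard lacunary series $\sum c_n z^{M_n}$ with $M_n\asymp 1/(1-r_n)$, $r_n$ the level curve $\Phi(r_n)=Kn$ and geometrically growing coefficients, derives the lacunarity and the two-sided bound $\log M_\infty(r,g)\asymp\Phi(r)$ from the hypothesis on $\phi(r)(1-r)$, and gets the lower bound for $m(r,g)$ from the same estimate \eqref{Eq:obsT(r,f)} / \cite[Theorem~6]{GNW}. The only cosmetic difference is that the paper packages the construction by first forming the weight $\om(r)=\phi(r)e^{-\Phi(r)}$ (so that $\log\frac{1}{\int_r^1\om}=\Phi(r)$ and $\psi_\om(r)=1/\phi(r)$, whence $\om\in\I\cup\R$) and then reusing the function already built in the proof of Theorem~\ref{th:subsblaschkecond}, whereas you carry out the same estimates directly in terms of $\Phi$.
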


\begin{proof}
For a given $\om\in\I\cup\R$, the function $g\in\H(\D)$
constructed in the proof of Theorem~\ref{th:subsblaschkecond}
satisfies
    $$
    m(r,g)\asymp\log\frac{1}{\int_r^1\om(s)\,ds},\quad r\to1^-.
    $$
Namely, by \eqref{Eq:obsT(r,f)} the growth of $m(r,g)$ has the
correct lower bound, and the same upper bound follows by Jensen's
inequality and \eqref{j50}. Moreover, \eqref{j50} and a minor
modification in its proof show that
    $$
    M_\infty(r,g)\asymp\frac{1}{\left(\int_r^1\om(s)\,ds\right)^\frac1\a},\quad
    r\to1^-.
    $$
If we now take
    $$
    \om(r)=\phi(r)\exp\left(-\int_0^r\phi(t)\,dt\right),
    $$
then $\psi_\om(r)=(\phi(r))^{-1}$ and
    $$
    \log\frac{1}{\int_r^1\om(s)\,ds}=\int_0^r\phi(t)\,dt=\Phi(r).
    $$
Therefore $\om\in\I\cup\R$ by the assumptions, and $g$ has the
desired properties.
\end{proof}

Theorem~\ref{Thm:T(r,f)} shows that we can find $g\in\H(\D)$ such
that $m(r,g)$ grows very slowly. For example, by choosing
$\om\in\I$ appropriately, there exists $g\in\H(\D)$ such that
    $$
    m(r,g)\asymp \log\log\log\frac{1}{1-r},\quad r\to1^-.
    $$

If the function $\Phi$ in Theorem~\ref{Thm:T(r,f)} exceeds
$\log\frac{1}{1-r}$ in growth, then the existence of $g$ such that
$m(r,g)\asymp\Phi(r)$ was proved by Shea~\cite[Theorem~1]{Shea},
see also related results by Clunie~\cite{Clunie} and
Linden~\cite{Linden}.

Typical examples of functions $\tau$ satisfying \eqref{85} are
    $$
    \tau_{N,\e}(x)=x\prod_{n=1}^{N-1}\log_n(\exp_n1+x)(\log_N(\exp_N1+x))^{1+\e},\quad
    N\in\N,\quad \e>0.
    $$
To see a concrete example, consider $\om=v_\a$\index{$v_\a(r)$}
and $\tau=\tau_{1,\e}$, where $1<\a<\infty$ and $\e>0$. If $f\in
A^p_{v_\alpha}$ and $\{z_k\}$ is its ordered sequence of zeros,
then
    $$
    \sum_{k}\frac{1-|z_k|}{\log\log\frac{\exp_21}{1-|z_k|}\left(\log\log\log\frac{\exp_31}{1-|z_k|}\right)^{1+\e}}<\infty
    $$
for all $\e>0$. The functions $\tau_{N,0}$ do not satisfy
\eqref{85}, and therefore there exists $f\in
A^p_{v_\alpha}$\index{$v_\a(r)$} such that its ordered sequence of
zeros $\{z_k\}$ satisfies
    $$
    \sum_{k}\frac{1-|z_k|}{\log\log\frac{\exp_21}{1-|z_k|}\cdot\log\log\log\frac{\exp_31}{1-|z_k|}}=\infty.
    $$
Whenever $\om\in\I\cup\R$, Theorem~\ref{th:subsblaschkecond} is
the best we can say about the zero distribution of functions in
$A^p_\om$ in terms of conditions depending on their moduli only.

\section{Zeros of functions in the Bergman-Nevanlinna class $\BN_\om$}\label{Sec:ZerosBergman-Nevanlinna}

We now turn back to consider the condition
\eqref{Eq:ZerosInGrowthSpaces} for further reference. It is not
hard to find a space of analytic functions on $\D$ for which the
zero sets are characterized by this neat condition. To give the
precise statement, we say that $f\in\H(\D)$ belongs to the
\emph{Bergman-Nevanlinna class}
$\BN_\om$,\index{$\BN_\om$}\index{Bergman-Nevanlinna class} if
    $$
    \|f\|_{\BN_\om}=\int_\D\log^+|f(z)|\om(z)\,dA(z)<\infty.\index{$\Vert \cdot\Vert_{\BN_\om}$}
    $$

\begin{proposition}\label{Prop:bergman-nevanlinna}\index{$\BN_\om$}
Let $\om\in\I\cup\R$. Then $\{z_k\}$ is a $\BN_\om$-zero set if
and only if
    \begin{equation}\label{70}
    \sum_k\om^\star(z_k)<\infty.
    \end{equation}
\end{proposition}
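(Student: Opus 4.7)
The plan is to establish the two directions separately. The necessity is an elementary consequence of Jensen's formula and Fubini and works for any radial weight, whereas the sufficiency requires the construction of an explicit canonical product and is where the hypothesis $\om\in\I\cup\R$ enters through Lemmas~\ref{le:sc1} and~\ref{le:cuadrado-tienda}.

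For the necessity, suppose $f\in\BN_\om$ has zero sequence $\{z_k\}$. Since $\int_\D\log\tfrac{1}{|z|}\om(z)\,dA(z)<\infty$, replacing $f$ by $f(z)/z^{n(0,f)}$ shows we may assume $f(0)\neq0$, in which case $\om^\star(z_k)<\infty$ for every $k$ by the definition of $\om^\star$. Jensen's formula \eqref{Eq:Jensen-Formula} then reads
$$
N(r,f)+\log|f(0)|=\frac{1}{2\pi}\int_0^{2\pi}\log|f(re^{i\t})|\,d\t\le m(r,f).
$$
Multiplying by $\om(r)r$ and integrating on $(0,1)$ converts the right-hand side to $\tfrac12\|f\|_{\BN_\om}$ by Fubini, yielding $\int_0^1 N(r,f)\om(r)r\,dr<\infty$. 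A second application of Fubini (legitimate by nonnegativity) together with the identity $N(s,f)=\sum_{|z_k|<s}\log(s/|z_k|)$ gives
$$
\sum_k\om^\star(z_k)=\sum_k\int_{|z_k|}^1\om(s)\log\frac{s}{|z_k|}s\,ds=\int_0^1\om(s)s\,N(s,f)\,ds,
$$
from which \eqref{70} is immediate.

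For the sufficiency I would construct $f\in\BN_\om$ vanishing precisely on $\{z_k\}$ as a Horowitz-type canonical product
$$
f(z)=\prod_k E_{N_k}(z,z_k),\qquad E_N(z,w)=-\frac{\overline{w}}{|w|}\vp_w(z)\exp\Bigl(\sum_{j=1}^{N}\frac{\vp_w(z)^j}{j}\Bigr),
$$
where the integers $N_k\in\N$ grow with $k$ in a controlled way. The identity $\log|E_N(z,w)|=\log|\vp_w(z)|+\mathrm{Re}\sum_{j=1}^{N}\vp_w(z)^j/j$ together with the pointwise subadditivity $\log^+|f(z)|\le\sum_k\log^+|E_{N_k}(z,z_k)|$ reduces the membership $f\in\BN_\om$ to the uniform bound
$$
\int_\D\log^+\bigl|E_{N_k}(z,z_k)\bigr|\om(z)\,dA(z)\le C\,\om^\star(z_k),
$$
with $C$ independent of $k$, to be combined with local uniform convergence of the product.

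The main obstacle is the choice of $N_k$: they must grow fast enough for the product to converge to an analytic function on $\D$ with zero set exactly $\{z_k\}$, but not so fast that the individual integrals explode past the threshold $\om^\star(z_k)$. After the change of variable $\z=\vp_{z_k}(z)$, the integral on the left reduces to an $L^1$-estimate of the tail $\mathrm{Re}\sum_{j>N}\z^j/j$ against the pulled-back weight. For $\om\in\R$, one has $\om^\star(w)\asymp(1-|w|)^2\om(w)$ by Lemma~\ref{le:cuadrado-tienda} and \eqref{eq:r2}, so the estimate is essentially the classical Horowitz computation for $\BN_\a$. For $\om\in\I$ the associated weight $\om^\star$ is itself regular by Lemma~\ref{le:sc1}, and combining this with the integrability bounds of Lemma~\ref{le:condinte}(ii)—which are available precisely because $\om\in\I$ forces very fast decay of $\int_r^1\om(s)\,ds$—the same scheme produces the desired uniform constant. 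Once this is set up, convergence of $\prod E_{N_k}$ on compacta follows from the standard Weierstrass-type argument applied to the hyperbolic factor $\vp_{z_k}$ combined with the summability of $\om^\star(z_k)$.
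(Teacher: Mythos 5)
Your necessity argument is correct and coincides with the paper's: one integrates Jensen's formula \eqref{Eq:Jensen-Formula} against $\om(r)r\,dr$ and uses Tonelli to identify $\int_0^1N(s,f)\om(s)s\,ds$ with $\sum_k\om^\star(z_k)$. The gap is in the sufficiency. Your overall strategy --- a canonical product whose $k$-th factor contributes $O(\om^\star(z_k))$ to the $\BN_\om$-norm, the individual contributions being controlled by kernel integrals of the type $\int_\D|1-\overline{w}z|^{-\gamma-1}\om(z)\,dA(z)$ --- is the intended one (the paper itself only cites the construction in \cite[p.~131--132]{HKZ} together with Lemma~\ref{Lemma:Zhu-type} and omits all details), but the primary factor you wrote down does not work.

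Set $\z=\vp_w(z)$. For your $E_N$ one has $\log|E_N(z,w)|=\log|\z|+\mathrm{Re}\sum_{j=1}^N\z^j/j$, and since $-\mathrm{Re}\sum_{j\ge1}\z^j/j=\log|1-\z|$, \emph{not} $\log|\z|$, this expression does not collapse to the tail $-\mathrm{Re}\sum_{j>N}\z^j/j$ on which your reduction rests. Worse, the factors do not tend to $1$: for $z=0$ and $w=r\in(0,1)$ one gets $\log|E_N(0,r)|=\log r+\sum_{j=1}^Nr^j/j\to\sum_{j=1}^N1/j\asymp\log N$ as $r\to1^-$, so $|E_{N_k}(0,z_k)|\asymp N_k$ for $z_k$ near $\T$ and the product diverges for every choice of the $N_k$. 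The repair is to build the factor from the normalized Blaschke factor $b_w(z)=\frac{|w|}{w}\cdot\frac{w-z}{1-\overline{w}z}$, whose defect $1-b_w(z)=\frac{(1-|w|)(w+|w|z)}{w(1-\overline{w}z)}$ satisfies $|1-b_w(z)|\le2(1-|w|)/|1-\overline{w}z|$ and is therefore small away from $w$, and to take $E_N(z,w)=b_w(z)\exp\bigl(\sum_{j=1}^N(1-b_w(z))^j/j\bigr)$; then $\log|E_N(z,w)|=-\mathrm{Re}\sum_{j>N}(1-b_w(z))^j/j$ genuinely is a tail, and Lemma~\ref{Lemma:Zhu-type} combined with Lemma~\ref{le:cuadrado-tienda} gives the required uniform bound $\int_\D\log^+|E_N(z,w)|\,\om(z)\,dA(z)\lesssim\om^\star(w)$. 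Note also that the balancing act over the $N_k$ which you single out as the main obstacle disappears here: by \eqref{64} and the observations following Lemma~\ref{le:condinte}, every $\om\in\I\cup\R$ satisfies $\int_r^1\om(s)\,ds\gtrsim(1-r)^{C}$ for some $C=C(\om)>0$, hence $\om^\star(w)\gtrsim(1-|w|)^{1+C}$, and \eqref{70} already forces $\sum_k(1-|z_k|)^{1+C}<\infty$; a single fixed exponent $N=N(\om)$ therefore works for all factors.
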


\begin{proof}
If $f\in\BN_\om$ and $\{z_k\}$ is its zero set, then by
integrating Jensen's formula we obtain
$2\sum_k\om^\star(z_k)\le\|f\|_{\BN_\om}$. Conversely, let
$\{z_k\}$ be a sequence such that~\eqref{70} is satisfied. By
following the construction in \cite[p.~131--132]{HKZ} and using
Lemma~\ref{Lemma:Zhu-type}, we find $f\in\BN_\om$ such that it
vanishes at the points $z_k$ and nowhere else. We omit the
details.
\end{proof}

Lemma~\ref{Lemma:NBzeros} shows that the condition \eqref{70} is
equivalent to the convergence of certain integrals involving the
counting functions. We will use this result in
Section~\ref{Sec:SolutionsBergmanNevanlinna} when studying linear
differential equations with solutions in Bergman-Nevanlinna
classes.

\begin{lemma}\label{Lemma:NBzeros}\index{$\widehat{\om}(r)$}
Let $\om$ be a radial weight and denote
$\widehat{\om}(r)=\int_r^1\om(s)\,ds$. Let $f\in\H(\D)$ and let
$\{z_k\}$ be its zero sequence. Then the following conditions are
equivalent:
\begin{itemize}
\item[\rm(1)]
$\displaystyle\sum_k\om^\star(z_k)<\infty$;\item[\rm(2)]
$\displaystyle\int_0^1N(r,f)\,\om(r)\,dr<\infty$; \item[\rm(3)]
$\displaystyle\int_0^1n(r,f)\,\widehat{\om}(r)\,dr<\infty$.
\end{itemize}
\end{lemma}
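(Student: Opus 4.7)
The plan is to establish $(1)\Leftrightarrow(2)$ by applying Jensen's formula together with Fubini's theorem, and $(2)\Leftrightarrow(3)$ by a single integration by parts. I would assume without loss of generality that $f(0)\neq 0$, so that $n(0,f)=0$, the integrated counting function reduces to $N(r,f)=\int_0^r n(s,f)/s\,ds$ which is nonnegative, $C^1$, and vanishes identically on $[0,|z_1|)$, and $N'(r)=n(r,f)/r$; the case $f(0)=0$ is degenerate since $\omega^\star(0)=\infty$ forces~(1) trivially, and can be reduced by the factorization $f(z)=z^m g(z)$.

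For $(1)\Leftrightarrow(2)$, I would unfold the definition of $\omega^\star$, write $\log(s/|z_k|)=\int_{|z_k|}^s dr/r$, and switch sum and integral by Fubini (legitimate since everything is nonnegative). This yields
\[
\sum_k \omega^\star(z_k) = \int_0^1 s\,\omega(s)\!\sum_{|z_k|<s}\!\log\frac{s}{|z_k|}\,ds = \int_0^1 s\,\omega(s)\,N(s,f)\,ds,
\]
where the last identity uses $\sum_{|z_k|<s}\log(s/|z_k|)=\int_0^s n(r,f)/r\,dr=N(s,f)$. Because $N(s,f)$ vanishes on $[0,|z_1|)$, the effective domain of integration is $[|z_1|,1]$, on which the factor $s$ is bounded above and below by positive constants; consequently the displayed integral converges iff $\int_0^1 N(r,f)\,\omega(r)\,dr$ does.

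For $(2)\Leftrightarrow(3)$, since $\widehat\omega'(r)=-\omega(r)$, $\widehat\omega(1)=0$, and $N(0,f)=0$, straightforward integration by parts gives
\[
\int_0^1 N(r,f)\,\omega(r)\,dr = \int_0^1 \frac{n(r,f)}{r}\,\widehat\omega(r)\,dr.
\]
Again, $n(r,f)=0$ on $[0,|z_1|)$, so the factor $1/r$ is bounded on the effective domain $[|z_1|,1]$; therefore the right side is comparable to $\int_0^1 n(r,f)\,\widehat\omega(r)\,dr$, which closes the equivalence chain.

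There is no substantial obstacle here: the only minor bookkeeping point is to confirm that the Fubini exchanges and the integration by parts are justified, which is automatic from nonnegativity of all integrands and the vanishing of $N(r,f)$ near $r=0$ under the assumption $f(0)\neq 0$. It is worth noting that no use is made of the classes $\I$ or $\R$; the lemma holds for arbitrary radial weights, which will be important when it is later applied in Section~\ref{Sec:SolutionsBergmanNevanlinna}.
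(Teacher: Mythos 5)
Your argument is correct, and it differs from the paper's in how it links condition (1) to the other two. The paper proves (2)$\Leftrightarrow$(3) by the same integration by parts you use, and then connects (1) to (3) by a \emph{second} integration by parts, writing $\int n(r,f)\widehat\om(r)\,dr$ as a Stieltjes integral against $dn$ and using the asymptotic comparison $\int_{|z_k|}^1\widehat\om(s)\,ds=\int_{|z_k|}^1\om(t)(t-|z_k|)\,dt\asymp\om^\star(z_k)$ as $|z_k|\to1^-$. You instead connect (1) directly to (2) via Fubini, obtaining the exact identity $\sum_k\om^\star(z_k)=\int_0^1 sN(s,f)\om(s)\,ds$; this is essentially the ``integrated Jensen formula'' that the paper itself invokes in the proof of Proposition~\ref{Prop:bergman-nevanlinna}, so your route is arguably the more natural one and yields an identity rather than a two-sided estimate. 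Two small points deserve to be made explicit. First, in the step (2)$\Rightarrow$(3) the boundary term $N(\rho,f)\widehat\om(\rho)$ at $\rho\to1^-$ is killed not by nonnegativity alone but by the monotonicity of $N$, via $N(\rho,f)\widehat\om(\rho)\le\int_\rho^1N(s,f)\om(s)\,ds$; the paper states this explicitly, and without it the claimed identity between the two integrals is not quite ``automatic'' (though for the mere equivalence of finiteness the one-sided inequality from the partial integration by parts already suffices). Second, your dismissal of the case $f(0)=0$ is a bit too quick: there (1) fails because $\om^\star(0)=\infty$, yet (3) can perfectly well hold (take $f(z)=z$), so the reduction $f=z^mg$ does not restore the equivalence; the lemma is really a statement under the standing normalization $f(0)\ne0$, which the paper also leaves implicit.
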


\begin{proof}
Let $\rho\in(\rho_0,1)$, where $\rho_0\in(0,1)$ is fixed. An
integration by parts gives
    \begin{equation}\label{81}
    \begin{split}
    \int_{\rho_0}^\rho
    N(r,f)\om(r)\,dr&=-N(\rho,f)\widehat{\om}(\rho)+N(\rho_0,f)\widehat\om(\rho_0)\\
    &\quad+\int_{\rho_0}^\rho\frac{n(r,f)}{r}\widehat\om(r)\,dr.
    \end{split}
    \end{equation}
If (2) is satisfied, then $N(\rho,f)\widehat{\om}(\rho)\to0$, as
$\rho\to1^-$. Therefore \eqref{81} shows that (2) and (3) are
equivalent. Moreover,
    \begin{equation}\label{82}
    \begin{split}
    \int_{\rho_0}^\rho
    n(r,f)\widehat\om(r)\,dr&=-n(\rho,f)\int_{\rho}^1\widehat{\om}(s)\,ds+n(\rho_0,f)\int_{\rho_0}^1\widehat\om(s)\,ds\\
    &\quad+\sum_{\rho_0<|z_k|<\rho}\int_{|z_k|}^1\widehat\om(s)\,ds,
    \end{split}
    \end{equation}
where
    $$
    \int_{|z_k|}^1\widehat\om(s)\,ds=\int_{|z_k|}^1\om(t)(t-|z_k|)\,dt\asymp\om^\star(z_k),\quad
    |z_k|\to1^-.
    $$
We deduce from \eqref{82} that also (1) and (3) are equivalent.
\end{proof}

By Proposition~\ref{Prop:bergman-nevanlinna} the condition
\eqref{Eq:ZerosInGrowthSpaces} characterizes zero sets in
$\BN_{v_{2+\e}}$.
Proposition~\ref{Prop:bergman-nevanlinna}\index{$\BN_\om$} further
implies that for each increasing differentiable unbounded function
$\lambda:[0,1)\to(0,\infty)$ such that
$\om_\lambda=\lambda'/\lambda^2\in\I\cup\R$, the condition
    $$
    \sum_k\frac{1-|z_k|}{\lambda(z_k)}<\infty
    $$
characterizes the zero sets in $\BN_{\om_\lambda}$. This because
$\om^\star_\lambda(r)\asymp\frac{1-r}{\lambda(r)}$ by
Lemma~\ref{le:cuadrado-tienda}.

\chapter{Integral Operators and Equivalent
Norms}\label{SecVolterra}

One of our main objectives is to characterize those symbols
$g\in\H(\D)$ such that the integral operator
    \begin{displaymath}
    T_g(f)(z)=\int_{0}^{z}f(\zeta)\,g'(\zeta)\,d\zeta,\quad
    z\in\D,
    \end{displaymath}
is bounded or compact from $A^p_\om$ to $A^q_\om$, when $\om$ is a
rapidly increasing weight. Pommerenke was probably one of the
first authors to consider the operator $T_g$. He used it
in~\cite{Pom} to study the space $\BMOA$, \index{$\BMOA$} which
consists of those functions in the Hardy space $H^1$ that have
\emph{bounded mean oscillation} \index{bounded mean oscillation}
on the boundary $\T$~\cite{Ba86(2),GiBMO}. The space
$\BMOA$\index{$\BMOA$}\index{bounded mean oscillation} can be
equipped with several different equivalent norms~\cite{GiBMO}. We
will use the one given by\index{$\Vert\cdot\Vert_{\BMOA}$}
    $$
    \|g\|^2_{\BMOA}=\sup_{a\in\D}\frac{\int_{S(a)}|g'(z)|^2(1-|z|^2)\,dA(z)}{1-|a|}+|g(0)|^2.\index{$\|\cdot\|_{\BMOA}$}
    $$
The operator $T_g$ has received different names in the literature:
the Pommerenke operator, a Volterra type operator (the choice
$g(z)=z$ gives the usual Volterra operator\index{Volterra
operator}), the generalized Ces\`{a}ro operator\index{Ces\`{a}ro
operator} (the usual Ces\`{a}ro operator is obtained when
$g(z)=-\log(1-z)$), a Riemann-Stieltjes type operator, or simply
an integral operator. The operator $T_g$ began to be extensively
studied after the appearance of the works by Aleman and
Siskakis~\cite{AS0,AS}, see also \cite{AHpreview, Sisreview} for
two surveys on the topic. If $\om$ is either regular or rapidly
decreasing,\index{rapidly decreasing weight} then a description of
those $g\in\H(\D)$ for which $T_g:\,A^p_\om\to A^q_\om$ is bounded
or compact can be found in~\cite{AlCo,AS,PP,PPVal}. In particular,
for these weights the boundedness of $T_g:A^p_\om\to A^p_\om$ is
equivalent to a growth condition on $M_\infty(r,g')$. It follows
from Lemma~\ref{le:RAp}(i) and \cite[Theorem~4.1]{AlCo} that if
$\om$ is regular, then $T_g:A^p_\om\to A^p_\om$ is bounded if and
only if $g$ is a Bloch function. Recall that the \emph{Bloch
space}\index{Bloch space} $\B$~\cite{ACP}\index{$\B$} consists of
those $f\in\H(\D)$ such that\index{$\Vert\cdot\Vert_{\B}$}
    $$
    \|f\|_{\B}=\sup_{z\in\D}|f'(z)|(1-|z|^2)+|f(0)|<\infty.\index{$\|\cdot\|_{\B}$}
    $$
We will see soon that if $\om$ is rapidly increasing, then the
boundedness of $T_g:A^p_\om\to A^p_\om$ can not be characterized
by a simple condition depending only on $M_\infty(r,g')$. The
above-mentioned results on the boundedness and compactness of
$T_g:A^p_\om\to A^q_\om$ are usually discovered by the aid of a
Littlewood-Paley type formula\index{Littlewood-Paley} that allows
one to get rid of the integral appearing from the definition of
$T_g$. However, this kind of approach does not work for $A^p_\om$
when $\om\in\I$, because such a Littlewood-Paley type formula for
$A^p_\om$, with $\om\in\I$ and $p\ne2$, does not exist by
Proposition~\ref{pr:NOL-P}. Therefore we are forced to search for
alternative norms in $A^p_\om$ in terms of the first derivative.
This leads us to employ ideas that are closely related to the
theory of Hardy spaces~\cite{AC,AHpreview,AS0}. It is also worth
noticing that if $0<p\le1$ and $\om\in\I$, then the Riesz
projection is not necessarily bounded on $L^p_\om$.

We say that $g\in\H(\D)$ belongs to $\CC^{q,\,p}(\om^\star)$,
$0<p,q<\infty$, if the measure $|g'(z)|^2\om^\star(z)\,dA(z)$ is a
$q$-Carleson measure for $A^p_\om$. Moreover,
$g\in\CC^{q,\,p}_0(\om^\star)$ if the identity operator
$I_d:A^p_\omega\to L^q(|g'|^2\omega^\star dA)$ is compact. If
$q\ge p$ and $\om\in\I\cup\R$, then Theorem~\ref{th:cm} shows that
these spaces only depend on the quotient $\frac{q}{p}$.
Consequently, for $q\ge p$ and $\om\in\I\cup\R$, we simply write
$\CC^{q/p}(\om^\star)$ instead of $\CC^{q,\,p}(\om^\star)$. Thus,
if $\alpha\ge 1$ and $\om\in\I\cup\R$, then
$\CC^{\alpha}(\om^\star)$ consists of those $g\in\H(\D)$ such
that\index{$\Vert\cdot\Vert_{\CC^{\alpha}(\om^\star)}$}
    \begin{equation}\label{calpha}
    \|g\|^2_{\CC^{\alpha}(\om^\star)}=|g(0)|^2+\sup_{I\subset\T}\frac{\int_{S(I)}|g'(z)|^2\om^\star(z)\,dA(z)}
    {\left(\om\left(S(I)\right)\right)^{\alpha}}<\infty.\index{$\CC^\a(\om^\star)$}
    \end{equation}
An analogue of this identity is valid for the little space
$\CC^{\alpha}_0(\om^\star)$.  The above characterization of the
Banach space
$\left(\CC^{\alpha}(\om^\star),\|\cdot\|_{\CC^{\alpha}(\om^\star)}
\right)$ has all the flavor of known characterizations of
$\BMOA$~\cite{GiBMO},\index{$\BMOA$} the Lipschitz class
$\Lambda_\alpha$~\cite{AC} or the Bloch space
$\B$~\cite{X2}\index{$\B$} when $\a$ is chosen appropriately. In
fact, Proposition~\ref{pr:blochcpp} shows that
$\CC^1(\om^\star)=\B$\index{$\CC^1(\om^\star)$} for all
$\om\in\R$, and
    \begin{equation}\label{42}
    \BMOA\subset\CC^1(\om^\star)\subset\B,\quad \om\in\I,\index{$\CC^1(\om^\star)$}
    \end{equation}
where both embeddings can be strict at the same time.
Unlike~$\B$,\index{$\B$} the space
$\CC^1(\om^\star)$\index{$\CC^1(\om^\star)$} can not be described
by a simple growth condition on the maximum modulus of $g'$ if
$\om\in\I$. This follows by Proposition~\ref{pr:blochcpp} and the
fact that $\log(1-z)\in A^p_\om$ for all $\om\in\I$. However, if
$\alpha>1$ and $\om\in\I$, then $g\in\CC^{\alpha}(\omega^\star)$
if and only if
    $$
    M_\infty(r,g')\lesssim\frac{(\omega^\star(r))^{\frac{\alpha-1}{2}}}{1-r},\quad
    0<r<1,
    $$
by Proposition~\ref{PropRadialq>p}.

The spaces $\BMOA$\index{$\BMOA$} and $\B$\index{$\B$} are
conformally invariant. This property has been used, among other
things, in describing those symbols $g\in\H(\D)$ for which $T_g$
is bounded on $H^p$ or $A^p_\alpha$. However, the space
$\CC^1(\omega^\star)$\index{$\CC^1(\om^\star)$} is not necessarily
conformally invariant by
Proposition~\ref{PropConformallyInvariant}, and therefore
different techniques must be employed in the case of $A^p_\om$
with $\om\in\I$.

The next result gives a complete characterization of when
$T_g:A^p_\om\to A^q_\omega$ is bounded, provided
$\om\in\widetilde{\I}\cup\R$.

\begin{theorem}\label{Thm-integration-operator-1}
Let $0<p,q<\infty$, $\om\in\I\cup\R$ and $g\in\H(\D)$.
    \begin{itemize}
    \item[\rm(i)] The following conditions are equivalent:
    \begin{enumerate}
    \item[\rm(ai)]\, $T_g:A^p_\om\to A^p_\om$ is bounded;
    \item[\rm(bi)]\,
    $g\in\CC^1(\om^\star)$.\index{$\CC^1(\om^\star)$}
    \end{enumerate}
    \item[\rm(ii)] If $0<p<q$ and $\frac1p-\frac1q<1$, then the
following conditions are equivalent:
    \begin{enumerate}
    \item[\rm(aii)]\,$T_g:A^p_\om\to A^q_\om$ is bounded;
    \item[\rm(bii)] $\displaystyle M_\infty(r,g')\lesssim\frac{(\omega^\star(r))^{\frac1p-\frac1q}}{1-r},\quad
    r\to1^-;
    $
    \item[\rm(cii)]
    $g\in\CC^{2\left(\frac{1}{p}-\frac1q\right)+1}(\omega^\star)$.
    \end{enumerate}
    \item[\rm(iii)] If $\frac1p-\frac1q\ge 1$, then $T_g:A^p_\om\to A^q_\om$ is bounded if and only if $g$ is constant.
    \item[\rm(iv)] If $0<q<p<\infty$ and $\om\in\widetilde{\I}\cup\R$,
    then the following conditions are equivalent:
    \begin{enumerate}
    \item[\rm(aiv)]\,$T_g:A^p_\om\to A^q_\om$ is bounded;
    \item[\rm(biv)] $g\in A^s_\omega$, where $\frac{1}{s}=\frac1q-\frac1p$.
    \end{enumerate}
    \end{itemize}
\end{theorem}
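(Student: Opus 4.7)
The plan is to reduce each case to the Carleson-measure characterization of Theorem~\ref{th:cm} via an equivalent norm on $A^p_\om$ that converts $\|T_g(f)\|_{A^q_\om}$ into an integral quantity governed by the measure $d\mu_g = |g'(z)|^2\om^\star(z)\,dA(z)$. Since the standard Littlewood--Paley identity fails for $\om\in\I$ unless $p=2$ (Proposition~\ref{pr:NOL-P}), I would first establish, as signaled in the preface, two equivalent norms for $A^p_\om$: one via a square area function $S(f)(\zeta)=\bigl(\int_{\Gamma(\zeta)}|f'(z)|^2\,dA(z)\bigr)^{1/2}$ and one via a non-tangential maximal function over the lens regions $\Gamma(\zeta)$ of~\eqref{eq:gammadeu}. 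These would be obtained by adapting the classical Fefferman--Stein $H^p$ estimates to the weighted setting by integrating radially against $\om$, with the identification $\om(T(z))\asymp\om^\star(z)$ from Lemma~\ref{le:cuadrado-tienda} doing all the geometric bookkeeping. Applied to $T_g(f)$, whose derivative equals $fg'$, Fubini then yields, modulo lower-order terms,
\[
    \|T_g(f)\|_{A^p_\om}^p \;\asymp\; \int_\D |f(\zeta)|^p\,d\mu_g(\zeta).
\]

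With this reduction in hand, part~(i) is immediate: $T_g:A^p_\om\to A^p_\om$ is bounded if and only if $\mu_g$ is a $p$-Carleson measure for $A^p_\om$, and by Theorem~\ref{th:cm}(i) this is equivalent to $\mu_g(S(I))\lesssim\om(S(I))$ for every arc $I\subset\T$, i.e.\ $g\in\CC^1(\om^\star)$. Necessity can alternatively be verified by testing against the family $F_{a,p}$ of Lemma~\ref{testfunctions1} and restricting the area representation to $S(a)$. For part~(ii) the same reduction gives (cii)$\Leftrightarrow$(aii) via Theorem~\ref{th:cm}(i) with exponent $q/p$. The equivalence (bii)$\Leftrightarrow$(cii) is a direct Carleson-type computation: subharmonicity of $|g'|^2$ over the relevant hyperbolic discs (cf.\ Lemma~\ref{le:suf1}) combined with Lemma~\ref{le:sc1}, which places $\om^\star_{\alpha-2}$ in $\R$, reveals that $\int_{S(I)}|g'|^2\om^\star\,dA$ is comparable to a pointwise expression involving $M_\infty(r,g')$ and a power of $\om(S(I))$ matching exactly the exponent $2(\tfrac1p-\tfrac1q)+1$.

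For part~(iii), the pointwise bound (bii) with $\frac1p-\frac1q\ge 1$ combined with $\om^\star(z)\lesssim(1-|z|)^2$ from Lemmas~\ref{le:cuadrado-tienda} and~\ref{le:sc1} forces $M_\infty(r,g')\to 0$ at a rate incompatible with any non-constant $g$ once one tests against the normalized family $f_{a,p}$ of~\eqref{testfunctions} and estimates $\|T_g(f_{a,p})\|_{A^q_\om}$ from below on $S(a)$ through the area representation. Part~(iv) is where the factorization theory of Chapter~\ref{sec:factorizacion} becomes indispensable. For the sufficient direction, given $f\in A^p_\om$, I would apply Corollary~\ref{cor:FactorizationBergman} (the invariance hypothesis $\om\in\widetilde{\I}\cup\R$ matches the requirement $\om\in{\mathcal Inv}$ with dense polynomials) to write $f=f_1f_2$ with controlled norms, and then combine H\"older's inequality in the area representation of $\|T_g f\|_{A^q_\om}$ with case~(i) applied to $T_g$ on $A^{p_1}_\om$ to isolate the factor $\|g\|_{A^s_\om}$; the $p_1$-only dependence of the constant in~\eqref{Eqco:NormEstimateForFactorization} prevents any blow-up when the conjugate exponent $p_2$ is large, which is precisely why interpolation and Khinchine-type arguments are bypassed. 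The converse uses the same test family $F_{a,p}$ of Lemma~\ref{testfunctions1} and a duality argument integrating over $\D$ against a suitable weight.

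The main obstacle will be setting up the two equivalent norms for $A^p_\om$ in the rapidly increasing case. Without a Littlewood--Paley identity, one must adapt the Fefferman--Stein duality between area and non-tangential maximal functions to the tents $T(z)$ with vertex inside $\D$ from~\eqref{eq:Tdez} and identify $\om^\star$ as the correct substitute for $(1-|z|)^{p/2}$. This is delicate precisely because $\CC^1(\om^\star)$ fails to be conformally invariant for $\om\in\I$, so the usual M\"obius reduction to the origin is unavailable; instead the identification $\om(T(z))\asymp\om^\star(z)$ of Lemma~\ref{le:cuadrado-tienda} must assume the role that $(1-|a|)^2$ plays in the classical theory of $A^p_\alpha$.
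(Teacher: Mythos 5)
Your proposal correctly assembles the right toolbox (the area-function norm \eqref{normacono}, the non-tangential maximal function of Lemma~\ref{le:funcionmaximalangular}, Theorem~\ref{th:cm}, the weighted maximal function, and the factorization of Chapter~\ref{sec:factorizacion}), but the reduction on which everything rests is not valid. The norm \eqref{normacono} gives
\[
\|T_g(f)\|_{A^p_\om}^p\asymp\int_\D\Bigl(\int_{\Gamma(u)}|f(z)|^2|g'(z)|^2\,dA(z)\Bigr)^{p/2}\om(u)\,dA(u),
\]
and for $p\ne2$ the exponent $p/2$ cannot be moved inside the inner integral, so the claimed asymptotic identity $\|T_g(f)\|_{A^p_\om}^p\asymp\int_\D|f|^p\,d\mu_g$ is false. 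What one actually gets, by H\"older against $N(f)$, are one-sided inequalities whose direction depends on whether $p$ is above or below $2$; this is exactly why the paper must treat $q>2$ and $q<2$ separately in the sufficiency direction, and why the necessity (ai)$\Rightarrow$(bi) for $0<p<2$ is the genuinely hard step. There the paper does not simply ``test against $F_{a,p}$ and restrict to $S(a)$'': it splits the exponent as $\frac1\a+\frac1{\a'}$ with $\b/\a=p/2$, introduces the auxiliary operator $S_g$, runs a duality computation through Corollary~\ref{co:maxbou}, and then \emph{absorbs} a power of the unknown quantity $\int_{S(a)}|g'|^2\om^\star\,dA$ into the left-hand side --- which requires knowing a priori that this quantity is finite, hence the passage to the dilates $g_r$, the a priori Bloch bound \eqref{eq:tgbloch}, the uniform estimate \eqref{eq:dilatadas} on $\|T_{g_r}(F_{a,p})\|_{A^p_\om}$, and Fatou's lemma. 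None of this is recoverable from your sketch.

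There is a second, independent gap in part (iv): you have the roles of the two implications reversed. The sufficiency (biv)$\Rightarrow$(aiv) is a plain H\"older estimate in the area norm (the paper's \eqref{58}) and needs no factorization; your plan to apply case (i) to $T_g$ on $A^{p_1}_\om$ does not make sense there, since $g\in A^s_\om$ does not imply $g\in\CC^1(\om^\star)$. It is the \emph{necessity} (aiv)$\Rightarrow$(biv) that requires Corollary~\ref{cor:FactorizationBergman}: the factorization feeds Proposition~\ref{PropSmallIndeces}, which in turn drives the Aleman--Cima iteration on $t^*=\sup\{t:g\in A^t_\om\}$ via the identity $g^{m+1}=(m+1)T_g(g^m)$, with the $p_1$-only dependence of the constant controlling the limit $\hat p\to p^-$. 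Your proposed replacement --- testing with $F_{a,p}$ and a duality argument --- cannot produce the conclusion $g\in A^s_\om$, because membership in $A^s_\om$ for $q<p$ is a global integrability condition that is not detected by Carleson-square testing.
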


It is worth noticing that the regularity assumption \eqref{eq:r2},
that all the weights in $\widetilde{\I}\cup\R$ satisfy, is only
used in the proof of Theorem~\ref{Thm-integration-operator-1} when
showing that (aiv)$\Rightarrow$(biv), the rest of the proof is
valid under the hypothesis $\om\in\I\cup\R$.

A description of the symbols $g\in\H(\D)$ such that
$T_g:A^p_\om\to A^q_\om$ is compact is given in
Theorem~\ref{Thm-integration-operator-2}.

In Section~\ref{Hardy} we will show how the techniques developed
on the way to the proof of
Theorem~\ref{Thm-integration-operator-1} can be applied to the
case when $T_g$ acts on the Hardy spaces. In particular, we will
give a new proof for the fact that $T_g:H^p\to H^p$ is bounded if
and only if $g\in\BMOA$.

\section{Equivalent norms on $A^p_\om$}\label{ssLP}

For a large class of radial weights, which includes any
differentiable decreasing weight and all the standard ones, the
most appropriate way to obtain a useful norm involving the first
derivative is to establish a kind of
Littlewood-Paley\index{Littlewood-Paley} type formula~\cite{PavP}.
However, if $\om\in\I$ and $p\neq 2$, this is not possible in
general by Proposition~\ref{pr:NOL-P} below. Consequently, we will
equip the space $A^p_\om$ with other norms that are inherited from
different equivalent $H^p$ norms. The one that we will obtain by
using square functions via the classical Fefferman-Stein
estimate~\cite{FC} appears to be the most useful for our purposes.
The square function that arises naturally is obtained by
integrating over the lens type region $\Gamma(u)$, see
\eqref{eq:gammadeu} for the definition, and also
Picture~\ref{fig:2}.

\begin{theorem}\label{ThmLittlewood-Paley}\index{$\omega^\star(z)$}
Let $0<p<\infty$, $n\in\N$ and $f\in\H(\D)$, and let $\omega$ be a
radial weight. Then
    \begin{equation}\label{HSB}
    \|f\|_{A^p_\omega}^p=p^2\int_{\D}|f(z)|^{p-2}|f'(z)|^2\omega^\star(z)\,dA(z)+\omega(\D)|f(0)|^p,
    \end{equation}
    and
\begin{equation}\label{normacono}
    \begin{split}
    \|f\|_{A^p_\omega}^p&\asymp\int_\D\,\left(\int_{\Gamma(u)}|f^{(n)}(z)|^2
    \left(1-\left|\frac{z}{u}\right|\right)^{2n-2}\,dA(z)\right)^{\frac{p}2}\omega(u)\,dA(u)\\
    &\quad+\sum_{j=0}^{n-1}|f^{(j)}(0)|^p,
    \end{split}
    \end{equation}
where the constants of comparison depend only on $p$, $n$ and $\om$. In particular,
    \begin{equation}\label{eq:LP2}
    \|f\|_{A^2_\omega}^2=4\|f'\|_{A^2_{\omega^\star}}^2+\omega(\D)|f(0)|^2.
    \end{equation}
\end{theorem}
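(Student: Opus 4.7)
The plan is to base both identities on the polar decomposition $\|f\|_{A^p_\omega}^p = \int_0^1 2r\omega(r) M_p^p(r, f)\,dr$ and then rewrite $M_p^p(r, f)$ in two distinct ways: by Green's identity applied to the subharmonic function $|f|^p$ to obtain \eqref{HSB}, and by the classical Fefferman--Stein / Calder\'on area-function characterization of $H^p$ applied to the dilations $f_\rho(z) = f(\rho z)$ to obtain \eqref{normacono}. The formula \eqref{eq:LP2} will then drop out by specialising \eqref{HSB} to $p = 2$.

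For \eqref{HSB}, I would apply Green's representation on $D(0, r)$ to $|f|^p$. Using the holomorphic identity $\triangle |f|^p = (p^2/4)|f|^{p-2}|f'|^2$ valid off $\{f = 0\}$ (the singularity at zeros being handled in the standard way by smoothing $|f|^p$ to $(|f|^2 + \varepsilon)^{p/2}$ and letting $\varepsilon \to 0^+$), one gets
\[
M_p^p(r, f) - |f(0)|^p = \tfrac{p^2}{2}\int_{|z|<r}|f(z)|^{p-2}|f'(z)|^2 \log(r/|z|)\,dA(z).
\]
Multiplying by $2r\omega(r)$, integrating over $r \in (0, 1)$, and switching the order of integration via Fubini--Tonelli collapses the inner integral to $\int_{|z|}^1 \omega(s)\log(s/|z|)s\,ds = \omega^\star(z)$, proving \eqref{HSB}. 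Setting $p = 2$ immediately gives \eqref{eq:LP2}.

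For \eqref{normacono}, I would apply to $g = f_\rho$ the classical estimate
\[
\|g\|_{H^p}^p \asymp \int_\T A_n(g)^p(\zeta)\,d\sigma(\zeta) + \sum_{j=0}^{n-1}|g^{(j)}(0)|^p,
\]
where $A_n(g)^2(\zeta) = \int_{\Gamma(\zeta)}|g^{(n)}(z)|^2(1 - |z|)^{2n-2}\,dA(z)$ with $\Gamma(\zeta)$ the standard Stolz region at $\zeta \in \T$. Since $\|f_\rho\|_{H^p}^p = M_p^p(\rho, f)$ and $f_\rho^{(j)}(0) = \rho^j f^{(j)}(0)$, and since the definition \eqref{eq:gammadeu} produces the scaling $\Gamma(\rho e^{i\theta}) = \rho\cdot\Gamma(e^{i\theta})$, the substitution $z = \rho w$ yields
\[
A_n(f_\rho)^2(e^{i\theta}) = \rho^{2n-2}\int_{\Gamma(\rho e^{i\theta})}|f^{(n)}(z)|^2(1 - |z/\rho|)^{2n-2}\,dA(z).
\]
Inserting this into $\int_0^1 2\rho\omega(\rho)M_p^p(\rho, f)\,d\rho$, converting $(\rho, \theta)$ back to $u = \rho e^{i\theta} \in \D$, and absorbing the $\rho^{jp}|f^{(j)}(0)|^p$ terms against $\omega(\D)$ delivers the right-hand side of \eqref{normacono} decorated with the spurious factor $|u|^{p(n-1)}$.

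The hardest part will be the removal of this factor $|u|^{p(n-1)}$. Since $|u|^{p(n-1)} \le 1$ on $\D$, one direction of the equivalence is automatic. For the reverse, I would split the $u$-integral at $|u| = 1/2$: on $\{|u| \ge 1/2\}$ the factor is bounded below by $2^{-p(n-1)}$, so comparability is uniform there. On $\{|u| < 1/2\}$ the region $\Gamma(u) \subset D(0, |u|)$ sits in a fixed compact subset of $\D$ and $(1-|z/u|)^{2n-2} \le 1$, so the inner integral is dominated by $|u|^2 \sup_{|z|\le 1/2}|f^{(n)}(z)|^2$. By Cauchy's inequalities and the pointwise bound \eqref{20NEW}, both $\sup_{|z|\le 1/2}|f^{(n)}(z)|$ and each $|f^{(j)}(0)|$, $j = 0, \ldots, n-1$, are controlled by $\|f\|_{A^p_\omega}$, so the contribution from $\{|u| < 1/2\}$ together with $\sum_{j=0}^{n-1}|f^{(j)}(0)|^p$ is absorbed into $\|f\|_{A^p_\omega}^p$, closing the equivalence in \eqref{normacono}.
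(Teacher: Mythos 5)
Your proof is correct and follows essentially the same route as the paper: the Hardy--Stein--Spencer identity (which the paper cites and you derive via Green's formula) combined with Fubini for \eqref{HSB} and \eqref{eq:LP2}, and the Ahern--Bruna area-function characterization of $H^p$ applied to dilations for \eqref{normacono}. Your explicit treatment of the spurious factor $|u|^{p(n-1)}$, which the paper silently absorbs into the final $\asymp$, is a welcome extra level of care and is carried out correctly.
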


\begin{proof}
Hardy-Stein-Spencer
identity~\cite{Garnett1981}\index{Hardy-Stein-Spencer identity}
    $$
    \|f\|_{H^p}^p=\frac{p^2}{2}\int_\D|f(z)|^{p-2}|f'(z)|^2\log\frac{1}{|z|}\,dA(z)+|f(0)|^p,
    $$
applied to $f_r(z)=f(rz)$, and Fubini's theorem yield
    \begin{equation*}
    \begin{split}
    \|f\|_{A^p_\omega}^p&=\int_\D|f(z)|^p\omega(z)\,dA(z)=2\int_0^1\|f_r\|_{H^p}^p\omega(r)r\,dr\\
    &=p^2\int_0^1\left(\int_\D|f(rz)|^{p-2}|f'(rz)|^2r^2\log\frac{1}{|z|}\,dA(z)\right)\omega(r)r\,dr\\
    &\quad+\omega(\D)|f(0)|^p\\
    &=p^2\int_0^1\left(\int_0^r\int_0^{2\pi}|f(se^{i\t})|^{p-2}|f'(se^{i\t})|^2d\theta\log\frac{r}{s}\,s\,ds\right)\omega(r)r\,dr\\
    &\quad+\omega(\D)|f(0)|^p\\
    &=p^2\int_0^1\int_0^{2\pi}|f(se^{i\t})|^{p-2}|f'(se^{i\t})|^2d\theta\left(\int_s^1\log\frac{r}{s}\,\omega(r)r\,dr\right)s\,ds\\
    &\quad+\omega(\D)|f(0)|^p\\
    &=p^2\int_\D|f(z)|^{p-2}|f'(z)|^2\omega^\star(z)\,dA(z)+\omega(\D)|f(0)|^p,
    \end{split}
    \end{equation*}
which proves~\eqref{HSB}. In particular, the
identity~\eqref{eq:LP2} is valid.

Analogously, the extension of the
Littlewood-Paley\index{Littlewood-Paley} identity to any $H^p$,
proved by Ahern and Bruna~\cite{AhernBruna1988}, states that
    \begin{equation}\label{eq:FC}
    \begin{split}
    \|f\|_{H^p}&\asymp\int_\T\,\left(\int_{\Gamma(e^{i\theta})}|f^{(n)}(z)|^2(1-|z|)^{2n-2}\,dA(z)\right)^{p/2}d\theta
    +\sum_{j=0}^{n-1}|f^{(j)}(0)|,
    \end{split}
   \end{equation}
see also the earlier results in \cite{FC} and \cite[Vol~II.
Chapter~14]{Zygmund59}. This and the change of variable $rz=\xi$
give
    \begin{equation*}
    \begin{split}
    \|f\|_{A^p_\omega}^p&=2\int_0^1\|f_r\|_{H^p}^p\omega(r)r\,dr\\
    &\asymp\int_0^1\left(\int_\T\,\left(\int_{\Gamma(\z)}|f^{(n)}(rz)|^2r^{2n}(1-|z|)^{2n-2}\,dA(z)\right)^{p/2}|d\z|\right)
    \omega(r)r\,dr\\
    &\quad+\sum_{j=0}^{n-1}|f^{(j)}(0)|^p\\
    &=\int_0^1\int_\T\,\left(\int_{\Gamma(r\z)}|f^{(n)}(\xi)|^2r^{2n-2}
    \left(1-\left|\frac{\xi}{r}\right|\right)^{2n-2}\,dA(\xi)\right)^{p/2}|d\z|\,\omega(r)r\,dr\\
    &\quad+\sum_{j=0}^{n-1}|f^{(j)}(0)|^p\\
    &\asymp
    \int_\D\,\left(\int_{\Gamma(u)}|f^{(n)}(\xi)|^2\left(1-\left|\frac{\xi}{u}\right|\right)^{2n-2}\,dA(\xi)\right)^{p/2}\omega(u)\,dA(u)\\
    &\quad+\sum_{j=0}^{n-1}|f^{(j)}(0)|^p,
    \end{split}
    \end{equation*}
and thus also \eqref{normacono} is valid.
\end{proof}

Theorem~\ref{ThmLittlewood-Paley} is of very general nature
because $\om$ is only assumed to be radial. In particular, the
Littlewood-Paley formula \eqref{eq:LP2}\index{Littlewood-Paley} is
valid for all radial weights~$\om$. The next result shows that no
asymptotic Littlewood-Paley formula can be found for $A^p_\om$
when $\om\in\I$ and $p\ne2$.

\begin{proposition}\label{pr:NOL-P}
Let $p\ne2$. Then there exists $\om\in\I$ such that, for any
function $\vp:[0,1)\to(0,\infty)$, the relation
\begin{equation}\label{eq:NOL-P}
    \|f\|^p_{A^p_{\om}}\asymp
    \int_\D|f'(z)|^p\varphi(|z|)^p\om(z)\,dA(z)+|f(0)|^p
    \end{equation}
can not be valid for all $f\in\H(\D)$.
\end{proposition}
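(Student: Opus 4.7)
The plan is to exhibit a single $\om\in\I$ and a pair of test-function families whose resulting necessary conditions on $\vp$ are mutually incompatible. I would take $\om=v_\a\in\I$ for a fixed $\a\in(1,\min\{2,1+p/2\})$, a nonempty range since $p>0$, and suppose for contradiction that \eqref{eq:NOL-P} holds for some $\vp:[0,1)\to(0,\infty)$.

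First, I would apply \eqref{eq:NOL-P} to the monomials $f_n(z)=z^n$ and extract asymptotic information on $\vp$. Since $f_n(0)=0$, the equivalence reduces to
    $$
    \int_0^1 r^{np+1}v_\a(r)\,dr\asymp n^p\int_0^1 r^{(n-1)p+1}\vp(r)^pv_\a(r)\,dr,
    $$
and Lemma~\ref{le:momentos} together with a standard Laplace-type comparison of the radial integrals and their tails forces
    $$
    F(t):=\int_t^1\vp(s)^pv_\a(s)\,ds\asymp(1-t)^p\Omega(t),\qquad \Omega(t):=\int_t^1 v_\a(s)\,ds,
    $$
as $t\to 1^-$. (Informally this says $\vp(r)^p\asymp(1-r)^{p-1}\psi_{v_\a}(r)\asymp(1-r)^p\log\frac{1}{1-r}$, but only the integrated version will be used.)

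Next, I would apply \eqref{eq:NOL-P} to the lacunary polynomials $f_N(z)=\sum_{k=1}^N z^{2^k}$, which satisfy $f_N(0)=0$. By the classical Paley theorem for lacunary series, $M_p(r,g)\asymp M_2(r,g)$ uniformly in $r$ for any lacunary $g$, so both norms reduce to radial integrals against $v_\a$. Direct computation with the Taylor coefficients yields $M_2(r,f_N)\asymp\sqrt{\min\{N,\log_2\frac{1}{1-r}\}}$ and $M_2(r,f_N')\asymp\min\{2^N,\frac{1}{1-r}\}$, from which
    $$
    \|f_N\|_{A^p_{v_\a}}^p\asymp N^{p/2+1-\a}
    $$
(using $\a<1+p/2$). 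Splitting the integral defining $\|f_N'\vp\|_{L^p_{v_\a}}^p$ at $r=1-2^{-N}$ (where the two regimes of $M_2(r,f_N')$ meet), integrating by parts against $F$, and substituting the first-step asymptotic $F(t)\asymp(1-t)^p\Omega(t)$ produces
    $$
    \|f_N'\vp\|_{L^p_{v_\a}}^p\asymp N^{2-\a}
    $$
(using $\a<2$).

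Since $f_N(0)=0$, the hypothesized equivalence would force $N^{p/2+1-\a}\asymp N^{2-\a}$, i.e.\ $N^{p/2-1}\asymp 1$ as $N\to\infty$, which fails for every $p\ne 2$; this contradiction completes the proof. The main obstacle is upgrading the monomial constraint—which a priori only controls the tail integral $F(t)$—into a two-sided asymptotic strong enough to evaluate $\|f_N'\vp\|_{L^p_{v_\a}}^p$ up to constants, handled through an integration by parts that uses $F$ directly rather than attempting to pin down $\vp$ pointwise.
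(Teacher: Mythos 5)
Your strategy is sound but genuinely different from the paper's. The paper also starts from the monomial identity, but then works with the \emph{infinite} lacunary series $h(z)=\sum_k z^{2^k}$ and chooses $\a$ on the other side of the critical values (for $p>2$ it takes $2<2(\a-1)\le p$, for $p<2$ it takes $p<2(\a-1)\le2$), so that the contradiction is qualitative: multiplying the monomial relation by $n^{p-1}$ and summing identifies $\int_\D|h'|^p\vp^pv_\a\,dA$ with $\int_0^1\log\frac{1}{1-r^p}\,v_\a(r)\,dr$, which is finite exactly when $\|h\|_{A^p_{v_\a}}$ is infinite (and vice versa). That route never needs any pointwise or tail information about $\vp$. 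Your version, with truncations $f_N$ and $\a<\min\{2,1+p/2\}$, trades this for a quantitative mismatch $N^{p/2+1-\a}\not\asymp N^{2-\a}$, which is a perfectly good endgame and in some ways more transparent.

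The one step you should not wave at is the two-sided asymptotic $F(t)\asymp(1-t)^p\Omega(t)$. The upper bound $F(1-\frac1n)\lesssim B_n$ is immediate (Chebyshev, since $r^{(n-1)p+1}\gtrsim1$ on $[1-\frac1n,1)$), but the lower bound does \emph{not} follow from the moment relation by a "standard comparison" alone: a priori the mass of $\vp^pv_\a\,dr$ producing $B_n$ could sit just below $1-\frac1n$. One must run a Karamata-type argument — split $B_n$ at $1-\frac{A}{n}$, use the already-proved upper bound on each dyadic block $[1-\frac{A2^{j+1}}{n},1-\frac{A2^j}{n})$ together with the factor $e^{-cA2^j}$, and choose $A$ large so that the far part is at most half of $B_n$. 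This works because $\Omega$ is slowly varying, and it is precisely the lower bound on $F$ that your argument needs when $p<2$ (for $p>2$ the upper bound already suffices). Alternatively, you can bypass the tail asymptotic entirely by staying at the level of moments, as the paper does: since $\min\{2^N,\frac{1}{1-r}\}^p\asymp\sum_{n\le2^N}n^{p-1}r^{(n-1)p}$ uniformly on $[\frac12,1)$, you get
\begin{equation*}
\int_0^1\min\Bigl\{2^N,\tfrac{1}{1-r}\Bigr\}^p\vp(r)^pv_\a(r)\,dr
\asymp\sum_{n\le2^N}n^{p-1}B_n
\asymp\sum_{2\le n\le2^N}\frac{(\log n)^{1-\a}}{n}\asymp N^{2-\a}
\end{equation*}
directly from the two-sided bound $B_n\asymp n^{-p}(\log n)^{1-\a}$, with no integration by parts and no Tauberian step. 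I would recommend writing the proof that way; as stated, the claim that the monomial constraint "forces" the two-sided asymptotic for $F$ is a gap, albeit a repairable one.
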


\begin{proof} Let first $p>2$ and consider the weight $v_\a$,\index{$v_\a(r)$} where $\alpha$ is fixed
such that $2<2(\alpha-1)\le p$. Assume on the contrary to the
assertion that \eqref{eq:NOL-P} is satisfied for all $f\in\H(\D)$.
Applying this relation to the function $h_n(z)=z^n$, we obtain
    \begin{equation}\label{eq:NOL-P1}
    \int_0^1r^{np}v_\a(r)\,dr\asymp
    n^p\int_0^1r^{(n-1)p}\varphi(r)^pv_\a(r)\,dr,\quad
    n\in\N.
    \end{equation}
Consider now the lacunary series $h(z)=\sum_{k=0}^\infty z^{2^k}$.
It is easy to see that
    \begin{equation}\label{1}
    M_p(r,h)\asymp\left(\log\frac{1}{1-r}\right)^{1/2},\quad
    M_p(r,h')\asymp\frac{1}{1-r},\quad 0\le r<1.
    \end{equation}
By combining the relations \eqref{eq:NOL-P1}, \eqref{1} and
    $$
    \left(\frac{1}{1-r^p}\right)^p\asymp\sum_{n=1}^\infty
    n^{p-1}r^{(n-1)p},\quad\log\frac{1}{1-r^p}\asymp\sum_{n=1}^\infty
    n^{-1}r^{np},\quad0\le r<1,
    $$
we obtain
    \begin{equation*}\begin{split}
    \int_\D|h'(z)|^p\varphi(z)^pv_\a(z)\,dA(z) &\asymp
    \int_0^1\left(\frac{1}{1-r^p}\right)^p\varphi(r)^pv_\a(r)\,dr \\ &
    \asymp \int_0^1\left(\sum_{n=1}^\infty
    n^{p-1}r^{(n-1)p}\right)\varphi(r)^pv_\a(r)\,dr\\
    &\asymp \sum_{n=1}^\infty n^{p-1}\int_0^1
    r^{(n-1)p}\varphi(r)^pv_\a(r)\,dr\\
    &\asymp \sum_{n=1}^\infty n^{-1}\int_0^1
    r^{np}v_\a(r)\,dr\\
    &\asymp \int_0^1\left(\sum_{n=1}^\infty n^{-1}r^{np}\right)v_\a(r)\,dr\\
    &\asymp \int_0^1\log\frac{1}{1-r^p}\,v_\a(r)\,dr,
    \end{split}\end{equation*}
where the last integral is convergent because $\a>2$. However,
    $$
    \|h\|^p_{A^p_{v_\a}}\asymp\int_0^1\left(\log\frac{1}{1-r}\right)^{p/2}v_\a(r)\,dr=\infty,
    $$
since $p\ge2(\alpha-1)$, and therefore \eqref{eq:NOL-P} fails for
$h\in\H(\D)$. This is the desired contradiction.

If $0<p<2$, we again consider $v_\a$,\index{$v_\a(r)$} where $\a$
is chosen such that $p<2(\alpha-1)\le2$, and use an analogous
reasoning to that above to prove the assertion. Details are
omitted.
\end{proof}

Let $f\in\H(\D)$, and define the \emph{non-tangential maximal
function} of $f$ in the (punctured) unit disc by
    $$\index{$N(f)$}\index{non-tangential maximal function}
    N(f)(u)=\sup_{z\in\Gamma(u)}|f(z)|,\quad
    u\in\D\setminus\{0\}.
    $$
The following equivalent norm will be used in the proof of
Theorem~\ref{Thm-integration-operator-1}.

\begin{lemma}\label{le:funcionmaximalangular}
Let $0<p<\infty$ and let $\om$ be a radial weight. Then there
exists a constant $C>0$ such that
    $$
    \|f\|^p_{A^p_\om}\le\|N(f)\|^p_{L^p_\om}\le C\|f\|^p_{A^p_\om}
    $$
for all $f\in\H(\D)$.\index{$N(f)$}\index{non-tangential maximal
function}
\end{lemma}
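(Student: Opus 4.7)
The plan is to derive both bounds from the corresponding classical statement for Hardy spaces, exploiting the fact that the regions $\Gamma(u)$ scale naturally under the dilation $z\mapsto z/r$ when $u=re^{i\theta}$.

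The lower bound $\|f\|^p_{A^p_\om}\le\|N(f)\|^p_{L^p_\om}$ is immediate: for any $u\in\D\setminus\{0\}$ with $u=re^{i\theta}$, the point $u$ itself is a limit point of $\Gamma(u)$ (take $z=\rho e^{i\theta}$ with $\rho\nearrow r$, so $|\theta-\arg z|=0<\frac12(1-\rho/r)$), and by continuity of $f$ we get $|f(u)|\le N(f)(u)$. Integrating against $\om$ over $\D$ gives the inequality. Note that the value at $u=0$ is harmless since $\{0\}$ has measure zero.

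For the upper bound, the key observation is the scaling identity
\[
z\in\Gamma(re^{i\theta})\iff z/r\in\Gamma(e^{i\theta}),
\]
which follows directly from \eqref{eq:gammadeu} since $1-|z|/r=1-|z/r|$. Consequently, setting $f_r(w)=f(rw)$,
\[
N(f)(re^{i\theta})=\sup_{z\in\Gamma(re^{i\theta})}|f(z)|=\sup_{w\in\Gamma(e^{i\theta})}|f(rw)|=f_r^{\ast}(e^{i\theta}),
\]
where $f_r^\ast$ denotes the classical non-tangential maximal function on $\T$. The first step is to write the $A^p_\om$-norm of $N(f)$ in polar form,
\[
\|N(f)\|^p_{L^p_\om}=\frac{1}{\pi}\int_0^1\om(r)r\left(\int_0^{2\pi}f_r^{\ast}(e^{i\theta})^p\,d\theta\right)dr.
\]

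Next, since $f_r$ is analytic on a neighborhood of $\overline{\D}$, it belongs to $H^p$, and the classical theorem of Hardy and Littlewood on the non-tangential maximal function yields a constant $C=C(p)>0$, independent of $r$, such that
\[
\int_0^{2\pi}f_r^{\ast}(e^{i\theta})^p\,d\theta\le C\cdot 2\pi\, M_p^p(r,f).
\]
Substituting into the previous display and using the polar expression
\[
\|f\|^p_{A^p_\om}=2\int_0^1 M_p^p(r,f)\,\om(r)r\,dr,
\]
we obtain $\|N(f)\|^p_{L^p_\om}\le C\|f\|^p_{A^p_\om}$, which completes the proof.

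There is no serious obstacle here; the argument is a routine radial averaging once one notes the scaling property of $\Gamma$. The only mildly delicate point is verifying that the cones in \eqref{eq:gammadeu} respect the dilation $z\mapsto z/r$, so that the unweighted $L^p(\T)$-estimate for the Hardy-space non-tangential maximal function of $f_r$ is directly applicable slice by slice in $r$.
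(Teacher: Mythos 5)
Your argument is correct and is essentially the paper's own proof: both reduce the upper bound to the classical $L^p(\T)$ estimate for the Hardy-space non-tangential maximal function via the identity $N(f)(re^{i\theta})=(f_r)^\star(e^{i\theta})$ and then integrate slice by slice in $r$ against $\om(r)r\,dr$. Your extra care with the lower bound (noting $u\notin\Gamma(u)$ but $u$ is a radial limit point of $\Gamma(u)$) and with the scaling of the cones is a welcome elaboration of steps the paper leaves implicit.
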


\begin{proof}
It follows from \cite[Theorem~3.1 on p.~57]{Garnett1981} that
there exists a constant $C>0$ such that the \emph{classical
non-tangential maximal function}\index{classical non-tangential
maximal function}
    $$
    f^\star(\z)=\sup_{z\in\Gamma(\z)}|f(z)|\index{$f^\star$},\quad
    \z\in\T,
    $$
satisfies
    \begin{equation}\label{Eq:Maximal}\index{$f^\star$}
    \|f^\star\|^p_{L^p(\T)}\le C\|f\|^p_{H^p}
    \end{equation}
for all $0<p<\infty$ and $f\in\H(\D)$. Therefore
    \begin{equation*}\index{$N(f)$}\index{non-tangential maximal function}
    \begin{split}
    \|f\|^p_{A^p_\om}&\le\|N(f)\|^p_{L^p_\om}=\int_\D(N(f)(u))^p\om(u)\,dA(u)\\
    &=\int_0^1\om(r)r\int_{\T}((f_r)^\star(\z))^p\,|d\z|\,dr\\
    &\le C\int_0^1\om(r)r\int_{\T}f(r\z)^p\,|d\z|\,dr
    =C\|f\|^p_{A^p_\om},
    \end{split}
    \end{equation*}
and the assertion is proved.
\end{proof}

With these preparations we are now in position to prove our main
results on the boundedness and compactness of the integral
operator $T_g:A^p_\om\to A^q_\om$ with $\om\in\I\cup\R$.

\section{Integral operator $T_g$ on the weighted Bergman space $A^p_\om$}\label{sec:Volterra}

We begin with the following lemma which says, in particular, that
the symbol $g\in\H(\D)$ must belong to the Bloch space
$\B$\index{$\B$} whenever the operator $T_g$ is bounded on
$A^p_\om$ and $\om\in\I\cup\R$.

\begin{lemma}\label{le:tgminfty}
Let $0<p,q<\infty$ and $\omega\in\I\cup\R$.
\begin{itemize}
\item[\rm(i)] If $T_g:A^p_\omega\to A^q_\omega$ is bounded, then
\begin{equation}\label{Eq:Radialqp}
    M_\infty(r,g')\lesssim\frac{(\omega^\star(r))^{\frac{1}{p}-\frac{1}{q}}}{1-r},\quad
    0<r<1.
    \end{equation}
\item[\rm(ii)] If $T_g:A^p_\omega\to A^q_\omega$ is compact, then
    \begin{equation}\label{Eq:Radialqp2}
    M_\infty(r,g')=\op\left(\frac{(\omega^\star(r))^{\frac{1}{p}-\frac{1}{q}}}{1-r}\right),\quad
    r\to 1^{-}.
    \end{equation}
\end{itemize}
\end{lemma}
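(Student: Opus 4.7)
\emph{Plan.} The argument will rest on testing $T_g$ against the normalized reproducing-kernel-type functions $f_{a,p}$ from \eqref{testfunctions}. By (the proof of) Lemma~\ref{testfunctions1} these satisfy $\sup_{a\in\D}\|f_{a,p}\|_{A^p_\omega}\lesssim 1$ and $|f_{a,p}(a)|\asymp\omega(S(a))^{-1/p}$. The key identity is
$$
(T_g f_{a,p})'(a)=f_{a,p}(a)\,g'(a),
$$
so once we have a pointwise bound for the derivative of an $A^q_\omega$-function at the point $a$, the size of $g'(a)$ can be read off directly from the boundedness (resp.\ compactness) of $T_g$.

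The first step is to establish the pointwise derivative estimate
$$
|h'(a)|\lesssim\frac{\|h\|_{A^q_\omega}}{(1-|a|)\,\omega(S(a))^{1/q}},\qquad h\in A^q_\omega,\quad |a|\ge\tfrac12,
$$
uniformly for $\omega\in\I\cup\R$. Lemma~\ref{le:suf1} provides $|h(z)|^q\lesssim M_\omega(|h|^q)(z)\le\|h\|_{A^q_\omega}^q/\omega(S(z))$, and for $z\in D(a,(1-|a|)/2)$ one has $\omega(S(z))\asymp\omega(S(a))$ by Lemma~\ref{le:condinte}. Cauchy's integral formula on $\partial D(a,(1-|a|)/2)$ then finishes the estimate. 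This is the central technical step, and passing through the $\omega\in\I$ case---where local smoothness of $\omega$ may fail badly, cf.\ \eqref{pesomalo1}, \eqref{pesomalo2}---is the main obstacle; Lemma~\ref{le:suf1}, whose proof makes serious use of Poisson-kernel decompositions for $\om\in\I$, is precisely what removes this difficulty.

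With the derivative estimate in hand, applying it to $h=T_g f_{a,p}$ together with $\|T_g f_{a,p}\|_{A^q_\omega}\lesssim\|T_g\|$ yields
$$
\frac{|g'(a)|}{\omega(S(a))^{1/p}}=|(T_g f_{a,p})'(a)|\lesssim\frac{\|T_g\|}{(1-|a|)\,\omega(S(a))^{1/q}},
$$
which, using $\omega(S(a))\asymp\omega^\star(a)$ from Lemma~\ref{le:cuadrado-tienda}, rearranges to $|g'(a)|\lesssim(\omega^\star(a))^{1/p-1/q}/(1-|a|)$. Taking the supremum over $|a|=r$ gives (i).

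For (ii), the plan is a standard normal-family argument. Choosing the exponent $\gamma$ in $f_{a,p}$ large enough, Lemma~\ref{le:condinte} forces $f_{a,p}\to 0$ uniformly on compact subsets of $\D$ as $|a|\to 1^-$, while $\sup_a\|f_{a,p}\|_{A^p_\omega}<\infty$; compactness of $T_g:A^p_\omega\to A^q_\omega$ therefore implies $\|T_g f_{a,p}\|_{A^q_\omega}\to 0$ as $|a|\to 1^-$. If (ii) failed, there would exist $\varepsilon>0$ and points $a_n$ with $|a_n|\to 1^-$ and $|g'(a_n)|=M_\infty(|a_n|,g')\ge\varepsilon(\omega^\star(a_n))^{1/p-1/q}/(1-|a_n|)$; reinstating the derivative estimate in the argument of (i) would then give $\|T_g f_{a_n,p}\|_{A^q_\omega}\gtrsim\varepsilon$, contradicting the vanishing just derived.
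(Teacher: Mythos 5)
Your proof is correct and follows essentially the same route as the paper: both arguments test $T_g$ against the functions $f_{a,p}$, exploit $(T_gf_{a,p})'(a)=f_{a,p}(a)g'(a)$ with $|f_{a,p}(a)|\asymp\omega(S(a))^{-1/p}$ and $\sup_a\|f_{a,p}\|_{A^p_\omega}<\infty$, convert $\|T_gf_{a,p}\|_{A^q_\omega}\lesssim\|T_g\|$ into the pointwise bound on $|g'(a)|$, and settle the compact case via $f_{a,p}\to0$ on compacta and $\|T_gf_{a,p}\|_{A^q_\omega}\to0$. The only divergence is the sub-step yielding $|h'(a)|\lesssim\|h\|_{A^q_\omega}\bigl((1-|a|)\,\omega(S(a))^{1/q}\bigr)^{-1}$: you route it through Lemma~\ref{le:suf1} and Cauchy's formula, while the paper uses the more elementary annulus estimate $M_q^q(r,h)\int_r^1\omega(s)\,ds\lesssim\|h\|_{A^q_\omega}^q$ combined with the standard inequalities $M_\infty(r,h)\lesssim M_q(\rho,h)(1-r)^{-1/q}$ and $M_q(r,h')\lesssim M_q(\rho,h)/(1-r)$, so the maximal-function machinery you lean on is not actually needed for this lemma.
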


\begin{proof}
(i) Let $0<p,q<\infty$ and $\omega\in\I\cup\R$, and assume that
$T_g:A^p_\omega\to A^q_\omega$ is bounded. Consider the functions
    $$
    f_{a,p}(z)=\frac{(1-|a|)^{\frac{\gamma+1}{p}}}{(1-\overline{a}z)^{\frac{\gamma+1}{p}}\om\left(S(a)\right)^{\frac1p}},\quad
    a\in\D,\index{$f_{a,p}$}
    $$
defined in~\eqref{testfunctions}. By choosing $\gamma>0$ large
enough and arguing as in the proof of Theorem~\ref{th:cm}(ii), we
deduce
$\sup_{a\in\D}\|f_{a,p}\|_{A^p_\om}<\infty$\index{$f_{a,p}$} and
also $f_{a,p}\to0$ uniformly on compact subsets of $\D$ as $|a|\to
1^-$. Since
    \begin{equation*}
    \begin{split}
    \|h\|_{A^q_\omega}^q&\ge\int_{\D\setminus
    D(0,r)}|h(z)|^q\omega(z)\,dA(z)
    \gtrsim M_q^q(r,h)\int_r^1\omega(s)\,ds,\quad r\ge\frac12,
    \end{split}
    \end{equation*}
for all $h\in A^q_\omega$, we obtain
    \begin{equation*}\index{$f_{a,p}$}
    \begin{split}
    M^q_q(r,T_g(f_{a,p}))&\lesssim\frac{\|T_g(f_{a,p})\|_{A^q_\omega}^q}{\int_r^1\omega(s)\,ds}
    \le\frac{\|T_g\|^q_{(A^p_\om,A^q_\om)} \cdot\left(\sup_{a\in\D}\|f_{a,p}\|^q_{A^p_\om}\right)}{\int_r^1\omega(s)\,ds}\\
    &\lesssim\frac{1}{\int_r^1\omega(s)\,ds},\quad r\ge\frac12,
    \end{split}
    \end{equation*}
for all $a\in\D$. This together with the well-known relations
$M_\infty(r,f)\lesssim M_q(\rho,f)(1-r)^{-1/q}$ and
$M_q(r,f')\lesssim M_q(\rho,f)/(1-r)$, $\rho=(1+r)/2$,
Lemma~\ref{le:condinte} and Lemma~\ref{le:cuadrado-tienda} yield
    \begin{equation*}
    \begin{split}\index{$f_{a,p}$}
    |g'(a)|&\asymp(\omega^\star(a))^\frac1p|T_g(f_{a,p})'(a)|
    \lesssim(\omega^\star(a))^\frac1p\frac{M_q\left(\frac{1+|a|}{2},\left(T_g(f_{a,p})\right)'\right)}{\left(1-|a|\right)^{\frac1q}}\\
    &\lesssim (\omega^\star(a))^\frac1p\frac{M_q((3+|a|)/4,T_g(f_{a,p}))}{\left(1-|a|\right)^{1+\frac1q}}
    \asymp\frac{(\omega^\star(a))^{\frac1p-\frac1q}}{1-|a|},\quad |a|\ge\frac12.
    \end{split}
    \end{equation*}
The assertion follows from this inequality.

(ii) We will need the following standard lemma whose proof will be
omitted.

\begin{lemma}\label{le:compacidadtg}
Let $0<p,q<\infty$, and let $\omega$ be a radial weight. Then
$T_g:\, A^p_\om\to A^q_\om$ is compact if and only if
$\lim_{n\to\infty}\|T_g(f_n)\|_{A^q_\om}=0$ for all sequences
$\{f_n\}\subset A^p_\om$ such that
$\sup_n\|f_n\|_{A^p_\om}<\infty$ and $\lim_{n\to\infty} f_n(z)=0$
uniformly on compact subsets of $\D$.
\end{lemma}

Assume now that $T_g:\, A^p_\om\to A^q_\om$ is compact, and
consider again the functions~$f_{a,p}$.\index{$f_{a,p}$} Recall
that $\sup_{a\in\D}\|f_{a,p}\|_{A^p_\om}<\infty$ and also
$f_{a,p}\to0$ uniformly on compact subsets of $\D$ as $|a|\to
1^-$. Therefore $\lim_{|a|\to 1^-}\|T_g(f_{a,p})\|_{A^q_\om}=0$ by
Lemma~\ref{le:compacidadtg}. A reasoning similar to that in the
proof of~(i) yields the assertion.
\end{proof}

\subsection*{Proof of Theorem~\ref{Thm-integration-operator-1}} The
proof is split into several parts. We will first prove (iii) and
(ii) because they are less involved than (i). Part (iv) will be
proved last.

\smallskip

(iii) If $0<p<q$ and $\frac1p-\frac1q\ge 1$, then
\eqref{Eq:Radialqp} implies $g'\equiv0$, and so $g$ is a constant.
The converse is trivial.

\smallskip

(ii) If $0<p<q$ and $\frac1p-\frac1q<1$, then
Lemma~\ref{le:tgminfty} gives the implication
(aii)$\Rightarrow$(bii). The equivalence
(bii)$\Leftrightarrow$(cii) follows from the next more general
result.

\begin{proposition}\label{PropRadialq>p}
Let $0<\alpha<\infty$, $\omega\in\I\cup\R$ and $g\in\H(\D)$. Then
the following assertions hold:
\begin{itemize}
\item[\rm(i)] $g\in\CC^{2\alpha+1}(\omega^\star)$ if and only if
    \begin{equation}\label{Eq:Radialq>p}
    M_\infty(r,g')\lesssim\frac{(\omega^\star(r))^{\alpha}}{1-r},\quad
    0<r<1;
    \end{equation}
\item[\rm(ii)] $g\in\CC^{2\alpha+1}_0(\omega^\star)$ if and only
if
    $$
    M_\infty(r,g')=\op\left(\frac{(\omega^\star(r))^{\alpha}}{1-r}\right),\quad
    r\to1^-.
    $$
\end{itemize}
\end{proposition}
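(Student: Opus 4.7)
The plan is to leverage three facts already established: $\omega^\star\in\R$ by Lemma~\ref{le:sc1}, so $\omega^\star$ is invariant and $\omega^\star(z)\asymp\omega^\star(|z|)$ on each Euclidean disc $D(a,c(1-|a|))$; the Carleson--tent identification $\omega(S(a))\asymp\omega^\star(a)$ from Lemma~\ref{le:cuadrado-tienda}; and the identity $\omega^\star(r)\asymp\int_r^1\omega(s)(s-r)\,ds$ for $r\ge 1/2$ coming from the tent computation behind the same lemma, which combined with Lemma~\ref{le:condinte} will yield the essentially decreasing relation
\[
\frac{\omega^\star(r)}{1-r}\asymp\int_r^1\omega(s)\,ds,\qquad r\ge\tfrac12.
\]

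For the forward direction of (i), I will fix $a\in\D$ with $|a|\ge 1/2$, set $\rho=(1-|a|)/4$ and $a^\star=\tfrac{3|a|-1}{2}e^{i\arg a}$ as in the proof of Lemma~\ref{le:suf1}, so that $D(a,\rho)\subset S(a^\star)$ and $\omega(S(a^\star))\asymp\omega^\star(a)$. Subharmonicity of $|g'|^2$ together with the invariance of $\omega^\star$ on $D(a,\rho)$ then gives
\[
|g'(a)|^2\lesssim\frac{1}{(1-|a|)^2\omega^\star(a)}\int_{S(a^\star)}|g'(z)|^2\omega^\star(z)\,dA(z)\lesssim\frac{\|g\|_{\CC^{2\alpha+1}(\omega^\star)}^2(\omega^\star(a))^{2\alpha+1}}{(1-|a|)^2\omega^\star(a)},
\]
and taking square roots (together with $\omega^\star(a)\asymp\omega^\star(|a|)$) yields \eqref{Eq:Radialq>p}.

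For the reverse direction, starting from the growth bound I will fix $I\subset\T$ with $r_0=1-|I|$, use the invariance of $\omega^\star$ to reduce the task to
\[
\int_{r_0}^1\frac{(\omega^\star(r))^{2\alpha+1}}{(1-r)^2}\,dr\lesssim\frac{(\omega^\star(r_0))^{2\alpha+1}}{1-r_0},
\]
and exploit the decreasing relation above: $\omega^\star(t)\lesssim\omega^\star(r)(1-t)/(1-r)$ for $r\le t$. At the dyadic cut-offs $r_k=1-2^{-k}(1-r_0)$ invariance makes the integrand essentially constant on $[r_k,r_{k+1}]$, and $\omega^\star(r_k)\lesssim 2^{-k}\omega^\star(r_0)$, so summing yields
\[
\int_{r_0}^1\frac{(\omega^\star(r))^{2\alpha+1}}{(1-r)^2}\,dr\lesssim\sum_{k\ge 0}\frac{(\omega^\star(r_k))^{2\alpha+1}\,2^k}{1-r_0}\lesssim\frac{(\omega^\star(r_0))^{2\alpha+1}}{1-r_0}\sum_{k\ge 0}2^{-2k\alpha},
\]
where the geometric series converges because $\alpha>0$. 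Multiplying by $|I|=1-r_0$ and invoking $\omega(S(I))\asymp\omega^\star(r_0)$ will deliver $g\in\CC^{2\alpha+1}(\omega^\star)$.

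Part (ii) will run in parallel with $\lesssim$ replaced by $o(1)$: the $\CC^{2\alpha+1}_0(\omega^\star)$ hypothesis forces the Carleson quotient at $a^\star$ to vanish as $|a|\to 1^-$, which propagates through the subharmonicity estimate; conversely, given an $o(\cdot)$ pointwise bound and $\varepsilon>0$, picking $r_1<1$ beyond which $|g'(z)|\le\varepsilon(\omega^\star(|z|))^\alpha/(1-|z|)$ and applying the above integral computation to Carleson squares $S(I)$ with $1-|I|\ge r_1$ yields $\int_{S(I)}|g'|^2\omega^\star\,dA\lesssim\varepsilon^2(\omega(S(I)))^{2\alpha+1}$. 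The hard part will be the integral estimate in the reverse direction of (i); it depends crucially on the monotonicity of $\omega^\star(r)/(1-r)$ and on the exponent matching that produces a convergent geometric sum precisely when $\alpha>0$, thereby dictating the specific exponent $2\alpha+1$ in the $\CC^{2\alpha+1}$-norm.
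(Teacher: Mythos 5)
Your proposal is correct and follows essentially the same route as the paper: necessity via subharmonicity of $|g'|^2$ on $D(a,c(1-|a|))\subset S(a^\star)$ together with $\om(S(a^\star))\asymp\om^\star(a)$, and sufficiency by reducing the Carleson quotient to the radial integral $\int_{r_0}^1(\om^\star(r))^{2\alpha+1}(1-r)^{-2}\,dr$ and exploiting $\om^\star(r)\asymp(1-r)\int_r^1\om(s)\,ds$ with $\alpha>0$. The only cosmetic difference is that you evaluate that integral by a dyadic decomposition with the essentially decreasing quantity $\om^\star(r)/(1-r)$, whereas the paper bounds $\int_r^1\om(s)\,ds$ by its value at the endpoint and integrates $(1-r)^{2\alpha-1}$ directly; both hinge on exactly the same monotonicity and the same convergence condition $\alpha>0$.
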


\begin{proof}
(i) Let $0<\alpha<\infty$ and $\omega\in\I\cup\R$, and assume
first that $g\in\CC^{2\alpha+1}(\omega^\star)$. We observe that
$\omega^\star\in\R$ by Lemma~\ref{le:sc1}. Let $z\in\D$, and
assume without loss of generality that $|z|>1/2$. Set
$z^\star=\frac{3|z|-1}{2}e^{i\arg z}$ so that
$D(z,\frac12(1-|z|))\subset S(z^\star)$. This inclusion together
with the subharmonicity property of $|g'|^2$ and the fact
$\omega^\star\in\R$ gives
    \begin{equation*}
    \begin{split}
    |g'(z)|^2 &\lesssim\frac{1}{(1-|z|)^2\omega^\star(z)}\int_{D(z,\frac12(1-|z|))}|g'(\z)|^2\omega^\star(\z)\,dA(\z)\\
    &\le\frac{1}{(1-|z|)^2\omega^\star(z)}\int_{S(z^\star)}|g'(\z)|^2\omega^\star(\z)\,dA(\z).
    \end{split}
    \end{equation*}
A standard reasoning involving \eqref{calpha},
Lemma~\ref{le:condinte} and Lemma~\ref{le:cuadrado-tienda} now
show that~\eqref{Eq:Radialq>p} is satisfied.

Conversely, \eqref{Eq:Radialq>p} together with
Lemma~\ref{le:cuadrado-tienda} yield
    \begin{equation*}
    \begin{split}
    \int_{S(a)}|g'(z)|^2\omega^\star(z)\,dA(z)
    &\lesssim
    (1-|a|)\int_{|a|}^1\frac{(\omega^\star(r))^{2\alpha+1}}{(1-r)^2}\,dr\\
    &\asymp(1-|a|)\int_{|a|}^1\frac{\left((1-r)\int_{r}^1\om(s)\,ds\right)^{2\alpha+1}}{(1-r)^{2}}\,dr\\
    &\lesssim \left(\om\left(S(a)\right)\right)^{2\alpha+1}\asymp \left(\omega^\star(a)\right)^{2\alpha+1},
    \quad |a|\ge\frac12,
    \end{split}
    \end{equation*}
and it follows that $g\in\CC^{2\alpha+1}(\omega^\star)$.

The second assertion (ii) can be proved by appropriately modifying
the reasoning above.
\end{proof}

We now return to the proof of the case (ii) of
Theorem~\ref{Thm-integration-operator-1}. It remains to show that
(aii) is satisfied if either of the equivalent conditions (bii)
and (cii) is valid. We first observe that, if $q=2$, then
(aii)$\Leftrightarrow$(cii) by Theorem~\ref{ThmLittlewood-Paley},
the definition of~$\CC^{2/p}(\om^\star)$ and \eqref{calpha}. The
cases $q>2$ and $q<2$ are now treated separately.

Let $q>2$ and assume that
$g\in\CC^{2\left(\frac{1}{p}-\frac1q\right)+1}(\omega^\star)$. In
this case $L^{q/2}_\omega$ can be identified with the dual of
$L^{\frac{q}{q-2}}_\omega$, that is,
$L^{q/2}_\omega=\left(L^{\frac{q}{q-2}}_\omega\right)^\star$.
Therefore formula \eqref{normacono} in
Theorem~\ref{ThmLittlewood-Paley} shows that $T_g:A^p_\omega\to
A^q_\omega$ is bounded if and only if
    \begin{equation*}
    \left|
    \int_\D\,h(u)\left(\int_{\Gamma(u)}|f(z)|^2|g'(z)|^2\,dA(z)\right)\omega(u)\,dA(u)
    \right|\lesssim\|h\|_{L^{\frac{q}{q-2}}_\omega}\|f\|^2_{A^p_\om}
    \end{equation*}
for all $h\in L^{\frac{q}{q-2}}_\omega$. To see this, we use first
Fubini's theorem, Lemma~\ref{le:cuadrado-tienda} and the
definition of the maximal function $M_\om(|h|)$\index{weighted
maximal function}\index{$M_\om(\vp)$} to obtain
    \begin{equation*}
    \begin{split}
    &\left|
    \int_\D\,h(u)\left(\int_{\Gamma(u)}|f(z)|^2|g'(z)|^2\,dA(z)\right)\omega(u)\,dA(u)\right|\\
    &\le\int_\D|f(z)|^2|g'(z)|^2\left(\int_{T(z)}|h(u)|\omega(u)\,dA(u)\right)\,dA(z)\\
    &\asymp\int_\D|f(z)|^2|g'(z)|^2\omega\left(T(z)\right)\quad\cdot\left(\frac{1}{\omega\left(S(z)\right)}\int_{T(z)}|h(u)|\omega(u)\,dA(u)\right)\,dA(z)\\
    &\lesssim\int_\D|f(z)|^2M_\om(|h|)(z)|g'(z)|^2\omega^\star(z)\,dA(z).
    \end{split}
    \end{equation*}
But $g\in\CC^{2\left(\frac{1}{p}-\frac1q\right)+1}(\omega^\star)$
by the assumption, and
$2(\frac1p-\frac1q)+1=(2+p-\frac{2p}{q})/p$, so we may estimate
the last integral upwards by H\"{o}lder's inequality and
Corollary~\ref{co:maxbou} to
    \begin{equation*}\index{weighted maximal
function}\index{$M_\om(\vp)$}
    \begin{split}
    &\left(\int_\D|f(z)|^{2+p-\frac{2p}{q}}|g'(z)|^2\omega^\star(z)\,dA(z)\right)^{\frac{2q}{(2+p)q-2p}}\\
    &\quad\cdot\left(\int_\D\left(M_\om(|h|)(z)\right)^{1+\frac{2q}{p(q-2)}}|g'(z)|^2\omega^\star(z)\,dA(z)\right)^{\frac{1}{1+\frac{2q}{p(q-2)}}}
    \lesssim\|f\|^2_{A^p_\om}\|h\|_{L^{\frac{q}{q-2}}_\omega}.
    \end{split}
    \end{equation*}
These estimates give the desired inequality for all $h\in
L^{\frac{q}{q-2}}_\omega$, and thus $T_g:A^p_\omega\to A^q_\omega$
is bounded.

Let now $0<q<2$, and assume again that
$g\in\CC^{2\left(\frac{1}{p}-\frac1q\right)+1}(\omega^\star)$. We
will use ideas from \cite{Cohn}. Then \eqref{normacono},
H\"older's inequality, Lemma~\ref{le:funcionmaximalangular},
Fubini's theorem, and Lemma~\ref{le:cuadrado-tienda} give
    \begin{equation*}\index{$N(f)$}\index{non-tangential maximal function}
    \begin{split}
    \|T_g(f)\|_{A^q_\omega}^q
    &\asymp \int_\D\left(\int_{\Gamma(u)}|f(z)|^2|g'(z)|^2\,dA(z)\right)^\frac{q}2\omega(u)\,dA(u)\\
    &\le\int_\D N(f)(u)^{\frac{p(2-q)}{2}}\cdot\left(\int_{\Gamma(u)}|f(z)|^{2-\frac{2p}{q}+p}|g'(z)|^2\,dA(z)\right)^\frac{q}2\omega(u)\,dA(u)\\
    &\le\left(\int_\D
    N(f)(u)^p\omega(u)\,dA(u)\right)^\frac{2-q}{2}\\
    &\quad\cdot\left(\int_\D\int_{\Gamma(u)}|f(z)|^{2-\frac{2p}{q}+p}|g'(z)|^2\,dA(z)\omega(u)\,dA(u)\right)^\frac{q}2\\
    &\lesssim\|f\|_{A^p_\omega}^{\frac{p(2-q)}{2}}\left(\int_\D|f(z)|^{2-\frac{2p}{q}+p}|g'(z)|^2\omega(T(z))\,dA(z)\right)^\frac{q}2\\
    &\asymp\|f\|_{A^p_\omega}^{\frac{p(2-q)}{2}}\left(\int_{\D}|f(z)|^{2-\frac{2p}{q}+p}|g'(z)|^2\omega^\star(z)\,dA(z)\right)^\frac{q}2
    \lesssim\|f\|_{A^p_\omega}^q,
    \end{split}
    \end{equation*}
and thus $T_g:A^p_\omega\to A^q_\omega$ is bounded. The proof of
the case (ii) is now complete.

\smallskip

(i) This case requires more work than the previous ones mainly
because $\CC^1(\om^\star)$\index{$\CC^1(\om^\star)$} can not be
characterized by a simple radial condition like
$\CC^\alpha(\om^\star)$ for each $\alpha>1$. We first observe
that, if $p=2$, then (ai)$\Leftrightarrow$(bi) by
Theorem~\ref{ThmLittlewood-Paley}, the definition of
$\CC^1(\om^\star)$\index{$\CC^1(\om^\star)$} and \eqref{calpha}.
Moreover, by following the two different cases of the proof of
(cii)$\Rightarrow$(aii) we obtain (bi)$\Rightarrow$(ai) for
$p\ne2$. It remains to show that (ai)$\Rightarrow$(bi) for
$p\ne2$. This will be done in two pieces.

Let first $p>2$, and assume that $T_g:A^p_\omega\to A^p_\omega$ is
bounded. By \eqref{normacono} in Theorem~\ref{ThmLittlewood-Paley}
this is equivalent to
    \begin{equation*}
    \|T_g(f)\|_{A^p_\omega}^p
    \asymp\int_\D\left(\int_{\Gamma(u)}|f(z)|^2|g'(z)|^2\,dA(z)\right)^\frac{p}2\omega(u)\,dA(u)
    \lesssim\|f\|_{A^p_\omega}^p
    \end{equation*}
for all $f\in A^p_\om$. By using this together with
Lemma~\ref{le:cuadrado-tienda}, Fubini's theorem, H\"older's
inequality and Lemma~\ref{le:funcionmaximalangular}, we obtain
    \begin{eqnarray*}\index{$N(f)$}\index{non-tangential maximal function}
    &&\int_{\D}|f(z)|^p|g'(z)|^2\om^\star(z)\,dA(z)
    \asymp\int_{\D}|f(z)|^p|g'(z)|^2\om(T(z))\,dA(z)\\
    &&=\int_\D\int_{\Gamma(u)}|f(z)|^p|g'(z)|^2dA(z)\om(u)\,dA(u)\\
    &&\le\int_\D N(f)(u)^{p-2}\int_{\Gamma(u)}|f(z)|^2|g'(z)|^2\,dA(z)\om(u)\,dA(u)\\
    &&\le\left(\int_\D
    N(f)(u)^{p}\om(u)\,dA(u)\right)^\frac{p-2}{p}\\
    &&\quad\cdot\left(\int_\D\left(\int_{\Gamma(u)}|f(z)|^2|g'(z)|^2dA(z)\right)^\frac{p}{2}\om(u)\,dA(u)\right)^\frac{2}{p}
    \lesssim\|f\|_{A^p_\om}^p
    \end{eqnarray*}
for all $f\in A^p_\om$. Therefore $|g'(z)|^2\omega^\star(z)dA(z)$
is a $p$-Carleson measure for $A^p_\omega$, and thus
$g\in\CC^1(\om^\star)$\index{$\CC^1(\om^\star)$} by the
definition. This implication can also be proved by using
Theorem~\ref{th:cm}, Theorem~\ref{ThmLittlewood-Paley} and the
functions $F_{a,p}$ of
Lemma~\ref{testfunctions1}.\index{$F_{a,p}$}

\smallskip

Let now $0<p<2$, and assume that $T_g:A^p_\omega\to A^p_\omega$ is
bounded. Then Lemma~\ref{le:tgminfty} and its proof imply
$g\in\mathcal{B}$ and
    \begin{equation}\label{eq:tgbloch}
    \|g\|_{\mathcal{B}}\lesssim\|T_g\|.
    \end{equation}
Choose $\gamma>0$ large enough, and consider the functions
$F_{a,p}=\left(\frac{1-|a|^2}{1-\overline{a}z}\right)^{\frac{\gamma+1}{p}}$
of Lemma~\ref{testfunctions1}.\index{$F_{a,p}$} Let
$1<\a,\b<\infty$ such that $\b/\a=p/2<1$, and let $\a'$ and $\b'$
be the conjugate indexes of $\a$ and $\b$. Then
Lemma~\ref{le:cuadrado-tienda}, Fubini's theorem, H\"older's
inequality, \eqref{eq:tf1} and \eqref{normacono} yield
    \begin{equation}\index{$F_{a,p}$}
    \begin{split}\label{eq:tgb1}
    &\int_{S(a)}|g'(z)|^2\omega^\star(z)\,dA(z)\\
    &\asymp\int_\D\left(\int_{S(a)\cap\Gamma(u)}|g'(z)|^2|F_{a,p}(z)|^2\,dA(z)\right)^{\frac1\a+\frac1{\a'}}\omega(u)\,dA(u)\\
    &\le\left(\int_\D\left(\int_{\Gamma(u)}|g'(z)|^2|F_{a,p}(z)|^2\,dA(z)\right)^\frac{\b}{\a}\omega(u)\,dA(u)\right)^\frac1\b\\
    &\quad\cdot\left(\int_\D\left(\int_{\Gamma(u)\cap
    S(a)}|g'(z)|^2\,dA(z)\right)^\frac{\b'}{\a'}\omega(u)\,dA(u)\right)^\frac1{\b'}\\
    &\asymp\|T_g(F_{a,p})\|_{A^p_\omega}^\frac{p}{\b}\|S_g(\chi_{S(a)})\|_{L_\omega^\frac{\b'}{\a'}}^\frac{1}{\a'},\quad
    a\in\D,
    \end{split}
    \end{equation}
where
    \begin{equation*}
    S_g(\varphi)(u)=\int_{\Gamma(u)}|\varphi(z)|^2|g'(z)|^2\,dA(z),\quad
    u\in\D\setminus\{0\},
    \end{equation*}
for any bounded function $\varphi$ on $\D$. Since $\b/\a=p/2<1$,
we have $\frac{\b'}{\a'}>1$ with the conjugate exponent
$\left(\frac{\b'}{\a'}\right)'=\frac{\b(\a-1)}{\a-\b}>1$.
Therefore
      \begin{equation}
      \begin{split}\label{eq:tgb2}
      \|S_g(\chi_{S(a)})\|_{L_\omega^\frac{\b'}{\a'}}
      =\sup_{\|h\|_{L_\omega^{\frac{\b(\a-1)}{\a-\b}}}\le1}
      \left|\int_\D h(u)S_g(\chi_{S(a)})(u)\omega(u)\,dA(u)\right|.
      \end{split}
      \end{equation}
By using Fubini's theorem, Lemma~\ref{le:cuadrado-tienda},
H\"older's inequality and Corollary~\ref{co:maxbou}, we deduce
     \begin{equation}\index{weighted maximal
function}\index{$M_\om(\vp)$}
     \begin{split}\label{eq:tgb3}
     &\left|\int_\D h(u)S_g(\chi_{S(a)})(u)\omega(u)\,dA(u)\right|\\
     &\le\int_\D|h(u)|\int_{\Gamma(u)\cap
     S(a)}|g'(z)|^2\,dA(z)\,\omega(u)\,dA(u)\\
     &\lesssim\int_{S(a)}M_\omega(|h|)(z)|g'(z)|^2\omega^\star(z)\,dA(z)\\
     &\le\left(\int_{S(a)}|g'(z)|^2\omega^\star(z)\,dA(z)\right)^\frac{\a'}{\b'}\\
     &\quad\cdot\left(\int_\D
     M_\omega(|h|)^{\left(\frac{\b'}{\a'}\right)'}|g'(z)|^2\omega^\star(z)\,dA(z)\right)^{1-\frac{\a'}{\b'}}\\
     &\lesssim\left(\int_{S(a)}|g'(z)|^2\omega^\star(z)\,dA(z)\right)^\frac{\a'}{\b'}\\
     &\quad\cdot\left(\sup_{a\in\D}\frac{\int_{S(a)}|g'(z)|^2\omega^\star(z)\,dA(z)}{\omega(S(a))}\right)^{1-\frac{\a'}{\b'}}
     \|h\|_{L_\omega^{\left(\frac{\b'}{\a'}\right)'}}.
     \end{split}
     \end{equation}
By replacing $g(z)$ by $g_r(z)=g(rz)$, $0<r<1$, and combining
\eqref{eq:tgb1}--\eqref{eq:tgb3}, we obtain
    \begin{equation*}\index{$F_{a,p}$}
    \begin{split}
    \int_{S(a)}|g_r'(z)|^2\omega^\star(z)\,dA(z)&\lesssim\|T_{g_r}(F_{a,p})\|_{A^p_\omega}^\frac{p}{\b}
    \left(\int_{S(a)}|g_r'(z)|^2\omega^\star(z)\,dA(z)\right)^\frac{1}{\b'}\\
    &\quad\cdot\left(\sup_{a\in\D}\frac{\int_{S(a)}|g_r'(z)|^2\omega^\star(z)\,dA(z)}{\omega(S(a))}\right)^{\frac{1}{\a'}\left(1-\frac{\a'}{\b'}\right)}.
    \end{split}
    \end{equation*}
We now claim that there exists a constant $C=C(\om)>0$ such that
    \begin{equation}\label{eq:dilatadas}\index{$F_{a,p}$}
    \sup_{0<r<1}\|T_{g_r}(F_{a,p})\|_{A^p_\omega}^p\le
    C\|T_{g}\|_{A^p_\omega}^p\omega(S(a)),\quad a\in\D.
    \end{equation}
Taking this for granted for a moment, we deduce
    \begin{equation*}
    \left(\frac{\int_{S(a)}|g_r'(z)|^2\omega^\star(z)\,dA(z)}{\omega(S(a))}\right)^\frac{1}{\b}
    \lesssim\|T_{g}\|_{A^p_\omega}^\frac{p}{\b}
    \left(\sup_{a\in\D}\frac{\int_{S(a)}|g_r'(z)|^2\omega^\star(z)\,dA(z)}{\omega(S(a))}\right)^{\frac{1}{\a'}\left(1-\frac{\a'}{\b'}\right)}
    \end{equation*}
for all $0<r<1$ and $a\in\D$. This yields
    $$
    \frac{\int_{S(a)}|g_r'(z)|^2\omega^\star(z)\,dA(z)}{\omega(S(a))}\lesssim\|T_{g}\|^2,\quad
    a\in\D,
    $$
and so
    \begin{equation*}
    \sup_{a\in\D}\frac{\int_{S(a)}|g(z)|^2\omega^\star(z)\,dA(z)}{\omega(S(a))}\le\sup_{a\in\D}\liminf_{r\to
1^-}\left(\frac{\int_{S(a)}|g_r'(z)|^2\omega^\star(z)\,dA(z)}{\omega(S(a))}\right)
    \lesssim  \|T_{g}\|^2
    \end{equation*}
by Fatou's lemma. Therefore
$g\in\CC^1(\om^\star)$\index{$\CC^1(\om^\star)$} by
Theorem~\ref{th:cm}.

It remains to prove~\eqref{eq:dilatadas}. To do this, let
$a\in\D$. If $|a|\le r_0$, where $r_0\in(0,1)$ is fixed, then the
inequality in~\eqref{eq:dilatadas} follows by Theorem~\ref{ThmLittlewood-Paley}, the change of
variable $rz=\z$, the fact
    \begin{equation}\label{conos}
    \Gamma(ru)\subset \Gamma(u),\quad 0<r<1,
    \end{equation}
and the assumption that $T_g:A^p_\om\to A^p_\om$ is bounded. If
$a\in\D$ is close to the boundary, we consider two separate cases.

Let first $\frac12<|a|\le\frac1{2-r}$. Then
    $$
    |1-\overline{a}z|\le\left|1-\overline{a}\frac{z}{r}\right|+\frac{1-r}{2-r}\le
    2\left|1-\overline{a}\frac{z}{r}\right|,\quad |z|\le r.
    $$
Therefore Theorem~\ref{ThmLittlewood-Paley}, \eqref{conos} and
\eqref{eq:tf2} yield
    \begin{equation}\label{78}\index{$F_{a,p}$}
    \begin{split}
    \|T_{g_{r}}(F_{a,p})\|_{A^p_\omega}^p
    &\asymp \int_\D \left(\int_{\Gamma(u)}r^2|g'(rz)|^2
    \left|F_{a,p}(z)\right|^2\,dA(z)\right)^{\frac{p}{2}}\om(u)\,du\\
    &=\int_\D \left(\int_{\Gamma(ru)}|g'(z)|^2
    \left|F_{a,p}\left(\frac{z}{r}\right)\right|^2
    \,dA(z)\right)^{\frac{p}{2}}\om(u)\,du\\
    &\le2^{\gamma+1}\int_\D \left(\int_{\Gamma(ru)}|g'(z)|^2
    \left|F_{a,p}(z)\right|^2
    \,dA(z)\right)^{\frac{p}{2}}\om(u)\,du\\
    &\le2^{\gamma+1}\int_\D \left(\int_{\Gamma(u)}|g'(z)|^2
    \left|F_{a,p}(z)\right|^2
    \,dA(z)\right)^{\frac{p}{2}}\om(u)\,du\\
    &\asymp\|T_{g}(F_{a,p})\|_{A^p_\omega}^p\lesssim\|T_{g}\|_{A^p_\omega}^p\omega(S(a)),
    \end{split}
    \end{equation}
and hence
    \begin{equation}\index{$F_{a,p}$}
    \begin{split}\label{eq:dil1}
    \|T_{g_{r}}(F_{a,p})\|_{A^p_\omega}^p\lesssim \|T_{g}\|_{A^p_\omega}^p\omega(S(a))
    ,\quad\frac12<|a|\le\frac1{2-r}.
    \end{split}
    \end{equation}

Let now $|a|>\max\{\frac1{2-r},\frac12\}$. Then, by
Theorem~\ref{ThmLittlewood-Paley}, \eqref{eq:tgbloch} and
Lemma~\ref{Lemma:Zhu-type}, with $\gamma+2-p$ in place of
$\gamma$, we deduce
    \begin{equation}\index{$F_{a,p}$}
    \begin{split}\label{eq:dil2}
    \|T_{g_{r}}(F_{a,p})\|_{A^p_\omega}^p
    &\asymp\int_\D\left(\int_{\Gamma(u)}r^2|g'(rz)|^2
    \left|F_{a,p}(z)\right|^2\,dA(z)\right)^{\frac{p}{2}}\om(u)\,du\\
    &\le M_\infty(r,g')^p \int_\D \left(\int_{\Gamma(u)}\left|F_{a,p}(z)\right|^2\,dA(z)\right)^{\frac{p}{2}}\om(u)\,du
\\ &\lesssim
 M_\infty\left( 2-\frac{1}{|a|},g'\right)^p(1-|a|)^p\left\|\left(\frac{1-|a|^2}{1-\overline{a}z}\right)^{\frac{\gamma+1}{p}-1}\right\|_{A^p_\omega}^p
\\ & \lesssim \|g\|^p_{\mathcal{B}}\,\omega(S(a))\lesssim\|T_{g}\|_{A^p_\omega}^p\,\omega(S(a))
    \end{split}
    \end{equation}
for $\gamma>0$ large enough. This together with \eqref{eq:dil1}
gives \eqref{eq:dilatadas}. The proof of (i) is now complete.

(iv) Let first $g\in A^s_{\omega}$, where $s=\frac{pq}{p-q}$. Then
Theorem~\ref{ThmLittlewood-Paley}, H\"older's inequality and
Lemma~\ref{le:funcionmaximalangular} yield
    \begin{equation}\label{58}\index{$N(f)$}\index{non-tangential maximal function}
    \begin{split}
    \|T_g(f)\|_{A^q_\omega}^q&\asymp\int_\D\left(\int_{\Gamma(u)}|f(z)|^2|g'(z)|^2\,dA(z)\right)^\frac{q}{2}\omega(u)\,dA(u)\\
    &\le\int_\D(N(f)(u))^q\left(\int_{\Gamma(u)}|g'(z)|^2\,dA(z)\right)^\frac{q}{2}\omega(u)\,dA(u)\\
    &\le\left(\int_\D(N(f)(u))^p\omega(u)\,dA(u)\right)^\frac{q}{p}\\
    &\quad\cdot\left(\int_\D\left(\int_{\Gamma(u)}|g'(z)|^2\,dA(z)\right)^\frac{pq}{2(p-q)}\omega(u)\,dA(u)\right)^\frac{p-q}{p}\\
    &\le C_1^{q/p}C_2(p,q,\om)\|f\|_{A^p_\omega}^q\|g\|_{A^s_\omega}^q.
    \end{split}
    \end{equation}
Thus $T_g:A^p_\omega\to A^q_\omega$ is bounded.

To prove the converse implication, we will use ideas
from~\cite[p.~170--171]{AC}, where~$T_g$ acting on Hardy spaces is
studied. We begin with the following result whose proof relies on
Corollary~\ref{cor:FactorizationBergman}.

\begin{proposition}\label{PropSmallIndeces}\index{factorization}
Let $0<q<p<\infty$ and $\omega\in\widetilde{\I}\cup\R$, and let
$T_g:A^p_\omega\to A^q_\omega$ be bounded. Then $T_g: A^{
\hat{p}}_\omega\to A^{ \hat{q}}_\omega$ is bounded for any $
\hat{p}<p$ and $ \hat{q}<q$ with $\frac{1}{ \hat{q}}-\frac{1}{
\hat{p}}=\frac{1}{q}-\frac{1}{p}$. Further, if $0<p\le 2$, then
there exists $C=C(p,q,\omega)>0$ such that
    \begin{equation}\label{eq:tqq<p1}
    \limsup_{ \hat{p}\to p^-}\|T_g\|_{\left(A^{ \hat{p}}_\om,A^{ \hat{q}}_\om\right)}\le C\|T_g\|_{\left(A^{p}_\om,A^{q}_\om\right)}.
    \end{equation}
\end{proposition}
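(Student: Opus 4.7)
The plan is to combine the factorization results (Theorem~\ref{Thm:FactorizationBergman} and Corollary~\ref{cor:FactorizationBergman}) with the square-function representation~\eqref{normacono} and a H\"older split, thereby reducing the boundedness of $T_g$ on $A^{\hat{p}}_\omega$ to the hypothesized boundedness on $A^p_\omega$. Since $\omega \in \widetilde{\I} \cup \R \subset {\mathcal Inv}$ is radial, the polynomials are dense in $A^{\hat{p}}_\omega$ by the discussion in Section~\ref{Sec:DensityOfPolynomials}, so the factorization theorems apply. First, define $p_2$ by $\tfrac{1}{p_2} = \tfrac{1}{\hat{p}} - \tfrac{1}{p}$; the hypothesis $\tfrac{1}{\hat{q}} - \tfrac{1}{\hat{p}} = \tfrac{1}{q} - \tfrac{1}{p}$ immediately gives the key identity $\tfrac{1}{\hat{q}} = \tfrac{1}{q} + \tfrac{1}{p_2}$, which will drive the H\"older step.

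Given $f \in A^{\hat{p}}_\omega$, Theorem~\ref{Thm:FactorizationBergman} yields $f = f_1 \cdot f_2$ with $f_1 \in A^{p}_\omega$, $f_2 \in A^{p_2}_\omega$, and $\|f_1\|_{A^{p}_\omega} \|f_2\|_{A^{p_2}_\omega} \le C(p, p_2, \omega) \|f\|_{A^{\hat{p}}_\omega}$. Applying~\eqref{normacono} with $n=1$ to $T_g(f)$ and using $(T_g f)' = f g'$,
\begin{equation*}
\|T_g(f)\|_{A^{\hat{q}}_\omega}^{\hat{q}} \asymp \int_\D \left(\int_{\Gamma(u)} |f_1(z)|^2 |f_2(z)|^2 |g'(z)|^2\,dA(z)\right)^{\hat{q}/2} \omega(u)\,dA(u).
\end{equation*}
Estimating $|f_2(z)| \le N(f_2)(u)$ for $z \in \Gamma(u)$, pulling the maximal function out of the inner integral, and then applying H\"older's inequality with conjugate exponents $\alpha = p_2/\hat{q}$ and $\beta = q/\hat{q}$ (whose conjugacy $\alpha^{-1}+\beta^{-1}=1$ is exactly the identity $\tfrac{1}{\hat{q}} = \tfrac{1}{q} + \tfrac{1}{p_2}$) yields
\begin{equation*}
\|T_g(f)\|_{A^{\hat{q}}_\omega}^{\hat{q}} \lesssim \|N(f_2)\|_{L^{p_2}_\omega}^{\hat{q}} \cdot \|T_g(f_1)\|_{A^q_\omega}^{\hat{q}}.
\end{equation*}
Lemma~\ref{le:funcionmaximalangular} bounds $\|N(f_2)\|_{L^{p_2}_\omega} \lesssim \|f_2\|_{A^{p_2}_\omega}$, the hypothesis gives $\|T_g(f_1)\|_{A^q_\omega} \le \|T_g\|_{(A^p_\omega, A^q_\omega)} \|f_1\|_{A^p_\omega}$, and multiplying through with the factorization estimate completes the boundedness of $T_g: A^{\hat{p}}_\omega \to A^{\hat{q}}_\omega$.

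For the uniform limsup bound~\eqref{eq:tqq<p1} under $p\le 2$, the entire argument is re-run with the stronger Corollary~\ref{cor:FactorizationBergman} in place of Theorem~\ref{Thm:FactorizationBergman}: applied with its indices $(p, p_1, p_2)$ identified here as $(\hat{p}, p, p_2)$, that corollary produces a factorization whose constant is $C(p,\omega)$ --- crucially \emph{independent} of $p_2$. As $\hat{p} \to p^-$ one has $p_2 \to \infty$, so both side conditions $p_2 > 2$ and $p_2 \ge 2\hat{p}$ of the corollary are satisfied for $\hat{p}$ close enough to $p$, and the final bound stays uniform. The main subtle point of the whole plan is the exponent bookkeeping in the H\"older split: one has to pair the $N(f_2)$ factor precisely with the $A^{p_2}_\omega$ norm of $f_2$ and the inner square function factor precisely with the $A^q_\omega$ norm of $T_g(f_1)$ --- and it is exactly the given index relation that makes this pairing succeed.
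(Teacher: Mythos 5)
Your proof is correct and follows essentially the same route as the paper: factor $f=f_1f_2$ via Theorem~\ref{Thm:FactorizationBergman}, use the square-function norm \eqref{normacono} together with $N(f_2)$ and H\"older driven by the identity $\frac1{\hat q}=\frac1q+\frac1{p_2}$, and then rerun the argument with Corollary~\ref{cor:FactorizationBergman} to get the uniform constant as $\hat p\to p^-$. The paper merely packages your H\"older step as the identity $T_g(f)=T_F(f_2)$ with $F=T_g(f_1)$ followed by an appeal to the estimate \eqref{58}, which is the same computation.
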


\begin{proof} Theorem~\ref{Thm:FactorizationBergman} shows that for any $f\in
A^{\hat{p}}_\om$, there exist $f_1\in A^p_\om$ and $f_2\in
A^{\frac{ \hat{p}p}{p- \hat{p}}}_\om$ such that
    \begin{equation}\label{eq:fact}
    f=f_1f_2\quad\text{and}\quad\|f_1\|_{A^p_\om}\cdot\|f_2\|_{A^{\frac{ \hat{p}p}{p- \hat{p}}}_\om}\le C_3\|f\|_{A^{ \hat{p}}_\om}
    \end{equation}
for some constant $C_3=C_3(p, \hat{p},\om)>0$. We observe that
$T_g(f)=T_F(f_2)$, where $F=T_g(f_1)$. Since $T_g:A^p_\omega\to
A^q_\omega$ is bounded,
    \begin{equation}\label{eq:fact2}
    \|F\|_{ A^q_\omega}=\|T_g(f_1)\|_{
    A^q_\omega}\le\|T_g\|_{\left(A^{p}_\om,A^{q}_\om\right)}\|f_1\|_{A^p_\om}<\infty,
    \end{equation}
and hence $F\in A^q_\omega$. Then \eqref{58} and the identity
$\frac{1}{q}=\frac{1}{ \hat{q}}-\frac{1}{\frac{ \hat{p}p}{p-
\hat{p}}}$ yield
    \begin{equation*}
    \|T_g(f)\|_{ A^{ \hat{q}}_\omega}=\|T_F(f_2)\|_{ A^{ \hat{q}}_\omega}\le C_1^{\frac{1}{ \hat{p}}-\frac{1}{p}}C_2
    \|f_2\|_{A^{\frac{ \hat{p}p}{p- \hat{p}}}_\om}\|F\|_{ A^q_\omega},
    \end{equation*}
where $C_2=C_2(q,\om)>0$. This together with \eqref{eq:fact} and
\eqref{eq:fact2} gives
    \begin{equation}\label{j11}
    \begin{split}
    \|T_g(f)\|_{A^{ \hat{q}}_\omega}&\le
    C_1^{\frac{1}{ \hat{p}}-\frac{1}{p}}C_2\|T_g\|_{\left(A^{p}_\om,A^{q}_\om\right)}\|f_1\|_{A^p_\om}
    \cdot\|f_2\|_{A^{\frac{ \hat{p}p}{p- \hat{p}}}_\om}\\
    &\le C_1^{\frac{1}{ \hat{p}}-\frac{1}{p}}C_2\,C_3\,\|T_g\|_{\left(A^{p}_\om,A^{q}_\om\right)}\|f\|_{A^{ \hat{p}}_\om}.
    \end{split}
    \end{equation}
Therefore $T_g: A^{ \hat{p}}_\omega\to A^{ \hat{q}}_\omega$ is
bounded.

To prove~\eqref{eq:tqq<p1}, let $0<p\le 2$ and let $0< \hat{p}<2$
be close enough to $p$ such that
    $$
    \min\left\{\frac{p}{p- \hat{p}}, \frac{ \hat{p}p}{p- \hat{p}}\right\}>2.
    $$
If $f\in A^{ \hat{p}}_\om$, then
Corollary~\ref{cor:FactorizationBergman} shows that
\eqref{eq:fact} holds with $C_3=C_3(p,\omega)$. Therefore the
reasoning in the previous paragraph  and \eqref{j11} give
\eqref{eq:tqq<p1}.
\end{proof}

With this result in hand, we are ready to prove
(aiv)$\Rightarrow$(biv). Let $0<q<p<\infty$ and
$\omega\in\widetilde{\I}\cup\R$, and let $T_g: A^{p}_\omega\to
A^{q}_\omega$ be bounded. Denote
$\frac{1}{s}=\frac{1}{q}-\frac{1}{p}$. By the first part of
Proposition~\ref{PropSmallIndeces}, we may assume that $p\le2$. We
may also assume, without loss of generality, that $g(0)=0$. Define
$t^*=\sup\{t:g\in A^t_\om\}$. Since the constant function $1$
belongs to $A^p_\om$, we have $g=T_g(1)\in A^q_\om$, and hence
$t^*\ge q>0$. Fix a positive integer $m$ such that
$\frac{t^*}{m}<p$. For each $t<t^*$, set $ \hat{p}=
\hat{p}(t)=\frac{t}{m}$, and define $ \hat{q}= \hat{q}(t)$ by the
equation $\frac{1}{s}=\frac{1}{ \hat{q}}-\frac{1}{ \hat{p}}$. Then
$ \hat{p}<p$,  $ \hat{q}<q$ and $T_g: A^{ \hat{p}}_\omega\to A^{
\hat{q}}_\omega$ is bounded by Proposition~\ref{PropSmallIndeces}.
Since $g^m=g^{\frac{t}{ \hat{p}}}\in A^{ \hat{p}}_\omega$, we have
$g^{m+1}=(m+1)T_g(g^m)\in A^{ \hat{q}}_\om$ and
    $$
    \|g^{m+1}\|_{ A^{ \hat{q}}_\omega}\le
    (m+1)\|T_g\|_{\left(A^{ \hat{p}}_\om,A^{ \hat{q}}_\om\right)}\|g^m\|_{A^{ \hat{p}}_\om},
    $$
that is,
    \begin{equation}\label{eq:tgq<p2}
    \|g\|^{m+1}_{ A^{(m+1) \hat{q}}_\omega}\le (m+1)\|T_g\|_{\left(A^{ \hat{p}}_\om,A^{ \hat{q}}_\om\right)}\|g\|^m_{A^{t}_\om}.
    \end{equation}

Suppose first that for some $t<t^*$, we have
    $$
    t\ge (m+1) \hat{q}=\left(\frac{t}{ \hat{p}}+1 \right) \hat{q}= \hat{q}+t\left(1-\frac{ \hat{q}}{s}
    \right).
    $$
Then $s\le t<t^*$, and the result follows from the definition of
$t^*$. It remains to consider the case in which $t<(m+1) \hat{q}$
for all $t<t^*$. By H\"older's inequality, $\|g\|^m_{A^{t}_\om}\le
C_1(m,\om)\|g\|^m_{ A^{(m+1) \hat{q}}_\omega}$. This and
\eqref{eq:tgq<p2} yield
    \begin{equation}\label{j12}
    \|g\|_{ A^{(m+1) \hat{q}}_\omega}\le
    C_2(m,\om)\|T_g\|_{\left(A^{ \hat{p}}_\om,A^{ \hat{q}}_\om\right)},
    \end{equation}
where $C_2(m,\om)=C_1(m,\om)(m+1)$. Now, as $t$ increases to
$t^*$, $ \hat{p}$ increases to $\frac{t^*}{m}$ and $ \hat{q}$
increases to $\frac{t^*s}{t^*+ms}$, so by \eqref{j12} and
\eqref{eq:tqq<p1} we deduce
    \begin{equation*}
    \begin{split}
    \|g\|_{ A^{\frac{(m+1)t^*s}{t^*+ms}}_\omega}
    &\le\limsup_{t\to t^*}\|g\|_{ A^{(m+1) \hat{q}}_\omega}
    \le C_2(m,\om)\limsup_{ \hat{p}\to
    p^-}\|T_g\|_{\left(A^{ \hat{p}}_\om,A^{ \hat{q}}_\om\right)}\\
    &\le C(p,q,m,\om)\|T_g\|_{\left(A^{p}_\om,A^{q}_\om\right)}<\infty.
    \end{split}
    \end{equation*}
The definition of $t^*$ implies $\frac{(m+1)t^*s}{t^*+ms}\le t^*$,
and so $t^*\ge s$. This finishes the proof. \hfill$\Box$

\medskip

We next characterize the symbols $g\in\H(\D)$ such that
$T_g:A^p_\om\to A^q_\om$ is compact.

\begin{theorem}\label{Thm-integration-operator-2}
Let $0<p,q<\infty$, $\om\in\I\cup\R$ and $g\in\H(\D)$.
\begin{itemize}
\item[\rm(i)] The following conditions are equivalent:
\begin{enumerate}
 \item[\rm(ai)]\,$T_g:A^p_\om\to A^p_\om$ is compact;
 \item[\rm(bi)]\,$g\in\CC^1_0(\om^\star)$.\index{$\CC^1_0(\om^\star)$}
\end{enumerate}
\item[\rm(ii)] If $0<p<q$ and $\frac1p-\frac1q<1$, then the
following conditions are equivalent:
\begin{enumerate}
\item[\rm(aii)]\,$T_g:A^p_\om\to A^q_\om$ is compact;
\item[\rm(bii)]\, $\displaystyle
    M_\infty(r,g')=\op\left(\frac{(\omega^\star(r))^{\frac1p-\frac1q}}{1-r}\right),\quad
    r\to1^-;
    $
\item[\rm(cii)]
$g\in\CC_0^{2\left(\frac{1}{p}-\frac1q\right)+1}(\omega^\star).$
\end{enumerate}
\item[\rm(iii)] If $0<q<p<\infty$ and
$\om\in\widetilde{\I}\cup\R$, then the following conditions are
equivalent:
\begin{enumerate}
\item[\rm(aiii)]\,$T_g:A^p_\om\to A^q_\om$ is compact;
\item[\rm(biii)]\,$T_g:A^p_\om\to A^q_\om$ is bounded;
\item[\rm(ciii)] $g\in A^s_\omega$, where
$\frac{1}{s}=\frac1q-\frac1p$.
\end{enumerate}
\end{itemize}
\end{theorem}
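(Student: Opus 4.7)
The plan is to mirror the proof of Theorem~\ref{Thm-integration-operator-1} throughout, systematically replacing each supremum (big-O) condition by its vanishing (little-oh) counterpart and using Theorem~\ref{th:cm}(ii) wherever part (i) of the same theorem was used. The standard reformulation of compactness provided by Lemma~\ref{le:compacidadtg} reduces every statement to estimating $\|T_g(f_n)\|_{A^q_\omega}$ along bounded sequences $\{f_n\}\subset A^p_\omega$ that converge to $0$ uniformly on compact subsets of $\D$.

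For the necessity direction in (i) and (ii), the test functions $f_{a,p}$ of \eqref{testfunctions} form a bounded family in $A^p_\omega$ tending to zero uniformly on compacta as $|a|\to 1^-$; Lemma~\ref{le:compacidadtg} then forces $\|T_g(f_{a,p})\|_{A^q_\omega}\to 0$, and the pointwise derivative estimate already carried out in Lemma~\ref{le:tgminfty}(ii) upgrades to the little-oh bound (bii), equivalent to (cii) by Proposition~\ref{PropRadialq>p}(ii). For (i) I will instead feed the Carleson-box test functions $F_{a,p}$ of Lemma~\ref{testfunctions1} into the area-function norm \eqref{normacono}; combined with the compactness half of Theorem~\ref{th:cm}(ii), this identifies $|g'|^2\omega^\star\,dA$ as a vanishing $p$-Carleson measure for $A^p_\omega$, i.e.\ $g\in\CC^1_0(\omega^\star)$. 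Part (iii) is immediate: compactness trivially entails boundedness, and (ciii) then follows from Theorem~\ref{Thm-integration-operator-1}(iv).

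For the sufficiency in (i)--(ii), fix $\varepsilon>0$ and use the vanishing Carleson hypothesis to choose $r_0\in(0,1)$ so that the Carleson norm of $|g'|^2\omega^\star\,dA$ restricted to $\D\setminus D(0,r_0)$ lies below $\varepsilon$. Writing
\[
\|T_g(f_n)\|_{A^q_\omega}^q\asymp\int_\D\Bigl(\int_{\Gamma(u)}|f_n(z)|^2|g'(z)|^2\,dA(z)\Bigr)^{q/2}\omega(u)\,dA(u)
\]
via \eqref{normacono}, I split the outer integral at $|u|=r_0$. Since $\Gamma(u)\subset\overline{D(0,|u|)}\subset\overline{D(0,r_0)}$ when $|u|<r_0$, and $f_n\to 0$ uniformly on this compact set, the integral over $\{|u|<r_0\}$ vanishes as $n\to\infty$ for each fixed $r_0$. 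The tail over $\{|u|\ge r_0\}$ is then handled by the very same estimates that established sufficiency in Theorem~\ref{Thm-integration-operator-1}---Lemma~\ref{le:funcionmaximalangular} in the regime where the relevant exponent lies below $2$, and the duality/maximal-function argument together with Corollary~\ref{co:maxbou} otherwise---producing a bound of shape $\varepsilon\cdot\|f_n\|_{A^p_\omega}^q$. Letting first $n\to\infty$ and then $\varepsilon\to 0$ finishes (i)--(ii). For (iii), I approximate $g$ by its dilates $g_r(z)=g(rz)$: since $\omega$ is radial, polynomials are dense in $A^s_\omega$ and \eqref{densdilat} gives $\|g-g_r\|_{A^s_\omega}\to 0$, hence $\|T_g-T_{g_r}\|_{(A^p_\omega,A^q_\omega)}\to 0$ by Theorem~\ref{Thm-integration-operator-1}(iv); each $T_{g_r}$ is compact because $g_r'\in H^\infty$ places $g_r$ in $A^{s'}_\omega$ for all $s'<\infty$, so $T_{g_r}$ is bounded into some $A^{q'}_\omega$ with $q<q'<p$, which via Vitali's theorem upgrades the pointwise convergence $T_{g_r}(f_n)\to 0$ to $A^q_\omega$-convergence. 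The main obstacle is the bookkeeping in the sufficiency of (i) for $p\neq 2$: because of the non-existence of a Littlewood--Paley formula (Proposition~\ref{pr:NOL-P}), the cases $p<2$ and $p>2$ demand genuinely different splittings, each inherited from the corresponding (and already delicate) boundedness argument.
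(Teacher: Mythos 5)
Your overall plan---mirror the proof of Theorem~\ref{Thm-integration-operator-1}, replace each supremum condition by its vanishing counterpart, and run everything through Lemma~\ref{le:compacidadtg} and the compactness half of Theorem~\ref{th:cm}---is exactly the paper's strategy, and your treatment of part (ii) and of the sufficiency in (i)--(ii) matches it. There is, however, one genuine gap: the necessity (ai)$\Rightarrow$(bi) in part (i) when $0<p<2$. You propose to feed $F_{a,p}$ into the area-function norm \eqref{normacono} and read off the vanishing Carleson condition, but for $p<2$ the exponent $p/2<1$ sits outside the inner integral and $t\mapsto t^{p/2}$ is concave, so Fubini only yields an \emph{upper} bound for $\int_\D\bigl(\int_{S(a)\cap\Gamma(u)}|g'|^2|F_{a,p}|^2\,dA\bigr)^{p/2}\om(u)\,dA(u)$ in terms of $\int_{S(a)}|g'|^2\om^\star\,dA$, not the lower bound you need. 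This is precisely why the paper's proof of Theorem~\ref{Thm-integration-operator-1}(i) for $p<2$ goes through the duality device with $S_g(\chi_{S(a)})$, the exponents $\b/\a=p/2$, Corollary~\ref{co:maxbou} and the uniform dilation estimate \eqref{eq:dilatadas}; the compactness version repeats that whole block with the normalized functions $f_{a,p}$ of \eqref{testfunctions}, so that the factor $\|T_g(f_{a,p})\|_{A^p_\om}^p\to0$ can be isolated at the end. Your $F_{a,p}$ shortcut covers only $p\ge2$. A minor further point: in the sufficiency argument the $\e$-split must be performed in the inner variable $z$ (truncate the measure $|g'(z)|^2\om^\star(z)\,dA(z)$ to $\{|z|\ge r_0\}$ as in \eqref{kmur}), not merely in the outer variable $u$, since $\Gamma(u)$ meets $D(0,r_0)$ for every $u$ close to the boundary; the fix is routine.

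For part (iii) you take a genuinely different route. The paper argues directly: it chooses $r_0$ so that $\int_{r_0\le|z|<1}\bigl(\int_{\Gamma(u)}|g'(z)|^2\,dA(z)\bigr)^{\frac{pq}{2(p-q)}}\om(u)\,dA(u)$ is small and reruns the H\"older/non-tangential-maximal-function estimate \eqref{58} on a bounded sequence tending to zero on compacta. You instead approximate $g$ by its dilates in $A^s_\om$, use the operator-norm bound $\|T_{g-g_r}\|_{(A^p_\om,A^q_\om)}\lesssim\|g-g_r\|_{A^s_\om}$ implicit in \eqref{58}, and obtain compactness of each $T_{g_r}$ from boundedness into some $A^{q'}_\om$ with $q<q'<p$ together with uniform integrability (Vitali). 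Both arguments are correct; yours exploits the norm-closedness of the compact operators and is arguably cleaner, at the cost of the auxiliary exponent $q'$, while the paper's is self-contained and reuses only the estimate already displayed.
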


\begin{proof}
(ii) Appropriate modifications in the proofs of
Theorem~\ref{th:cm}(ii) and
Theorem~\ref{Thm-integration-operator-1}(ii) together with
Lemma~\ref{le:tgminfty}(ii), Lemma~\ref{le:compacidadtg} and
Proposition~\ref{PropRadialq>p}(ii) give the assertion.

(i) By using Lemma~\ref{le:compacidadtg}, the proof of
Theorem~\ref{th:cm}(ii) and arguing as in the proof of
Theorem~\ref{Thm-integration-operator-1}(i), with appropriate
modifications, we obtain (bi)$\Rightarrow$(ai) for all
$0<p<\infty$, and the converse implication (ai)$\Rightarrow$(bi)
for $p\ge2$. To prove the remaining case, let $0<p<2$ and assume
that $T_g:A^p_\omega\to A^p_\omega$ is compact. Recall that the
functions $f_{a,p}$\index{$f_{a,p}$} defined
in~\eqref{testfunctions} satisfy $\|f_{a,p}\|_{A^p_\omega}\asymp1$
and $f_{a,p}\to 0$, as $|a|\to1^-$, uniformly in compact subsets
of $\D$. Therefore $\|T_g(f_{a,p})\|_{A^p_\omega}\to 0$, as
$|a|\to 1^-$, by Lemma~\ref{le:compacidadtg}. Now, let
$1<\a,\b<\infty$ such that $\b/\a=p/2<1$. Arguing as in
\eqref{eq:tgb1} we deduce
    \begin{equation*}\index{$f_{a,p}$}
    \frac{1}{(\omega(S(a)))^\frac2p}\int_{S(a)}|g'(z)|^2\omega^\star(z)\,dA(z)
    \lesssim\|T_g(f_{a,p})\|_{A^p_\omega}^\frac{p}{\b}
    \|S_g(\chi_{S(a)}f_{a,p})\|_{L_\omega^{\frac{\b'}{\a'}}}^\frac{1}{\a'}
    \end{equation*}
for all $a\in\D$. Following the reasoning in the proof of
Theorem~\ref{Thm-integration-operator-1}(i) further, we obtain
    \begin{equation*}\index{$f_{a,p}$}
    \frac{\int_{S(a)}|g'(z)|^2\omega^\star(z)\,dA(z)}{(\omega(S(a)))^\frac{2}{p}}
    \lesssim\|T_g(f_{a,p})\|_{A^p_\omega}^\frac{p}{\b}
    \frac{\left(\int_{S(a)}|g'(z)|^2\omega^\star(z)\,dA(z)\right)^{\frac{\a'}{\b'}\cdot\frac{1}{\a'}}}
    {(\omega(S(a)))^{\frac{2}{p}\cdot\frac{1}{\a'}}},
    \end{equation*}
which is equivalent to
    $$
    \frac{\int_{S(a)}|g'(z)|^2\omega^\star(z)\,dA(z)}{\omega(S(a))}
    \lesssim\|T_g(f_{a,p})\|_{A^p_\omega}^p.
    $$\index{$f_{a,p}$}
Thus Theorem~\ref{th:cm} implies
$g\in\CC^1_0(\omega^\star)$.\index{$\CC^1_0(\om^\star)$}

(iii) The equivalence (biii)$\Leftrightarrow$(ciii) is
Theorem~\ref{Thm-integration-operator-1}(iv), and
 (aiii)$\Rightarrow$(biii) is obvious. To prove the remaining implication
 (ciii)$\Rightarrow$(aiii), let $g\in A^s_{\omega}$, where $s=\frac{pq}{p-q}$, and let  $\{f_n\}\subset A^p_\om$ such that
$\sup_n\|f_n\|_{A^p_\om}<\infty$ and $\lim_{n\to\infty} f_n(z)=0$
uniformly on compact subsets of $\D$. Let $\e>0$. By using
\eqref{normacono} in Theorem~\ref{ThmLittlewood-Paley}, we can
find $r_0\in(0,1)$ such that
    $$
    \int_{r_0\le |z|<1}\left(\int_{\Gamma(u)}|g'(z)|^2\,dA(z)\right)^\frac{pq}{2(p-q)}\omega(u)\,dA(u)<\e^{\frac{p}{p-q}}.
    $$
Now, take $n_0\in\N$ such that $\sup_{n\ge n_0}|f_n(z)|<\e^{1/q}$
for all $z\in D(0,r_0)$. Then Theorem~\ref{ThmLittlewood-Paley},
H\"older's inequality and Lemma~\ref{le:funcionmaximalangular}
yield
    \begin{equation*}
    \begin{split}
    \|T_g(f_n)\|_{A^q_\omega}^q&\asymp\int_\D\left(\int_{\Gamma(u)}|f_n(z)|^2|g'(z)|^2\,dA(z)\right)^\frac{q}{2}\omega(u)\,dA(u)\\
    &\le\e\int_{|z|<r_0}\left(\int_{\Gamma(u)}|g'(z)|^2\,dA(z)\right)^\frac{q}{2}\omega(u)\,dA(u)\\
    &\quad+\int_{r_0\le |z|<1}\left(\int_{\Gamma(u)}|f_n(z)|^2|g'(z)|^2\,dA(z)\right)^\frac{q}{2}\omega(u)\,dA(u)\\
    &\lesssim\e\|g\|^q_{A^q_\om}
    +\int_{r_0\le |z|<1}(N(f_n)(u))^q\left(\int_{\Gamma(u)}|g'(z)|^2\,dA(z)\right)^\frac{q}{2}\omega(u)\,dA(u)\\
    &\le\e\|g\|^q_{A^q_\om}+
    \e\left(\int_{r_0\le |z|<1}(N(f_n)(u))^p\omega(u)\,dA(u)\right)^\frac{q}{p}\\
    &\lesssim\e\left(\|g\|^q_{A^q_\om}+\left(\sup_n\|f_n\|_{A^p_\om}\right)^q\right)\lesssim\e
    \end{split}
    \end{equation*}
for all $n\ge n_0$. In the last step we used the fact $g\in
A^q_\om$, which follows by
Theorem~\ref{Thm-integration-operator-1}(iv). Therefore,
$\lim_{n\to\infty}\|T_g(f_n)\|_{A^q_\omega}=0$, and so
$T_g:A^p_\om\to A^q_\om$ is compact by
Lemma~\ref{le:compacidadtg}. This finishes the proof of
Theorem~\ref{Thm-integration-operator-2}.
\end{proof}

It is worth noticing that
Theorem~\ref{Thm-integration-operator-1}(iii) is a consequence of
the fact that condition \eqref{Eq:Radialq>p} implies $g'\equiv0$
for all $\alpha\in [1,\infty)$. If $\om$ is regular, then this
implication remains valid also for some $\alpha<1$ by~\eqref{64}.
However, no such conclusion can be made if $\om$ is rapidly
increasing by the observation (ii) to Lemma~\ref{le:condinte}.

If $\om$ is a regular weight, a description of those $g\in\H(\D)$
such that $T_g:A^p_\om\to A^q_\om$ is bounded follows by
\cite[Theorem~4.1]{AlCo} and Lemma~\ref{le:RAp}(i), see also
\cite{AS}. The reasoning in the proof of
Theorem~\ref{Thm-integration-operator-1} gives a different way to
establish these results, in particular, when $q\le p$. In the
proofs found in the existing literature, the correct necessary
conditions for $\mu$ to be a $q$-Carleson measure for $A^p_\om$,
with $q<p$, are achieved by using Luecking's approach based on
Kinchine's inequality~\cite{Lu93}. In contrast to this, the
corresponding part of the proof of
Theorem~\ref{Thm-integration-operator-1}(iv) relies on an argument
inherited from \cite{AC} and an appropriate estimate for the
constant in the reverse type H\"older's inequality, which is
obtained through factorization of functions in $A^p_\om$ in
Corollary~\ref{cor:FactorizationBergman}. If $q=p$ and $\om$ is
rapidly increasing, then the proof of
Theorem~\ref{Thm-integration-operator-1} is much more involved
than in the case when $\om$ is regular. This is due to the fact
that $\CC^1(\om^\star)$\index{$\CC^1_0(\om^\star)$} is a proper
subspace of the Bloch space~$\B$\index{$\B$} by
Proposition~\ref{pr:blochcpp}, it is not necessarily conformally
invariant and it can not be characterized by a simple growth
condition on $M_\infty(r,g')$.

\section{Integral operator $T_g$ on the Hardy space $H^p$}\label{Hardy}

The question of when $T_g:\,H^p\to H^q$ is bounded was completely
solved by Aleman, Cima and Siskakis~\cite{AC,AS0}, see also
\cite{AHpreview,Cohn2,Pom,Sisreview}. We next quote their result
for further reference.

\begin{lettertheorem}\label{Thm-integration-operator-3}
Let $0<p,q<\infty$ and $g\in\H(\D)$.
\begin{itemize}
\item[\rm(i)] The following conditions are equivalent:
    \begin{enumerate}
    \item[\rm(ai)] $T_g:H^p\to H^p$ is bounded;
    \item[\rm(bi)] $g\in\BMOA$.\index{$\BMOA$}
    \end{enumerate}
\item[\rm(ii)] If $0<p<q$ and $\frac1p-\frac1q\le 1$, then the
following conditions are equivalent:
     \begin{enumerate}
    \item[\rm(aii)] $T_g:H^p\to H^q$ is bounded;
    \item[\rm(bii)] $\displaystyle M_\infty(r,g')\lesssim\left(\frac{1}{1-r}\right)^{1-\left(\frac1p-\frac1q\right)},\quad
    r\to1^-$;
     \item[\rm(cii)] $g\in\Lambda(\frac1p-\frac1q)$;
      \item[\rm(dii)] The measure $d\mu_g(z)=|g'(z)|^2(1-|z|^2)\,dA(z)$ satisfies
        $$
        \sup_{I\subset \T}\frac{\mu_g\left(S(I)\right)}{|I|^{2\left(\frac1p-\frac1q\right)+1}}<\infty.
        $$
      \end{enumerate}
     \item[\rm(iii)] If $\frac1p-\frac1q>1$, then $T_g:H^p\to H^q$ is bounded if and only if $g$ is constant.
\item[\rm(iv)] If $0<q<p<\infty$, then the following conditions
are equivalent:
    \begin{enumerate}
    \item[\rm(aiv)] $T_g:H^p\to H^q$ is bounded;
    \item[\rm(biv)] $g\in H^s$, where $\frac{1}{s}=\frac1q-\frac1p$.
    \end{enumerate}

\end{itemize}
\end{lettertheorem}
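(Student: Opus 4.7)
The plan is to mirror the Bergman-space arguments carried out in Theorems~\ref{Thm-integration-operator-1} and~\ref{Thm-integration-operator-2}, relying on the $H^p$ Fefferman-Stein identity \eqref{eq:FC} with $n=1$ and the classical boundedness $\|f^\star\|_{L^p(\T)}\asymp\|f\|_{H^p}$, so that $(T_g(f))'=f\,g'$ gives
\[
\|T_g(f)\|_{H^p}^p\asymp \int_\T\left(\int_{\Gamma(\zeta)}|f(z)|^2|g'(z)|^2\,dA(z)\right)^{p/2}|d\zeta|.
\]
The anchor is the case $p=q=2$, where the Littlewood-Paley identity reduces boundedness of $T_g$ on $H^2$ to the condition that $|g'(z)|^2(1-|z|)\,dA(z)$ be a classical Carleson measure for $H^2$, equivalently $g\in\BMOA$. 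From this base case the rest of (i) follows by transferring the two-sided strategy of the proof of Theorem~\ref{Thm-integration-operator-1}(i): for $p>2$ apply H\"older with factor $(f^\star)^{p-2}$ inside the area integral to reduce to the $p=2$ case, and for $p<2$ split via H\"older the other way around, producing $\|f^\star\|_{L^p(\T)}^{\,p(2-q)/2}$ times an $H^2$-Carleson piece. The converse (ai)$\Rightarrow$(bi) is obtained by testing on $f_a(z)=(1-|a|^2)^{\gamma/p}/(1-\bar a z)^{(\gamma+1)/p}$ as in Lemma~\ref{testfunctions1}; these are uniformly bounded in $H^p$ with $|f_a|\asymp(1-|a|)^{-1/p}$ on $S(a)$, so Fubini yields the Carleson square condition characterizing $\BMOA$.

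For (ii) the necessity of (bii) is a direct analogue of Lemma~\ref{le:tgminfty}: applying $T_g$ to the normalized test functions above and using the elementary $M_\infty(r,h)\lesssim M_q(\rho,h)(1-r)^{-1/q}$ together with $M_q(r,h')\lesssim M_q((1+r)/2,h)/(1-r)$ gives the pointwise bound on $g'$. The equivalence (bii)$\Leftrightarrow$(cii) is the classical Hardy-Littlewood description of the Lipschitz class $\Lambda(1/p-1/q)$ via the maximum modulus of the derivative, and (bii)$\Leftrightarrow$(dii) follows by a direct calculation: integrating $|g'(z)|^2(1-|z|)$ over $S(I)$ using the radial bound produces $|I|^{2(1/p-1/q)+1}$, while conversely subharmonicity of $|g'|^2$ on a disc $D(z,\tfrac12(1-|z|))\subset S(z^\star)$ recovers the pointwise estimate from the Carleson condition. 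Sufficiency for boundedness is then proved by the same duality/H\"older trick as in Theorem~\ref{Thm-integration-operator-1}(ii), splitting into $q\ge 2$ (dualize $L^{q/2}(\T)$ against $L^{q/(q-2)}(\T)$ and use the classical Hardy-Littlewood maximal function in place of $M_\omega$) and $q<2$ (H\"older with $f^\star$). Part (iii) is immediate: if $1/p-1/q>1$ then (bii) forces $M_\infty(r,g')\to 0$ as $r\to 1^-$ with a positive power of $(1-r)$, hence $g'\equiv 0$.

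Part (iv) has two directions. For (biv)$\Rightarrow$(aiv), imitate \eqref{58}: by \eqref{eq:FC}, H\"older with exponents $p/q$ and $p/(p-q)$, the bound $\|f^\star\|_{L^p(\T)}\asymp\|f\|_{H^p}$ and the identity $(q/2)\cdot p/(p-q)=s/2$, one gets
\[
\|T_g(f)\|_{H^q}^q\lesssim \|f\|_{H^p}^q\left(\int_\T\left(\int_{\Gamma(\zeta)}|g'(z)|^2\,dA(z)\right)^{s/2}|d\zeta|\right)^{q/s}\asymp\|f\|_{H^p}^q\,\|g\|_{H^s}^q.
\]
The converse is the delicate direction and follows the bootstrap of Proposition~\ref{PropSmallIndeces}: if $T_g:H^p\to H^q$ is bounded and $\hat p<p$ with $1/\hat q-1/\hat p=1/q-1/p$, then any $f\in H^{\hat p}$ admits an inner-outer factorization $f=f_1f_2$ with $f_1\in H^p$, $f_2\in H^{\hat p p/(p-\hat p)}$ and $\|f_1\|_{H^p}\|f_2\|_{H^{\hat p p/(p-\hat p)}}\lesssim\|f\|_{H^{\hat p}}$; since $T_g(f)=T_{T_g(f_1)}(f_2)$, the forward implication applied to $F=T_g(f_1)\in H^q$ shows that $T_g$ also maps $H^{\hat p}\to H^{\hat q}$ boundedly. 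Iterating this on powers $g^m$ through the argument after \eqref{eq:tgq<p2}, using $g=T_g(1)\in H^q$ as the seed, one concludes $g\in H^s$.

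The genuinely difficult step is Part (iv), converse, just as in the Bergman case, and it is the only place where a factorization result is needed; however for $H^p$ the inner-outer decomposition gives the factorization with sharp norm equalities at no cost, so the Bergman-space tightrope walk (Corollary~\ref{cor:FactorizationBergman}) simplifies considerably. The only other point requiring care is the $p=q$, $p\neq 2$ case of (i): the necessity of (bi) requires the same dilation/limit argument as in the Bergman proof, i.e.\ first pass to $g_r(z)=g(rz)$, establish uniform estimates independent of $r$, and invoke Fatou's lemma to return to $g$.
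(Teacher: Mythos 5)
Your proposal is correct and follows essentially the same route as the paper's Section~\ref{Hardy}: transfer the Bergman machinery by replacing \eqref{normacono} with \eqref{eq:FC}, $N(f)$ with $f^\star$, $M_\om$ with the classical Hardy--Littlewood maximal function, and $\om^\star(z)$ with $1-|z|$; prove (ai)$\Rightarrow$(bi) for $0<p<2$ via the $\a,\b$-H\"older splitting with the dilations $g_r$ and Fatou's lemma; and obtain the converse of (iv) from the inner--outer factorization (in place of Corollary~\ref{cor:FactorizationBergman}) followed by the bootstrap on powers of $g$ exactly as in Proposition~\ref{PropSmallIndeces}. One small transposition worth fixing: for $p>2$ the H\"older step with the factor $(f^\star)^{p-2}$ is the \emph{necessity} argument (it shows $|g'(z)|^2(1-|z|^2)\,dA(z)$ is a $p$-Carleson measure for $H^p$, whence $g\in\BMOA$), whereas the \emph{sufficiency} (bi)$\Rightarrow$(ai) for $p>2$ must go through the duality of $L^{p/2}(\T)$ against $L^{p/(p-2)}(\T)$ and the maximal function --- the argument you correctly describe for part (ii) with $q\ge2$ --- since the crude bound $\int_{\Gamma(\z)}|f|^2|g'|^2\,dA\le (f^\star(\z))^2\int_{\Gamma(\z)}|g'|^2\,dA$ is useless here ($\int_{\Gamma(\z)}|g'|^2\,dA$ is already infinite for $g(z)=\log\frac{1}{1-z}\in\BMOA$).
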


For $0<\alpha\le 1$, the \emph{Lipschitz space}\index{Lipschitz
space} $\Lambda(\alpha)$\index{$\Lambda(\alpha)$} consists of
those $g\in\H(\D)$, having a non-tangential limit $g(e^{i\theta})$
almost everywhere, such that
    $$
    \sup_{\t\in[0,2\pi],\,0<t<1}\frac{|g(e^{i(\theta+t)})-g(e^{i\theta})|}{t^\alpha}<\infty.
    $$
The proof of Theorem~\ref{Thm-integration-operator-3}
in~\cite{AC,AS0} uses several well-known properties of
$\BMOA$\index{$\BMOA$} and $H^p$ such as the conformal invariance
of $\BMOA$, Fefferman's duality identity
$(H^1)^\star=\BMOA$~\cite{Ba86(2),GiBMO}, a Riesz-Thorin type
interpolation theorem for $H^p$~\cite{Zygmund59,Pabook}, and the
inner-outer factorization of $H^p$-functions. However, the proof
of Theorem~\ref{Thm-integration-operator-1} does not rely on such
properties for $\CC^1(\om^\star)$\index{$\CC^1(\om^\star)$} and
$A^p_\om$. In fact, we will see in Chapter~\ref{Sec:SpaceCCpp}
that the space $\CC^1(\om^\star)$\index{$\CC^1(\om^\star)$} is not
necessarily conformally invariant if $\om$ is rapidly increasing.
The objective of this section is to offer an alternative proof for
some cases of Theorem~\ref{Thm-integration-operator-3} by using
the techniques developed on the way to the proof of
Theorem~\ref{Thm-integration-operator-1}. We will omit analogous
steps and we will deepen only in the cases when the proof
significantly differs from the original one.

We begin with recalling some definitions and known results needed.
For $\beta>0$, a positive Borel measure on $\D$ is a
\emph{$\beta$-classical Carleson measure}\index{$\beta$-classical
Carleson measure} if
    \begin{equation*}
    \sup_{I\subset\T}\frac{\mu\left(S(I)\right)}{|I|^\beta}<\infty.
    \end{equation*}
If $\beta\ge 1$ and $0<p<\infty$, then $H^p\subset
L^{p\beta}(\mu)$ if and only if $\mu$ is a $\beta$-classical
Carleson measure~\cite[Section~9.5]{Duren1970}. For $0<\a<\infty$
and a $2\pi$-periodic function $\vp(e^{i\t})\in L^1(\T)$, the
\emph{Hardy-Littlewood maximal function} is defined
by\index{Hardy-Littlewood maximal function}
    $$
    M(\vp)(z)=\sup_{I:\,z\in S(I)}\frac{1}{|I|}\int_{I}|\vp(\z)|\,\frac{|d\z|}{2\pi},\quad
    z\in\D.
    $$

The following result can be obtained by carefully observing either
\cite[Section~9.5]{Duren1970} or the proof of Theorem~\ref{th:cm}.

\begin{corollary}\label{co:maxbouhp}
Let $0<p\le q<\infty$ and $0<\alpha<\infty$ such that $p\alpha>1$.
Let $\mu$ be a positive Borel measure on $\D$. Then
$[M((\cdot)^{\frac{1}{\alpha}})]^{\alpha}:L^p(\T)\to L^q(\mu)$ is
bounded if and only if $\mu$ is a $\frac{q}{p}$-classical Carleson
measure. Moreover,
    $$
    \|[M((\cdot)^{\frac{1}{\alpha}})]^{\alpha}\|^q\asymp\sup_{I\subset\T}\frac{\mu\left(S(I) \right)}{|I|^\frac{q}p}.
    $$
\end{corollary}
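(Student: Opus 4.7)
The plan is to mirror the proof of Theorem~\ref{th:cm}(i) and Corollary~\ref{co:maxbou}, with the weighted ingredient $\om(\xi)\,dA(\xi)$ on $S(I)$ replaced by the normalized arc-length $|d\zeta|/(2\pi)$ on $I\subset\T$, and with $\om(S(I))$ replaced by $|I|$. The role of the weighted maximal function $M_\om$ is now played by the classical Hardy-Littlewood maximal function $M$, and the role of Lemma~\ref{le:suf1} by the trivial fact that the supremum defining $M(\varphi)$ stays large on Carleson boxes attached to large arcs of $I$.

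For necessity, I would test boundedness against the indicator $\varphi=\chi_I$ for an arbitrary arc $I\subset\T$. Since $\chi_I^{1/\alpha}=\chi_I$ and $M(\chi_I)(z)\ge (2\pi)^{-1}$ whenever $z\in S(I)$ (take the arc $I$ itself in the supremum), one gets $[M(\chi_I^{1/\alpha})]^{\alpha}(z)\gtrsim 1$ on $S(I)$, so that
\begin{equation*}
\mu(S(I))\lesssim \int_\D[M(\chi_I^{1/\alpha})(z)]^{q\alpha}\,d\mu(z)\le \|[M((\cdot)^{1/\alpha})]^\alpha\|^q\,\|\chi_I\|_{L^p(\T)}^{q}=C|I|^{q/p},
\end{equation*}
which gives the one-sided estimate $\sup_I\mu(S(I))/|I|^{q/p}\lesssim \|[M((\cdot)^{1/\alpha})]^\alpha\|^q$.

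For sufficiency, assume the Carleson condition. Following the proof of Theorem~\ref{th:cm}(i) verbatim with $\om$ replaced by the normalized arc-length, I would first prove the $L^1(\T)$-weak type inequality
\begin{equation*}
\mu\bigl(\{z\in\D:M(\varphi)(z)>s\}\bigr)\le K\,s^{-q/p}\,\|\varphi\|_{L^1(\T)}^{q/p},\quad \varphi\in L^1(\T),\;s>0,
\end{equation*}
with $K=K(p,q)\sup_I\mu(S(I))/|I|^{q/p}$. The proof is the same covering-lemma argument as before: replace the sets $A_s^\e,B_s^\e$ by their unweighted analogues obtained by substituting $|I_z|$ for $\om(S(z))$, pick finitely many points $z_n$ with disjoint arcs $I_{z_n}$ whose five-fold enlargements $J_{z_n}$ cover $B_s^\e$, and use the Carleson hypothesis to bound $\mu(S(J_{z_n}))\lesssim |J_{z_n}|^{q/p}\asymp|I_{z_n}|^{q/p}$ combined with the subadditivity $(\sum a_n)^{q/p}\ge\sum a_n^{q/p}$ for $q\ge p$.

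Once this weak-type bound is in hand, for $\alpha>1/p$ and $f\in L^p(\T)$ I would split $|f|^{1/\alpha}=\psi_s+\chi_s$ at level $s/(2K)$ exactly as in the proof of Theorem~\ref{th:cm}(i), apply the weak-type estimate to $\psi_s$, and use the layer-cake formula together with Minkowski's integral inequality (or Fubini when $q=p$) to get
\begin{equation*}
\int_\D [M(|f|^{1/\alpha})(z)]^{q\alpha}\,d\mu(z)\lesssim \Bigl(\sup_I\frac{\mu(S(I))}{|I|^{q/p}}\Bigr)\|f\|_{L^p(\T)}^{q},
\end{equation*}
which yields both the boundedness and the two-sided norm estimate. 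Since every step of the sufficiency argument in the proof of Theorem~\ref{th:cm}(i) is purely formal in the measure-pair $(\om\,dA,\mu)$, the only point that genuinely needs attention is the bookkeeping of constants in the covering argument; there is no real obstacle, which is precisely why the corollary is stated as following from the observation of the earlier proof.
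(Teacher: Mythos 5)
Your proposal is correct and is essentially the proof the paper has in mind: the paper explicitly states that Corollary~\ref{co:maxbouhp} "can be obtained by carefully observing either \cite[Section~9.5]{Duren1970} or the proof of Theorem~\ref{th:cm}," and your argument is precisely that transcription — indicator test functions for necessity, and the covering-lemma weak-type estimate followed by the truncation/layer-cake/Minkowski step for sufficiency, with $(\om\,dA,\om(S(I)))$ replaced by $(|d\z|/2\pi,|I|)$ throughout. No gaps.
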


\subsection*{Proof of Theorem~\ref{Thm-integration-operator-3}} We
will prove in detail only the implication (ai)$\Rightarrow$(bi)
and discuss (aiv)$\Rightarrow$(biv) because their proofs are
significantly different from the original ones, and because some
adjustments to the arguments in the proof of
Theorem~\ref{Thm-integration-operator-1} should be made. After
these proofs, we will indicate how the reader can modify the proof
of Theorem~\ref{Thm-integration-operator-1} to obtain the
remaining implications.

(ai)$\Rightarrow$(bi). If $p=2$, then the equivalence
(ai)$\Leftrightarrow$(bi) follows by using the fact that
$g\in\BMOA$\index{$\BMOA$} if and only if $|g'(z)|^2(1-|z|^2)$ is
a $1$-classical Carleson measure~\cite{Garnett1981,GiBMO}.

Let $p>2$, and assume that $T_g: H^p\to H^p$ is bounded. Then
\eqref{eq:FC}, with $n=1$, yields
    \begin{equation*}
    \|T_g(f)\|_{H^p}^p
    \asymp\int_\T\left(\int_{\Gamma(\z)}|f(z)|^2|g'(z)|^2\,dA(z)\right)^\frac{p}2\,|d\z|
    \lesssim\|f\|_{H^p}^p
    \end{equation*}
for all $f\in {H^p}$. This together with Fubini's theorem,
H\"older's inequality and \eqref{Eq:Maximal} give
    \begin{eqnarray*}\index{$f^\star$}
    &&\int_{\D}|f(z)|^p|g'(z)|^2(1-|z|^2)\,dA(z)\asymp\int_{\D}|f(z)|^p|g'(z)|^2||I_z|\,dA(z)\\
    &&\asymp\int_\T\int_{\Gamma(\z)}|f(z)|^p|g'(z)|^2dA(z)\,|d\z|\\
    &&\le\int_\T \left(f^\star(\z)\right)^{p-2}\left(\int_{\Gamma(\z)}|f(z)|^2|g'(z)|^2\,dA(z)\right)\,|d\z|\\
    &&\le\left(\int_\T
    \left(f^\star(\z)\right)^{p}\,|d\z|\right)^\frac{p-2}{p}\left(\int_\D\left(\int_{\Gamma(\z)}|f(z)|^2|g'(z)|^2dA(z)\right)^\frac{p}{2}\,|d\z|\right)^\frac{2}{p}
    \lesssim\|f\|_{H^p}^p,
    \end{eqnarray*}
and thus $|g'(z)|^2(1-|z|^2)dA(z)$ is a $p$-Carleson measure for
$H^p$. Therefore $g\in\BMOA$\index{$\BMOA$} by
\cite[Theorem~9.4]{Duren1970}.

Let now $0<p<2$, and assume that $T_g:H^p\to H^p$ is bounded.
First, we will show that $g$ is a Bloch function and
    \begin{equation}\label{eq:tgblochhp}
    \|g\|_{\mathcal{B}}\lesssim\|T_g\|_{(H^p,H^p)}.
    \end{equation}
For $a\in\D$, let $D_a=\{z:~|z-a|<\frac{1-|a|}{2}\}$ and consider
the functions
$F_{a,p}(z)=\left(\frac{1-|a|^2}{1-\overline{a}z}\right)^{\frac{\gamma+1}{p}}$,\index{$F_{a,p}$}
where $\gamma>0$ is fixed. Clearly,
$\|F_{a,p}\|_{H^p}^p\asymp(1-|a|)$ and $|F_{a,p}(z)|\asymp1$ for
all $z\in D_a$. Moreover, there exists $r_0\in(0,1)$ such that
    $$
    \left|\{\z\in \T:~ D_a\subset\Gamma(\z)\}\right|\asymp(1-|a|),\quad |a|\ge
    r_0.
    $$\index{$F_{a,p}$}
Therefore
    \begin{equation*}\index{$F_{a,p}$}
    \begin{split}
    (1-|a|)\left(\int_{D_a}\left|g'(z)\right|^2dA(z)\right)^{p/2}
    &\lesssim\int_\T\left(\int_{\Gamma(\zeta)}|F_{a,p}(z)|^2|g'(z)|^2\,dA(z)\right)^{p/2}|d\z|\\
    &\lesssim\|T_g\|^p_{(H^p,H^p)} \|F_{a,p}\|_{H^p}^p\\
    &\asymp\|T_g\|^p_{(H^p,H^p)}(1-|a|),\quad |a|\ge r_0,
    \end{split}
    \end{equation*}
from which the subharmonicity of $|g'|^2$ yields
    $$
    |g'(a)|^2(1-|a|^2)^2\lesssim\int_{D_a}\left|g'(z)\right|^2\,dA(z)\lesssim\|T_g\|^2_{(H^p,H^p)}<\infty.
    $$
This implies $g\in\B$\index{$\B$} and \eqref{eq:tgblochhp}.

Let now $1<\a,\b<\infty$ such that $\b/\a=p/2<1$, and let $\a'$
and $\b'$ be the conjugate indexes of $\a$ and $\b$. Then Fubini's
theorem, \eqref{eq:tf1} and H\"older's inequality yield
    \begin{equation}\index{$F_{a,p}$}
    \begin{split}\label{eq:tgb1hp}
    &\int_{S(a)}|g'(z)|^2(1-|z|^2)\,dA(z)\\
    &\asymp\int_\T\left(\int_{S(a)\cap\Gamma(\z)}|g'(z)|^2|F_{a,p}(z)|^2\,dA(z)\right)^{\frac1\a+\frac1{\a'}}\,|d\z|\\
    &\le\left(\int_\T\left(\int_{\Gamma(\z)}|g'(z)|^2|F_{a,p}(z)|^2\,dA(z)\right)^\frac{\b}{\a}\,|d\z|\right)^\frac1\b\\
    &\quad\cdot\left(\int_\T\left(\int_{\Gamma(\z)\cap
    S(a)}|g'(z)|^2\,dA(z)\right)^\frac{\b'}{\a'}\,|d\z|\right)^\frac1{\b'}\\
    &=\|T_g(F_{a,p})\|_{H^p}^\frac{p}{\b}\|S_g(\chi_{S(a)})\|_{L^\frac{\b'}{\a'}(\T)}^\frac{1}{\a'},\quad a\in\D,
    \end{split}
    \end{equation}
where
    $$
    S_g(\varphi)(\zeta)=\int_{\Gamma(\zeta)}|\varphi(z)|^2|g'(z)|^2\,dA(z),\quad
    \zeta\in\T,
    $$
for any bounded function $\varphi$ on $\D$. Now
$\left(\frac{\b'}{\a'}\right)'=\frac{\b(\a-1)}{\a-\b}>1$, and
hence
      \begin{equation}\begin{split}\label{eq:tgb2hp}
    \|S_g(\chi_{S(a)})\|_{L^\frac{\b'}{\a'}(\T)}
    =\sup_{\|h\|_{L^{\frac{\b(\a-1)}{\a-\b}}(\T)}\le1}
    \left|\int_\T h(\z)S_g(\chi_{S(a)})(\z)\,|d\z|\right|.
    \end{split}\end{equation}
Next, using Fubini's theorem, H\"older's inequality and
Corollary~\ref{co:maxbouhp}, we deduce
     \begin{equation}
     \begin{split}\label{eq:tgb3hp}
    &\left|\int_\T h(\z)S_g(\chi_{S(a)})(\z)\,|d\z|\right|\\
    &\le\int_\T|h(\z)|\int_{\Gamma(\z)\cap
    S(a)}|g'(z)|^2\,dA(z)\,|d\z|\\
    &\lesssim\int_{S(a)}|g'(z)|^2(1-|z|^2)M(|h|)(z)\,dA(z)\\
    &\le\left(\int_{S(a)}|g'(z)|^2(1-|z|^2)\,dA(z)\right)^\frac{\a'}{\b'}\\
    &\quad\cdot\left(\int_\D
    M(|h|)^{\left(\frac{\b'}{\a'}\right)'}|g'(z)|^2(1-|z|^2)\,dA(z)\right)^{1-\frac{\a'}{\b'}}\\
    &\lesssim\left(\int_{S(a)}|g'(z)|^2(1-|z|^2)\,dA(z)\right)^\frac{\a'}{\b'}\\
    &\quad\cdot\left(\sup_{a\in\D}\frac{\int_{S(a)}|g'(z)|^2(1-|z|^2)\,dA(z)}{1-|a|}\right)^{1-\frac{\a'}{\b'}}
    \|h\|_{L^{\left(\frac{\b'}{\a'}\right)'}(\T)}
    \end{split}
    \end{equation}
By replacing $g$ by $g_r$ in \eqref{eq:tgb1hp}--\eqref{eq:tgb3hp},
we obtain
    \begin{equation}\label{59}\index{$F_{a,p}$}
    \begin{split}
    &\int_{S(a)}|g_r'(z)|^2(1-|z|^2)\,dA(z)\\
    &\lesssim\|T_{g_r}(F_{a,p})\|_{H^p}^\frac{p}{\b}
    \left(\int_{S(a)}|g_r'(z)|^2(1-|z|^2)\,dA(z)\right)^\frac{1}{\b'}\\
    &\quad\cdot\left(\sup_{a\in\D}\frac{\int_{S(a)}|g_r'(z)|^2(1-|z|^2)\,dA(z)}{1-|a|}\right)^{\frac{1}{\a'}\left(1-\frac{\a'}{\b'}\right)}.
    \end{split}
    \end{equation}
By arguing as in the proof of \eqref{eq:dilatadas} we find a
constant $C>0$ such that
    $$
    \sup_{0<r<1}\|T_{g_r}(F_{a,p})\|_{H^p}^p\le
    C\|T_{g}\|_{(H^p,H^p)}^p(1-|a|),\quad a\in\D.
    $$\index{$F_{a,p}$}
This combined with \eqref{59} and Fatou's lemma yield
    $$
    \sup_{a\in\D}\frac{\int_{S(a)}|g'(z)|^2(1-|z|^2)\,dA(z)}{1-|a|}\lesssim\|T_{g}\|_{(H^p,H^p)}^2,
    $$
and so $g\in\BMOA$.\index{$\BMOA$} Now the proof of
(ai)$\Rightarrow$(bi) is complete.

(aiv)$\Rightarrow$(biv). One of the key ingredients in the
original proof of this implication is \cite[Proposition
p.~170]{AC} that is stated in weaker form as
Proposition~\ref{PropSmallIndeceshp} below. Our contribution
consists of indicating how this weaker result can be established
by using the standard factorization of $H^p$-functions instead of
appealing to the interpolation theory. The disadvantage of this
method is that we will not obtain the sharp constant $C=1$ that
follows by interpolating.

\begin{letterproposition}\label{PropSmallIndeceshp}
Let $0<q<p<\infty$ and let $T_g:H^p\to H^q$ be bounded. Then $T_g:
H^{ \hat{p}}\to H^{ \hat{q}}$ is bounded for any $ \hat{p}<p$ and
$\hat{q}<q$ with $\frac{1}{ \hat{q}}-\frac{1}{
\hat{p}}=\frac{1}{q}-\frac{1}{p}$. Further, if $0<p<\infty$, then
there exists $C=C(q)>0$ such that
    \begin{equation}\label{eq:tqq<p1hp}
    \limsup_{ \hat{p}\to p^-}\|T_g\|_{\left(H^{ \hat{p}},H^{ \hat{q}}\right)}\le C\|T_g\|_{\left(H^{p},H^{q}\right)}.
    \end{equation}
\end{letterproposition}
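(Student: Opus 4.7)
The plan is to mimic the proof of Proposition~\ref{PropSmallIndeces} in the Hardy space setting, replacing the Bergman-type factorization of Corollary~\ref{cor:FactorizationBergman} by the classical inner-outer factorization of $H^p$. Given $f\in H^{\hat p}$ with inner-outer factorization $f=I\cdot O$, and setting $s=\hat p p/(p-\hat p)$ (so that $\tfrac{1}{\hat p}=\tfrac1p+\tfrac1s$), I would define
$$
f_1=I\cdot O^{\hat p/p},\qquad f_2=O^{\hat p/s}.
$$
Then $\hat p/p+\hat p/s=1$ gives $f=f_1\cdot f_2$, and a direct computation on $\T$ yields the sharp norm identity
$$
\|f_1\|_{H^p}\cdot\|f_2\|_{H^s}=\|O\|_{H^{\hat p}}^{\hat p/p}\cdot\|O\|_{H^{\hat p}}^{\hat p/s}=\|f\|_{H^{\hat p}}.
$$

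Next, exactly as in the proof of Proposition~\ref{PropSmallIndeces}, I would observe that $T_g(f)=T_F(f_2)$, where $F=T_g(f_1)$. The hypothesis that $T_g:H^p\to H^q$ is bounded gives the first key estimate
$$
\|F\|_{H^q}\le\|T_g\|_{(H^p,H^q)}\|f_1\|_{H^p}.
$$
The remaining task is to bound $\|T_F(f_2)\|_{H^{\hat q}}$ by $\|f_2\|_{H^s}\|F\|_{H^q}$. Using the Fefferman--Stein equivalence \eqref{eq:FC} with $n=1$, pointwise domination $|f_2(z)|\le f_2^\star(\z)$ on $\Gamma(\z)$, and H\"older's inequality with conjugate exponents $s/\hat q$ and $q/\hat q$ (whose reciprocals add to $1$ by $\tfrac1{\hat q}=\tfrac1s+\tfrac1q$), one obtains
\begin{equation*}
\begin{split}
\|T_F(f_2)\|_{H^{\hat q}}^{\hat q}
&\asymp\int_\T\left(\int_{\Gamma(\z)}|f_2(z)|^2|F'(z)|^2\,dA(z)\right)^{\hat q/2}|d\z|\\
&\le\int_\T\bigl(f_2^\star(\z)\bigr)^{\hat q}\left(\int_{\Gamma(\z)}|F'(z)|^2\,dA(z)\right)^{\hat q/2}|d\z|\\
&\le\|f_2^\star\|_{L^s(\T)}^{\hat q}\left(\int_\T\left(\int_{\Gamma(\z)}|F'(z)|^2\,dA(z)\right)^{q/2}|d\z|\right)^{\hat q/q}\\
&\lesssim\|f_2\|_{H^s}^{\hat q}\,\|F\|_{H^q}^{\hat q},
\end{split}
\end{equation*}
where the last step combines the classical maximal inequality \eqref{Eq:Maximal} with \eqref{eq:FC} applied to $F$. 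Chaining these estimates with the factorization identity above gives $\|T_g(f)\|_{H^{\hat q}}\lesssim\|T_g\|_{(H^p,H^q)}\|f\|_{H^{\hat p}}$, establishing the first assertion.

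For the limsup estimate, I would track all constants: the factorization contributes a factor of exactly $1$; the bound on $\|F\|_{H^q}$ contributes $\|T_g\|_{(H^p,H^q)}$; and the remaining multiplicative constant depends only on the Fefferman--Stein comparison constants for $H^{\hat q}$ and $H^q$ and on the constant in \eqref{Eq:Maximal} applied at exponent $s$. As $\hat p\to p^-$ we have $\hat q\to q^-$ and $s\to\infty$; the Fefferman--Stein constants depend continuously on $\hat q$ on any compact subinterval of $(0,q]$, while the non-tangential maximal constant in \eqref{Eq:Maximal} is uniformly bounded for $s\ge 1$. Consequently the product stabilizes to a constant $C=C(q)$, yielding \eqref{eq:tqq<p1hp}. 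The main obstacle I anticipate is precisely this uniformity: one must verify that the implicit constants in the Fefferman--Stein equivalence \eqref{eq:FC} and the maximal inequality \eqref{Eq:Maximal} behave well in the limit $\hat p\to p^-$; this is standard but requires some care since the $L^p$ boundedness of the non-tangential maximal function is not entirely uniform in $p$ near the endpoints, and one must exploit that $\hat q$ stays bounded away from both $0$ and $\infty$ as $\hat p\to p^-$.
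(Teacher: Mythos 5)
Your proposal is correct and follows essentially the same route as the paper: the paper factors $f=Bg$ with $B$ the Blaschke product and $g$ zero-free, sets $f_1=Bg^{\hat p/p}$ and $f_2=g^{(p-\hat p)/p}$ to get the exact identity $\|f_1\|_{H^p}\|f_2\|_{H^s}=\|f\|_{H^{\hat p}}$, and then invokes the Hardy-space analogue of \eqref{58} via $T_g(f)=T_F(f_2)$ with $F=T_g(f_1)$ — precisely the square-function/maximal-function/H\"older chain you write out. Your use of the inner--outer factorization in place of the Blaschke/zero-free splitting is an immaterial variant, and your remarks on the uniformity of the constants (the non-tangential maximal inequality being uniform in the exponent, and the Fefferman--Stein constants depending only on $\hat q$ ranging over a compact subinterval) correctly justify the $C=C(q)$ claim.
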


Indeed, if $f\in H^{ \hat{p}}$, then $f=Bg$, where $B$ is the
Blaschke product whose zeros are those of $f$, and $g$ is a
non-vanishing analytic function with $\|g\|_{H^{
\hat{p}}}=\|f\|_{H^{ \hat{p}}}$. So, if  we take $f_1=Bg^{\frac{
\hat{p}}{p}}\in H^{p}$ and $f_2=g^{\frac{p- \hat{p}}{p}}\in
H^{\frac{ \hat{p}p}{p- \hat{p}}}$, then
    \begin{equation*}
    f=f_1f_2\quad\text{and}\quad\|f_1\|_{H^p}\cdot\|f_2\|_{H^{\frac{ \hat{p}p}{p- \hat{p}}}}=\|f\|_{H^{ \hat{p}}}.
    \end{equation*}
Consequently, \eqref{eq:tqq<p1hp} can be proved in the same way as
\eqref{eq:tqq<p1}.

The remaining parts of Theorem~\ref{Thm-integration-operator-3}
can be proved by appropriately modifying the proof of
Theorem~\ref{Thm-integration-operator-1}. Basically one has to use
the norm \eqref{eq:FC} in $H^p$ when the norm \eqref{normacono} in
$A^p_\om$ is used, and use the maximal function
$f^\star$\index{$f^\star$} whenever $N(f)$
appears.\index{$N(f)$}\index{non-tangential maximal function}
Further, the maximal operator $M$ should be used instead of
$M_\om$\index{weighted maximal function}\index{$M_\om(\vp)$} and
the weight $1-|z|$ should appear instead of $\om^\star(z)$.
Furthermore, one will need to use \cite[Theorem~9.3]{Duren1970},
\cite[Theorem~9.4]{Duren1970}, \cite[Theorem~5.1]{Duren1970}, and
Corollary~\ref{co:maxbouhp}. With this guidance on the remaining
implications we consider Theorem~\ref{Thm-integration-operator-3}
proved. \hfill$\Box$

\medskip

We finish the section by two observations on the proof of
Theorem~\ref{Thm-integration-operator-3}. First, the techniques
can be also used to establish \cite[Corollary~1]{AC}. Second, in
the proof we used repeatedly the fact that for $h\in L^p(\T)$,
$1<p\le q$, and a $q/p$-classical Carleson measure $\mu$, we have
    \begin{equation*}
    \begin{split}
    \int_{\D}\left(\frac{1}{1-|z|}\int_{I_z}h(\z)\,dm(\z)\right)\,d\mu(z)
    &\lesssim\int_{\D}M(h)(z)\,d\mu(z)\\
    &\lesssim\|M(h)\|_{L^{q}(\mu)}\lesssim\|h\|_{L^p(\T)}<\infty.
    \end{split}
    \end{equation*}
It is also worth noticing that here the maximal function $M(h)$
can be replaced by the Poisson integral $P(h)$ of~$h$.

\chapter{Non-conformally Invariant Space Induced by $T_g$
on~$A^p_\om$}\label{Sec:SpaceCCpp}\index{conformally invariant}

The boundedness of the integral operator $T_g$ on the Hardy space
$H^p$ and the classical weighted Bergman space $A^p_\a$ are
characterized by the conditions $g\in\BMOA$\index{$\BMOA$} and
$g\in\B$,\index{$\B$} respectively. Moreover, we saw in
Chapter~\ref{SecVolterra} that if $\om$ is rapidly increasing,
then $T_g$ is bounded on $A^p_\om$ if and only if
$g\in\CC^1(\om^\star)$.\index{$\CC^1(\om^\star)$} Since
$H^p\subset A^p_\om\subset A^p_\a$ for each $\om\in\I$ and
$\a>-1$, it is natural to expect that
$\CC^1(\om^\star)$\index{$\CC^1(\om^\star)$} lies somewhere
between $\BMOA$\index{$\BMOA$} and $\B$.\index{$\B$} In this
chapter we will give some insight to the structural properties of
the spaces $\CC^1(\omega^\star)$\index{$\CC^1(\om^\star)$} and
$\CC^1_0(\omega^\star)$.\index{$\CC^1_0(\om^\star)$} We will also
study their relations to several classical spaces of analytic
functions on $\D$. In particular, we will confirm the expected
inclusions
$\BMOA\subset\CC^1(\om^\star)\subset\B$.\index{$\B$}\index{$\CC^1(\om^\star)$}
Moreover, we will show that whenever a rapidly increasing weight
$\om$ admits certain regularity, then
$\CC^1(\om^\star)$\index{$\CC^1(\om^\star)$} is not conformally
invariant, but contains non-trivial inner functions.

\section{Inclusion relations}

The first result shows the basic relations between
$\BMOA$,\index{$\BMOA$}
$\CC^1(\om^\star)$,\index{$\CC^1(\om^\star)$} $\B$\index{$\B$} and
$A^p_\om$, when $\om\in\I\cup\R$. In some parts we are forced to
require additional regularity on $\om\in\I$ due to technical
reasons induced by the fact that rapidly increasing weights may
admit a strong oscillatory behavior, as was seen in
Chapter~\ref{S2}. Recall that $h:\,[0,1)\to (0,\infty)$ is
essentially increasing on $[0,1)$ if there exists a constant $C>0$
such that $h(r)\le C h(t)$ for all $0\le r\le t<1$.

\begin{proposition}\label{pr:blochcpp}
\begin{itemize}
\item[\rm(A)] If $\om\in\I\cup\R$, then
$\CC^1(\omega^\star)\subset\cap_{0<p<\infty}A^p_\omega$.\index{$\CC^1(\om^\star)$}

\item[\rm(B)] If $\om\in\I\cup\R$, then $\BMOA\subset
\CC^1(\omega^\star)\subset\B$.

\item[\rm(C)] If $\om\in\R$, then $\CC^1(\omega^\star)=\B$.

\item[\rm(D)] If $\om\in\I$, then
$\CC^1(\omega^\star)\subsetneq\B$.

\item[\rm(E)] If $\om\in\I$ and both $\om(r)$ and
$\frac{\psi_\om(r)}{1-r}$ are essentially increasing on $[0,1)$,
then
$\BMOA\subsetneq\CC^1(\omega^\star)$.\index{$\BMOA$}\index{$\B$}
\end{itemize}
\end{proposition}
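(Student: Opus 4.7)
My plan is to dispatch parts (A)--(C) by short weight estimates and to save the real work for the strict-containment statements (D) and (E), each of which requires an explicit witness.

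For (A) I would invoke Theorem~\ref{Thm-integration-operator-1}(i) from the previous chapter: if $g\in\CC^1(\om^\star)$, then $T_g:A^p_\om\to A^p_\om$ is bounded for every $0<p<\infty$, and applying it to the constant $1\in A^p_\om$ gives $T_g(1)=g-g(0)\in A^p_\om$, hence $g\in A^p_\om$. For $\BMOA\subset\CC^1(\om^\star)$ in part~(B), I would exploit $\log(s/|z|)\lesssim 1-|z|$ for $s\in[|z|,1]$ together with $|z|\ge 1-|I|$ on $S(I)$ to bound $\om^\star(z)\lesssim(1-|z|)\int_{1-|I|}^1\om(s)\,ds$, so that the $\BMOA$-Carleson condition produces
\begin{equation*}
\int_{S(I)}|g'(z)|^2\om^\star(z)\,dA(z)\lesssim\Bigl(\int_{1-|I|}^1\om(s)\,ds\Bigr)\|g\|^2_{\BMOA}\,|I|\asymp\|g\|^2_{\BMOA}\,\om(S(I)),
\end{equation*}
using $\om(S(I))\asymp|I|\int_{1-|I|}^1\om(s)\,ds$. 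For $\CC^1(\om^\star)\subset\B$, I would apply subharmonicity of $|g'|^2$ on $D(z,\tfrac12(1-|z|))$ combined with $\om^\star\in\R$ (Lemma~\ref{le:sc1}) and $\om^\star(z)\asymp\om(S(z))$ (Lemma~\ref{le:cuadrado-tienda}), along the lines of Lemma~\ref{le:suf1}, to conclude $|g'(z)|^2(1-|z|)^2\lesssim\|g\|^2_{\CC^1(\om^\star)}$. Part (C) then reduces, via (B), to $\B\subset\CC^1(\om^\star)$ for $\om\in\R$; using the identity $\om^\star(r)\asymp(1-r)^2\om(r)$ recorded in equation~\eqref{22} and the Bloch bound $|g'(z)|^2\le\|g\|_\B^2/(1-|z|)^2$, this is immediate: $\int_{S(I)}|g'|^2\om^\star\,dA\lesssim\|g\|_\B^2\,\om(S(I))$.

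For (D), I would exhibit a Bloch function that fails to lie in some $A^p_\om$, whereupon part~(A) forces it out of $\CC^1(\om^\star)$. A natural candidate is the lacunary series $g(z)=\sum_{k=0}^\infty z^{2^k}\in\B$, whose integral means satisfy $M_p(r,g)\asymp(\log(1/(1-r)))^{1/2}$ for all $0<p<\infty$. I would then show that, using the rapid-increase hypothesis $\psi_\om(r)/(1-r)\to\infty$ together with Lemma~\ref{le:condinte}, for sufficiently large $p$ the weighted integral $\int_0^1(\log(1/(1-r)))^{p/2}\om(r)\,dr$ diverges, so that $g\notin A^p_\om$. For (E), I would construct $g\in\CC^1(\om^\star)\setminus\BMOA$ whose derivative is sharp enough to violate the pointwise necessary condition $|g'(z)|^2(1-|z|)\lesssim\|g\|^2_{\BMOA}$ coming from the subharmonicity argument in (B), but mild enough that the $\om^\star$-weighted local $L^2$ estimate defining $\CC^1(\om^\star)$ still holds; the essential-increasing hypotheses on $\om$ and on $\psi_\om/(1-r)$ allow radial integrations inside each $S(I)$ with $\om$-denominators under control, supporting for example a lacunary construction with coefficients tuned to $\om^\star$.

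The hard parts are (D) and (E). In (D), Lemma~\ref{le:condinte}(ii) yields only polynomial comparability of $\int_r^1\om(s)\,ds$ across scales, so the quantitative divergence of $\|g\|^p_{A^p_\om}$ must be extracted from the rapid-increase hypothesis itself, and one may need to adjust lacunary gaps or coefficients if the simplest choice does not amplify enough against a particular $\om\in\I$. In (E), the simultaneous requirements of lying inside $\CC^1(\om^\star)$ and outside $\BMOA$ are subtle because both spaces lie in a narrow band between $\BMOA$ and $\B$; the essential-increasing hypotheses on $\om$ and $\psi_\om/(1-r)$ are tailored precisely to permit the radial monotonicity arguments on $\om(S(I))$ that make the construction of the witness possible.
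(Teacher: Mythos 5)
Your treatment of (A)--(C) is correct. For (A) you take a genuinely different route from the paper: the paper bootstraps through the identity \eqref{HSB} (the measure $|g'|^2\om^\star\,dA$ is finite, hence $g\in A^2_\om$, hence $g\in A^4_\om$, and so on), whereas you apply Theorem~\ref{Thm-integration-operator-1}(i) to the constant function $1$ and read off $g-g(0)=T_g(1)\in A^p_\om$. Your argument is shorter and is legitimate, since the implication (bi)$\Rightarrow$(ai) of that theorem is proved without reference to the present proposition; the paper's bootstrap has the advantage of staying entirely inside Chapter~1 machinery. Your (B) coincides with the paper's proof of $\BMOA\subset\CC^1(\om^\star)$, and for $\CC^1(\om^\star)\subset\B$ your subharmonicity argument is exactly the one the paper runs in Proposition~\ref{PropRadialq>p}; your (C), via $\om^\star(r)\asymp(1-r)^2\om(r)$ from \eqref{22}, is also sound.

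Part (D) contains a genuine gap, and it is not repairable by ``adjusting lacunary gaps or coefficients''. Your strategy is to exhibit a Bloch function outside $\cap_{0<p<\infty}A^p_\om$ and then invoke (A). But every $g\in\B$ satisfies $M_\infty(r,g)\lesssim1+\log\frac{e}{1-r}$, so $\|g\|_{A^p_\om}^p\lesssim\int_0^1\bigl(\log\frac{e}{1-r}\bigr)^p\om(r)\,dr$, and there are rapidly increasing weights for which this integral converges for \emph{every} $p$. Take, for instance, $\om$ determined by $\int_r^1\om(s)\,ds=\exp\bigl(-(\log\frac{e}{1-r})^{1/2}\bigr)$: then $\psi_\om(r)/(1-r)=2(\log\frac{e}{1-r})^{1/2}\to\infty$, so $\om\in\I$, while the substitution $u=(\log\frac{e}{1-r})^{1/2}$ turns $\int_0^1(\log\frac{e}{1-r})^{M}\om(r)\,dr$ into $\int u^{2M}e^{-u}\,du<\infty$ for every $M$. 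For such $\om$ one has $\B\subset\cap_{0<p<\infty}A^p_\om$, so part (A) cannot separate $\CC^1(\om^\star)$ from $\B$ and no Bloch witness of the kind you seek exists. The paper argues differently: assuming $\B\subset\CC^1(\om^\star)$, it takes the Ramey--Ullrich pair $g_1,g_2\in\B$ with $|g_1'(z)|+|g_2'(z)|\ge(1-|z|)^{-1}$ and deduces from the Carleson-measure property the inequality
\begin{equation*}
\|f\|_{A^2_\om}^2\gtrsim\int_\D|f(z)|^2\,\frac{\psi_\om(|z|)}{1-|z|}\,\om(z)\,dA(z),\quad f\in\H(\D),
\end{equation*}
which it then contradicts by testing against $f\equiv1$ or against the functions $F_{a,2}$, using $\psi_\om(r)/(1-r)\to\infty$. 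Some argument of this type, exploiting the full Carleson-measure content of $\CC^1(\om^\star)$ rather than only its consequence (A), is required.

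Part (E) as written is a plan, not a proof: the witness is never constructed and neither of the two required memberships is verified. The paper's witness is the explicit lacunary series \eqref{eq:nordn} with coefficients $2^{-k/2}\bigl(\psi_\om(1-2^{-k})\log\frac{e}{\int_{1-2^{-k}}^1\om(s)\,ds}\bigr)^{-1/2}$; it is excluded from $\BMOA$ by showing $\|g\|_{H^2}=\infty$ directly (note that this is a more robust exit mechanism than violating the pointwise bound $|g'(z)|^2(1-|z|)\lesssim\|g\|_{\BMOA}^2$ you propose, which a function of $\CC^1(\om^\star)$ need not violate), and it is placed in $\CC^1_0(\om^\star)$ via the radial estimate $M_\infty^2(r,g')=\op\bigl(\om(r)/\om^\star(r)\bigr)$ together with Theorem~\ref{th:cm}. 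Carrying out this construction is precisely where the two essential-increasing hypotheses are spent, and it is the substance of the proof.
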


\begin{proof} (A). Let $g\in\CC^1(\omega^\star)$.\index{$\CC^1(\om^\star)$} Theorem~\ref{th:cm} shows that
$|g'(z)|^2\omega^{\star}(z)\,dA(z)$ is a $p$-Carleson measure for
$A^p_\omega$ for all $0<p<\infty$. In particular,
$|g'(z)|^2\omega^{\star}(z)\,dA(z)$ is a finite measure and hence
$g\in A^2_\omega$ by \eqref{eq:LP2}. Therefore \eqref{HSB} yields
    \begin{equation*}
    \|g\|_{A^4_\omega}^4=4^2\int_{\D}|g(z)|^{2}|g'(z)|^2\omega^\star(z)\,dA(z)+|g(0)|^4\lesssim\|g\|_{A^2_\omega}^2+|g(0)|^4,
    \end{equation*}
and thus $g\in A^4_\omega$. Continuing in this fashion, we deduce
$g\in A^{2n}_\omega$ for all $n\in\N$, and the assertion follows.

(B). If $g\in\BMOA$,\index{$\BMOA$} then
$|g'(z)|^2\log\frac{1}{|z|}\,dA(z)$ is a classical Carleson
measure~\cite{Garnett1981} (or \cite[Section~8]{GiBMO}), that is,
    $$
    \sup_{I\subset\T}\frac{\int_{S(I)}|g'(z)|^2\log\frac{1}{|z|}\,dA(z)}{|I|}<\infty.
    $$
Therefore
    \begin{equation*}
    \begin{split}
    \int_{S(I)}|g'(z)|^2\om^\star(z)\,dA(z)
    &\le\int_{S(I)}|g'(z)|^2\log\frac{1}{|z|}\left(\int_{|z|}^1\om(s)s\,ds\right)\,dA(z)\\
    &\le\left(\int_{1-|I|}^1\om(s)s\,ds\right)\int_{S(I)}|g'(z)|^2\log\frac{1}{|z|}\,dA(z)\\
    &\lesssim\left(\int_{1-|I|}^1\om(s)s\,ds\right)|I|\asymp\om\left(S(I)\right),
    \end{split}
    \end{equation*}
which together with Theorem~\ref{th:cm} gives
$g\in\CC^1(\omega^\star)$ for all
$\om\in\I\cup\R$.\index{$\CC^1(\om^\star)$}

Let now $g\in\CC^1(\omega^\star)$ with $\om\in\I\cup\R$. It is
well known that $g\in\H(\D)$ is a Bloch function if and only if
    $$
    \int_{S(I)}|g'(z)|^2(1-|z|^2)^\gamma\,dA(z)\lesssim|I|^\gamma,\quad
    I\subset\T,
    $$
for some (equivalently for all) $\gamma>1$, see~\cite{X2}. Fix
$\b=\b(\omega)>0$ and $C=C(\b,\om)>0$ as in
Lemma~\ref{le:condinte}. Then Lemma~\ref{le:cuadrado-tienda} and
Lemma~\ref{le:condinte} yield
    \begin{equation*}
    \begin{split}
    \int_{S(I)}|g'(z)|^2(1-|z|)^{\b+1}\,dA(z)
    &=\int_{S(I)}|g'(z)|^2\om^\star(z)\frac{(1-|z|)^{\b+1}}{\om^\star(z)}\,dA(z)\\
    &\asymp\int_{S(I)}|g'(z)|^2\om^\star(z)\frac{(1-|z|)^{\b}}{\int_{|z|}^1\om(s)s\,ds}\,dA(z)\\
    &\le\frac{C|I|^\b}{\int_{1-|I|}^1\om(s)s\,ds}\int_{S(I)}|g'(z)|^2\om^\star(z)\,dA(z)\\
    &\lesssim |I|^{\b+1},\quad |I|\le\frac12,
    \end{split}\index{$\CC^1(\om^\star)$}
    \end{equation*}
and so $g\in\B$.\index{$\B$}

(C). By Part (B) it suffices to show that
$\B\subset\CC^1(\omega^\star)$\index{$\B$} for $\om\in\R$. To see
this, let $g\in\B$\index{$\B$} and $\om\in\R$. By \eqref{eq:r1}
there exists $\a=\alpha(\omega)>0$ such that
$h(r)=\frac{\int_r^1\om(s)\,ds}{(1-r)^\a}$ is decreasing on
$[0,1)$. This together with Lemma~\ref{le:cuadrado-tienda} gives
    \begin{equation*}\index{$\CC^1(\om^\star)$}
    \begin{split}
    \int_{S(I)}|g'(z)|^2\om^\star(z)\,dA(z)
    &=\int_{S(I)}|g'(z)|^2\frac{\om^\star(z)}{(1-|z|)^{\alpha+1}}(1-|z|)^{\alpha+1}\,dA(z)\\
    &\asymp\int_{S(I)}|g'(z)|^2\frac{\int_{|z|}^1\om(s)s\,ds}{(1-|z|)^{\alpha}}(1-|z|)^{\alpha+1}\,dA(z)\\
    &\le\frac{\int_{1-|I|}^1\om(s)\,ds}{|I|^\alpha}\int_{S(I)}|g'(z)|^2(1-|z|)^{\alpha+1}\,dA(z)\\
    &\lesssim\om\left(S(I)\right),\quad |I|\le\frac12,
    \end{split}
    \end{equation*}
and therefore $g\in\CC^1(\omega^\star)$.

(D). Let $\om\in\I$, and assume on the contrary to the assertion
that $\B\subset\CC^1(\omega^\star)$.\index{$\B$} Ramey and
Ullrich~\cite[Proposition~5.4]{RU} constructed
$g_1,g_2\in\B$\index{$\B$} such that
$|g'_1(z)|+|g'_2(z)|\ge(1-|z|)^{-1}$ for all $z\in\D$. Since
$g_1,g_2\in\CC^1(\omega^\star)$ by the antithesis,
Lemma~\ref{le:cuadrado-tienda} yields
    \begin{equation}\label{56}\index{$\CC^1(\om^\star)$}
    \begin{split}
    \|f\|_{A^2_\om}^2&\gtrsim\int_{\D}|f(z)|^2\left(|g'_1(z)|^2+|g'_2(z)|^2\right)\om^\star(z)\,dA(z)\\
    &\ge\frac{1}{2}\int_{\D}|f(z)|^2\left(\vert g'_1(z)\vert +\vert
    g'_2(z)\vert\right)^2\om^\star(z)\,dA(z)\\
    &\ge\frac{1}{2}\int_{\D}|f(z)|^2\frac{\om^\star(z)}{(1-|z|)^2}\,dA(z)
    \asymp\int_{\D}|f(z)|^2\frac{\int_{|z|}^1\om(s)\,ds}{(1-|z|)}\,dA(z)\\
    &=\int_{\D}|f(z)|^2\frac{\psi_\om(|z|)}{1-|z|}\,\om(z)\,dA(z)
    \end{split}
    \end{equation}
for all $f\in\H(\D)$. If
$\int_{\D}\frac{\psi_\om(|z|)}{1-|z|}\,\om(z)\,dA(z)=\infty$, we
choose $f\equiv1$ to obtain a contradiction. Assume now that
    $
    \int_{\D}\frac{\psi_\om(|z|)}{1-|z|}\,\om(z)\,dA(z)<\infty,
    $
and replace $f$ in \eqref{56} by the test function $F_{a,2}$ from
Lemma~\ref{testfunctions1}. Then \eqref{eq:tf2} and
Lemma~\ref{le:cuadrado-tienda} yield
    \begin{equation*}
    \om^\star(a)\gtrsim\int_0^1\frac{(1-|a|)^{\gamma+1}}{(1-|a|r)^\gamma}\frac{\psi_\om(r)}{1-r}\,\om(r)\,dr
    \gtrsim(1-|a|)\int_{|a|}^1\frac{\psi_\om(r)}{1-r}\,\om(r)\,dr,
    \end{equation*}
and hence
    $$
    \int_{|a|}^1\frac{\psi_\om(r)}{1-r}\,\om(r)\,dr\lesssim\int_{|a|}^1\om(r)\,dr,\quad
    a\in\D.
    $$
By letting $|a|\to1^-$, Bernouilli-l'H\^{o}pital theorem and the
assumption $\om\in\I$ yield a contradiction. For completeness with
regards to \eqref{56} we next construct $f\in A^2_\om$ such that
    \begin{equation}\label{eq:ccppb}
    \int_{\D}|f(z)|^2\frac{\psi_\om(|z|)}{1-|z|}\,\om(z)\,dA(z)=\infty,
    \end{equation}
provided $\om\in\I$ such that $\frac{\psi_\om(r)}{1-r}$ is
essentially increasing on $[0,1)$. To do this, denote
    $$
    \omega_k=\int_0^1s^{2k}\omega(s)s\,ds,\quad
    M_k=\int_0^1s^{2k}\frac{\psi_\om(s)}{1-s}\,\om(s)s\,ds,\quad k\in\N.
    $$
Since $\om\in\I$, there exists an increasing sequence
$\{n_k\}\subset\N$ such that
    \begin{equation}\label{eq:nk}
    n_k\psi_\om(1-n_k^{-1})\ge k^2,\quad k\in\N.
    \end{equation}
We claim that the analytic function
    $$
    f(z)=\sum_{k=1}^\infty \frac{z^{n_k}}{M_{n_k}^{1/2}}
    $$
has the desired properties. Parseval's identity shows that $f$
satisfies \eqref{eq:ccppb}. To prove $f\in A^2_\om$, note first
that by the proof of Lemma~\ref{le:condinte}, there exists
$r_0\in(0,1)$ such that $h(r)=\frac{\int_r^1\om(s)\,ds}{1-r}$ is
increasing on $[r_0,1)$, and hence $h$ is essentially increasing
on $[0,1)$. This together with the assumption on
$\frac{\psi_\om(r)}{1-r}$ and Lemmas~\ref{le:condinte}
and~\ref{le:momentos} yields
    \begin{equation}\label{eq:wk}
    \om_k\asymp \int_{1-\frac{1}{k}}^1\om(s)s\,ds,\quad M_k\asymp
    \int_{1-\frac{1}{k}}^1\frac{\psi_\om(s)}{1-s}\,\om(s)s\,ds,\quad
    k\in\N.
    \end{equation}
Parseval's identity, \eqref{eq:wk}, the assumption on
$\frac{\psi_\om(r)}{1-r}$ and \eqref{eq:nk} finally give
    \begin{equation*}
    \begin{split}
    \|f\|_{A^2_\om}^2&=\sum_k\frac{\om_k}{M_{n_k}}\asymp\sum_{k=1}^\infty\frac{\int_{1-\frac{1}{n_k}}^1\om(s)s\,ds}
    {\int_{1-\frac{1}{n_k}}^1\frac{\psi_\om(s)}{1-s}s\om(s)\,ds}\\
    &\lesssim\sum_{k=1}^\infty\frac{1}{ n_k\psi_\om(1-n_k^{-1})}
    \le\sum_{k=1}^\infty\frac{1}{k^{2}}<\infty.
    \end{split}
    \end{equation*}
(E) Recall that
$\BMOA\subset\CC^1(\om^\star)$\index{$\BMOA$}\index{$\CC^1(\om^\star)$}
by Part~(B). In Proposition~\ref{pr:blochcppcero}(E) below we will
prove that
$\CC^1_0(\om^\star)\not\subset\BMOA$,\index{$\BMOA$}\index{$\CC^1_0(\om^\star)$}
which yields the desired strict inclusion
$\BMOA\subsetneq\CC^1(\om^\star)$.\index{$\BMOA$}\index{$\CC^1(\om^\star)$}
\end{proof}

Results analogous to those given in Proposition~\ref{pr:blochcpp}
are valid for $\CC_0^1(\omega^\star)$. Recall that $\VMOA$
consists of those functions in the Hardy space $H^1$ that have
\emph{vanishing mean oscillation} on the
boundary~$\T$\index{vanishing mean oscillation}\index{$\VMOA$},
and $f\in\H(\D)$ belongs to the \emph{little Bloch
space}~$\B_0$\index{little Bloch space}\index{$\B_0$} if
$f'(z)(1-|z|^2)\to0$, as $|z|\to1^-$. It is well known that
$\VMOA\subsetneq\B_0$.

\begin{proposition}\label{pr:blochcppcero}
\begin{itemize}
\item[\rm(A)] $\CC_0^1(\omega^\star)$ is a closed subspace of
$\left(\CC^1(\omega^\star),\|\cdot\|_{\CC^1(\omega^\star)}\right)$.
\item[\rm(B)] If $\om\in\I\cup\R$, then $\VMOA\subset
\CC_0^1(\omega^\star)\subset\B_0$.\index{$\B_0$}\index{$\CC^1(\om^\star)$}\index{$\CC^1_0(\om^\star)$}

\item[\rm(C)] If $\om\in\R$, then
$\CC^1_0(\omega^\star)=\B_0$.\index{$\B_0$}

\item[\rm(D)] If $\om\in\I$, then
$\CC^1_0(\omega^\star)\subsetneq\B_0$.\index{$\B_0$}

\item[\rm(E)] If $\om\in\I$ and both $\om(r)$ and
$\frac{\psi_\om(r)}{1-r}$ are essentially increasing, then
$\VMOA\subsetneq\CC^1_0(\omega^\star)$. Moreover, there exists a
function $g\in \CC^1_0(\omega^\star)$ such that
$g\notin\BMOA$.\index{$\BMOA$}
\end{itemize}
\end{proposition}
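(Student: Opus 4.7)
The proof follows the architecture of Proposition~\ref{pr:blochcpp}, with parts (A)--(D) being ``little-oh'' analogues and part (E) the main obstacle. Throughout I write $Q(g,I):=\int_{S(I)}|g'(z)|^2\om^\star(z)\,dA(z)/\om(S(I))$, so that $\|g\|_{\CC^1(\om^\star)}^2=|g(0)|^2+\sup_I Q(g,I)$ and $g\in\CC^1_0(\om^\star)$ means $Q(g,I)\to 0$ as $|I|\to 0$. Part (A) is a standard $\varepsilon$-argument: for $g_n\to g$ in $\CC^1(\om^\star)$ with $g_n\in\CC^1_0(\om^\star)$, the inequality
\[
Q(g,I)\le 2Q(g-g_n,I)+2Q(g_n,I)\le 2\|g-g_n\|_{\CC^1(\om^\star)}^2+2Q(g_n,I)
\]
allows one to send $|I|\to 0$ first and $n\to\infty$ second.

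Parts (B) and (C) follow by rerunning the corresponding estimates from Proposition~\ref{pr:blochcpp}(B) and (C) uniformly in $I$. The chain
\[
Q(g,I)\lesssim \frac{\int_{S(I)}|g'(z)|^2\log(1/|z|)\,dA(z)}{|I|}
\]
converts the vanishing-Carleson characterization of $\VMOA$ into $\VMOA\subset\CC^1_0(\om^\star)$; the reverse chain
\[
\frac{\int_{S(I)}|g'(z)|^2(1-|z|)^{\beta+1}\,dA(z)}{|I|^{\beta+1}}\lesssim Q(g,I),
\]
with $\beta=\beta(\om)>0$ from Lemma~\ref{le:condinte}, combined with the Carleson-square characterization of $\B_0$, gives $\CC^1_0(\om^\star)\subset\B_0$; and for $\om\in\R$ the same estimates that yielded $\B\subset\CC^1(\om^\star)$ in Proposition~\ref{pr:blochcpp}(C) now provide the vanishing bound $\B_0\subset\CC^1_0(\om^\star)$.

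For Part (D) the Ramey--Ullrich pair $g_1,g_2$ used in Proposition~\ref{pr:blochcpp}(D) cannot both lie in $\B_0$, since $|g_1'|+|g_2'|\ge(1-|z|)^{-1}$ forces at least one of them out of $\B_0$. I argue instead by contradiction using the closed graph theorem combined with a dilation--Fatou principle. Suppose $\B_0\subset\CC^1_0(\om^\star)$. Both spaces are Banach (by (A) and by closedness of $\B_0$ in $\B$), and both norms dominate pointwise values on compacta (the $\CC^1(\om^\star)$-norm through Proposition~\ref{pr:blochcpp}(A) combined with~\eqref{20NEW}), so the closed graph theorem yields $\|g\|_{\CC^1(\om^\star)}\le C\|g\|_\B$ for all $g\in\B_0$. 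For arbitrary $g\in\B$, the dilations $g_r(z)=g(rz)$ satisfy $g_r\in\B_0$ with $\|g_r\|_\B\le\|g\|_\B$, so $\|g_r\|_{\CC^1(\om^\star)}\le C\|g\|_\B$ uniformly in $r\in(0,1)$. Since $g_r'\to g'$ pointwise, Fatou's lemma applied to $\int_{S(I)}|g_r'|^2\om^\star\,dA$ for each $I$ and then the supremum over $I$ give $\|g\|_{\CC^1(\om^\star)}\lesssim\|g\|_\B+|g(0)|$. Hence $\B\subset\CC^1(\om^\star)$, contradicting Proposition~\ref{pr:blochcpp}(D).

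Part (E) is the main difficulty and requires an explicit construction of $g\in\CC^1_0(\om^\star)\setminus\BMOA$, which a fortiori yields $\VMOA\subsetneq\CC^1_0(\om^\star)$. The key observation is that for $\om\in\I$ the ratio $\om^\star(z)/(1-|z|)=\int_{|z|}^1\om(s)s\,ds$ decays to zero very slowly, opening a gap between the unweighted Carleson quotient defining $\BMOA$ and the $\om^\star$-weighted one defining $\CC^1(\om^\star)$. I plan to take a lacunary series $g(z)=\sum_k c_k z^{n_k}$ with $n_k=2^k$ and $(c_k)\notin\ell^2$, so that $g\notin\BMOA$ automatically by the classical lacunary characterization. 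Using $M_2^2(r,g')\asymp\sum_j c_j^2 n_j^2 r^{2(n_j-1)}$ together with Lemma~\ref{le:cuadrado-tienda}, one estimates $Q(g,I)$ at scale $|I|\asymp 1/n_k$ by a weighted tail comparable to
\[
\sum_{j>k}c_j^2\,\frac{\int_{1-1/n_j}^1\om(s)\,ds}{\int_{1-1/n_k}^1\om(s)\,ds};
\]
the essential-monotonicity hypotheses on $\om$ and on $\psi_\om(r)/(1-r)$ (needed to rule out the oscillatory pathologies of Chapter~\ref{S2}) ensure the rigidity behind these asymptotics. A choice such as $c_j^2=1/(j\log j)$ for the model weight $v_\alpha$ makes this tail vanish as $k\to\infty$ while keeping $(c_j)\notin\ell^2$. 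The remaining technical point is to verify that the worst-case arc length is indeed $\asymp 1/n_k$ for some $k$, which will follow from the lacunarity of $\{n_k\}$ together with the monotonicity hypotheses.
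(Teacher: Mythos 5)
Your parts (A)--(D) are correct and, modulo packaging, coincide with what the paper does: (A)--(C) are the little-oh reruns of the corresponding estimates in Proposition~\ref{pr:blochcpp}, and for (D) the paper likewise replaces the Ramey--Ullrich pair by the dilations $(g_1)_r,(g_2)_r\in\B_0$ and applies Fatou's lemma at the end. Your closed-graph formulation merely makes explicit the uniformity in $r$ that the paper's sketch also needs (a constant independent of $r$ in the embedding $\|(g_j)_r\|_{\CC^1(\om^\star)}\lesssim\|g_j\|_{\B}$), and it has the small advantage of reducing the contradiction to the already proved Proposition~\ref{pr:blochcpp}(D) instead of rerunning that argument.

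The gap is in (E), and it sits exactly where you leave the construction open: for a general $\om\in\I$ with $\om(r)$ and $\psi_\om(r)/(1-r)$ essentially increasing you never specify the coefficients $c_k$, and choosing them so that simultaneously $\sum_k c_k^2=\infty$ (hence $g\notin H^2\supset\BMOA$ for a lacunary series) and the Carleson quotients vanish is the entire content of the construction; the choice $c_j^2=1/(j\log j)$ only works for the model weight $v_\a$. The paper takes
\begin{equation*}
c_k^2=\frac{1}{2^{k}\,\psi_\om(1-2^{-k})\,\log\dfrac{e}{\widehat{\om}(1-2^{-k})}},\qquad \widehat{\om}(r)=\int_r^1\om(s)\,ds,
\end{equation*}
for which divergence of $\sum_k c_k^2$ follows from the exact identity
$\int_0^1\frac{dr}{\psi_\om(r)\log(e/\widehat\om(r))}=\lim_{\rho\to1^-}\log\log\frac{e}{\widehat\om(\rho)}-\log\log\frac{e}{\widehat\om(0)}=\infty$
together with the essential monotonicity of $\psi_\om(r)/(1-r)$, which converts the integral into the dyadic sum. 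Membership in $\CC^1_0(\om^\star)$ is then checked by the pointwise bound
$M_\infty^2(r,g')=\op\bigl(\om(r)/\om^\star(r)\bigr)$, which immediately gives $\int_{S(I)}|g'|^2\om^\star\,dA\le\e\,\om(S(I))$ for all short arcs at once and disposes of your unresolved ``worst-case arc length'' issue. Your $M_2$-tail computation for $v_\a$ is consistent with this (it is the specialization of the general formula), so your approach is salvageable, but as written the general case of (E) is not proved.
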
\index{$\VMOA$}

\begin{proof}
Parts~(A), (B) and (C) follow readily from the proof of
Proposition~\ref{pr:blochcpp}. Part~(D) can be proved by following
the argument in Proposition~\ref{pr:blochcpp}(D), with
$g_1,g_2\in\B$\index{$\B$} being replaced by
$(g_1)_r,(g_2)_r\in\B_0$,\index{$\B$} and using Fatou's lemma at
the end of the proof. It remains to prove the second assertion
in~(E), which combined with~(B) implies
$\VMOA\subsetneq\CC^1_0(\omega^\star)$. To do this, we assume
without loss of generality that $\int_r^1 \om(s)\,ds<1$. Consider
the lacunary series
    \begin{equation}\label{eq:nordn}
    g(z)=\sum_{k=0}^\infty\frac{z^{2^k}}{
    2^{k/2}\left(\psi_\om(1-2^{-k})\log\left(\frac{e}{\int_{1-2^{-k}}^1 \om(s)\,ds}\right)\right)^{1/2}}.
    \end{equation}
The assumption on $\frac{\psi_\om(r)}{1-r}$ implies
    \begin{equation*}\begin{split}
    \infty&=\lim_{\rho\to 1^-}\log\log\left(\frac{e}{\int_\rho^1\om(s)\,ds}\right)\asymp\int_{0}^1\frac{dr}{\psi_\om(r)\log\left(\frac{e}{\int_r^1
    \om(s)\,ds}\right)}\\
    &=\sum_{k=0}^\infty\int_{1-2^{-k}}^{1-2^{-k-1}}\frac{dr}{\psi_\om(r)\log\left(\frac{e}{\int_r^1 \om(s)\,ds}\right)}\\
    &\lesssim\sum_{k=0}^\infty \frac{1}{2^{k}\psi_\om(1-2^{-k})\log\left(\frac{e}{\int_{1-2^{-k}}^1 \om(s)\,ds}\right)}
    =\|g\|_{H^2}^2,
    \end{split}\end{equation*}
and hence $g\notin\BMOA$.\index{$\BMOA$}

It remains to show that $g\in\CC_0^1(\omega^\star)$. Consider
$z=re^{i\theta}\in\D$ with $r\ge\frac12$, and take $N=N(r)\in\N$
such that $1-\frac{1}{2^N}\le r<1-\frac{1}{2^{N+1}}$. Since
$\om(r)$ is essentially increasing on $[0,1)$ and
$h(x)=x\log\frac{e}{x}$ is increasing on $(0,1]$, we deduce
    \begin{equation}
    \begin{split}\label{eq:cppbmo1n}
    &\sum_{k=0}^N \frac{2^{k/2}r^{2^k}}{\left(\psi_\om(1-2^{-k})\log\left(\frac{e}{\int_{1-2^{-k}}^1
    \om(s)\,ds}\right)\right)^{1/2}}\\
    &\lesssim\frac{1}{\left(\psi_\om(1-2^{-N})\log\left(\frac{e}{\int_{1-2^{-N}}^1 \om(s)\,ds}\right)\right)^{1/2}}\sum_{k=0}^N 2^{k/2}\\
    &\lesssim\frac{2^{N/2}}{\left(\psi_\om(r)\log\left(\frac{e}{\int_{r}^1 \om(s)\,ds}\right)\right)^{1/2}}\le\frac{1}{\left((1-r)\psi_\om(r)\log\left(\frac{e}{\int_{r}^1 \om(s)\,ds}\right)\right)^{1/2}}.
    \end{split}
    \end{equation}
Moreover, the assumptions on $\om(r)$ and
$\frac{\psi_\om(r)}{1-r}$ and Lemma~\ref{le:condinte}(ii) yield
    \begin{equation}
    \begin{split}\label{eq:cppbmo2n}
    &\sum_{k=N+1}^\infty \frac{2^{k/2}r^{2^k}}{\left(\psi_\om(1-2^{-k})
    \log\left(\frac{e}{\int_{1-2^{-k}}^1 \om(s)\,ds}\right)\right)^{1/2}}\\
    &\lesssim\frac{1}{2^{N/2}\left(\psi_\om(1-2^{-N})\log\left(\frac{e}{\int_{1-2^{-N}}^1 \om(s)\,ds}\right)\right)^{1/2}}\sum_{k=N+1}^\infty2^{k}r^{2^k}\\
    &\lesssim\frac{2^{N/2}}{\left(\psi_\om(1-2^{-N})\log\left(\frac{e}{\int_{1-2^{-N}}^1 \om(s)\,ds}\right)\right)^{1/2}}\sum_{j=1}^\infty2^{j}e^{-2^j}\\
    &\lesssim\frac{1}{\left((1-r)\psi_\om(r)\log\left(\frac{e}{\int_{r}^1 \om(s)\,ds}\right)\right)^{1/2}}.
    \end{split}
    \end{equation}
Consequently, joining \eqref{eq:cppbmo1n} and \eqref{eq:cppbmo2n},
we deduce
    $$
    M_\infty(r,g')\lesssim
    \frac{1}{\left((1-r)\psi_\om(r)\log\left(\frac{e}{\int_{r}^1 \om(s)\,ds}\right)\right)^{1/2}},\quad r\ge\frac12,
    $$
in particular,
    $$
    M^2_\infty(r,g')=\op\left(\frac{1}{\psi_\om(r)(1-r)}\right)
    \asymp\left(\frac{\om(r)}{\om^\star(r)}\right),\quad r\to 1^-,
    $$
by Lemma~\ref{le:cuadrado-tienda}. Hence, for each $\e>0$, there
exists $r_0\in(0,1)$ such that
$M^2_\infty(r,g')\le\e\frac{\om(r)}{\om^\star(r)}$ for all
$r\in[r_0,1)$. It follows that
    \begin{equation*}
    \begin{split}
    \int_{S(I)}|g'(z)|^2\om^\star(z)\,dA(z)&\le\int_{S(I)}M^2_\infty(|z|,g')\om^\star(z)\,dA(z)\\
    &\le\e\int_{S(I)}\om(z)\,dA(z)=\e\om(S(I)),\quad |I|\le1-r_0,
    \end{split}
    \end{equation*}
and thus $g\in\CC_0^1(\omega^\star)$ by Theorem~\ref{th:cm}(ii).
\end{proof}

The proof of Part~(E) shows that for any $\om\in\I$, with both
$\om(r)$ and $\frac{\psi_\om(r)}{1-r}$ essentially increasing,
there are functions in $\CC^1_0(\omega^\star)$ which have finite
radial limits almost nowhere on $\T$.\index{$\CC^1_0(\om^\star)$}

\section{Structural properties of $\CC^1(\om^\star)$}\index{$\CC^1(\om^\star)$}

We begin with the following auxiliary result which, in view of
Theorem~\ref{th:cm}, yields a global integral characterization of
$q$-Carleson measures for $A^p_\omega$ when $0<p\le q<\infty$ and
$\omega\in\I\cup\R$.

\begin{lemma}\label{LemmaGlobalCarleson}
Let $0<s<\infty$ and $\omega\in\I\cup\R$, and let $\mu$ be a
positive Borel measure on $\D$. Then there exists
$\eta=\eta(\omega)>1$ such that
    $$
    I_1(\mu)=\sup_{a\in\D}\frac{\mu(S(a))}{(\omega(S(a)))^s}
    \asymp\sup_{a\in\D}\int_\D\left(\frac{1}{\omega(S(a))}\left(\frac{1-|a|}{|1-\overline{a}z|}\right)^\eta\right)^s\,d\mu(z)
    =I_2(\mu)
    $$
and, for $s\ge1$,
    $$
    \lim_{|a|\to1^-}\frac{\mu(S(a))}{(\omega(S(a)))^s}=0
    \Leftrightarrow\lim_{|a|\to1^-}\int_\D\left(\frac{1}{\omega(S(a))}\left(\frac{1-|a|}{|1-\overline{a}z|}\right)^\eta\right)^s\,d\mu(z)=0.
    $$
In particular, if $\omega\in\I$, then the above assertions are
valid for all $1<\eta<\infty$.
\end{lemma}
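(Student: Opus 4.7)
The plan splits naturally along the standard strategy for Carleson-type characterizations. The easy bound $I_1(\mu)\lesssim I_2(\mu)$ comes from restricting the integral defining $I_2(\mu)$ to $S(a)$, where $|1-\overline{a}z|\asymp 1-|a|$, so the integrand exceeds a constant multiple of $(\om(S(a)))^{-s}$ and one recovers $\mu(S(a))/(\om(S(a)))^s$ after multiplying by $\om(S(a))^s$ inside the supremum.

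For $I_2(\mu)\lesssim I_1(\mu)$, I would fix $a\in\D$ (the case $|a|$ bounded away from $1$ being trivial) and decompose $\D$ into the dyadic annuli $R_n=\{z\in\D:2^n(1-|a|)<|1-\overline{a}z|\le 2^{n+1}(1-|a|)\}$ for $n=0,1,\ldots,N(a)$, where $N(a)$ is the smallest integer with $2^{N(a)+1}(1-|a|)\ge 2$. On $R_n$ the kernel $\bigl(\frac{1-|a|}{|1-\overline{a}z|}\bigr)^\eta$ is of order $2^{-n\eta}$. Elementary planar geometry shows that $R_n$ can be covered by a bounded number (uniform in $n$ and $a$) of Carleson boxes $S(b_{n,k})$ with $1-|b_{n,k}|\asymp 2^n(1-|a|)$, so $\mu(R_n)\lesssim I_1(\mu)\sum_k(\om(S(b_{n,k})))^s\lesssim I_1(\mu)(\om(S(c_n)))^s$ for any representative $c_n$ with $1-|c_n|\asymp 2^n(1-|a|)$.

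The decisive step is to control $\om(S(c_n))/\om(S(a))$ using Lemma~\ref{le:condinte}. Since $\om(S(b))\asymp(1-|b|)\int_{|b|}^1\om(s)\,ds$ by Lemma~\ref{le:cuadrado-tienda}, and $\tfrac{1-|c_n|}{1-|a|}\asymp 2^n$, Lemma~\ref{le:condinte}(i) gives $\om(S(c_n))/\om(S(a))\lesssim 2^{n(1+\beta)}$ for $\om\in\R$ with a fixed $\beta=\beta(\om)>0$, while Lemma~\ref{le:condinte}(ii) gives the same bound for $\om\in\I$ with $\beta>0$ as small as desired. Assembling the estimates,
$$\int_\D\left(\frac{1-|a|}{|1-\overline{a}z|}\right)^{\eta s}d\mu(z)\lesssim I_1(\mu)\,(\om(S(a)))^s\sum_{n=0}^{\infty}2^{ns(1+\beta-\eta)},$$
and this converges whenever $\eta>1+\beta$. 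Thus $\eta$ must be chosen sufficiently large when $\om\in\R$, whereas for $\om\in\I$ any $\eta>1$ works, matching the last sentence of the statement.

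For the little-oh equivalence with $s\ge 1$, the reverse implication is again immediate by restricting to $S(a)$. For the forward one, given $\varepsilon>0$, split the sum in $n$ at some $N_0=N_0(\varepsilon)$ chosen so that the tail $\sum_{n>N_0}2^{ns(1+\beta-\eta)}$ is below $\varepsilon$; for the head, the hypothesis yields $\mu(S(c_n))/(\om(S(c_n)))^s<\varepsilon$ uniformly in $0\le n\le N_0$ once $|a|$ is close enough to $1$, since each $|c_n|$ is then also close to $1$. The principal obstacle is the combinatorial bookkeeping: producing a covering of each $R_n$ by boundedly many Carleson boxes of comparable size, uniformly in $n$ and $a$, and pushing the estimates for $\om$ cleanly through this covering. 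All the analytic content lives in Lemma~\ref{le:condinte} and Lemma~\ref{le:cuadrado-tienda}; the remainder is geometric.
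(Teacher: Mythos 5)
Your proposal is correct and follows essentially the same route as the paper: both decompose $\D$ into dyadic annuli where $|1-\overline{a}z|\asymp 2^n(1-|a|)$ (the paper uses the sets $S_k(a)=\{z:|z-a/|a||<2^k(1-|a|)\}$, which amounts to the same thing), bound $\mu$ of each piece by $I_1(\mu)$ times the $\om$-measure of a comparable Carleson box, and then invoke Lemma~\ref{le:condinte} to get the geometric factor $2^{n(1+\beta)s}$, so that $\eta>\beta+1$ suffices in general and any $\eta>1$ works for $\om\in\I$. The only cosmetic difference is in the little-oh part, where you split the dyadic sum at a level $N_0$ while the paper splits $\D$ into $D(0,r)$ and its complement; both arguments are valid and equivalent.
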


\begin{proof}
Clearly,
    \begin{equation}\label{Eq:I1<I2}
    \begin{split}
    \int_\D\left(\frac{1}{\omega(S(a))}\left(\frac{1-|a|}{|1-\overline{a}z|}\right)^\eta\right)^s\,d\mu(z)
    &\ge\int_{S(a)}\left(\frac{1}{\omega(S(a))}\left(\frac{1-|a|}{|1-\overline{a}z|}\right)^\eta\right)^s\,d\mu(z)\\
    &\asymp\frac{\mu(S(a))}{(\omega(S(a)))^s},
    \end{split}
    \end{equation}
and consequently $I_1(\mu)\lesssim I_2(\mu)$.

For $a\in\D\setminus\{0\}$ and $k\in\N\cup\{0\}$, denote
    $$
    S_k(a)=\{z\in\D:|z-a/|a||<2^k(1-|a|)\}.
    $$
Then $|1-\overline{a}z|\asymp1-|a|$, if $z\in S_0(a)$, and
$|1-\overline{a}z|\asymp2^k(1-|a|)$, if $z\in S_k(a)\setminus
S_{k-1}(a)$ and $k\ge1$. Let $\eta>\b+1$, where $\b=\b(\omega)>0$
is from Lemma~\ref{le:condinte}. Then
    \begin{equation}\label{12}
    \begin{split}
    &\int_\D\left(\frac{1}{\omega(S(a))}\left(\frac{1-|a|}{|1-\overline{a}z|}\right)^\eta\right)^s\,d\mu(z)\\
    &=\frac{1}{(\omega(S(a)))^s}\sum_k\int_{S_k(a)\setminus S_{k-1}(a)}\left(\frac{1-|a|}{|1-\overline{a}z|}\right)^{\eta
    s}\,d\mu(z)\\
    &\lesssim\frac{1}{(\omega(S(a)))^s}\sum_k\frac{1}{2^{ks\eta}}\int_{S_k(a)}\,d\mu(z)\\
    &\lesssim\frac{I_1(\mu)}{(\omega(S(a)))^s}\sum_k\frac{1}{2^{ks\eta}}(2^k(1-|a|))^s\left(\int_{1-2^k(1-|a|)}^1\omega(s)s\,ds\right)^s\\
    &=\frac{I_1(\mu)}{(\omega(S(a)))^s}\sum_k\frac{1}{2^{ks(\eta-1)}}(1-|a|)^s\left(\int_{1-2^k(1-|a|)}^1\omega(s)s\,ds\right)^s\\
    &\lesssim\frac{I_1(\mu)}{(\omega(S(a)))^s}\sum_k\frac{1}{2^{ks(\eta-\b-1)}}(1-|a|)^s\left(\int_{|a|}^1\omega(s)s\,ds\right)^s\\
    &\le
    I_1(\mu)\sum_{k=0}^\infty\frac{1}{2^{ks(\eta-\b-1)}}\lesssim
    I_1(\mu),
    \end{split}
\end{equation}
because $\eta>\b+1$. It follows that $I_2(\mu)\lesssim I_1(\mu)$,
and thus the first assertion is proved.

Assume now that
    \begin{equation}\label{13}
    \lim_{|a|\to1^-}\frac{\mu(S(a))}{(\omega(S(a)))^s}=0,
    \end{equation}
and hence $I_1(\mu)<\infty$. Then \eqref{12} implies
    \begin{eqnarray*}
    &&\int_\D\left(\frac{1}{\omega(S(a))}\left(\frac{1-|a|}{|1-\overline{a}z|}\right)^\eta\right)^s\,d\mu(z)\\
    &&\lesssim\frac{1}{(\omega(S(a)))^s}\frac{(1-|a|)^{\eta s}}{(1-r)^{\eta s}}\mu(\D)
    +\sup_{a\in\D}\frac{\mu(S(a)\setminus D(0,r))}{(\omega(S(a)))^s}\\
    &&\lesssim\left(\frac{(1-|a|)^{\eta-1}}{\int_{|a|}^1\omega(s)s\,ds}\right)^s\frac{\mu(\D)}{(1-r)^{\eta s}}
    +\sup_{|a|\ge r}\frac{\mu(S(a))}{(\omega(S(a)))^s},
    \end{eqnarray*}
where the obtained upper bound can be made arbitrarily small by
fixing sufficiently large $r$ by \eqref{13} first and then
choosing $|a|$ to be close enough to 1, see the proof of
Lemma~\ref{le:condinte} for an argument similar to the last step.
The opposite implication follows by~\eqref{Eq:I1<I2}.
\end{proof}

Lemma~\ref{LemmaGlobalCarleson} yields an alternative description
of $\CC^{\a}(\om^\star)$ when $\a\ge 1$. Indeed, for each
$\om\in\I\cup\R$ and $\eta=\eta(\om)>1$ large
enough,\index{$\Vert\cdot\vert_{\CC^{\a}(\om^\star),\eta}$} the
quantity
    $$
    \|g\|^2_{\CC^{\a}(\om^\star),\eta}
    =|g(0)|^2+ \sup_{a\in\D}\int_\D\left(\frac{1}{\omega(S(a))}\left(\frac{1-|a|}{|1-\overline{a}z|}\right)^\eta\right)^\a\,
    |g'(z)|^2\om^\star(z)\,dA(z)
    $$
is equivalent to $\|g\|^2_{\CC^{\a}(\om^\star)}$. We also deduce
that $g\in \CC^{\a}_0(\om^\star)$ if and only if
    $$
    \lim_{|a|\to 1^-}\int_\D\left(\frac{1}{\omega(S(a))}\left(\frac{1-|a|}{|1-\overline{a}z|}\right)^\eta\right)^\a\,
    |g'(z)|^2\om^\star(z)\,dA(z)=0.
    $$

Recall that an inner function is called
\emph{trivial}\index{trivial inner function} if it is a finite
Blaschke product. It is known that only inner functions in $\VMOA$
are the trivial ones~\cite{GiBMO}, but this is not true in the
case of $\B_0$~\cite{Bishop1990}.\index{$\B_0$} Therefore it is
natural to ask whether or not
$\CC^1_0(\om^\star)$\index{$\CC^1_0(\om^\star)$} contains
non-trivial inner functions when $\om\in\I$? The next result gives
an affirmative answer to this question and offers an alternative
proof for the first assertion in
Proposition~\ref{pr:blochcppcero}(E) under the additional
assumption on the monotonicity of
$\psi_\om(r)/(1-r)$.\index{$\VMOA$}

\begin{proposition}\label{pr:inner}
If $\om\in\I$ such that $\om(r)$ is essentially increasing and
$\psi_\om(r)/(1-r)$ is increasing, then there are non-trivial
inner functions in $\CC^1_0(\om^\star)$.
\end{proposition}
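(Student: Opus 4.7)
I would construct a non-trivial Blaschke product $B$ with zeros $\{z_n\}$ in $\D$ distributed in a way adapted to the weight $\om$, in the spirit of Bishop's construction of Blaschke products in $\B_0$. Specifically, I would pick $z_n = r_n e^{i\theta_n}$ with $\{r_n\} \subset (0,1)$ increasing to $1$ at a rate calibrated so that $\int_{r_n}^1 \om(s)\,ds$ decays rapidly in $n$ (for instance geometrically with a sufficiently small ratio, or faster if required), and with angles $\{\theta_n\}$ spread along $\T$ so that the pseudohyperbolic distances between distinct $z_n$'s are bounded below. Since the construction gives $\sum_n(1-r_n)<\infty$, the product $B = \prod_n \frac{\overline{z_n}}{|z_n|}\vp_{z_n}$ defines a non-trivial inner function; the rate of sparsity of $\{r_n\}$ is the free parameter to be tuned.

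The verification that $B \in \CC^1_0(\om^\star)$ then reduces, by Theorem~\ref{th:cm}(ii), to showing
$$\lim_{|I|\to 0}\frac{1}{\om(S(I))}\int_{S(I)}|B'(z)|^2\om^\star(z)\,dA(z) = 0.$$
I plan to bound the inner integral by decomposing $S(I)$ as the union of pseudohyperbolic discs $\Delta(z_n, r_0)$ around each zero $z_n \in S(I)$ (for a fixed small $r_0 \in (0,1)$), plus a residual set. On each such disc, the identity $|B'(z)| = |\vp_{z_n}'(z)||B_n(z)|$ with $B_n = B/\vp_{z_n}$ and the uniform boundedness of $|B_n|$ yield $\int_{\Delta(z_n, r_0)}|B'|^2\om^\star\,dA \lesssim \om(S(z_n))$, via Lemma~\ref{le:cuadrado-tienda}. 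On the residual set, the spatial separation of the zeros gives a pointwise decay of $|B'|$ strictly faster than the generic Bloch rate $1/(1-|z|)$, so that the contribution of the residual set is negligible compared with $\om(S(I))$.

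The main obstacle is to verify that $\sum_{z_n \in S(I)} \om(S(z_n)) = o(\om(S(I)))$ as $|I|\to 0$, uniformly in the position of $I\subset\T$. This is where the two monotonicity hypotheses of the statement enter decisively: the essential monotonicity of $\om$ allows one to compare sums like $\sum_{r_n \ge 1-|I|}(1-r_n)\om(r_n)$ against the integral $\int_{1-|I|}^1 \om(s)\,ds$, while the monotonicity of $\psi_\om(r)/(1-r)$, which is a quantitative form of $\om \in \I$, produces the rapid growth of $\psi_\om(r)/(1-r)$ to infinity needed to force a geometric-series dominance of a single leading term in that sum. A careful balance between the sparsity of $\{r_n\}$ and these monotonicity properties then yields the desired vanishing quotient, completing the verification.
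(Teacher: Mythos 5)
Your construction cannot work, and the ``main obstacle'' you flag at the end is not a technical difficulty but a genuine impossibility. For any Blaschke product $B$ with zeros $\{z_n\}$ one has
$$
|B'(z_n)|(1-|z_n|^2)=\prod_{m\ne n}|\vp_{z_m}(z_n)|=|B_n(z_n)|,
$$
and for the configuration you prescribe (radii decaying at least geometrically, arguments spread so that the zeros are uniformly separated) the sequence is interpolating, so $\inf_n|B_n(z_n)|=\delta>0$; making the radii sparser only pushes $|B_n(z_n)|$ closer to $1$. Subharmonicity of $|B'|^2$ then gives $\int_{\Delta(z_n,\e)}|B'|^2\,dA\gtrsim(1-|z_n|)^2|B'(z_n)|^2\gtrsim\delta^2$, and since $\om^\star(z)\asymp\om^\star(z_n)$ on $\Delta(z_n,\e)$ and $\Delta(z_n,\e)\subset S(J_n)$ for an interval $J_n$ concentric with $I_{z_n}$ of length $3|I_{z_n}|$, Lemmas~\ref{le:cuadrado-tienda} and~\ref{le:condinte} yield
$$
\frac{\int_{S(J_n)}|B'(z)|^2\om^\star(z)\,dA(z)}{\om(S(J_n))}\gtrsim\frac{\om^\star(z_n)\,\delta^2}{\om(S(z_n))}\gtrsim\delta^2>0
$$
along intervals with $|J_n|\to0$, so $B\notin\CC^1_0(\om^\star)$. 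Equivalently: $\CC^1_0(\om^\star)\subset\B_0$ by Proposition~\ref{pr:blochcppcero}(B), and a Blaschke product lies in the little Bloch space only if $|B_n(z_n)|\to0$, i.e.\ only if its zeros cluster hyperbolically --- the opposite of separation. Your key sub-claim $\sum_{z_n\in S(I)}\om(S(z_n))=o(\om(S(I)))$ already fails for $I=I_{z_N}$, where the single term $n=N$ contributes $\asymp\om(S(I))$, and here the matching lower bound for the integral shows this is not an artifact of a crude estimate.

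The paper sidesteps the hand construction entirely: it sets $\Phi(t)^2=t/\bigl(\psi_\om(1-t)\log\frac{e}{\int_{1-t}^1\om(s)\,ds}\bigr)$, checks that $\Phi$ is increasing, that $\int_0^1\Phi^2(t)t^{-1}\,dt=\infty$, and that $t\int_t^1\Phi(s)s^{-2}\,ds\lesssim\Phi(t)$ (this is where both monotonicity hypotheses enter), and then invokes Aleksandrov--Anderson--Nicolau \cite{AlAnNi} to produce a non-trivial inner function $I$ with $(1-|z|^2)|I'(z)|/(1-|I(z)|^2)\lesssim\Phi(1-|z|)$. This gives $M^2_\infty(r,I')=\op\left(\om(r)/\om^\star(r)\right)$, and the vanishing Carleson condition of Theorem~\ref{th:cm}(ii) follows by integrating that radial bound over $S(I)$, exactly as in the proof of Proposition~\ref{pr:blochcppcero}(E). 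If you insist on an explicit Blaschke product, its zeros must accumulate in the hyperbolic metric (Bishop-type clustering), which is a substantially more delicate construction than the separated one you propose.
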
\index{$\CC^1_0(\om^\star)$}

\begin{proof}
We may assume without loss of generality that $\int_0^1
\om(r)\,dr<1$. We will show that there exists a non-trivial inner
function $I$ such that
    \begin{equation}\label{j22}
    M_\infty(r,I')\lesssim
    \frac{1}{\left((1-r)\psi_\om(r)\log\left(\frac{e}{\int_{r}^1 \om(s)\,ds}\right)\right)^{1/2}},\quad r\in (0,1),
    \end{equation}
and therefore $I\in\CC^1_0(\om^\star)$ by
Theorem~\ref{th:cm}.\index{$\CC^1_0(\om^\star)$}

Consider the auxiliary function $\Phi$ defined by
    $$
    \Phi^2(r)=\frac{r}{\psi_\om(1-r)\log\left(\frac{e}{\int_{1-r}^1 \om(s)\,ds}\right)},\quad r\in
    (0,1],\quad\Phi(0)=0.
    $$
It is clear that $\Phi$ is continuous and increasing since
$\psi_\om(r)/(1-r)$ is increasing on $(0,1)$. Further,
    \begin{equation*}
    \int_0^1\frac{\Phi^2(t)}{t}\,dt=\int_0^1 \frac{1}{\psi_\om(r)\log\left(\frac{e}{\int_{r}^1
    \om(s)\,ds}\right)}\,dr=\infty.
    \end{equation*}
By using the assumption on $\om$ and the monotonicity
of~$h(r)=r\log\frac{e}{r}$, we obtain
    \begin{equation*}
    \begin{split}
    &t\int_t^1\frac{\Phi(s)}{s^2}\,ds+t\Phi(1)\le t\int_t^1\frac{\Phi(s)}{s^2}\,ds+\Phi(t)\\
    &\lesssim t\int_{0}^{1-t}\frac{dr}
    {\left((1-r)^3\psi_\om(r)\log\left(\frac{e}{\int_{r}^1 \om(s)\,ds}\right)\right)^{1/2}}+ \Phi(t)\\
    &\lesssim t\om^{1/2}(1-t)\int_{0}^{1-t}\frac{dr}
    {\left((1-r)^3\left(\int_{r}^1 \om(s)\,ds\right)\log\left(\frac{e}{\int_{r}^1 \om(s)\,ds}\right)\right)^{1/2}}
    + \Phi(t)\\
    &\le\frac{t}{\left(\psi_\om(1-t)\log\left(\frac{e}{\int_{1-t}^1 \om(s)\,ds}\right)\right)^{1/2}}\int_{0}^{1-t}(1-r)^{-3/2}\,dr
    + \Phi(t)\lesssim\Phi(t).
    \end{split}
    \end{equation*}
Consequently, \cite[Theorem~6(b)]{AlAnNi} implies that there
exists a non-trivial inner function $I$ such that
    $$
    (1-|z|^2)\frac{|I'(z)|}{1-|I(z)|^2}\lesssim\Phi(1-|z|),\quad
    z\in\D.
    $$
This implies \eqref{j22} and finishes the proof.
\end{proof}

It is natural to expect that polynomials are not dense in
$\CC^1(\omega^\star)$.\index{$\CC^1(\om^\star)$} Indeed we will
prove that the closure of polynomials in
$\left(\CC^1(\omega^\star),\|\cdot\|_{\CC^1(\omega^\star)}\right)$
is nothing else
but~$\CC_0^1(\omega^\star)$.\index{$\CC^1(\om^\star)$}\index{$\CC^1_0(\om^\star)$}

\begin{proposition}\label{density}
Let $\om\in\I\cup\R$ and $g\in\CC^1(\omega^\star)$. Then the
following assertions are equivalent:
\begin{itemize}
\item[\rm(i)] $g\in \CC^1_0(\omega^\star)$;

\item[\rm(ii)] $\lim_{r\to 1^-}
\|g-g_r\|_{\CC^1(\omega^\star)}=0$;

\item[\rm(iii)] There is a sequence of polynomials $\{P_n\}$ such
that
$\displaystyle\lim_{n\to\infty}\|g-P_n\|_{\CC^1(\omega^\star)}=0$.
\end{itemize}\index{$\CC^1_0(\om^\star)$}
\end{proposition}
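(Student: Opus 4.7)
The plan is to prove (ii) $\Rightarrow$ (iii) $\Rightarrow$ (i) $\Rightarrow$ (ii). The implications (ii) $\Rightarrow$ (iii) and (iii) $\Rightarrow$ (i) rest on one simple observation: combining $\om^\star(z)\asymp(1-|z|)\int_{|z|}^1\om(s)s\,ds$ from Lemma~\ref{le:cuadrado-tienda} with the monotonicity of the inner integral yields $\int_{S(I)}\om^\star(z)\,dA(z) \lesssim |I|^2\om(S(I))$. For (iii) $\Rightarrow$ (i), any polynomial $P$ thus satisfies $\int_{S(I)}|P'|^2\om^\star\,dA \lesssim \|P'\|_{H^\infty}^2|I|^2\om(S(I))$ and so lies in $\CC^1_0(\om^\star)$; since $\CC^1_0(\om^\star)$ is closed in $\CC^1(\om^\star)$ by Proposition~\ref{pr:blochcppcero}(A), every $\CC^1(\om^\star)$-limit of polynomials belongs to $\CC^1_0(\om^\star)$. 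For (ii) $\Rightarrow$ (iii), each $g_r$ extends analytically across $\overline{\D}$, so its Taylor polynomials $P_{r,N}$ satisfy $\|g_r'-P_{r,N}'\|_{H^\infty}\to 0$, and the same estimate gives $\|g_r-P_{r,N}\|_{\CC^1(\om^\star)}\to 0$; a diagonal argument with (ii) then produces polynomial approximants of $g$.

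The heart of the proof is (i) $\Rightarrow$ (ii). Writing $h_r=g-g_r$ (so $h_r(0)=0$), the task is to show $\sup_{I\subset\T}\om(S(I))^{-1}\int_{S(I)}|h_r'|^2\om^\star\,dA\to 0$ as $r\to 1^-$. Given $\e>0$, I would pick $\delta_0\in(0,1/2)$ from (i) so that $|J|<\delta_0$ forces $\int_{S(J)}|g'|^2\om^\star\,dA<\e\om(S(J))$, and then split arcs into three regimes. In the first regime, $|I|\ge\delta_0/2$, the quantity $\om(S(I))$ is bounded below by a positive constant $C_\delta$ by the radiality of $\om$, so it suffices to control $\|h_r'\|_{A^2_{\om^\star}}$; here one uses that $\om^\star\in\R$ (Lemma~\ref{le:sc1}) and that $g'\in A^2_{\om^\star}$ (applying the $\CC^1(\om^\star)$ bound with $I=\T$), together with the dilation limit \eqref{densdilat} for radial weights, to conclude $\|(g')_r-g'\|_{A^2_{\om^\star}}\to 0$ and hence $\|h_r'\|_{A^2_{\om^\star}}\to 0$.

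For arcs with $|I|<\delta_0/2$, the inequality $|h_r'|^2\le 2|g'|^2+2|g_r'|^2$ reduces matters to the two summands; the first is handled by the choice of $\delta_0$, while the second, after the substitution $w=rz$ and the easy inequality $\om^\star(w/r)\le\om^\star(w)$, becomes $\int_{rS(I)}|g'|^2\om^\star\,dA\le\int_{S(J)}|g'|^2\om^\star\,dA$, where $J$ is the arc concentric with $I$ of length $|J|=1-r+r|I|$. When $|I|\ge 1-r$ one has $|J|\le 2|I|<\delta_0$, and Lemma~\ref{le:condinte} gives $\om(S(J))\lesssim\om(S(I))$, finishing this case. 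The main obstacle is the remaining subcase $|I|<1-r$, where $|J|$ is of order $1-r$, much larger than $|I|$, so that the Carleson-square comparison fails entirely. The remedy is to invoke $\CC^1_0(\om^\star)\subset\mathcal{B}_0$ from Proposition~\ref{pr:blochcppcero}(B): setting $\eta(\rho)=\sup_{|w|\ge\rho}(1-|w|)|g'(w)|\to 0$ as $\rho\to 1^-$, the constraint $|I|<1-r$ forces $|rz|\ge r(1-|I|)>r^2$ for $z\in S(I)$, which yields $|g'(rz)|\le\eta(r^2)/(1-|rz|)\le\eta(r^2)/(1-r)$; combining with $\int_{S(I)}\om^\star\,dA\lesssim|I|^2\om(S(I))\le(1-r)^2\om(S(I))$ gives $\int_{S(I)}|g_r'|^2\om^\star\,dA\lesssim\eta(r^2)^2\om(S(I))$, which is $o(\om(S(I)))$. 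Choosing $r_0$ close enough to $1$ so that $\eta(r^2)^2$ and $\|h_r'\|_{A^2_{\om^\star}}^2/C_\delta$ are both less than $\e$ for all $r\ge r_0$ completes the proof.
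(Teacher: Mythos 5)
Your proposal is correct, and for the main implication (i)$\Rightarrow$(ii) it takes a genuinely different route from the paper. The paper reformulates the $\CC^1(\om^\star)$-seminorm through the Littlewood--Paley identity as $\sup_{a}\|T_{g-g_r}(f_{a,2})\|_{A^2_\om}$ for the test functions $f_{a,2}$ of \eqref{testfunctions}, and then controls this operator quantity by splitting according to whether $|a|\le 1/(2-r)$ (where the dilation inclusion $\Gamma(ru)\subset\Gamma(u)$ and the estimate \eqref{78} apply) or $|a|\ge 1/(2-r)$ (where the pointwise bound $|g'(a)|^2(1-|a|)^2<\e$ together with Lemma~\ref{Lemma:Zhu-type} for $\om^\star\in\R$ finishes the job). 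You instead estimate the Carleson quotient directly, splitting arcs into $|I|\ge\delta_0/2$ (handled by the norm convergence $\|h_r'\|_{A^2_{\om^\star}}\to0$, which follows from \eqref{densdilat} applied to the radial weight $\om^\star$ and the uniform lower bound on $\om(S(I))$), $1-r\le|I|<\delta_0/2$ (handled by the pushforward $rS(I)\subset S(J)$ with $|J|\le 2|I|$ and the doubling from Lemma~\ref{le:condinte}), and the critical regime $|I|<1-r$ (handled by $\CC^1_0(\om^\star)\subset\B_0$ from Proposition~\ref{pr:blochcppcero}(B) and the bound $\int_{S(I)}\om^\star\,dA\lesssim|I|^2\om(S(I))$). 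The two decompositions are structurally parallel — in both, dilation inclusion handles moderate scales and (little-)Bloch behavior handles the scale finer than $1-r$ — but your version avoids the detour through $T_g$, the test functions, Lemma~\ref{LemmaGlobalCarleson} and Lemma~\ref{Lemma:Zhu-type}, at the price of a slightly more hands-on three-way case analysis; the paper's version reuses machinery already built for Theorem~\ref{Thm-integration-operator-1}. Your treatment of (ii)$\Rightarrow$(iii)$\Rightarrow$(i) agrees in substance with the paper's appeal to the closedness of $\CC^1_0(\om^\star)$ and the continuous embedding of bounded (here: polynomial) symbols, with the useful explicit observation that $\int_{S(I)}\om^\star\,dA\lesssim|I|^2\om(S(I))$ forces every polynomial into $\CC^1_0(\om^\star)$.
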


\begin{proof}
Since $\CC_0^1(\om^\star)$ is a closed subspace of
$\CC^1(\om^\star)$ by Proposition~\ref{pr:blochcppcero}(A),
and~$H^\infty$ is continuously embedded in $\CC^1(\om^\star)$,
standard arguments give the implications
(ii)$\Rightarrow$(iii)$\Rightarrow$(i). It remains to prove
(i)$\Rightarrow$(ii). To do this, take $\gamma=\gamma(\om)>0$
large enough and consider the test functions
    $$
    f_{a,2}(z)=\left(\frac{1}{\omega(S(a))}\left(\frac{1-|a|}{|1-\overline{a}z|}\right)^{\gamma+1}\right)^{1/2},
    $$
defined in \eqref{testfunctions}. Since
$g\in\CC^1_0(\omega^\star)$ by the assumption,
Theorem~\ref{th:cm}, Lemma~\ref{LemmaGlobalCarleson} and
Theorem~\ref{ThmLittlewood-Paley} yield
    \begin{equation}\label{j16}\index{$\CC^1_0(\om^\star)$}
    \lim_{|a|\to 1^-}\|T_g(f_{a,2})\|_{A^2_\om}^2\asymp\lim_{|a|\to 1^-}\int_\D|f_{a,2}(z)|^2
    |g'(z)|^2\,\om^\star(z)dA(z)=0.
    \end{equation}
Since
    $$
    \lim_{r\to 1^-}\|g-g_r\|_{\CC^1(\omega^\star)}\asymp\lim_{r\to 1^-}
    \sup_{a\in\D}\|T_{g-g_r}(f_{a,2})\|_{A^2_\om}^2,
    $$
it is enough to prove
    \begin{equation}\label{j17}\index{$\CC^1_0(\om^\star)$}
    \lim_{r\to 1^-}
    \sup_{a\in\D}\|T_{g-g_r}(f_{a,2})\|_{A^2_\om}^2=0.
    \end{equation}
Let $\e>0$ be given. By \eqref{j16} and
Proposition~\ref{pr:blochcppcero}(B), there exists $r_0\in(1/2,1)$
such that
    \begin{equation}\label{j18}
    \|T_g(f_{a,2})\|_{A^2_\om}^2<\e,\quad |a|\ge r_0,
    \end{equation}
and
    \begin{equation}\label{j19}
    |g'(a)|^2(1-|a|)^2<\e,\quad|a|\ge r_0.
    \end{equation}
If $r_0\le|a|\le1/(2-r)$, then \eqref{78} and \eqref{j18} yield
    \begin{equation*}
    \begin{split}
    \|T_{g-g_r}(f_{a,2})\|_{A^2_\om}^2\lesssim\|T_{g}(f_{a,2})\|_{A^2_\om}^2+\|T_{g_r}(f_{a,2})\|_{A^2_\om}^2
    \lesssim\|T_{g}(f_{a,2})\|_{A^2_\om}^2<\e.
    \end{split}
    \end{equation*}
If $|a|\ge\max\{r_0,1/(2-r)\}$, then \eqref{j19} and
Lemma~\ref{Lemma:Zhu-type}(i), applied to $\om^\star\in\R$, give
    \begin{equation*}
    \begin{split}
    \|T_{g-g_r}(f_{a,2})\|_{A^2_\om}^2&\lesssim\|T_{g}(f_{a,2})\|_{A^2_\om}^2+\|T_{g_r}(f_{a,2})\|_{A^2_\om}^2\\
    &\lesssim\e+M_\infty^2(r,g')\int_\D|f_{a,2}(z)|^2\om^\star(z)\,dA(z)\\
    &\lesssim\e+M_\infty^2\left(2-\frac{1}{|a|},g'\right)(1-|a|)^2\lesssim\e.
    \end{split}
    \end{equation*}
Consequently,
    \begin{equation}
    \begin{split}\label{j21}
    \sup_{r\in(0,1),|a|\ge
    r_0}\|T_{g-g_r}(f_{a,2})\|_{A^2_\om}^2\lesssim\e,
    \end{split}
    \end{equation}
and since clearly,
    \begin{equation*}
    \lim_{r\to 1^-}\sup_{|a|<r_0}\|T_{g-g_r}(f_{a,2})\|_{A^2_\om}^2
    \lesssim\frac{1}{\omega(S(r_0))}\lim_{r\to 1^-}\|g-g_r\|_{A^2_\om}^2=0,
    \end{equation*}
we obtain \eqref{j17}.
\end{proof}

A space $X\subset\H(\D)$ equipped with a seminorm $\rho$ is called
\emph{conformally invariant}\index{conformally invariant} or
\emph{M\"{o}bius invariant}\index{M\"{o}bius invariant} if there
exists a constant $C>0$ such that
    $$
    \sup_{\vp}\rho(g\circ\varphi)\le C\rho(g),\quad
    g\in X,
    $$
where the supremum is taken on all M\"{o}bius transformations
$\vp$ of $\D$ onto itself.

$\BMOA$\index{$\BMOA$} and $\B$\index{$\B$} are conformally
invariant spaces. This is not necessarily true for
$\CC^1(\omega^\star)$ if $\omega\in\I$ as the following result
shows. Typical examples satisfying the hypothesis of
Proposition~\ref{PropConformallyInvariant} are the weights
\eqref{Eq:PesosEnV-alpha} and $v_\a$,\index{$v_\a(r)$}
$1<\a<\infty$.\index{$\CC^1_0(\om^\star)$}

\begin{proposition}\label{PropConformallyInvariant}
Let $\om\in\I$ such that both $\om(r)$ and
$\frac{\psi_\om(r)}{1-r}$ are essentially increasing on $[0,1)$,
and
    \begin{equation}\label{41}
    \int_r^1\omega(s)s\,ds\lesssim\int_{\frac{2r}{1+r^2}}^1\omega(s)s\,ds,\quad0\le
    r<1.
    \end{equation}
Then $\CC^1(\omega^\star)$ is not conformally
invariant.\index{$\CC^1(\om^\star)$}
\end{proposition}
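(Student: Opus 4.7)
I argue by contradiction. Suppose there is a constant $C>0$ such that $\|g\circ\varphi\|_{\CC^1(\om^\star)}\le C\|g\|_{\CC^1(\om^\star)}$ for every $g\in\CC^1(\om^\star)$ and every disc automorphism $\varphi$. As a test function I take the lacunary series $g$ constructed in the proof of Proposition~\ref{pr:blochcppcero}(E); the hypotheses on $\om$ assumed there are precisely the present ones, so $g\in\CC^1_0(\om^\star)\subset\CC^1(\om^\star)$, with coefficients calibrated so that
\begin{equation*}
M_\infty^2(r,g')\asymp \frac{1}{(1-r)\,\psi_\om(r)\,\log\bigl(e/\int_r^1\om(s)\,ds\bigr)}.
\end{equation*}

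For real $r\in(0,1)$ close to $1$, the change of variable $w=\varphi_r(z)$ yields, for any arc $I\subset\T$,
\begin{equation*}
\int_{S(I)}|(g\circ\varphi_r)'(z)|^2\om^\star(z)\,dA(z)=\int_{\varphi_r(S(I))}|g'(w)|^2\om^\star(\varphi_r(w))\,dA(w),
\end{equation*}
and I will test the invariance hypothesis against the Carleson square $S(-r)=S(I_{-r})$, an arc of length $1-r$ centered at $-1$. The identity $1-\varphi_r(w)=(1-r)(1+w)/(1-rw)$ together with the computation $\varphi_r(-r)=r'$, where $r'=2r/(1+r^2)$ satisfies $1-r'\asymp(1-r)^2$, shows that $\varphi_r(S(-r))$ is comparable to the much smaller Carleson box $S(r')$. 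Moreover, the identity $1-|\varphi_r(w)|^2=(1-r^2)(1-|w|^2)/|1-rw|^2$ gives $1-|\varphi_r(w)|\asymp(1-|w|)/(1-r)$ for $w$ in that image region. Hypothesis \eqref{41} enters here in a crucial way: radiality of $\om$ yields $\om(S(-r))=\om(S(r))\asymp(1-r)\int_r^1\om(s)\,ds$, whereas \eqref{41} is equivalent to $\om(S(r))\asymp(1-r)^{-1}\om(S(r'))$, reflecting the scaling of $\om$ between the two second-order scales determined by $\varphi_r$.

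The remaining, and main, step is to lower-bound $\int_{\varphi_r(S(-r))}|g'(w)|^2\om^\star(\varphi_r(w))\,dA(w)$ by a quantity that grows faster than $\om(S(r))$ as $r\to1^-$. I carry this out through a lacunary block decomposition of $g'$: the dominant contribution to $|g'(w)|^2$ on the annulus $\{|w|\asymp r'\}$ comes from the dyadic index $k$ with $2^k\asymp(1-r')^{-1}\asymp(1-r)^{-2}$; combining the resulting pointwise lower bound with the estimate $\om^\star(\varphi_r(w))\gtrsim\om^\star(r)(1-|w|)/(1-r)$, which follows from the second paragraph and the essential monotonicity of both $\om(r)$ and $\psi_\om(r)/(1-r)$, produces a gain of order $1/(1-r)$ in the relevant Carleson ratio and so forces the norm $\|g\circ\varphi_r\|_{\CC^1(\om^\star)}$ to blow up. The hardest point, and the true obstacle, is the sharpness of this lacunary block lower bound: one must carefully match the dyadic scale $2^k\asymp(1-r)^{-2}$ to the image region $S(r')$, control the boundary contributions using the monotonicity assumptions, and invoke \eqref{41} at the very end to guarantee that the pullback weight $\om^\star\circ\varphi_r$ still retains enough mass on $S(r')$ for the gain to be realized after integration.
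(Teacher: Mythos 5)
Your choice of test function (the lacunary series from Proposition~\ref{pr:blochcppcero}(E)) and of the automorphisms $\varphi_r$ matches the paper, but the central quantitative claim — a gain of order $1/(1-r)$ in the Carleson ratio at the square $S(-r)$ — does not hold, and the argument collapses at exactly the point you flag as "the hardest". The gain and the loss cancel. Indeed, for $w\in\varphi_r(S(-r))\approx S(r')$ one has $\om^\star(\varphi_r(w))\asymp\frac{1-|w|}{1-r}\,\widehat\om\bigl(1-\frac{1-|w|}{1-r}\bigr)\lesssim\frac{1}{1-r}\,\om^\star(w)$ (here \eqref{41} gives $\widehat\om(|w|)\asymp\widehat\om(r)$ on $S(r')$), while the same hypothesis \eqref{41} forces $\om(S(-r))=\om(S(r))\asymp(1-r)^{-1}\om(S(r'))$. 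Hence
\begin{equation*}
\frac{1}{\om(S(-r))}\int_{S(-r)}|(g\circ\varphi_r)'(z)|^2\om^\star(z)\,dA(z)
\lesssim\frac{1}{\om(S(r'))}\int_{S(r')}|g'(w)|^2\om^\star(w)\,dA(w)\le\|g\|^2_{\CC^1(\om^\star)},
\end{equation*}
so this square can never witness unboundedness; in fact the left-hand side tends to $0$ because $g\in\CC^1_0(\om^\star)$ (a direct computation with $\om=v_\a$ gives $\asymp1/\log\log\frac{1}{1-r}$). The conceptual reason is that the property of $g$ that must be exploited is global, namely $\|g\|_{H^2}=\infty$, i.e.\ $\int_\D|g'(z)|^2(1-|z|)\,dA(z)=\infty$: each dyadic scale contributes a summand tending to zero, and only the sum over \emph{all} scales up to $\log\frac{1}{1-r}$ diverges. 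A single Carleson square $S(-r)$, which under $\varphi_r$ sees only the scales $1-|w|\le(1-r)^2$ of $g$, cannot detect this divergence, no matter how sharp a lacunary lower bound on small arcs you prove.

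The paper's proof sidesteps the choice of a witnessing square by working with the equivalent global form of the norm from Lemma~\ref{LemmaGlobalCarleson} (with $s=1$, $\eta=2$): it bounds $\|g\circ\varphi_a\|^2_{\CC^1(\om^\star)}$ from below by $\frac{(1-|a|)^2}{\om(S(a))}\int_\D\frac{|(g\circ\varphi_a)'(z)|^2}{|1-\overline{a}z|^2}\om(S(z))\,dA(z)$, changes variables, and uses the identity $1-\overline a\varphi_a(\z)=\frac{1-|a|^2}{1-\overline a\z}$ together with \eqref{41} to show that $\frac{\om(S(\varphi_a(\z)))}{\om(S(a))}\cdot\frac{|1-\overline a\z|^2}{1-|\z|}\gtrsim1$ for all $|\z|\le|a|$. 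This reduces the lower bound to $\int_{D(0,|a|)}|g'(\z)|^2(1-|\z|)\,dA(\z)$, which tends to $\infty$ precisely because $g\notin H^2$. So \eqref{41} is used to keep a weight ratio bounded \emph{below} over the whole disc $D(0,|a|)$ — aggregating every scale — rather than to produce a gain on one small box. If you want to salvage a square-based argument, you would have to test on the dilated squares $S_k(a)$ centered at $a/|a|$ (where $|\varphi_a'|$ is large) and sum their contributions, which is essentially what Lemma~\ref{LemmaGlobalCarleson} packages for you.
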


\begin{proof}
Let $\omega\in\I$ be as in the assumptions. Recall first that
$g\in\CC^1(\omega^\star)$ if and only if
    $$
    \sup_{b\in\D}\frac{(1-|b|)^2}{\omega(S(b))}\int_\D\frac{|g'(z)|^2}{|1-\overline{b}z|^2}\omega(S(z))\,dA(z)<\infty
    $$
by Lemma~\ref{le:cuadrado-tienda} and
Lemma~\ref{LemmaGlobalCarleson} with $s=1$ and $\eta=2$. Let
$g\in\CC^1(\omega^\star)\setminus H^2$ be the function constructed
in the proof of Proposition~\ref{pr:blochcpp}(E). Then
    \begin{equation}\label{43}\index{$\CC^1(\om^\star)$}
    \begin{split}
    &\sup_{b\in\D}\frac{(1-|b|)^2}{\omega(S(b))}\int_\D\frac{|(g\circ\vp_a)'(z)|^2}{|1-\overline{b}z|^2}\omega(S(z))\,dA(z)\\
    &\ge\frac{(1-|a|)^2}{\omega(S(a))}\int_\D\frac{|g'(\zeta)|^2}{|1-\overline{a}\vp_a(\zeta)|^2}
    \omega(S(\vp_a(\zeta)))\,dA(\z)\\
    &\ge\int_{D(0,|a|)}|g'(\z)|^2(1-|\z|)\left(\frac{\omega(S(\vp_a(\zeta)))}{\omega(S(a))}\frac{|1-\overline{a}\z|^2}{1-|\z|}\right)dA(\z),
    \end{split}
    \end{equation}
where
    \begin{equation*}
    \begin{split}
    \frac{\omega(S(\vp_a(\zeta)))}{\omega(S(a))}\frac{|1-\overline{a}\z|^2}{1-|\z|}
    &=\frac{(1-|\vp_a(\zeta)|)\int_{|\vp_a(\zeta)|}^1\omega(s)s\,ds}{(1-|a|)\int_{|a|}^1\omega(s)s\,ds}
    \frac{|1-\overline{a}\z|^2}{1-|\z|}\\
    &\gtrsim\frac{\int_{\frac{2|a|}{1+|a|^2}}^1\omega(s)s\,ds}{\int_{|a|}^1\omega(s)s\,ds}\gtrsim1,\quad
    |\z|\le|a|,
    \end{split}
    \end{equation*}
by Lemma~\ref{le:cuadrado-tienda} and \eqref{41}. Since $g\not\in
H^2$, the assertion follows by letting $|a|\to1^-$ in \eqref{43}.
\end{proof}

\chapter{Schatten Classes of the Integral Operator $T_g$ on $A^2_\om$}\label{Sec:Schatten}\index{Schatten class}

Let $H$ be a separable Hilbert space. For any non-negative integer
$n$, the $n$:th \emph{singular value} of a \index{singular value}
bounded operator $T:H\to H$ is defined by
    $$
    \lambda_n(T)=\inf\left\{\|T-R\|:\,\text{rank}(R)\le
    n\right\},\index{$\lambda_n(T)$}
    $$
where $\|\cdot\|$ denotes the operator norm. It is clear that
    $$
    \|T\|=\lambda_0(T)\ge\lambda_1(T)\ge\lambda_2(T)\ge\dots\ge 0.
    $$
For $0<p<\infty$, the \emph{Schatten $p$-class} $\mathcal{S}_
p(H)$ \index{$\mathcal{S}_ p(H)$} consists of those compact
operators $T:H\to H$ whose sequence of singular values
$\{\lambda_n\}_{n=0}^\infty$ belongs to the space~$\ell^p$ of
$p$-summable sequences. For $1\le p<\infty$, the Schatten
$p$-class $\mathcal{S}_p(H)$ is a Banach space with respect to the
norm
$|T|_p=\|\{\lambda_n\}_{n=0}^\infty\|_{\ell^p}$.\index{$\vert\cdot\vert_p$}
Therefore all finite rank operators belong to every
$\mathcal{S}_p(H)$, and the membership of an operator in
$\mathcal{S}_p(H)$ measures in some sense the size of the
operator. We refer to~\cite{DS} and \cite[Chapter~1]{Zhu} for more
information about $\mathcal{S}_p(H)$.

The membership of the integral operator $T_g$ in the Schatten
$p$-class $\SSS_p(H)$ has been characterized when $H$ is either
the Hardy space $H^2$~\cite{AS0}, the classical weighted Bergman
space $A^2_\alpha$~\cite{AS}, or the weighted Bergman space
$A^2_\om$, where $\omega$ is a rapidly decreasing
weight~\cite{PP,PPVal}\index{rapidly decreasing weight} or a
Bekoll\'e-Bonami weight~\cite{OC} (the case
$p\ge2$).\index{Bekoll\'e-Bonami weight} We note that if $\om$ is
regular and $p\ge2$, then \cite[Theorem~5.1]{OC} yields a
characterization of those $g\in\H(\D)$ for which
$T_g\in\SSS_p(A^2_\om)$. This because by Lemma~\ref{le:RAp}(i),
for each $p_0=p_0(\omega)>1$ there exists
$\eta=\eta(p_0,\omega)>-1$ such that $\frac{\om(z)}{(1-|z|)^\eta}$
belongs to $B_{p_0}(\eta)$.\index{Bekoll\'e-Bonami
weight}\index{$B_{p_0}(\eta)$}

The main purpose of this chapter is to provide a complete
description of those symbols $g\in\H(\D)$ for which the integral
operator $T_g$ belongs to the Schatten $p$-class
$\SSS_p(A^2_\om)$, where $\omega\in\I\cup\R$. For this aim, we
recall that, for $1<p<\infty$, the \emph{Besov space} $B_p$
consists of those $g\in\H(\D)$\index{Besov
space}\index{$\Vert\cdot\Vert_{B_p}$} such that
    $$
    \|g\|_{B_p}^p=\int_{\D}|g'(z)|^p(1-|z|^2)^{p-2}\,dA(z)+|g(0)|^p<\infty.
    $$

\begin{theorem}\label{th:shI}
Let $\om\in\I\cup\R$ and $g\in\H(\D)$. If $p>1$, then $T_g\in
\SSS_p(A^2_\om)$ if and only if $g\in B_p$. If  $0<p\le 1$, then
$T_g\in \SSS_p(A^2_\om)$ if and only if $g$ is constant.
\end{theorem}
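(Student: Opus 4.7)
The plan is to reduce the problem to the Schatten class membership of a Toeplitz-type operator on the Hilbert space $A^2_\om$, and then to exploit the fact that $\om^\star$ is regular (Lemma~\ref{le:sc1}) to obtain clean kernel asymptotics even when $\om\in\I$ oscillates wildly. The starting point is the Littlewood-Paley identity \eqref{eq:LP2}, which gives
\begin{equation*}
\|T_g(f)\|_{A^2_\om}^2=4\int_\D|f(z)|^2|g'(z)|^2\om^\star(z)\,dA(z),
\end{equation*}
so that the operator $T_g^{*}T_g$ is (unitarily equivalent to) the Toeplitz operator $T_{\mu_g}$ on $A^2_\om$ with symbol measure $d\mu_g(z)=|g'(z)|^2\om^\star(z)\,dA(z)$. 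Thus $T_g\in\SSS_p(A^2_\om)$ if and only if $T_{\mu_g}\in\SSS_{p/2}(A^2_\om)$, and the whole question is translated into one about Schatten classes of positive Toeplitz operators induced by a $\mu$-measure on the Dirichlet-type norm induced by $\om^\star$, which is the setting announced at the end of the introduction.

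Next, I would develop the Toeplitz calculus à la Luecking~\cite{Lu87}. Two ingredients are needed. First, sharp two-sided estimates for the reproducing kernel $K_z$ of $A^2_\om$: combining Lemma~\ref{Lemma:Zhu-type} with the fact that $\om^\star\in\R$ (Lemma~\ref{le:sc1}) one gets $\|K_z\|_{A^2_\om}^2\asymp 1/\om(S(z))$ and $|K_z(\zeta)|\lesssim 1/\om(S(z))$ on $\De(z,r)$. Second, a discretization over a pseudohyperbolic $\delta$-lattice $\{a_j\}$: for any positive $\mu$ on $\D$ and $0<s<\infty$, the Toeplitz operator $T_\mu\in\SSS_s(A^2_\om)$ is equivalent, up to constants depending only on $\delta$ and $\om$, to
\begin{equation*}
\sum_j\left(\frac{\mu(\De(a_j,\delta))}{\om(S(a_j))}\right)^{s}<\infty\quad(s\ge1),\qquad
\int_\D\left(\frac{\mu(\De(z,\delta))}{\om(S(z))}\right)^{s}\frac{dA(z)}{(1-|z|)^2}<\infty\quad(s\ge1),
\end{equation*}
and, for $0<s<1$, only the sufficiency half of such a discrete/continuous criterion survives, the necessary condition being obtained through Berezin-type duality against atoms in $A^2_\om$. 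The whole scheme will be stated once for an abstract complex Borel measure, and then applied with $\mu=\mu_g$.

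With this criterion in hand, the case $p>1$ reduces, via $s=p/2$, to checking that
\begin{equation*}
\int_\D\left(\frac{\int_{\De(z,\delta)}|g'(\zeta)|^2\om^\star(\zeta)\,dA(\zeta)}{\om(S(z))}\right)^{p/2}\frac{dA(z)}{(1-|z|)^2}<\infty
\end{equation*}
is equivalent to $g\in B_p$. Here the subharmonicity of $|g'|^2$ and the regularity properties $\om^\star(z)\asymp\om(S(z))\asymp(1-|z|)^2\int_{|z|}^1\om(s)s\,ds$ (Lemmas~\ref{le:cuadrado-tienda} and~\ref{le:sc1}) collapse the quotient to $(1-|z|)^2|g'(z)|^2$, so the condition becomes $\int_\D|g'(z)|^p(1-|z|^2)^{p-2}\,dA(z)<\infty$, which is exactly $g\in B_p$. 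The cancellation of $\om$ is a pleasant feature and the reason the Besov condition does not depend on the weight. For $0<p\le1$, one uses $\SSS_p\subset\SSS_1$ and computes the trace of the positive operator $T_g^*T_g$ through the diagonal of the Berezin transform; the cancellation above reduces it to $\int_\D|g'(z)|^2\,dA(z)$ times a factor that forces $g'\equiv0$ when the $\SSS_{p}$-quasinorm is finite, by the same threshold obstruction that makes $B_p$ trivial for $p\le1$.

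The main technical obstacle will be the case $1<p<2$ (equivalently $s=p/2<1$ in the Toeplitz reformulation), where the Luecking-type criterion is only one-sided in its most direct form. I expect to handle it by decomposing $g'$ through atomic/molecular decompositions of $A^2_{\om^\star}$ inherited from the regularity $\om^\star\in\R$, and tracing the Schatten norms through these atoms; duality between $\SSS_{p/2}$ and $\SSS_{(p/2)'}$ will not be available here, so the proof has to run directly. A secondary difficulty will be ensuring that the kernel estimates and the decomposition constants do not deteriorate as $\om$ oscillates, which is precisely why every step is formulated in terms of $\om^\star$ rather than $\om$ itself.
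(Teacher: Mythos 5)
Your reduction $T_g^\star T_g=T_{\mu_g}$ with $d\mu_g=|g'|^2\om^\star\,dA$, the Luecking-type lattice criterion for the resulting Toeplitz operator, and the cancellation $\om^\star(z)\asymp\om(S(z))$ that collapses everything to the weight-free Besov condition are exactly the paper's route for $2\le p<\infty$ (Corollary~\ref{co:shIpmayor2} via Theorem~\ref{th:sufschpmayor2}, proved by complex interpolation between $s=1$ and a bounded endpoint), and your lattice/test-vector scheme for necessity is the paper's Proposition~\ref{pr:necshI}. The genuine gap is the range $1<p<2$: there $s=p/2<1$, your two-sided Toeplitz criterion is unavailable, and the ``atomic/molecular decomposition'' plan is a hope, not an argument — Schatten classes below the trace class for Toeplitz operators are exactly where the standard machinery breaks, and nothing in your outline supplies the missing half. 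The paper avoids the issue entirely by \emph{not} passing to $T_{\mu_g}$ for $1<p<2$: sufficiency is proved directly at the level of $T_g$ by estimating $\sum_n|\langle T_g(e_n),e_n\rangle_{A^2_\om}|^p$ for an arbitrary orthonormal set, using two applications of H\"older, the kernel bound $\sum_n|e_n(z)|^2\le\|B^\om_z\|^2$, and the derivative estimate $\|\partial_{\bar z}B^\om_z\|\lesssim\|B^\om_z\|/(1-|z|)$ (Lemma~\ref{le:sp1}); see Proposition~\ref{pr:shR1}. You should adopt something of this kind rather than trying to extend the Toeplitz characterization below $s=1$.

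The case $0<p\le1$ is also broken as stated. The trace of the positive operator $T_g^\star T_g=T_{\mu_g}$ computes $|T_g|_2^2\asymp\int_\D\|B^\om_z\|^2_{A^2_\om}|g'(z)|^2\om^\star(z)\,dA(z)\asymp\|g\|_{B_2}^2$, i.e.\ it only detects membership in $B_2$, which is finite for many non-constant $g$; no ``threshold obstruction'' appears from that computation. What is actually needed is the $\SSS_1$-necessity run on $T_g$ itself: from $T_g\in\SSS_p\subset\SSS_1$ one extracts, by conjugating with the bounded operator sending an orthonormal set to the normalized antiderivatives $\phi^\om_{a_j}$ of the $A^2_{\om^\star}$-kernels over a uniformly discrete sequence, the bound $\sum_j|g'(a_j)|(1-|a_j|)<\infty$, hence $\int_\D|g'(z)|(1-|z|^2)^{-1}\,dA(z)<\infty$, which forces $g'\equiv0$. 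Your framework, which channels everything through $T_{\mu_g}$ at exponent $s=p/2\le\tfrac12$, cannot produce this.
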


The assertion in Theorem~\ref{th:shI} is by no means a surprise.
This because of the inclusions $H^2\subset A^2_\om\subset A^2_\a$,
where $\a=\a(\om)\in(-1,\infty)$, and the fact that the membership
of $T_g$ in both $\SSS_p(H^2)$ and $\SSS_p(A^2_\a)$ is
characterized by the condition $g\in B_p$.

This chapter is mainly devoted to proving Theorem~\ref{th:shI}. It
appears that the hardest task here is to make the proof work for
the rapidly increasing weights, the regular weights could be
treated in an alternative way. This again illustrates the
phenomenon that in the natural boundary between $H^2$ and each
$A^2_\a$, given by the spaces $A^2_\om$ with $\om\in\I$, things
work in a different way than in the classical weighted Bergman
space $A^2_\a$. It appears that on the way to the proof of
Theorem~\ref{th:shI} for $\om\in\I$, we are forced to study the
Toeplitz operator~$T_\mu$, induced by a complex Borel measure
$\mu$ and a reproducing kernel, in certain Dirichlet type spaces
that are defined by using the associated weight $\om^\star$. Our
principal findings on $T_\mu$ are gathered in
Theorem~\ref{th:sufschpmayor2}, whose proof occupies an important
part of the chapter.

\section{Preliminary results}

We will need several definitions and auxiliary lemmas. We first
observe that by Lemma~\ref{le:suf1} each point evaluation $L_
a(f)=f(a)$\index{point evaluation}\index{$L_a(f)$} is a bounded
linear functional on $A^p_\om$ for all $0<p<\infty$ and
$\om\in\I\cup\R$. Therefore there exist reproducing
kernels\index{reproducing kernel} $B^\omega_a\in
A^2_\om$\index{$B^\omega_a$} with $\|L_ a\|=\|B^\omega_
a\|_{A^2_\om}$ such that
    \begin{equation}\label{rk}
    f(a)=\langle f, B^\omega_ a\rangle_{A^2_\om} =\int_{\D}
    f(z)\,\overline{B^\omega_a(z)}\,\om(z)\,dA(z),\quad f\in A^2_\om.
    \end{equation}
We will write $b^\omega_a=\frac{B^\omega_ a}{\|B^\omega_
a\|_{A^2_\om}}$ for the normalized reproducing
kernels.\index{$b^\omega_a$} In fact, Lemma~\ref{le:suf1} shows
that the norm convergence implies the uniform convergence on
compact subsets of $\D$. It follows that the space $A^p_\om$ is a
Banach space for all $1\le p<\infty$ and $\om\in\I\cup\R$.

\begin{lemma}\label{kernels}
If $\om\in\I\cup\R$, then the reproducing kernel $B^\omega_a\in
A^2_\om$ satisfies
    \begin{equation}\label{eq:kernels}
    \|B^\omega_a\|^2_{A^2_\om}\asymp \frac{1}{\om\left(S(a)\right)}
    \end{equation}
for all $a\in\D$.
\end{lemma}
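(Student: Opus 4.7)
The plan is to exploit the Riesz identity $\|B^\omega_a\|^2_{A^2_\om}=B^\omega_a(a)=\|L_a\|^2_{(A^2_\om)^\star}$, which reduces matters to estimating the norm of the point evaluation functional $L_a:f\mapsto f(a)$ from above and below in terms of $\om(S(a))$.

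For the upper bound $\|B^\omega_a\|^2_{A^2_\om}\lesssim 1/\om(S(a))$, I would invoke Lemma~\ref{le:suf1} with $\alpha=2$ to obtain
    $$
    |f(a)|^2\le C\,M_\omega(|f|^2)(a)=C\sup_{I:\,a\in S(I)}\frac{1}{\om(S(I))}\int_{S(I)}|f(\xi)|^2\om(\xi)\,dA(\xi).
    $$
Trivially each integral on the right is bounded by $\|f\|^2_{A^2_\om}$, and since $I_a$ is (up to a fixed constant) the smallest arc with $a\in S(I_a)$, Lemma~\ref{le:condinte} together with Lemma~\ref{le:cuadrado-tienda} gives $\om(S(I))\gtrsim\om(S(a))$ for every $I$ with $a\in S(I)$. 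Hence $|f(a)|^2\lesssim\|f\|^2_{A^2_\om}/\om(S(a))$, so $\|L_a\|^2\lesssim 1/\om(S(a))$. For $|a|\le 1/2$ the bound is trivial from subharmonicity since $\om(S(a))\asymp 1$ in this range, so the estimate is global.

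For the lower bound I would test against the functions $F_{a,2}$ provided by Lemma~\ref{testfunctions1}. By \eqref{eq:tf1} and \eqref{eq:tf2} we have $|F_{a,2}(a)|\asymp 1$ and $\|F_{a,2}\|^2_{A^2_\om}\asymp\om(S(a))$, whence
    $$
    \|L_a\|^2\ge\frac{|F_{a,2}(a)|^2}{\|F_{a,2}\|^2_{A^2_\om}}\asymp\frac{1}{\om(S(a))}.
    $$
Combining the two estimates yields \eqref{eq:kernels}. The argument is essentially frictionless because the heavy lifting — the pointwise domination by the weighted maximal function and the construction of well-calibrated test functions — has already been carried out in Lemma~\ref{le:suf1} and Lemma~\ref{testfunctions1}; the only point requiring a brief remark is the reduction of the supremum defining $M_\om$ at the single point $a$ to the canonical Carleson box $S(a)$, which follows from the monotonicity of $\om(S(I))$ together with Lemma~\ref{le:condinte}.
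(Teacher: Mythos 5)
Your proof is correct and follows essentially the same route as the paper: the lower bound via the test functions $F_{a,2}$ of Lemma~\ref{testfunctions1}, and the upper bound via the pointwise maximal-function estimate of Lemma~\ref{le:suf1} combined with the observation that $\om(S(I))\ge\om(S(a))$ for every $I$ with $a\in S(I)$ (which for a radial weight is immediate from the monotonicity of $|I|\mapsto\om(S(I))$). The extra remarks about $|a|\le 1/2$ and the appeal to Lemmas~\ref{le:condinte} and~\ref{le:cuadrado-tienda} are harmless but not needed.
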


\begin{proof}
Consider the functions $F_{a,2}$ of Lemma~\ref{testfunctions1}.
The relations \eqref{eq:tf1} and \eqref{eq:tf2} yield
    \begin{equation}\label{8}
    \|B^\omega_ a\|_{A^2_\om}=\|L_
    a\|\ge\frac{|F_{a,2}(a)|}{\|F_{a,2}\|_{A^2_\om}}\asymp\frac{1}{\om\left(S(a)\right)^{1/2}}
    \end{equation}
for all $a\in\D$. Moreover, Lemma~\ref{le:suf1} gives
    $$
    |f(a)|^2\lesssim M_\om(|f|^2)(a)\le \left(\sup_{I:a\in
    S(I)}\frac{1}{\om\left(S(I)\right)}\right)\|f\|^2_{A^2_\om}=\frac{\|f\|^2_{A^2_\om}}{\om\left(S(a)\right)},
    $$\index{weighted maximal
function}\index{$M_\om(\vp)$} and so
    $$
    \|B^\omega_ a\|_{A^2_\om}=\|L_ a\|\lesssim\frac{1}{\om\left(S(a)\right)^{1/2}}.
    $$
This together with \eqref{8} yields \eqref{eq:kernels}.
\end{proof}

It is known that if $\{e_n\}$ is an orthonormal basis of a Hilbert
space $H\subset\H(\D)$ with reproducing kernel $K_z$,
then\index{reproducing kernel}
    \begin{equation}\label{RKformula}
    K_z(\zeta)=\sum_n e_ n(\zeta)\,\overline{e_ n(z)}
    \end{equation}
for all $z,\z\in\D$, see e.g.~\cite[Theorem~4.19]{Zhu}. It follows
that
    \begin{equation}\label{eqRK1}
    \sum_n|e_n(z)|^2\le\|K_ z\|_{H}^2
    \end{equation}
for any orthonormal set $\{e_n\}$ of $H$, and equality in
\eqref{eqRK1} holds if $\{e_n\}$ is a basis of~$H$.

For $n\in\N\cup\{0\}$ and a radial weight $\omega$, set
    $$
    \om_n=\int_0^1r^{2n+1}\om(r)\,dr.\index{$\om_n$}
    $$

\begin{lemma}\label{le:sc2} If $0<\alpha<\infty$, $n\in\N$ and
$\om\in\I\cup\R$, then
    \begin{equation}\label{4}
    \int_0^1r^n\om_{\alpha-2}^\star(r)\,dr\asymp
    \frac{\om^\star\left(1-\frac{1}{n+1}\right)}{(n+1)^{\alpha-1}},
    \end{equation}
and
    \begin{equation}\label{5}
    (n+1)^{2-\alpha}\om^\star_n\asymp\int_0^1r^{2n+1}\om_{\alpha-2}^\star(r)\,dr.
    \end{equation}
\end{lemma}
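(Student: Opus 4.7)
The plan is to deduce both asymptotics from the regularity results already available for $\omega^\star$ and $\omega^\star_{\alpha-2}$ together with the moment estimate Lemma~\ref{le:momentos}. First I would recall that by Lemma~\ref{le:sc1} the weight $\omega^\star_{\alpha-2}$ belongs to $\R$ for every $\alpha>0$ and every $\omega\in\I\cup\R$, and that in particular $\omega^\star=\omega^\star_0\in\R$. This immediately provides the two-sided bound
\[
\int_r^1\omega^\star_{\alpha-2}(s)\,ds\asymp(1-r)\,\omega^\star_{\alpha-2}(r)=(1-r)^{\alpha-1}\omega^\star(r),
\]
coming from $\psi_{\omega^\star_{\alpha-2}}(r)\asymp(1-r)$, and the analogous statement for $\omega^\star$.

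For \eqref{4}, I would apply Lemma~\ref{le:momentos} to $\omega^\star_{\alpha-2}\in\R$ with $x=n+1$ to get
\[
\int_0^1 r^{n+1}\omega^\star_{\alpha-2}(r)\,dr\asymp\int_{1-\frac{1}{n+1}}^1\omega^\star_{\alpha-2}(s)\,ds\asymp\frac{\omega^\star\!\left(1-\frac{1}{n+1}\right)}{(n+1)^{\alpha-1}},
\]
and then observe that $\int_0^1 r^n\omega^\star_{\alpha-2}(r)\,dr\asymp\int_0^1 r^{n+1}\omega^\star_{\alpha-2}(r)\,dr$ since both integrals are concentrated near $r=1$ (or, equivalently, by the argument in the proof of Lemma~\ref{le:momentos}, which only uses $\omega^\star_{\alpha-2}\in\R$). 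This yields the first equivalence.

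For \eqref{5}, I would apply \eqref{4} with $n$ replaced by $2n+1$ to obtain
\[
\int_0^1 r^{2n+1}\omega^\star_{\alpha-2}(r)\,dr\asymp\frac{\omega^\star\!\left(1-\frac{1}{2n+2}\right)}{(2n+2)^{\alpha-1}}.
\]
On the other hand, applying Lemma~\ref{le:momentos} to $\omega^\star\in\R$ (with $x=2n+1$) and then using $\psi_{\omega^\star}(r)\asymp 1-r$ gives
\[
\omega^\star_n=\int_0^1 r^{2n+1}\omega^\star(r)\,dr\asymp\int_{1-\frac{1}{2n+1}}^1\omega^\star(s)\,ds\asymp\frac{\omega^\star\!\left(1-\frac{1}{2n+1}\right)}{2n+1}.
\]
Multiplying by $(n+1)^{2-\alpha}$ and matching the two expressions then reduces \eqref{5} to the comparison $\omega^\star(1-\tfrac{1}{2n+1})\asymp\omega^\star(1-\tfrac{1}{2n+2})$, which follows from the local smoothness \eqref{eq:r2} for $\omega^\star\in\R$.

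No step should pose any serious obstacle; the only thing to handle with a little care is the passage between the moment with weight $r^n$ and the one with $r^{n+1}$, and the replacement $\tfrac{1}{2n+1}\leftrightarrow\tfrac{1}{2n+2}$ inside the argument of $\omega^\star$. Both are routine consequences of the regularity of $\omega^\star$ and $\omega^\star_{\alpha-2}$ established in Lemma~\ref{le:sc1}, so the proof is essentially a bookkeeping exercise built on Lemmas~\ref{le:momentos} and~\ref{le:sc1}.
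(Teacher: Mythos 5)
Your proof is correct, and it reaches the conclusion by a somewhat different route than the paper. Both arguments rest on the same foundation, namely that $\om^\star_{\alpha-2}\in\R$ by Lemma~\ref{le:sc1}; but where you then invoke Lemma~\ref{le:momentos} as a black box (legitimately, since that lemma applies to any weight in $\R$, hence to $\om^\star_{\alpha-2}$ and to $\om^\star$ itself) and finish with $\psi_{\om^\star_{\alpha-2}}(r)\asymp 1-r$, the paper instead redoes the moment estimate by hand: it splits $\int_0^1 r^n\om^\star_{\alpha-2}(r)\,dr$ at $r=1-\frac{1}{n+1}$, controls the inner piece using the fact that $(1-r)^{-\beta}\om^\star(r)$ is essentially increasing for some $\beta=\beta(\om)>1$ together with the Beta-integral $\int_0^1 r^n(1-r)^{\gamma}\,dr\asymp(n+1)^{-\gamma-1}$, and gets the matching lower bound from the tail piece. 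For \eqref{5} the paper uses a small self-consistency trick — applying \eqref{4} with $\alpha=2$ to express $\om^\star(1-\frac1{n+1})$ as $(n+1)\int_0^1 r^n\om^\star(r)\,dr$ and then substituting $2n+1$ for $n$ — whereas you recompute $\om^\star_n$ directly via Lemma~\ref{le:momentos} and reconcile the two arguments $1-\frac{1}{2n+1}$ and $1-\frac{1}{2n+2}$ through \eqref{eq:r2}. Your version is shorter and arguably cleaner because it reuses machinery already proved; the paper's version is self-contained at this point and exposes the constant $\beta$ explicitly, which it reuses elsewhere. The only steps in your write-up that deserve one extra line are the comparison of the $r^n$ and $r^{n+1}$ moments (cleanest via Lemma~\ref{le:momentos} twice plus Lemma~\ref{le:condinte}(i) to compare $\int_{1-1/n}^1$ with $\int_{1-1/(n+1)}^1$) and the replacement of $1-\frac{1}{2n+1}$ by $1-\frac{1}{2n+2}$ inside $\om^\star$, which indeed follows from \eqref{eq:r2} since $\frac{1}{2n+1}-\frac{1}{2n+2}\le\frac12\cdot\frac{1}{2n+1}$; neither is a gap.
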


\begin{proof}
Now $\om_{\alpha-2}^\star\in\R$ by Lemma~\ref{le:sc1}, and so
\eqref{eq:r1} yields
    \begin{equation}\begin{split}\label{eq:sc21}
    \int_{1-\frac{1}{n+1}}^1r^n(1-r)^{\alpha-2}\om^\star(r)\,dr\lesssim
    \frac{\om^\star\left(1-\frac{1}{n+1}\right)}{(n+1)^{\alpha-1}}.
    \end{split}\end{equation}
Moreover, by Lemma~\ref{le:cuadrado-tienda} and the proof of
Lemma~\ref{le:condinte}, there exists $\b=\b(\omega)>1$ such that
$(1-r)^{-\beta}\om^\star(r)$ is essentially increasing on
$[1/2,1)$. It follows that
    \begin{equation}\begin{split}\label{eq:sc22}
    &\int_0^{1-\frac{1}{n+1}}r^n(1-r)^{\alpha-2}\om^\star(r)\,dr\\
    &\quad\lesssim
    (n+1)^\beta\om^\star\left(1-\frac{1}{n+1}\right)\int_0^{1-\frac{1}{n+1}}r^n(1-r)^{\beta+\alpha-2}\,dr\\
    &\quad\lesssim
    (n+1)^\beta\om^\star\left(1-\frac{1}{n+1}\right)\int_0^{1}r^n(1-r)^{\beta+\alpha-2}\,dr\asymp
    \frac{\om^\star\left(1-\frac{1}{n+1}\right)}{(n+1)^{\alpha-1}}
    \end{split}\end{equation}
and
    \begin{equation}\begin{split}\label{eq:sc23}
    &\int_0^1r^n(1-r)^{\alpha-2}\om^\star(r)\,dr\ge\int_{1-\frac{1}{n+1}}^1r^n(1-r)^{\alpha-2}\om^\star(r)\,dr\\
    &\quad\gtrsim
    (n+1)^\beta\om^\star\left(1-\frac{1}{n+1}\right)\int_{1-\frac{1}{n+1}}^1(1-r)^{\beta+\alpha-2}\,dr\asymp
    \frac{\om^\star\left(1-\frac{1}{n+1}\right)}{(n+1)^{\alpha-1}}.
    \end{split}\end{equation}
Combining \eqref{eq:sc21}--\eqref{eq:sc23} we obtain \eqref{4}.

The second assertion is a consequence of \eqref{4}. Namely,
\eqref{4} with $\alpha=2$ gives
    $$
    \omega^\star\left(1-\frac1{n+1}\right)\asymp(n+1)\int_0^1
    r^n\omega^\star(r)\,dr,
    $$
which combined with \eqref{4} yields
    $$
    \int_0^1r^n\omega_{\a-2}^\star(r)\,dr\asymp(n+1)^{2-\alpha}{\int_0^1r^n\omega^\star(r)\,dr}
    $$
for any $n\in\N$. By replacing $n$ by $2n+1$ we obtain~\eqref{5}.
\end{proof}

\begin{lemma}\label{le:Phi}
For $a\in\D$, $-\infty<\alpha<1$ and
$\omega\in\I\cup\R$,\index{$\Phi^{\om}_a$} define
    \begin{equation}\label{eq:Phi}
    \Phi^{\om_{-\a}}_a(z)=\overline{a}\int_0^z
    B^{\omega^\star_{-\a}}_{a}(\xi)\,d\xi,\quad
    \Phi^{\om_{-\a}}_a(0)=0,\quad z\in\D.
    \end{equation}
Then there exists $\delta=\delta(\alpha,\om)>0$ such that
    $$
    |\Phi^{\om_{-\a}}_a(z)|\gtrsim\frac{1}{(1-|a|)\om_{-\a}^\star(a)},\quad z\in
    D(a,\delta(1-|a|)),
    $$
for all $|a|\ge\frac{1}{2}$.
\end{lemma}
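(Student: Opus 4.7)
The plan is to combine an explicit power series representation of $\Phi^{\om_{-\a}}_a$ with an elementary derivative bound to localize the estimate near $z=a$. First I would verify that $\omega^\star_{-\a}$ is a regular weight: applying Lemma~\ref{le:sc1} with $\alpha=2-\a>1$ shows $\omega^\star_{-\a}=\omega^\star_{(2-\a)-2}\in\R$ and $(\omega^\star_{-\a})^\star(z)\asymp\omega^\star_{2-\a}(z)=(1-|z|)^{2}\omega^\star_{-\a}(z)$ for $|z|\ge\tfrac12$. Combined with Lemmas~\ref{le:cuadrado-tienda} and~\ref{kernels}, this yields
\[
B^{\omega^\star_{-\a}}_a(a)=\|B^{\omega^\star_{-\a}}_a\|^2_{A^2_{\omega^\star_{-\a}}}\asymp\frac{1}{\omega^\star_{-\a}(S(a))}\asymp\frac{1}{(1-|a|)^{2}\omega^\star_{-\a}(a)}.
\]

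Next, since $\omega^\star_{-\a}$ is radial, the normalized monomials $e_n(z)=z^{n}/\sqrt{2(\omega^\star_{-\a})_n}$ form an orthonormal basis of $A^{2}_{\omega^\star_{-\a}}$, so \eqref{RKformula} gives $B^{\omega^\star_{-\a}}_a(\xi)=\sum_{n\ge0}(\overline a\xi)^{n}/(2(\omega^\star_{-\a})_n)$. Term-by-term integration then produces the explicit expansion
\[
\Phi^{\om_{-\a}}_a(z)=\sum_{k=1}^{\infty}\frac{(\overline a z)^{k}}{2k\,(\omega^\star_{-\a})_{k-1}},
\]
with positive coefficients. Evaluating at $z=a$ and using Lemma~\ref{le:momentos} together with the fact that $\omega^\star_{-\a}\in\R$ to obtain $(\omega^\star_{-\a})_{k-1}\asymp(1/k)\,\omega^\star_{-\a}(1-1/k)$, I would restrict the sum to indices $k\in[N,2N]$ with $N=\lfloor 1/(1-|a|)\rfloor$. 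On this block $|a|^{2k}\asymp 1$, and the local smoothness \eqref{eq:r2} for $\omega^\star_{-\a}$ yields $\omega^\star_{-\a}(1-1/k)\asymp\omega^\star_{-\a}(a)$, so summing $\asymp N$ positive terms gives
\[
|\Phi^{\om_{-\a}}_a(a)|\gtrsim\frac{N}{\omega^\star_{-\a}(a)}\asymp\frac{1}{(1-|a|)\,\omega^\star_{-\a}(a)}.
\]

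To transfer this to the whole disc $D(a,\delta(1-|a|))$, I would control the derivative $(\Phi^{\om_{-\a}}_a)'(z)=\overline a\,B^{\omega^\star_{-\a}}_a(z)$. By Lemma~\ref{le:suf1} applied to $B^{\omega^\star_{-\a}}_a\in A^{2}_{\omega^\star_{-\a}}$ one has $|B^{\omega^\star_{-\a}}_a(\xi)|^{2}\lesssim \|B^{\omega^\star_{-\a}}_a\|^{2}/\omega^\star_{-\a}(S(\xi))$, and for $\xi\in D(a,(1-|a|)/2)$ the regularity of $\omega^\star_{-\a}$ gives $\omega^\star_{-\a}(S(\xi))\asymp\omega^\star_{-\a}(S(a))$, whence $|B^{\omega^\star_{-\a}}_a(\xi)|\lesssim B^{\omega^\star_{-\a}}_a(a)$. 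Integrating along the segment from $a$ to $z$ then yields
\[
|\Phi^{\om_{-\a}}_a(z)-\Phi^{\om_{-\a}}_a(a)|\le|z-a|\sup_{\xi\in[a,z]}|B^{\omega^\star_{-\a}}_a(\xi)|\lesssim\delta(1-|a|)B^{\omega^\star_{-\a}}_a(a)\asymp\frac{\delta}{(1-|a|)\,\omega^\star_{-\a}(a)},
\]
so choosing $\delta=\delta(\alpha,\om)$ small enough that this bound is at most half of the lower bound for $|\Phi^{\om_{-\a}}_a(a)|$ finishes the proof via the triangle inequality.

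The main obstacle I anticipate is the third step, the lower bound on $|\Phi^{\om_{-\a}}_a(a)|$. The coefficients $(\omega^\star_{-\a})_{k-1}$ must be estimated with some care: applying Lemma~\ref{le:momentos} gives them in terms of tail integrals of $\omega^\star_{-\a}$, and it is crucial that $\omega^\star_{-\a}\in\R$ (which provides both \eqref{eq:r2} and the comparison $\int_r^{1}\omega^\star_{-\a}(s)\,ds\asymp(1-r)\omega^\star_{-\a}(r)$), since the original weight $\om$ may belong to $\I$ and oscillate wildly, so any direct estimate using $\om$ would fail. Restricting the sum to $k\asymp 1/(1-|a|)$ is what converts the regularity of $\omega^\star_{-\a}$ into the sharp size $1/((1-|a|)\omega^\star_{-\a}(a))$.
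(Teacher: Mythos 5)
Your proposal is correct and follows essentially the same route as the paper's proof: the explicit power-series expansion of $\Phi^{\om_{-\a}}_a$, a lower bound for $\Phi^{\om_{-\a}}_a(a)$ obtained by localizing the series to indices $n\asymp 1/(1-|a|)$ and using the moment asymptotics for the regular weight $\om^\star_{-\a}$, and then a bound on $|B^{\om^\star_{-\a}}_a(\xi)|$ near $a$ combined with the triangle inequality for a small enough $\delta$. The only cosmetic differences are that you sum over the block $k\in[N,2N]$ instead of the full tail $n\ge N$, and you control $|B^{\om^\star_{-\a}}_a(\xi)|$ via Lemma~\ref{le:suf1} and the regularity of $\om^\star_{-\a}$ rather than via the Cauchy--Schwarz inequality on the kernel series as in the paper; both variants are valid.
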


\begin{proof}
Taking the orthonormal basis
    $
    \left\{\left(2\,(\omega_{-\a}^\star)_n\right)^{-1/2}z^n\right\}_{n=0}^\infty
    $
in $A^2_{\omega^\star_{-\a}}$ and using \eqref{RKformula} we
deduce
    $$
    \Phi^\om_a(z)=\sum_{n=0}^\infty\frac{(\overline{a}z)^{n+1}}{2(n+1)(\omega_{-\a}^\star)_n},\quad z\in\D.
    $$
Let $|a|\ge\frac{1}{2}$ be given and fix the integer $N\ge2$ such
that $1-\frac{1}{N}<|a|\le1-\frac{1}{N+1}$. Now
$\omega_{-\a}^\star\in\R$ by Lemma~\ref{le:sc1}, and hence
Lemma~\ref{le:sc2} and \eqref{eq:r2} yield
$(n+1)\,(\omega_{-\a}^\star)_n\asymp
(\omega_{-\a-1}^\star)_n\asymp\omega^\star_{-\a}(1-\frac1{n+1})$.
Since $\om^\star$ is decreasing, we deduce
    \begin{equation*}
    \begin{split}
    \Phi_a^{\om_{-\a}}(a) & \gtrsim\sum_{n=N}^\infty\frac{|a|^{2n+2}}{(\omega_{-\a-1}^\star)_n}
    \ge \frac{1}{(\omega_{-\a-1}^\star)_N}\sum_{n=N}^\infty|a|^{2n+2}
 \\ &    \gtrsim\frac{1}{\om^\star_{-\a}\left(1-\frac{1}{N+1}\right)}\sum_{n=N}^\infty|a|^{2n+2}\asymp\frac{1}{(1-|a|)\om_{-\a}^\star(a)}
    \end{split}
    \end{equation*}
for all $|a|\ge\frac12$.

Next, bearing in mind \eqref{RKformula}, Lemma~\ref{kernels},
\eqref{22} and Lemma~\ref{le:sc1}, we deduce
    \begin{equation}\label{86}
    B_\zeta^{\omega_{-\a}^\star}(\zeta)
    =\sum_{n=0}^\infty\frac{|\zeta|^{2n}}{2(\omega_{-\a}^\star)_n}
    =\|B_\zeta^{\omega_{-\a}^\star}\|_{A^2_{\om^\star_{-\a}}}^2
    \asymp \frac{1}{(1-|\zeta|)^2\omega_{-\a}^\star(\zeta)}
    \end{equation}
for each $\zeta\in\D$. Let now $z\in D(a,\delta(1-|a|))$, where
$|a|\ge\frac12$ and $0<\delta<1$. Then the Cauchy-Schwarz
inequality, \eqref{86}, the relation $1-|a|\asymp 1-|\zeta|$ for
all $\zeta\in[a,z]$, and the observation~(i) to
Lemma~\ref{le:condinte} yield
    \begin{equation*}
    \begin{split}
    |B_a^{\omega_{-\a}^\star}(\zeta)|
    &=\left|\sum_{n=0}^\infty\frac{\left(\overline{a}\zeta\right)^{n}}{2(\omega_{-\a}^\star)_n}\right|
    \le\left(\sum_{n=0}^\infty\frac{|a|^{2n}}{2(\omega_{-\a}^\star)_n}\right)^{\frac{1}{2}}
    \left(\sum_{n=0}^\infty\frac{|\zeta|^{2n}}{2(\omega_{-\a}^\star)_n}\right)^{\frac{1}{2}}\\
    &\asymp|B_a^{\omega_{-\a}^\star}(a)|^{\frac{1}{2}}\left(\frac{1}{(1-|\zeta|)^2\omega_{-\a}^\star(\zeta)}\right)^{\frac{1}{2}}
    \asymp |B_a^{\omega_{-\a}^\star}(a)|.
\end{split}
\end{equation*}
Consequently,
\begin{equation*}
    \begin{split}
    |\Phi_a^{\om_{-\a}}(z)-\Phi_a^{\om_{-\a}}(a)|&\le\max_{\zeta\in[a,z]}|(\Phi_a^{\om_{-\a}})'(\zeta)||z-a|
    \le\max_{\zeta\in[a,z]}|B_a^{\omega_{-\a}^\star}(\zeta)|\delta(1-|a|)\\
    &\lesssim|B_a^{\omega_{-\a}^\star}(a)|\delta(1-|a|)\asymp\frac{\delta}{(1-|a|)\omega_{-\a}^\star(a)}.
    \end{split}
    \end{equation*}
By choosing $\delta>0$ sufficiently small, we deduce
    $$
    |\Phi_a^{\om_{-\a}}(z)|\ge\Phi_a^{\om_{-\a}}(a)-|\Phi_a^{\om_{-\a}}(z)-\Phi_a^{\om_{-\a}}(a)|\gtrsim\frac{1}{(1-|a|)\om_{-\a}^\star(a)}
    $$
for all $z\in D(a,\delta(1-|a|))$.
\end{proof}

A sequence $\{a_k\}_{k=0}^\infty$ of points in $\D$ is called
\emph{uniformly discrete}\index{uniformly discrete sequence} if it
is separated in the pseudohyperbolic metric, that is, if there
exists a constant $\gamma>0$ such that
$\varrho(a_j,a_k)=\left|\frac{a_j-a_k}{1-\overline{a}_ja_k}\right|\ge\gamma$
for all $k$. For $0<\e<1$, a sequence $\{a_k\}_{k=0}^\infty$ is
called an \emph{$\e$-net}\index{$\e$-net} if $\D =
\bigcup_{k=0}^\infty \Delta(a_k,\e)$. A sequence
$\{a_k\}_{k=0}^\infty\subset\D$ is a
\emph{$\delta$-lattice}\index{$\delta$-lattice} if it is uniformly
discrete with constant $\gamma=\delta/5$ and if it is a
$5\delta$-net. With these preparations we are ready for the next
lemma.

\begin{lemma}\label{le:wo}
For $a\in\D$ and $\om\in\I\cup\R$,\index{$\phi^\om_{a}$} define
    $$
    \phi^\om_{a}(z)=\frac{\Phi^\om_{a}(z)}{\|B^{\om^\star}_a\|_{A^2_{\om^\star}}},\quad
    z\in\D.
    $$
Let $\{a_j\}$ be a uniformly discrete sequence and $\{e_ j\}$ be
an orthonormal set in~$A^2_\om$. Let $E=[\{e_ j\}]$ denote the
subspace generated by $\{e_ j\}$ and equipped with the norm of
$A^2_\omega$, and consider the linear operator
$\widetilde{J}:\,E\to A^2_\omega$, defined by
$\widetilde{J}(e_j)=\phi^\om_{a_j}$. If~$P$ is the orthogonal
projection from $A^2_\om$ to $E$, then $J=\widetilde{J}\circ P$ is
bounded on $A^2_\omega$.
\end{lemma}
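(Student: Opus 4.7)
The plan is to reduce the boundedness of $J=\widetilde J\circ P$ to that of $\widetilde J$ (since $P$ is an orthogonal projection and has norm at most one), and then to handle $\widetilde J$ by a duality argument that converts the question into a Carleson-measure estimate on $A^2_{\om^\star}$.

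First I would unwind the definition of $\phi^\om_a$ using the polarized Littlewood-Paley identity
\[
\langle f,g\rangle_{A^2_\om}=4\langle f',g'\rangle_{A^2_{\om^\star}}+\om(\D)\,f(0)\overline{g(0)},
\]
coming from \eqref{eq:LP2}. Since $\Phi^\om_a(0)=0$ and $(\Phi^\om_a)'(z)=\overline a\, B^{\om^\star}_a(z)$, the reproducing property in $A^2_{\om^\star}$ gives, for every $h\in A^2_\om$,
\[
\langle \Phi^\om_a,h\rangle_{A^2_\om}=4\overline a\,\langle B^{\om^\star}_a,h'\rangle_{A^2_{\om^\star}}=4\overline a\,\overline{h'(a)}.
\]
Combining this with Lemma~\ref{kernels} applied to the regular weight $\om^\star\in\R$ (see Lemma~\ref{le:sc1}) and the relation $\om^\star(S(a))\asymp (1-|a|)^2\om^\star(a)$, I obtain the key pointwise estimate
\[
|\langle \phi^\om_a,h\rangle_{A^2_\om}|^2\asymp |h'(a)|^2\,\om^\star(S(a)),\qquad a\in\D.
\]
In particular, the choice $h=\Phi^\om_a$ shows that $\sup_a\|\phi^\om_a\|_{A^2_\om}<\infty$.

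Next I would set up the duality argument. If $f=\sum_j c_je_j\in E$, then formally $\widetilde J f=\sum_j c_j\phi^\om_{a_j}$, and for any $h\in A^2_\om$ with $\|h\|_{A^2_\om}\le 1$ the Cauchy--Schwarz inequality gives
\[
|\langle \widetilde J f,h\rangle_{A^2_\om}|\le \Bigl(\sum_j|c_j|^2\Bigr)^{1/2}\Bigl(\sum_j|\langle \phi^\om_{a_j},h\rangle_{A^2_\om}|^2\Bigr)^{1/2}.
\]
By the estimate just established, the second factor is comparable to
\[
\Bigl(\sum_j |h'(a_j)|^2\,\om^\star(S(a_j))\Bigr)^{1/2}=\Bigl(\int_\D |h'(z)|^2\,d\mu(z)\Bigr)^{1/2},
\]
where $\mu=\sum_j \om^\star(S(a_j))\,\delta_{a_j}$. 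Thus matters reduce to proving that $\mu$ is a $2$-Carleson measure for $A^2_{\om^\star}$, for then Theorem~\ref{th:cm} combined with the Littlewood-Paley formula \eqref{eq:LP2} yields $\int_\D|h'|^2\,d\mu\lesssim\|h'\|^2_{A^2_{\om^\star}}\lesssim \|h\|^2_{A^2_\om}$, and hence $\|\widetilde J f\|_{A^2_\om}\lesssim \|f\|_{A^2_\om}$ by taking the supremum over $h$.

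The main (and essentially only) obstacle is checking the Carleson condition for $\mu$, and this is where I expect to spend real work. By Theorem~\ref{th:cm} applied to $\om^\star\in\R$, it suffices to show
\[
\mu(S(I))=\sum_{a_j\in S(I)}\om^\star(S(a_j))\lesssim \om^\star(S(I)),\qquad I\subset\T.
\]
Here I would use the uniform discreteness of $\{a_j\}$: for a fixed small $r>0$ depending on the separation constant $\gamma$, the pseudohyperbolic discs $\Delta(a_j,r)$ are pairwise disjoint, and $\om^\star(S(a_j))\asymp \om^\star(\Delta(a_j,r))$ because $\om^\star\in\R$ is locally comparable in such discs by \eqref{eq:r2}. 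Since each $\Delta(a_j,r)$ with $a_j\in S(I)$ is contained in an absolute dilate $S(cI)$ (with $c=c(r)$), the disjointness gives
\[
\sum_{a_j\in S(I)}\om^\star(\Delta(a_j,r))\le \om^\star(S(cI))\asymp \om^\star(S(I)),
\]
where the last comparability is a standard doubling-type property of regular weights that follows from Lemma~\ref{le:condinte}(i). This completes the verification and hence the proof.
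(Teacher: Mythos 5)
Your proof is correct, and its core is the paper's own argument: you reduce to $\widetilde J$ via the projection, compute $\langle\Phi^\om_a,h\rangle_{A^2_\om}=4\overline a\,\overline{h'(a)}$ from the polarized identity \eqref{eq:LP2} and the reproducing property of $B^{\om^\star}_a$ in $A^2_{\om^\star}$ (legitimate because $\om^\star\in\R$ by Lemma~\ref{le:sc1}), apply Cauchy--Schwarz, and use $\|B^{\om^\star}_{a_j}\|^{-2}_{A^2_{\om^\star}}\asymp\om^\star(S(a_j))$ — all exactly as in the paper. You diverge only in the last step, the bound on $\sum_j|h'(a_j)|^2\,\om^\star(S(a_j))$. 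The paper handles this directly: the sub-mean-value property of $|h'|^2$ on the pairwise disjoint discs $\Delta(a_j,\gamma/2)$, combined with \eqref{eq:r2} for $\om^\star$, gives $\lesssim\|h'\|^2_{A^2_{\om^\star}}\lesssim\|h\|^2_{A^2_\om}$ in a few lines, with no embedding theorem needed. You instead read the sum as $\int_\D|h'|^2\,d\mu$ for the atomic measure $\mu=\sum_j\om^\star(S(a_j))\,\delta_{a_j}$, verify the Carleson square condition by a purely geometric packing argument (disjointness of the $\Delta(a_j,r)$, the comparison $\om^\star(\Delta(a_j,r))\asymp\om^\star(S(a_j))$, and the doubling $\om^\star(S(cI))\lesssim\om^\star(S(I))$ from Lemma~\ref{le:condinte}), and then invoke Theorem~\ref{th:cm} for $\om^\star\in\R$. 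Both routes are sound; yours is a clean application of machinery already established in the monograph, while the paper's is more self-contained, replacing the maximal-function argument hidden inside Theorem~\ref{th:cm} by one explicit use of subharmonicity. The only cosmetic slip is the claimed two-sided comparability $|\langle\phi^\om_a,h\rangle|^2\asymp|h'(a)|^2\,\om^\star(S(a))$, which in fact carries a factor $|a|^2$ degenerating at the origin; since only the upper bound is used, this is harmless.
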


\begin{proof}
Let $\om\in\I\cup\R$. Since $P:A^2_\omega\to E$ is bounded, it
suffices to show that
    \begin{equation}\label{9}
    \left \| \widetilde{J} \left(\sum_{j} c_ j e_ j \right) \right \|_{A^2_\om}\lesssim \left
    (\sum_ j |c_ j|^2\right )^{1/2}
    \end{equation}
for all sequences $\{c_j\}$ in $\ell^2$. To prove \eqref{9}, note
first that $\om^\star\in\R$ by Lemma~\ref{le:sc1}, and hence
\eqref{rk} can be applied for $f'\in A^2_{\omega^\star}$. Using
this, the definitions of $\widetilde{J}$ and~$\phi^\om_{a}$, the
polarization of the identity \eqref{eq:LP2} and the Cauchy-Schwarz
inequality, we obtain
    \begin{equation}\label{6}
    \begin{split}
    \left|\left \langle \widetilde{J} \left(\sum_{j} c_ j e_ j \right), f\right
    \rangle_{A^2_\om} \right|
    &= \left| \left \langle \sum_{j} c_ j \phi^\om_{a_ j},f
    \right \rangle_{A^2_\om} \right |=4\left |\sum_{j} \overline{a_j}c_ j \big \langle
b^{\omega^\star}_{a_ j},f' \big \rangle_{A^2_{\om^\star}}\right |
    \\ &\le4\sum_{j} |c_ j|\,\frac{|f'(a_
    j)|}{\|B^{\om^\star}_{a_ j}\|_{A^2_{\om^\star}}}\\
    &\le4\left (\sum_{j}|c_j|^2\right)^{1/2}\left(\sum_{j}|f'(a_
    j)|^2 \,\|B^{\om^\star}_{a_ j}\|_{A^2_{\om^\star}}^{-2}\right)^{1/2}
    \end{split}
    \end{equation}
for all $f\in A^2_\om$. Applying now Lemma~\ref{kernels} and
Lemma~\ref{le:cuadrado-tienda} to $\om^\star\in\R$, and
Lemma~\ref{le:sc1}, with $\a=2$, we deduce
    \begin{equation}\label{7}
    \begin{split}
    \|B^{\om^\star}_{a_
    j}\|_{A^2_{\om^\star}}^{-2}&\asymp\omega^\star(S(a_j))\asymp\omega^{\star\star}(a_j)\asymp(1-|a_j|)^2\om^\star(a_j),\quad |a_j|\ge\frac12.
    \end{split}
    \end{equation}
Let $\gamma>0$ be the pseudohyperbolic separation constant of
$\{a_j\}$. Then \eqref{7} together with the subharmonicity of
$|f'|^2$, \eqref{eq:r2} for $\om^\star$ and
Theorem~\ref{ThmLittlewood-Paley} yield
    \begin{equation}\label{10}
    \begin{split}
    \sum_{|a_j|\ge\frac12} |f'(a_j)|^2 \,\|B^{\om^\star}_{a_ j}\|_{A^2_{\om^\star}}^{-2}
    &\asymp\sum_{|a_j|\ge\frac12}|f'(a_ j)|^2(1-|a_j|)^2\om^\star(a_j)\\
    &\lesssim \sum_{|a_j|\ge\frac12}\om^\star(a_j)\int_{\Delta\left(a_ j,\frac{\gamma}{2}\right)}
    |f'(z)|^2\,dA(z)\\
    &\lesssim \sum_{|a_j|\ge\frac12}\int_{\Delta\left(a_ j,\frac{\gamma}{2}\right)}
    |f'(z)|^2\om^\star(z)\,dA(z)\\
    &\lesssim\|f'\|^2_{A^2_{\om^\star}}\lesssim\|f\|^2_{A^2_{\om}}.
    \end{split}
    \end{equation}
By combining \eqref{6} and \eqref{10}, we finally obtain
    $$
    \left|\left \langle \widetilde{J} \left(\sum_{j} c_ j e_ j \right), f\right
    \rangle_{A^2_\om}\right|
    \lesssim\left
    (\sum_ j |c_ j|^2\right )^{1/2}\|f\|_{A^2_\omega},\quad f\in
    A^2_\omega,
    $$
which in turn implies \eqref{9}.
\end{proof}

\section{Proofs of the main results}

With the lemmas of the previous section in hand we are ready to
start proving Theorem~\ref{th:shI}. We first deal with the case
$0<p\le1$, and show that $g\in B_p$ is a necessary condition for
$T_g$ to belong to $\SSS_p(A^2_\om)$ for all $1<p<\infty$.

\begin{proposition}\label{pr:necshI}
Let $\om\in\I\cup\R$ and $g\in\H(\D)$. If $p>1$ and $T_g\in
\SSS_p(A^2_\om)$, then $g\in B_p$. If $0<p\le1$, then $T_g\in
\SSS_p(A^2_\om)$ if and only if $g$ is constant.
\end{proposition}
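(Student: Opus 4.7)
The plan is to derive from $T_g \in \SSS_p(A^2_\om)$ a single discrete inequality
\begin{equation*}
\sum_j |g'(a_j)|^p (1-|a_j|)^p \;\lesssim\; |T_g|_p^p \qquad(\star)
\end{equation*}
along any sufficiently dense $\delta$-lattice $\{a_j\}$ in $\D$, and then to convert $(\star)$ by the standard covering/subharmonicity argument (using that $|g'|^p$ is subharmonic and that on $\Delta(a_j,5\delta)$ one has $(1-|z|)\asymp(1-|a_j|)$ and $|\Delta(a_j,5\delta)|\asymp(1-|a_j|)^2$) into the continuous bound
\begin{equation*}
\int_\D |g'(z)|^p (1-|z|)^{p-2}\, dA(z) \;\lesssim\; |T_g|_p^p.
\end{equation*}
For $p>1$ this is exactly $\|g\|_{B_p}^p \lesssim |T_g|_p^p$, so $g\in B_p$. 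For $0<p\le 1$ the same integral forces $g$ to be constant: since $|g'|^p$ is subharmonic, $M_p^p(r,g')$ is non-decreasing in $r$, so if $g$ were non-constant it would be bounded below by a positive constant on some $[r_0,1)$, while $\int_{r_0}^1(1-r)^{p-2}\,dr=\infty$ whenever $p\le 1$, a contradiction. Conversely, if $g$ is constant then $T_g\equiv 0 \in \SSS_p$ for every $p>0$, which settles the converse direction in the $p\le 1$ case.

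To prove $(\star)$, fix a $\delta$-lattice $\{a_j\}$ with $\delta$ small and use Lemma~\ref{le:wo} to produce a bounded operator $J$ on $A^2_\om$ sending an orthonormal set $\{e_j\}$ to $\{\phi^\om_{a_j}\}$; then $T_gJ\in\SSS_p(A^2_\om)$ with $|T_gJ|_p\le\|J\|\,|T_g|_p$ by the ideal property of $\SSS_p$. Next, by Lemma~\ref{le:Phi} applied with $\alpha=0$, combined with the kernel size estimate \eqref{7} (yielding $\|B^{\om^\star}_{a_j}\|_{A^2_{\om^\star}}^{-1}\asymp(1-|a_j|)(\om^\star(a_j))^{1/2}$), one has the pointwise bound $|\phi^\om_{a_j}(z)|\gtrsim(\om^\star(a_j))^{-1/2}$ on $D(a_j,\delta(1-|a_j|))$ for $|a_j|\ge \tfrac12$. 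Feeding this into the Littlewood--Paley identity \eqref{eq:LP2} applied to $T_g\phi^\om_{a_j}$, and invoking the subharmonicity of $|g'|^2$ together with the local regularity \eqref{eq:r2} of $\om^\star\in\R$ (Lemma~\ref{le:sc1}), we obtain the key lower bound
\begin{equation*}
\|T_g\phi^\om_{a_j}\|_{A^2_\om}^2 \;\asymp\; \int_\D|\phi^\om_{a_j}|^2|g'|^2\om^\star\,dA \;\gtrsim\; |g'(a_j)|^2(1-|a_j|)^2.
\end{equation*}

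It remains to prove the Schatten inequality $\sum_j\|T_g\phi^\om_{a_j}\|^p \lesssim |T_g|_p^p$. Writing $A:=J^*T_g^*T_gJ\ge 0$, one has $A\in\SSS_{p/2}$ with $|A|_{p/2}^{p/2}=|T_gJ|_p^p$, and $\|T_g\phi^\om_{a_j}\|^p=\langle Ae_j,e_j\rangle^{p/2}$. When $p\ge 2$, so that $p/2\ge 1$, the Weyl--Horn inequality (applied to the convex map $x\mapsto x^{p/2}$) immediately gives $\sum_j\langle Ae_j,e_j\rangle^{p/2}\le|A|_{p/2}^{p/2}$, closing $(\star)$. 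When $0<p<2$ the map $x\mapsto x^{p/2}$ is concave, Weyl--Horn reverses direction, and this elementary route fails; in that range the proof is completed via the Toeplitz-operator framework to be developed in Theorem~\ref{th:sufschpmayor2}: through the Littlewood--Paley isomorphism of Theorem~\ref{ThmLittlewood-Paley}, the operator $T_g^*T_g$ is similar (modulo a bounded perturbation) to a Toeplitz operator on a Dirichlet-type space induced by $\om^\star$ with symbol the measure $d\mu_g=|g'|^2\om^\star\,dA$, and the Schatten-class characterization of such Toeplitz operators supplied by Theorem~\ref{th:sufschpmayor2} delivers $(\star)$ across the full range $0<p<\infty$.

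The principal obstacle is precisely the range $0<p<2$: the direct lattice-plus-Weyl--Horn argument cleanly handles $p\ge 2$, but for smaller $p$ the convexity of $x^{p/2}$ is lost and one is forced to invoke the deeper Toeplitz-operator machinery of Theorem~\ref{th:sufschpmayor2} in order to cover the remaining range (and in particular to deduce, in the rigid case $0<p\le 1$, the constancy of $g$).
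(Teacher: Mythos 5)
Your overall architecture (uniformly discrete lattice, test functions $\phi^\om_{a_j}$, the bounded operator $J$ of Lemma~\ref{le:wo}, the ideal property of $\SSS_p$, and the conversion of the discrete sum into $\int_\D|g'(z)|^p(1-|z|)^{p-2}\,dA(z)<\infty$) matches the paper, and your lower bound $\|T_g\phi^\om_{a_j}\|_{A^2_\om}^2\gtrsim|g'(a_j)|^2(1-|a_j|)^2$ is correct. The gap is in the summation step. By testing the \emph{norms} $\|T_g\phi^\om_{a_j}\|$ you are forced into $\sum_j\langle Ae_j,e_j\rangle^{p/2}\le|A|_{p/2}^{p/2}$ with $A=J^\star T_g^\star T_gJ$, which is only valid for $p\ge2$; your fallback for $0<p<2$ via Theorem~\ref{th:sufschpmayor2} does not close it, for two reasons. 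First, that theorem is stated only for exponents in $[1,\infty)$, so applying its necessity half to $T_g^\star T_g\in\SSS_{p/2}$ would again require $p\ge2$. Second, and more seriously, the necessity half of Theorem~\ref{th:sufschpmayor2} is itself proved in the paper by ``a similar reasoning as in the beginning of Proposition~\ref{pr:necshI}'', so invoking it here is circular. Since the case $0<p\le1$ is settled by reducing to $p=1$ (via $\SSS_p\subset\SSS_1$), the range $1\le p<2$ that your argument misses is exactly the one needed to conclude that $g$ is constant.

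The correct move, which the paper makes, is to test the \emph{diagonal matrix elements} rather than the norms: since $(\phi^\om_{a_j})'=\overline{a_j}\,b^{\om^\star}_{a_j}$ is a normalized reproducing kernel of $A^2_{\om^\star}$, the polarized Littlewood--Paley identity \eqref{eq:LP2} evaluates the inner product exactly,
\begin{equation*}
\langle T_g(\phi^\om_{a_j}),\phi^\om_{a_j}\rangle_{A^2_\om}
=4\,\overline{a_j}\,\bigl\langle g'\phi^\om_{a_j},b^{\om^\star}_{a_j}\bigr\rangle_{A^2_{\om^\star}}
=4\,\overline{a_j}\,\frac{g'(a_j)\,\Phi^{\om}_{a_j}(a_j)}{\|B^{\om^\star}_{a_j}\|^2_{A^2_{\om^\star}}},
\end{equation*}
whose modulus is $\gtrsim|g'(a_j)|(1-|a_j|)$ by Lemma~\ref{le:Phi} and \eqref{7}; and $\sum_j|\langle(J^\star T_gJ)e_j,e_j\rangle|^p<\infty$ holds for \emph{every} $p\ge1$ by \cite[Theorem~1.27]{Zhu}, with no convexity obstruction. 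This yields $(\star)$ for all $p\ge1$, which is all that is needed.
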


\begin{proof}
Let first $p\ge1$ and $T_g\in \SSS_p\left(A^2_\om\right)$. Let
$\{e_ j\}$ be an orthonormal set in~$A^2_\om$. Let $\{a_ j\}$ be a
uniformly discrete sequence and consider the linear operator
$J=\widetilde{J}\circ P$, where $P$ is the orthogonal projection
from $A^2_\omega$ to $[\{e_j\}]$ and the linear operator
$\widetilde{J}:[\{e_j\}]\to A^2_\omega$ is defined by
$\widetilde{J}(e_j)=\phi^\om_{a_j}$. The operator $J$ is bounded
on~$A^2_\om$ by Lemma~\ref{le:wo}. Since $\mathcal{S}_p(A^2_\om)$
is a two-sided ideal in the space of bounded linear operators on
$A^2_\om$, we have $J^\star T_ g J\in\mathcal{S}_p(A^2_\om)$ by
\cite[p.~27]{Zhu}. Hence \cite[Theorem~1.27]{Zhu} yields
    \begin{displaymath}
    \sum_ j \left|\langle T_ g(\phi^\om_{a_ j}),\phi^\om_{a_ j})\rangle_{A^2_\omega}\right|^p
    =\sum_ j \left|\langle (J^\star T_gJ)(e_j),e_j\rangle_{A^2_\omega}\right|^p<\infty.
    \end{displaymath}
Then, by the polarization of the identity \eqref{eq:LP2},
\eqref{rk} for $\omega^\star$, Lemma~\ref{le:Phi} and \eqref{7},
we obtain
    \begin{equation}
    \begin{split}
    \infty&>\sum_{j}\left|\langle T_ g(\phi^\om_{a_ j}),\phi^\om_{a_ j})\rangle_{A^2_\omega}\right|^p
    =4\sum_{j}|a_j|^{p}\left|\langle g'\phi^\om_{a_j},b^{\omega^\star}_{a_j}\rangle_{A^2_{\om^\star}}\right|^p\\
    &\gtrsim\sum_{|a_j|\ge\frac12}
    \left|\frac{g'(a_j)\Phi^\omega_{a_j}(a_j)}{\|B^{\omega^\star}_{a_j}\|^2_{A^2_{\om^\star}}}\right|^p
    \gtrsim\sum_{|a_j|\ge\frac12}\left|g'(a_j)\right|^p(1-|a_j|)^p.
    \end{split}
    \end{equation}
Therefore for any uniformly discrete sequence $\{a_j\}$, and hence
in particular for any $\delta$-lattice, we have
    $$
    \sum_{|a_j|\ge\frac12}\left|g'(a_j)\right|^p(1-|a_j|)^p<\infty.
    $$
Arguing as in~\cite[p.~917]{RoInd}, this in turn implies
    \begin{equation}\label{11111}
    \int_{\D}|g'(z)|^p(1-|z|^2)^{p-2}\,dA(z)<\infty,
    \end{equation}
which is the assertion for $p>1$.

If $0<p\le1$ and $T_g\in \SSS_p\left(A^2_\om\right)$, then $T_g\in
\SSS_1\left(A^2_\om\right)$, and hence the first part of the proof
gives \eqref{11111} with $p=1$, which implies that $g$ is
constant. Since $T_g=0$ if $g$ is constant, we deduce the
assertion also for $0<p\le1$.
\end{proof}

We note that if $\om$ is regular and $1<p<2$, then the assertion
in Proposition~\ref{pr:necshI} can be proved in an alternative way
by following the argument in \cite[Proposition~7.15]{Zhu} and
using \cite[Theorem~1.26]{Zhu}, Lemmas~\ref{le:cuadrado-tienda}
and~\ref{kernels}, \eqref{22} and
Theorem~\ref{ThmLittlewood-Paley}.

The proof of the fact that $g\in B_p$ is a sufficient condition
for $T_g$ to belong to $\SSS_p(A^2_\om)$ is more involved, in
particular when $p>2$. We begin with the case $1<p<2$ which will
be proved by using ideas from~\cite{PP}. We need an auxiliary
result.

\begin{lemma}\label{le:sp1}
If $\om\in\I\cup\R$, then
    $$
    \left\|\frac{\partial}{\partial\overline{z}}B^\omega_z\right\|_{A^2_\om}
    \lesssim\frac{\,\|B^\omega_ z\|_{A^2_\omega}}{1-|z|},\quad
    z\in\D.
    $$
\end{lemma}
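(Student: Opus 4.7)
The plan is to pass to a duality formulation and then reduce to a pointwise derivative estimate for $A^2_\omega$-functions. First I would observe that differentiating the reproducing formula
\[
f(z)=\int_\D f(\zeta)\,\overline{B^\omega_z(\zeta)}\,\omega(\zeta)\,dA(\zeta),\qquad f\in A^2_\omega,
\]
with respect to $z$ (the anti-holomorphic dependence sits on $B^\omega_z$) yields $f'(z)=\langle f,\partial_{\overline z}B^\omega_z\rangle_{A^2_\omega}$. By Cauchy--Schwarz together with the fact that equality is attained on a suitable unit vector,
\[
\left\|\tfrac{\partial}{\partial\overline z}B^\omega_z\right\|_{A^2_\omega}
=\sup\bigl\{|f'(z)|:\|f\|_{A^2_\omega}\le 1\bigr\}.
\]
So the lemma reduces to the pointwise bound $|f'(z)|\lesssim \|B^\omega_z\|_{A^2_\omega}\|f\|_{A^2_\omega}/(1-|z|)$.

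Next I would estimate $|f'(z)|$ through Cauchy's integral formula on the circle $|w-z|=(1-|z|)/2$, which gives
\[
|f'(z)|\le \frac{2}{1-|z|}\max_{|w-z|=(1-|z|)/2}|f(w)|.
\]
For each such $w$ the reproducing property and Cauchy--Schwarz yield $|f(w)|\le\|B^\omega_w\|_{A^2_\omega}\|f\|_{A^2_\omega}$, and Lemma~\ref{kernels} converts this to $|f(w)|^2\lesssim \|f\|_{A^2_\omega}^2/\omega(S(w))$.

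The remaining step, and arguably the only delicate point, is to show the local invariance $\omega(S(w))\asymp\omega(S(z))$ whenever $|w-z|\le(1-|z|)/2$. For $|z|\le 1/2$ the claim is trivial, and for $|z|\ge 1/2$ I would argue uniformly for $\om\in\I\cup\R$ by using Lemma~\ref{le:cuadrado-tienda} to write $\omega(S(w))\asymp\omega^\star(w)$ and $\omega(S(z))\asymp\omega^\star(z)$, and then invoking Lemma~\ref{le:sc1}, which ensures that $\omega^\star\in\R$; since the Euclidean disc $\{|w-z|\le(1-|z|)/2\}$ lies in a fixed pseudohyperbolic disc around $z$, the regularity property \eqref{eq:r2} of $\omega^\star$ gives $\omega^\star(w)\asymp\omega^\star(z)$. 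Combining, one obtains
\[
|f'(z)|\lesssim \frac{1}{1-|z|}\cdot\frac{\|f\|_{A^2_\omega}}{\sqrt{\omega(S(z))}}\asymp\frac{\|B^\omega_z\|_{A^2_\omega}}{1-|z|}\|f\|_{A^2_\omega},
\]
and taking the supremum over $\|f\|_{A^2_\omega}\le 1$ completes the proof.
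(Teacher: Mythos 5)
Your proof is correct, but it takes a genuinely different route from the paper's. The paper exploits radiality throughout: taking the orthonormal basis $e_n(z)=(2\om_n)^{-1/2}z^n$ and the auxiliary function $f=\sum_n e_n$, Parseval's identity identifies $\|B^\om_z\|_{A^2_\om}=M_2(r,f)$ and $\|\partial_{\overline z}B^\om_z\|_{A^2_\om}=M_2(r,f')$ for $|z|=r$, after which the standard estimate $M_2(r,f')\lesssim M_2\bigl(\tfrac{1+r}{2},f\bigr)/(1-r)$ together with Lemma~\ref{kernels} and Lemma~\ref{le:condinte} (with $t=\tfrac{1+r}{2}$) finishes the job; the only comparison of kernel norms needed is along a single radius. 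You instead use the duality identity $\|\partial_{\overline z}B^\om_z\|_{A^2_\om}=\sup\{|f'(z)|:\|f\|_{A^2_\om}\le1\}$ (which is legitimate: $f'(z)=\langle f,\partial_{\overline z}B^\om_z\rangle_{A^2_\om}$, and $\partial_{\overline z}B^\om_z\in A^2_\om$ since $\sum_n|e_n'(z)|^2<\infty$ for each fixed $z$ by the moment estimates), and then prove the pointwise bound $|f'(z)|\lesssim\|f\|_{A^2_\om}\,\om(S(z))^{-1/2}/(1-|z|)$ via Cauchy's formula and Lemma~\ref{kernels}. The price you pay is the two-dimensional local comparison $\om(S(w))\asymp\om(S(z))$ on $|w-z|\le\tfrac12(1-|z|)$, which the paper's radial trick sidesteps entirely; your justification through $\om(S(\cdot))\asymp\om^\star(\cdot)$ and $\om^\star\in\R$ (hence invariant) is valid, though it could be obtained more directly from $1-|w|\asymp1-|z|$ and Lemma~\ref{le:condinte} applied to $\int_{|w|}^1\om$. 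What your approach buys is a cleaner conceptual statement — the quantity being bounded is exactly the norm of the functional $f\mapsto f'(z)$ — at the cost of slightly more bookkeeping; the paper's computation is shorter but more ad hoc.
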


\begin{proof}
Let $\{e_n\}_{n=0}^\infty$ be the orthonormal basis of
$A^2_\omega$ given by $e_n(z)=(2\,\om_n)^{-1/2}z^n$, where
$\om_n=\int_0^1 r^{2n+1}\om(r)\,dr$. If $f(z)=\sum_{n=0}^\infty
e_n(z)$, then Parseval's identity yields
    \begin{equation*}\begin{split}
    M^2_2(r,f)=\frac{1}{2\pi}\int_{0}^{2\pi}\left\vert f(re\sp{i\theta})\right\vert^2d\theta
    =\sum_{n=0}^\infty
    \frac{r^{2n}}{2\,\om_n}
    =\|B^\omega_z\|^2_{A^2_\om},\quad |z|=r.
    \end{split}\end{equation*}
But now
    \begin{displaymath}
    \begin{split}
    \left\|\frac{\partial}{\partial\overline{z}}B^\omega_z\right\|^2_{A^2_\om}
    =\sum_{n=1}^\infty|e'_n(z)|^2
    =\sum_{n=1}^\infty \frac{n^2r^{2n-2}}{2\,\om_n}
    =M^2_2(r,f'),\quad |z|=r,
    \end{split}
    \end{displaymath}
and hence an application of the Cauchy integral formula, Lemma
\ref{kernels} and Lemma~\ref{le:condinte}, with $t=\frac{1+r}{2}$,
give
    \begin{displaymath}
    \left\|\frac{\partial}{\partial\overline{z}}B^\omega_z\right\|_{A^2_\omega}=M_
    2(r,f')\lesssim\frac{M_
    2\left(\frac{1+r}{2},f\right)}{(1-r)}\asymp
    \frac{\,\|B^\omega_z\|_{A^2_\omega}}{1-|z|},\quad |z|=r,
    \end{displaymath}
which is the desired estimate.
\end{proof}

\begin{proposition}\label{pr:shR1}
Let $1<p<2$ and $\om\in\I\cup\R$. If $g\in B_p$, then $T_g\in
\SSS_p(A^2_\om)$.
\end{proposition}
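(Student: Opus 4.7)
The plan is to combine an operator-theoretic anti-majorisation with a Luecking-style discretisation by a $\delta$-lattice, and then reduce the Schatten-$p$ estimate to the $B_p$-norm through the subharmonicity of $|g'|^p$. The starting point is the identity $|T_g|_p^p=|T_g^\star T_g|_{p/2}^{p/2}$. Since $p/2\in(\tfrac12,1)$, the function $t\mapsto t^{p/2}$ is concave, so Horn's anti-majorisation applied to the positive compact operator $T_g^\star T_g$ yields, for \emph{any} orthonormal basis $\{e_n\}$ of $A^2_\omega$,
\begin{equation*}
|T_g|_p^p=\sum_n\lambda_n(T_g^\star T_g)^{p/2}\le\sum_n\langle T_g^\star T_g\,e_n,e_n\rangle^{p/2}=\sum_n\|T_g e_n\|_{A^2_\omega}^p.
\end{equation*}

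To exploit this, I would work with an ONB $\{e_j\}$ tied to a $\delta$-lattice $\{a_j\}$ (with $\delta$ chosen small enough later). Using the bounded operator $J$ of Lemma~\ref{le:wo}, which maps $e_j\mapsto\phi^\omega_{a_j}$, and the fact that $\SSS_p$ is a two-sided ideal, the estimate of the previous display can be replaced by a bound on
\begin{equation*}
\sum_j\|T_g\phi^\omega_{a_j}\|_{A^2_\omega}^p.
\end{equation*}
By the Littlewood--Paley formula \eqref{eq:LP2} and the identity $(\phi^\omega_{a_j})'=\overline{a_j}\,b^{\omega^\star}_{a_j}$, each summand is comparable to
\begin{equation*}
\Bigl(\int_\D|\phi^\omega_{a_j}(z)|^2|g'(z)|^2\omega^\star(z)\,dA(z)\Bigr)^{p/2}.
\end{equation*}

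The key step is to localise this integral near $a_j$. Lemma~\ref{le:Phi} together with the regularity $\omega^\star\in\R$ of Lemma~\ref{le:sc1} gives $|\phi^\omega_{a_j}(z)|\lesssim\omega^\star(a_j)^{-1/2}$ on $\Delta(a_j,\delta(1-|a_j|))$, and Lemma~\ref{le:cuadrado-tienda} gives $\omega^\star(z)\asymp\omega^\star(a_j)$ there, so the local contribution is bounded by a constant times $\int_{\Delta(a_j,\delta(1-|a_j|))}|g'|^2\,dA$. For the tail $\D\setminus\Delta(a_j,\delta(1-|a_j|))$ I would use the pointwise Bergman-kernel asymptotics of $B^{\omega^\star}_{a_j}$ (as in the proof of Lemma~\ref{le:Phi}) to bound $|\phi^\omega_{a_j}(z)|^2\omega^\star(z)$ by a symbol of the form $\omega^\star(a_j)^{-1}(1-|a_j|)/|1-\overline{a_j}z|$ raised to an appropriate power, and then invoke the global Carleson characterisation of Lemma~\ref{LemmaGlobalCarleson} together with Theorem~\ref{th:cm} applied to the measure $d\mu_g=|g'|^2\omega^\star\,dA$. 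Putting these estimates together gives $\|T_g\phi^\omega_{a_j}\|_{A^2_\omega}^2\lesssim\int_{\Delta(a_j,c\delta)}|g'|^2\,dA$.

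To conclude, I would raise this bound to the power $p/2$ and apply the analytic-function subharmonicity inequality
\begin{equation*}
\Bigl(\int_{\Delta(a,r)}|g'|^2\,dA\Bigr)^{p/2}\lesssim(1-|a|)^{p-2}\int_{\Delta(a,2r)}|g'|^p\,dA,
\end{equation*}
which follows from $\sup_{\Delta(a,r)}|g'|\lesssim((1-|a|)^{-2}\int_{\Delta(a,2r)}|g'|^p\,dA)^{1/p}$ combined with $|\Delta(a,r)|\asymp(1-|a|)^2$. Because $\{a_j\}$ is a $\delta$-lattice, the enlarged discs $\{\Delta(a_j,2c\delta)\}$ have uniformly bounded overlap, and summing yields $\sum_j\|T_g\phi^\omega_{a_j}\|^p\lesssim\int_\D|g'(z)|^p(1-|z|)^{p-2}\,dA(z)=\|g\|_{B_p}^p<\infty$, so $T_g\in\SSS_p(A^2_\omega)$. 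The main obstacle will be Step~3, the upper bound $\|T_g\phi^\omega_{a_j}\|^2\lesssim\int_{\Delta(a_j,c\delta)}|g'|^2\,dA$: because $\phi^\omega_{a_j}$ is a \emph{primitive} of the normalised kernel $b^{\omega^\star}_{a_j}$ rather than the kernel itself, it is not tightly localised near $a_j$, and the rapidly increasing character of $\omega$ forbids the use of the hyperbolic-disc averaging that would work for $\om\in\R$. The tail must instead be absorbed through the Carleson-square characterisation of $q$-Carleson measures established in Theorem~\ref{th:cm}, which is precisely the reason the authors announce, in the introduction of this chapter, that the proof has to be carried out via Toeplitz operators on Dirichlet-type spaces induced by $\omega^\star$.
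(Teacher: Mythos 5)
Your architecture does not close, for two reasons. First, the reduction from an orthonormal basis to the lattice functions $\phi^\om_{a_j}$ is invalid in this direction: the two-sided-ideal property of $\SSS_p$ lets you pass from $T_g\in\SSS_p$ to $J^\star T_gJ\in\SSS_p$ (this is exactly how the proof of Proposition~\ref{pr:necshI} uses Lemma~\ref{le:wo}, for the \emph{necessity} of $g\in B_p$), but for sufficiency you would need to recover $T_g$ from $T_gJ$, i.e.\ a bounded right inverse of $J$, which Lemma~\ref{le:wo} does not provide; a bound on $\sum_j\|T_g\phi^\om_{a_j}\|_{A^2_\om}^p$ says nothing about the action of $T_g$ off the range of $J$. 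Your opening inequality $|T_g|_p^p\le\sum_n\|T_g e_n\|_{A^2_\om}^p$ is a correct fact, but it must then be verified for a genuine orthonormal basis, not for the non-orthogonal family $\{\phi^\om_{a_j}\}$. Second, you concede that the localisation $\|T_g\phi^\om_{a_j}\|_{A^2_\om}^2\lesssim\int_{\Delta(a_j,c\delta)}|g'|^2\,dA$ is unproven, and your proposed rescue --- the Toeplitz machinery on the Dirichlet-type spaces induced by $\om^\star$ --- is what the paper deploys for $p\ge2$ (Theorem~\ref{th:sufschpmayor2} and Corollary~\ref{co:shIpmayor2}), not for $1<p<2$; it is not needed here.

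The range $1<p<2$ admits a short direct proof with no lattices and no localisation. By \cite[Theorem~1.27]{Zhu} it suffices to bound $\sum_n|\langle T_g(e_n),e_n\rangle_{A^2_\om}|^p$ over an \emph{arbitrary} orthonormal set $\{e_n\}$. Polarising \eqref{eq:LP2} gives $|\langle T_g(e_n),e_n\rangle_{A^2_\om}|\lesssim\int_\D|g'||e_n||e_n'|\,\om^\star\,dA$. One H\"older inequality with exponents $p$ and $p/(p-1)$, splitting off $|e_n'|^{2(p-1)/p}$ and using $\int_\D|e_n'|^2\om^\star\,dA\lesssim\|e_n\|_{A^2_\om}^2=1$, followed by a second H\"older inequality on the sum over $n$ with exponents $2/p$ and $2/(2-p)$, reduces everything to the pointwise bounds $\sum_n|e_n(z)|^2\le\|B^\om_z\|_{A^2_\om}^2$ from \eqref{eqRK1} and $\sum_n|e_n'(z)|^2\lesssim\|B^\om_z\|_{A^2_\om}^2(1-|z|)^{-2}$ from Lemma~\ref{le:sp1}. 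Since $\|B^\om_z\|_{A^2_\om}^2\,\om^\star(z)\asymp1$ by Lemma~\ref{kernels} and Lemma~\ref{le:cuadrado-tienda}, the whole expression collapses to $\int_\D|g'(z)|^p(1-|z|)^{p-2}\,dA(z)\asymp\|g\|_{B_p}^p$. The idea you are missing is that the sum over the orthonormal set can be performed \emph{inside} the integral via the reproducing kernel, which renders any decomposition of $\D$ unnecessary.
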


\begin{proof}
If $1<p<\infty$, then $T_g\in\mathcal{S}_p(A^2_\om)$ if and only
if
    \begin{displaymath}
    \sum_n\left|\langle T_g(e_n),e_n\rangle_{A^2_\om}\right|^p<\infty
    \end{displaymath}
for any orthonormal set $\{e_n\}$, see~\cite[Theorem~1.27]{Zhu}.
Let $1<p<2$ and let~$\{e_n\}$ be an orthonormal set in $A^2_\om$.
Then the polarization of the identity \eqref{eq:LP2}, two
applications of H\"older's inequality, Lemma~\ref{le:sp1},
Lemma~\ref{kernels} and Lemma~\ref{le:cuadrado-tienda} yield
    \begin{equation}
    \begin{split}\label{eq:1}
    &\sum_n\left|\langle T_g(e_n), e_n\rangle_{A^2_\om}\right|^p \\ &
    \lesssim
    \sum_n \left(\int_\D|g'(z)||e_n(z)||e'_n(z)|\om^\star(z)\,dA(z)
    \right)^p\\
    &\le\sum_n\left(\int_\D|g'(z)|^p|e_n(z)|^p|e'_n(z)|^{2-p}\om^\star(z)\,dA(z)\right)\\
    &\quad\cdot\left(\int_\D|e'_n(z)|^2\om^\star(z)\,dA(z)\right)^{p-1}\\
    &\lesssim\int_\D|g'(z)|^p\left(\sum_n|e_n(z)|^p|e'_n(z)|^{2-p}\right)\om^\star(z)\,dA(z)\\
    &\le\int_\D|g'(z)|^p\left(\sum_n|e_n(z)|^2\right)^{\frac{p}{2}}\left(\sum_n|e'_n(z)|^{2}\right)^{1-\frac{p}{2}}
    \om^\star(z)\,dA(z)
    \\ & \lesssim\int_\D|g'(z)|^p\frac{\|B^\omega_z\|^2_{A^2_\omega}}{(1-|z|)^{2-p}}\,\om^\star(z)\,dA(z)
    \asymp\|g\|_{B_p}^p.
    \end{split}\end{equation}
Thus, $T_g\in \SSS_p(A^2_\om)$ if $g\in B_p$.
\end{proof}

We have now proved Theorem~\ref{th:shI} when $\om\in\I\cup\R$ and
$0<p<2$. As mentioned earlier, the case $\om\in\R$ and $2\le
p<\infty$ follows by \cite[Theorem~5.1]{OC} and
Lemma~\ref{le:RAp}(i). We now proceed to the remaining case
$\om\in\I$ and $2\le p<\infty$. The proof we are going to present
works also for $\om\in\R$.

\subsection{Dirichlet type spaces induced by $\om^\star$}\index{Dirichlet type space}

For $\alpha\in\mathbb{R}$ and $\om\in\I\cup\R$, let us consider
the Hilbert space
$H_\alpha(\om^\star)$,\index{$H_\alpha(\om^\star)$}\index{$\omega_\alpha(z)$}
which consists of those $f\in\H(\D)$ whose Maclaurin series
$\sum_{n=0}^\infty a_n z^n$ satisfies
    $$
    \sum_{n=0}^\infty
    (n+1)^{\alpha+2}\om^\star_n|a_{n+1}|^2<\infty.
    $$

Bearing in mind Theorem~\ref{ThmLittlewood-Paley}, we deduce the
identity $H_0(\om^\star)=A^2_\omega$, which is a special case of
the following result. Recall that
$\omega_\alpha(z)=(1-|z|)^\alpha\omega(z)$ for all
$\alpha\in\mathbb{R}$.

\begin{lemma}\label{le:sc3}
If $-\infty<\alpha<2$ and $\om\in\I\cup\R$, then
    \begin{equation}\label{eq:sc31}\index{$H_\alpha(\om^\star)$}
    H_\alpha(\omega^\star)=\left\{f\in\H(\D):\,\int_\D |f'(z)|^2\,\om^\star_{-\alpha}(z)\,dA(z)<\infty\right\}.
    \end{equation}
In particular, if $\alpha<0$, then
$H_\alpha(\omega^\star)=A^2_{\omega_{-\alpha-2}^\star}$.
\end{lemma}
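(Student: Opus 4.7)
The plan is to reduce the statement to a Parseval computation combined with the moment asymptotics of Lemma~\ref{le:sc2}, and then derive the ``in particular'' clause from Lemma~\ref{le:sc1} and the Littlewood-Paley identity \eqref{eq:LP2}.

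First, I would write $f(z)=\sum_{n=0}^\infty a_n z^n$ and, since $\omega^\star_{-\alpha}$ is radial, apply Parseval's identity on circles $|z|=r$ followed by integration in $r$ to obtain
    $$
    \int_\D |f'(z)|^2\,\omega^\star_{-\alpha}(z)\,dA(z)
    =2\sum_{n=0}^\infty (n+1)^2|a_{n+1}|^2\int_0^1 r^{2n+1}\,\omega^\star_{-\alpha}(r)\,dr.
    $$
Since $\alpha<2$, I can apply Lemma~\ref{le:sc2} with its parameter equal to $2-\alpha\in(0,\infty)$ to the radial weight $\omega^\star$; formula \eqref{5} there reads
    $$
    (n+1)^\alpha\,\omega^\star_n\asymp\int_0^1 r^{2n+1}\,\omega^\star_{-\alpha}(r)\,dr,\quad n\in\N.
    $$
Substituting this into the Parseval expression yields
    $$
    \int_\D |f'(z)|^2\,\omega^\star_{-\alpha}(z)\,dA(z)\asymp\sum_{n=0}^\infty(n+1)^{\alpha+2}\,\omega^\star_n\,|a_{n+1}|^2,
    $$
which is exactly \eqref{eq:sc31}. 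Note that $f(0)=a_0$ plays no role on either side, so no boundary condition at the origin is needed.

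For the ``in particular'' part, assume $\alpha<0$. Then Lemma~\ref{le:sc1}, applied with its parameter equal to $-\alpha>0$, gives $\omega^\star_{-\alpha-2}\in\R$ together with the asymptotic
    $$
    \bigl(\omega^\star_{-\alpha-2}\bigr)^\star(z)\asymp\omega^\star_{-\alpha}(z),\quad |z|\ge\tfrac12.
    $$
Applying the Littlewood-Paley identity \eqref{eq:LP2} to the radial weight $\omega^\star_{-\alpha-2}$ yields
    $$
    \|f\|_{A^2_{\omega^\star_{-\alpha-2}}}^2=4\int_\D|f'(z)|^2\,\bigl(\omega^\star_{-\alpha-2}\bigr)^\star(z)\,dA(z)+\omega^\star_{-\alpha-2}(\D)|f(0)|^2,
    $$
and combining this with the previous display (after absorbing the compact set $|z|<\tfrac12$ contribution into a $|f(0)|^2$ term via the mean value property for $|f'|^2$) gives $H_\alpha(\omega^\star)=A^2_{\omega^\star_{-\alpha-2}}$.

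The only real technical point is the passage across $|z|=\tfrac12$ in the asymptotic from Lemma~\ref{le:sc1}, since it is stated only for $|z|\ge\tfrac12$; I expect this to cause no difficulty because both weights are bounded above and below on $\{|z|\le\tfrac12\}$, so any discrepancy there can be absorbed into the initial coefficients (equivalently, into the $|f(0)|^2$ and $|f'(0)|^2$ terms). No step appears to be a genuine obstacle once Lemmas~\ref{le:sc1} and~\ref{le:sc2} are in hand.
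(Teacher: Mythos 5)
Your proposal is correct and follows essentially the same route as the paper: the main equivalence \eqref{eq:sc31} is obtained exactly as in the paper's proof by combining Parseval's identity with \eqref{5} of Lemma~\ref{le:sc2} applied with parameter $2-\alpha\in(0,\infty)$, and the ``in particular'' clause is deduced, as the paper indicates, from Lemma~\ref{le:sc1} together with the Littlewood--Paley identity \eqref{eq:LP2}. Your extra care about the region $|z|\le\tfrac12$ is sound (both weights behave like $\log\frac{e}{|z|}$ near the origin, so the comparison in fact extends globally), and it is if anything more explicit than the paper's ``immediate consequence'' remark.
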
\index{$H_\alpha(\om^\star)$}

\begin{proof}
If $f(z)=\sum_{n=0}^\infty a_nz^n\in\H(\D)$, then
Lemma~\ref{le:sc2} and Parseval's identity give
    \begin{equation*}\begin{split}
    \sum_{n=0}^\infty(n+1)^{\alpha+2}\omega^\star_n|a_{n+1}|^2
    &=\sum_{n=0}^\infty(n+1)^2(n+1)^{2-(-\alpha+2)}\omega^\star_n|a_{n+1}|^2\\
    &\asymp
    2\sum_{n=0}^\infty(n+1)^2|a_{n+1}|^2\int_0^1r^{2n+1}\om_{-\alpha}^\star(r)\,dr\\
    &=\int_\D |f'(z)|^2\,\om^\star_{-\alpha}(z)\,dA(z),
    \end{split}\end{equation*}
which proves \eqref{eq:sc31}.

The identity
$H_\alpha(\omega^\star)=A^2_{\omega_{-\alpha-2}^\star}$,
$-\infty<\alpha<0$, is an immediate consequence of
\eqref{eq:sc31}, Theorem~\ref{ThmLittlewood-Paley} and
Lemma~\ref{le:sc1}.
\end{proof}\index{$H_\alpha(\om^\star)$}

Lemma~\ref{le:sc3} allows us to define an inner product on
$H_\alpha(\om^\star)$, $-\infty<\alpha<2$, by
    \begin{equation}
    \begin{split}\label{Eq:InnerProduct}
    \langle f,g\rangle_{H_\alpha(\om^\star)}
    &=f(0)\overline{g(0)}\om^\star_{-\alpha}(\D)\\
    &\quad+2\sum_{n=0}^\infty(n+1)^2a_{n+1}\overline{b_{n+1}}\int_0^1r^{2n+1}\om_{-\alpha}^\star(r)\,dr\\
    &=f(0)\overline{g(0)}\om^\star_{-\alpha}(\D)+\int_\D
    f'(z)\overline{g'(z)}\,\om^\star_{-\alpha}(z)\,dA(z),
    \end{split}
    \end{equation}
where $\sum_{n=0}^\infty a_nz^n$ and $\sum_{n=0}^\infty b_nz^n$
are the Maclaurin series of $f$ and $g$ in $\D$, respectively. It
also follows from Lemma~\ref{le:sc3} that each point evaluation
$L_a(f)=f(a)$ is a bounded linear functional on
$H_\alpha(\om^\star)$ for all $-\infty<\alpha<2$.
\index{$H_\alpha(\om^\star)$} Therefore there exist reproducing
kernels\index{reproducing kernel} $K^\alpha_a\in
H_\alpha(\om^\star)$\index{$K^\alpha_a$} with $\|L_
a\|=\|K_a^\a\|_{H_\alpha(\om^\star)}$ such that $L_a(f)=\langle f,
K^\alpha_ a\rangle_{H_\alpha(\om^\star)}$, and thus
    \begin{equation}\label{eq:repro}
    f(a)=f(0)\overline{K^\alpha_a(0)}\om^\star_{-\alpha}(\D)
    +\int_\D f'(z)\overline{\frac{\partial K^\alpha_a(z)}{\partial z}}\,\om^\star_{-\alpha}(z)\,dA(z)
    \end{equation}
for all $f\in H_\alpha(\om^\star)$.

\begin{lemma}\label{le:rker}
If $-\infty<\alpha<1$ and $\om\in\I\cup\R$, then
    \begin{equation}\label{Eq:le:rker}
    \|K^\alpha_a\|_{H_\alpha(\om^\star)}^2\asymp\frac{1}{\om_{-\alpha}^\star(a)},\quad |a|\ge\frac12.
    \end{equation}
\end{lemma}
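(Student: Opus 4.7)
The plan is to compute $\|K^{\a}_a\|^2_{H_\a(\om^\star)}=K^{\a}_a(a)$ by producing an explicit orthonormal basis and then matching the resulting series against $1/\om^\star_{-\a}(a)$. First I would exhibit the orthonormal basis $e_0^{\a}(z)=1/\sqrt{\om^\star_{-\a}(\D)}$ and $e_n^{\a}(z)=z^n/\|z^n\|_{H_\a(\om^\star)}$ for $n\ge1$; using \eqref{Eq:InnerProduct} directly one has $\|z^n\|^2_{H_\a(\om^\star)}=2n^2\int_0^1r^{2n-1}\om^\star_{-\a}(r)\,dr$. Applying Lemma~\ref{le:sc2} (specifically \eqref{5} with $2-\a$ in place of $\a$ and $n-1$ in place of $n$) then gives $\|z^n\|^2\asymp n^{\a+2}\om^\star_{n-1}$, so that by \eqref{RKformula} and a reindexing
$$\|K^{\a}_a\|^2\asymp 1+\sum_{n=0}^\infty\frac{|a|^{2n+2}}{(n+1)^{\a+2}\om^\star_n},\quad |a|\ge 1/2.$$

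For $-\infty<\a<0$ the proof is essentially immediate: Lemma~\ref{le:sc3} identifies $H_\a(\om^\star)=A^2_{\om^\star_{-\a-2}}$ with equivalent norms, and Lemma~\ref{le:sc1} guarantees $\om^\star_{-\a-2}\in\R$, so Lemma~\ref{kernels} yields $\|K^{\a}_a\|^2\asymp\|B^{\om^\star_{-\a-2}}_a\|^2_{A^2_{\om^\star_{-\a-2}}}\asymp 1/\om^\star_{-\a-2}(S(a))$. Lemma~\ref{le:cuadrado-tienda} combined with the second assertion of Lemma~\ref{le:sc1} then gives $\om^\star_{-\a-2}(S(a))\asymp(\om^\star_{-\a-2})^\star(a)\asymp\om^\star_{-\a}(a)$ for $|a|\ge 1/2$, proving the claim in this range.

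For $0\le\a<1$ the weight $\om^\star_{-\a-2}$ is in general not integrable on $\D$, so the convenient Bergman-space identification of Lemma~\ref{le:sc3} is unavailable and the series has to be evaluated directly. Here I would exploit the identity $(n+1)^{-2}=\int_0^1\int_0^1(xy)^n\,dx\,dy$ together with Lemma~\ref{le:sc2}\,\eqref{5} to rewrite the series as $\int_0^1\int_0^1 B^{\om^\star_{-\a}}_{\sqrt{xy}\,a}(\sqrt{xy}\,a)\,dx\,dy$ up to multiplicative constants. Since $\om^\star_{-\a}\in\R$ by Lemma~\ref{le:sc1}, a combination of Lemma~\ref{kernels}, Lemma~\ref{le:cuadrado-tienda} and \eqref{22} gives $B^{\om^\star_{-\a}}_w(w)\asymp 1/((1-|w|)^{2-\a}\om^\star(w))$ for $|w|\ge 1/2$. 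The substitutions $t=xy$ (with density $-\log t$) and $u=\sqrt{t}\,|a|$ reduce matters to the one-dimensional integral
$$\int_{1/2}^{|a|}\frac{|a|-u}{(1-u)^{2-\a}\om^\star(u)}\,du,$$
which one handles by splitting $|a|-u=(1-u)-(1-|a|)$ and invoking the integration-by-parts identity \eqref{77} (applied to $\om^\star\in\R$), producing the desired two-sided bound by $1/\om^\star_{-\a}(a)$.

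The main obstacle is precisely this case $0\le\a<1$: the loss of the Bergman-space identification forces a delicate direct estimation in which one must use the regularity of $\om^\star_{-\a}$ sharply, since the two terms arising from the splitting $|a|-u=(1-u)-(1-|a|)$ are each of the order $1/\om^\star_{-\a}(a)$ and neither the upper nor the lower bound can afford to lose a factor of $1-|a|$.
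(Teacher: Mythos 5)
Your argument is correct, and for the range $0\le\a<1$ it takes a genuinely different route from the paper's. The case $-\infty<\a<0$ is identical to the paper (Bergman-space identification via Lemmas~\ref{le:sc3}, \ref{kernels}, \ref{le:cuadrado-tienda} and \ref{le:sc1}). For $0<\a<1$ the paper stays entirely on the discrete side: it fixes $N$ with $1-\frac1N\le|a|<1-\frac1{N+1}$, observes that $(n+1)^2\om^\star_n$ is decreasing in $n$ (a consequence of \eqref{eq:LP2} applied to monomials), and splits the series \eqref{eq:ker1} at $N$, bounding the head from above and the tail from below by pulling out $(N+1)^2\om^\star_N$, while the far tail is controlled by $(1-|a|)^\a\Vert K^0_a\Vert^2$ using the separately established case $\a=0$. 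Your version instead converts $\sum_n|a|^{2n+2}/((n+1)^2(\om^\star_{-\a})_n)$ into the double integral $\int_0^1\int_0^1 B^{\om^\star_{-\a}}_{\sqrt{xy}\,a}(\sqrt{xy}\,a)\,dx\,dy$ and reduces to a single radial integral; this is self-contained (it subsumes $\a=0$) and trades the paper's monotonicity trick for a change of variables plus the diagonal kernel estimate \eqref{86}. Two remarks on your final step. First, the citation of \eqref{77} is not quite apt: that identity concerns tail integrals $\int_r^1\om(s)(1-s)^{-\gamma}\,ds$ of a regular weight, whereas your integrand involves $1/\om^\star$ and is integrated up to $|a|$. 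Second, and fortunately, the estimate is easier than you fear: for the upper bound use $|a|-u\le 1-u$ and the monotonicity of $r\mapsto\int_r^1\om(s)\,ds$ together with $\om^\star(u)\asymp(1-u)\int_u^1\om(s)\,ds$ to get
\begin{equation*}
\int_{1/2}^{|a|}\frac{|a|-u}{(1-u)^{2-\a}\om^\star(u)}\,du
\lesssim\frac{1}{\int_{|a|}^1\om(s)\,ds}\int_{1/2}^{|a|}(1-u)^{\a-2}\,du
\asymp\frac{(1-|a|)^{\a-1}}{\int_{|a|}^1\om(s)\,ds}\asymp\frac{1}{\om^\star_{-\a}(a)},
\end{equation*}
since $\a-2<-1$; for the lower bound restrict the integral to $1-2(1-|a|)\le u\le 1-\frac32(1-|a|)$, where $|a|-u\asymp 1-u\asymp 1-|a|$ and $\om^\star(u)\asymp\om^\star(a)$ by \eqref{eq:r2} for $\om^\star\in\R$. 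In particular no cancellation between the two terms of the splitting $|a|-u=(1-u)-(1-|a|)$ needs to be tracked, so no factor of $1-|a|$ is at risk.
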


\begin{proof}
If $-\infty<\alpha<0$, then Lemma~\ref{le:sc3},
Lemma~\ref{kernels}, Lemma~\ref{le:cuadrado-tienda}, and
Lemma~\ref{le:sc1} yield
    \begin{equation*}\index{$H_\alpha(\om^\star)$}
    \begin{split}
    \|K^\alpha_a\|^2_{H_\alpha(\om^\star)}
    &\asymp\|B^{\omega^\star_{-\alpha-2}}_a\|^2_{A^2_{\omega^\star_{-\alpha-2}}}
    \asymp\frac{1}{\omega^\star_{-\alpha-2}(S(a))}\\
    &\asymp\frac{1}{(\omega^\star_{-\alpha-2})^\star(a)}
    \asymp\frac{1}{\omega^\star_{-\alpha}(a)},\quad |a|\ge\frac12.
    \end{split}
    \end{equation*}
A similar reasoning involving Theorem~\ref{ThmLittlewood-Paley}
and Lemma~\ref{le:cuadrado-tienda} gives the assertion for
$\alpha=0$.

To prove the case $0<\a<1$, consider the orthonormal basis
$\{e_n\}_{n=0}^\infty$ of $H_\alpha(\om^\star)$ given by
    $$
    e_0(z)=\left(\frac{1}{\omega_{-\alpha}^\star(\D)}\right)^{1/2},\quad
    e_n(z)=\frac{z^n}{n\left(2\int_0^1r^{2n-1}\om_{-\alpha}^\star(r)\,dr\right)^{1/2}},\quad
    n\in\N.
    $$
Lemma~\ref{le:sc2} yields
    \begin{equation}\begin{split}\label{eq:ker1}
    \|K^\alpha_a\|^2_{H_\alpha(\om^\star)}&=K^\alpha_a(a)=
    \frac{1}{\omega_{-\alpha}^\star(\D)}+\sum_{n=0}^\infty\frac{|a|^{2n+2}}{2(n+1)^2\int_0^1r^{2n+1}\om_{-\alpha}^\star(r)\,dr}\\
    &\asymp1+\sum_{n=0}^\infty\frac{|a|^{2n+2}}{(n+1)^{2+\alpha}\om^\star_n},\quad
    a\in\D.
    \end{split}
    \end{equation}
Take $N\in\N$ such that $1-\frac{1}{N}\le |a|<1-\frac{1}{N+1}$.
Using \eqref{eq:LP2}, with $f(z)=z^{n+1}$, we deduce
    $$
    4(n+1)^2\om^\star_n=\int_0^1r^{2n+3}\om(r)\,dr,\quad n\in\N\cup\{0\},
    $$
and hence $\{\om^\star_n(n+1)^2\}_{n=0}^\infty$ is a decreasing
sequence. This together with the fact $\om^\star\in\R$, which
follows by Lemma~\ref{le:sc1}, and Lemma~\ref{le:sc2} gives
    \begin{equation}
    \begin{split}\label{eq:ker2}
    \sum_{n=0}^N\frac{|a|^{2n+2}}{(n+1)^{2+\alpha}\om^\star_n}
    &\le\frac{1}{(N+1)^{2}\om^\star_N}\sum_{n=0}^N\frac{|a|^{2n+2}}{(n+1)^{\alpha}}\\
    &\lesssim\frac{1}{(N+1)\om^\star\left(1-\frac{1}{N+1}\right)}\sum_{n=0}^\infty\frac{|a|^{2n+2}}{(n+1)^{\alpha}}\\
    &\lesssim\frac{1-|a|}{\om^\star(a)}(1-|a|)^{\alpha-1}=\frac{1}{\om_{-\alpha}^\star(a)},\quad
    a\in\D,
    \end{split}
    \end{equation}
and
    \begin{equation}
    \begin{split}\label{eq:ker2999}
    \sum_{n=N}^\infty\frac{|a|^{2n+2}}{(n+1)^{2+\alpha}\om^\star_n}
    &\ge\frac{1}{(N+1)^{2}\om^\star_N}\sum_{n=N}^\infty\frac{|a|^{2n+2}}{(n+1)^{\alpha}}\\
    &\asymp\frac{1}{(N+1)\om^\star\left(1-\frac{1}{N+1}\right)}\sum_{n=N}^\infty\frac{|a|^{2n+2}}{(n+1)^{\alpha}}\\
    &\gtrsim\frac{1-|a|}{\om^\star(a)}(1-|a|)^{\alpha-1}=\frac{1}{\om_{-\alpha}^\star(a)},\quad
    |a|\ge\frac12.
    \end{split}
    \end{equation}
Moreover, \eqref{Eq:le:rker} with $\alpha=0$ yields
    \begin{equation}\label{11}
    \begin{split}
    \sum_{n=N+1}^\infty\frac{|a|^{2n+2}}{(n+1)^{2+\alpha}\om^\star_n}
    &\lesssim \frac{1}{(N+1)^{\alpha}} \sum_{n=0}^\infty\frac{|a|^{2n+2}}{(n+1)^{2}\om^\star_n}\\
    &\lesssim (1-|a|)^\alpha\|K^0_a\|^2_{H_0(\om^\star)}\asymp\frac{1}{\om_{-\alpha}^\star(a)}.
    \end{split}
    \end{equation}
By combining \eqref{eq:ker1}--\eqref{11} we obtain the assertion
for $0<\alpha<1$.
\end{proof}

\subsection{Toeplitz operator $T_\mu$ and complex interpolation
technique}\index{Toeplitz operator}

We next consider a decomposition of $\D$ into disjoint sets of
roughly equal size in the hyperbolic sense. Let $\Upsilon$ denote
the family of all dyadic arcs of $\T$. Every dyadic arc
$I\subset\T$ is of the form
    $$
    I_{n,k}=\left\{e^{i\theta}:\,\frac{2\pi k}{2^n}\le
    \theta<\frac{2\pi(k+1)}{2^n}\right\},
    $$
where $k=0,1,2,\dots,2^n-1$ and $n\in\N\cup\{0\}$. For each
$I\subset\T$, set
    $$
    R(I)=\left\{z\in\D:\,\frac{z}{|z|}\in I,\,\,1-\frac{|I|}{2\pi}\le r<1-\frac{|I|}{4\pi}\right\}.
    $$
Then the family $\left\{R(I):\,\,I\in\Upsilon\right\}$ consists of
pairwise disjoint sets whose union covers~$\D$. For
$I_j\in\Upsilon\setminus\{I_{0,0}\}$, we will write $z_j$ for the
unique point in $\D$ such that $z_j=(1-|I_j|/2\pi)a_j$, where
$a_j\in\T$ is the midpoint of $I_j$. For convenience, we associate
the arc $I_{0,0}$ with the point $1/2$.

Now, if $\mu$ is a complex Borel measure on $\D$, let us consider
the\index{$H_\alpha(\om^\star)$} operator
    $$
    T_\mu(f)(w)=\int_\D f(z)K^\alpha(w,z)\,d\mu(z),\quad f\in
    H_\alpha(\om^\star),\index{$T_\mu(f)$}
    $$
where $K^\alpha(w,z)=K_z^\alpha(w)=\overline{K_w^\alpha(z)}$. This
operator has been studied, for instance, in~\cite{Lu87}.

\begin{theorem}\label{th:sufschpmayor2}
Let $1\le p<\infty$ and $-\infty<\alpha<1$ such that $p\alpha<1$.
Let $\omega\in\I\cup\R$, and let $\mu$ be a complex Borel measure
on $\D$. If
    \begin{equation}\label{eq:sl}\index{$H_\alpha(\om^\star)$}
    \sum_{R_j\in\Upsilon}
    \left(\frac{|\mu|(R_j)}{\omega^\star_{-\alpha}(z_j)}\right)^p<\infty,
    \end{equation}
then $T_\mu\in\SSS_p(H_\alpha(\om^\star))$, and there exists a
constant $C>0$ such that
\begin{equation}\label{eq:sl2}
|T_\mu|_p^p\le C \sum_{R_j\in\Upsilon}
    \left(\frac{|\mu|(R_j)}{\omega^\star_{-\alpha}(z_j)}\right)^p.
    \end{equation}
Conversely, if $\mu$ is a positive Borel measure on $\D$ and
$T_\mu\in\SSS_p(H_\alpha(\om^\star))$, then \eqref{eq:sl} is
satisfied.
\end{theorem}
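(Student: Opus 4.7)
The plan is to adapt the Luecking-type template for Schatten class membership of Toeplitz operators~\cite{Lu87} to the weighted Dirichlet setting $H_\alpha(\omega^\star)$. The strategy is two-pronged: for necessity, I would test $T_\mu$ against normalized reproducing kernels at a uniformly discrete family indexing the partition $\{R_j\}_{R_j\in\Upsilon}$ and convert the $\SSS_p$-trace criterion of \cite[Theorem~1.27]{Zhu} into the sum \eqref{eq:sl}; for sufficiency, I would identify two endpoint estimates (one trace-class, one operator-norm) for the linear map that sends a sequence to a Toeplitz operator built from the normalized pieces $\mu|_{R_j}$, and interpolate.

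For the necessity, assume $\mu$ is positive and $T_\mu\in\SSS_p(H_\alpha(\omega^\star))$. Since the family $\{z_j\}$ of centers arising from $\Upsilon$ is uniformly discrete, I would construct, in analogy with Lemma~\ref{le:wo}, a bounded operator $J$ on $H_\alpha(\omega^\star)$ sending an orthonormal set $\{e_j\}$ to the normalized reproducing kernels $k^\alpha_{z_j}=K^\alpha_{z_j}/\|K^\alpha_{z_j}\|_{H_\alpha(\omega^\star)}$; the boundedness of $J$ is verified via the Cauchy-Schwarz/kernel estimate carried out in the proof of Lemma~\ref{le:wo}, now using \eqref{Eq:InnerProduct}, Lemma~\ref{le:rker}, and the fact that $\omega^\star_{-\alpha}\in\R$ by Lemma~\ref{le:sc1}. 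Because $\SSS_p$ is a two-sided ideal, $J^\star T_\mu J\in\SSS_p$, hence by \cite[Theorem~1.27]{Zhu} (applied to the positive operator $T_\mu$),
    \begin{equation*}
    \sum_j\bigl|\langle T_\mu k^\alpha_{z_j},k^\alpha_{z_j}\rangle_{H_\alpha(\omega^\star)}\bigr|^p<\infty.
    \end{equation*}
Using the reproducing property, $\langle T_\mu k^\alpha_{z_j},k^\alpha_{z_j}\rangle=\int_\D|k^\alpha_{z_j}(z)|^2\,d\mu(z)\ge\int_{R_j}|k^\alpha_{z_j}(z)|^2\,d\mu(z)$, and the same Cauchy-Schwarz argument employed in the proof of Lemma~\ref{le:Phi}, combined with Lemma~\ref{le:rker} and the local smoothness of $\omega^\star_{-\alpha}$, yields $|k^\alpha_{z_j}(z)|^2\asymp 1/\omega^\star_{-\alpha}(z_j)$ throughout~$R_j$. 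This gives each summand $\gtrsim \mu(R_j)/\omega^\star_{-\alpha}(z_j)$, which is \eqref{eq:sl}.

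For sufficiency, decompose $d\mu=\sum_jd\mu_j$ with $\mu_j=\mu|_{R_j}$, set $a_j=|\mu|(R_j)/\omega^\star_{-\alpha}(z_j)$, and (when $a_j>0$) normalize $d\nu_j=d\mu_j/a_j$. Consider the linear map $\Phi\colon c=(c_j)\mapsto T_{\sum_jc_j\nu_j}$. At the $\ell^1$ endpoint I would reduce to the positive case via polar decomposition and compute
    \begin{equation*}
    |T_{|\nu_j|}|_1=\mathrm{tr}\,T_{|\nu_j|}=\int_\D K^\alpha(z,z)\,d|\nu_j|(z)\asymp\int_{R_j}\frac{d|\nu_j|(z)}{\omega^\star_{-\alpha}(z)}\asymp 1,
    \end{equation*}
where the first asymptotic uses Lemma~\ref{le:rker} and the second the essential constancy of $\omega^\star_{-\alpha}$ on $R_j$. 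Hence $\Phi\colon\ell^1\to\SSS_1$ is bounded. At the $\ell^\infty$ endpoint I need the boundedness of $T_{|\mu|}$ on $H_\alpha(\omega^\star)$ whenever $\sup_j|\mu|(R_j)/\omega^\star_{-\alpha}(z_j)<\infty$; when $\alpha\le 0$, Lemma~\ref{le:sc3} identifies $H_\alpha(\omega^\star)$ with $A^2_{\omega^\star_{-\alpha-2}}$ and Theorem~\ref{th:cm} applies directly, while for $0<\alpha<1$ the proof of Theorem~\ref{th:cm} must be redone in the Dirichlet setting with $\omega$ replaced by $\omega^\star_{-\alpha}$ and the kernel asymptotics of Lemma~\ref{le:rker} in place of Lemma~\ref{kernels}. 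Complex interpolation for the Schatten scale (Stein-type), applied to the analytic family obtained by raising $|c_j|$ to appropriate complex powers (with the fixed phase from the polar decomposition), then yields $\Phi\colon\ell^p\to\SSS_p$ with $|\Phi(c)|_p\lesssim\|c\|_{\ell^p}$ for all $1\le p\le\infty$; specializing $c=(a_j)$ gives both $T_\mu\in\SSS_p(H_\alpha(\omega^\star))$ and the norm estimate~\eqref{eq:sl2}.

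The main obstacle I foresee is the $\ell^\infty$ endpoint in the sufficiency when $0<\alpha<1$, since $H_\alpha(\omega^\star)$ is then a genuine Dirichlet type space rather than a weighted Bergman space, and the Carleson-square description of $q$-Carleson measures cannot simply be quoted from Theorem~\ref{th:cm}. Resolving this requires reproducing the maximal-function machinery of Chapter~\ref{Sec:Carleson}, with the H\"ormander maximal operator weighted by $\omega^\star_{-\alpha}$ and the test functions $F_{a,p}$ replaced by suitable analogues built from $K^\alpha_a$, and it is precisely at this step that the hypothesis $p\alpha<1$ enters (via the integrability needed to dominate the kernel). Once this endpoint is in hand the rest of the argument is essentially formal.
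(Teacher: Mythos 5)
Your overall template (endpoint estimates plus complex interpolation on one side, testing against kernels attached to a uniformly discrete family on the other) is the right one, but there is a genuine gap at exactly the point you flag, and your proposed repair does not close it. The operator-norm endpoint for $0<\alpha<1$ is not a Carleson-measure problem for a weighted Bergman space: by Lemma~\ref{le:sc3} the identification $H_\alpha(\om^\star)=A^2_{\om^\star_{-\alpha-2}}$ holds only for $\alpha<0$, and for $\om\in\I$ and $0<\alpha<1$ the function $\om^\star_{-\alpha-2}$ need not even be integrable, so $H_\alpha(\om^\star)$ is a genuine Dirichlet-type space. The H\"ormander maximal-function machinery of Chapter~\ref{Sec:Carleson} rests on the subharmonicity estimate $|f|^q\lesssim M_\om(|f|^q)$, which exploits that the \emph{norm} controls $|f|$ locally; in $H_\alpha(\om^\star)$ the norm controls $|f'|^2$ against $\om^\star_{-\alpha}$ and gives no local integral bound on $|f|^2$, so "redoing Theorem~\ref{th:cm} with $\om$ replaced by $\om^\star_{-\alpha}$" does not produce the embedding $H_\alpha(\om^\star)\subset L^2(\mu)$. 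The paper's way out is structurally different: it never proves the bad endpoint in $H_\alpha(\om^\star)$ itself, but introduces index-shifting operators $Q_\zeta$ (fractional differentiation of order $\varepsilon(1-p\zeta)$, with $\alpha<2\varepsilon<\frac{1-\alpha}{p-1}$ --- this is precisely where $p\alpha<1$ enters, not in a kernel integrability bound) so that along $\Re\zeta=0$ the analytic family $S_\zeta$ acts between spaces of index $\alpha-2\varepsilon<0$, which \emph{are} weighted Bergman spaces where subharmonicity applies, while along $\Re\zeta=1$ the index $\alpha+2\varepsilon(p-1)<1$ keeps the diagonal kernel estimate of Lemma~\ref{le:rker} available for the $\SSS_1$ bound. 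Your untwisted family $(c_j)\mapsto T_{\sum_j c_j\nu_j}$ has no mechanism for escaping the Dirichlet range at the $\ell^\infty$ endpoint.

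The necessity half has a parallel problem. Testing against the normalized reproducing kernels $k^\alpha_{z_j}$ of $H_\alpha(\om^\star)$ requires the operator $J(e_j)=k^\alpha_{z_j}$ to be bounded, and unwinding the Cauchy--Schwarz step this amounts to the sampling inequality $\sum_j|f(z_j)|^2\om^\star_{-\alpha}(z_j)\lesssim\|f\|^2_{H_\alpha(\om^\star)}$, which is itself the $p=\infty$ instance of the embedding you could not establish for $0<\alpha<1$; the argument of Lemma~\ref{le:wo} does not transfer because it hinges on the pairing producing a \emph{derivative} evaluation. This is why the paper tests instead against the functions $\Phi^{\om_{-\alpha}}_{a_n}/\|B^{\om^\star_{-\alpha}}_{a_n}\|$, i.e.\ primitives of Bergman-kernel data: their pairing with $f$ in $H_\alpha(\om^\star)$ evaluates $f'(a_n)$, and $f'$ lives in $A^2_{\om^\star_{-\alpha}}$ where separation plus subharmonicity give the $\ell^2$ bound, while Lemma~\ref{le:Phi} supplies the pointwise lower bound on $D(a_n,\delta(1-|a_n|))$ that you wanted from $|k^\alpha_{z_j}|$ on $R_j$. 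Your $\ell^1\to\SSS_1$ endpoint computation and the use of the Schatten ideal property are fine and match the paper; the missing idea is the smoothness-index twist that moves both problematic estimates into ranges where Bergman-space tools apply.
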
\index{$H_\alpha(\om^\star)$}

We consider this result of its own interest because it allows us
to complete the proof of Theorem~\ref{th:shI} for rapidly
increasing weights.

\begin{corollary}\label{co:shIpmayor2}\index{Besov space}\index{$\Vert\cdot\Vert_{B_p}$}
Let $2\le p<\infty$, $\om\in\I\cup\R$ and $g\in\H(\D)$. Then,
$T_g\in\SSS_p(A^2_\om)$ if and only if $g\in B_p$.
\end{corollary}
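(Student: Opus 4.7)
The necessity $T_g\in\SSS_p(A^2_\om)\Rightarrow g\in B_p$ is already contained in Proposition~\ref{pr:necshI}, so I focus on the converse. The plan is to reduce Schatten $p$-membership of $T_g$ to Schatten $p/2$-membership of a Toeplitz operator on the Dirichlet-type space $H_0(\om^\star)$, and then invoke Theorem~\ref{th:sufschpmayor2}. Note that since $p\ge 2$, the exponent $p/2$ is at least $1$, which is where the argument really needs the hypothesis.

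First, the Littlewood--Paley identity \eqref{eq:LP2} shows that $A^2_\om$ and $H_0(\om^\star)$ (cf.\ Lemma~\ref{le:sc3} with $\alpha=0$) coincide as sets of analytic functions with equivalent Hilbert norms, and consequently Schatten $p$-membership of an operator on this common space is independent of which of the two norms is used. Since $(T_g f)' = f g'$ and $(T_g f)(0)=0$, the polarized form of \eqref{eq:LP2} yields, for all $f,h\in A^2_\om$,
\begin{equation*}
\langle T_g^\ast T_g f,h\rangle_{A^2_\om}=4\int_\D f(z)\overline{h(z)}\,|g'(z)|^2\om^\star(z)\,dA(z).
\end{equation*}
Thus, up to a constant, $T_g^\ast T_g$ equals the Toeplitz operator $T_\mu$ (viewed on $H_0(\om^\star)$) with $d\mu=|g'(z)|^2\om^\star(z)\,dA(z)$. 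Since $T\in\SSS_p$ iff $T^\ast T\in\SSS_{p/2}$, it suffices to prove $T_\mu\in\SSS_{p/2}(H_0(\om^\star))$.

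Now apply Theorem~\ref{th:sufschpmayor2} with $\alpha=0$ and $p/2$ in place of $p$ (the hypothesis $(p/2)\alpha<1$ is trivial). The task is reduced to verifying
\begin{equation*}
\sum_{R_j\in\Upsilon}\left(\frac{\mu(R_j)}{\om^\star(z_j)}\right)^{p/2}<\infty.
\end{equation*}
Since $\om^\star\in\R$ by Lemma~\ref{le:sc1}, estimate \eqref{eq:r2} makes $\om^\star$ essentially constant on each $R_j$, whence $\mu(R_j)/\om^\star(z_j)\asymp\int_{R_j}|g'|^2\,dA$. Because $p/2\ge 1$, H\"older's inequality with exponents $p/2$ and $p/(p-2)$ gives $\int_{R_j}|g'|^2\,dA\le\bigl(\int_{R_j}|g'|^p\,dA\bigr)^{2/p}|R_j|^{1-2/p}$, and combining with $|R_j|\asymp(1-|z_j|)^2$ and $(1-|z|)\asymp(1-|z_j|)$ on $R_j$ one obtains
\begin{equation*}
\sum_{R_j}\left(\frac{\mu(R_j)}{\om^\star(z_j)}\right)^{p/2}\lesssim\sum_{R_j}\int_{R_j}|g'(z)|^p(1-|z|)^{p-2}\,dA(z)=\|g\|_{B_p}^p,
\end{equation*}
which is finite by assumption.

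The main obstacle is the first step, namely the identification of $T_g^\ast T_g$ with the Toeplitz operator $T_\mu$ on the correct Hilbert space; this is exactly where the Littlewood--Paley formula \eqref{eq:LP2}, the isomorphism $A^2_\om\simeq H_0(\om^\star)$ from Lemma~\ref{le:sc3}, and the invariance of Schatten classes under equivalent Hilbert norms must all be lined up. Once that reduction is in place, Theorem~\ref{th:sufschpmayor2} together with the H\"older-plus-subharmonicity estimate above closes the argument and requires no hypothesis on $\om$ beyond $\om\in\I\cup\R$.
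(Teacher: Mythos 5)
Your proof is correct and follows essentially the same route as the paper: both identify $T_g^\ast T_g$ with the Toeplitz operator $T_{\mu_g}$ on $H_0(\om^\star)=A^2_\om$ via the polarized Littlewood--Paley identity \eqref{eq:LP2}, apply Theorem~\ref{th:sufschpmayor2} with $\alpha=0$, and pass between $\SSS_p$ and $\SSS_{p/2}$ via $T^\ast T$. The only cosmetic difference is that you verify the dyadic condition $\sum_j\bigl(\int_{R_j}|g'|^2\,dA\bigr)^{p/2}\lesssim\|g\|_{B_p}^p$ directly by H\"older, whereas the paper cites the equivalence with $g\in B_p$ from the proof of \cite[Theorem~2]{AS0}; both are fine.
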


\begin{proof}
By the proof of \cite[Theorem~2]{AS0}, $g\in B_p$ if and only if
    \begin{equation}\label{eq:as}
    \sum_{R_j\in\Upsilon}
    \left(\int_{R_j}|g'(z)|^2\,dA(z)\right)^{p/2}<\infty,
    \end{equation}
which is equivalent to
    $$
    \sum_{R_j\in\Upsilon}
    \left(\frac{\int_{R_j}|g'(z)|^2\om^\star(z)\,dA(z)}{\omega^\star(z_j)}\right)^{p/2}<\infty
    $$
since $\om^\star\in\R$ by Lemma~\ref{le:sc1}. Therefore we may
apply Theorem~\ref{th:sufschpmayor2} with $\alpha=0$ to the
measure $d\mu_g(z)=|g'(z)|^2\om^\star(z)\,dA(z)$ to deduce
$T_{\mu_g}\in\SSS_{p/2}(H_0(\omega^\star))$ if and only if $g\in
B_p$. But $H_0(\omega^\star)=A^2_\omega$ by
Theorem~\ref{ThmLittlewood-Paley}, and hence
$T_{\mu_g}\in\SSS_{p/2}(A^2_\omega)$. On the other hand,
\eqref{eq:repro} gives
    \begin{equation*}
    \langle (T_g^\star T_g)(f),h\rangle_{H_0(\om^\star)}
    =\langle  T_g(f),T_g(h)\rangle_{H_0(\om^\star)}
    =\int_{\D}f(z)\overline{h(z)}|g'(z)|^2\om^\star(z)\,dA(z),
    \end{equation*}
so, by taking $h(z)=K^0_w(z)$, we deduce
    $$
    (T_g^\star
    T_g)(f)(w)=\int_{\D}f(z)K^0(w,z)|g'(z)|^2\om^\star(z)\,dA(z)=T_{\mu_g}(f)(w).
    $$
Therefore $g\in B_p\Leftrightarrow T_g^\star T_g\in
\SSS_{p/2}(A^2_\om)\Leftrightarrow T_g\in\SSS_p(A^2_\om)$, where
in the last equivalence we use \cite[Theorem~1.26]{Zhu}.
\end{proof}

Propositions~\ref{pr:necshI} and~\ref{pr:shR1}, and
Corollary~\ref{co:shIpmayor2} yield Theorem~\ref{th:shI}. Finally,
we prove Theorem~\ref{th:sufschpmayor2}.

\subsection*{Proof of Theorem~\ref{th:sufschpmayor2}.} We borrow the
argument from the proof of the main theorem in
\cite[p.~352--355]{Lu87}. First, we will show that \eqref{eq:sl}
implies $T_\mu\in\SSS_p(H_\alpha(\om^\star))$ and \eqref{eq:sl2}.
This part of the proof will be divided into two
steps.\index{$H_\alpha(\om^\star)$}

\smallskip
\noindent\textbf{First Step.} We begin with showing by a limiting
argument that it is enough to prove the assertion for measures
with compact support. For simplicity we only consider positive
Borel measures, a similar reasoning works also for complex Borel
measures.

Assume that there exists a constant $C>0$ such that \eqref{eq:sl2}
is satisfied for all positive compactly supported Borel measures
on $\D$ that fulfill \eqref{eq:sl}. Let $\mu$ be a positive Borel
measure on $\D$ such that
    $$
    M_p=\sum_{R_j\in\Upsilon}\left(\frac{\mu(R_j)}{\omega^\star_{-\alpha}(z_j)}\right)^p<\infty.
    $$
For each $k\in\N$, let $\mu_k$ denote the measure defined by
    $$
    \mu_k(E)=\mu\left(E\cap
    D\left(0,1-\frac{1}{k}\right)\right)
    $$
for any Borel set $E\subset\D$, and set
    $$
    M^k_p=\sum_{R_j\in\Upsilon}\left(\frac{\mu_k(R_j)}{\omega^\star_{-\alpha}(z_j)}\right)^p.
    $$
Since $\mu_k$ has compact support and $M_p^k\le M_p<\infty$ for
all $k\in\N$, the assumption yields
$T_{\mu_k}\in\SSS_p(H_\alpha(\om^\star))$ for all $k\in\N$, and
    \begin{equation}\label{eq:st11}\index{$H_\alpha(\om^\star)$}
    |T_{\mu_k}|_p^p\le C M^k_p\le C M_p<\infty.
    \end{equation}
In particular, $T_{\mu_k}$ is a compact operator on
$H_\alpha(\om^\star)$ with $\|T_{\mu_k}\|^p\le CM_p$ for all
$k\in\N$.\index{$H_\alpha(\om^\star)$}

Consider the identity operator $I_d:H_\alpha(\om^\star)\to
L^2(\mu)$.\index{$H_\alpha(\om^\star)$} The
definition~\eqref{Eq:InnerProduct}, Fubini's theorem and
\eqref{eq:repro} yield
    \begin{equation}\label{eq:st12}\index{$H_\alpha(\om^\star)$}
    \begin{split}
    \langle T_\mu(g),f\rangle_{H_\alpha(\om^\star)}
    &=T_\mu(g)(0)\overline{f(0)}\,\om^\star_{-\alpha}(\D)\\
    &\quad+\int_\D
    \left(\int_\D g(z)\frac{\partial K^\alpha(\zeta,z)}{\partial \zeta}\,d\mu(z)\right)
    \overline{f'(\zeta)}\,\om_{-\alpha}^\star(\zeta)\,dA(\zeta)\\
    &=\int_\D g(z)\left(\overline{f(0)}K^\alpha(0,z)\om_{-\alpha}^\star(\D)\right)\,d\mu(z)\\
    &\quad+\int_\D g(z)\left(\int_\D\frac{\partial
    K^\alpha(\zeta,z)}{\partial \zeta}\overline{f'(\zeta)}\om^\star_{-\alpha}(\zeta)\,dA(\zeta)\right)d\mu(z)\\
    &=\int_\D g(z)\overline{f(z)}\,d\mu(z)=\langle I_d(g),
    I_d(f)\rangle_{L^2(\mu)}\\
    &=\langle (I_d^\star I_d)(g),f\rangle_{H_\alpha(\om^\star)}
    \end{split}
    \end{equation}
for all $g$ and $f$ in $H_\alpha(\om^\star)$, and thus
$T_\mu=I_d^\star I_d$. Therefore $T_\mu$ is bounded (resp.
compact) on $H_\alpha(\om^\star)$ if and only if
$I_d:\,H_\alpha(\om^\star)\to L^2(\mu)$ is bounded (resp.
compact). Since the same is true if $\mu$ is replaced by $\mu_k$,
we deduce that $I_d:\,H_\alpha(\om^\star)\to L^2(\mu_k)$ is
compact with $\|I_d\|^2\asymp\|T_{\mu_k}\|\le CM_p$ for all
$k\in\N$ by the previous paragraph. Now, by using the monotone
convergence theorem, we obtain
    \begin{equation*}\index{$H_\alpha(\om^\star)$}
    \begin{split}
    \int_\D|f(z)|^2\,d\mu(z)&=\int_\D\left(\lim_{k\to\infty}|f(z)|^2\chi_{D(0,1-\frac{1}{k})}(z)\right)\,d\mu(z)=\lim_{k\to\infty}\int_\D|f(z)|^2\,d\mu_k(z)\\
    &\lesssim\left(\lim_{k\to\infty}\|T_{\mu_k}\|\right)\|f\|^2_{H_\alpha(\om^\star)}
    \le C M_p\|f\|^2_{H_\alpha(\om^\star)}
    \end{split}
    \end{equation*}
for all $f\in H_\alpha(\om^\star)$. Therefore
$I_d:H_\alpha(\om^\star)\to L^2(\mu)$ is bounded and so is $T_\mu$
on $H_\alpha(\om^\star)$. Furthermore, by using standard tools and
\eqref{eq:st12}, we deduce
    \begin{equation}\label{eq:st13}\index{$H_\alpha(\om^\star)$}
    \begin{split}
    &\|T_{\mu}-R\|\\
    &=\sup_{\|f\|_{H_\alpha(\om^\star)}\le1,\,\|g\|_{H_\alpha(\om^\star)}\le1}
    \left|\langle
    (T_{\mu}-R)(f),g\rangle_{H_\alpha(\om^\star)}\right|\\
    &=\sup_{\|f\|_{H_\alpha(\om^\star)}\le1,\,\|g\|_{H_\alpha(\om^\star)}\le1}
    \left|\int_\D f(z)\overline{g(z)}\,d\mu(z)-\langle R(f),g
    \rangle_{H_\alpha(\om^\star)}\right|\\
    &=\sup_{\|f\|_{H_\alpha(\om^\star)}\le1,\,\|g\|_{H_\alpha(\om^\star)}\le1}\lim_{k\to\infty}
    \left|\int_\D f(z)\overline{g(z)}\,d\mu_k(z)-\langle R(f),g
    \rangle_{H_\alpha(\om^\star)}\right|\\
    &=\lim_{k\to\infty}\sup_{\|f\|_{H_\alpha(\om^\star)}\le1,\,\|g\|_{H_\alpha(\om^\star)}\le1}
    \left|\int_\D f(z)\overline{g(z)}\,d\mu_k(z)-\langle R(f), g \rangle_{H_\alpha(\om^\star)}\right|\\
    &=\lim_{k\to\infty}\|T_{\mu_k}-R\|
    \end{split}
    \end{equation}
for any bounded operator $R$ on $H_\alpha(\om^\star)$. Therefore
$\lim_{k\to\infty}\|T_{\mu_k}-T_{\mu}\|=0$, which implies that
$T_{\mu}$ is compact. We also  deduce from \eqref{eq:st13} that
$\lambda_n(T_\mu)=\lim_{k\to\infty}\lambda_n(T_{\mu_k}),$\, which
together with \eqref{eq:st11} and Fatou's lemma gives
    \begin{equation*}
    \begin{split}
    |T_\mu|^p_p&=\sum_{n=0}^\infty \lambda_n(T_\mu)^p
    =\sum_{n=0}^\infty \left(\lim_{k\to\infty}\lambda_n(T_{\mu_k})^p\right)\\
    &\le\liminf_{k\to\infty}\left(\sum_{n=0}^\infty \lambda_n(T_{\mu_k})^p \right)
    \le\limsup_{k\to\infty}|T_{\mu_k}|^p_p\le CM_p.
    \end{split}
    \end{equation*}
This completes the proof of the first step.

\smallskip
\noindent\textbf{Second Step.} Assume that $\mu$ is a compactly
supported complex Borel measure on $\D$. The proof is by complex
interpolation. First, we will obtain the assertion for $p=1$.
Since $T_\mu$ is compact on $H_\alpha(\om^\star)$, there are
orthonormal sets $\{e_n\}$ and $\{f_n\}$ on $H_\alpha(\om^\star)$
such that
    $$\index{$H_\alpha(\om^\star)$}
    T_\mu(f)=\sum_n\lambda_n\langle f,e_n\rangle_{H_\alpha(\om^\star)}f_n
    $$
for all $f\in H_\alpha(\om^\star)$.\index{$H_\alpha(\om^\star)$}
Therefore the formula \eqref{Eq:InnerProduct}, Fubini's theorem,
the Cauchy-Schwarz inequality, \eqref{eqRK1} and
Lemma~\ref{le:rker} yield
    \begin{equation}\label{eq:sufp1}
    \begin{split}
    |T_\mu|_{\SSS_1(H_\alpha(\om^\star))}
    &=\sum_n\left|\langle
    T_\mu(e_n),f_n\rangle_{H_\alpha(\om^\star)}\right|
    =\sum_n \bigg|T_\mu(e_n)(0)\overline{f_n(0)}\,\om^\star_{-\alpha}(\D)\\
    &\quad+\int_\D\left(\int_\D e_n(z)\frac{\partial K^\alpha(w,z)}{\partial w}\,d\mu(z)\right)
    \overline{f'_n(w)}\om_{-\alpha}^\star(w)\,dA(w)\bigg|\\
    &=\sum_n\bigg|\int_\D e_n(z)\bigg(\overline{f_n(0)}K^\alpha(0,z)\om_{-\alpha}^\star(\D)\\
    &\quad+\int_\D\frac{\partial K^\alpha(w,z)}{\partial w}\overline{f'_n(w)}\,\om^\star_{-\alpha}(w)\,dA(w)\bigg)\,d\mu(z)\bigg|\\
    &=\sum_n\left|\int_\D e_n(z)\overline{f_n(z)}\,d\mu(z)\right|\\
    &\le\int_\D
    \left(\sum_n|e_n(z)|^2\right)^{1/2}\left(\sum_n|f_n(z)|^2\right)^{1/2}\,d|\mu|(z)\\
    &\le\int_\D\|K^\alpha_z\|^2_{H_\alpha(\om^\star)}\,d|\mu|(z)
    \asymp\int_\D\frac{d|\mu|(z)}{\om_{-\alpha}^\star(z)}
    \asymp\sum_{R_j\in\Upsilon}\frac{|\mu|(R_j)}{\om_{-\alpha}^\star(z_j)},
    \end{split}\index{$H_\alpha(\om^\star)$}
    \end{equation}
because $\omega_{-\alpha}^\star\in\R$ by Lemma~\ref{le:sc1}. Thus
the assertion is proved for $p=1$ and $-\infty<\alpha<1$.

Let now $1<p<\infty$ and $-\infty<\alpha<1$ with $p\alpha<1$. Take
$\varepsilon>0$ such that
$\alpha<2\varepsilon<\frac{1-\alpha}{p-1}$. For $\zeta$ in the
strip $\Lambda=\{w:0\le \Re w\le 1\}$, define the differentiation
operator $Q_\zeta$ on $\H(\D)$ by
    $$
    Q_\zeta\left(\sum_{n=0}^\infty a_nz^n\right)=\sum_{n=0}^\infty
    (n+1)^{\varepsilon(1-p\zeta)}a_nz^n,
    $$
and let $\gamma=\alpha-2\varepsilon(1-p\Re\zeta)$. It is easy to
see that $Q_\zeta$ is a bounded invertible operator from
${H_\alpha(\om^\star)}$\index{$H_\alpha(\om^\star)$} to
$H_\gamma(\om^\star)$ and $\|Q_\zeta\|=\|Q_{\Re\zeta}\|$. If
$\Re\zeta=0$, then
$H_\gamma(\om^\star)=H_{\alpha-2\varepsilon}(\om^\star)=A^2_{\om^\star_{-(\alpha-2\varepsilon)-2}}$
by Lemma~\ref{le:sc3}, because $\alpha-2\varepsilon<0$. If
$\Re\zeta=1$, then $\gamma<1$ because
$2\varepsilon<\frac{1-\alpha}{p-1}$. For each $\zeta\in\Lambda$,
define the measure-valued function $\mu_\zeta$ by
    $$
    \mu_\zeta=\sum_{R_j\in\Upsilon}\left(\frac{|\mu|(R_j)}{\om_{-\alpha}^\star(z_j)}
    \right)^{p\zeta-1}\chi_{R_j}\mu,
    $$
where the coefficient of $\chi_{R_j}$ is taken to be zero if
$\mu(R_j)=0$. Further, define the analytic operator-valued
function\index{$H_\alpha(\om^\star)$} $S_\zeta$ on
${H_\alpha(\om^\star)}$ by
    $$
    S_\zeta(f)(w)=\int_\D
    Q_\zeta(f)(z)\,\overline{Q_{\overline{\zeta}}(K_w^\alpha)(z)}
    (1-|z|)^{2\varepsilon(1-p\zeta)}\,d\mu_{\zeta}(z).
    $$
Since $\mu$ has compact support, $\|S_\zeta\|$ is uniformly
bounded on $\Lambda$. Moreover, $S_{\frac1p}=T_\mu$. By
\cite[Theorem~13.1]{Gohberg}, it suffices to find constants
$M_0>0$ and $M_1>0$ such that $\|S_\zeta\|\le M_0$ when
$\Re\zeta=0$, and $|S_\zeta|_1\le M_1$ when $\Re\zeta=1$, because
then it follows that $T_\mu\in \SSS_p(H_\alpha(\om^\star))$ and
$|T_\mu|_p\le M_0^{1-1/p}M_1^{1/p}$.\index{$H_\alpha(\om^\star)$}

Let us find $M_0$ first. By using Fubini's theorem,
\eqref{eq:repro}, the identity
$Q_\zeta(K_z^\alpha)(w)=\overline{Q_{\overline{\zeta}}(K_w^\alpha)(z)}$
and arguing as in \eqref{eq:st12} we obtain
    \begin{equation}\label{28}
    \langle S_\zeta(f),g\rangle_{H_\a(\om^\star)}
    =\int_\D
    Q_\zeta(f)(z)\overline{Q_{\overline{\zeta}}(g)(z)}(1-|z|)^{2\varepsilon(1-p\zeta)}\,d\mu_\zeta(z)
    \end{equation}
for all $f$ and $g$ in
$H_\alpha(\om^\star)$,\index{$H_\alpha(\om^\star)$} and
consequently,
    \begin{equation}\label{25}
    \begin{split}
    \left|\langle S_\zeta(f),g\rangle_{H_\a(\om^\star)}\right|
    &\le\left(\int_\D|Q_\zeta(f)(z)|^2(1-|z|)^{2\varepsilon}\,d|\mu_\zeta|(z)\right)^{1/2}\\
    &\quad\cdot\left(\int_\D|Q_{\overline{\zeta}}(g)(z)|^2(1-|z|)^{2\varepsilon}\,d|\mu_\zeta|(z)\right)^{1/2}
    \end{split}
    \end{equation}
if $\Re\zeta=0$. Now $\alpha-2\varepsilon<0$ by our choice, so
Lemma~\ref{le:sc3} yields
$H_{\alpha-2\varepsilon}(\om^\star)=A^2_{\om^\star_{-(\alpha-2\varepsilon)-2}}$,
where $\om^\star_{-(\alpha-2\varepsilon)-2}\in\R$ by
Lemma~\ref{le:sc1}. Moreover, both $Q_\zeta(f)$ and
$Q_{\overline{\zeta}}(g)$ belong to
$H_{\alpha-2\varepsilon}(\om^\star)=A^2_{\om^\star_{-(\alpha-2\varepsilon)-2}}$
because $\gamma=\alpha-2\varepsilon$ if $\Re\zeta=0$. Since
$|Q_\zeta(f)|^2$ is subharmonic and
$\om^\star_{-(\alpha-2\varepsilon)-2}\in\R$, we obtain
    \begin{equation}\label{33}
    |Q_\zeta(f)(z)|^2
    \lesssim\frac{\int_{\Delta(z,r)}|Q_\zeta(f)(\xi)|^2\omega^\star_{-(\alpha-2\varepsilon)-2}(\xi)\,dA(\xi)}
    {\omega^\star_{-(\alpha-2\varepsilon)}(z)}
    \end{equation}
for any fixed $r\in(0,1)$. This together with Fubini's theorem
yields
    \begin{equation}\label{36}
    \begin{split}
    &\int_\D|Q_\zeta(f)(z)|^2(1-|z|)^{2\varepsilon}\,d|\mu_\zeta|(z)\\
    &\lesssim\int_\D|Q_\zeta(f)(\xi)|^2\omega^\star_{-(\a-2\e)-2}(\xi)\left(\int_{\Delta(\xi,r)}
    \frac{d|\mu_\zeta|(z)}{\omega^\star_{-\a}(z)}\right)dA(\xi)\\
    &\le\sup_{\xi\in\D}\left(\int_{\Delta(\xi,r)}
    \frac{d|\mu_\zeta|(z)}{\omega^\star_{-\a}(z)}\right)
    \int_\D|Q_\zeta(f)(\xi)|^2\omega^\star_{-(\a-2\e)-2}(\xi)dA(\xi)\\
    &\asymp\sup_{R_j\in\Upsilon}\frac{|\mu_\zeta|(R_j)}{\omega^\star_{-\alpha}(z_j)}\,
    \|Q_\zeta(f)\|^2_{A^2_{\omega^\star_{-(\alpha-2\e)-2}}},
    \end{split}
    \end{equation}
and the same is true for $Q_{\overline{\zeta}}(g)$ in place of
$Q_{{\zeta}}(f)$. Therefore \eqref{25} gives
    \begin{equation}\label{26}
    \begin{split}
    \left|\langle S_\zeta(f),g\rangle_{H_\a(\om^\star)}\right|
    &\lesssim C^2(\mu_\zeta)\|Q_\zeta(f)\|_{A^2_{\om^\star_{-(\alpha-2\varepsilon)-2}}}
    \|Q_{\overline{\zeta}}(g)\|_{A^2_{\om^\star_{-(\alpha-2\varepsilon)-2}}}\\
    &\lesssim C^2(\mu_\zeta)\|Q_0\|^2\|f\|_{H_{\alpha}(\om^\star)}\|g\|_{H_{\alpha}(\om^\star)},
    \end{split}
    \end{equation}
where
    \begin{equation*}
    C(\mu_\zeta)=\sup_{R_j\in\Upsilon}\frac{|\mu_\zeta|(R_j)}{\omega^\star_{-\alpha}(z_j)}.
    \end{equation*}
Since
    \begin{equation}\label{27}
    \frac{|\mu_\zeta|(R_j)}{\om^\star_{-\alpha}(z_j)}
    =\left(\frac{|\mu|(R_j)}{\om^\star_{-\alpha}(z_j)}\right)^{p\Re\zeta-1}
    \frac{|\mu|(R_j)}{\om^\star_{-\alpha}(z_j)}
    =\left(\frac{|\mu|(R_j)}{\om^\star_{-\alpha}(z_j)}\right)^{p\Re\zeta},
    \end{equation}
we obtain $|\mu_\zeta|(R_j)|\le\om^\star_{-\alpha}(z_j)$ if
$\Re\zeta=0$, and hence $C(\mu_\zeta)\lesssim1$. Therefore the
existence of $M_0>0$ with the desired properties follows by
\eqref{26}.

Finally, we will show that $S_\zeta\in\SSS_1$ if $\Re\zeta=1$.  In
this case the fact $\om_{-\alpha}^\star\in\R$ and \eqref{27} yield
    \begin{equation}\label{eq:b1}
    \int_\D\frac{d|\mu_\zeta|(z)}{\om_{-\alpha}^\star(z)}
    \asymp\sum_{R_j\in\Upsilon}\frac{|\mu_\zeta|(R_j)}{\om_{-\alpha}^\star(z_j)}
    =\sum_{R_j\in\Upsilon}\left(\frac{|\mu|(R_j)}{\om_{-\alpha}^\star(z_j)}\right)^p.
    \end{equation}
Let $\zeta\in\Lambda$ with $\Re\zeta=1$ be fixed, and choose
bounded invertible operators $A$ and $B$ on
$H_\alpha(\om^\star)$\index{$H_\alpha(\om^\star)$} so that
$V=Q_\zeta A$ and $W=Q_{\overline{\zeta}}B$ are unitary operators
from $H_{\alpha}(\om^\star)$ to $H_{\gamma}(\om^\star)$, where
$\gamma=\alpha+2\varepsilon(p-1)$. Let $T=B^\star S_\zeta A$. By
applying \eqref{28}, with $f$ and $g$ being replaced by $A(f)$ and
$B(g)$, respectively, we obtain
    \begin{equation}\label{29}\index{$H_\alpha(\om^\star)$}
    \begin{split}
    \langle T(f),g\rangle_{H_\a(\om^\star)}
    &=\langle (S_\zeta A)(f),B(g)\rangle_{H_\a(\om^\star)}\\
    &=\int_\D V(f)(z)\overline{W(g)(z)}(1-|z|)^{2\varepsilon(1-p\zeta)}\,d\mu_\zeta(z).
    \end{split}
    \end{equation}
Let now $\{f_n\}$ and $\{g_n\}$ be orthonormal sets on
$H_\alpha(\om^\star)$. Since $V$ and $W$ are unitary operators,
the sets $\{e_n=V(f_n)\}$ and $\{h_n=W(g_n)\}$ are orthonormal on
$H_{\gamma}(\om^\star)$. Since $-\infty<\gamma<1$, the inequality
\eqref{eqRK1} and Lemma~\ref{le:rker} yield
    $$
    \sum_{n}|e_n(z)|^2\le\|K^\gamma_z\|^2_{H_{\gamma}(\om^\star)}
    \asymp\frac{1}{\om^\star_{-\gamma}(z)},
    $$
and similarly for $\{h_n\}$. These inequalities together with
\eqref{29}, the Cauchy-Schwarz inequality and \eqref{eq:b1} give
    \begin{equation*}
    \begin{split}
   & \sum_n\left|\langle T(f_n), g_n \rangle_{H_\a(\om^\star)}\right|
   \\ &\le\sum_n\int_\D|e_n(z)||h_n(z)|\,(1-|z|)^{2\varepsilon(1-p)}\,d|\mu_\zeta|(z)\\
    &\le\int_\D\left(\sum_n|e_n(z)|^2\right)^{1/2}\left(\sum_n|h_n(z)|^2\right)^{1/2}
    (1-|z|)^{2\varepsilon(1-p)}\,d|\mu_\zeta|(z)\\
    &\lesssim\int_\D\frac{(1-|z|)^{2\varepsilon(1-p)}}{\om^\star_{-\gamma}(z)}\,d|\mu_\zeta|(z)=\int_\D \frac{d|\mu_\zeta|(z)}{\om^\star_{-\alpha}(z)}
    \asymp\sum_{R_j\in\Upsilon}\left(\frac{|\mu|(R_j)}{\om_{-\alpha}^\star(z_j)}\right)^p.
    \end{split}
    \end{equation*}
By \cite[Theorem~1.27]{Zhu} this implies
$T\in\SSS_1(H_\alpha(\om^\star))$ with
    $$\index{$H_\alpha(\om^\star)$}
    |T|_1\lesssim\sum_{R_j\in\Upsilon}
    \left(\frac{|\mu|(R_j)}{\om_{-\alpha}^\star(z_j)}\right)^p,
    $$
from which the existence of $M_1>0$ with the desired properties
follows by the inequality
    $$
    |S_\zeta|_1\le\|(B^\star)^{-1}\||T|_1\|A^{-1}\|,
    $$
see \cite[p.~27]{Zhu}. This finishes the proof of the first
assertion in Theorem~\ref{th:sufschpmayor2}.

\medskip
It remains to show that if $\mu$ is a positive Borel measure such
that
$T_\mu\in\SSS_p(H_\alpha(\om^\star))$,\index{$H_\alpha(\om^\star)$}
then \eqref{eq:sl} is satisfied. We only give an outline of the
proof. A similar reasoning as in the beginning of
Proposition~\ref{pr:necshI} together with \eqref{eq:st12} shows
that the assumption $T_\mu\in\SSS_p(H_\alpha(\om^\star))$ yields
    \begin{equation}\label{34}\index{$H_\alpha(\om^\star)$}
    \sum_n\left(\int_\D|v_n(z)|^2\,d\mu(z)\right)^p<\infty,
    \end{equation}
where $\{v_n\}$ are the images of orthonormal vectors $\{e_n\}$ in
$H_{\a}(\omega^\star)$ under any bounded linear operator
$\widetilde{J}:[\{e_n\}]\to H_\a(\omega^\star)$. By choosing
    \begin{equation}\label{35}\index{$H_\alpha(\om^\star)$}
    v_n(z)=\frac{\Phi_{a_n}^{\omega_{-\a}}(z)}{\|B_{a_n}^{\omega_{-\a}^\star}\|_{A^2_{\omega_{-\a}^\star}}},
    \end{equation}
where $\{a_n\}$ is uniformly discrete and $a_n\ne0$ for all $n$, a
similar reasoning as in the proof of Lemma~\ref{le:wo}, with
$\omega^\star_{-\a}$ in place of $\om^\star$, shows that
$\widetilde{J}:[\{e_n\}]\to H_\a(\omega^\star)$ is
bounded.\index{$H_\alpha(\om^\star)$}

Now, take $\delta=\delta(\alpha,\om)>0$ obtained in
Lemma~\ref{le:Phi} and consider the covering
$\Omega=\left\{D(a_n,\delta(1-|a_n|)\right\}_{n\ge 0}$ of $\D$,
induced by a uniformly discrete sequence $\{a_n\}$ with $a_n\ne0$,
such that
    \begin{enumerate}
    \item[\rm(i)] Each $R_j\in\Upsilon$ intersects at most
    $N=N(\delta)$ discs of
    $\Omega$.
    \item[(ii)] Each disc of $\Omega$
    intersects at most $M=M(\delta)$ squares of $\Upsilon$.
\end{enumerate}
Then, by combining \eqref{34} and \eqref{35}, and using
Lemmas~\ref{le:Phi} and \ref{kernels} as well as \eqref{22} and
\eqref{eq:r2} for $\omega_{-\a}^\star\in\R$ we obtain
    $$
    \sum_{R_j\in\Upsilon}\left(\frac{|\mu|(R_j)}{\om_{-\alpha}^\star(z_j)}\right)^p
    \lesssim C(N,M)\sum_n\left(\int_\D
    \frac{\left|\Phi_{a_n}^{\omega_{-\a}}(z)\right|^2}{\|B_{a_n}^{\omega_{-\a}^\star}\|^2_{A^2_{\omega_{-\a}^\star}}}\,d\mu(z)\right)^p<\infty,
    $$
which finishes the proof. \hfill$\Box$

\medskip

We finish the chapter by the following result concerning the case
$p=\infty$ in Theorem~\ref{th:sufschpmayor2}. The proof of this
proposition is straightforward and its principal ingredients can
be found in the proof of Theorem~\ref{th:sufschpmayor2}.

\begin{proposition}
Let $-\infty<\alpha<0$ and $\omega\in\I\cup\R$, and let $\mu$ be a
complex Borel measure on $\D$. If
    \begin{equation}\label{Eq:p=infinity}
    \sup_{R_j\in\Upsilon}\frac{|\mu|(R_j)}{\om_{-\alpha}^\star(z_j)}<\infty,
    \end{equation}
then $T_\mu$ is bounded on $H_\alpha(\om^\star)$, and there exists
a constant $C>0$ such that
    \begin{equation}\label{Eq:p=infinity2}\index{$H_\alpha(\om^\star)$}
    \|T_\mu\|\le C\sup_{R_j\in\Upsilon}\frac{|\mu|(R_j)}{\om_{-\alpha}^\star(z_j)}.
    \end{equation}
Conversely, if $\mu$ is a positive Borel measure on $\D$ and
$T_\mu$ is bounded on $H_\alpha(\om^\star)$, then
\eqref{Eq:p=infinity} is satisfied.
\end{proposition}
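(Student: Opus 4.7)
The plan is to leverage the machinery already developed for Theorem~\ref{th:sufschpmayor2}, now much simplified since we only track suprema rather than $\ell^p$ sums. Since $-\infty<\alpha<0$, Lemma~\ref{le:sc3} identifies $H_\alpha(\om^\star)=A^2_{\om^\star_{-\alpha-2}}$, and Lemma~\ref{le:sc1} ensures $\om^\star_{-\alpha-2}\in\R$ and $\om^\star_{-\alpha}\in\R$. In particular, $|f|^2$ is subharmonic on $\D$ for $f\in H_\alpha(\om^\star)$, and both weights enjoy the local smoothness \eqref{eq:r2}; these are the key ingredients.

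For the sufficiency direction, I would first note that the computation in \eqref{eq:st12} goes through verbatim for complex Borel measures by linearity, yielding the bilinear identity
    $$
    \langle T_\mu(f), g\rangle_{H_\alpha(\om^\star)}=\int_\D f(z)\overline{g(z)}\,d\mu(z),\quad f,g\in H_\alpha(\om^\star).
    $$
Cauchy--Schwarz reduces boundedness of $T_\mu$ to the embedding estimate $\int_\D|f(z)|^2\,d|\mu|(z)\lesssim\|f\|_{H_\alpha(\om^\star)}^2$. Applying the subharmonic bound \eqref{33} with $Q_\zeta=\mathrm{Id}$ and $\varepsilon=0$, namely
    $$
    |f(z)|^2\lesssim\frac{1}{\om^\star_{-\alpha}(z)}\int_{\Delta(z,r)}|f(\xi)|^2\om^\star_{-\alpha-2}(\xi)\,dA(\xi),
    $$
for a fixed $r\in(0,1)$, integrating against $d|\mu|(z)$, and switching the order of integration by Fubini (exactly as in \eqref{36}) gives
    $$
    \int_\D|f(z)|^2\,d|\mu|(z)\le\left(\sup_{\xi\in\D}\int_{\Delta(\xi,r)}\frac{d|\mu|(z)}{\om^\star_{-\alpha}(z)}\right)\|f\|_{A^2_{\om^\star_{-\alpha-2}}}^2.
    $$
Since $\om^\star_{-\alpha}\in\R$ obeys \eqref{eq:r2}, the inner supremum is comparable to $\sup_{R_j\in\Upsilon}|\mu|(R_j)/\om^\star_{-\alpha}(z_j)$ by the standard argument relating pseudohyperbolic discs and the dyadic covering, which proves \eqref{Eq:p=infinity2}.

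For the converse, given a positive Borel measure $\mu$ with $T_\mu$ bounded, I would test against the normalized kernel primitives
    $$
    v_a(z)=\frac{\Phi^{\om_{-\alpha}}_a(z)}{\|B^{\om^\star_{-\alpha}}_a\|_{A^2_{\om^\star_{-\alpha}}}},\quad |a|\ge\tfrac12,
    $$
from \eqref{eq:Phi}. A direct computation using $v_a(0)=0$, the definition \eqref{Eq:InnerProduct}, and the reproducing property of $B^{\om^\star_{-\alpha}}_a$ in $A^2_{\om^\star_{-\alpha}}$ gives $\|v_a\|_{H_\alpha(\om^\star)}^2=|a|^2\asymp 1$. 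Lemma~\ref{le:Phi} together with \eqref{86} applied to $\om^\star_{-\alpha}$ yields the pointwise lower bound $|v_a(z)|^2\gtrsim 1/\om^\star_{-\alpha}(a)$ on $D(a,\delta(1-|a|))$ for some fixed $\delta=\delta(\alpha,\om)>0$. Testing the bilinear identity with $f=g=v_a$ then produces
    $$
    \|T_\mu\|\gtrsim\langle T_\mu(v_a),v_a\rangle_{H_\alpha(\om^\star)}=\int_\D|v_a(z)|^2\,d\mu(z)\gtrsim\frac{\mu(D(a,\delta(1-|a|)))}{\om^\star_{-\alpha}(a)}.
    $$

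The remaining step is to pass from discs to the dyadic boxes $R_j$. For each $R_j$ with $|z_j|\ge 1/2$, cover $R_j$ by a uniformly bounded number (depending only on $\delta$) of discs $D(a_{j,k},\delta(1-|a_{j,k}|))$ centered at points $a_{j,k}\in R_j$; the local smoothness \eqref{eq:r2} of $\om^\star_{-\alpha}\in\R$ gives $\om^\star_{-\alpha}(a_{j,k})\asymp\om^\star_{-\alpha}(z_j)$, yielding $\mu(R_j)/\om^\star_{-\alpha}(z_j)\lesssim\|T_\mu\|$. The finitely many $R_j$ near the origin are controlled trivially by $\mu(\D)<\infty$, which itself follows from testing against the constant function $1\in H_\alpha(\om^\star)$ (boundedness of $T_\mu$ forces $\mu$ to be a finite measure). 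The main technical nuisance is this finite-overlap covering argument and the need to keep track of the $|a|\ge 1/2$ restriction imposed by Lemma~\ref{le:Phi} and the norm estimate for $v_a$, but both are routine given that $\om^\star_{-\alpha}\in\R$.
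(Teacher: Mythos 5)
Your proof is correct and is precisely the argument the paper has in mind: it states that the proposition's ``principal ingredients can be found in the proof of Theorem~\ref{th:sufschpmayor2}'', and you have assembled exactly those ingredients --- the bilinear identity \eqref{eq:st12}, the subharmonicity/Fubini estimate \eqref{33}--\eqref{36} specialized to $\varepsilon=0$ (legitimate since $\alpha<0$ gives $H_\alpha(\om^\star)=A^2_{\om^\star_{-\alpha-2}}$ with $\om^\star_{-\alpha-2},\om^\star_{-\alpha}\in\R$), and the test functions $\Phi^{\om_{-\alpha}}_a/\|B^{\om^\star_{-\alpha}}_a\|$ from Lemma~\ref{le:Phi} together with the finite-overlap covering of the dyadic boxes for the converse. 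The norm computation $\|v_a\|^2_{H_\alpha(\om^\star)}=|a|^2$ and the pointwise lower bound $|v_a|^2\gtrsim 1/\om^\star_{-\alpha}(a)$ on $D(a,\delta(1-|a|))$ both check out against \eqref{Eq:le:rker}, \eqref{86} and \eqref{22}.
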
\index{$H_\alpha(\om^\star)$}

\chapter{Applications to Differential
Equations}\label{difequ}\index{linear differential
equation}\index{$\BN_\om$}

In this chapter we study linear differential equations with
solutions in either the weighted Bergman space $A^p_\om$ or the
Bergman-Nevalinna class $\BN_\omega$. Our primary interest is to
relate the growth of coefficients to the growth and the zero
distribution of solutions. Apart from tools commonly used in the
theory of complex differential equations in the unit disc, this
chapter relies also strongly on results and techniques from
Chapters~\ref{S2}--\ref{Sec:SpaceCCpp}.

\section{Solutions in the weighted Bergman space $A^p_\om$}\label{Sec:SolutionsBergman}

The aim of this section is to show how the results and techniques
developed in the preceding chapters can be used to find a set of
sufficient conditions for the analytic coefficients $B_j$ of the
linear differential equation
    \begin{equation}\label{lde}
    f^{(k)}+B_{k-1}f^{(k-1)}+\cdots+B_1f'+B_0f=B_k
    \end{equation}
forcing all solutions to be in the weighted Bergman
space~$A^p_\omega$. In fact, the linearity of the equation is not
in essence, yet it immediately guarantees the analyticity of all
solutions if the coefficients are analytic. At the end of the
section we will shortly discuss certain non-linear equations, and
obtain results on their analytic solutions.

The approach taken here originates from Pommerenke's work~\cite{P}
in which he uses Parseval's identity and Carleson's theorem to
study the question of when all solutions of the equation
$f''+Bf=0$ belong to the Hardy space $H^2$. For the case of
\eqref{lde} with solutions in $H^p$, we refer to \cite{R2}. For
the theory of complex differential equations the reader is invited
to see~\cite{Laine,Laine2}.

The main result of this section is Theorem~\ref{Thm:DE-1}. Its
proof is based on Theorem~\ref{th:cm}, the equivalent norms given
in Theorem~\ref{ThmLittlewood-Paley}, the proof of
Theorem~\ref{Thm-integration-operator-1}, and the estimate on the
zero distribution of functions in $A^p_\om$ given in
Theorem~\ref{th:subsblaschkecond}.

\begin{theorem}\label{Thm:DE-1}
Let $0<p<\infty$ and  $\om\in\I\cup\R$. Then there exists a
constant $\a=\a(p,k,\om)>0$ such that if the coefficients
$B_j\in\H(\D)$ of \eqref{lde} satisfy
    \begin{equation}\label{CondForB0}
    \sup_{I\subset\T}\frac{\int_{S(I)}|B_0(z)|^2(1-|z|)^{2k-2}\omega^\star(z)\,dA(z)}{\omega(S(I))}\le\a,\quad
    \end{equation}
and
    \begin{equation}\label{CondForOthers}\index{linear differential equation}
    \sup_{z\in\D}|B_j(z)|(1-|z|^2)^{k-j}\le\a,\quad j=1,\ldots,k-1,
    \end{equation}
and a $k$:th primitive $B_k^{(-k)}$ of $B_k$ belongs to
$A^p_\omega$, then all solutions $f$ of \eqref{lde} belong to
$A^p_\omega$. Moreover, the ordered zero sequence $\{z_k\}$ of
each solution satisfies~\eqref{sblascke1}.
\end{theorem}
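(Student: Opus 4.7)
The plan is to control the $A^p_\omega$-norm of any solution~$f$ via the Littlewood--Paley-type square function norm of order~$k$ from Theorem~\ref{ThmLittlewood-Paley}, substitute the equation to eliminate~$f^{(k)}$, and then absorb the contributions from $B_0,\dots,B_{k-1}$ into the left-hand side by choosing~$\alpha$ small enough. First I would apply~\eqref{normacono} with $n=k$:
    $$
    \|f\|^p_{A^p_\omega}\asymp\int_\D\left(\int_{\Gamma(u)}|f^{(k)}(z)|^2\bigl(1-|z/u|\bigr)^{2k-2}dA(z)\right)^{p/2}\omega(u)\,dA(u)+\sum_{j=0}^{k-1}|f^{(j)}(0)|^p.
    $$
Using $|f^{(k)}|^2\lesssim|B_k|^2+\sum_{j=0}^{k-1}|B_j|^2|f^{(j)}|^2$ from~\eqref{lde}, the inner integral splits into a source term $A_k(u)$ and $k$ operator-type terms $A_j(u)$. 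Applying~\eqref{normacono} with $n=k$ to the primitive~$B_k^{(-k)}$ bounds $\int_\D A_k(u)^{p/2}\omega(u)\,dA(u)$ by a multiple of $\|B_k^{(-k)}\|^p_{A^p_\omega}$ (plus the data at~$0$), which is finite by hypothesis.

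Next I would dispatch the intermediate terms $j=1,\dots,k-1$. Because on $\Gamma(u)$ one has $1-|z/u|\le 1-|z|$, the assumption $|B_j(z)|(1-|z|)^{k-j}\le\alpha$ gives $|B_j(z)|^2(1-|z/u|)^{2(k-j)}\le\alpha^2$, and therefore
    $$
    A_j(u)\le\alpha^2\int_{\Gamma(u)}|f^{(j)}(z)|^2\bigl(1-|z/u|\bigr)^{2j-2}dA(z).
    $$
A second application of~\eqref{normacono}, now with $n=j\ge 1$, bounds $\int_\D A_j(u)^{p/2}\omega(u)\,dA(u)$ by $C\alpha^p\bigl(\|f\|^p_{A^p_\omega}+\sum_{i=0}^{j-1}|f^{(i)}(0)|^p\bigr)$. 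The term $j=0$ is the delicate one: the hypothesis~\eqref{CondForB0} says precisely that the measure $d\mu_0=|B_0(z)|^2(1-|z|)^{2k-2}\omega^\star(z)\,dA(z)$ is a $p$-Carleson measure for $A^p_\omega$ with norm $\lesssim\alpha$ in the sense of Theorem~\ref{th:cm}. To move from the tent-integral version $\int_\D A_0(u)^{p/2}\omega(u)\,dA(u)$ to $\int_\D|f|^p\,d\mu_0$, I would use Fubini together with the estimate $\int_{T(z)}(1-|z/u|)^{2k-2}\omega(u)\,dA(u)\lesssim(1-|z|)^{2k-2}\omega^\star(z)$ (which follows from Lemma~\ref{le:cuadrado-tienda} and the definition of the tent), and then the H\"ormander maximal-function arguments from the proof of Theorem~\ref{Thm-integration-operator-1} (splitting into the cases $p\ge 2$ and $p<2$ with Corollary~\ref{co:maxbou} and Lemma~\ref{le:funcionmaximalangular}) to obtain
    $$
    \int_\D A_0(u)^{p/2}\omega(u)\,dA(u)\lesssim\alpha^{p/2}\|f\|^p_{A^p_\omega}.
    $$

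Assembling these bounds yields $\|f\|^p_{A^p_\omega}\le C_1\|B_k^{(-k)}\|^p_{A^p_\omega}+C_1\bigl(\alpha^p+\alpha^{p/2}\bigr)\|f\|^p_{A^p_\omega}+C_1\sum_{j=0}^{k-1}|f^{(j)}(0)|^p$, so choosing $\alpha=\alpha(p,k,\omega)>0$ small enough lets us absorb the $\|f\|^p_{A^p_\omega}$ term on the right. The one subtlety I anticipate as the \emph{main obstacle} is that this absorption is only legitimate once we know $\|f\|_{A^p_\omega}$ is finite; since~$f$ is a priori only analytic on~$\D$, I would first apply the whole argument to the dilation~$f_r(z)=f(rz)$ (a solution of the dilated equation whose coefficients satisfy~\eqref{CondForB0}--\eqref{CondForOthers} with the same~$\alpha$, and which is bounded on~$\D$, hence in $A^p_\omega$), obtain a uniform estimate in $r\in(0,1)$, and then pass to the limit by Fatou's lemma. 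Finally, once $f\in A^p_\omega$ is established, the claim about the zero sequence follows immediately from Theorem~\ref{th:subsblaschkecond}(i): for every increasing $\tau$ with $x\lesssim\tau(x)$, $\tau'(x)\lesssim\tau(x)+1$, and $\int_{R_0}^\infty d x/\tau(x)<\infty$, the zeros of any $f\in A^p_\omega$ satisfy~\eqref{sblascke1}.
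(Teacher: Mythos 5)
Your proposal is correct and follows essentially the same route as the paper's proof: the square-function norm \eqref{normacono} with $n=k$, substitution of the equation, absorption of the terms $j=1,\dots,k-1$ via \eqref{normacono} with $n=j$, the tent/Carleson-measure treatment of the $B_0$-term through Lemma~\ref{le:cuadrado-tienda}, Theorem~\ref{th:cm} and the maximal-function arguments of Theorem~\ref{Thm-integration-operator-1}, followed by absorption for small $\a$ and Theorem~\ref{th:subsblaschkecond} for the zeros. Your explicit dilation-plus-Fatou justification of the absorption step corresponds to the paper's remark that one may assume $f$ continuous up to the boundary by replacing $f(z)$ with $f(rz)$ and letting $r\to1^-$.
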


\begin{proof}
Let $f$ be a solution of \eqref{lde}, and let first $0<p<2$. We
may assume that $f$ is continuous up to the boundary; if this is
not the case we replace $f(z)$ by $f(rz)$ and let $r\to1^-$ at the
end of the proof. The equivalent norm \eqref{normacono}
in~$A^p_\omega$, the equation~\eqref{lde} and the assumption
\eqref{CondForOthers} yield
    \begin{equation}\label{30}\index{linear differential equation}
    \begin{split}
    \|f\|_{A^p_\omega}^p&\asymp\int_\D\,\left(\int_{\Gamma(u)}|f^{(k)}(z)|^2
    \left(1-\left|\frac{z}{u}\right|\right)^{2k-2}\,dA(z)\right)^{\frac{p}2}\omega(u)\,dA(u)\\
    &\quad+\sum_{j=0}^{k-1}|f^{(j)}(0)|^p\\
    &\lesssim\sum_{j=0}^{k-1}\int_\D\,\left(\int_{\Gamma(u)}|f^{(j)}(z)|^2|B_j(z)|^2
    \left(1-\left|\frac{z}{u}\right|\right)^{2k-2}\,dA(z)\right)^{\frac{p}2}\omega(u)\,dA(u)\\
    &\quad+\|B_k^{(-k)}\|_{A^p_\omega}^p+\sum_{j=0}^{k-1}|f^{(j)}(0)|^p\\
    &\lesssim\int_\D\,\left(\int_{\Gamma(u)}|f(z)|^2|B_0(z)|^2
    \left(1-\left|\frac{z}{u}\right|\right)^{2k-2}\,dA(z)\right)^{\frac{p}2}\omega(u)\,dA(u)\\
    &\quad+\a^p\|f\|_{A^p_\om}^p+\|B_k^{(-k)}\|_{A^p_\omega}^p+\sum_{j=0}^{k-1}|f^{(j)}(0)|^p.
    \end{split}
    \end{equation}
Arguing as in the last part of the proof of
Theorem~\ref{Thm-integration-operator-1}(ii), with $q=p<2$, we
obtain
    \begin{eqnarray*}
    &&\int_\D\,\left(\int_{\Gamma(u)}|f(z)|^2|B_0(z)|^2
    \left(1-\left|\frac{z}{u}\right|\right)^{2k-2}\,dA(z)\right)^{\frac{p}2}\omega(u)\,dA(u)\\
    &&\lesssim\|f\|_{A^p_\om}^\frac{p(2-p)}{2}\left(\int_\D|f(z)|^p|B_0(z)|^2\int_{T(z)}
    \omega(u)\left(1-\left|\frac{z}{u}\right|\right)^{2k-2}\,dA(u)\,dA(z)\right)^{\frac{p}2}\\
    &&=\|f\|_{A^p_\om}^\frac{p(2-p)}{2}\left(\int_\D|f(z)|^p|B_0(z)|^2\left(\int_{|z|}^1
    \omega(s)\left(1-\frac{|z|}{s}\right)^{2k-1}s\,ds\right)\,dA(z)\right)^{\frac{p}2}\\
    &&\lesssim\|f\|_{A^p_\om}^\frac{p(2-p)}{2}\left(\int_\D|f(z)|^p|B_0(z)|^2
   (1-|z|)^{2k-2} \omega^\star(z)\,dA(z)\right)^{\frac{p}2},
    \end{eqnarray*}
where the last step follows by Lemma~\ref{le:cuadrado-tienda}.
Putting everything together, and applying Theorem~\ref{th:cm} and
the assumption \eqref{CondForB0}, we get
    \begin{equation}\label{31}\index{linear differential equation}
    \|f\|_{A^p_\omega}^p\le C\left(\a^p\|f\|_{A^p_\om}^p+\|B_k^{(-k)}\|_{A^p_\omega}^p+\sum_{j=0}^{k-1}|f^{(j)}(0)|^p\right)
    \end{equation}
for some constant $C=C(p,k,\omega)>0$. By choosing $\a>0$
sufficiently small, and rearranging terms, we deduce $f\in
A^p_\omega$.

A proof of the case $p=2$ can be easily constructed by imitating
the reasoning above and taking into account Theorem~\ref{th:cm}
and the Littlewood-Paley formula \eqref{eq:LP2}.

Let now $2<p<\infty$. Since the reasoning in \eqref{30} remains
valid, we only need to deal with the last integral in there.
However, we have already considered a similar situation in the
proof of Theorem~\ref{Thm-integration-operator-1}(ii). A careful
inspection of that proof, with $q=p>2$ and
$|B_0(z)|^2\left(1-\left|z\right|\right)^{2k-2}$ in place of
$|g'(z)|^2$, shows that \eqref{31} remains valid for some
$C=C(p,k,\omega)>0$, provided $2<p<\infty$ and
$\omega\in\I\cup\R$. It follows that $f\in A^p_\omega$.

Finally, we complete the proof by observing that the assertion on
the zeros of solutions is an immediate consequence of
Theorem~\ref{th:subsblaschkecond}.
\end{proof}

Theorem~\ref{Thm:DE-1} is of very general nature because it is
valid for all weights~$\omega\in\I\cup\R$. If $\omega$ is one of
the exponential type radial weights $w_{\gamma,\alpha}(z)$,
defined in \eqref{Eq:ExponentialWeights}, or tends to zero even
faster as $|z|$ approaches $1$, then the maximum modulus of a
solution~$f$ might be of exponential growth, and hence a natural
way to measure the growth of $f$ is by means of the terminology of
Nevalinna theory. This situation will be discussed in
Section~\ref{Sec:SolutionsBergmanNevanlinna}.

If $\om$ is rapidly increasing, then \eqref{CondForB0} is more
restrictive than the other assumptions \eqref{CondForOthers} in
the sense that the supremum in \eqref{CondForOthers} with $j=0$ is
finite if \eqref{CondForB0} is satisfied, but the converse is not
true in general. Moreover, if $\om$ is regular, then
\eqref{CondForB0} reduces to a radial condition as the following
consequence of Theorem~\ref{Thm:DE-1} shows. These facts can be
deduced by carefully observing the proofs of Parts~(B)-(D) in
Proposition~\ref{pr:blochcpp}.

\begin{corollary}\label{Cor:DE}\index{linear differential equation}
Let $0<p<\infty$ and $\om\in\R$. Then there exists a constant
$\a=\a(p,k,\omega)>0$ such that if the coefficients $B_j\in\H(\D)$
of \eqref{lde} satisfy
    \begin{equation}\label{CondForOthers++}
    \sup_{z\in\D}|B_j(z)|(1-|z|^2)^{k-j}\le\a,\quad j=0,\ldots,k-1,
    \end{equation}
and a $k$:th primitive $B_k^{(-k)}$ of $B_k$ belongs to
$A^p_\omega$, then all solutions of \eqref{lde} belong to
$A^p_\omega$.
\end{corollary}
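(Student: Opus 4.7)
The plan is to deduce Corollary~\ref{Cor:DE} directly from Theorem~\ref{Thm:DE-1}. Since condition \eqref{CondForOthers} for $j=1,\ldots,k-1$ is identical to \eqref{CondForOthers++} restricted to those indices, the only thing that requires verification is that the pointwise radial bound \eqref{CondForOthers++} for $j=0$, combined with the regularity of $\om$, implies the Carleson-type condition \eqref{CondForB0}.

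The key observation, which is where the hypothesis $\om\in\R$ (as opposed to $\om\in\I\cup\R$) enters essentially, is the asymptotic identity \eqref{22} from the proof of Lemma~\ref{le:sc1}:
    $$
    \omega^\star(r)\asymp(1-r)^2\omega(r),\quad r\ge\tfrac12,
    $$
which is a straightforward consequence of Lemma~\ref{le:cuadrado-tienda} together with $\psi_\om(r)\asymp(1-r)$. This is precisely the phenomenon flagged by the problem statement when referring to the proofs of Parts~(B)--(D) of Proposition~\ref{pr:blochcpp}: for regular weights one has the pointwise equivalence $\om^\star(z)/(1-|z|)^2\asymp\om(z)$, whereas for rapidly increasing weights this fails, which is exactly the source of the strict inclusion $\CC^1(\om^\star)\subsetneq\B$ in Proposition~\ref{pr:blochcpp}(D).

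Using \eqref{CondForOthers++} with $j=0$ we have $|B_0(z)|^2(1-|z|)^{2k-2}\le\a^2(1-|z|)^{-2}$, so combining with the displayed equivalence above,
    $$
    |B_0(z)|^2(1-|z|)^{2k-2}\omega^\star(z)\le\a^2\,\frac{\omega^\star(z)}{(1-|z|)^2}\lesssim\a^2\,\omega(z)
    $$
for $|z|\ge\tfrac12$, with an implicit constant depending only on $\om$. Integrating over a Carleson square $S(I)$ (and handling the bounded range $|z|<\tfrac12$ trivially, absorbing it into the supremum) yields
    $$
    \int_{S(I)}|B_0(z)|^2(1-|z|)^{2k-2}\omega^\star(z)\,dA(z)\le C(k,\om)\,\a^2\,\omega(S(I)),
    $$
so \eqref{CondForB0} holds with $\a$ replaced by $C(k,\om)^{1/2}\a$.

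Thus, by choosing the constant in \eqref{CondForOthers++} sufficiently small (proportional to the one supplied by Theorem~\ref{Thm:DE-1} divided by $C(k,\om)^{1/2}$), both hypotheses of Theorem~\ref{Thm:DE-1} are in force and the conclusion follows. There is no real obstacle: the entire content of the corollary is the replacement of the non-radial Carleson condition \eqref{CondForB0} by the simpler radial condition on $B_0$, made possible by the fact that regular weights are characterized by the two-sided estimate $\psi_\om(r)\asymp(1-r)$ that collapses the square-integral against $\om^\star$ into an integral against $\om$ itself.
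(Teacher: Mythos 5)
Your proposal is correct and follows essentially the same route as the paper, which derives the corollary from Theorem~\ref{Thm:DE-1} by noting (via the mechanism in the proof of Proposition~\ref{pr:blochcpp}(C)) that for $\om\in\R$ the pointwise equivalence $\om^\star(r)\asymp(1-r)^2\om(r)$ collapses the Carleson condition \eqref{CondForB0} to the radial condition \eqref{CondForOthers++} with $j=0$. The only (inconsequential) slip is in the bookkeeping of constants at the end: since the left-hand side of \eqref{CondForB0} is quadratic in $B_0$, the constant in \eqref{CondForOthers++} should be taken proportional to $\sqrt{\a/C(k,\om)}$ rather than $\a/C(k,\om)^{1/2}$, which does not affect the existence claim.
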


It is clear that the assertions in Theorem~\ref{Thm:DE-1}  and
Corollary~\ref{Cor:DE} remain valid if the assumptions are
satisfied only near the boundary. By this we mean that in
\eqref{CondForB0} it is enough to take the supremum over the
intervals whose length does not exceed a pregiven
$\delta\in(0,1)$, and that in \eqref{CondForOthers} we may replace
$\sup_{z\in\D}$ by $\sup_{|z|\ge1-\delta}$. In the proofs one just
needs to split the integral over $\D$ into two pieces,
$D(0,1-\delta)$ and the remainder, and proceed in a natural
manner. This observation yields the following result.

\begin{corollary}\label{Cor:DE-2}\index{linear differential equation}
Let $\om\in\I\cup\R$. If the coefficients $B_j\in\H(\D)$ of
\eqref{lde} satisfy
    \begin{equation}\label{CondForB0+n}
    \lim_{|I|\to0}\frac{\int_{S(I)}|B_0(z)|^2(1-|z|)^{2k-2}\omega^\star(z)\,dA(z)}{\omega(S(I))}=0,
    \end{equation}
and
    \begin{equation}\label{CondForOthers+n}
    \lim_{|z|\to1^-}|B_j(z)|(1-|z|^2)^{k-j}=0,\quad j=1,\ldots,k-1,
    \end{equation}
and a $k$:th primitive $B_k^{(-k)}$ of $B_k$ belongs to
$\cap_{0<p<\infty}A^p_\omega$, then all solutions of \eqref{lde}
belong to $\cap_{0<p<\infty}A^p_\omega$.
\end{corollary}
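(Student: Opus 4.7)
The plan is to deduce the corollary directly from Theorem~\ref{Thm:DE-1}, applied separately for each $p$, once it is extended to the \lq\lq only near the boundary\rq\rq\ setting pointed out in the paragraph preceding the statement. Fix $p \in (0,\infty)$ and let $\a = \a(p,k,\om) > 0$ be the threshold supplied by Theorem~\ref{Thm:DE-1}. The hypotheses \eqref{CondForB0+n} and \eqref{CondForOthers+n} permit one to choose $\delta = \delta(p) > 0$ so small that
\[
\sup_{|I|\le\delta}\frac{\int_{S(I)}|B_0(z)|^2(1-|z|)^{2k-2}\om^\star(z)\,dA(z)}{\om(S(I))}\le\a,
\]
and $\sup_{|z|\ge 1-\delta}|B_j(z)|(1-|z|^2)^{k-j}\le\a$ for $j=1,\dots,k-1$. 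These are precisely the near-boundary conditions which, by the remark preceding the corollary, still force every solution of \eqref{lde} into $A^p_\om$. Since $p$ was arbitrary, the conclusion $f \in \bigcap_{0<p<\infty}A^p_\om$ follows. The statement on zeros is then immediate from Theorem~\ref{th:subsblaschkecond}(i), exactly as in Theorem~\ref{Thm:DE-1}.

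To carry out this plan honestly I would first establish the near-boundary version of Theorem~\ref{Thm:DE-1} itself. The proof rerun goes through verbatim except that every integral over $\D$ in the chain of estimates culminating in \eqref{31} must be split as $\int_{D(0,1-\delta)} + \int_{\D\setminus D(0,1-\delta)}$. On $\D\setminus D(0,1-\delta)$ the same computation as in Theorem~\ref{Thm:DE-1} works: the contributions from $B_1,\dots,B_{k-1}$ are controlled by the smallness of $|B_j(z)|(1-|z|^2)^{k-j}$ for $|z|\ge 1-\delta$, and the $B_0$ contribution is controlled via Theorem~\ref{th:cm} applied to the truncated measure $d\mu_\delta = |B_0(z)|^2 (1-|z|)^{2k-2}\om^\star(z)\chi_{\{|z|\ge 1-\delta\}}\,dA(z)$. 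On the complementary compact region, all $f^{(j)}$ and $B_j$ are uniformly bounded on $\overline{D(0,1-\delta)}$, so each piece contributes a finite constant $K$ depending on $\delta$, on the initial data $f(0),\dots,f^{(k-1)}(0)$ and on the coefficients, but not on the global growth of $f$. The resulting analogue of \eqref{31}, combined with the initial choice of $\a$, lets one absorb the factor $C\a^p\|f\|_{A^p_\om}^p$ on the left-hand side and conclude $f \in A^p_\om$.

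The main obstacle, though essentially bookkeeping, is the $B_0$ step: one must verify that the truncated measure $\mu_\delta$ above has Carleson constant (as a $p$-Carleson measure for $A^p_\om$) dominated by its restricted supremum over $|I|\le\delta$. A standard covering argument, in which any Carleson square $S(I)$ with $|I|>\delta$ is covered by $\lesssim |I|/\delta$ Carleson squares of size $\delta$, together with Lemma~\ref{le:condinte} to compare $\om(S(I_\delta))$ with $\om(S(I))$, yields
\[
\sup_{I\subset\T}\frac{\mu_\delta(S(I))}{\om(S(I))} \lesssim \sup_{|I|\le\delta}\frac{\mu_\delta(S(I))}{\om(S(I))} \le \a,
\]
which is exactly what the $B_0$ estimate in the proof of Theorem~\ref{Thm:DE-1} requires.
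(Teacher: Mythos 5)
Your proposal is correct and follows the paper's own route: the paper obtains Corollary~\ref{Cor:DE-2} precisely by invoking the near-boundary version of Theorem~\ref{Thm:DE-1}, remarking that one only needs to split the integrals over $\D$ into $D(0,1-\delta)$ and its complement and ``proceed in a natural manner.'' You have simply filled in the bookkeeping the paper leaves to the reader (the covering argument for the truncated Carleson measure is the same computation used to prove \eqref{kmur} in Theorem~\ref{th:cm}(ii)), so there is nothing to correct.
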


For simplicity we now settle to analyze the assertion in
Theorem~\ref{Thm:DE-1} in the case of the most simple, yet
generally explicitly unsolvable, homogeneous second order linear
differential equation
    \begin{equation}\label{Eq:Second-Order}
    f''+Bf=0,\quad B\in\H(\D).
    \end{equation}
In this case the condition \eqref{CondForB0} reduces to
    \begin{equation}\label{32}\index{linear differential equation}
    \sup_{I\subset\T}\frac{\int_{S(I)}|B(z)|^2(1-|z|)^{2}\omega^\star(z)\,dA(z)}{\omega(S(I))}\le\a.
    \end{equation}
We concentrate on the most interesting case $\omega\in\I$ so that
the assumption on the only coefficient $B$ does not reduce to a
radial condition as in Corollary~\ref{Cor:DE}. Since the weight
$v_\alpha$\index{$v_\a(r)$} are one of the typical members in
$\I$, we choose it as an example. To this end, let $0<p<\infty$
and $1<\b\le2$. Then the function
$f(z)=(1-z)^{-\frac1p}(\log\frac{e}{1-z})^{\frac{\b-2}{p}}$ is a
solution of \eqref{Eq:Second-Order}, where
    $$
    B(z)=-\frac{\frac1p(\frac1p+1)}{(1-z)^2}-\frac{(\b-2)(2+p)}{p^2(1-z)^2\log\frac{e}{1-z}}
    -\frac{(\b-2)(\b-2+p)}{p^2(1-z)^2\left(\log\frac{e}{1-z}\right)^2}
    $$
is analytic in $\D$. Now $f\not\in A^p_{v_\b}$, and therefore the
constant $\alpha=\alpha(p,\om)$ in \eqref{32} must satisfy
$\a(p,\om)\lesssim\frac1p(\frac1p+1)$ if $\omega=v_\b$ and
$1<\b\le2$. In particular, $\alpha(p,\om)\lesssim\frac1p$, as
$p\to\infty$.

We finish the section by shortly discussing certain non-linear
equations for which the used methods work. For simplicity, let us
consider the equation
    \begin{equation}\label{Eq:non-linear1}\index{linear differential equation}
    (f'')^2+Bf=0,\quad
    B\in\H(\D),
    \end{equation}
and the question of when its analytic solutions belong to the
Hilbert space~$A^2_\omega$. If~$f$ is such an analytic solution,
then the Littlewood-Paley formula \eqref{eq:LP2},
Lemma~\ref{le:sc1} and H\"older's inequality yield
    \begin{equation*}
    \begin{split}
    \|f\|_{A^2_\omega}^2&\asymp\int_\D|f''(z)|^2(1-|z|)^2\omega^\star(z)\,dA(z)+|f(0)|^2+|f'(0)|^2\\
    &=\int_\D|f(z)||B(z)|(1-|z|)^2\omega^\star(z)\,dA(z)+|f(0)|^2+|f'(0)|^2\\
    &\le\|f\|_{A^2_\omega}\left(\int_\D\frac{|B(z)|^2(1-|z|)^4(\omega^\star(z))^2}{\omega(z)}\,dA(z)\right)^\frac12
    +|f(0)|^2+|f'(0)|^2,
    \end{split}
    \end{equation*}
and we immediately deduce $f\in A^2_\omega$ if the last integral
is sufficiently small. This method works also for more general
equations. Namely, it yields a set of sufficient condition for the
coefficients of
    $$
    (f^{(k)})^{n_k}+B_{k-1}(f^{(k-1)})^{n_{k-1}}+\cdots+B_1(f')^{n_1}+B_0f=B_k,
    $$
where $n_j\ge1$ for all $j=1,\ldots,k$, forcing all analytic
solutions to be in $A^p_\omega$. Details are left to the reader.

\section{Solutions in the Bergman-Nevanlinna class $\BN_\om$}\label{Sec:SolutionsBergmanNevanlinna}\index{$\BN_\om$}

In this section we study the equation
    \begin{equation}\label{ldeHOMO}\index{linear differential equation}
    f^{(k)}+B_{k-1}f^{(k-1)}+\cdots+B_1f'+B_0f=0
    \end{equation}
with solutions in the Bergman-Nevalinna class $\BN_\omega$ induced
by a rapidly increasing or regular weight $\om$. We will see that
in this case it is natural to measure the growth of the
coefficients $B_j$ by the containment in the weighted Bergman
spaces depending on the index $j$ and $\om$. The Bergman-Nevalinna
class $\BN_\omega$ is much larger than the weighted Bergman space
$A^p_\om$, and therefore we will use methods that are different
from those applied in Section~\ref{Sec:SolutionsBergman}. The
tools employed here involve general growth estimates for
solutions~\cite{HKR:AASF}, a representation of coefficients in
terms of ratios of linearly independent solutions~\cite{Kim},
integrated generalized logarithmic derivative
estimates~\cite{CHR2009}, the order reduction
procedure~\cite{Gary2} as well as lemmas on weights proved in
Section~\ref{Sec:LemmasOnWeights}, and results on zero
distribution of functions in $\BN_\om$ from
Section~\ref{Sec:Zeros}. Also results from the Nevanlinna value
distribution theory are explicitly or implicitly present in many
instances.\index{$\BN_\om$}

The order reduction produces linear equations with meromorphic
coefficients and solutions. Therefore we say that a meromorphic
function $f$ in $\D$ belongs to the \emph{Bergman-Nevanlinna
class} $\BN_\omega$ if
    $$\index{$\BN_\om$}\index{Bergman-Nevanlinna class}
    \int_0^1T(r,f)\,\om(r)\,dr<\infty,
    $$
where
    $$
    T(r,f)=m(r,f)+N(r,1/f)=\frac1{2\pi}\int_0^{2\pi}\log^+|f(re^{i\t})|\,d\t+N(r,1/f)
    $$
is the \emph{Nevanlinna characteristic}\footnote{The integrated
counting function of poles is usually denoted by $N(r,f)$. Since
we have already conserved this symbol for zeros, we will use the
unorthodox notation $N(r,1/f)$ for poles.}.\index{Nevalinna
characteristic}\index{$T(r,f)$} If $f\in\H(\D)$, then the
integrated counting function
    $$
    N(r,1/f)=\int_0^r\frac{n(s,1/f)-n(0,1/f)}{s}\,ds+n(0,1/f)\log
    r
    $$
of poles vanishes, and thus this definition is in accordance with
that given in Section~\ref{SubSec:factorization} for analytic
functions. For the Nevanlinna value distribution theory the reader
is invited to see~\cite{Hayman,Laine}.

For each radial weight $\omega$, we write
$\overline{\om}(r)=\om(r)(1-r)$ for short, and recall that
    $\index{$\widehat{\om}(r)$}\index{$\overline{\om}(r)$}
    \widehat{\omega}(r)=\int_r^1\om(s)\,ds.
    $
If $\om$ is regular, then
$\widehat{\om}(r)\asymp\overline{\om}(r)$, and
$\widehat{\om}(r)/\overline{\om}(r)\to\infty$, as $r\to1^-$, for
each rapidly increasing weight $\om$.

We begin with applying the growth estimates for solutions to
obtain sufficient conditions for coefficients forcing all
solutions to belong to $\BN_\om$.

\begin{proposition}\label{Prop:DE-implication}\index{linear differential equation}
Let $\om$ be a radial weight. If $B_j\in
A^{1/(k-j)}_{\widehat{\om}}$ for all $j=0,\ldots,k-1$, then all
solutions of \eqref{ldeHOMO} belong to $\BN_\om$, and the zero
sequence $\{z_k\}$ of each non-trivial solution satisfies
    \begin{equation}\label{Eq:ZerosOfSolutionsInBN}
    \sum_k\om^\star(z_k)<\infty.
    \end{equation}
\end{proposition}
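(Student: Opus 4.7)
The plan is to combine a standard Herold-type pointwise growth estimate for solutions of \eqref{ldeHOMO} with Jensen's formula and Lemma~\ref{Lemma:NBzeros}.

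First, I would invoke the radial growth estimate from \cite{HKR:AASF}: for any solution $f$ of \eqref{ldeHOMO} there exists a constant $C=C(k,f(0),\dots,f^{(k-1)}(0))>0$ such that
\begin{equation*}
\log^+\vert f(re^{i\theta})\vert \le C\left(1+\sum_{j=0}^{k-1}\int_0^r \vert B_j(se^{i\theta})\vert^{1/(k-j)}\,ds\right),\quad 0\le r<1,\ \theta\in[0,2\pi).
\end{equation*}
The exponents $1/(k-j)$ are those dictated by the order-$k$ structure of the equation, and this is precisely what forces the assumption $B_j\in A^{1/(k-j)}_{\widehat{\om}}$.

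Next, I would integrate this bound in $\theta$ to control the proximity function $m(r,f)$, then multiply by $\om(r)$ and integrate over $r\in(0,1)$. An application of Fubini's theorem, together with $\int_s^1\om(r)\,dr=\widehat{\om}(s)$, yields
\begin{equation*}
\int_0^1 m(r,f)\,\om(r)\,dr\lesssim 1+\sum_{j=0}^{k-1}\int_\D \vert B_j(z)\vert^{1/(k-j)}\widehat{\om}(z)\,dA(z),
\end{equation*}
and the right-hand side is finite by the hypothesis $B_j\in A^{1/(k-j)}_{\widehat{\om}}$. Since $f\in\H(\D)$ has no poles, $T(r,f)=m(r,f)$, and hence $\int_0^1 T(r,f)\,\om(r)\,dr<\infty$, which means $f\in\BN_\om$.

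For the zero distribution, I would apply Jensen's formula \eqref{Eq:Jensen-Formula} to a non-trivial solution $f$ (after factoring out a suitable power of $z$ if $f(0)=0$) to obtain $N(r,f)\le m(r,f)+O(1)$. Integrating against $\om(r)\,dr$ and using the previous step gives $\int_0^1 N(r,f)\,\om(r)\,dr<\infty$, and then Lemma~\ref{Lemma:NBzeros} delivers \eqref{Eq:ZerosOfSolutionsInBN}. The main obstacle here is pinning down a version of the HKR estimate that is valid on all of $\D$ (not just away from some exceptional set) and in which the dependence on $\theta$ enters only through evaluation of $B_j$ along the radial segment from $0$ to $re^{i\theta}$, so that Fubini applies cleanly; once such a radial estimate is available, everything else is a mechanical weighted integration paired with the Nevanlinna-theoretic Lemma~\ref{Lemma:NBzeros}.
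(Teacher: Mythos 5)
Your proposal is correct and follows essentially the same route as the paper: the $m(r,f)$ bound from \cite{HKR:AASF} (the paper cites its Corollary~5.3, which is exactly your pointwise radial estimate already integrated in $\theta$, so the Fubini concern you raise does not arise), followed by integration against $\om(r)\,dr$. The only cosmetic difference is that the paper deduces \eqref{Eq:ZerosOfSolutionsInBN} by citing Proposition~\ref{Prop:bergman-nevanlinna}, whereas you integrate Jensen's formula and invoke Lemma~\ref{Lemma:NBzeros} directly — the same computation.
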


\begin{proof}
Let $f$ be a non-trivial solution of \eqref{ldeHOMO}. By
\cite[Corollary~5.3]{HKR:AASF}, there exists a constant $C>0$,
depending only on $k$ and initial values of $f$, such that
    \begin{equation*}
    m(r,f)\le C\left(\sum_{j=0}^{k-1}\int_0^r\int_0^{2\pi}
    |B_j(se^{i\t})|^\frac{1}{k-j}\,d\t ds+1\right).
    \end{equation*}
Therefore Fubini's theorem yields
    \begin{equation*}
    \|f\|_{\BN_\om}\lesssim\sum_{j=0}^{k-1}\|B_j\|_{A^{1/(k-j)}_{\widehat{\om}}}^{1/(k-j)}+\int_0^1\om(s)\,ds,
    \end{equation*}
and thus $f\in\BN_\om$. By
Proposition~\ref{Prop:bergman-nevanlinna} this in turn implies
that the zero sequence $\{z_k\}$ of $f$ satisfies
\eqref{Eq:ZerosOfSolutionsInBN}.
\end{proof}

We next study the growth of coefficients when all solutions belong
to $\BN_\om$ and will find out a clear difference, that is not
explicitly present in Proposition~\ref{Prop:DE-implication},
between regular and rapidly increasing weights. This difference
will be underscored in detail at the end of the section by means
of examples. At this point it is also worth mentioning that
Theorem~\ref{Thm:De-converse-growth} will be needed when the
growth of coefficients is studied under the hypothesis
$\sum_k\om^\star(z_k)<\infty$ on all zero sequences $\{z_k\}$ of
solutions.

\begin{theorem}\label{Thm:De-converse-growth}\index{linear differential equation}
Let $f_{1},\ldots,f_{k}$ be linearly independent meromorphic
solutions of \eqref{ldeHOMO}, where $B_{0},\ldots,B_{k-1}$ are
meromorphic in $\D$, such that $f_j\in\BN_\om$ for all
$j=1,\ldots,k-1$.
\begin{itemize}
\item[\rm(i)] If $\om\in\R$, then
$\|B_j\|_{L^\frac1{k-j}_{\widehat{\om}}}<\infty$ for all
$j=0,\ldots,k-1$. \item[\rm(ii)] If $\om\in\widetilde{\I}$ such
that
    $$
    \int_0^1\log\frac{1}{1-r}\,\om(r)\,dr<\infty,
    $$
then $\|B_j\|_{L^\frac1{k-j}_{\overline{\om}}}<\infty$ for all
$j=0,\ldots,k-1$.
\end{itemize}
\end{theorem}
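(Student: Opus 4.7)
The plan is to represent the coefficients $B_j$ in terms of Wronskians of the linearly independent solutions $f_1, \ldots, f_k$, apply the order reduction procedure from \cite{Gary2} to reduce everything to the logarithmic derivatives of solutions (and of products/ratios of them), and then control these logarithmic derivatives in $L^{1/(k-j)}_\om$-mean by appealing to the integrated logarithmic derivative estimates of \cite{CHR2009}. The dichotomy between the two parts of the theorem will then reduce to how much slack one has in the weight to absorb the extra $\log\frac{1}{1-r}$ factor that the logarithmic derivative estimate inevitably brings.

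More concretely, I would first recall Kim's representation \cite{Kim}: each coefficient $B_j$ is a quotient $\pm W_j(f_1,\ldots,f_k)/W(f_1,\ldots,f_k)$ of Wronskian-type determinants, so that $B_j$ is a polynomial expression in the logarithmic derivatives $g_\nu = f_\nu'/f_\nu$ and their iterated logarithmic derivatives of total order exactly $k-j$. Equivalently, one can successively perform the standard order reduction $f \mapsto (f/f_k)'$, $\ldots$ to arrive at first-order equations whose coefficients involve only such logarithmic derivatives. In either formulation, the pointwise bound takes the schematic form
\begin{equation*}
|B_j(z)|^{1/(k-j)} \lesssim \sum_{\nu} \sum_{n\le k-j} \left|\frac{f_\nu^{(n)}(z)}{f_\nu(z)}\right|^{1/n} + \text{(cross terms of the same shape)}.
\end{equation*}

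The main step is to integrate this inequality against $\widehat{\om}(r)\,dr$ (resp.\ $\overline{\om}(r)\,dr$) in $r$ and $d\theta$ in $\theta$. Here the integrated generalized logarithmic derivative estimate from \cite{CHR2009} provides, for any meromorphic $h \in \BN_\om$, a bound of the form
\begin{equation*}
\int_0^{2\pi}\left|\frac{h^{(n)}(re^{i\theta})}{h(re^{i\theta})}\right|^{1/n} d\theta \;\lesssim\; \frac{1}{(1-r)^{\,?}}\bigl(T(\rho,h)+\log\tfrac{1}{1-r}\bigr)
\end{equation*}
with $\rho = \tfrac{1+r}{2}$, and the weight-integrated versions derived there compare $\int T(r,h)\om(r)\,dr$ with $\int \log^+ T(r,h)\,\mu(r)\,dr$ on an exceptional-set-free basis after multiplying by an auxiliary power of $(1-r)$. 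Since $f_\nu \in \BN_\om$ by hypothesis, $T(r,f_\nu)\in L^1_\om$, and the standard growth-lemma passage from $r$ to $\rho=(1+r)/2$ is absorbed by Lemma~\ref{le:condinte} together with the density estimate $\sum(1-|z_k|) < \infty$ of Proposition~\ref{Prop:bergman-nevanlinna} / Lemma~\ref{Lemma:NBzeros} applied to poles (which in our case come only from zeros of the $f_\nu$).

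The dichotomy between (i) and (ii) will emerge from the bookkeeping of the $\log\tfrac{1}{1-r}$ and $(1-r)^{-1}$ factors produced by the logarithmic derivative estimate. In case (i), $\om\in\R$ gives $\widehat{\om}(r)\asymp(1-r)\om(r) = \overline{\om}(r)$, so there is a full extra factor of $(1-r)$ available; this factor is exactly what is needed to absorb both the $(1-r)^{-1}$ loss in the $\rho$-passage and any $\log\tfrac{1}{1-r}$ factor (the latter because $\int \log\tfrac{1}{1-r}\,\om(r)\,dr<\infty$ automatically for every $\om\in\R$ by \eqref{64} and Lemma~\ref{le:momentos}). In case (ii), one integrates instead against $\overline{\om}(r)=(1-r)\om(r)$, so the $(1-r)$-factor for the $\rho$-passage is already spent, and the $\log\tfrac{1}{1-r}$-factor left over by the logarithmic derivative estimate is precisely what forces the hypothesis $\int_0^1 \log\tfrac{1}{1-r}\,\om(r)\,dr<\infty$; the local smoothness $\om\in\widetilde{\I}$ (that is, \eqref{eq:r2}) is used to move between $\om(r)$ and $\om(\rho)$ in the estimates. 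I expect the main obstacle to be the careful handling of the exceptional sets in the logarithmic derivative estimate and the interplay of the weighted means of $T(r,f_\nu)$ with the $\rho$-passage, which is where Lemma~\ref{le:condinte} and the invariance property \eqref{eq:r2} do the decisive work.
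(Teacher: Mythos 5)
You have correctly identified the toolkit the paper uses (the order reduction procedure of Lemma~\ref{orderred-lemma}, the integrated generalized logarithmic derivative estimates of \cite{CHR2009} packaged as Lemma~\ref{Lem:log-deriv-estimate}, the closure of $\BN_\om$ under products, quotients and differentiation in Lemma~\ref{Lem:differentialfield}), and your accounting of where the dichotomy between (i) and (ii) comes from is exactly right: for $\om\in\R$ the term $\int_0^1\log\frac{e}{1-r}\,\om(r)\,dr$ produced by the logarithmic derivative estimate converges automatically, while for $\om\in\widetilde{\I}$ it must be imposed as a hypothesis and \eqref{eq:r2} is what lets one pass between $\overline{\om}(r_n)$ and $\int_{r_n}^{r_{n+1}}\om$.

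The gap is in the central step, namely how each $B_j$ is actually isolated. Your schematic bound $|B_j|^{1/(k-j)}\lesssim\sum_\nu\sum_{n\le k-j}|f_\nu^{(n)}/f_\nu|^{1/n}+\cdots$ cannot hold as stated: a single solution of \eqref{ldeHOMO} gives only one linear relation among the $k$ unknown coefficients, so already for $k=2$ with two nonzero coefficients one cannot read off $B_0$ pointwise from logarithmic derivatives of the $f_\nu$. Solving the full linear system by Cramer's rule expresses $B_j$ as a quotient of Wronskian-type determinants, and after dividing by $\prod_\nu f_\nu$ both numerator and denominator are polynomials in generalized logarithmic derivatives --- but an upper bound for $B_j$ then requires a \emph{lower} bound on the denominator Wronskian, which is not supplied by Lemma~\ref{Lem:log-deriv-estimate}. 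Kim's representation does not escape this either: the ratios $W_{k-i}/W_k$ appearing there are themselves the coefficients of the order-$k$ equation \eqref{meromorphic-equation-plane} satisfied by $1,y_1,\ldots,y_{k-1}$, so controlling them is the very statement you are trying to prove (indeed the paper bounds them in the proof of Theorem~\ref{Thm:DE-Main} by \emph{invoking} Theorem~\ref{Thm:De-converse-growth}, not the other way round). The paper's resolution is a contradiction argument on $q=\max\{j:\|B_j\|=\infty\}$: by maximality the coefficients $B_{q+1},\ldots,B_{k-1}$ are already under control, one performs $q$ successive order reductions, solves the bottom equation \eqref{kertalukuapudotettu} for its zeroth coefficient (the only place the equation is used to express a coefficient in terms of the rest), and climbs back up through \eqref{pudotetutkertoimet}, at each stage using H\"older's inequality together with Lemma~\ref{Lem:log-deriv-estimate} applied to the ratios $f_{q,j}$, which remain in $\BN_\om$ by Lemma~\ref{Lem:differentialfield}. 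Without this (or an equivalent mechanism for the denominator), your argument does not close.
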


By combining Proposition~\ref{Prop:DE-implication} and
Theorem~\ref{Thm:De-converse-growth} we obtain the following
immediate consequence.

\begin{corollary}\index{linear differential equation}
Let $\om\in\R$ and $B_j\in\H(\D)$ for all $j=0,\ldots,k-1$. Then
all solutions of \eqref{ldeHOMO} belong to $\BN_\om$ if and only
if $B_j\in A^{1/(k-j)}_{\widehat{\om}}$ for all $j=0,\ldots,k-1$.
\end{corollary}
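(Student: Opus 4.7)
The plan is to observe that this corollary is simply the conjunction of Proposition~\ref{Prop:DE-implication} (for the sufficiency) and Theorem~\ref{Thm:De-converse-growth}(i) (for the necessity), specialized to the analytic setting, so the proof amounts to unpacking each direction.

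For the sufficient direction, I would invoke Proposition~\ref{Prop:DE-implication} directly: it requires only that $\om$ be a radial weight (so $\om\in\R$ is more than enough) and that $B_j\in A^{1/(k-j)}_{\widehat{\om}}$ for every $j=0,\ldots,k-1$, and it then delivers $f\in\BN_\om$ for every solution of \eqref{ldeHOMO}. No further manipulation is needed.

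For the necessary direction, the key remark is that \eqref{ldeHOMO} is a homogeneous linear ODE with coefficients $B_j\in\H(\D)$, hence its solution space is a $k$-dimensional vector space of analytic functions on $\D$. Under the hypothesis that every solution belongs to $\BN_\om$, I would pick any basis $\{f_1,\ldots,f_k\}$ of linearly independent solutions; each $f_j$ then lies automatically in $\BN_\om$. Feeding this basis into Theorem~\ref{Thm:De-converse-growth}(i) with $\om\in\R$ yields $\|B_j\|_{L^{1/(k-j)}_{\widehat{\om}}}<\infty$ for every $j=0,\ldots,k-1$. Since each $B_j$ is analytic by the standing assumption, this $L^{1/(k-j)}_{\widehat{\om}}$-bound is literally the statement $B_j\in A^{1/(k-j)}_{\widehat{\om}}$.

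There is really no obstacle here: the two cited results have been engineered precisely so that they clip together in the regular-weight case, where $\widehat{\om}(r)\asymp\overline{\om}(r)$ collapses the distinction between the two weights appearing in parts~(i) and~(ii) of Theorem~\ref{Thm:De-converse-growth}. The only points worth flagging explicitly in the write-up are the automatic production of $k$ linearly independent $\BN_\om$-solutions from the $k$-dimensionality of the solution space, and the trivial identification of $L^{1/(k-j)}_{\widehat{\om}}\cap\H(\D)$ with $A^{1/(k-j)}_{\widehat{\om}}$.
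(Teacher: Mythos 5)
Your proposal is correct and is exactly the paper's argument: the corollary is presented there as an immediate consequence of combining Proposition~\ref{Prop:DE-implication} (sufficiency) with Theorem~\ref{Thm:De-converse-growth}(i) (necessity), using that the solution space of \eqref{ldeHOMO} is $k$-dimensional so a basis of $\BN_\om$-solutions exists. The two clarifying remarks you flag are harmless and consistent with the paper's intent.
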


Theorem~\ref{Thm:De-converse-growth} is proved by applying the
standard order reduction procedure. To do so, we will need the
following three lemmas of which the first one is contained in
\cite[Lemma~6.4]{G-S-W}.\index{order reduction procedure}

\begin{letterlemma}\label{orderred-lemma}\index{linear differential equation}
Let $f_{0,1},\ldots,f_{0,m}$ be $m\ge2$ linearly independent
meromorphic solutions of
\begin{equation}\label{94}
f^{(k)}+B_{0,k-1}f^{(k-1)}+\cdots +B_{0,1}f'+B_{0,0}f=0,\quad
k\geq m,
\end{equation}
where $B_{0,0},\ldots,B_{0,k-1}$ are meromorphic in $\D$. For
$1\le q\leq m-1$, set
    \begin{equation}\label{ratkaisut}
    f_{q,j}=\left(\frac{f_{q-1,j+1}}{f_{q-1,1}}\right)',\quad
    j=1,\ldots,m-q.
    \end{equation}
Then $f_{q,1},\ldots,f_{q,m-q}$ are linearly independent
meromorphic solutions of
    \begin{equation}\label{kertalukuapudotettu}
    f^{(k-q)}+B_{q,k-q-1}f^{(k-q-1)}+\cdots
    +B_{q,1}f'+B_{q,0}f=0,
    \end{equation}
where
    \begin{equation}\label{pudotetutkertoimet}
    B_{q,j}=\sum_{l=j+1}^{k-q+1}\binom{l}{j+1}B_{q-1,l}
    \frac{f_{q-1,1}^{(l-j-1)}}{f_{q-1,1}}
    \end{equation}
for $j=0,\ldots,k-1-q$. Here $B_{j,k-j}\equiv 1$ for all
$j=0,\ldots,q$.
\end{letterlemma}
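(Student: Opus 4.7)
The plan is to proceed by induction on $q$, with the base case $q=0$ being the given setup. For the inductive step, one may assume the assertions hold at level $q-1$, so that $f_{q-1,1},\ldots,f_{q-1,m-q+1}$ are linearly independent meromorphic solutions of
$$\sum_{l=0}^{k-q+1} B_{q-1,l}\,f^{(l)}=0,\qquad B_{q-1,k-q+1}\equiv 1.$$
Then I would apply the classical reduction-of-order substitution $f=f_{q-1,1}\cdot g$ and expand each $f^{(l)}$ by the Leibniz rule
$$f^{(l)}=\sum_{i=0}^{l}\binom{l}{i}\,f_{q-1,1}^{(l-i)}\,g^{(i)}.$$
Collecting by powers of $g^{(i)}$, the coefficient of $g$ (corresponding to $i=0$) is $\sum_{l=0}^{k-q+1}B_{q-1,l}f_{q-1,1}^{(l)}$, which vanishes because $f_{q-1,1}$ is itself a solution. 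Hence only $g',g'',\ldots,g^{(k-q+1)}$ survive.

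Next I would divide the resulting equation through by $f_{q-1,1}$ and set $h=g'$. This produces a linear equation of order $k-q$ in $h$. A careful reindexing $i=j+1$ in the inner sum shows that the coefficient of $h^{(j)}$ is exactly
$$\sum_{l=j+1}^{k-q+1}\binom{l}{j+1}B_{q-1,l}\,\frac{f_{q-1,1}^{(l-j-1)}}{f_{q-1,1}},$$
which is the formula \eqref{pudotetutkertoimet}. The top coefficient (taking $j=k-q$) reduces to $\binom{k-q+1}{k-q+1}B_{q-1,k-q+1}=1$, so the normalization convention $B_{j,k-j}\equiv 1$ is preserved.

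For $j=1,\ldots,m-q$, applying this computation with $g=f_{q-1,j+1}/f_{q-1,1}$ shows that $f_{q,j}=g'=(f_{q-1,j+1}/f_{q-1,1})'$ is a solution of the reduced equation \eqref{kertalukuapudotettu}. Linear independence of $f_{q,1},\ldots,f_{q,m-q}$ follows by contradiction: any nontrivial relation $\sum_j c_j f_{q,j}\equiv 0$ would force $\sum_j c_j f_{q-1,j+1}/f_{q-1,1}$ to equal a constant $C$, whence $\sum_j c_j f_{q-1,j+1}-C f_{q-1,1}\equiv 0$, contradicting the linear independence of $f_{q-1,1},\ldots,f_{q-1,m-q+1}$ at level $q-1$.

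The conceptual content of the argument is essentially the Frobenius-type observation that a known solution of a linear ODE allows one to peel off one order. The main obstacle is purely combinatorial bookkeeping: one must verify that after the Leibniz expansion, division by $f_{q-1,1}$, and the change of variable $h=g'$, the binomial factor $\binom{l}{j+1}$ and the range $l\ge j+1$ in \eqref{pudotetutkertoimet} come out with the correct indexing. Once this identification is done cleanly, the remainder of the proof—solvability, preservation of normalization, and linear independence—reduces to routine verifications, and the induction closes.
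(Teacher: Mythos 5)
Your argument is correct. Note that the paper itself does not prove this lemma: it is stated as a quoted result, imported from \cite[Lemma~6.4]{G-S-W}, so there is no in-paper proof to compare against. Your reduction-of-order computation --- substituting $f=f_{q-1,1}g$, using that the coefficient of $g^{(0)}$ vanishes because $f_{q-1,1}$ solves the level-$(q-1)$ equation, dividing by $f_{q-1,1}$, and reindexing with $h=g'$ --- reproduces \eqref{pudotetutkertoimet} exactly, including the normalization $B_{q,k-q}\equiv1$, and your linear-independence argument via the constant of integration is the standard one. This is precisely the classical proof of the order-reduction procedure, and it closes the induction without gaps.
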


The coefficients $B_{q,j}$ of \eqref{kertalukuapudotettu} contain
terms of the type $g^{(j)}/g$, where $g$ is a meromorphic function
given by \eqref{ratkaisut}. These quotients are known as
\emph{generalized logarithmic derivatives}\index{generalized
logarithmic derivative} and they will be estimated by using the
following result.\index{logarithmic derivative estimates}

\begin{lemma}\label{Lem:log-deriv-estimate}\index{linear differential equation}
Let $g$ be a meromorphic function in $\BN_\om$, and let $k>j\ge0$
be integers.
\begin{itemize}
\item[\rm(i)] If $\om\in\R$, then
    $$
    \left\|\frac{g^{(k)}}{g^{(j)}}\right\|_{L^{\frac{1}{k-j}}_{\widehat{\om}}}^{\frac{1}{k-j}}
    =\int_\D\left|\frac{g^{(k)}(z)}{g^{(j)}(z)}\right|^{\frac{1}{k-j}}\widehat{\om}(z)\,dA(z)<\infty.
    $$
\item[\rm(ii)] If $\om\in\widetilde{\I}$ such that
    $$
    \int_0^1\log\frac1{1-r}\,\om(r)\,dr<\infty,
    $$
then
    $$
    \left\|\frac{g^{(k)}}{g^{(j)}}\right\|_{L^{\frac{1}{k-j}}_{\overline{\om}}}^{\frac{1}{k-j}}
    =\int_\D\left|\frac{g^{(k)}(z)}{g^{(j)}(z)}\right|^{\frac{1}{k-j}}\overline{\om}(z)\,dA(z)<\infty.
    $$
\end{itemize}
\end{lemma}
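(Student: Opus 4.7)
The plan is to combine an integrated generalized logarithmic derivative estimate of the form given in \cite{CHR2009} with the hypothesis $g\in\BN_\om$ and the local regularity of $\om$. Specifically, for any meromorphic $g$ with $g^{(j)}\not\equiv 0$ and integers $k>j\ge 0$ there exists a constant $C=C(k,j)>0$ such that
    $$
    \int_0^{2\pi}\left|\frac{g^{(k)}(re^{i\theta})}{g^{(j)}(re^{i\theta})}\right|^{1/(k-j)}d\theta
    \le C\left(\frac{T(\rho,g)}{\rho-r}+\log\frac{e}{\rho-r}+1\right),\quad 0<r<\rho<1.
    $$
Choosing $\rho=(1+r)/2$ gives $\rho-r=(1-r)/2$, and integrating against the radial weight $r\,dr$ (times $\widehat\om$ or $\overline\om$, respectively) reduces both parts to showing finiteness of
    $$
    \int_0^1 \frac{T\bigl(\tfrac{1+r}{2},g\bigr)+\log\frac{e}{1-r}}{1-r}\,w(r)\,dr,
    $$
where $w=\widehat\om$ in (i) and $w=\overline\om=(1-r)\om$ in (ii).

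For part (i), since $\om\in\R$ Lemma~\ref{le:condinte} yields $\widehat\om(r)\asymp(1-r)\om(r)$, so the factor $1-r$ in the denominator cancels and the integral is controlled by $\int_0^1 T\bigl((1+r)/2,g\bigr)\om(r)\,dr+\int_0^1(1-r)\log\frac{e}{1-r}\om(r)\,dr$. The second piece is finite because $(1-r)\log\frac{e}{1-r}$ is bounded and $\om$ is a weight. For the first, the substitution $s=(1+r)/2$, together with the local smoothness \eqref{eq:r2} (which gives $\om(2s-1)\asymp\om(s)$ for $s\ge\frac12$), yields $\int_0^1 T\bigl((1+r)/2,g\bigr)\om(r)\,dr\lesssim \int_{1/2}^1 T(s,g)\om(s)\,ds\le\|g\|_{\BN_\om}<\infty$.

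For part (ii), the factor $1-r$ is now supplied directly by $\overline\om(r)=(1-r)\om(r)$, so the integral splits as $\int_0^1 T\bigl((1+r)/2,g\bigr)\om(r)\,dr+\int_0^1\log\frac{e}{1-r}\om(r)\,dr$. The first term is handled exactly as in (i), using that $\widetilde\I\subset\I$ satisfies \eqref{eq:r2} by definition, so the change of variables is again legitimate. The second term is finite by the additional hypothesis $\int_0^1\log\frac{1}{1-r}\om(r)\,dr<\infty$, which is precisely tailored to absorb this residual log-term.

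The main obstacle is conceptual rather than computational: the whole argument rests on having an $L^{1/(k-j)}$-mean logarithmic derivative estimate with \emph{first-order} dependence on $T(\rho,g)$, rather than the weaker $\log^+ T(\rho,g)$ bound on the Nevanlinna proximity $m$; this is the reason the exponent $1/(k-j)$ appears in the statement. The technical point is that the change of variables $s=(1+r)/2$ forces the use of \eqref{eq:r2}, explaining both why part (i) holds for all $\R$ (where \eqref{eq:r2} is automatic) and why part (ii) is restricted to $\widetilde\I$ rather than all of $\I$, since oscillatory weights in $\I\setminus\widetilde\I$ such as \eqref{pesomalo1} are precisely those for which $\om(2s-1)\not\asymp\om(s)$ fails.
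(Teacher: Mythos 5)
Your overall strategy --- an integrated generalized logarithmic derivative estimate combined with $g\in\BN_\om$ and the local regularity of the weight --- is the same as the paper's, and your accounting of where \eqref{eq:r2} and the hypothesis $\int_0^1\log\frac1{1-r}\,\om(r)\,dr<\infty$ enter is essentially right. The gap is in the starting inequality. The circle-mean estimate
$$
\int_0^{2\pi}\Bigl|\frac{g^{(k)}(re^{i\theta})}{g^{(j)}(re^{i\theta})}\Bigr|^{1/(k-j)}d\theta
\le C\Bigl(\frac{T(\rho,g)}{\rho-r}+\log\frac{e}{\rho-r}+1\Bigr)
$$
is taken at exactly the critical exponent $1/(k-j)$, where the Gol'dberg--Grinshtein type estimates degenerate. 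Already for $k-j=1$ it asserts $\int_0^{2\pi}|g'/g|\,d\theta\lesssim T(\rho,g)/(\rho-r)+\cdots$, which fails: a zero or pole $a$ of $g$ contributes $|z-a|^{-1}$ to $|g'/g|$, and $\int_0^{2\pi}|re^{i\theta}-a|^{-1}\,d\theta\asymp\log\frac{1}{\bigl||a|-r\bigr|}$ is unbounded as $|a|\to r$ (infinite when a singularity lies on the circle), so no such bound can hold uniformly in $r$. For general $k>j$ the quotient $g^{(k)}/g^{(j)}$ has poles of order $k-j$, so $|g^{(k)}/g^{(j)}|^{1/(k-j)}$ again behaves like $|z-a|^{-1}$ and the same failure occurs; the known circle-mean estimates require an exponent strictly below $1/(k-j)$ and their constants blow up at the endpoint. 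The valid endpoint statement is the area-integrated estimate over dyadic annuli, \eqref{Eq:logartihmicDerivativeEstimate}, from \cite{CHR2009} and \cite{HR2011}: the extra radial integration tames the $|z-a|^{-1}$ singularities. This is what the paper uses, pairing $\widehat{\om}(r_n)$ (resp.\ $\overline{\om}(r_n)$) with the annulus integrals and then converting the dyadic sum into $\int_0^1 T(r,g)\,\om(r)\,dr+\int_0^1\log\frac{e}{1-r}\,\om(r)\,dr$; once you replace your circle estimate by that one, the rest of your reduction goes through essentially as you describe.

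A secondary point: in part (i) you write the residual term as $\int_0^1(1-r)\log\frac{e}{1-r}\,\om(r)\,dr$, but after cancelling $\widehat{\om}(r)/(1-r)\asymp\om(r)$ the term is $\int_0^1\log\frac{e}{1-r}\,\om(r)\,dr$ with no spare factor of $(1-r)$. Its finiteness is not automatic for an arbitrary weight; it genuinely uses $\om\in\R$, via $\om(r)\lesssim(1-r)^{1/C_2-1}$ from \eqref{64}. For $\om=v_\a$ with $1<\a\le 2$ this integral diverges, which is precisely why part (ii) carries the extra logarithmic integrability hypothesis rather than holding for all of $\widetilde{\I}$.
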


\begin{proof}
(i) Let $r_n=1-2^{-n}$ for $n\in\N\cup\{0\}$, and denote
$A(r,\rho)=\{z:r<|z|<\rho\}$. Under the assumptions there exists
$d\in(0,1)$ such that
    \begin{equation}\label{Eq:logartihmicDerivativeEstimate}
    \begin{split}
    \int_{A(r_n,r_{n+1})}\left|\frac{g^{(k)}(z)}{g^{(j)}(z)}\right|^{\frac{1}{k-j}}\,dA(z)
    &\lesssim
    T\left(s(r_{n+3}),g\right)+\log\frac{e}{1-r_{n+3}},
    \end{split}
    \end{equation}
where $s(r)=1-d(1-r)$. See \cite[Theorem~5]{CHR2009} and
\cite[p.~7--8]{HR2011} for a proof. Therefore
    \begin{equation}
    \begin{split}
    \left\|\frac{g^{(k)}}{g^{(j)}}\right\|_{L^{\frac{1}{k-j}}_{\widehat{\om}}}^{\frac{1}{k-j}}
    &\le\sum_{n=0}^\infty\widehat{\om}(r_n)\int_{A(r_n,r_{n+1})}\left|\frac{g^{(k)}(z)}{g^{(j)}(z)}\right|^{\frac{1}{k-j}}\,dA(z)\\
    &\lesssim\sum_{n=0}^\infty\widehat{\om}(r_n)\left(T\left(s(r_{n+3}),g\right)+\log\frac{e}{1-r_{n+3}}\right)\\
    &\lesssim\sum_{n=0}^\infty\widehat{\om}(r_n)\left(T\left(s(r_{n}),g\right)+\log\frac{e}{1-r_{n}}\right).
    \end{split}
    \end{equation}
Now $\om\in\R$ by the assumption, and hence the observation (i) to
Lemma~\ref{le:condinte}(i) gives
    $$
    \widehat{\om}(s(r_n))\asymp\om(s(r_n))(1-s(r_n))=2\om(s(r_n))\int_{s(r_n)}^{s(r_{n+1})}dr
    \asymp\int_{s(r_n)}^{s(r_{n+1})}\om(r)dr
    $$
for all $n\in\N\cup\{0\}$. Moreover, $T(r,g)$ is an increasing
function by \cite[p.~8]{Hayman},\index{linear differential
equation} and hence
    \begin{equation}
    \begin{split}
    \left\|\frac{g^{(k)}}{g^{(j)}}\right\|_{L^{\frac{1}{k-j}}_{\widehat{\om}}}^{\frac{1}{k-j}}
    \lesssim\int_0^1T(r,g)\,\om(r)\,dr+\int_0^1\log\frac{e}{1-r}\om(r)\,dr.
    \end{split}
    \end{equation}
The first integral in the right hand side is finite by the
assumption $g\in\BN_\om$ and the second one converges by the
observation (ii) to Lemma~\ref{le:condinte}.

(ii) The assertion follows by minor modifications in the proof of
(i).
\end{proof}

We will also need the following standard result.

\begin{lemma}\label{Lem:differentialfield}\index{linear differential equation}
Let $\om\in\I\cup\R$ such that
    \begin{equation}\label{Extra}
    \int_0^1\log\frac{1}{1-r}\,\om(r)\,dr<\infty.
    \end{equation}
If $f$ and $g$ are meromorphic functions in $\BN_\om$, then $fg$,
$f/g$ and $f'$ belong to~$\BN_\om$.
\end{lemma}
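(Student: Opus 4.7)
The plan is to reduce each of the three conclusions to the defining integrability of the Nevanlinna characteristic against $\om$. For products and quotients I would invoke the standard Nevanlinna-theoretic inequalities
\[
T(r,fg)\le T(r,f)+T(r,g),\qquad T(r,f/g)\le T(r,f)+T(r,g)+O(1);
\]
multiplying by $\om(r)$ and integrating over $(0,1)$ immediately gives $fg,f/g\in\BN_\om$, since the $O(1)$-contribution is absorbed by $\int_0^1\om(r)\,dr<\infty$.

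For $f'$ the starting point is the bound $T(r,f')\le 2T(r,f)+m(r,f'/f)$, which comes from combining $m(r,f')\le m(r,f)+m(r,f'/f)$ with the pole-counting estimate $N(r,f')\le 2N(r,f)$ (a pole of $f$ of order $k$ contributes a pole of $f'$ of order $k+1\le 2k$). The $2T(r,f)$-piece is integrable against $\om$ by hypothesis, so the whole problem collapses to controlling $\int_0^1 m(r,f'/f)\,\om(r)\,dr$. At this point I would invoke the unit-disc logarithmic derivative estimate from~\cite{CHR2009} in the pointwise form
\[
m(r,f'/f)\le C\bigl(\log^+ T(\rho,f)+\log\tfrac{1}{\rho-r}+1\bigr),\quad 0<r<\rho<1,
\]
and specialise to $\rho=s(r)=1-d(1-r)$ with a fixed $d\in(0,1)$, which yields
\[
m(r,f'/f)\lesssim \log^+ T(s(r),f)+\log\tfrac{1}{1-r}+1.
\]
The $\log\tfrac{1}{1-r}$-term integrates against $\om$ thanks to the standing hypothesis~\eqref{Extra}, while the constant contributes only $\int_0^1\om(r)\,dr<\infty$.

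What remains is the estimate $\int_0^1\log^+ T(s(r),f)\,\om(r)\,dr<\infty$. Here I would apply $\log^+ x\le x$ to remove the logarithm and then change variables $u=s(r)$, reducing the integral to a constant multiple of $\int_{1-d}^1 T(u,f)\,\om(s^{-1}(u))\,du$. The local smoothness~\eqref{eq:r2} of $\om$, which holds for every $\om\in\R$ by observation~(i) to Lemma~\ref{le:condinte} and for $\om\in\widetilde{\I}=\I\cap{\mathcal Inv}$ directly, provides $\om(s^{-1}(u))\asymp\om(u)$ for $u$ close to $1$, and the last integral is then bounded by $\|f\|_{\BN_\om}<\infty$. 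The main obstacle is the potentially delicate case $\om\in\I\setminus\widetilde{\I}$, where $\om$ may oscillate so strongly that pointwise comparison of $\om(s^{-1}(u))$ with $\om(u)$ fails; my plan there is to perform a dyadic decomposition over the rings $\{1-2^{-n}\le r\le 1-2^{-n-1}\}$ and apply Lemma~\ref{le:condinte}(ii) to the monotone antiderivative $\widehat\om$, which remains well-behaved irrespective of oscillations of $\om$, so that $\int_0^1 T(s(r),f)\om(r)\,dr$ is ultimately controlled by a finite multiple of $\|f\|_{\BN_\om}$.
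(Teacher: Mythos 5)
Your treatment of $fg$ and $f/g$ coincides with the paper's: the subadditivity of $T$ for products and the first fundamental theorem for quotients, integrated against $\om$, with the $O(1)$ absorbed by $\int_0^1\om<\infty$. The reduction for $f'$ to the single term $\int_0^1 m(r,f'/f)\,\om(r)\,dr$ via $N(r,f')\le 2N(r,f)$ and the logarithmic derivative lemma is also equivalent to the paper's starting estimate $T(r,f')\lesssim T(r,f)+\log^+T(\rho,f)+\log\frac{1}{\rho-r}+1$.

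The gap is in your handling of $\int_0^1\log^+T(s(r),f)\,\om(r)\,dr$. Applying $\log^+x\le x$ replaces an easy term by a genuinely hard one: you now need $\int_0^1 T(s(r),f)\,\om(r)\,dr<\infty$ with $s(r)>r$, and your dyadic repair for $\om\in\I\setminus\widetilde{\I}$ does not deliver it. Lemma~\ref{le:condinte}(ii) controls the tails $\widehat\om(r)=\int_r^1\om$, so it does show $\widehat\om(r)\asymp\widehat\om(s(r))$; but transferring the mass of $\om$ from a dyadic ring $A_n$ to the ring containing $s(A_n)$ requires comparing the \emph{increments} $\int_{A_n}\om$ and $\int_{A_{n+k}}\om$, and for oscillatory rapidly increasing weights such as \eqref{pesomalo1}--\eqref{pesomalo2} these ratios are unbounded (on some rings $\om\asymp1$ while on neighbouring rings $\om\asymp v_\a$). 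Monotonicity of $T$ only gives $T(s(r),f)\widehat\om(s(r))\le\|f\|_{\BN_\om}$, which after the comparison of tails leaves you with $\int_0^1\e(r)\,dr/\psi_\om(r)$, and $\int_0^1 dr/\psi_\om(r)=\infty$ for every radial weight by Lemma~\ref{Lemma:Distortion}; so nothing forces convergence.

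The fix is to keep the logarithm, which is what the paper does. Since $T(\cdot,f)$ is increasing and $f\in\BN_\om$, one has $T(\rho,f)\,\widehat\om(\rho)\le\int_\rho^1 T(t,f)\om(t)\,dt\le\|f\|_{\BN_\om}$, and by the observations (ii) and (iii) to Lemma~\ref{le:condinte} every $\om\in\I\cup\R$ satisfies $\widehat\om(\rho)\gtrsim(1-\rho)^{C}$ for some $C=C(\om)>0$; hence $T(\rho,f)\lesssim(1-\rho)^{-C}$ and
\begin{equation*}
\log^+T(\rho,f)\lesssim\log\frac{1}{1-\rho}\lesssim\log\frac{1}{1-r}+1,\quad\rho=\tfrac{1+r}{2},
\end{equation*}
pointwise. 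Now \eqref{Extra} gives the integrability directly, with no change of variables, no regularity assumption beyond $\om\in\I\cup\R$, and no comparison of $\om$ at $r$ and $s(r)$.
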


\begin{proof}
Let $f,g\in\BN_\om$, where $\om\in\I\cup\R$ satisfies
\eqref{Extra}. Since $\log^+(xy)\le\log^+ x+\log^+y+\log2$ for all
$x,y>0$, we obtain
    \begin{equation}\label{log+}
    T(r,fg)\le T(r,f)+T(r,g)+\log2,
    \end{equation}
and so $fg\in\BN_\om$. The inequality \eqref{log+} and the first
fundamental theorem of Nevanlinna~\cite[Theorem~2.1.10]{Laine}
give
    $$
    T(r,f/g)\lesssim T(r,f)+T(r,g)+1,
    $$
and hence $f/g\in\BN_\om$. Moreover, a standard lemma of the
logarithmic derivative~\cite[Theorem~2.3.1]{Laine} together
with~\eqref{log+} yields
    $$
    T(r,f')\lesssim
    T(r,f)+\log^+T(\rho,f)+\log\frac{1}{\rho-r}+1,\quad
    \rho=\frac{1+r}{2}.
    $$
Since $\om\in\I\cup\R$, we obtain $\log^+
T(r,f)\lesssim\log\frac1{1-r}$, and hence
    $$
    T(r,f')\lesssim T(r,f)+\log\frac{1}{1-r}+1.
    $$
By using the assumption \eqref{Extra} we deduce $f'\in\BN_\om$.
\end{proof}

By Lemma~\ref{orderred-lemma} and
Lemma~\ref{Lem:differentialfield} we deduce that if
$f_{0,1},\ldots,f_{0,m}$, $m\ge2$, belong to $\BN_\om$ for all
$j=1,\ldots,m$, then $f_{q,j}\in\BN_\om$ for all $j=1,\ldots,m-q$
and $q=1,\ldots,m-1$.

\subsection*{Proof of Theorem~\ref{Thm:De-converse-growth}} (i) Assume
that at least one of the coefficients does not satisfy the
assertion and denote
$q=\max\{j:\|B_j\|_{L^\frac1{k-j}_{\widehat{\om}}}=\infty\}$.

If $q=0$, then \eqref{ldeHOMO}, standard estimates and H\"older's
inequality yield
    $$\index{linear differential equation}
    \|B_{0}\|_{L^\frac1{k}_{\widehat{\om}}}^\frac1{k}
    \lesssim\left\|\frac{f^{(k)}}{f}\right\|_{L^\frac1{k}_{\widehat{\om}}}^\frac1{k}
    +\sum_{j=1}^{k-1}\|B_{j}\|_{L^\frac1{k-j}_{\widehat{\om}}}^\frac{1}{k}
    \left\|\frac{f^{(j)}}{f}\right\|_{L^\frac1{j}_{\widehat{\om}}}^\frac
    1{k}.
    $$
Since $q=0$, we have
$\|B_{j}\|_{L^\frac1{k-j}_{\widehat{\om}}}<\infty$ for all
$j=1,\ldots,k-1$. Moreover, the norms of the generalized
logarithmic derivatives are finite by
Lemma~\ref{Lem:log-deriv-estimate}(i). Therefore the right hand
side is finite and we have a contradiction.

Assume now that $q=1$. Denote $B_j=B_{0,j}$ for $j=0,\ldots,k-1$
so that the notation for coefficients in \eqref{94} coincide with
those in \eqref{ldeHOMO}. The order of \eqref{94} is now reduced
down once to equation \eqref{kertalukuapudotettu}, where the
coefficients $B_{1,0},\ldots,B_{1,k-2}$ are given by
\eqref{pudotetutkertoimet}. H\"older's inequality and
\eqref{pudotetutkertoimet} now yield
    $$
    \|B_{0,1}\|_{L^\frac1{k-1}_{\widehat{\om}}}^\frac1{k-1}
    \lesssim\|B_{1,0}\|_{L^\frac1{k-1}_{\widehat{\om}}}^\frac1{k-1}
    +\sum_{j=2}^{k-1}\|B_{0,j}\|_{L^\frac1{k-j}_{\widehat{\om}}}^\frac{1}{k-1}
    \left\|\frac{f_1^{(j-1)}}{f_1}\right\|_{L^\frac1{j-1}_{\widehat{\om}}}^\frac
    1{k-1}
    +\left\|\frac{f_1^{(k-1)}}{f_1}\right\|_{L^\frac1{k-1}_{\widehat{\om}}}^\frac1{k-1}.
    $$
All the norms of the generalized logarithmic derivatives are
finite by Lemma~\ref{Lem:log-deriv-estimate}(i), and
$\|B_{0,j}\|_{L^\frac1{k-j}_{\widehat{\om}}}<\infty$ for all
$j=2,\ldots,k-1$ since $q=1$. Moreover,
\eqref{kertalukuapudotettu} gives
    $$
    \|B_{1,0}\|_{L^\frac1{k-1}_{\widehat{\om}}}^\frac1{k-1}
    \lesssim\left\|\frac{f_{1,1}^{(k-1)}}{f_{1,1}}\right\|_{L^\frac1{k-1}_{\widehat{\om}}}^\frac1{k-1}
    +\sum_{j=1}^{k-2}\|B_{1,j}\|_{L^\frac1{k-1-j}_{\widehat{\om}}}^\frac{1}{k-1}
    \left\|\frac{f_{1,1}^{(j)}}{f_{1,1}}\right\|_{L^\frac1{j}_{\widehat{\om}}}^\frac
    1{k-1},
    $$
where again all the norms of the generalized logarithmic
derivatives are finite by Lemma~\ref{Lem:log-deriv-estimate}(i)
and Lemma~\ref{Lem:differentialfield}. Applying once again
\eqref{pudotetutkertoimet}, we obtain
    \begin{equation}\index{linear differential equation}
    \begin{split}
    \|B_{1,j}\|_{L^\frac1{k-1-j}_{\widehat{\om}}}^\frac{1}{k-1-1}
    &\lesssim\|B_{0,j+1}\|_{L^\frac1{k-1-j}_{\widehat{\om}}}^\frac1{k-1-j}
    +\sum_{l=j+2}^{k-1}\|B_{0,l}\|_{L^\frac1{k-l}_{\widehat{\om}}}^\frac{1}{k-1-j}
    \left\|\frac{f_{1}^{(l-j-1)}}{f_{1}}\right\|_{L^\frac1{l-j-1}_{\widehat{\om}}}^\frac
    1{k-1-j}\\
    &\quad+\left\|\frac{f_{1}^{(k-j-1)}}{f_{1}}\right\|_{L^\frac1{k-j-1}_{\widehat{\om}}}^\frac1{k-1-j}
    \end{split}
    \end{equation}
for all $j=1,\ldots,k-2$. As earlier, we see that this upper bound
is finite. Putting everything together we deduce
$\|B_{0,1}\|_{L^\frac1{k-1}_{\widehat{\om}}}<\infty$ which is in
contradiction with the hypothesis $q=1$.

If $q>1$ we must do $q$ order reduction steps and proceed as in
the case $q=1$ until the desired contradiction is reached. We omit
the details.

(ii) Proof is essentially identical to that of (i) and is
therefore omitted. \hfill$\Box$

\medskip

To study the converse implication with regards to oscillation of
solutions in Proposition~\ref{Prop:DE-implication}, we first
observe that a standard substitution shows that the coefficient
$B_{k-1}$ can be ignored when studying the zero distribution of
solutions of \eqref{ldeHOMO}. Namely, if we replace $f$ by $u$ in
\eqref{ldeHOMO} and let $\Phi$ be a primitive function
of~$B_{k-1}$, then the substitution $f=ue^{-\frac1k \Phi}$, which
does not change zeros, shows that $f$ satisfies
    \begin{equation}\label{lde2}\index{linear differential equation}
    f^{(k)}+B^\star_{k-2}f^{(k-2)}+\cdots +B^\star_1f'+B^\star_0f=0,\quad k\geq 2,
    \end{equation}
with coefficients $B^\star_0,\ldots,B^\star_{k-2}\in\H(\D)$.
Therefore, from now on we concentrate on \eqref{lde2}.

The principal findings of this section are gathered in the
following result.

\begin{theorem}\label{Thm:DE-Main}
Let $\om\in\I\cup\R$ and
$B^\star_0,\ldots,B^\star_{k-2}\in\H(\D)$, and consider the
assertions:
    \begin{enumerate}
    \item[(1)] $B_j^\star\in
A^{1/(k-j)}_{\widehat{\om}}$ for all $j=0,\ldots,k-2$; \item[(2)]
All solutions of \eqref{lde2} belong to $\BN_\om$; \item[(3)] The
zero sequence $\{z_k\}$ of each non-trivial solution of
\eqref{lde2} satisfies \eqref{Eq:ZerosOfSolutionsInBN}; \item[(4)]
$B_j^\star\in A^{1/(k-j)}_{\overline{\om}}$ for all
$j=0,\ldots,k-2$.
    \end{enumerate}
With this notation the following assertions hold:
    \begin{itemize}
    \item[\rm(i)]
    If $\om\in\R$, then
    {\rm(1)}$\Leftrightarrow${\rm(2)}$\Leftrightarrow${\rm(3)}$\Leftrightarrow${\rm(4)}.
    \item[\rm(ii)] If $\om\in\I$, then {\rm(1)}$\Rightarrow${\rm(2)}$\Rightarrow${\rm(3)}.
    Moreover, if $\om\in\widetilde\I$ satisfies
        $$
        \int_0^1\log\frac1{1-r}\,\om(r)<\infty,
        $$
then {\rm(3)}$\Rightarrow${\rm(4)}.
    \end{itemize}
\end{theorem}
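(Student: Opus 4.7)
The plan is to assemble the full cycle of implications from the results already at hand, using Proposition~\ref{Prop:DE-implication}, Proposition~\ref{Prop:bergman-nevanlinna}, Lemma~\ref{Lemma:NBzeros} and Theorem~\ref{Thm:De-converse-growth} as the main building blocks. Before starting, I would observe that for $\om\in\R$, the relation $\widehat{\om}(r)\asymp(1-r)\om(r)=\overline{\om}(r)$, which follows from the observation~(i) to Lemma~\ref{le:condinte}, yields the equivalence {\rm(1)}$\Leftrightarrow${\rm(4)} without further work. Consequently, in part~(i) it suffices to establish the cycle {\rm(1)}$\Rightarrow${\rm(2)}$\Rightarrow${\rm(3)}$\Rightarrow${\rm(2)}$\Rightarrow${\rm(1)}, with {\rm(3)}$\Rightarrow${\rm(2)} being the genuinely new step, and in part~(ii) the chain {\rm(1)}$\Rightarrow${\rm(2)}$\Rightarrow${\rm(3)} together with the conditional {\rm(3)}$\Rightarrow${\rm(4)}.

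The easy direct implications are handled as follows. First, {\rm(1)}$\Rightarrow${\rm(2)} is exactly Proposition~\ref{Prop:DE-implication}, valid for any radial weight and hence for any $\om\in\I\cup\R$; note that in~\eqref{lde2} the coefficient of $f^{(k-1)}$ is identically zero, so the hypothesis concerns only $j=0,\dots,k-2$. Next, {\rm(2)}$\Rightarrow${\rm(3)} is immediate from Proposition~\ref{Prop:bergman-nevanlinna}: any $f\in\BN_\om$ has $\sum_k\om^\star(z_k)<\infty$ for its zero sequence. Finally, {\rm(2)}$\Rightarrow${\rm(1)} in part~(i) is Theorem~\ref{Thm:De-converse-growth}(i) applied to a fundamental system of solutions, and the analogous {\rm(2)}$\Rightarrow${\rm(4)} which appears implicitly as part of the machinery for part~(ii) is Theorem~\ref{Thm:De-converse-growth}(ii).

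The critical remaining implication is {\rm(3)}$\Rightarrow${\rm(2)} in part~(i), and the implication {\rm(3)}$\Rightarrow${\rm(4)} in part~(ii). My plan is to treat both simultaneously through a coefficient-representation argument. Fix linearly independent solutions $f_1,\dots,f_k$ of \eqref{lde2}. Since the $f^{(k-1)}$ coefficient vanishes, Abel's identity shows that the Wronskian $W=W(f_1,\dots,f_k)$ is a nonzero constant. Cramer's rule then expresses each coefficient as $B_j^\star=(-1)^{k-j}W_j/W$, where $W_j$ is obtained by replacing the $(j{+}1)$-st row of $W$ by $(f_1^{(k)},\dots,f_k^{(k)})$. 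Multiplying and dividing column~$i$ of $W_j$ by $f_i$, the determinant becomes a polynomial combination of generalized logarithmic derivatives $f_i^{(l)}/f_i$, $0\le l\le k$. Combining Lemma~\ref{Lemma:NBzeros} applied to each $f_i$, which translates~(3) into $\int_0^1 n(r,1/f_i)\widehat{\om}(r)\,dr<\infty$, with a Jensen-type pointwise control of $f_i^{(l)}/f_i$ of the form underlying~\eqref{Eq:logartihmicDerivativeEstimate}, the plan is to derive $\|B_j^\star\|_{L^{1/(k-j)}_{\widehat{\om}}}<\infty$ in part~(i) (resp.\ $\|B_j^\star\|_{L^{1/(k-j)}_{\overline{\om}}}<\infty$ in part~(ii)). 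Once {\rm(1)} (resp.\ {\rm(4)}) has been established, the circle closes by reapplying Proposition~\ref{Prop:DE-implication} (resp.\ by combining the growth bound with the order reduction scheme of Lemma~\ref{orderred-lemma}).

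The hard part will be extracting integrability of the generalized logarithmic derivatives against $\widehat{\om}$ or $\overline{\om}$ using only the zero-integrability hypothesis~\eqref{Eq:ZerosOfSolutionsInBN}, before one knows that $f_i\in\BN_\om$. Lemma~\ref{Lem:log-deriv-estimate} presupposes $\BN_\om$-membership, so a direct appeal is circular. I would therefore need either (a)~to refine the annular estimate~\eqref{Eq:logartihmicDerivativeEstimate} so that its right-hand side depends only on the counting function $n(r,1/f_i)$ and a harmless logarithmic error, using the Poisson--Jensen formula on the annuli $A(r_n,r_{n+1})$ in the spirit of \cite{CHR2009}; or (b)~to run a bootstrap: use a crude Nevanlinna-type inequality to place each $f_i$ in some larger class $\BN_{\om_0}$ with $\om_0\in\R$ dominated by $\om$, apply Lemma~\ref{Lem:log-deriv-estimate} there to obtain preliminary coefficient bounds, and then feed them back into Proposition~\ref{Prop:DE-implication} to upgrade to genuine $\BN_\om$-membership. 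Route~(a) is conceptually cleaner and ought to yield both parts uniformly, with the additional integrability hypothesis $\int_0^1\log\frac{1}{1-r}\,\om(r)\,dr<\infty$ in part~(ii) entering precisely at the point where the $\log\frac{e}{1-r}$ error in~\eqref{Eq:logartihmicDerivativeEstimate} must be absorbed against~$\om$.
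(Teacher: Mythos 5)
Your overall architecture is right: for part (i) it does suffice to prove {\rm(3)}$\Rightarrow${\rm(1)} (the rest being Proposition~\ref{Prop:DE-implication}, Proposition~\ref{Prop:bergman-nevanlinna} and $\widehat{\om}\asymp\overline{\om}$ for regular $\om$), and you correctly flag that a direct appeal to Lemma~\ref{Lem:log-deriv-estimate} is circular because it presupposes $\BN_\om$-membership. But neither of your proposed escapes from that circularity works, and the missing idea is precisely the one the paper's proof turns on. Route (a) cannot succeed even in principle: the Nevanlinna characteristic of an \emph{analytic} function is not controlled by its zero counting function (a zero-free solution can grow arbitrarily fast), so no refinement of the annular estimate \eqref{Eq:logartihmicDerivativeEstimate} can bound $\int_{A(r_n,r_{n+1})}|f_i^{(l)}/f_i|^{1/l}\,dA$ by $n(r,1/f_i)$ plus a harmless logarithm. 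Route (b) is not a proof sketch but a restatement of the problem: you do not say what ``crude Nevanlinna-type inequality'' would place a single solution $f_i$ in any $\BN_{\om_0}$ from hypothesis~\eqref{Eq:ZerosOfSolutionsInBN} alone, and there is none that uses only the zeros of that one function.

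The key device is to work with the \emph{ratios} $y_j=f_j/f_k$ of a fundamental system rather than with the solutions themselves. The zeros, poles and $1$-points of $y_j$ are the zeros of the solutions $f_j$, $f_k$ and $f_j-f_k$ respectively, so hypothesis~{\rm(3)} controls all three integrated counting functions $N(r,y_j)$, $N(r,1/y_j)$, $N(r,y_j-1)$. The second main theorem of Nevanlinna (in the form \eqref{Eq:NevanlinnaII}, with the error term $\log^+T(s(r),y_j)$ absorbed using the a priori polynomial bound $T(r,f)\lesssim(1-r)^{-\a-\e}$ supplied by \cite[Corollary~1.4]{HR2011} from the zero distribution of \emph{all} solutions) then yields $T(r,y_j)\lesssim$ those counting functions plus $\log\frac1{1-r}$, whence $y_j\in\BN_\om$ by Lemma~\ref{Lemma:NBzeros}. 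Only at this point do the logarithmic-derivative and order-reduction lemmas become applicable --- not to your Cramer/Wronskian expression in the $f_i^{(l)}/f_i$, but to Kim's representation \eqref{Eq:RepresentationOfCoefficients} of the $B_j^\star$ in terms of the determinants $W_j$ built from $y_1,\dots,y_{k-1}$, combined with Theorem~\ref{Thm:De-converse-growth} applied to the reduced meromorphic equation \eqref{meromorphic-equation-plane} that $1,y_1,\dots,y_{k-1}$ satisfy. Without the passage to ratios and the second main theorem, the implication {\rm(3)}$\Rightarrow${\rm(1)} (and likewise {\rm(3)}$\Rightarrow${\rm(4)} in part (ii)) does not close.
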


\begin{proof}
(i) By Proposition~\ref{Prop:DE-implication},
Proposition~\ref{Prop:bergman-nevanlinna} and the relation
$\widehat{\om}(r)\asymp\overline{\om}(r)$ for $\om\in\R$, it
suffices to show that (3) implies (1). Let $f_1,\ldots,f_k$ be
linearly independent solutions of \eqref{lde2}, where
$B^\star_0,\ldots,B^\star_{k-2}\in\H(\D)$, and denote
    \begin{equation}\label{ratios-plane}
    y_j=\frac{f_j}{f_k},\quad j=1,\ldots, k-1.
    \end{equation}
The second main theorem of Nevanlinna~\cite[p.~84]{Nevanlinna}
yields
    $$
    T(r,y_j)\le N(r,y_j)+N(r,1/y_j)+N(r,y_j-1)+S(r,y_j),
    $$
where
    $$
    S(r,y_j)\lesssim\log^+T(r,y_j)+\log\frac{1}{1-r},
    $$
outside of a possible exceptional set $F\subset[0,1)$ satisfying
$\int_F\frac{dr}{1-r}<\infty$. By \cite[Lemma~5]{Gary2}, there
exists a constant $d\in(0,1)$ such that
    \begin{equation}\label{Eq:NevanlinnaII}\index{linear differential equation}
    \begin{split}
    T(r,y_j)&\lesssim
    N(s(r),y_j)+N(s(r),1/y_j)+N(s(r),y_j-1)\\
    &\quad+\log^+T(s(r),y_j))+\log\frac{1}{1-r}+\log\frac1d,\quad
    r\in[0,1),
    \end{split}
    \end{equation}
where $s(r)=1-d(1-r)$. Now $\om^\star\in\R$ by Lemma~\ref{le:sc1},
and hence the observation~(ii) to Lemma~\ref{le:condinte} shows
that there exists $\a>1$ such that $(1-r)^\a\lesssim\om^\star(r)$.
Therefore the assumption (3) implies that the zero sequence
$\{z_k\}$ of each solution $f$ of \eqref{lde2} satisfies
    $$
    \sum_k(1-|z_k|)^\a\lesssim\sum_k\om^\star(z_k)<\infty.
    $$
Therefore each solution $f$ satisfies $T(r,f)\lesssim
(1-r)^{-\a-\e}$ for all $\e>0$ by \cite[Corollary~1.4]{HR2011},
and thus $\log^+T(s(r),y_j))\lesssim\log\frac1{1-r}-\log d$.
Therefore the term $\log^+T(s(r),y_j))$ can be erased from
\eqref{Eq:NevanlinnaII}.

The zeros and poles of $y_j=\frac{f_j}{f_k}$ are the zeros of
$f_j$ and $f_k$, respectively, and the 1-points of $y_j$ are
precisely the zeros of $f_j-f_k$, which is also a solution of
\eqref{lde2}. Therefore the assumption~(3),
\eqref{Eq:NevanlinnaII} and Lemma~\ref{Lemma:NBzeros} yield
    \begin{equation}\label{92}\index{linear differential equation}
    y_j\in\BN_\om,\quad j=1,\ldots,k-1.
    \end{equation}

By \cite[Theorem~2.1]{Kim} each coefficient of \eqref{lde2} can be
represented in the form
    \begin{equation}\label{Eq:RepresentationOfCoefficients}
    B^\star_j=\sum_{i=0}^{k-j} (-1)^{2k-i}\delta_{ki}\left(\begin{array}{c} k-i\\ k-i-j\end{array}\right)
    \frac{W_{k-i}}{W_k} \frac{\left(\sqrt[k]{W_k}\right)^{(k-i-j)}}{\sqrt[k]{W_k}},
    \end{equation}
where $\delta_{kk}=0$ and $\delta_{ki}=1$ otherwise, and
    \begin{equation}\label{determinant-plane}
    W_j=\left|
    \begin{array}{cccc}
    y_1'\ & y_2'\ & \cdots \ & y_{k-1}'\\
    \vdots &&&\\
    y_1^{(j-1)}\ & y_2^{(j-1)}\ & \cdots & y_{k-1}^{(j-1)}\\
    y_1^{(j+1)}\ & y_2^{(j+1)}\ & \cdots & y_{k-1}^{(j+1)}\\
    \vdots &&&\\
    y_1^{(k)}\ & y_2^{(k)}\ & \cdots & y_{k-1}^{(k)}
    \end{array}
    \right|,\quad j=1,\ldots,k.
    \end{equation}
By \eqref{Eq:RepresentationOfCoefficients} and H\"older's
inequality,
    \begin{equation}\label{933}
    \begin{split}
    \|B_j^\star\|_{A^\frac{1}{k-j}_{\widehat\om}}^\frac{1}{k-j}
    &\lesssim\left\|\frac{\left(\sqrt[k]{W_k}\right)^{(k-j)}}{\sqrt[k]{W_k}}\right\|_{L^\frac{1}{k-j}_{\widehat\om}}^\frac{1}{k-j}
    +\left\|\frac{W_{j}}{W_k}\right\|_{L^\frac{1}{k-j}_{\widehat\om}}^\frac{1}{k-j}\\
    &\quad+\sum_{i=1}^{k-j-1}\left\|\frac{W_{k-i}}{W_k}\right\|_{L^\frac{1}{i}_{\widehat\om}}^\frac{1}{k-j}
    \left\|\frac{\left(\sqrt[k]{W_k}\right)^{(k-i-j)}}{\sqrt[k]{W_k}}\right\|_{L^\frac{1}{k-i-j}_{\widehat\om}}^\frac{1}{k-j}.
    \end{split}
    \end{equation}
The function $\sqrt[k]{W_k}$ is a well defined meromorphic
function in $\D$~\cite{HR2011,Kim}. Therefore
$\left\|\frac{\left(\sqrt[k]{W_k}\right)^{(k-i-j)}}{\sqrt[k]{W_k}}\right\|_{L^\frac{1}{k-i-j}_{\widehat\om}}$
is finite for each $i=1,\ldots,k-j$ by
Lemma~\ref{Lem:log-deriv-estimate}, \eqref{determinant-plane} and
Lemma~\ref{Lem:differentialfield}.

Finally, we note that the norms involving $W_{k-i}$ in \eqref{933}
are finite by Theorem~\ref{Thm:De-converse-growth}(i), because the
functions $1,y_1,\ldots,y_{k-1}$ satisfy \eqref{92} and are
linearly independent meromorphic solutions of
    \begin{equation}\label{meromorphic-equation-plane}\index{linear differential equation}
    y^{(k)}-\frac{W_{k-1}(z)}{W_k(z)}y^{(k-1)}+\cdots +
    (-1)^{k-1}\frac{W_1(z)}{W_k(z)}y'=0,
    \end{equation}
see \cite{HR2011,Kim}.

(ii) The proof is similar and is therefore omitted.
\end{proof}

A linear differential equation is called
\emph{Horowitz-oscillatory},\index{Horowitz-oscillatory} if
$\sum(1-|z_k|)^2<\infty$ for the zero sequence $\{z_k\}$ of each
non-trivial solution~\cite[Chapter~13]{H2011}.
Theorem~\ref{Thm:DE-Main} with $\om\equiv1$ shows that
\eqref{lde2} is Horowitz-oscillatory if and only if~$B^\star_j$
belongs to the classical weighted Bergman space $A^\frac1{k-j}_1$
for all $j=0,\ldots,k-2$.

The existing literature contains several concrete examples
illustrating the correspondence between the growth of
coefficients, the growth of solutions and the zero distribution of
solutions given in Theorem~\ref{Thm:DE-Main}(i). See, for example,
\cite{CHR2009} and the references therein. We next provide two
examples regarding to Theorem~\ref{Thm:DE-Main}(ii) which does not
give such a satisfactory correspondence in the case
$\om\in\widetilde{\I}$. To this end, consider first the locally
univalent analytic function
    $$
    p(z)=\frac{\log\frac1{1-z}}{1-z},\quad z\in\D.
    $$
The functions
    $$
    f_1(z)=\frac{1}{\sqrt{p'(z)}}\sin p(z),\quad f_2(z)=\frac{1}{\sqrt{p'(z)}}\cos p(z)
    $$
are linearly independent solutions of
    $$\index{linear differential equation}
    f''+B(z)f=0,\quad B(z)=(p'(z))^2+\frac12S_p(z),
    $$
where $S_p=\frac{p'''}{p'}-\frac32\left(\frac{p''}{p'}\right)^2$
is the \emph{Schwarzian derivative}\index{Schwarzian derivative}
of $p$. Therefore
    \begin{equation*}
    \begin{split}
    B(z)&=\frac{\left(\log\frac1{1-z}+1\right)^2}{(1-z)^4}
    -\frac12\frac{1}{\left(1+\log\frac1{1-z}\right)(1-z)^2}
    -\frac34\frac{1}{\left(1+\log\frac1{1-z}\right)^2(1-z)^2}
    \end{split}
    \end{equation*}
satisfies
    $$
    M_\frac12^\frac12(r,B)\asymp\frac{\log\frac1{1-r}}{1-r},\quad
    r\to1^-,
    $$
and hence $B\in A^\frac12_{\widehat{v}_\a}$ if and only if $\a>3$,
but $B\in A^\frac12_{\overline{v}_\a}$ if and only if $\a>2$. If
$\{z_k\}$ is the zero sequence of a solution $f$, then
Theorem~\ref{Thm:DE-Main}(ii) yields
    $$\index{$v_\a(r)$}
    \sum_kv_\a^\star(z_k)\asymp\sum_k\frac{1-|z_k|}{\left(\log\frac{2}{1-|z_k|}\right)^{\a-1}}<\infty
    $$
for all $\a>3$. The solution $f=C_1f_1+C_2f_2$, where
$|C_1|+|C_2|\ne0$, vanishes at the points $\{z_k\}$ that satisfy
    \begin{equation}\label{91}
    \frac{\log\frac1{1-z_k}}{1-z_k}=k\pi+\frac{1}2\arg\left(\frac{C_1-C_2i}{C_1+C_2i}\right),\quad k\in\mathbb{Z}.
    \end{equation}
Clearly the only possible accumulation point of zeros is $1$. If
$|1-z_k|<\delta$, then $w_k=\frac1{1-z_k}$ belongs to the right
half plane and $|w_k|>1/\delta$. Therefore the equation \eqref{91}
does not have solutions when $k$ is a large negative integer.
Moreover, \eqref{91} yields
    \begin{equation}\label{91j}\index{linear differential equation}
    \lim_{k\to\infty}|w_k|=+\infty
    \end{equation}
and
    $$
    |\arg w_k |=|\arg(\log w_k)|=\left|\arctan\left(\frac{\arg
    w_k}{\log|w_k|}\right)\right|\lesssim\left|\frac{\arg
    w_k}{\log|w_k|} \right|
    $$
for large $k$. So, if $\arg w_k\neq 0$, then
$|\log|w_k||\lesssim1$, which together with \eqref{91j} gives
$\arg(1-z_k)=-\arg w_k=0$, that is, $z_k\in (0,1)$ for $k>0$ large
enough. It follows that
    $$\index{$v_\a(r)$}
    \sum_kv_\a^\star(z_k)\asymp\sum_k\frac{1-|z_k|}{\left(\log\frac{2}{1-|z_k|}\right)^{\a-1}}\asymp\sum_{k\ge2}\frac{1}{k(\log
    k)^{\a-2}}<\infty
    $$
if and only if $\a>3$. We deduce that in this case the
implications (1)$\Rightarrow$(2)$\Rightarrow$(3) in
Theorem~\ref{Thm:DE-Main}(ii) are sharp, but (3)$\Rightarrow$(4)
is not. In view of Proposition~\ref{Prop:bergman-nevanlinna}, this
makes us questionize the sharpness of
Theorem~\ref{Thm:De-converse-growth}(ii). The answer, however, is
affirmative. Namely, the functions
    $$
    f_1(z)=\exp\left( \frac{\log \frac1{1-z}}{1-z} \right),\quad
    f_2(z)=\exp\left(-\frac{\log \frac1{1-z}}{1-z} \right)
    $$
are linearly independent solutions of
    \begin{equation}\label{2nd}\index{linear differential equation}
    f''+B_1(z)f'+B_0(z)f=0,
    \end{equation}
where
    \begin{equation*}
    \begin{split}
    B_1(z)=-\frac{2\log\frac{1}{1-z}+3}{\left(\log\frac1{1-z}+1\right)(1-z)}
    \end{split}
    \end{equation*}
and
    \begin{equation*}
    \begin{split}
    B_0(z)=-\frac{\left(\log\frac{1}{1-z}+1\right)^2}{(1-z)^4}
    \end{split}
    \end{equation*}
are analytic in $\D$. The coefficients satisfy both $B_1\in
A^1_{\overline{v}_\a}$ and $B_0\in A^\frac12_{\overline{v}_\a}$ if
and only if $\a>2$. Moreover, it was shown
in~\cite[Example~3]{CHR2009} that
    $$
    T(r,f_1)\asymp\log \frac 1{1-r}, \quad r\to1^-,
    $$
and since $f_2=1/f_1$, all solutions belong to $BN_{v_\a}$ for all
$\a>2$ by the first main theorem of Nevanlinna. This shows that
the statement in Theorem~\ref{Thm:De-converse-growth}(ii) is sharp
in this case, but the first implication in
Proposition~\ref{Prop:DE-implication} is not. Note also that if
$\{z_k\}$ is the zero sequence of a solution $f$, then
Proposition~\ref{Prop:bergman-nevanlinna} yields
    $$\index{$v_\a(r)$}
    \sum_kv_\a^\star(z_k)\asymp\sum_k\frac{1-|z_k|}{\left(\log\frac{2}{1-|z_k|}\right)^{\a-1}}<\infty
    $$
for all $\a>2$.

There is a clear and significant difference between the geometric
zero distribution of solutions in these two examples. Namely, in
the latter one the solution $f=C_1f_1+C_2f_2$, where $C_1\ne0\ne
C_2$, vanishes at the points $\{z_k\}$ that satisfy
    \begin{equation*}\index{linear differential equation}
    \frac{\log\frac1{1-z_k}}{1-z_k}=k\pi i+\frac{1}2\log\left(-\frac{C_2}{C_1}\right),\quad k\in\mathbb{Z}.
    \end{equation*}
It follows that $f$ has infinitely many zeros whose imaginary part
is strictly positive and the same is true for zeros with negative
imaginary part. This is in contrast to the first example in which
the zeros are real except possibly a finite number of them. Note
that the coefficients are essentially of the same growth in both
examples, but the leading term in the coefficient $B$ behaves
roughly speaking as $-B_0$, when $z$ is close to 1. It would be
desirable to establish a result similar to
Theorem~\ref{Thm:DE-Main}(i) for rapidly increasing weight $\om$,
and, in particular, to see how the geometric zero distribution of
solutions fits in the picture.

\chapter{Further Discussion}\label{sec:furtherdiscussion}

This chapter concerns topics that this monograph does not cover.
Here we will briefly discuss $q$-Carleson measures for $A^p_\om$
when $q<p$, generalized area operators acting on $A^p_\om$ as well
as questions related to differential equations and the zero
distribution of functions in $A^p_\om$. We include few open
problems that are particularly related to the weighted Bergman
spaces induced by rapidly increasing weights.

\section{Carleson measures}\index{Carleson measure}

The following  result can be proved by using arguments similar to
those in the proof of
Theorem~\ref{Thm-integration-operator-1}(iv).

\begin{theorem}\label{Thm-p>q-Carleson}
Let $0<q<p<\infty$, and let $\mu$ be a positive Borel measure
on~$\D$. If the function
    $$
    \Psi_\mu(u)=\int_{\Gamma(u)}\frac{d\mu(z)}{\om^\star(z)},\quad
    u\in\D\setminus\{0\},
    $$
belongs to $L^{\frac{p}{p-q}}_\om$, then $\mu$ is a $q$-Carleson
measure for $A^p_\om$.
\end{theorem}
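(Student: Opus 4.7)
The plan is to combine a straightforward averaging trick with Fubini's theorem and H\"older's inequality, closely mimicking the structure of the proof of Theorem~\ref{Thm-integration-operator-1}(iv) but with the measure $|g'(z)|^2\,dA(z)$ replaced by $d\mu(z)$ and the role of the square-function norm \eqref{normacono} taken over by the trivial pointwise bound $|f(z)|\le N(f)(u)$ on the tent $T(z)$. First I would fix $f\in A^p_\omega$ and restrict attention to $z\in\D$ with $|z|\ge 1/2$ (the contribution of $|z|<1/2$ is controlled in the obvious manner by subharmonicity and Lemma~\ref{le:suf1}, and can be disposed of once and for all). Since $z\in\Gamma(u)$ precisely when $u\in T(z)$, the definition of the non-tangential maximal function gives $|f(z)|\le N(f)(u)$ for every $u\in T(z)$, and Lemma~\ref{le:cuadrado-tienda}(i) gives $\omega(T(z))\asymp\omega^\star(z)$. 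Averaging the (constant in $u$) quantity $|f(z)|^q$ over $T(z)$ against $\omega$ therefore produces
\begin{equation*}
|f(z)|^q\lesssim\frac{1}{\omega^\star(z)}\int_{T(z)}(N(f)(u))^q\,\omega(u)\,dA(u).
\end{equation*}

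Integrating this inequality against $d\mu(z)$ and interchanging the order of integration via Fubini's theorem, the equivalence $u\in T(z)\Leftrightarrow z\in\Gamma(u)$ converts the inner integral into $\Psi_\mu(u)$, so that
\begin{equation*}
\int_\D|f(z)|^q\,d\mu(z)\lesssim\int_\D (N(f)(u))^q\,\Psi_\mu(u)\,\omega(u)\,dA(u).
\end{equation*}
At this point H\"older's inequality with conjugate exponents $p/q$ and $p/(p-q)$ separates the two factors, and Lemma~\ref{le:funcionmaximalangular} bounds $\|N(f)\|_{L^p_\omega}$ by a constant multiple of $\|f\|_{A^p_\omega}$, yielding
\begin{equation*}
\int_\D|f(z)|^q\,d\mu(z)\lesssim\|f\|_{A^p_\omega}^q\,\|\Psi_\mu\|_{L^{p/(p-q)}_\omega}^q,
\end{equation*}
which is the desired Carleson-type inequality.

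The argument is genuinely short because the heavy lifting has already been done in earlier chapters: the $L^p_\omega$-boundedness of the non-tangential maximal function on $A^p_\omega$ (Lemma~\ref{le:funcionmaximalangular}) and the comparability $\omega(T(z))\asymp\omega^\star(z)$ (Lemma~\ref{le:cuadrado-tienda}(i)) are precisely the two ingredients that make the scheme in Theorem~\ref{Thm-integration-operator-1}(iv) portable to the present setting. Consequently I do not anticipate a genuine obstacle; the only step demanding mild care is the averaging bound, where one must verify that $\omega(T(z))$ is bounded away from zero on the relevant range of $z$, which is handled by the preliminary restriction $|z|\ge 1/2$ together with the finiteness of $\mu$ on compact subsets of $\D$.
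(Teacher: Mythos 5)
Your argument is correct and is essentially the proof the paper has in mind: it is the direct adaptation of the scheme in \eqref{58} from the proof of Theorem~\ref{Thm-integration-operator-1}(iv), with the averaging of $|f(z)|^q$ over $T(z)$ (via Lemma~\ref{le:cuadrado-tienda}) and Fubini converting the tent integral into $\Psi_\mu$, followed by H\"older and Lemma~\ref{le:funcionmaximalangular}. Only a cosmetic slip: H\"older yields $\|\Psi_\mu\|_{L^{p/(p-q)}_\om}$ to the first power, not the $q$-th, in the final display, which does not affect the conclusion.
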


Even though we do not know if the condition $\Psi_\mu\in
L^{\frac{p}{p-q}}_\om$ characterizes $q$-Carleson measures for
$A^p_\om$, we can show that this condition is necessary when
$A^p_\om$ is replaced by $L^p_\om$ and $\om\in\I\cup\R$. Namely,
if $I_d:\,L^p_\om\to L^q(\mu)$ is bounded and $g\in
L^{\frac{p}{q}}_\om$ is a positive function, then Fubini's
theorem, Lemma~\ref{le:cuadrado-tienda} and Corollary
\ref{co:maxbou} give
    \begin{equation*}\index{$M_\om(\vp)$}
    \begin{split}
    \int_\D g(u)\Psi_\mu (u)\om(u)\,dA(u)
    &=\int_\D\left(\frac{1}{\om^\star(z)}\int_{T(z)} g(u)\om(u)\,dA(u)\right)\,d\mu(z)\\
    &\lesssim\int_\D M_\om(g)(z)\,d\mu(z)
    =\int_\D \left(\left(M_\om(g)(z))\right)^{\frac{1}{q}}\right)^q\,d\mu(z)\\
    &\lesssim \int_\D \left(M_\om(g)(z))\right)^{\frac{p}{q}}\,\om(z)dA(z)
\lesssim\|g\|^{p/q}_{L^{\frac{p}{q}}_\om}.
    \end{split}
    \end{equation*}
Since $L^{\frac{p}{p-q}}_\om$ is the dual of
$L^{\frac{p}{q}}_\om$, we deduce $\Psi_\mu\in
L^{\frac{p}{p-q}}_\om$.

In view of these results it is natural to ask, if $q$-Carleson
measures for $A^p_\om$, with $\om\in\I\cup\R$, are characterized
by the condition $\Psi_\mu\in L^{\frac{p}{p-q}}_\om$?

\section{Generalized area operators}\index{generalized area operator}

Let $0<p<\infty$, $n\in\N$ and $f\in\H(\D)$, and let $\omega$ be a
radial weight. Then Theorem~\ref{ThmLittlewood-Paley} shows that
$f\in A^p_\om$ if and only if
    $$
    \int_\D\,\left(\int_{\Gamma(u)}|f^{(n)}(z)|^2
    \left(1-\left|\frac{z}{u}\right|\right)^{2n-2}\,dA(z)\right)^{\frac{p}2}\omega(u)\,dA(u)<\infty.
    $$
Therefore the operator
    $$\index{$N(f)$}\index{non-tangential maximal function}
    F_n(f)(u)=\left(\int_{\Gamma(u)}|f^{(n)}(z)|^2
    \left(1-\left|\frac{z}{u}\right|\right)^{2n-2}\,dA(z)\right)^{\frac{1}2},\quad
    u\in\D,
    $$
is bounded from $A^p_\om$ to $L^p_\om$ for each $n\in\N$. To study
the case $n=0$, let $\mu$ be a positive Borel measure on $\D$, and
consider the generalized area operator
    $$\index{$G^\om_\mu(f)$}
    G^\om_\mu(f)(\z)=\int_{\Gamma(\z)}|f(z)|\frac{d\mu(z)}{\om^\star(z)},\quad
    \z\in\D\setminus\{0\},
    $$
associated with a weight $\om\in\I\cup\R$. The method of proof of
Theorem~\ref{Thm-integration-operator-1} yields the following
result which is related to a study by Cohn on area
operators~\cite{Cohn}. In particular, the reasoning gives an
alternative way to establish the implication in
\cite[Theorem~1]{Cohn} in which the original proof relies on an
argument of John and Nirenberg in conjunction with the
Calderon-Zygmund decomposition.

\begin{theorem}\label{Cor:AreaOperator}
Let $0<p\le q<\infty$ and $\omega\in\I\cup\R$, and let $\mu$ be a
positive Borel measure on $\D$. Then $G^\om_\mu:A^p_\omega\to
L^q_\omega$ is bounded if and only if $\mu$ is a
$(1+p-\frac{p}{q})$-Carleson measure for $A^p_\omega$. Moreover,
$G^\om_\mu:A^p_\omega\to L^q_\omega$ is compact if and only if
$I_d:A^p_\omega\to L^p(\mu)$ is compact.
\end{theorem}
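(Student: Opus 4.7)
The plan is to deduce Theorem~\ref{Cor:AreaOperator} by a duality argument that reduces the area operator to the Carleson embedding of Theorem~\ref{th:cm} combined with the weighted maximal function bound from Corollary~\ref{co:maxbou}. Throughout, I write $r=1+p-p/q$, so that the hypothesis becomes $\mu(S(I))\lesssim\om(S(I))^{r/p}$. The key algebraic identity driving the argument is
\begin{equation*}
\frac{1}{p}+\frac{1}{q'}=\frac{r}{p},\qquad\text{equivalently}\qquad \frac{r'}{q'}=\frac{r}{p},
\end{equation*}
which will couple the two Carleson-type inequalities that appear.

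For sufficiency, the first step is to invoke $L^q_\om$-duality and Fubini's theorem, exploiting $z\in\Gamma(\z)\Leftrightarrow\z\in T(z)$, to rewrite
\begin{equation*}
\|G^\om_\mu(f)\|_{L^q_\om}=\sup_{\|g\|_{L^{q'}_\om}=1}\int_\D\frac{|f(z)|}{\om^\star(z)}\left(\int_{T(z)}g\,\om\,dA\right)d\mu(z).
\end{equation*}
Since $T(z)\subset S(z)$ and $\om(S(z))\asymp\om^\star(z)$ by Lemma~\ref{le:cuadrado-tienda}, the inner integral is dominated by $\om^\star(z)M_\om(g)(z)$, so
\begin{equation*}
\|G^\om_\mu(f)\|_{L^q_\om}\lesssim\sup_g\int_\D|f|\,M_\om(g)\,d\mu.
\end{equation*}
H\"older's inequality with exponents $r$ and $r'$ splits this into $\|f\|_{L^r(\mu)}\|M_\om(g)\|_{L^{r'}(\mu)}$. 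Theorem~\ref{th:cm}(i) controls the first factor by $\|f\|_{A^p_\om}$, while Corollary~\ref{co:maxbou} (with $\alpha=1$, $p_0=q'$, and the Carleson exponent $r'/q'=r/p$) controls the second factor by $\|g\|_{L^{q'}_\om}=1$; both applications use the same Carleson assumption on $\mu$.

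For necessity, I apply $G^\om_\mu$ to the test functions $F_{a,p}$ of Lemma~\ref{testfunctions1}, pair the result against the unit vector $g_a=\chi_{T(a)}/\om(T(a))^{1/q'}$ in $L^{q'}_\om$, and unwind by Fubini. Since $T(z)\subset T(a)$ and $\om(T(z))\asymp\om^\star(z)\asymp\om(S(a))$ for $z$ in a sufficiently inner subregion of $S(a)$ (a mild dilation $a\mapsto a^\star$ using the local smoothness of $\om$ from Lemma~\ref{le:condinte} handles the boundary portion of $S(a)$ without changing $\om(S(a))$ up to constants), one obtains
\begin{equation*}
\|G^\om_\mu(F_{a,p})\|_{L^q_\om}\ge\langle g_a,G^\om_\mu(F_{a,p})\rangle_\om\gtrsim\frac{\mu(S(a))}{\om(S(a))^{1/q'}}.
\end{equation*}
Combined with boundedness and $\|F_{a,p}\|_{A^p_\om}\asymp\om(S(a))^{1/p}$, this forces $\mu(S(a))\lesssim\om(S(a))^{1/p+1/q'}=\om(S(a))^{r/p}$, which is the $r$-Carleson condition by Theorem~\ref{th:cm}(i).

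The compactness statement follows by combining the above duality with the normality of bounded subsets of $A^p_\om$ (Lemma~\ref{le:suf1} plus Montel's theorem) and the little-oh variant of Theorem~\ref{th:cm}(ii), paralleling the passage from Theorem~\ref{th:cm}(i) to (ii); truncating $\mu$ to a shrinking neighborhood of $\partial\D$ and applying the boundedness estimate on the remainder delivers $G^\om_\mu(f_n)\to 0$ in $L^q_\om$ for any $A^p_\om$-bounded sequence $\{f_n\}$ tending to zero uniformly on compacta. The main obstacle is the sufficiency step: producing the correct exponent $r=1+p-p/q$ hinges on the matching $r'/q'=r/p$, which is precisely what allows a single Carleson hypothesis on $\mu$ to simultaneously yield the embedding $A^p_\om\hookrightarrow L^r(\mu)$ and the maximal function bound $M_\om\colon L^{q'}_\om\to L^{r'}(\mu)$.
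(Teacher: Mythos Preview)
Your duality argument is correct and matches the paper's method for the range $q>1$: this is the analogue of the $q>2$ case in the proof of Theorem~\ref{Thm-integration-operator-1}(ii), with the square in the Littlewood--Paley integral replaced by the first power in $G^\om_\mu$. However, there is a genuine gap when $0<q\le 1$, which is part of the stated range $0<p\le q<\infty$. For $q<1$ the space $L^q_\om$ has no usable duality, and more concretely your H\"older split with exponents $r$ and $r'$ collapses because $r=1+p-\tfrac{p}{q}<1$, so $r'$ is not even defined. Likewise, Corollary~\ref{co:maxbou} with $\alpha=1$ requires a finite exponent $q'$, hence $q>1$. Your necessity argument, which pairs against $g_a\in L^{q'}_\om$, suffers from the same restriction.

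The paper's method covers the missing range exactly as in the $0<q<2$ case of Theorem~\ref{Thm-integration-operator-1}(ii), via the non-tangential maximal function $N(f)$ and Lemma~\ref{le:funcionmaximalangular}. For $q<1$ one writes $|f(z)|\le N(f)(u)^{\beta}|f(z)|^{1-\beta}$ on $\Gamma(u)$ with $\beta=\tfrac{p(1-q)}{q}\in(0,1)$ (this uses $p\le q$), then applies H\"older with exponents $\tfrac{1}{1-q}$ and $\tfrac{1}{q}$:
\begin{equation*}
\|G^\om_\mu(f)\|_{L^q_\om}^q\le\|N(f)\|_{L^p_\om}^{p(1-q)}\left(\int_\D\int_{\Gamma(u)}\frac{|f(z)|^{1-\beta}}{\om^\star(z)}\,d\mu(z)\,\om(u)\,dA(u)\right)^{q}.
\end{equation*}
Fubini and Lemma~\ref{le:cuadrado-tienda} turn the inner term into $\int_\D|f|^{1-\beta}\,d\mu$, and since $1-\beta=1+p-\tfrac{p}{q}=r\ge p$, Theorem~\ref{th:cm}(i) bounds this by $\|f\|_{A^p_\om}^{r}$; the exponents recombine to give $\|f\|_{A^p_\om}^q$. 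The case $q=1$ is a direct Fubini identity $\|G^\om_\mu(f)\|_{L^1_\om}\asymp\|f\|_{L^1(\mu)}$. For necessity when $q\le 1$, drop the pairing and estimate $\|G^\om_\mu(F_{a,p})\|_{L^q_\om}$ from below directly by restricting $u$ to a tent where $\Gamma(u)$ captures a fixed dilate of $S(a)$; this avoids duality altogether. With these additions your argument matches the method the paper indicates.
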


It would be interesting to find out the condition on $\mu$ that
characterizes the bounded area operators $G^\om_\mu$ from
$A^p_\omega$ to $L^q_\omega$, when $0<q<p<\infty$.

\section{Growth and oscillation of solutions}

In Chapter~\ref{difequ} we studied the interaction between the
growth of coefficients and the growth and the zero distribution of
solutions of linear differential equations. In particular, we
established a one-to-one correspondence between the growth of
coefficients, the growth of solutions and the oscillation of
solutions, when all solutions belong to the Bergman-Nevanlinna
class $\BN_\om$ and $\om$ is regular. In order to keep our
discussion simple here, let us settle to consider the second order
equation
    \begin{equation}\label{Eq:FurtherDE}
    f''+Bf=0, \quad B\in\H(\D).
    \end{equation}
Theorem~\ref{Thm:DE-Main}(i) states that, if $\om$ is regular,
then $B\in A^\frac12_{\widehat\om}$, if and only if, all
solutions~$f$ of \eqref{Eq:FurtherDE} belong to $\BN_\om$, if and
only if, the zero sequence $\{z_k\}$ of each non-trivial solution
satisfies $\sum_k\om^\star(z_k)<\infty$. Recall that
$\widehat{\omega}(r)=\int_r^1\om(s)\,ds$ and
    $$
    \omega^\star(z)=\int_{|z|}^1\omega(s)\log\frac{s}{|z|}s\,ds\asymp(1-|z|)\widehat{\omega}(|z|),\quad
    |z|\to1^-,
    $$
so the convergence of the sum $\sum_k\om^\star(z_k)$ is clearly a
weaker requirement than the Blaschke condition
$\sum_k(1-|z_k|)<\infty$. If $\om$ is rapidly increasing, then
Theorem~\ref{Thm:DE-Main}(ii) and the examples given at the end of
Section~\ref{Sec:SolutionsBergmanNevanlinna} show that the
situation is more delicate and it is not enough to measure the
distribution of zeros by a condition depending on their moduli
only. Needless to say that things get even more complex in general
if all solutions belong to the weighted Bergman space~$A^p_\om$
induced by $\om\in\I\cup\R$, see Chapter~\ref{sec:factorizacion}
and Section~\ref{Sec:SolutionsBergman}. The final end in the
oscillation is of course the growth restriction
$|B(z)|(1-|z|^2)^2\le1$, which guarantees that each non-trivial
solution of \eqref{Eq:FurtherDE} vanishes at most once in the unit
disc. This statement is equivalent to the well known theorem of
Nehari that provides a sufficient condition for the injectivity of
a locally univalent meromorphic function in~$\D$ in terms of the
size of its Schwarzian derivative. In view of the results in the
existing literature and those presented in Chapter~\ref{difequ},
it appears that the connection between the growth of the
coefficient $B$ and the growth and the oscillation of solutions
$f$ of \eqref{Eq:FurtherDE} is not well understood when Nehari's
condition fails, but Theorem~\ref{Thm:DE-Main}(i) is too rough.
This case seems to call for further research. In particular, it
would be desirable to establish a result similar to
Theorem~\ref{Thm:DE-Main}(i) for rapidly increasing weights, and
to see how the geometric zero distribution of solutions fits in
the picture. Going further, the equations \eqref{Eq:FurtherDE}
with solutions whose zeros constitute Blaschke sequences have
attracted attention during the last decade. For the current stage
of the theory of these equations as well as open problems we refer
to~\cite{H2011}.

\section{Zero distribution}

It is known that the density techniques employed in the study of
$A^p_\a$-zero sets and interpolating and
sampling\index{interpolating and sampling sequences} sequences are
related~\cite{S3,S2,S4}. When we were finishing the typesetting of
this monograph in July 2011, Jordi Pau kindly brought our
attention to the existence of the very recent paper by
Seip~\cite{Seip} on interpolation and sampling on the Hilbert
space $A^2_\om$, where $\om$ is assumed to be continuous such that
$\om(r)\lesssim\om(\frac{1+r}{2})$ for all $0\le r<1$. It is clear
that each weight $\om\in\widetilde{\I}\cup\R$ satisfies this
condition. Seip characterizes interpolating and sampling sequences
for $A^2_\om$ by using densities and points out that the
techniques used work also for $A^p_\om$ without any essential
changes of the arguments. It is natural to expect that these
techniques can be used to obtain information on the zero
distribution of functions in $A^p_\om$.

\bigskip

\subsection*{Acknowledgements}

We would like to thank the referee for a careful and meticulous
reading of the manuscript. In particular, the referee's comments
with regards to the question of when polynomials are dense in the
weighted Bergman space induced by a non-radial weight are highly
appreciated. The referee's suggestions and observations improved
the exposition of the monograph and undoubtedly made it easier to
read. We would also like to thank Alexandru Aleman for inspiring
discussions on the topic.

\backmatter
\bibliographystyle{amsalpha}

\printindex

\end{document}